\documentclass[11pt]{amsart}

\usepackage{evearticle}
\usepackage{longtable}

\makeatletter
\renewcommand*\env@matrix[1][\arraystretch]{
	\edef\arraystretch{#1}
	\hskip -\arraycolsep
	\let\@ifnextchar\new@ifnextchar
	\array{*\c@MaxMatrixCols c}}
\makeatother

\usepackage{etoolbox}
\newcounter{stepc}
\newcounter{stepcanchor}
\renewcommand{\thestepc}{\arabic{stepc}}

\crefname{stepc}{Step}{Steps}
\Crefname{stepc}{Step}{Steps}
\AtBeginEnvironment{proof}{
	\setcounter{stepc}{0}
}
\newcommand{\stepx}[2]{
	\stepcounter{stepcanchor}
	\refstepcounter{stepc}
	\label{#1}
	\emph{Step \thestepc. #2.}
}

\begin{document}

\title[The conformally invariant metric on $\CLE_4$ I]{The conformally invariant metric on $\CLE_4$ I:\\ subsequential limits of the non-simple CLE graph metric}
\author[E.~Kammerer, K.~Kavvadias, J.~Miller, and Y.~Tian]{Emmanuel Kammerer, Konstantinos Kavvadias, Jason Miller, and Yi Tian}

\begin{abstract}
	We consider the conformal loop ensemble (CLE) with the parameter $\kappa=4$, the critical value at or below which the loops are simple and do not intersect each other or the domain boundary. We show that the loops of a $\CLE_4$ uniquely determine a conformally invariant, local, and geodesic metric so that the metric ball growth from the domain boundary coincides with the uniform exploration of Werner and Wu. This metric was previously constructed in unpublished work of Sheffield, Watson, and Wu. Our approach differs in that we show that the metric arises as the renormalized limit of the graph metric on $\CLE_\kappa$ loops as $\kappa \downarrow 4$. In this first paper in a series of three, we prove that the subsequential limits exist and define a non-trivial conformally invariant metric on $\CLE_4$ which is local and such that the metric ball growth from the boundary is given by the uniform exploration of Werner and Wu. In subsequent work, we will show that the subsequential limit exists as a true limit. 
\end{abstract}

\date{\today}

\maketitle

\setlength{\parindent}{0pt}
\setlength{\parskip}{0\baselineskip plus 1pt minus 1pt}

\setcounter{tocdepth}{1}
\tableofcontents

\begin{figure}[h]
	\includegraphics[width=0.495\textwidth]{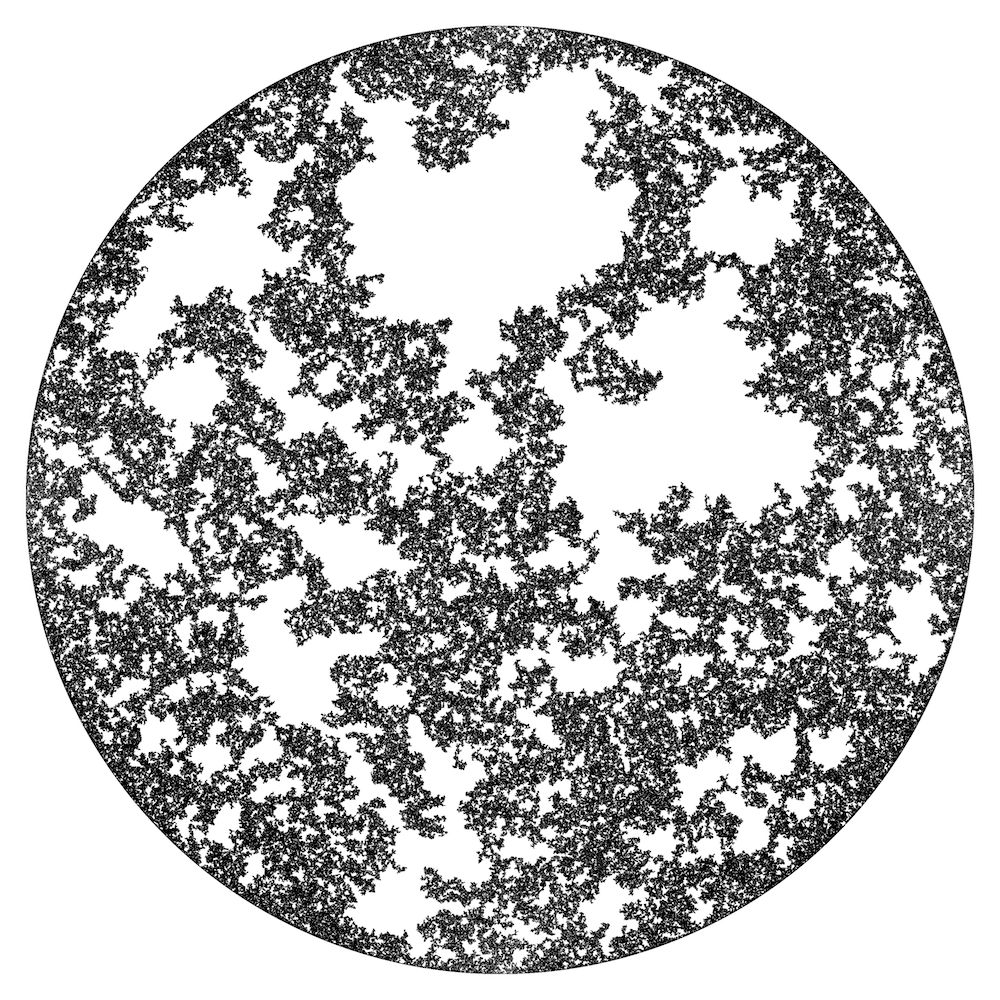}
	\includegraphics[width=0.495\textwidth]{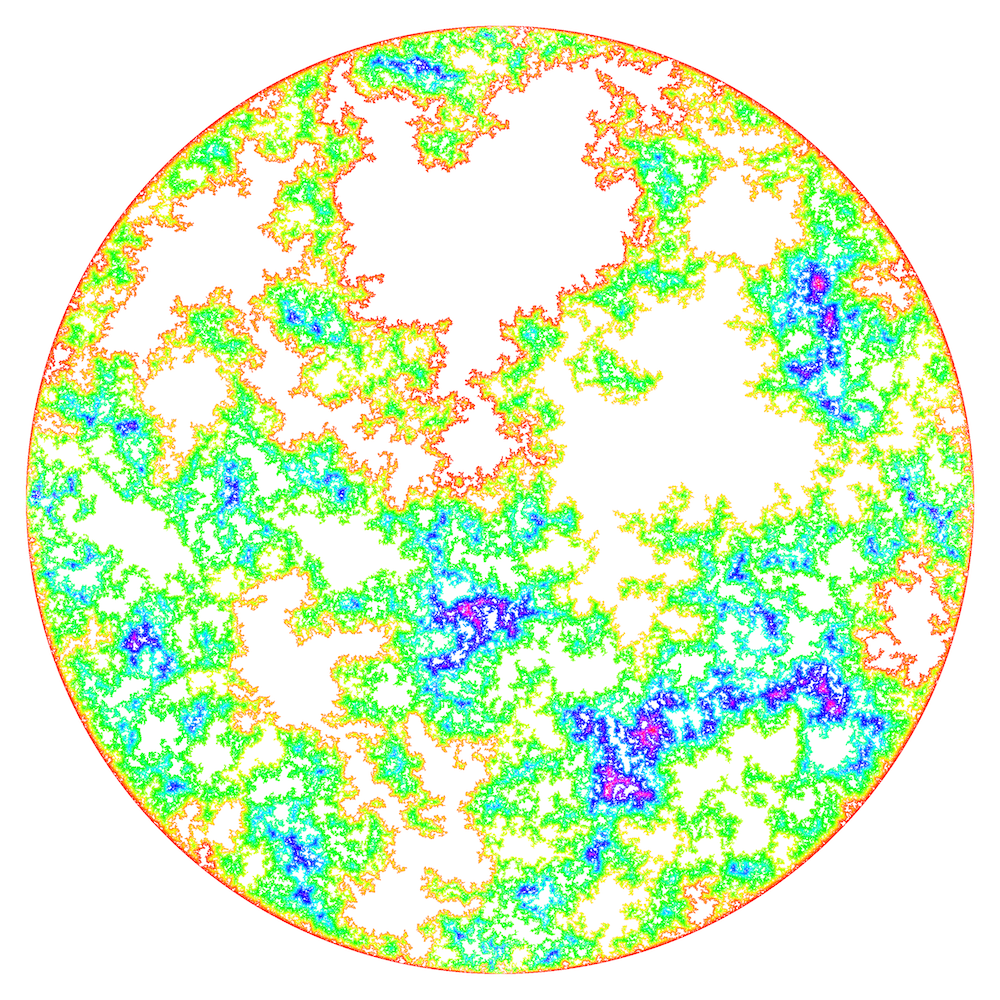}
	\caption{\label{fig:simulations}
		{{\bf Left:} A (discrete approximation of a) $\CLE_4$ in the unit disk $\BD$. {\bf Right:} The loops are colored according to their distance to $\partial \BD$. Equivalently, loops are colored according to the time at which they are discovered by the uniform exploration of Werner and Wu~\cite{CoInCLEExpl}.}}
\end{figure}

\setlength{\parindent}{0pt}
\setlength{\parskip}{0.5\baselineskip plus 1pt minus 1pt}

\section{Introduction}\label{sec:intro}

\subsection{Overview}\label{subsec:overview}

The \emph{conformal loop ensembles ($\CLE_\kappa$ for $\kappa \in (8/3, 8)$)} are random collections of non-crossing loops within simply connected domains in $\BC$ \cite{TreeCLE,CLE}. They are conformally invariant, meaning their laws are preserved under conformal automorphisms of the domain. CLEs are the natural loop variants of the Schramm--Loewner evolution (SLE)~\cite{s2000sle}. SLEs and CLEs are conjectured (and in several cases rigorously shown) to describe the scaling limits of interfaces in critical statistical mechanics models on planar lattices~\cite{s2001percolation,lsw2004lerw,ss2009dgff,s2010ising} as well as on random planar maps~\cite{s2016hc,lsw2017schnyder,gkmw2018active,kmsw2019bipolar,gm2021saw,gm2021percolation}.

Just like for $\SLE$, the value $\kappa=4$ is critical for CLE. For $\kappa \le 4$, the loops are simple and do not intersect each other or the domain boundary. On the other hand, for $\kappa \in (4,8)$ the loops are self-intersecting and intersect each other; each loop moreover intersects the domain boundary with positive probability~\cite{rs2005basic}. In this paper, we focus on the critical case $\kappa=4$. For $\kappa \in (4,8)$, since the loops intersect, there is a natural metric that one can define on the loops of a $\CLE_\kappa$. It is simply the graph metric where the vertices in the graph are the loops of the $\CLE_\kappa$ and there is an edge precisely when two distinct loops intersect. Since a $\CLE_\kappa$ is conformally invariant and this metric is a function of a $\CLE_\kappa$, it is trivially conformally invariant. This construction does not make sense when $\kappa = 4$ since the loops do not intersect each other. The main aim of this paper is to show that nevertheless such a conformally invariant metric on the loops of a $\CLE_4$ exists. See~\Cref{fig:simulations} for numerical simulations of the metric.

The program of constructing the $\CLE_4$ metric was initiated by Werner and Wu~\cite{CoInCLEExpl}, who defined the distance from any loop in the $\CLE_4$ to the domain boundary. Equivalently, they gave a definition of the growth of a metric ball from the domain boundary, where one explores the loops according to their distance from the domain boundary. They coined this the \emph{uniform exploration} (cf.~also~\cite{LevelLineGFFI,TVSGFF}). The reason for this terminology is that as one grows the uniform exploration from the domain boundary toward a target point $z \in \BD$, the $\CLE_4$ loops are discovered in a Poissonian way where, when a new loop is discovered, it is rooted at a point of the boundary of the component of the unexplored region containing $z$ just before this time, chosen according to the harmonic measure seen from $z$; equivalently, uniformly on $\partial \BD$ once that component is mapped conformally to $\BD$ with $z$ sent to $0$. It was not proved in~\cite{CoInCLEExpl} that the uniform exploration corresponds to the metric ball growth from the boundary of a metric defined on the entire $\CLE_4$. The aim of this paper is to prove this conjecture and to show that the metric satisfies a number of important properties; we will spell these out just below in~\Cref{subsec:setup}.  It was also conjectured in~\cite{CoInCLEExpl} that the uniform exploration is a.s.\ determined by the $\CLE_4$. That this holds will be a consequence of the sequels~\cite{kkmt2026cle4_part2,kkmt2026cle4_part3} to the present paper.  Let us already give a statement of the main result of this paper, which will be made more precise in \Cref{subsec:main_results}.

	\begin{theorem}[See~\Cref{thm:convergence_nonsimple_cle} for a more detailed statement]
		For all $\kappa\in(4,8)$, consider a $\CLE_\kappa$ in the unit disk $\BD$ equipped with the graph distance $D^\kappa$, where two distinct loops are adjacent if and only if they intersect. Then, for every sequence $\kappa_n \downarrow 4$, there exists a sequence $\ka_{\kappa_n} \uparrow \infty$ such that, along a subsequence, 
		\[
		\left( \ka_{\kappa_n}^{-1}D^{\kappa_n} (\SCL^{\kappa_n}(x), \SCL^{\kappa_n}(y))\right)_{x,y \in \BD \cap \BQ^2} \mathop{\longrightarrow}\limits_{n\to \infty}^{(\mathrm{d})}
		\left( D (\SCL(x), \SCL(y))\right)_{x,y \in \BD \cap \BQ^2},
		\]
		for the product topology, where $D$ is a distance on the set of loops of a $\CLE_4$ in $\BD$, and where $\SCL^{\kappa_n}(x)$ (resp.\ $\SCL(x)$) is the loop of the $\CLE_{\kappa_n}$ (resp.\ $\CLE_4$) that surrounds $x$. Moreover, the above convergence also holds for the distances to $\partial \BD$, and the distances $D(\partial \BD, \SCL(x))$ for $x \in \BD \cap \BQ^2$ have the law of the times at which the uniform exploration discovers each loop $\SCL(x)$. Finally, the limiting metric $D$ depends on the $\CLE_4$ in a conformally invariant and local manner.
	\end{theorem}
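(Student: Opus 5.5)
The plan is to prove tightness of the renormalized distances, identify the law of the boundary distances through the Werner--Wu uniform exploration, and then pass conformal invariance and locality to the limit. I take $\ka_\kappa$ to be a characteristic scale, for instance the median of $D^\kappa(\partial\BD,\SCL^\kappa(0))$, or the inverse of the rate at which the $\CLE_\kappa$ exploration from the boundary discovers loops surrounding $0$; the two choices should be comparable.

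\textbf{Step 1: the boundary distance.} For $\kappa\in(4,8)$, consider the boundary-to-loop exploration of a $\CLE_\kappa$ targeted at $z$. This is the branching $\SLE_\kappa(\kappa-6)$ growth from $\partial\BD$, in which one graph-distance layer of loops is discovered per step. By the Markov property of $\CLE_\kappa$, the successive conformal radii seen from $z$ after removing $k$ layers form a random walk in $\log$ scale. The first goal is to show that, as $\kappa\downarrow 4$, each layer shrinks the conformal radius only by $1+O(1/\ka_\kappa)$ and has small variance. The log conformal radius process, indexed by renormalized distance, should then converge to the Lévy process of the Werner--Wu uniform exploration. The jumps are the loops of the limiting $\CLE_4$ surrounding $z$, and between jumps there is a drift given by the uniform (Poissonian) exploration of loops not surrounding $z$. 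I would establish this using the known convergence of $\CLE_\kappa$ to $\CLE_4$ together with its exploration, namely $\SLE_\kappa(\kappa-6)\to\SLE_4(-2)$ in the sense of Werner--Wu/Sheffield--Watson--Wu. Finally I would match the Laplace exponent of the number of layers to the exploration time. This gives convergence of $\ka^{-1}D^\kappa(\partial\BD,\SCL^\kappa(x))$ jointly for finitely many $x$, using the branching structure: targets share the exploration until they are disconnected.

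\textbf{Step 2: tightness of loop-to-loop distances.} The lower bound is immediate: by the triangle inequality,
\[
D^\kappa(\SCL(x),\SCL(y))\ge |D^\kappa(\partial\BD,\SCL(x))-D^\kappa(\partial\BD,\SCL(y))|.
\]
Tightness needs an upper bound. I would bound $D^\kappa(\SCL(x),\SCL(y))$ by $D^\kappa$ between these loops and a common loop surrounding both $x$ and $y$ at the disconnection scale. This uses the boundary distance inside a sub-domain (Step 1 applied after a conformal map) plus the distance from that loop's interior boundary. Then Prokhorov along a subsequence gives a joint limit, jointly with the $\CLE_{\kappa_n}\to\CLE_4$ loops (coupled via Skorokhod). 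Symmetry and the triangle inequality pass to the limit.

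\textbf{Step 3: positivity and the limiting properties.} Positivity, $D(\SCL(x),\SCL(y))>0$ when the loops differ, follows because distinct limiting loops are separated by a loop surrounding one but not the other, or by nesting. Any path must then cross an annulus whose boundary distance increment is a.s.\ positive, by Step 1 applied in that annulus. Conformal invariance holds because $D^\kappa$ is a deterministic function of the loops and is exactly invariant under Möbius maps of $\BD$; this invariance survives the limit for the joint law. The main obstacle is locality and measurability: I must show that the limit $D$ is a.s.\ determined by the $\CLE_4$ and that distances inside a subdomain depend only on loops there. For this I would use the locality of $D^\kappa$, whose restriction to loops in a region depends only on those loops. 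I would combine it with a resampling/independence argument: condition on the loops outside a region, use the Markov property of $\CLE_\kappa$, and show that the conditional law of internal distances converges. Showing that the limit is a function of $\CLE_4$, and not just coupled with it, is the hardest step. I would first try an Efron--Stein style argument showing the conditional variance given the loops vanishes, which reduces to the concentration of the number of layers established in Step 1.
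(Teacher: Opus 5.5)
Your proposal has genuine gaps: you aim at the wrong statement for locality, you miss the key obstruction there, and your Step~2 tightness argument does not work as written.

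\textbf{Locality.} The theorem does not assert that $D$ is a function of the $\CLE_4$; that is left to the sequels. Here ``local'' means the coupling property of Axiom~(II). Conditionally on the loops not contained in $V$, and on the balls grown from them (and from a boundary arc) until they reach $\partial V^\star$, the restrictions to the components $V_j$ of $V^\star$ are independent copies of $(\Gamma,D)$, with $D\le D_j$ and with balls agreeing before they reach $\partial V_j$.

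Your Efron--Stein reduction would fail anyway, because there is no concentration of the number of layers: $\ka_\kappa^{-1}D^\kappa(\partial\BD,\SCL^\kappa(0))$ has a non-degenerate exponential limit.

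More seriously, ``condition on the loops outside a region, use the Markov property, show the conditional laws converge'' hides the real obstruction. The Markov property of $\CLE_{\kappa_n}$ gives independence in the components of $V^{\star,\kappa_n}$. To pass to the limit you need these components to converge to those of $V^\star$, i.e.\ Hausdorff convergence of the closure of the union of the loops meeting $\overline{\BD}\setminus V$. Convergence of the loop surrounding each fixed rational point does not give this. Large but very thin $\CLE_{\kappa_n}$ loops, which surround no fixed point in the limit, could cut $V$ differently. The paper spends a whole section proving that, uniformly as $\kappa'\downarrow 4$, every $\CLE_{\kappa'}$ loop of diameter at least $\varepsilon$ surrounds a Euclidean ball of radius $\delta$. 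This comes from fatness of simple $\CLE_\kappa$ loops as $\kappa\uparrow 4$ via loop soups, transferred to $\CLE_{16/\kappa}$ through the CLE-percolation/GFF coupling. The locality axiom also involves metric balls, so you would need to construct the limiting balls at all radii (J$_1$-tightness, density of loops in balls), which the proposal does not address.

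\textbf{Step~2 and Step~1.} In a non-nested ensemble no loop surrounds both $x$ and $y$. The real issue is that once $\partial\BD$ is not a vertex, paths cannot pass through the boundary, and a priori the adjacency graph need not even be connected. So boundary distances do not bound loop-to-loop distances.

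What you need is control of the distance from a boundary arc $\alpha$:
\begin{itemize}
\item By rotational invariance, $\BP[\SCL^\kappa(0)\cap\alpha\neq\emptyset]\ge\mathrm{len}(\alpha)/(2\pi\ka_\kappa)$.
\item Iterating with the domain Markov property (the image arc only gets longer) gives domination by a geometric variable of parameter $\mathrm{len}(\alpha)/(2\pi\ka_\kappa)$.
\item For two loops, grow the ball from $\SCL(x)$ with $\partial\BD$ treated as a vertex until it first reaches $\partial\BD$. Then apply the arc bound to the arc this ball leaves on the boundary of the component of $y$. That arc has harmonic measure bounded below because the ball contains $\SCL^\kappa(x)$.
\end{itemize}

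In Step~1, the crux is left as ``match the Laplace exponent''. You must show that the renormalized number of layers converges to the uniform-exploration clock $L^0_t(\theta)/(4\pi)$, jointly with the Aru--Holden--Powell--Sun convergence of the branching exploration. The layers are the clockwise loops around $z$ drawn by the radial $\SLE_\kappa(\kappa-6)$. Your picture of the limit is also off: only one loop of a non-nested $\CLE_4$ surrounds $z$, and the jumps come from bubbles not surrounding it.

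Positivity, by contrast, follows at once from your own triangle-inequality lower bound, since distinct loops a.s.\ have distinct discovery times.
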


The reason that we have to introduce a normalization factor $\ka_\kappa^{-1}$ in the statement of the theorem is that, as $\kappa \downarrow 4$, the number of adjacent loops necessary to get from the boundary to the loop surrounding, e.g., the origin tends to $\infty$. In fact, $\ka_\kappa^{-1}$ is defined in~\Cref{subsec:main_results} as the probability that the loop surrounding the origin touches the boundary; the number of loops in such a shortest path is then geometric (starting at $1$) with success probability $\ka_\kappa^{-1}$, of mean $\ka_\kappa$.

The uniqueness of the limit is established in the subsequent papers~\cite{kkmt2026cle4_part2,kkmt2026cle4_part3}, so that the above scaling limit actually holds as $\kappa \downarrow 4$, not only along subsequences. The metric that we construct in this article together with~\cite{kkmt2026cle4_part2,kkmt2026cle4_part3} was previously constructed by Sheffield, Watson, and Wu in unpublished work using an approach which is completely distinct from the one taken here. In particular, we will show that the metric arises as an appropriately renormalized limit as $\kappa \downarrow 4$ of the adjacency graph metric of $\CLE_\kappa$ loops as opposed to the indirect approach taken by Sheffield, Watson, and Wu.

\subsubsection*{Random planar map motivation}

We anticipate that the $\CLE_4$ metric will have applications in the study of random planar maps. Recall that a \emph{planar map} is a graph together with an embedding into the plane so that no two edges cross. Two planar maps are considered to be equivalent if it is possible to deform one into the other using an orientation-preserving homeomorphism of the plane. In recent years, the limiting behavior of random planar maps has received a large amount of attention. Major results include the Gromov--Hausdorff--Prokhorov convergence of rescaled quadrangulations toward the Brownian sphere~\cite{MM06, Mie13, LG13} and, for Boltzmann planar maps with large faces, the subsequential scaling limits obtained in~\cite{LGM11} together with the convergence toward the $\alpha$-stable carpet (for $\alpha \in [3/2,2)$) or gasket (for $\alpha \in (1,3/2)$) established in~\cite{StabCarGas}. In this general framework, we expect that our random metric space describes the scaling limit of $3/2$-stable maps. These are random planar maps where vertices of degree $k\ge1$ are assigned a weight of order $k^{-2}$, first studied in the local limit in~\cite{BCM18CauchyMaps}. Although these maps do not admit a scaling limit in the sense of Gromov--Hausdorff or Gromov--Prokhorov~\cite{Kam25large32maps}, one of the authors obtained in~\cite{Kam25distancesonCLE4and32maps} a non-trivial scaling limit of the distances to the root, identified with a distance from the $\CLE_4$ loops to the boundary introduced in~\cite{BECriLQG}, which will be shown in future work to agree with our metric. As shown in~\cite{Kam26GasketsO2maps} (and in~\cite{ADSH26volumeOn} in the particular case of quadrangulations, see also~\cite{DSHPW25} for the integrable fully-packed case on triangulations), the gasket of a critical $O(2)$ loop-decorated planar map, obtained by forgetting the parts of the map that are inside the outermost loops, is an instance of the $3/2$-stable map. This makes our expectation agree with the conjecture that the scaling limit of critical $O(2)$ loop-decorated maps is described by the $\CLE_4$ together with an independent critical Liouville quantum gravity (see, e.g.,~\cite[Conjecture 2.1]{HL24weightedLQG} for a precise conjecture).

As mentioned above, in the dual setting it has been shown in~\cite{StabCarGas} that the scaling limit of random maps with large faces, also called $\alpha$-stable maps for $\alpha \in (1,2)$, with respect to the Gromov--Hausdorff topology, is given by a random metric space known as the $\alpha$-stable carpet (or gasket when $\alpha<3/2$). Consequently, our metric space can be understood as the ``dual'' of the $3/2$-stable carpet. We also expect that it is possible to construct analogous duals for the $(1/2 + 4/\kappa)$-stable carpets, where $\kappa \in (8/3, 4)$, corresponding to a metric on the loops of a $\CLE_\kappa$, though they would be conformally covariant instead of conformally invariant.

\subsubsection*{Relationship to other work}  In recent years, there have been many works aimed at constructing random metrics. In addition to the Brownian sphere~\cite{MM06, Mie13, LG13} and $\alpha$-stable carpet and gasket~\cite{StabCarGas} mentioned above, these include the $\sqrt{8/3}$-Liouville quantum gravity (LQG) metric~\cite{ms2019lqg_bm1,ms2021lqg_bm2,ms2021lqg_bm3}, the $\gamma$-LQG metric for $\gamma \in (0,2)$~\cite{TightLFPP,ExUniLQG}, and the CLE metric in the simple~\cite{TightSimCLE,GeoCLECarp} and non-simple regimes~\cite{TightNonsimCLE,ExUniCoCoGeoMetNonsimCLEGas}. The present work together with its sequels~\cite{kkmt2026cle4_part2,kkmt2026cle4_part3} differs in several essential ways from these works primarily because the metric on $\CLE_4$ is conformally \emph{invariant} rather than \emph{covariant}. In particular, arbitrarily small regions in the Euclidean sense can have arbitrarily large size in the $\CLE_4$-metric sense. This will manifest itself in particular in the proof of the uniqueness~\cite{kkmt2026cle4_part3}, which we emphasize will not proceed by showing that different subsequential limits are bi-Lipschitz equivalent as in~\cite{ExUniLQG,GeoCLECarp,ExUniCoCoGeoMetNonsimCLEGas}. We will instead obtain a representation of the $\CLE_4$ metric in terms of the uniform exploration which we expect will have future applications in relating the $\CLE_4$ metric to the scaling limit of $3/2$-stable maps.

\subsection{Setup}
\label{subsec:setup}

Let $U \subsetneq \BC$ be a deterministic simply connected domain. Let $\Gamma_U$ be a non-nested $\CLE_4$ in $U$. For each open subset $V \subseteq U$, we shall write 
\begin{equation}\label{eq:def-V-star}
	V^\star \defeq V \setminus \overline{\bigcup_{\SCL \in \Gamma_U: \SCL \not\subseteq V} \mathop{\mathrm{int}}(\SCL)}.
\end{equation}
Here $\mathop{\mathrm{int}}(\SCL)$ denotes the region surrounded by $\SCL$. Note that $V^\star$ is random, depending on $\Gamma_U$, a dependence we suppress from the notation; if $V$ is simply connected, so is each component of $V^\star$. For deterministic $z \in U$, we shall set $\SCL(z)$ to be the a.s.\ unique loop of $\Gamma_U$ that surrounds $z$.

Let $D$ be a metric on $\Gamma_U$. For $\SCL \in \Gamma_U$ and $t \ge 0$, we shall write 
\begin{equation*}
	\SCB_t(\SCL; D) \quad \text{(resp.~} \SCB_t^-(\SCL; D)\text{)}
\end{equation*}
(or simply $\SCB_t(\SCL)$ (resp.~$\SCB_t^-(\SCL)$), when there is no danger of confusion) for the closure of the union of the domains surrounded by $\SCL^\prime$ for $\SCL^\prime \in \Gamma_U$ with $D(\SCL, \SCL^\prime) \le t$ (resp.~$D(\SCL, \SCL^\prime) < t$). Note that $\SCB_0^-(\SCL; D) = \emptyset$. 
Note that $\SCB_t(\SCL; D) = \SCB_t^-(\SCL; D)$ unless there exists $\SCL^\prime \in \Gamma_U$ with $D(\SCL, \SCL^\prime) = t$. 

Let $A, B \subseteq \overline U$ be subsets. Then we shall write
	\begin{itemize}
		\item $D(\SCL, A) \defeq \inf\{t \ge 0: \SCB_t(\SCL; D) \cap A \neq \emptyset\}$ for all $\SCL \in \Gamma_U$; 
		\item $\SCB_t(A; D)$ (resp.~$\SCB_t^-(A; D)$) for the closure of the union of $A$ and the domains surrounded by $\SCL$ for $\SCL \in \Gamma_U$ with $D(\SCL, A) \le t$ (resp.~$D(\SCL, A) < t$) for all $t \ge 0$, except for $\SCB_0^-(A; D)\defeq \emptyset$; 
		\item $D(A, B) \defeq \inf\{t \ge 0 : \SCB_t(A; D) \cap B \neq \emptyset\}$.
	\end{itemize}
	(with the convention that $\inf\emptyset = \infty$). Note that $D(A,B) = D(B,A)$ (since one can see that $D(A, B)$ is the limit as $\varepsilon \downarrow 0$ of the infimum of $D(\SCL, \SCL')$ over pairs of loops $\SCL, \SCL' \in \Gamma_U$ such that $\mathrm{dist}(A, \mathrm{int}(\SCL))\le \varepsilon$ and $\mathrm{dist}(B, \mathrm{int}(\SCL'))\le \varepsilon$). We also write $D(\SCL, z)\defeq D(\SCL, \{z\})$ for all $z \in \overline{U}$.

We shall write $U_\BQ \defeq U \cap \BQ^2$ and $\BN \defeq \{1,2,\ldots\}$. By abuse of notation, we shall also denote by $D$ the mapping
\begin{equation*}
	D \colon U_\BQ \times U_\BQ \to \BR \colon (x, y) \mapsto D(\SCL(x), \SCL(y)). 
\end{equation*}
The same abuse of notation is used for the distance $D^U_{\Gamma^\kappa_U}$ that is defined in the next subsection.
We shall equip $\BR^{U_\BQ \times U_\BQ}$ with the product topology. Note that $\BR^{U_\BQ \times U_\BQ}$ is a Polish space. 

\subsection{Main results}
\label{subsec:main_results}

Let $\kappa \in (4, 8)$. Let $\Gamma_U^\kappa$ be a non-nested $\CLE_\kappa$ in $U$. We shall write $D_{\Gamma_U^\kappa}^U \colon \Gamma_U^\kappa \times \Gamma_U^\kappa \to \BR$ for the graph distance associated with $\Gamma_U^\kappa$, where distinct $\SCL_1, \SCL_2 \in \Gamma_U^\kappa$ are adjacent if and only if $\SCL_1 \cap \SCL_2 \neq \emptyset$. For deterministic $z \in U$, let $\SCL^\kappa(z)$ be defined in the same manner as $\SCL(z)$ but with $\Gamma_U^\kappa$ in place of $\Gamma_U$. We shall write
\begin{equation*}
    1/\ka_\kappa \defeq \BP\lbrack\SCL^\kappa(0) \cap \partial \BD \neq \emptyset\rbrack = 1 - \frac{\sin(\pi(\kappa/4 + 8/\kappa))}{\sin(\pi(\kappa/4 - 1))}
\end{equation*}
(cf.~\cite{BdryTouchingNonsimCLE}; we remark that the precise formula for $\ka_\kappa$ will not be important for this paper or the sequels~\cite{kkmt2026cle4_part2,kkmt2026cle4_part3}). Here $\SCL^\kappa(0)$ is the loop of a non-nested $\CLE_\kappa$ in $\BD$ surrounding the origin; by conformal invariance the right-hand side is unchanged if $(\BD, 0)$ is replaced by any $(U, x)$ with $x \in U$.

\begin{definition}\label{def:weak_axioms}
	We define a \emph{weak geodesic CLE$_4$ metric coupling} to be a family of couplings
	\begin{equation*}
		D= \left\{(\Gamma_U, D_{\Gamma_U}^U)\right\}_U
	\end{equation*}
	where $U \subsetneq \BC$ ranges over all simply connected domains, and for each $U$, $\Gamma_U$ is a non-nested CLE$_4$ in $U$, such that the following conditions are satisfied for every such $U$ and all deterministic choices of the auxiliary data ($V$, $I$, $\phi$, arcs) below:
	\begin{enumerate}[label=(\Roman*), ref=\Roman*]
		\item\label{it:weak_axiom_geodesic} {\bfseries (Weak geodesic metric)} $D_{\Gamma_U}^U$ is a.s.\ a metric on $\Gamma_U$ satisfying the following conditions:
		\begin{enumerate}[label=(\roman*), ref=\roman*]
			\item\label{it:weak_axiom_geodesic_0} Let $\SCL \in \Gamma_U$ and $t \ge 0$. Then the metric ball $\SCB_t(\SCL; D_{\Gamma_U}^U)$ is connected.
			\item\label{it:weak_axiom_geodesic_1} Let $\SCL \in \Gamma_U$ and $t \ge 0$. Let $\phi\colon U \to \BD$ be a conformal mapping. Then
			\begin{equation*}
				\lim_{t^\prime \downarrow t} \phi (\SCB_{t^\prime}(\SCL; D_{\Gamma_U}^U) )= \phi (\SCB_t(\SCL; D_{\Gamma_U}^U)) \quad \text{and} \quad \lim_{t^\prime \uparrow t} \phi (\SCB_{t^\prime}(\SCL; D_{\Gamma_U}^U)) = \phi (\SCB_t^-(\SCL; D_{\Gamma_U}^U))
			\end{equation*}
			with respect to the Hausdorff metric, the second limit being required only for $t > 0$; see \Cref{subsec:setup} for the convention used when a ball is empty.
			\item\label{it:weak_axiom_geodesic_2} Let $\SCL_1, \SCL_2 \in \Gamma_U$ and $t \ge 0$. Suppose that $\SCB_t(\SCL_1; D_{\Gamma_U}^U) \cap \SCL_2 = \emptyset$. Then
			\begin{equation*}
				D_{\Gamma_U}^U(\SCL_1, \SCL_2) = t + D_{\Gamma_U}^U(\SCB_t(\SCL_1; D_{\Gamma_U}^U), \SCL_2). 
			\end{equation*}
			\item\label{it:weak_axiom_geodesic_4} Let $\SCL_1, \SCL_2 \in \Gamma_U$. Then the closed set $K_{\SCL_1, \SCL_2}$ defined as the closure of $\{ z \in \overline U : D_{\Gamma_U}^U(\SCL_1, z)+ D_{\Gamma_U}^U(\SCL_2,z)= D_{\Gamma_U}^U(\SCL_1, \SCL_2) \}$ has a connected component containing $\SCL_1 \cup \SCL_2$.
		\end{enumerate}
		Items~\eqref{it:weak_axiom_geodesic_0}--\eqref{it:weak_axiom_geodesic_4} are also required with a deterministic connected arc (resp.\ two disjoint such arcs) of $\partial U$ in place of $\SCL$ or $\SCL_1$ (resp.\ $\SCL_1$ and $\SCL_2$).
		\item\label{it:weak_axiom_locality} {\bfseries (Locality)}
		Let $V \subset U$ be a deterministic simply connected subdomain. Let $I \subset \partial U$ be a deterministic connected arc. Let $\{V_j\}_j$ be the connected components of $V^\star$. Then there exists a coupling $(\Gamma_U, D_{\Gamma_U}^U, \{D_{\Gamma_U|_{V_j}}^{V_j}\}_j)$ such that the following hold: 
			\begin{itemize}
				\item Conditionally on the $\sigma$-algebra generated by
				\begin{multline}\label{eq:weak_axiom_locality}
					\bigl\{\SCL \in \Gamma_U : \SCL \not\subset V\bigr\}, \quad \bigl\{\SCB_t(\SCL; D_{\Gamma_U}^U) : \SCL \in \Gamma_U, \ \SCL \not\subset V, \ t \in [0, D_{\Gamma_U}^U(\SCL, \partial V^\star)]\bigr\}, \\
					\text{and} \quad \bigl\{\SCB_t(I; D_{\Gamma_U}^U) : t \in [0, D_{\Gamma_U}^U(I, \partial V^\star)]\bigr\}
				\end{multline}
				the $(\Gamma_U|_{V_j}, D_{\Gamma_U|_{V_j}}^{V_j})$'s are independent and their conditional laws are those of $(\Gamma_{V_j}, D_{\Gamma_{V_j}}^{V_j})$, respectively. 
				\item For all $j$, for all $\SCL_1, \SCL_2 \in \Gamma_U|_{V_j}$, we have $D_{\Gamma_U}^U(\SCL_1, \SCL_2)\le D_{\Gamma_U|_{V_j}}^{V_j}(\SCL_1, \SCL_2)$.
				\item For all $j$, for all $\SCL \in \Gamma_U|_{V_j}, \ t \in [0, D_{\Gamma_U}^U(\SCL, \partial V_j)]$, we have $\SCB_t(\SCL; D_{\Gamma_U}^U) = \SCB_t(\SCL; D_{\Gamma_U|_{V_j}}^{V_j})$.
		\end{itemize} 
		\item\label{it:weak_axiom_conformal_invariance} {\bfseries (Conformal invariance)} Let $\phi \colon U \to \phi(U)$ be a deterministic conformal mapping. Then,
		\begin{equation*}
				\left(\phi(\Gamma_U), \left( D_{\Gamma_U}^U\left(\phi^{-1}(\SCL_1), \phi^{-1}(\SCL_2) \right) \right)_{\SCL_1, \SCL_2 \in \phi(\Gamma_U)} \right)
				\overset{(\mathrm{d})}{=} \left( \Gamma_{\phi(U)}, \left(D_{\Gamma_{\phi(U)}}^{\phi(U)}(\SCL_1, \SCL_2) \right)_{\SCL_1, \SCL_2 \in \Gamma_{\phi(U)}} \right).
		\end{equation*}
		Here both sides are viewed as random elements of the space of pairs consisting of a loop ensemble in $\phi(U)$ and a symmetric function on its pairs of loops; equivalently, by the reformulation of \Cref{subsec:setup}, as random elements of $\BR^{\phi(U)_\BQ \times \phi(U)_\BQ}$ together with the ensemble.
		\item\label{it:weak_axiom_uniform_exploration} {\bfseries (Uniform exploration)} The collection $\{(\SCL, D_{\Gamma_U}^U(\SCL, \partial U))\}_{\SCL \in \Gamma_U}$ has the law of a uniform exploration of $\Gamma_U$ (cf.~\cite{CoInCLEExpl}; see \Cref{subsec:labeled_cle_4}). That is, the pair consisting of the ensemble and its labels has the law of the labeled CLE$_4$ in $U$, namely of a CLE$_4$ decorated by its uniform exploration.
	\end{enumerate}
\end{definition}

Note that Axiom~\eqref{it:weak_axiom_geodesic}\eqref{it:weak_axiom_geodesic_2} can equivalently be written as follows, using that $D_{\Gamma_U}^U$ is symmetric. For all $t\ge 0$ such that $\SCB_t(\SCL_1; D_{\Gamma_U}^U) \cap \SCL_2 = \emptyset$, we have
\[
D_{\Gamma_U}^U(\SCL_1, \SCL_2) = t + \inf\{s \ge 0: \SCB_s(\SCL_2;D_{\Gamma_U}^U) \cap \SCB_t(\SCL_1;D_{\Gamma_U}^U) \neq \emptyset \}.
\]
In particular, taking $t=0$ and noting that $\SCB_0(\SCL_1; D_{\Gamma_U}^U) \cap \SCL_2 = \emptyset$ a.s.\ for distinct loops of a non-nested $\CLE_4$, we have
\begin{equation*}
	D_{\Gamma_U}^U(\SCL_1, \SCL_2) = \inf\{t \ge 0: \SCB_t(\SCL_1; D_{\Gamma_U}^U) \cap \SCL_2 \neq \emptyset\}, \quad \forall \SCL_1 \neq \SCL_2 \in \Gamma_U.
\end{equation*}
Next, we state the main results of the paper. The first main result states that there exists a weak geodesic $\CLE_4$ metric coupling that can be obtained as a subsequential limit in law as $\kappa \downarrow 4$ of the graph distance on $\Gamma_U^{\kappa}$ (appropriately re-scaled).

\begin{theorem}[Convergence of nonsimple CLE graph distance]\label{thm:convergence_nonsimple_cle}
For any sequence $\kappa_n \downarrow 4$, there exist a subsequence and a weak geodesic $\CLE_4$ metric coupling $D$ such that for every simply connected domain $U \subsetneq \BC$, the random metrics $\ka_{\kappa_n}^{-1} D_{\Gamma_U^{\kappa_n}}^U$ converge in law along this subsequence to $D_{\Gamma_U}^U$ in $\BR^{U_\BQ \times U_\BQ}$ equipped with the product topology.
\end{theorem}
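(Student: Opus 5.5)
We prove tightness, extract a subsequential limit, and then check the axioms of \Cref{def:weak_axioms} one at a time.

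\emph{Tightness and joint limit.} First we fix $U=\BD$; conformal invariance transfers everything to other domains. The starting point is the CLE$_\kappa$ exploration from the boundary. For $\kappa\in(4,8)$ the loops at graph distance $1$ from $\partial\BD$ are exactly the boundary-touching loops. Removing them leaves independent CLE$_\kappa$'s in the complementary components, so the distance from $\partial\BD$ to $\SCL^\kappa(z)$ is obtained by iterating this step. By the Markov property, the number of iterations needed to reach $\SCL^\kappa(z)$ is geometric with parameter $\ka_\kappa^{-1}$. After rescaling by $\ka_\kappa^{-1}$ this gives exponential-type tails, which yields tightness of $\ka_\kappa^{-1}D(\partial\BD,\SCL^\kappa(z))$.

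The more precise claim is that the boundary-touching loops and the gasket-filling domains of $\CLE_\kappa$ converge, as $\kappa\downarrow4$, to the structure of the uniform exploration of Werner and Wu. Under this convergence the iterated exploration becomes a Poisson point process of loop discoveries. Accordingly, we would show that the labeled ensemble $\{(\SCL,\ka_\kappa^{-1}D(\SCL,\partial\BD))\}$ converges in law to the labeled $\CLE_4$ of \Cref{subsec:labeled_cle_4}. This uses the known convergence $\CLE_\kappa\to\CLE_4$ together with the fact that each boundary-touching step discovers a point of the unexplored boundary distributed approximately by harmonic measure.

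For pairwise distances $D(\SCL(x),\SCL(y))$ we use the triangle inequality through a loop close to both points. Tightness then reduces to the boundary case applied inside small sub-domains, via the Markov/locality property of CLE$_\kappa$ restricted to $V^\star$. By Prokhorov and a diagonal argument over $U_\BQ$, together with countably many rational conformal images, we extract a subsequence along which the rescaled distances for all $x,y\in U_\BQ$, all boundary distances, and the CLE$_\kappa$ loops converge jointly in law. Skorokhod embedding then gives almost sure convergence.

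\emph{Axioms.} Axiom~\eqref{it:weak_axiom_conformal_invariance} passes directly to the limit, since each prelimit graph metric is exactly conformally invariant. Axiom~\eqref{it:weak_axiom_uniform_exploration} is the labeled-exploration convergence just described. For locality \eqref{it:weak_axiom_locality}, we couple the prelimit restrictions of the CLE$_\kappa$ to the components of $V^\star$. In the prelimit, the restricted graph metrics dominate the global one, and they agree for balls that do not reach $\partial V_j$. We then pass to the limit, checking that the conditioning $\sigma$-algebras converge; this needs continuity of balls, so it comes after the geodesic axioms.

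\emph{Geodesic axioms and the main obstacle.} Items \eqref{it:weak_axiom_geodesic_0}, \eqref{it:weak_axiom_geodesic_2} and \eqref{it:weak_axiom_geodesic_4} hold exactly in the prelimit: the graph metric is geodesic and balls are unions of chains of intersecting loops. Each of these should survive Hausdorff limits, provided the rescaled balls converge as sets.

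The hard step is showing that the limit is a genuine metric, that is, $D(\SCL_1,\SCL_2)>0$ for distinct loops, together with the continuity statement \eqref{it:weak_axiom_geodesic_1}. Both require ruling out macroscopic clusters of $o(\ka_\kappa)$ intersecting loops in the limit. Positivity would follow from the uniform exploration: a $\CLE_4$ loop separated from another one needs a positive amount of exploration time, because the Poissonian discovery rate is finite. One shows that a short path would force an atypically fast exploration inside a small annulus, and bounds this via the exponential tail from the geometric law combined with locality in annuli. Continuity of balls should follow from the fact that, almost surely, the uniform exploration has no two loops discovered at the same time in a way that creates a jump. I expect establishing these uniform-in-$\kappa$ non-degeneracy estimates to be the main difficulty.
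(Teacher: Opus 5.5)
\textbf{Gap 1: Axiom~\eqref{it:weak_axiom_geodesic}\eqref{it:weak_axiom_geodesic_2} does not survive Hausdorff limits.} You claim that this axiom, and with it \eqref{it:weak_axiom_geodesic_4}, holds in the prelimit and survives Hausdorff limits. Hausdorff convergence of balls gives only one direction. If $\SCB_{t_1}(\SCL(x))\cap\SCB_{t_2}(\SCL(y))=\emptyset$, the prelimit balls are eventually disjoint, so $D(\SCL(x),\SCL(y))>t_1+t_2$.

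The converse fails: limiting balls that touch need not give $D(\SCL(x),\SCL(y))\le t_1+t_2$. Touching in the limit only says that $\SCB^{\kappa_n}_{\ka_{\kappa_n}t_1}(\SCL^{\kappa_n}(x))$ and $\SCB^{\kappa_n}_{\ka_{\kappa_n}t_2}(\SCL^{\kappa_n}(y))$ come within Euclidean distance $o(1)$. Nothing in your plan bounds the number of $\CLE_{\kappa_n}$ loops needed to bridge such a gap, and by conformal invariance Euclidean-small regions can be metrically large. Item \eqref{it:weak_axiom_geodesic_4} needs exactly this converse, since you must show $D(\SCL_1,z)+D(\SCL_2,z)\ge D(\SCL_1,\SCL_2)$ for $z$ on limiting geodesic sets.

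What is missing is an \emph{upper} bound on distances across thin regions, the opposite of the non-degeneracy you single out. The paper supplies it as follows:
\begin{enumerate}
\item \Cref{lemma distance from two long segments is small}: the graph distance between the long sides of a sufficiently long conformal rectangle is $o(\ka_\kappa)$ with high probability, uniformly as $\kappa\downarrow4$. It starts from the positive-probability event that $\SCL(0)$ is Hausdorff-close to $\partial\BD$ and at small $D$-distance from it, then boosts this by splitting into many conditionally independent sub-rectangles.
\item \Cref{lemma extremal distance rectangle}: an extremal-length estimate showing that two nearly touching balls, together with loop cuts, bound such a long rectangle, which carries a fresh $\CLE_{\kappa_n}$ by the Markov property.
\end{enumerate}
These combine in \Cref{prop hitting balls}.

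Your phrase ``provided the rescaled balls converge as sets'' also hides a real step. You must identify the Hausdorff limit of the prelimit balls with $\SCB_t$ as defined in \Cref{subsec:setup}, namely the closure of the interiors of limiting loops at $D$-distance at most $t$. The paper does this via the uniform exploration started from balls, and gets \eqref{it:weak_axiom_geodesic_1} from it together with J$_1$-tightness via Aldous' criterion.

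Positivity, by contrast, is immediate once boundary distances converge to the uniform exploration. The triangle inequality gives $D(\SCL_1,\SCL_2)\ge|D(\SCL_1,\partial\BD)-D(\SCL_2,\partial\BD)|$, and distinct loops are a.s.\ discovered at distinct times.

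\textbf{Gap 2: locality.} Passing the prelimit conditional independence to the limit requires $V^{\star,\kappa_n}_j\to V_j$ in the Carath\'eodory sense. That means Hausdorff convergence of the closure of the union of loops meeting $\overline{\BD}\setminus V$. This does not follow from convergence of the loops surrounding rational points: a macroscopic but very thin $\CLE_{\kappa_n}$ loop, surrounding no fixed rational point in the limit, could cut $V$ in the prelimit and vanish in the limit.

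Excluding this requires that large $\CLE_\kappa$ loops surround Euclidean balls of radius bounded below uniformly as $\kappa\downarrow4$ (\Cref{prop:thin-loops}, \Cref{lemma cv Hausdorff union of loops}). The fixed-$\kappa$ space-filling SLE argument degenerates here. The paper needs a long argument: the case $\kappa\uparrow4$ via loop soups, the CLE-percolation coupling with $\CLE_{16/\kappa}$, flow-line shields, and a Markovian exploration of annuli. The same input is used in the rectangle arguments above.

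\textbf{Minor: tightness of loop-to-loop distances.} Your argument does not work as written, because $\partial\BD$ is not a vertex for $D^\kappa$ and paths cannot be concatenated through it. The paper instead runs the boundary-point metric $D^{\kappa,\partial}$ until the ball from $\SCL^\kappa(x)$ reaches $\partial\BD$. It then dominates the remaining distance by a geometric variable with success probability of order $\ka_\kappa^{-1}$ times the harmonic measure, seen from $y$, of the boundary arc of that ball.
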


\begin{remark}
	The above result actually holds in a stronger sense.
	\begin{itemize}
		\item Thanks to the uniqueness proved in the subsequent papers, the above result implies that $\ka_{\kappa}^{-1} D_{\Gamma_U^{\kappa}}^U$ converges in law to $D_{\Gamma_U}^U$ as $\kappa \downarrow 4$.
		\item Jointly with the above convergence, we also prove the convergence of $\ka_{\kappa_n}^{-1} D_{\Gamma_U^{\kappa_n}}^U(\partial U, \SCL^{\kappa_n}(x))$ to $D_{\Gamma_U}^U(\partial U, \SCL(x))$ for $x \in U_\BQ$. See, e.g.,~\Cref{prop subsequential limit boundary} for more details on the convergence.
	\end{itemize}
\end{remark}

\subsection{Tightness of random metric spaces}
Let us give here some general background on subsequential limits of sequences of random metric spaces. In this subsection, we also explain why our approach differs from the classical approaches to random metrics.

\emph{When there is no natural embedding.} A general approach for scaling limits of random metric spaces which are not equipped with a particular embedding in some reference metric space is to prove in some sense the convergence in distribution of the matrix of distances between random points toward the matrix of distances in the limiting metric space. This sense of convergence can be made precise using the so-called Gromov--Hausdorff, Gromov--Prokhorov, or Gromov--Hausdorff--Prokhorov topologies. Proving the existence of subsequential limits in this context often requires precise concentration estimates and relies on chaining techniques~\cite{Tal05}. See for example the foundational work~\cite{Ald91} for convergence of random trees toward the Brownian continuum random tree (CRT). For random planar maps, the convergence of a process that encodes the metric space can be useful to obtain the tightness; see~\cite{LG07} for the existence of subsequential scaling limits of $2p$-angulations and~\cite{LGM11} for the analogous result for random maps with large faces. In our case, we do not work with such topologies but with the product topology where the loops are labeled by the points with rational coordinates that they surround. This is why we follow a different approach.

\emph{When there is a natural embedding in the plane.} A case that is in some sense closer to this work is when the metric space has a natural embedding in a domain of the Euclidean plane. A famous example is the LQG metric; see~\cite{IntroLQGMet} for an introduction to this metric. When such an embedding exists, one can hope to compare the metric to the Euclidean metric and use some geometric arguments. For instance, in~\cite{TightLFPP}, the authors prove the tightness of an approximation scheme for the LQG metric using Russo--Seymour--Welsh arguments as well as independence across scales. To get the convergence, Ding, Dub\'edat, Dunlap, and Falconet~\cite{TightLFPP} also rely on an Efron--Stein-type argument which involves resampling the metric in some box. The uniqueness of the metric was then obtained in~\cite{ExUniLQG, UniCriSupercriLQGMet}, which characterize the limit using a list of axioms. The tightness of the chemical distance on the carpet of simple CLEs was obtained in~\cite{TightSimCLE} and the limiting metric was later characterized in~\cite{GeoCLECarp} by a list of axioms using closely related techniques. The same program was also achieved on non-simple CLEs in~\cite{TightNonsimCLE, ExUniCoCoGeoMetNonsimCLEGas} relying on a different resampling operation. In our case, in addition to the above-mentioned conformal invariance that complicates the analysis, the distance $D^U_{\Gamma_U}$ cannot be extended to a continuous function on $\overline{U} \times \overline{U}$, so that one cannot hope for a tightness result with respect to uniform convergence, in contrast to~\cite{TightLFPP}.

The object of study of this paper is somewhat in-between these two broad classes of random metric spaces since, even though the $\CLE_4$ is naturally embedded in a planar domain, the associated distance cannot be compared easily with the Euclidean distance. The fact that we take a different approach will be even more apparent in the next two papers~\cite{kkmt2026cle4_part2, kkmt2026cle4_part3}.
\subsection{Techniques and strategy of proof}\label{subsec:techniques}

Recall that the purpose of this paper is to construct a weak $\CLE_4$ metric coupling $D$ (see~\Cref{def:weak_axioms}) as the scaling limit, along a subsequence, of the renormalized graph distance $\ka_\kappa^{-1} D^U_{\Gamma^\kappa_U}$ as $\kappa \downarrow 4$. 

In the literature, the construction of the limit of a sequence of random metric spaces is often split into two parts. The first one is establishing tightness, which by Prokhorov's theorem yields the existence of subsequential limits, and the second part consists in proving the uniqueness of such a limit. Tightness often requires careful work, depending on the topology, while the uniqueness part relies on a deeper understanding of the geometry of the limit. 

We emphasize at the outset that proving tightness of the mapping from $U_\BQ \times U_\BQ$ to $\BR_{\ge 0}$ for the product topology is not the most difficult part of this paper. The main difficulty in the proof of~\Cref{thm:convergence_nonsimple_cle} is to check that the subsequential limit indeed satisfies the axioms of~\Cref{def:weak_axioms}.

In order to construct $D$, we will first construct a related metric $D^\partial$, defined on
\begin{equation*}
	\widehat{\Gamma}_U \defeq \Gamma_U \cup \{\partial U\},
\end{equation*}
i.e., $\partial U$ (the domain boundary) is treated as a distinguished point of the metric space. The metric $D^\partial$ can be recovered from $D$ via
\begin{equation*}
	D^\partial(\SCL_1, \SCL_2) = D(\SCL_1, \SCL_2) \wedge \bigl(D(\SCL_1, \partial U) + D(\partial U, \SCL_2)\bigr), \qquad D^\partial(\SCL, \partial U) = D(\SCL, \partial U).
\end{equation*}
We will nevertheless begin by constructing $D^\partial$ rather than $D$, since its construction is simpler and because it will play a key role in the subsequent construction of $D$. The construction of $D^\partial$, as a subsequential scaling limit of the graph distance on $\Gamma_U^\kappa \cup \{\partial \BD\}$, will be achieved by first obtaining the scaling limit of the distances to the boundary using results of Aru, Holden, Powell, and Sun~\cite{BECriLQG}, who prove the convergence of the branching $\SLE_{\kappa}(\kappa-6)$ exploration to the uniform exploration of the $\CLE_4$ as well as the convergence of the loops of the $\CLE_\kappa$ surrounding points with rational coordinates toward the loops of the $\CLE_4$. Then, we describe the metric balls from the loops using inversion invariance of the CLE (see~\cite{CoInCLERiemSph} for $\kappa>4$ and~\cite{SimCLEDblConnDom} for $\kappa=4$).

Using the convergence to $D^\partial$ and some tightness arguments, we construct $D$ and the associated metric balls as a subsequential limit of the renormalized graph distance $\ka_\kappa^{-1} D^U_{\Gamma^\kappa_U}$ on $\Gamma_U^\kappa$. The associated metric balls are actually constructed as subsequential scaling limits of metric balls on the $\CLE_\kappa$. We then prove that they are indeed given by the closure of the union of the domains surrounded by the loops at distance at most $t$ for all $t\ge 0$, thus matching the definition given in~\Cref{subsec:setup}.

The conformal invariance of the subsequential limit (Axiom~\eqref{it:weak_axiom_conformal_invariance}) comes as a by-product of the conformal invariance of the $\CLE_\kappa$ for $\kappa>4$. Moreover, Axiom~\eqref{it:weak_axiom_uniform_exploration} will stem from the fact that the $D$-distance to the boundary is equal to the $D^\partial$-distance to the boundary, which will be given by the uniform exploration.

Thus, the only axioms which will require some work are the first two Axioms~\eqref{it:weak_axiom_geodesic} and~\eqref{it:weak_axiom_locality}.

The first one, Axiom~\eqref{it:weak_axiom_geodesic} (weak geodesic metric), will play a key role in the next paper~\cite{kkmt2026cle4_part2} to prove the existence of geodesics. It describes some properties of metric balls: namely their connectedness~\eqref{it:weak_axiom_geodesic}\eqref{it:weak_axiom_geodesic_0} and the fact that metric balls are c\`adl\`ag with respect to the radius~\eqref{it:weak_axiom_geodesic}\eqref{it:weak_axiom_geodesic_1}. The two points~\eqref{it:weak_axiom_geodesic}\eqref{it:weak_axiom_geodesic_0} and~\eqref{it:weak_axiom_geodesic}\eqref{it:weak_axiom_geodesic_1}, as well as~\eqref{it:weak_axiom_geodesic}\eqref{it:weak_axiom_geodesic_4} which describes in some weak sense the points in a geodesic between two loops, will be consequences of the subsequential limit and of the tightness of the balls for the Skorokhod topology. Let us stress that~\eqref{it:weak_axiom_geodesic}\eqref{it:weak_axiom_geodesic_2} is not a simple consequence of the triangle inequality due to the definition of the metric balls in~\Cref{subsec:setup}. Surprisingly,~\eqref{it:weak_axiom_geodesic}\eqref{it:weak_axiom_geodesic_2} will require substantial work.

The second one, Axiom~\eqref{it:weak_axiom_locality} (locality), is also difficult to check. At first glance, it seems to be a simple consequence of the restriction property for the $\CLE_\kappa$ with $\kappa>4$. Yet, the Hausdorff convergence of the $\CLE_\kappa$ loops surrounding rational points to the loops of the $\CLE_4$ is not enough to get the convergence of the connected components of the set $V^\star$ obtained by removing from $V$ the closure of the union of the interiors of the loops which do not lie in $V$. Indeed, it may happen for instance that there is a large but very thin loop of the $\CLE_\kappa$. 

One key intermediate result toward checking Axiom~\eqref{it:weak_axiom_locality} is a uniform bound on the fatness of $\CLE_\kappa$ loops when $\kappa \downarrow 4$. We prove in~\Cref{prop:thin-loops} that for all $\varepsilon>0$, there exists $\delta>0$ such that, uniformly as $\kappa \downarrow 4$, with probability at least $1-\varepsilon$ all $\CLE_\kappa$ loops with diameter at least $\varepsilon$ surround a Euclidean ball of radius $\delta$. We also prove the same result when $\kappa \uparrow 4$ in~\Cref{prop:big_loops_are_fat_simple_cle}. Such a statement is clearly true for a fixed $\kappa$, and one can show that it is true locally uniformly in $\kappa \in (4, 8)$ using the space-filling SLE. However, when $\kappa \downarrow 4$, the space-filling SLE degenerates. We prove~\Cref{prop:thin-loops} using couplings from~\cite{CLEPerc} and using the analogous statement for $\kappa \uparrow 4$ and the convergence of $\CLE_\kappa$ loops as $\kappa \uparrow 4$ in terms of closed paths. The analogous statement for $\kappa \uparrow 4$ is obtained via the coupling with Brownian loop soups.

Let us conclude this subsection with the following observation. One can note that, by conformal invariance, it suffices to consider the case where $U = \BD$. We record here the following equivalent formulation of the axioms of~\Cref{def:weak_axioms}, which will be more convenient to use in this paper. We chose to first present~\Cref{def:weak_axioms} before the following axioms inasmuch as~\Cref{def:weak_axioms} will be used in the further work~\cite{kkmt2026cle4_part2}.

\begin{lemma}\label{def:weak_axioms2}
	Taking images under conformal maps is a bijection, with inverse the restriction to $U=\BD$, from the couplings $(\Gamma,D)$, $\Gamma$ a non-nested $\CLE_4$ in $\BD$ and $D$ a metric on $\Gamma$, satisfying the following conditions onto the weak geodesic $\CLE_4$ metric couplings of~\Cref{def:weak_axioms}:
	\begin{enumerate}[label=(\Roman*), ref=\Roman*]
		\item\label{it:weak_axiom_geodesic2} {\bfseries (Weak geodesic metric)} Recall that the metric ball $\SCB_t(\SCL)=\SCB_t(\SCL; D)$ (resp.~$\SCB_t^-(\SCL)=\SCB_t^-(\SCL; D)$) denotes the closure of the union of the domains surrounded by $\SCL^\prime$ for $\SCL^\prime\in\Gamma$ with $D(\SCL,\SCL^\prime) \le t$ (resp.~$D(\SCL,\SCL^\prime) < t$). For a general subset $A\subset\overline\BD$, define
		\begin{equation*}
			D(\SCL, A) \defeq \inf\bigl\{t \ge 0: \SCB_t(\SCL) \cap A \neq \emptyset\bigr\}, 
		\end{equation*}
		and in particular $D(\SCL, z) \defeq D(\SCL, \{z\})$ for all $z \in \overline{\BD}$. Also write $\SCB_t(A)=\SCB_t(A;D)$ (resp.~$\SCB_t^-(A)=\SCB_t^-(A;D)$) for the closure of the union of $A$ and of the domains surrounded by $\SCL$ for $\SCL\in\Gamma$ with $D(\SCL,A)\le t$ (resp.~$D(\SCL,A)<t$). Then:
		\begin{enumerate}[label=(\roman*), ref=\roman*]
			\item\label{it:weak_axiom_geodesic_02} Let $\SCL \in \Gamma$ and $t \ge 0$. Then the metric ball $\SCB_t(\SCL)$ is connected. The same is true with a deterministic connected arc of $\partial \BD$ in place of $\SCL$.
			\item\label{it:weak_axiom_geodesic_12} Let $\SCL \in \Gamma$ and $t \ge 0$. Then
			\begin{equation*}
				\lim_{t^\prime \downarrow t} \SCB_{t^\prime}(\SCL) = \SCB_t(\SCL) \quad \text{and} \quad \lim_{t^\prime \uparrow t} \SCB_{t^\prime}(\SCL) = \SCB_t^-(\SCL)
			\end{equation*}
			with respect to the Hausdorff metric, the second limit being required only for $t > 0$; see \Cref{subsec:setup} for the convention used when a ball is empty. The same is true with a deterministic connected arc of $\partial \BD$ in place of $\SCL$.
			\item\label{it:weak_axiom_geodesic_22} Let $\SCL_1, \SCL_2 \in \Gamma$ and $t \ge 0$. Suppose that $\SCB_t(\SCL_1) \cap \SCL_2 = \emptyset$. Then
			\begin{equation*}
				D(\SCL_1, \SCL_2) = t + D( \SCL_2, \SCB_t(\SCL_1)). 
			\end{equation*}
			The same is true with a deterministic connected arc (resp.~two deterministic and disjoint connected arcs) of $\partial \BD$ in place of $\SCL_1$ (resp.~$\SCL_1$ and $\SCL_2$).
			\item\label{it:weak_axiom_geodesic_42} Let $\SCL_1, \SCL_2 \in \Gamma$. Then, the compact set $K_{\SCL_1, \SCL_2}$ defined as the closure of $ \{ z \in \overline{\BD}, \ D(\SCL_1, z)+ D(\SCL_2,z)= D(\SCL_1, \SCL_2) \}$ contains a connected component $\widetilde{K}_{\SCL_1, \SCL_2} $ that contains both $\SCL_1$ and $\SCL_2$. The same is true with a deterministic connected arc (resp.~two deterministic and disjoint connected arcs) of $\partial\BD$ in place of $\SCL_1$ (resp.~$\SCL_1$ and $\SCL_2$). 
		\end{enumerate}
		\item\label{it:weak_axiom_locality2} {\bfseries (Locality)} Let $V \subseteq \BD$ be a deterministic simply connected subdomain. Let $\{V_j\}_j$ be the connected components of $V^\star$ (cf.~\eqref{eq:def-V-star}). Let $I$ be a deterministic connected arc of $\partial \BD$. For each $j$, let $\phi_j$ be a conformal mapping from $V_j$ onto $\BD$ that is a.s.\ determined by
		\begin{multline}\label{eq:weak_axiom_locality2}
			\bigl\{\SCL \in \Gamma: \SCL \not\subseteq V\bigr\} \quad \text{and} \quad \bigl\{\SCB_t(\SCL): \SCL \in \Gamma, \ \SCL \not\subseteq V, \ t \in [0, D(\SCL, \partial V^\star)]\bigr\}, \\
			\text{and} \quad \bigl\{\SCB_t(I): t \in [0, D(I, \partial V^\star)]\bigr\}.
		\end{multline}
		There exists a coupling of $(\Gamma, D)$ with some $(\phi_j(\Gamma\vert_{V_j}), D_j)$'s such that 
		\begin{itemize}
			\item Conditionally on~\eqref{eq:weak_axiom_locality2}, the $(\phi_j(\Gamma\vert_{V_j}), D_j)$'s are independent and their conditional laws are those of $(\Gamma, D)$.
			\item For all $j$, for all $\SCL_1, \SCL_2 \in \Gamma\vert_{V_j}$, we have $D(\SCL_1, \SCL_2)\le D_j(\phi_j(\SCL_1), \phi_j(\SCL_2))$.
			\item For all $j$, for all $\SCL \in \Gamma|_{V_j}, \ t \in [0, D(\SCL, \partial V_j)]$, the closure of the image under $\phi_j^{-1}$ of the intersection with $\BD$ of the metric ball of radius $t$ for $D_j$ centered at $\phi_j(\SCL)$ is $\SCB_t(\SCL)$.
		\end{itemize}       
		\item\label{it:weak_axiom_conformal_invariance2} {\bfseries (Conformal invariance)} Let $\phi \colon \BD \to \BD$ be a deterministic conformal automorphism of $\BD$. Then $(\Gamma, D)$ and $(\phi(\Gamma), \phi_\ast(D))$ have the same law, where $\phi_\ast(D)$ is defined by setting $\phi_\ast (D)(\phi(\SCL_1), \phi(\SCL_2)) = D(\SCL_1, \SCL_2)$.  
		\item\label{it:weak_axiom_uniform_exploration2} {\bfseries (Uniform exploration)} The collection $\{(\SCL, D(\SCL, \partial\BD))\}_{\SCL \in \Gamma}$ has the law of a uniform exploration of $\Gamma$ (see~\Cref{subsec:labeled_cle_4} for the definition of the uniform $\CLE_4$ exploration).
	\end{enumerate}
\end{lemma}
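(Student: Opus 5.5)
The plan is to set up the two maps explicitly and then check that each is well defined and that they are mutually inverse. Let $(\Gamma,D)$ be a coupling on $\BD$ satisfying (I)--(IV) of the lemma. For each simply connected $U\subsetneq\BC$, fix a conformal map $\psi_U\colon\BD\to U$ and set $\Gamma_U\defeq\psi_U(\Gamma)$ and $D^U_{\Gamma_U}\defeq(\psi_U)_\ast D$. Two choices of $\psi_U$ differ by precomposition with an automorphism of $\BD$. By (III) of the lemma, the law of $(\Gamma_U,D^U_{\Gamma_U})$ therefore does not depend on this choice, so the family is well defined in law. Conversely, restricting a weak geodesic $\CLE_4$ metric coupling to $U=\BD$ yields (III) of the lemma as a special case of Axiom~\eqref{it:weak_axiom_conformal_invariance}. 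Moreover, Axiom~\eqref{it:weak_axiom_conformal_invariance} shows that the whole family is the image of its $\BD$-member, so the two maps are mutually inverse once we know each lands in the right class. Conformal invariance of $\CLE_4$ gives that $\Gamma_U$ is a non-nested $\CLE_4$ in $U$. Conformal invariance of the uniform exploration \cite{CoInCLEExpl}, in the form of the labelled $\CLE_4$, transports Axiom~\eqref{it:weak_axiom_uniform_exploration} in both directions.

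For the geodesic axioms, the key observation is that all objects defined in \Cref{subsec:setup} commute with conformal maps. Since $\psi_U$ maps interiors of loops to interiors of loops and extends to a homeomorphism of closures (in the prime-end sense), we get $\psi_U(\SCB_t(\SCL;D))=\SCB_t(\psi_U(\SCL);D^U)$ after taking closures in $\overline U$. Similarly, distances from loops to sets $A$ are preserved. Connectedness of balls, the identity in (I)(iii), and the existence of a connected component of $K_{\SCL_1,\SCL_2}$ containing both loops are all topological or metric-label statements, so they transfer directly. Deterministic arcs of $\partial\BD$ correspond to deterministic arcs (prime-end arcs) of $\partial U$. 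For (I)(ii), Hausdorff continuity in $\BD$ is exactly the statement of Axiom~\eqref{it:weak_axiom_geodesic}\eqref{it:weak_axiom_geodesic_1} after composing with $\phi=\psi_U^{-1}$. This is precisely why that axiom was phrased through a conformal map to $\BD$: a different choice of $\phi$ differs by a Möbius automorphism, which is uniformly continuous on $\overline\BD$ and so preserves Hausdorff convergence.

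Locality is the step requiring care and is the main obstacle. Given $V\subset U$ and $I$, pull back by $\psi_U$ to $\psi_U^{-1}(V)\subset\BD$ and $\psi_U^{-1}(I)$. Note that $V^\star$ and its components $V_j$ are conformally covariant, and the $\sigma$-algebras \eqref{eq:weak_axiom_locality} and \eqref{eq:weak_axiom_locality2} correspond under $\psi_U$. Starting from the $\BD$-formulation, choose $\phi_j$ measurable with respect to \eqref{eq:weak_axiom_locality2}; for instance, normalise by sending a measurably chosen interior point to $0$ with a fixed boundary normalisation. Then define $D^{V_j}_{\Gamma_U|_{V_j}}\defeq(\phi_j^{-1})_\ast D_j$ transported to $V_j\subset U$. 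The conditional law of the pair is $(\phi_j^{-1}(\Gamma),(\phi_j^{-1})_\ast D)$, which by the already-constructed family is the law of $(\Gamma_{V_j},D^{V_j})$, since $\phi_j$ is fixed under the conditioning. The comparison $D\le D_j$ transfers immediately. The ball identity transfers using the commuting of balls with conformal maps. Here the closure of the image under $\phi_j^{-1}$ of $\SCB\cap\BD$ is exactly the ball in $V_j$, because balls are closures in $\overline{V_j}$. For the converse direction, given the $V_j$-metrics, set $D_j\defeq(\phi_j)_\ast D^{V_j}$ with any measurable $\phi_j$, and use Axiom~\eqref{it:weak_axiom_conformal_invariance} on $V_j$ to identify its conditional law with $(\Gamma,D)$.

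The delicate points are twofold. The first is the existence of such a measurable choice of $\phi_j$ for the random, countably many components. The second is that conditional independence and conditional law statements are preserved under this measurable random change of coordinates. I would handle both by indexing components via the rational points they contain and applying the conditional Riemann map normalisation.
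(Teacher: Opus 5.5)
Your proposal is correct and follows the same route as the paper. It transports the $\BD$-coupling by a conformal map, uses conformal invariance both for well-definedness and to see that restriction to $U=\BD$ is the inverse, and relies on loops, $V^\star$ and its components, and metric balls all commuting with conformal maps. Your treatment of locality is more detailed than the paper's one-line argument: you choose and index the $\phi_j$ measurably with respect to the data, and you check the conditional law under this random change of coordinates. The underlying idea is the same.
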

\begin{proof}
	Let $U$ be a simply connected domain of $\BC$ which is different from $\BC$. One can define the coupling $(\Gamma_U, D^U_{\Gamma_U})$ from the above coupling $(\Gamma, D)$ by taking the image under a conformal map $\varphi$ from $\BD$ to $U$. Such a coupling does not depend on the choice of $\varphi$ by conformal invariance. Then, the family of couplings $\{(\Gamma_U, D^U_{\Gamma_U})\}_U$ satisfies the axioms of~\Cref{def:weak_axioms}. Conversely, such a family is determined by its restriction to $U=\BD$, so the two constructions are inverse to each other, and it suffices to check that each axiom of~\Cref{def:weak_axioms} for all $U$ is equivalent to its counterpart below for $U=\BD$. This is immediate for~\eqref{it:weak_axiom_conformal_invariance2},~\eqref{it:weak_axiom_uniform_exploration2} and the geodesic axioms, whose statements involve only the loops, the metric balls and the conformally invariant notions of connectedness and of surrounding. For locality, $\varphi$ maps the loops not contained in $V$ to those not contained in $\varphi(V)$, hence $V^\star$ and its components to $\varphi(V)^\star$ and its components, and the metric balls of $D$ to those of $D^U_{\Gamma_U}$.
\end{proof}

\subsection{Outline}

Let us give an outline of the content of the rest of the paper.

\smallskip
\noindent\emph{\Cref{section:preliminaries}: Preliminaries.} We will review the necessary background material on the Gaussian free field (GFF), Schramm--Loewner evolutions (SLE) with force points, conformal loop ensembles (CLE); couplings between the GFF, $\CLE_4$, and boundary conformal loop ensembles (BCLE); the uniform exploration; and Carath\'eodory convergence.

\smallskip
\noindent\emph{\Cref{sec:boundary_is_a_point}: When the boundary is a point.} We construct the subsequential scaling limit of the $\CLE_\kappa$ graph distance when $\kappa \downarrow 4$ in the case where we identify the boundary of the domain with a point of the metric space. Building on results of~\cite{BECriLQG}, we prove that balls from the boundary correspond to the uniform exploration. Using the inversion invariance of the CLE in the annulus, we also describe metric balls from any loop in terms of the uniform exploration. This construction is useful in order to construct the metric in the case where the boundary is not identified with a point.

\smallskip
\noindent\emph{\Cref{sec:boundary not a point}: When the boundary is not a point.} In this section, we construct the subsequential limiting metric in the case that the boundary of the domain is not a single point; this is in fact the case in which we are primarily interested.  We first show that the rescaled graph distances $\ka_\kappa^{-1} D_{\Gamma_U^\kappa}^U$ for $\kappa \in (4, 8)$ are tight as $\kappa \downarrow 4$. Then, we construct the associated metric balls and prove that they are c\`adl\`ag via a tightness argument and a comparison between these metric balls and the metric balls obtained in the previous section.

\smallskip
\noindent\emph{\Cref{sec:big non simple CLE loops are fat}: Large non-simple CLE loops are fat.} Next, in order to check that our metric satisfies the axioms of~\Cref{def:weak_axioms}, we establish a stronger convergence of loops than the Hausdorff convergence obtained in~\cite{BECriLQG}. More precisely, we prove that the closure of the union of loops intersecting a given connected compact set converges. This is a consequence of the fact that uniformly in $\kappa \downarrow 4$, large $\CLE_\kappa$ loops surround a large Euclidean ball. To prove this fact, we will first prove an analogous statement when $\kappa \uparrow 4$ using the coupling with Brownian loop soups and prove the convergence of simple loops as $\kappa \uparrow 4$ in terms of closed paths. Then, relying on the couplings from~\cite{CLEPerc}, we show the desired result for large non-simple loops. On a first reading, one can skip the proofs in this section. They are, however, of independent interest and may have other applications in further works on CLE.

\smallskip
\noindent\emph{\Cref{sec: distance between two segments}: On the distance between two segments.} In this section, we prove that for all $\varepsilon>0$ one can find two disjoint segments on the boundary of the domain whose $\CLE_\kappa$ graph distance is at most $\varepsilon \ka_\kappa$ with probability at least $1-\varepsilon$, uniformly in $\kappa \downarrow 4$. This technical result will be useful to check the axioms.

\smallskip
\noindent\emph{\Cref{sec:checking_the_axioms}: Checking the axioms.} We finally check that our subsequential limiting metric satisfies the axioms of~\Cref{def:weak_axioms}, concluding the proof of~\Cref{thm:convergence_nonsimple_cle}.

\subsection*{Acknowledgements}

E.K.~acknowledges the support of a Research Fellowship from Emmanuel College, Cambridge. J.M.~received support from ERC consolidator grant ARPF (Horizon Europe UKRI G120614). Y.T.~was supported by a Cambridge International Scholarship from the Cambridge Trust.  K.K.~was supported by the Simons Collaboration Grant
\emph{Probabilistic Paths to Quantum Field Theory}. We thank Wendelin Werner for stimulating discussions at an early stage of this work. E.K.~also thanks Juhan Aru and Ellen Powell for insightful discussions on this topic before starting this work.

\section{Preliminaries}\label{section:preliminaries}

\subsection{Gaussian free fields}
\label{subsec:gff}

We will now briefly recall some of the basics of the Gaussian free field (GFF); see~\cite{ss2007gff} for a more in-depth review. Let $G \subseteq \BC$ be a simply connected domain with harmonically non-trivial boundary, and let $H_0^1(G)$ denote the Hilbert space closure of $C_0^{\infty}(G)$ with respect to the Dirichlet inner product 
\begin{align*}
(f,g)_{\nabla} = \frac{1}{2\pi} \int_{G} \nabla f(z) \cdot \nabla g(z) \,\mathrm{d}z.
\end{align*}
The zero-boundary Gaussian free field (GFF) $h$ is the random distribution defined by (the series converging in the sense explained below)
\begin{align}\label{eqn:gff_series}
h = \sum_{n \geq 1} \alpha_n \phi_n,
\end{align}
where $(\phi_n)_{n \geq 1}$ is a $(\cdot, \cdot)_{\nabla}$-orthonormal basis of $H_0^1(G)$ and $(\alpha_n)_{n \geq 1}$ is a sequence of independent $\mathcal{N}(0,1)$-distributed random variables; the law of $h$ does not depend on the choice of $(\phi_n)_{n \ge 1}$. 

A GFF on $G$ is not a function but rather a random variable in the space of distributions on $G$, since the series in~\eqref{eqn:gff_series} does not converge in $H_0^1(G)$ a.s. Nevertheless, it converges a.s.\ in $H^{-1}(G)$, the dual space of $H_0^1(G)$, when $G$ is bounded, and in the space of distributions on $G$ in general (by conformal invariance). We say that a GFF on $G$ has boundary conditions given by a bounded measurable function $f$ defined on $\partial G$ if it can be expressed as the sum of a zero-boundary GFF on $G$ plus the harmonic extension of $f$ from $\partial G$ to $G$.

The following are two important properties of the GFF that we will repeatedly use.
\begin{itemize}
\item \emph{The Markov property.} If $h$ is a zero-boundary GFF on $G$ and $U \subseteq G$ is open, then we can write $h = h_U^0 + h_U$, where $h_U^0$ is a zero-boundary GFF on $U$ and $h_U$ is a distribution on $G$ which is harmonic on $U$. Moreover, $h_U^0$ and $h_U$ are independent.
\item \emph{Conformal invariance.} If $\phi: G \to G'$ is a conformal transformation and $h$ is a GFF on $G$, then $h \circ \phi^{-1}$ is a GFF on $G'$. 
\end{itemize}
There is also a variant of the Markov property where one thinks of conditioning the values of the GFF on a set which is random, in analogy with the strong Markov property and stopping times for Brownian motion. Specifically, a random closed set $A \subseteq G$ coupled with $h$ is said to be \emph{local} for $h$ \cite{MR3101840} if we can write $h = h_A^0 + h_A$ where $h_A$ is a distribution on $G$ which is harmonic on $G \setminus A$ and, given the $\sigma$-algebra generated by $(A,h_A)$, $h_A^0$ has the law of a GFF on $G \setminus A$ with zero boundary conditions.

\subsection{Chordal $\SLE_{\kappa}(\underline{\rho})$ processes}
\label{subsec:chordal_sle}

The chordal $\SLE_\kappa(\underline{\rho})$ processes are variants of $\SLE_\kappa$ where one keeps track of extra marked points~\cite{CoRestr}. To define them, we fix $\kappa>0$, let $\underline{x}_L = (x_{\ell,L},\ldots,x_{1,L})$ and $\underline{x}_R = (x_{1,R},\ldots,x_{r,R})$, where $x_{\ell,L} < \cdots < x_{1,L} \leq 0 \leq x_{1,R} < \cdots < x_{r,R}$, and let $\underline{\rho}_L = (\rho_{1,L},\ldots,\rho_{\ell,L})$ and $\underline{\rho}_R = (\rho_{1,R},\ldots,\rho_{r,R})$, where $\rho_{j,q} \in \BR$ for $q \in \{L,R\}$ and $j \in [1, N_q]_\BZ$ with $N_L = \ell$, $N_R = r$, where $[a,b]_\BZ \defeq [a,b] \cap \BZ$. Let $(g_t)$ denote the solution to the ODE:
\begin{align}\label{eqn:loewner_ode}
\partial_t g_t(z) = \frac{2}{g_t(z) - W_t}, \quad g_0(z) = z \in \BH,
\end{align}
where $W$ is a solution to:
\begin{align}\label{eqn:multiforce_point_sde}
&\mathrm{d} W_t = \sum_{j=1}^{\ell} \frac{\rho_{j,L}}{W_t - V_t^{j,L}} \mathrm{d}t + \sum_{j=1}^r \frac{\rho_{j,R}}{W_t - V_t^{j,R}} \mathrm{d}t + \sqrt{\kappa} \mathrm{d}B_t, \quad W_0 = 0,\notag \\
&\mathrm{d}V_t^{j,q} = \frac{2}{V_t^{j,q} - W_t} \mathrm{d}t, \quad V_0^{j,q} = x_{j,q}, \  j \in [1, N_q]_\BZ, \ q \in \{L,R\},
\end{align}
for a standard Brownian motion $B$.

For any value of $\underline{\rho}_L$ and $\underline{\rho}_R$, it is clear that~\eqref{eqn:multiforce_point_sde} has a unique strong solution (when $x_{1,L} < 0 < x_{1,R}$) until the first time $t$ such that $W_t=V^{j,q}_t$ for some $q \in \{L,R\}$ and $j \in [1, N_q]_\BZ$. Actually, it was shown in~\cite{IG1,LevelLineGFFI} that, when $\sum_{j=1}^k \rho_{j,L} > -2$ for all $k \in [1,\ell]_\BZ$ and $\sum_{j=1}^k \rho_{j,R} > -2$ for all $k \in [1,r]_\BZ$, there exists a solution to~\eqref{eqn:multiforce_point_sde} defined for all times $t\geq 0$ (also for force points at $0^-$, $0^+$), and for this solution the set of $t \geq 0$ for which $W_t=V^{j,q}_t$ for some $q \in \{L,R\}$ and $j \in [1, N_q]_\BZ$ a.s.\ has zero Lebesgue measure. The uniqueness in law of such a solution is also shown in~\cite{IG1}. Moreover, it was shown in~\cite{IG1,LevelLineGFFI} that there a.s.\ exists a continuous curve $\eta$ such that the domain $\BH_t$ of $g_t$ is given by the unbounded connected component of $\BH \setminus \eta([0,t])$, for all $t\ge 0$. The curve $\eta$ is called the chordal $\SLE_{\kappa}(\underline{\rho})$ in $\BH$ from~$0$ to~$\infty$. (These processes are defined below for $\rho \le -2$, for which the above condition fails; see~\cite{CLEPerc,ms2019lightcone} for their continuity.)

An $\SLE_{\kappa}(\underline{\rho})$ process in a simply connected domain $G \subsetneq \BC$ is defined as the image of an $\SLE_{\kappa}(\underline{\rho})$ on $\BH$ under a conformal transformation mapping $\BH$ onto $G$ which sends the starting point $0$ to the desired starting point on $\partial G$ and the target point $\infty$ to the desired target point on $\partial G$ (the law of the image does not depend on the choice of the map, by the scale invariance of $\SLE_\kappa(\underline\rho)$). The force points on $\partial G$ are then defined as the images of the corresponding force points on $\partial \BH$.

One can also define a variant of $\SLE_{\kappa}(\rho)$ in which the side of the marked point is chosen afresh for each excursion by an independent (possibly biased) coin toss.  In particular,  we fix $\kappa>0,  \rho \in (-2-(\kappa/2), (\kappa/2) -2) \setminus \{-2\}$,  and $\beta \in [-1,1]$,  and let $X$ be a Bessel process starting from $0$ with dimension
\begin{align*}
\delta = \delta(\kappa,\rho)\defeq1+\frac{2(\rho+2)}{\kappa}.
\end{align*}
Then,  we modify the process $\sqrt{\kappa} X$ by tossing an independent coin for each excursion of $\sqrt{\kappa} X$ away from $0$ to decide its sign,  where the sign of each excursion is positive with probability $(1+\beta)/2$ and negative with probability $(1-\beta)/2$.  We let $Z$ denote the process obtained by concatenating the above modified excursions and set 
\begin{align*}
W_t\defeq Z_t -2 \int_0^t \frac{1}{Z_s} \mathrm{d}s
\end{align*}
in the case that $\rho>-2$.

If $\rho\le-2$,  we can still define $W_t$ as above except that we use a compensation to make sense of the integral of $1/Z$ (see \cite[Section~3]{TreeCLE} and \cite[Section~3]{CLEPerc} for more details).  In both cases, we define the $\SLE_{\kappa}^{\beta}(\rho)$ process in $\BH$ from $0$ to $\infty$ as the collection of hulls generated by $W$ via the Loewner equation~\eqref{eqn:loewner_ode}.  It was shown in \cite[Theorem~10.3]{CLEPerc} that $\SLE_{\kappa}^{\beta}(\rho)$ processes are a.s.  generated by continuous curves for all $\kappa \in (2,4),  \beta \in [-1,1]$,  and all $\rho \in (-2-(\kappa/2),  (\kappa/2) -4)$.  Moreover,  it was shown in \cite[Theorem~10.2]{CLEPerc} that $\SLE_{\kappa'}^{\beta}(\rho')$ processes are a.s.  generated by continuous curves for all $\kappa'>4,  \beta \in [-1,1]$,  and all $\rho' \in (-2-(\kappa'/2) ,  (\kappa'/2) -2) \setminus \{-2\}$.

For a general simply connected domain $G \subsetneq \BC$,  we define an $\SLE_{\kappa}^{\beta}(\rho)$ process in $G$ between two distinct marked points $x,y \in \partial G$ to be given by the image of an $\SLE_{\kappa}^{\beta}(\rho)$ in $\BH$ from $0$ to $\infty$ under a conformal transformation mapping $\BH$ onto $G$ which maps $0$ to $x$ and $\infty$ to $y$.

\subsection{Conformal loop ensembles}
\label{subsec:cle}

Now, we will briefly review CLE and refer the reader to~\cite{CLE,CLEPerc,TreeCLE} for more details.

A CLE in $\BD$ is a random countable collection $\Gamma$ of non-nested loops $(\gamma_j)_{j \in J}$ in $\overline{\BD}$ satisfying the following properties.
\begin{enumerate}
\item\label{it:cle_conf_inv} \textbf{Conformal invariance}: For any M\"obius transformation $\phi: \BD \to \BD$, the laws of $\Gamma$ and $\phi(\Gamma)$ are the same. This makes it possible to define CLE in any proper simply connected domain $G \subseteq \BC$ as the image of a CLE in $\BD$ under a conformal transformation mapping $\BD$ onto $G$.
\item\label{it:cle_dmp} \textbf{Domain Markov property}: For any simply connected domain $U \subseteq \BD$, we let $U^\star$ denote the set obtained by removing from $U$ the closure of the union of the interiors of the loops in $\Gamma$ not contained in $U$. Then, conditionally on the loops of $\Gamma$ not contained in $U$ (which determine $U^\star$), the restrictions of $\Gamma$ to the components $V$ of $U^\star$ are independent, and each has the law of $\Gamma$ conformally mapped to $V$.
\end{enumerate}

It was shown in~\cite{CLE,TreeCLE} that for each $\kappa \in (8/3,8)$ there exists a $\CLE_\kappa$ where the loops can be constructed using a branching variant of $\SLE_\kappa$ (specifically, a tree built out of a collection of $\SLE_\kappa(\kappa-6)$ processes; we will describe this construction in more detail in~\Cref{subsec:branching_sle_kappa_minus_6}). The loops in the $\CLE_\kappa$ are then $\SLE_\kappa$-type curves. From the perspective of the branching tree construction, it is not obvious that the loops of the $\CLE_\kappa$ do not depend on the choice of root, are continuous curves, and are locally finite. In the case that $\kappa \in (8/3,4]$, this was proved in~\cite{CLE} as a consequence of the representation of the $\CLE_\kappa$ loops as the outer boundaries of clusters of the Brownian loop soup; see also~\cite{CLEPerc} for another argument. In the case that $\kappa \in (4,8)$, this was proved as a consequence of the continuity of $\SLE_\kappa(\kappa-6)$ \cite{IG1}, the reversibility of $\SLE_\kappa$ \cite{IG3}, and the continuity of space-filling $\SLE_\kappa$ \cite{IG4}.

If $\kappa \in (8/3,4]$, we have that $\CLE_{\kappa}$ consists of simple loops which do not intersect the domain boundary or each other a.s. However, if $\kappa \in (4,8)$, we have that the loops in the $\CLE_{\kappa}$ are non-simple and can intersect each other and the domain boundary. In the present work, we will focus on the $\kappa = 4$ case, for which the loops do not intersect each other or the domain boundary but \emph{nearly} do. It was also shown in~\cite{CLE} that if one has an ensemble of loops which satisfies~\eqref{it:cle_conf_inv} and~\eqref{it:cle_dmp}, together with the further assumptions that the loops are simple and do not intersect the boundary or each other, and that the ensemble is locally finite, then it must be a $\CLE_\kappa$ for some $\kappa \in (8/3,4]$.

\subsection{Level lines of the GFF}
\label{subsec:level_lines_of_gff}

Let $\lambda = \pi / 2$ and let $h$ be an instance of the GFF on $\BH$ with boundary conditions given by $-\lambda$ (resp.\ $\lambda$) on $\BR_-$ (resp.\ $\BR_+$). Then, it was shown in~\cite{MR3101840} that a zero level line $\eta$ of $h$ from $0$ to $\infty$ can be rigorously defined and has the law of a chordal $\SLE_4$. Moreover, $\eta$ is characterized by the property that for all $t \geq 0$, the conditional law of $h$ given $\eta|_{[0,t]}$ is that of a GFF on $\BH \setminus \eta([0,t])$ with boundary conditions given by $-\lambda$ on the left side of $\eta([0,t])$ and on $\BR_-$, and by $\lambda$ on the right side of $\eta([0,t])$ and on $\BR_+$.

The results in~\cite{MR3101840} were generalized in~\cite{LevelLineGFFI} as follows. Fix a collection of weights $(\underline{\rho}_L; \underline{\rho}_R)$ satisfying $\sum_{i=1}^{k} \rho_{i,L} > -2$ for all $k \in [1,\ell]_\BZ$ and $\sum_{i=1}^{k} \rho_{i,R} > -2$ for all $k \in [1,r]_\BZ$, and force points 
$(\underline{x}_L; \underline{x}_R)$, and let $h$ be a GFF on $\BH$ with boundary conditions given by $-\lambda( 1+ \sum_{i=0}^j \rho_{i,L})$ on $[x_{j+1,L}, x_{j,L})$ and $\lambda (1+\sum_{i=0}^j \rho_{i,R})$ on $[x_{j,R}, x_{j+1,R})$ for $j \in [0,\ell]_\BZ$ (resp.\ $j \in [0,r]_\BZ$), where $\rho_{0,L} = \rho_{0,R} = 0$, $x_{0,L} = 0^-$, $x_{\ell + 1,L} = -\infty$, $x_{0,R} = 0^+$, and $x_{r+1,R} = \infty$.  Then, the zero level line of $h$ from $0$ to $\infty$ can be rigorously defined. Moreover, it has the law of a chordal $\SLE_4(\underline{\rho}_L; \underline{\rho}_R)$ and it is a measurable function of $h$. Furthermore, the level line is characterized by the property that, for all $t \ge 0$, the conditional law of $h$ given $\eta|_{[0,t]}$ is that of a GFF on $\BH \setminus \eta([0,t])$ with boundary conditions given by $-\lambda$ (resp.\ $\lambda$) on the left (resp.\ right) side of $\eta([0,t])$ and the same boundary values as $h$ on $\partial \BH$.

For all $u \in \BR$, the level line of $h$ with height $u$ from $0$ to $\infty$ is given by the level line of $h-u$ from $0$ to $\infty$. Then, we have the following interaction rules for level lines with different heights proved in~\cite{LevelLineGFFI}.

\begin{theorem}[{\cite[Theorem~1.1.4 and Remark~1.1.5]{LevelLineGFFI}}]
Suppose that $h$ is a GFF on $\BH$ with piecewise constant boundary conditions that change only finitely many times. For all $u \in \BR$ and $x \in \partial \BH$, we let $\gamma_u^x$ be the level line of $h$ with height $u$ starting from $x$ and ending at $\infty$ (when it exists). Fix $x_2 \leq x_1$.
\begin{enumerate}
\item If $u_2 < u_1$, then $\gamma_{u_2}^{x_2}$ a.s.\ stays to the left of $\gamma_{u_1}^{x_1}$ (possibly touching it).
\item If $u_2 = u_1$, then $\gamma_{u_2}^{x_2}$ may intersect $\gamma_{u_1}^{x_1}$ and, upon intersecting, the two curves merge and never separate.
\item If $u_1 - u_2 \geq 2 \lambda$, then $\gamma_{u_1}^{x_1}$ and $\gamma_{u_2}^{x_2}$ do not intersect each other a.s.
\end{enumerate}
\end{theorem}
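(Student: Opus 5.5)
This statement is quoted from \cite[Theorem~1.1.4 and Remark~1.1.5]{LevelLineGFFI}, so in the paper we will simply invoke it. If we were to reprove it, we would use the imaginary-geometry approach of \cite{IG1}, adapted to level lines ($\kappa=4$, where flow lines become level lines). The idea is to reduce everything to the characterizing conditional-law property of level lines stated just above. Given an initial segment of one curve, the other curve must be the level line of the conditional field in the slit domain. We then read off its behaviour from the boundary data it sees.

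\emph{Step 1: heights.} For the ordering statement with $u_2<u_1$, condition on $\gamma_{u_1}^{x_1}$. Given it, $h$ restricted to the complement is a GFF with boundary values $u_1-\lambda$ on the left side of $\gamma_{u_1}^{x_1}$ and $u_1+\lambda$ on its right side. The level line of height $u_2$ from $x_2\le x_1$ lives in the left component. We then show it is the level line of the conditional field there. Since the boundary value $u_1-\lambda$, shifted by $-u_2$, equals $u_1-u_2-\lambda>-\lambda$, the corresponding $\SLE_4(\underline\rho)$ force-point weights satisfy $\rho>-2$. Hence $\gamma_{u_2}^{x_2}$ may hit $\gamma_{u_1}^{x_1}$ but cannot cross it. If $u_1-u_2\ge 2\lambda$, the shifted value is at least $\lambda$, giving $\rho\ge 0$; a standard Bessel-dimension computation ($\delta\ge 2$) then shows the curve does not hit that boundary arc, which is the non-intersection statement. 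The case $u_1=u_2$ is handled similarly. Here both curves solve the same local problem after the first intersection time, so uniqueness of the level line as a function of the field in the remaining domain forces them to merge.

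\emph{Step 2: the main obstacle.} The crux is justifying that $\gamma_{u_2}^{x_2}$ is indeed the level line of the conditional field given $\gamma_{u_1}^{x_1}$, i.e.\ that conditioning on one level line does not disturb the other. Our first attempt would be the resampling argument of \cite{IG1}. We show that the pair, with each curve run up to stopping times, is a local set for $h$, using the fact that the union of local sets is local. We then use the Gibbs-type characterization together with the a.s.\ determination of each level line by $h$ (uniqueness) to identify the conditional laws. Continuity up to and including boundary-hitting times, where the $\rho$-values approach the critical threshold, is delicate and is where \cite{LevelLineGFFI} does most of the work. For this reason we cite that result rather than reproduce it.
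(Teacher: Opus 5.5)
Your proposal matches the paper: the paper gives no proof of this statement and simply invokes \cite[Theorem~1.1.4 and Remark~1.1.5]{LevelLineGFFI}, and your sketch is the standard imaginary-geometry argument behind that reference. One small caution about your Step~1: when $u_1=u_2$ the shifted boundary value is exactly $-\lambda$, i.e.\ $\rho=-2$, which is precisely the threshold where the $\SLE_4(\underline\rho)$ continuation used for the other two cases breaks down, so the merging really does need the separate argument you indicate rather than being ``handled similarly''.
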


\subsection{Labeled $\CLE_4$}
\label{subsec:labeled_cle_4}

Labeled $\CLE_4$ in $\BD$, also called the uniform exploration of the $\CLE_4$ in $\BD$, was introduced in~\cite{CoInCLEExpl} and corresponds to a Markovian exploration of $\CLE_4$ loops, with a label on each loop that keeps track of the time when each loop is discovered. We will briefly describe the construction and refer to~\cite{CoInCLEExpl} and~\cite{LevelLineGFFI} for more details.

To construct the exploration, we define the measure $M $ as the image measure of $\mu \otimes \omega$ under the mapping $(\ell, x) \mapsto x \cdot \ell$ (the rotation of $\ell$ by $x$),  where $\omega$ denotes the harmonic measure on $\partial \BD$ as seen from $0$, and $\mu$ denotes the (infinite) $\SLE_4$ bubble measure on $\BD$ pinned at $1$ introduced in~\cite{CLE}, normalized so that the time at which the loop surrounding $0$ is discovered is an exponential random variable of parameter one. We also let $(\ell_t)_{t \geq 0}$ denote a Poisson point process (PPP) with intensity measure given by $M$ times the Lebesgue measure on $\BR_+$ ($\ell_t$ being defined only for the countably many atom times $t$). This process defines an ordering on the (pinned) loops (also called bubbles), where each loop $\ell$ is equipped with time label $t_{\ell}$.

The exploration targeted at $0$ is defined as follows. Let $\tau_0$ denote the first time that the PPP discovers a loop that surrounds $0$. For all $\varepsilon \in (0,1)$, we let $\ell_{t_1^{\varepsilon}},\ldots, \ell_{t_{n_{\varepsilon}}^{\varepsilon}}$ denote the loops discovered strictly before time $\tau_0$ with diameter at least $\varepsilon$, such that $t_1^{\varepsilon}<\cdots<t_{n_{\varepsilon}}^{\varepsilon}$. 
 We define $D_0 = \BD$ and inductively let $D_i$ denote the connected component of $D_{i-1} \setminus \phi_i(\ell_{t_i^{\varepsilon}})$ containing $0$, where $\phi_i$ is the conformal mapping from $\BD$ onto $D_{i-1}$ such that $\phi_i(0) = 0$ and $\phi_i'(0) > 0$. Then, $D_{n_{\varepsilon}}$ converges in the Carath\'eodory sense as $\varepsilon \to 0$ to the interior of a loop which, by~\cite{CoInCLEExpl}, has the same law as the loop in a $\CLE_4$ surrounding $0$. The loop surrounding $0$ is then equipped with the time label $\tau_0$ and we say that it is discovered at time $\tau_0$. More generally, if we set $D^\varepsilon_t = D_{i-1}$ when $t_{i-1}^\varepsilon \le t<t_i^\varepsilon$, where $t^\varepsilon_0 \defeq 0$ and $t^\varepsilon_{n_\varepsilon+1} \defeq \tau_0$, then for all $t < \tau_0$, $D^\varepsilon_t$ decreases as $\varepsilon \downarrow 0$ and hence converges a.s. in the Carath\'eodory sense to a simply connected open domain $C_t(0)$, which is called the connected component of the unexplored region containing the origin.

It was shown in \cite[Lemma~6]{CoInCLEExpl} that the measure $M$ is invariant under M\"obius transformations of $\BD$. Therefore, by conformal invariance, we can define the exploration targeted at any fixed point $z \in \BD$ by mapping the exploration targeted at $0$ via a M\"obius transformation of $\BD$ which maps $0$ to $z$. The transformation is unique up to rotation, but by the rotational invariance of $M$ the law of the exploration does not depend on the choice.

The labeled $\CLE_4$ exploration is then defined by coupling the exploration processes targeted at any two fixed points $z,w \in \BD$ together so that they agree up until the first time that they are separated, and then evolve independently. For all $z \in \BD$, we denote by $C_t(z)$ the connected component of the unexplored region containing $z$. The explored region is the compact set 
\[\SCB_t(\partial \BD) \defeq \overline{\BD} \setminus \bigcup_{z \in \BD_\BQ} C_t(z),\]
where $\BD_\BQ \defeq \BD \cap \BQ^2$.
Since every rational point is encircled by a loop (the loop $\SCL(x)$ surrounding $x \in C_t(z)$ being discovered after time $t$, inside $C_t(z)$) and since $C_t(z)$ is open, we can also write $\overline{C_t(z)} = \overline{\bigcup_{x \in C_t(z) \cap \BQ^2} \mathrm{int}(\SCL(x)) }$.

\subsection{Two-valued sets of the GFF}\label{subsec:tvs_gff}

Next, we introduce the two-valued sets of a GFF $h$ on $\BD$ that we are going to consider. Recall the definition of a local set of $h$ given in~\cite{MR3101840}. We refer to~\cite{aru2019bounded} and~\cite{TVSGFF} for more detailed expositions of two-valued sets of the GFF.

\begin{definition}\label{def TVS}
Fix $a,b>0$ and let $h$ be a zero-boundary GFF in a simply connected domain $G \subsetneq \BC$. We say that $\mathbb{A}_{-a,b}$ is a two-valued local set of $h$ of levels $-a$ and $b$ if it is a local set of $h$ and satisfies the following properties.
\begin{enumerate}
\item \label{it:boundary_conditions}
If $h_{\mathbb{A}_{-a,b}}$ denotes the harmonic part of $h$ on $G \setminus \mathbb{A}_{-a,b}$ (as in~\Cref{subsec:gff}), then we have that $h_{\mathbb{A}_{-a,b}} \in \{-a,b\}$ a.s.
\item \label{it:thin_local_set}
The set $\mathbb{A}_{-a,b}$ is a \emph{thin} local set, i.e., for any smooth test function $f \in C_0^{\infty}(G )$, we have that
\begin{align*}
(h,f) = (h-h_{\mathbb{A}_{-a,b}}, f) + \int_{G \setminus \mathbb{A}_{-a,b}} h_{\mathbb{A}_{-a,b}}(x) f(x) \mathrm{d}x
\end{align*}
a.s.
\item \label{it:finitely_many_components} The set $\mathbb{A}_{-a,b} \cup \partial G$ has a finite number of connected components.
\end{enumerate}
\end{definition}

It was shown in~\cite{aru2019bounded} that if $a,b \in \BR$ are such that $-a<0<b$ and $a+b \geq 2\lambda$, then such a set $\mathbb{A}_{-a,b}$ exists and it is a.s.\ determined by $h$. Moreover, $\mathbb{A}_{-a,b}$ is unique in the sense that if $A'$ is another local set of $h$ satisfying~\eqref{it:boundary_conditions}, \eqref{it:thin_local_set} and~\eqref{it:finitely_many_components}, we have that $A' = \mathbb{A}_{-a,b}$ a.s.

Let us now briefly review the construction of $\mathbb{A}_{-a,-a+2\lambda}$ for $a \in (0,2\lambda)$, which will be needed for our purposes. We will follow \cite[Section~3.2]{TVSGFF}. 

To begin with, we let $\eta$ denote the level line of $h+a-\lambda$ from $-\ri $ to $\ri$ (level lines in $\BD$ being defined by conformal invariance) and set $A^1 = \eta([0,\infty])$. In each connected component $O$ of $\BD \setminus A^1$ lying to the left of $\eta$, we let $x$ and $y$ be the two endpoints of the (a.s.\ nondegenerate) arc $\partial O \cap \partial \BD$, and we assume that $(-\ri,x,y)$ are in counterclockwise order. We then let $\eta^O$ denote the level line of $h|_{O} + a -\lambda$ from $x$ to $y$.

As for the connected components $O$ of $\BD \setminus A^1$ lying to the right of $\eta$, we perform an analogous procedure except that we explore the level line of $h|_O + a - \lambda$ from $y$ to $x$. We denote those level lines by $\eta^O$.

In the next step of the exploration, we let $A^2$ denote the closure of the union of $A^1$ with the $\eta^O$'s for each connected component $O$ of $\BD \setminus A^1$. In the connected components of $\BD \setminus A^2$ whose boundaries are contained in $A^2$, we stop the exploration. Note that the boundary conditions of $h$ on each of the aforementioned components are a.s.\ constant and given by either $-a$ or $2\lambda - a$. In the rest of the components, we iterate as before to obtain $A^n$. We note that if $V$ is a connected component of $\BD \setminus A^2$ such that $\partial V \cap \partial \BD \neq \emptyset$, then we have that the boundary conditions of $h|_{V}$ are given by $0$ on $\partial V \cap \partial \BD$, and $-a$ (resp.\ $2\lambda-a$) on $\partial V \setminus \partial \BD$ if $V$ is contained in a component of $\BD \setminus A^1$ lying to the right (resp.\ left) of $\eta$. Then, we set $\mathbb{A}_{-a,-a+2\lambda}\defeq\overline{\cup_{n \in \BN} A^n}$.

\begin{remark}\label{rem:target_invariance}
The uniqueness of $\mathbb{A}_{-a,b}$ implies that the starting and ending points of the level lines and the order in which they were sampled to produce $\mathbb{A}_{-a,b}$ do not matter. Alternatively, this can also be seen from the reversibility of the $\SLE_4(\underline{\rho})$ processes~\cite[Theorem~1.1.6]{LevelLineGFFI}.
\end{remark}

\begin{remark}\label{remark coupling TVS BCLE}
The set $\mathbb{A}_{-a,-a+2\lambda}$ constructed above is equal to the range of a clockwise boundary conformal loop ensemble with $\kappa=4$ and parameter $\rho$ (i.e., a $\cwBCLE_4(\rho)$ process) with $\rho = -a/\lambda$ introduced in \cite[Section~7]{CLEPerc} (the definition is recalled in~\Cref{sec:cle-percolation}). By varying $a \in (0,2\lambda)$, we obtain $\cwBCLE_4(\rho)$ for all $\rho \in (-2,0)$. More precisely, the loops with boundary condition $2\lambda -a$ in $\mathbb{A}_{-a,-a+2\lambda}$ correspond to the true loops of the $\cwBCLE_4(\rho)$ and the loops with boundary condition $-a$ in $\mathbb{A}_{-a,-a+2\lambda}$ correspond to the false loops of the $\cwBCLE_4(\rho)$.
\end{remark}

\subsection{Coupling between the GFF and the labeled $\CLE_4$}
\label{subsec:gff_labeled_cle_4}

Now, we are ready to construct the coupling that we will use between a zero-boundary GFF $h$ on $\BD$ and a labeled $\CLE_4$ $\{(\ell, t_{\ell})\}_{\ell \in \Gamma}$. We will follow the construction given in \cite[Section~6]{TVSGFF}.

Fix $r \in (0,2\lambda)$. For all $j \in \BN$, we define sets $\mathbb{B}_r^j$ iteratively as follows. First, we set $\mathbb{B}_r^1\defeq\mathbb{A}_{-r,-r+2\lambda}$. In each connected component $O$ of $\BD \setminus \mathbb{B}_r^1$ such that the boundary conditions of $h|_O$ are given by $-r$, we explore the set $\mathbb{A}_{-r,-r+2\lambda}(O)$ which is defined in the same way as $\mathbb{A}_{-r,-r+2\lambda}$, but with the zero-boundary GFF $h|_O - h_{\mathbb{B}_r^1}$ in place of $h$, where we recall that $h_{\mathbb{B}_r^1}$ is as in~\Cref{def TVS}. Then, we let $\mathbb{B}_r^2$ denote the closure of the union of the sets explored and note that the boundary conditions of $h$ on the connected components of $\BD \setminus \mathbb{B}_r^2$ are constant and lie in $\{2\lambda-r,2\lambda-2r,-2r\}$.

Suppose that we have constructed $\mathbb{B}_r^j$ for $j \in \BN$. Then, in each connected component $O$ of $\BD \setminus \mathbb{B}_r^j$ such that the boundary conditions of $h|_O$ are given by $-jr$, we explore the set $\mathbb{A}_{-r,-r+2\lambda}(O)$ defined in the same way as $\mathbb{A}_{-r,-r+2\lambda}$ but with the zero-boundary GFF $h|_O - h_{\mathbb{B}_r^j}$ in place of $h$. We define $\mathbb{B}_r^{j+1}$ as the closure of the union of $\mathbb{B}_r^j$ and the sets explored.

We set $\mathbb{B}_r\defeq\overline{\cup_{j \in \BN} \mathbb{B}_r^j}$ and $\mathbb{B}_0\defeq\overline{\cup_{n \in \BN} \mathbb{B}_{2^{-n}}}$. For every loop $\ell$ of $\mathbb{B}_0$ (i.e., whose interior $\text{int}(\ell)$ is a connected component of $\BD \setminus \mathbb{B}_0$), we define a label $t_{\ell}$ such that the boundary conditions of $h|_{\text{int}(\ell)}$ are given by $2\lambda - t_{\ell}$ (their a.s.\ constancy on each connected component of $\BD \setminus \mathbb{B}_0$ is part of the construction of the two-valued sets in~\cite[Section~6]{TVSGFF}). Then, we have the following.

\begin{proposition}[{\cite[Proposition~6.6]{TVSGFF}}]
\label{prop coupling TVS uniform explo}
Suppose that we have the setup described above. Then, the collection of loops in $\mathbb{B}_0$ has the law of a non-nested $\CLE_4$ $\Gamma$ in $\BD$ and $\{(\ell,t_{\ell})\}_{\ell \in \Gamma}$ has the law of a labeled $\CLE_4$.
\end{proposition}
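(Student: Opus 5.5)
This statement is a citation of \cite[Proposition~6.6]{TVSGFF}, so the plan is to recover it from the two ingredients already recalled: the uniform exploration of \Cref{subsec:labeled_cle_4} and the two-valued sets of \Cref{subsec:tvs_gff}. The strategy is to show that, for each fixed $r>0$, the iterated sets $\mathbb{B}_r^j$ form a discretized version of the uniform exploration, and then to let $r=2^{-n}\downarrow 0$.

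\emph{Step 1: one layer.} First I will identify the set $\mathbb{B}^1_r=\mathbb{A}_{-r,-r+2\lambda}$ with $\cwBCLE_4(-r/\lambda)$, using \Cref{remark coupling TVS BCLE}. Its true loops carry boundary value $2\lambda-r$; these should be exactly the $\CLE_4$ loops discovered by a uniform exploration during a time interval of length $r$. Its false loops carry value $-r$; these should be the components of the unexplored region $C_r(\cdot)$ at time $r$. The point to verify is the time normalization, namely that in the Werner--Wu exploration the loop around $0$ is found before time $r$ with the probability dictated by the $\BCLE$ law. I would check this by a martingale computation for $h$ at $0$.

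\emph{Step 2: iteration.} Given $\mathbb{B}^j_r$, the field in a component with boundary value $-jr$ is, by the strong Markov property for local sets, $-jr$ plus an independent zero-boundary GFF. So the next layer is an independent copy of Step~1, mapped conformally into that component. This is exactly the Markov property of the uniform exploration: conditionally on $\SCB_t(\partial\BD)$, the unexplored components carry independent rescaled explorations, by Möbius invariance of $M$ \cite[Lemma~6]{CoInCLEExpl}. A loop found in layer $j$ has boundary value $2\lambda-jr$, which matches a label in $((j-1)r,jr]$. Hence $\mathbb{B}_r$ is a labeled $\CLE_4$ with labels rounded up to the grid $r\BN$.

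\emph{Step 3: consistency and limit.} For $r=2^{-n}$, the uniqueness and determinism of two-valued sets \cite{aru2019bounded} should give $\mathbb{B}_{2^{-n}}\subseteq\mathbb{B}_{2^{-n-1}}$. The labels then refine consistently, and $\mathbb{B}_0$ is the closure of the increasing union. Since the time-$t$ explored sets of the uniform exploration are determined by its dyadic-time marginals, and since $\mathbb{B}_0$ is still determined by $h$, the loop collection has the $\CLE_4$ law. The label $t_\ell$ equals $2\lambda$ minus the boundary value and is the limit of the discretized labels, so it is the true discovery time.

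I expect Step~1 to be the main obstacle: identifying a single two-valued set with a time-$r$ exploration requires an explicit law computation, such as matching the Poissonian bubble intensity with $\SLE_4(\rho)$ excursions.
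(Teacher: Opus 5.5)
There is a genuine gap: your Step~1 identification is false, and Steps~2--3 inherit it.

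\textbf{Step~1 fails.} The true loops of $\mathbb{B}^1_r=\mathbb{A}_{-r,-r+2\lambda}$ are loops of a $\cwBCLE_4(-r/\lambda)$, so they intersect $\partial\BD$. $\CLE_4$ loops never do. Hence the true loops cannot be the $\CLE_4$ loops discovered during $[0,r]$.

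The one-point martingale check you propose actually refutes the identification. $\mathbb{A}_{-r,-r+2\lambda}$ is a thin bounded-type local set, so $\BE[h_{\mathbb{A}_{-r,-r+2\lambda}}(0)]=0$. This gives $\BP[0 \text{ lies in a true loop}]=r/(2\lambda)$ exactly, which is linear in $r$ and tends to $1$ as $r\uparrow 2\lambda$. By contrast, the label of $\SCL(0)$ is the first atom of a Poisson process, so $\BP[t_{\SCL(0)}\le r]=1-e^{-cr}$ for some $c>0$. No choice of $c$ makes these agree for all $r\in(0,2\lambda)$.

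\textbf{What actually happens at a fixed scale.} Only the false loops are unexplored components at time $r$, and only those components of $\BD\setminus\SCB_r(\partial\BD)$ whose boundary meets $\partial\BD$ (this is \Cref{lem:bcle_uniform_exploration}, a consequence of the proposition). The true loops fill $\SCB_r(\partial\BD)$ together with all the unexplored pockets it encloses. Each true loop therefore contains infinitely many loops of $\mathbb{B}_0$, and their labels are not confined to $(0,r]$. This is exactly why \Cref{lemma link TVS uniform exploration} needs the union over $n$ of the loops of $\mathbb{B}^{2^{n-k}}_{2^{-n}}$ with value $-2^{-k}$.

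\textbf{Consequences for Steps~2--3.} The conclusion of Step~2, that $\mathbb{B}_r$ is a labeled $\CLE_4$ with labels rounded to the grid $r\BN$, is false. The loops of $\mathbb{B}_r$ are $\BCLE$ loops. A $\CLE_4$ loop with label $t$ only sits inside a true loop of $\mathbb{B}_{2^{-n}}$ with value $2\lambda-j_n2^{-n}$, where $j_n2^{-n}\to t$ as $n\to\infty$ (as used in the proof of \Cref{lemma link TVS uniform exploration}). Your Markov-property matching is correct, but it concerns only the components carrying value $-jr$, not the loops.

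\textbf{The missing step is genuinely asymptotic.} At fixed $r$ the construction contains no $\CLE_4$ loop at all. The loops and their labels appear only as $r=2^{-n}\to0$, so you would have to prove two things:
\begin{itemize}
\item the nested true loops of $\mathbb{B}_{2^{-n}}$ around a given point shrink onto a loop with the law of a $\CLE_4$ loop;
\item the levels $-jr$ of the false loops containing that point, read as times, converge to the uniform exploration targeted at it.
\end{itemize}
Heuristically this holds because the branches are $\SLE_4(\rho;-2-\rho)$ with $\rho=-r/\lambda\uparrow0$, whose excursions away from the boundary become small (\Cref{prop diameter of arcs}). Each layer then barely changes the current domain, and the accumulated layers look like a Poisson process of $\SLE_4$ bubbles. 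Turning this heuristic into an identification of laws is the real content of \cite[Section~6]{TVSGFF}. The paper imports that result wholesale rather than reproving it.

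\textbf{Step~3 does not supply this.} That $\mathbb{B}_0$ is determined by $h$ says nothing about its law. Even the monotonicity $\mathbb{B}_{2^{-n}}\subseteq\mathbb{B}_{2^{-n-1}}$ does not follow from uniqueness of two-valued sets alone, since $\mathbb{B}_r$ is not two-valued. It requires the comparison arguments of \cite[Subsection~6.2.4]{TVSGFF}.
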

Let us provide more details on this coupling. The first lemma shows an inclusion of loops with boundary condition $-2^{-k}$.
\begin{lemma}\label{lemma link TVS BCLE}
	Let $r \in (0,2\lambda)$. For all $m \in \BZ_{\ge 0}$, the loops with boundary condition $-r$ of $\mathbb{B}^1_{r}$ are loops with boundary condition $-r$ in $\mathbb{B}^{2^{m}}_{2^{-m}r}$. In particular, for $r = 2^{-k}$, $n\ge k$ and $m = n-k$, the false loops of the $\cwBCLE_4(-2^{-k}/\lambda)$ (from~\Cref{remark coupling TVS BCLE}) are loops with boundary condition $-2^{-k}$ in $\mathbb{B}^{2^{n-k}}_{2^{-n}}$.
\end{lemma}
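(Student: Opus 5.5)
The plan is to argue by induction on $m$, using a single elementary fact about two-valued sets: a two-valued set with a wide gap can be built by composing two-valued sets with half the gap. Concretely, I would show that for any zero-boundary GFF $h$ in a simply connected domain $G$, the loops with boundary condition $-r$ of $\mathbb{A}_{-r,-r+2\lambda}$ are among the loops with boundary condition $-r$ of $\mathbb{B}^2_{r/2}$. Here $\mathbb{B}^2_{r/2}$ is the set obtained by first exploring $\mathbb{A}_{-r/2,-r/2+2\lambda}$, then exploring a further $\mathbb{A}_{-r/2,-r/2+2\lambda}$ inside each component with boundary value $-r/2$. Iterating this $m$ times gives the claim for $\mathbb{B}^{2^m}_{2^{-m}r}$. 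Indeed, each $-r$ loop of $\mathbb{B}^1_r$ is, by the one-step statement applied to $r/2$, a $-r$ loop of $\mathbb{B}^2_{r/2}$. Each component of $\mathbb{B}^2_{r/2}$ with boundary value $-r$ arises through two consecutive $-r/2$ steps, and applying the one-step statement inside every $-jr/2$ component lets us refine each step further. The special case then follows by taking $r=2^{-k}$, $m=n-k$, and using \Cref{remark coupling TVS BCLE} to identify the false loops of the $\cwBCLE_4(-2^{-k}/\lambda)$ with the $-2^{-k}$ loops of $\mathbb{B}^1_{2^{-k}}=\mathbb{A}_{-2^{-k},-2^{-k}+2\lambda}$.

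For the one-step statement I would use the uniqueness of two-valued sets from \cite{aru2019bounded}. Let $A=\mathbb{B}^2_{r/2}$. Its harmonic part takes only the values $-r$, $2\lambda-r/2$ and $2\lambda-r$. In every component with value $2\lambda-r/2$ or $2\lambda-r$, I would continue by exploring a suitable two-valued set of the restricted field, chosen so that the final values lie in $\{-r,2\lambda-r\}$. In a component with value $2\lambda-r/2$, for example, this is $\mathbb{A}_{-2\lambda+r/2,\,r/2}$, whose gap is exactly $2\lambda$ and which therefore exists. Components with value $2\lambda-r$ are left as they are, and components with value $-r$ are never touched.

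Call the resulting set $A'$. It is a local set, being a countable composition of local sets with conditionally independent explorations, and its harmonic part takes values in $\{-r,-r+2\lambda\}$. It is thin, because each piece is thin and thinness is preserved under such unions. Granting the finiteness condition of \Cref{def TVS}, uniqueness gives $A'=\mathbb{A}_{-r,-r+2\lambda}$. Since the $-r$ components of $A$ were left untouched, every component of $\mathbb{A}_{-r,-r+2\lambda}$ with value $-r$ either is a $-r$ component of $A$ or comes from the later refinement. The statement needs the converse direction, namely that every $-r$ loop of $\mathbb{B}^1_r$ is a $-r$ loop of $A$. So I would also check that the refinement in the $2\lambda-r/2$ components produces no new $-r$ loops. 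In fact it does produce some, so the argument must be arranged more carefully. I would switch to comparing the two sets from the other side: show that $\mathbb{B}^1_r\subseteq A'$ and that each $-r$ component of $\mathbb{B}^1_r$ is a component of $\BD\setminus A$. The monotonicity of two-valued sets under enlarging the interval \cite{aru2019bounded} gives $\mathbb{A}_{-r/2,-r/2+2\lambda}\subseteq$ the first stage. A component $O$ of $\BD\setminus\mathbb{B}^1_r$ with value $-r$ is then cut by the first stage only into pieces with values $-r/2$ (never $2\lambda-r/2$, since the level-line interaction rules forbid crossing $O$'s boundary at that height), and each of these becomes a $-r$ component after the second stage.

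The main obstacle is this last geometric step: showing that the $-r$ components of $\mathbb{B}^1_r$ are not subdivided by $A$, equivalently that $A\cap O=\emptyset$ for each such $O$. I would try to prove it through the construction by level lines of $h+r/2-\lambda$ together with the non-crossing rule from the interaction theorem of \Cref{subsec:level_lines_of_gff}. The finiteness condition (iii) of \Cref{def TVS} for the composed set also requires care, and I would expect to handle it via the known fact that the finiteness condition holds for the compositions in \cite{TVSGFF}.
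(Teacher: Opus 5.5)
Your induction on $m$ is the same scaffold as the paper's. The paper applies the one-step fact once, then the induction hypothesis with $r/2$ both in $\BD$ and inside the $-r/2$ component $O'$. So everything rests on the one-step fact: the $-r$ loops of $\mathbb{B}^1_r$ are $-r$ loops of $\mathbb{B}^2_{r/2}$. The paper does not prove this; it imports it from \cite[Subsection~6.2.4]{TVSGFF}.

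Your attempted proof of this fact fails at its first move.
\begin{itemize}
\item In a component where the harmonic part is $2\lambda-r/2$, exploring $\mathbb{A}_{-2\lambda+r/2,\,r/2}$ of the restricted field gives final values $0$ and $2\lambda$, not $-r$ and $2\lambda-r$.
\item No correct choice exists. Going from $2\lambda-r/2$ to $\{-r,2\lambda-r\}$ would require levels $-2\lambda-r/2$ and $-r/2$, both negative. A thin local set of a zero-boundary GFF cannot have such levels, since its harmonic part has mean zero.
\item Equivalently, $\mathbb{B}^1_{r/2}=\mathbb{A}_{-r/2,2\lambda-r/2}$ is not contained in $\mathbb{A}_{-r,2\lambda-r}$. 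So there is no set $A'\supseteq\mathbb{B}^2_{r/2}$ that uniqueness could identify with $\mathbb{B}^1_r$, in either direction.
\end{itemize}

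Your fallback does not repair this.
\begin{itemize}
\item Monotonicity in the interval cannot compare $\mathbb{A}_{-r/2,2\lambda-r/2}$ with $\mathbb{A}_{-r,2\lambda-r}$, because neither interval contains the other.
\item The picture of $O$ being cut into $-r/2$ pieces that each ``become'' $-r$ components contradicts what must be shown: $O$ must not be cut at all. Moreover, the second stage splits each $-r/2$ piece again into $-r$ and $2\lambda-r$ components.
\item You name $A\cap O=\emptyset$ as the main obstacle but do not prove it. Even if you did, it would only place $O$ inside a component of $\BD\setminus A$, not make $O$ one.
\end{itemize}

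A route that works is to compare both sets with the wider-gap set $\mathbb{A}_{-r,2\lambda-r/2}$.
\begin{itemize}
\item In the $2\lambda-r$ components of either $\mathbb{A}_{-r,2\lambda-r}$ or $\mathbb{B}^2_{r/2}$, explore $\mathbb{A}_{-2\lambda,r/2}$ of the restricted field. This produces values in $\{-r,2\lambda-r/2\}$ and leaves the $-r$ components untouched.
\item By uniqueness, both completed sets equal $\mathbb{A}_{-r,2\lambda-r/2}$, so the $-r$ loops of both are $-r$ loops of it.
\item The $-r$ loops of $\mathbb{A}_{-r,2\lambda-r}$ intersect $\partial\BD$, being false BCLE loops.
\item It remains to show that the $-r$ loops created inside the $2\lambda-r$ components of $\mathbb{B}^2_{r/2}$ do not intersect $\partial\BD$. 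This is the boundary-touching dichotomy supplied by \cite[Lemma~3.8]{TVSGFF} and packaged in \cite[Subsection~6.2.4]{TVSGFF}.
\end{itemize}
Either cite that subsection for the one-step fact, as the paper does, or supply this argument.
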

\begin{proof}
	By \cite[Subsection 6.2.4]{TVSGFF}, we know that for all $r \in (0, 2\lambda)$, all loops with boundary condition $-r$ of $\mathbb{B}^1_r$ are loops with boundary condition $-r$ in $\mathbb{B}^2_{r/2}$. The desired result follows by induction as detailed below. The case $m=0$ is trivial.
	
	Let $m \in \BZ_{\ge 0}$. Let us assume that for all $r \in (0, 2\lambda)$, for all zero-boundary GFFs $h$ on $\BD$ (equivalently, by conformal invariance, on any simply connected domain $U \subsetneq \BC$), all the loops with boundary condition $-r$ in $\mathbb{B}^{1}_{r}$ (constructed with $h$) are loops with boundary condition $-r$ in $\mathbb{B}^{2^m}_{2^{-m}r}$. Let $r \in (0,2\lambda)$ and fix a zero-boundary GFF $h$ on $\BD$. Let $O$ be the interior of a loop with boundary condition $-r $ in $\mathbb{B}^{1}_{r}$. By the beginning of the proof, we know that $O$ is also the interior of a loop with boundary condition $-r$ in $\mathbb{B}^{2}_{r/2}$. But, by the iterative construction of $\mathbb{B}^{2}_{r/2}$, we see that $O$ is included in the interior $O'$ of a loop of $\mathbb{B}^1_{r/2}$ with boundary condition $-r/2$ (in the sense that the boundary conditions of $h\vert_{O'}$ are given by $-r/2$) and that $O$ is the interior of a loop of $\mathbb{A}_{-r/2, -r/2+2\lambda}(O')$ with boundary condition $-r/2$.
	Moreover, using that $\mathbb{B}_{r/2}^1$ is a local set for the GFF $h$, conditionally on $\mathbb{B}_{r/2}^1$ and $h_{\mathbb{B}_{r/2}^1}$, the field $h|_{O'} - h_{\mathbb{B}_{r/2}^1}$ is a zero-boundary GFF on $O'$. Note that $\mathbb{A}_{-r/2, -r/2+2\lambda}(O')$ can therefore be seen as $\mathbb{B}^1_{r/2}(O')$, in the sense that it is defined in the same way as $\mathbb{B}^1_{r/2}$ except that $\BD, h$ are replaced by $O',h|_{O'} - h_{\mathbb{B}_{r/2}^1}$.
	By performing the exact same iterative construction in $O'$ (instead of $\BD$) with the field $h|_{O'} - h_{\mathbb{B}_{r/2}^1}$ (instead of $h$), we define the set $\mathbb{B}^j_{r/2}(O')$ for all $j\ge 1$. By induction hypothesis, we deduce that $O$ is the interior of a loop with boundary condition $-r/2$ of $\mathbb{B}^{2^m}_{2^{-m-1}r}(O')$ and that $O'$ is the interior of a loop with boundary condition $-r/2$ of $\mathbb{B}^{2^m}_{2^{-m-1}r}$.
	Finally, note that by the iterative construction of $\mathbb{B}^{2^{m+1}}_{2^{-m-1}r}$, we can build $\mathbb{B}^{2^{m+1}}_{2^{-m-1}r}$ by first taking $\mathbb{B}^{2^{m}}_{2^{-m-1}r}$ and then by exploring $\mathbb{B}^{2^{m}}_{2^{-m-1}r}(O'')$ in all the interiors $O''$ of the loops with boundary condition $-r/2$ of $\mathbb{B}^{2^{m}}_{2^{-m-1}r}$. In particular, we conclude that $O$ is the interior of a loop with boundary condition $-r$ in $\mathbb{B}^{2^{m+1}}_{2^{-m-1}r}$. This ends the induction step.
\end{proof}

Next, let us identify the explored region $\SCB_{2^{-k}}(\partial \BD)$ in terms of the increasing union of the loops with boundary condition $-2^{-k}$.
\begin{lemma}\label{lemma link TVS uniform exploration}
	Let $k \in \BZ_{\ge 0}$. Then, in the coupling of~\Cref{prop coupling TVS uniform explo}, the increasing union over $n$ of the regions encircled by the loops of $\mathbb{B}^{2^{n-k}}_{2^{-n}}$ with boundary condition $-2^{-k}$ is $\BD \setminus \SCB_{2^{-k}}(\partial \BD)$.
\end{lemma}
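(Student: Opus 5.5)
We have to identify $\BD\setminus\SCB_{2^{-k}}(\partial\BD)$ with the increasing union over $n\ge k$ of the interiors of the loops of $\mathbb{B}^{2^{n-k}}_{2^{-n}}$ carrying boundary condition $-2^{-k}$. The plan is to translate both sides into statements about labels of $\CLE_4$ loops in the coupling of \Cref{prop coupling TVS uniform explo}, where a loop $\ell$ of $\mathbb{B}_0$ has label $t_\ell$ and $h|_{\mathrm{int}(\ell)}$ has boundary value $2\lambda-t_\ell$.

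\emph{Monotonicity.} Write $O^n$ for the union of the interiors of the loops of $\mathbb{B}^{2^{n-k}}_{2^{-n}}$ with boundary condition $-2^{-k}$. Apply \Cref{lemma link TVS BCLE} with $r=2^{-n}\cdot 2^{n-k}$ replaced by the scale $2^{-n}$ and the domain $\mathbb{B}^{2^{n-k}}_{2^{-n}}$: iterating the inclusion used in its proof (each $-r$ loop of $\mathbb{B}^1_r$ is a $-r$ loop of $\mathbb{B}^2_{r/2}$), we get that every $-2^{-k}$ loop of $\mathbb{B}^{2^{n-k}}_{2^{-n}}$ is also a $-2^{-k}$ loop of $\mathbb{B}^{2^{n+1-k}}_{2^{-n-1}}$. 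Hence $O^n$ is increasing in $n$, and we let $O\defeq\bigcup_n O^n$.

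\emph{Interpretation via labels.} At level $2^{-n}$ the field is explored by two-valued sets, each of which lowers the boundary value by $2^{-n}$ on ``false'' components and terminates with value $2\lambda-j2^{-n}$ on ``true'' loops. A point $z$ therefore lies in $O^n$ if and only if, after $2^{n-k}$ iterations, it is still in a false component, i.e.\ it has not been enclosed by any true loop at steps $j\le 2^{n-k}$. As $n\to\infty$, the true loops of the $\mathbb{B}_{2^{-n}}$ approximate the $\CLE_4$ loops of $\mathbb{B}_0$, and the index $j2^{-n}$ approximates the label. Consequently $z\in O$ should be equivalent to the following: there is a neighbourhood of $z$ none of whose points lies in a loop with label $\le 2^{-k}$, and $z$ lies in an unexplored component at time $2^{-k}$. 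This is exactly the statement that $z\in C_{2^{-k}}(w)$ for some rational $w$, i.e.\ $z\notin\SCB_{2^{-k}}(\partial\BD)$. To make this rigorous we use the nested structure: the $\mathbb{B}_{2^{-n}}$ increase to $\mathbb{B}_0$, which is the union defining the coupling in~\cite[Section~6]{TVSGFF}. We then argue the following.
\begin{enumerate}
\item[(a)] For $O\subseteq\BD\setminus\SCB_{2^{-k}}(\partial\BD)$: a component of $O^n$ is a simply connected domain on which $h$ equals a zero-boundary GFF minus $2^{-k}$. All $\CLE_4$ loops inside it are discovered by further exploration, so by construction they have labels $>2^{-k}$ (labels only increase, being $2^{-k}$ plus the labels of the subexploration, which are a.s.\ positive). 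Moreover, its boundary is part of $\mathbb{B}_0$ explored at time $2^{-k}$. Using the Markov property of the uniform exploration (\cite[Section~6]{TVSGFF} identifies the exploration restricted to such a component with a fresh exploration), the component equals $C_{2^{-k}}(w)$ for rational $w$ inside.
\item[(b)] For the reverse inclusion: fix rational $w$ with $t_{\SCL(w)}>2^{-k}$. For $n$ large, the $2^{-n}$-discretized label of $\SCL(w)$ exceeds $2^{-k}$, so $w\in O^n$. Since $C_{2^{-k}}(w)$ is the component of $O$ containing $w$ (by (a) applied to the component of $O^n$ containing $w$, which is then $C_{2^{-k}}(w)$ itself), the union over rational $w$ gives everything.
\end{enumerate}

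The main obstacle is making precise that the discretized labels at dyadic level $2^{-n}$ converge to the uniform exploration labels consistently with the unexplored components $C_t$, rather than just as loop labels. In particular, we need a component of $O^n$ to coincide exactly with $C_{2^{-k}}(w)$, not merely to be contained in it. The plan is to rely on the identification in~\cite[Section~6]{TVSGFF} of $\mathbb{B}_r$-type explorations with the Poissonian exploration at dyadic times, as the proof of \Cref{prop coupling TVS uniform explo} does, together with the Carath\'eodory monotone convergence of $D_t^\varepsilon$ to $C_t(0)$.
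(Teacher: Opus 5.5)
Your monotonicity step and the first half of (a) match the paper. That half says every $\CLE_4$ loop inside a $-2^{-k}$ component $U$ of $\mathbb{B}^{2^{n-k}}_{2^{-n}}$ has label $>2^{-k}$, because the construction restarts inside $U$ from boundary value $-2^{-k}$. As in the paper, this already gives $U\subseteq\BD\setminus\SCB_{2^{-k}}(\partial\BD)$.

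The gap is the second half of (a): the claim that $U$ \emph{equals} some $C_{2^{-k}}(w)$. Given $U\subseteq C_{2^{-k}}(w)$, equality is equivalent to $\partial U\cap\BD\subseteq\SCB_{2^{-k}}(\partial\BD)$. That is, every boundary point of $U$ must be approached by loops discovered by time $2^{-k}$, and your justification does not give this.
\begin{itemize}
\item That $\partial U\subseteq\mathbb{B}_0$ carries no information, since $\mathbb{B}_0$ contains every $\CLE_4$ loop, including loops discovered long after time $2^{-k}$.
\item The Markov property of the field inside $U$ only reproduces the label bound, which is a statement about the inside of $U$.
\item The Markov property of the uniform exploration applies to domains already known to be components of the unexplored region at time $2^{-k}$. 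That is what you are trying to prove, so invoking it is circular.
\end{itemize}
Part (b) uses this identification to pass from $w\in O^n$ to $C_{2^{-k}}(w)\subseteq O$. So the inclusion $\BD\setminus\SCB_{2^{-k}}(\partial\BD)\subseteq O$ is not established, and your last paragraph defers it to \cite[Section~6]{TVSGFF} without naming a result that supplies it.

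The paper never claims a component-level identification. After the same monotonicity and label bound, it obtains $O\subseteq\BD\setminus\SCB_{2^{-k}}(\partial\BD)$ from $\overline{C_t(z)}=\overline{\bigcup_{x\in C_t(z)\cap\BQ^2}\mathrm{int}(\SCL(x))}$ and \Cref{prop coupling TVS uniform explo}. It proves the converse loop by loop. If $\mathrm{int}(\SCL(x))$ has boundary value $2\lambda-t$ with $t>2^{-k}$, then for each $n$ it lies in a component of $\BD\setminus\mathbb{B}_{2^{-n}}$ with boundary value $2\lambda-j_n2^{-n}$, where $j_n2^{-n}\to t$ by \cite[Lemma~6.13]{TVSGFF}. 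Once $j_n2^{-n}>2^{-k}$, the iterative definition places that component inside a $-2^{-k}$ loop of $\mathbb{B}^{2^{n-k}}_{2^{-n}}$; finally, a.s.\ no label equals $2^{-k}$. So the ``main obstacle'' you name is, in the paper, just \cite[Lemma~6.13]{TVSGFF} applied to individual loop labels, with the link between labels and discovery times supplied by \Cref{prop coupling TVS uniform explo}.

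Your remark that a component of $O^n$ must coincide with $C_{2^{-k}}(w)$, not merely lie in it, does point at something real. Passing from loop interiors to every point of $\BD\setminus\SCB_{2^{-k}}(\partial\BD)$ needs that no point of $\partial U$ lies in an unexplored component, and the paper also leaves this implicit. But it must come from the structure of $\mathbb{B}_0$ near $\partial U$, or a precise citation from \cite{TVSGFF}, not from a Markov property.
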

\begin{proof}
	Recall that the loops of $\mathbb{B}^{2^{n-k}}_{2^{-n}}$ have boundary conditions $2\lambda- 2^{-n}, 2\lambda -  2\cdot 2^{-n}, 2\lambda -  3\cdot 2^{-n},\ldots, 2 \lambda -  2^{-k}$ and $-2^{-k}$. When we iterate the construction in the domains encircled by the loops with boundary condition $-2^{-k}$ as explained at the beginning of the current subsection, the loops that are drawn have boundary conditions given by either $-s$ or $2\lambda- s$ for some $s>2^{-k}$. Moreover, by~\Cref{lemma link TVS BCLE}, the loops with boundary condition $-2^{-k}$ in $\mathbb{B}^{2^{n-k}}_{2^{-n}}$ are still loops with boundary condition $-2^{-k}$ in $\mathbb{B}^{2^{n+1-k}}_{2^{-n-1}}$. After taking the increasing union as $n\to \infty$, this shows that the loops $\ell \in \Gamma$ that are in the domains encircled by the loops of $\mathbb{B}^{2^{n-k}}_{2^{-n}}$ with boundary condition $-2^{-k}$ have a label $t_{\ell} \ge 2^{-k}$.
	
	Recall that by construction of the uniform exploration in~\Cref{subsec:labeled_cle_4}, we have that 
	\begin{align*}
		\overline{C_t(z)} = \overline{\bigcup_{x \in C_t(z) \cap \BQ^2} \mathrm{int}(\SCL(x)) }
	\end{align*}
	for all $t \geq 0$ and all $z \in \BD_{\BQ}$. By taking~\Cref{prop coupling TVS uniform explo} into account, this implies that the increasing union over $n$ of the regions encircled by the loops of $\mathbb{B}_{2^{-n}}^{2^{n-k}}$ with boundary condition $-2^{-k}$ is contained in $\BD \setminus \SCB_{2^{-k}}(\partial \BD)$.
	
	Conversely, let $O = \text{int}(\SCL(x))$ for some $x \in \BQ^2$ be a connected component of boundary condition $2\lambda-t$ for some $t>2^{-k}$ in $\mathbb{B}_0$. By definition of $\mathbb{B}_0$, for all $n \in \BN$, the component $O$ is included in the interior $O_n(x)$ of a loop in $\mathbb{B}_{2^{-n}}$ with boundary condition $2 \lambda - j_n 2^{-n}$ where $j_n2^{-n} \to t$ as $n\to \infty$ (due to \cite[Lemma 6.13]{TVSGFF}). For $n$ large enough, we have $j_n 2^{-n}>2^{-k}$. Therefore, by the iterative definition of $\mathbb{B}^j_{2^{-n}}$'s for $j\ge 1$, we deduce that $O_n(x)$ is included in the interior of a loop of $\mathbb{B}^{2^{n-k}}_{2^{-n}}$ with boundary condition $-2^{-k}$. Thus, $O$ is included in the interior of a loop of $\mathbb{B}^{2^{n-k}}_{2^{-n}}$ with boundary condition $-2^{-k}$. This concludes the proof of the lemma since almost surely, there is no loop of $\mathbb{B}_0$ with boundary condition exactly $2\lambda-2^{-k}$.
\end{proof}

\begin{remark}\label{rem:counterclockwise_loops_bcle_4}
	The coupling defined in~\cite{TVSGFF}, performed with local sets whose loops have boundary conditions determined by dyadic numbers $j2^{-k}$ for $j,k \in \BZ_{\ge 0}$, can be performed in the exact same way by multiplying these dyadic numbers by a fixed real number $r \in (0,2\lambda)$, so that the resulting levels $jr2^{-k}$ again range over $(0, 2\lambda)$. Hence, by taking~\Cref{prop coupling TVS uniform explo},~\Cref{lemma link TVS BCLE} and~\Cref{lemma link TVS uniform exploration} (with $k=0$) into account, we see that for all $t\in (0,2\lambda)$, we can couple the uniform exploration of the $\CLE_4$ with a $\cwBCLE_4(-t/\lambda)$ so that the interior of the false $\cwBCLE_4(-t/\lambda)$ loop containing the origin (on the event that the origin is encircled by a false loop) is the connected component of the unexplored region containing $0$ at time $t$. More generally, one can couple the uniform exploration with a $\cwBCLE_4(-t/\lambda)$ so that the interiors of all the false $\cwBCLE_4(-t/\lambda)$ loops are connected components of the unexplored region at time $t$.
\end{remark}

Let us emphasize that, in order to get all the connected components of the unexplored region $\BD \setminus \SCB_t(\partial \BD)$, one needs to look at the union described in~\Cref{lemma link TVS uniform exploration}: the interiors of the false $\cwBCLE_4(-t/\lambda)$ loops do not give all the connected components of $\BD \setminus \SCB_t(\partial \BD)$. However, we can state the following lemma which will be useful in~\cite{kkmt2026cle4_part3}.

\begin{lemma}\label{lem:bcle_uniform_exploration}
	Consider the uniform exploration $(\SCB_s(\partial \BD))_{s\ge 0}$, let $r \in (0, 2\lambda)$ and consider the $\cwBCLE_4(-r/\lambda)$ coupled with $(\SCB_s(\partial \BD))_{s\ge 0}$ as in~\Cref{rem:counterclockwise_loops_bcle_4}. Then, the interiors of the false $\cwBCLE_4(-r/\lambda)$ loops are exactly the connected components of $\BD \setminus \SCB_r(\partial \BD)$ whose boundary intersects $\partial \BD$.
\end{lemma}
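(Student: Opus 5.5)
We prove the two inclusions separately, using the dyadic-level coupling of \Cref{rem:counterclockwise_loops_bcle_4} (with $r$ playing the role of $2^{-k}$ after rescaling the levels by a fixed real factor, so that \Cref{lemma link TVS BCLE} and \Cref{lemma link TVS uniform exploration} apply with $r$ in place of $2^{-k}$).

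\emph{False loops give such components.} By \Cref{rem:counterclockwise_loops_bcle_4}, the interior $O$ of each false $\cwBCLE_4(-r/\lambda)$ loop is a connected component of $\BD\setminus\SCB_r(\partial\BD)$. It remains to see that $\partial O\cap\partial\BD\neq\emptyset$. Recall the construction of $\mathbb{A}_{-r,-r+2\lambda}$ in \Cref{subsec:tvs_gff}: the false loops are the components with boundary value $-r$, and they are produced as components of $\BD\setminus A^n$ cut out by level lines $\eta^O$ drawn between two boundary points of an arc of $\partial\BD$. Any component whose boundary is contained in $A^n$ has a boundary arc consisting only of level-line sides. One then checks that the components carrying value $-r$ on their entire boundary are exactly those whose boundary contains a nondegenerate arc traced with the $0$ boundary data on $\partial\BD$ replaced by a single touching point. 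Equivalently, one can use the standard fact from \cite{CLEPerc} that every loop of a $\BCLE$ touches $\partial\BD$. I would take this last route, since $\BCLE$ loops are by definition boundary-touching.

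\emph{Components touching the boundary are false loops.} Let $C$ be a component of $\BD\setminus\SCB_r(\partial\BD)$ with $\partial C\cap\partial\BD\neq\emptyset$. By \Cref{lemma link TVS uniform exploration}, $C$ is an increasing union over $n$ of interiors of loops of $\mathbb{B}^{2^{n}}_{2^{-n}r}$ with boundary condition $-r$. It therefore suffices to show that any loop of $\mathbb{B}^{2^n}_{2^{-n}r}$ with boundary condition $-r$ that touches $\partial\BD$ is already a false loop of $\mathbb{B}^1_r$, since the union then stabilises.

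To show this, I would trace the iterative construction. Write $r_n=2^{-n}r$. A loop of $\mathbb{B}^{j}_{r_n}$ with boundary value $-jr_n$ is obtained by nesting $j$ boundary-touching false loops $O_1\supset O_2\supset\cdots$ of successive $\mathbb{A}_{-r_n,-r_n+2\lambda}$ sets. Since each $O_{i+1}$ touches $\partial O_i$, the final loop touches $\partial\BD$ only if every $O_i$ touches $\partial\BD$. The next step is to show the converse to \Cref{lemma link TVS BCLE}: a loop with boundary condition $-r$ of $\mathbb{B}^2_{r/2}$ that touches $\partial\BD$ is a false loop of $\mathbb{B}^1_r$. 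The argument uses the level-line interaction rules. In the two-step construction, the points of $\partial\BD\cap\partial O_2$ lie on level lines of heights differing appropriately, and the levels force the combined boundary to be the level line of $h+r-\lambda$ from the one-step construction. This would come from the uniqueness of two-valued sets applied to the local set formed by $\mathbb{B}^1_{r/2}$ together with the $\mathbb{A}(O')$ in boundary-touching $O'$. Induction on $n$, as in \Cref{lemma link TVS BCLE}, then concludes.

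\emph{Expected obstacle.} The main obstacle is this converse step. The plan is to recognise the boundary-touching part of $\mathbb{B}^2_{r/2}$ as a two-valued-type local set whose boundary-touching components have value $-r$, and to invoke uniqueness of $\mathbb{A}_{-r,-r+2\lambda}$ \cite{aru2019bounded}. This would likely be done via the description of the uniform exploration up to time $r$ from \Cref{rem:counterclockwise_loops_bcle_4}, where the explored set restricted to boundary-touching components coincides with $\mathbb{A}_{-r,-r+2\lambda}$.
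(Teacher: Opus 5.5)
Your overall architecture is the paper's. The first inclusion comes from \Cref{rem:counterclockwise_loops_bcle_4} together with the fact that BCLE loops touch $\partial\BD$. The second is reduced, via \Cref{lemma link TVS uniform exploration} and the persistence of \Cref{lemma link TVS BCLE}, to the claim that the boundary-touching loops of $\mathbb{B}^{2^m}_{2^{-m}r}$ with boundary condition $-r$ are exactly the loops of $\mathbb{B}^1_r=\mathbb{A}_{-r,2\lambda-r}$ with boundary condition $-r$. (Persistence in fact makes each component a single loop interior at some finite level, not merely an increasing union.) That claim is proved by induction on $m$ through the decomposition $O\subset O'$, with both loops touching $\partial\BD$, and your induction step is fine. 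The gap is the base case $m=1$: a boundary-touching loop of $\mathbb{B}^2_{r/2}$ with boundary condition $-r$ is a loop of $\mathbb{A}_{-r,2\lambda-r}$. You flag this as the main obstacle, but you do not supply an argument for it, and the route you sketch does not work.

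Uniqueness of $\mathbb{A}_{-r,-r+2\lambda}$ \cite{aru2019bounded} cannot be applied to any local set built on $\mathbb{B}^2_{r/2}$. That set has three boundary values, $-r$, $2\lambda-r$ and $2\lambda-r/2$. The components with value $2\lambda-r/2$ (the true loops of $\mathbb{B}^1_{r/2}$) all touch $\partial\BD$, so even its ``boundary-touching part'' is not two-valued. You also cannot explore further inside those components to bring their values into $\{-r,2\lambda-r\}$. Relative to the zero-boundary field there, both target levels $-2\lambda-r/2$ and $-r/2$ are negative, and a two-valued set must have $-a<0<b$ because its harmonic part has mean zero. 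So $\mathbb{A}_{-r,2\lambda-r}$ is not a completion of $\mathbb{B}^2_{r/2}$, and uniqueness alone cannot identify it; a comparison statement is needed. Your fallback, that the explored set restricted to boundary-touching components coincides with $\mathbb{A}_{-r,-r+2\lambda}$, is exactly the lemma being proved, so it is circular. The level-line heuristic is not an argument either.

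The missing idea, which is what the paper uses, is to pass through the intermediate set $\mathbb{A}_{-r,2\lambda-r/2}$, whose gap $2\lambda+r/2$ is admissible. The steps are:
\begin{enumerate}
\item The loops of $\mathbb{B}^2_{r/2}$ with boundary condition $-r$ are loops of $\mathbb{A}_{-r,2\lambda-r/2}$ with boundary condition $-r$ \cite[Subsection~6.2.4]{TVSGFF}. For instance, complete $\mathbb{B}^2_{r/2}$ by $\mathbb{A}_{-2\lambda,r/2}$ in its components of value $2\lambda-r$ and invoke uniqueness.
\item By monotonicity of two-valued sets in the upper level \cite[Lemma~3.8]{TVSGFF}, a loop of $\mathbb{A}_{-r,2\lambda-r/2}$ with boundary condition $-r$ touches $\partial\BD$ if and only if it is a loop of $\mathbb{A}_{-r,2\lambda-r}$ with boundary condition $-r$.
\end{enumerate}
With this base case in hand, your induction goes through.
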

\begin{proof}
Let $r \in (0, 2\lambda)$. By \cite[Lemma 3.8]{TVSGFF}, we know that every loop $\ell$ of $\mathbb{A}_{-r,2\lambda-r/2}$ with boundary condition $-r$ intersects $\partial \BD$ if and only if $\ell$ is a loop of $\mathbb{A}_{-r,2\lambda-r}$ with boundary condition $-r$. Moreover, as explained in \cite[Subsection 6.2.4]{TVSGFF}, the loops of $\mathbb{B}_{r/2}^2$ with boundary condition $-r$ are loops of $\mathbb{A}_{-r,2\lambda - r/2}$ with boundary condition $-r$, and the loops of $\mathbb{A}_{-r,2\lambda-r}$ with boundary condition $-r$ that intersect $\partial \BD$ are loops of $\mathbb{B}^2_{r/2}$. We obtain that the collection of loops in $\mathbb{B}_{r/2}^2$ with boundary condition $-r$ which intersect $\partial \BD$ is precisely the family of loops in $\mathbb{A}_{-r,2\lambda-r}$ with boundary condition $-r$. 
	
Let us show by induction that for all $m \in \BN$, for all $r \in (0, 2 \lambda)$, for all zero-boundary GFFs $h$ on $\BD$ (or, by conformal invariance, on any simply connected domain), the family of loops in $\mathbb{B}_{2^{-m} r}^{2^m}$ (defined using $h$) with boundary condition $-r$ that intersect $\partial \BD$ is precisely the family of loops in $\mathbb{B}^1_r=\mathbb{A}_{-r,2\lambda-r}$ with boundary condition $-r$. We proved the result for $m=1$ in the first paragraph.
		
Assume that the result holds for some $m \in \BN$. Let $r \in (0, 2 \lambda)$ and let $h$ be a zero-boundary GFF on $\BD$. Let $O$ be the interior of a loop of $\mathbb{B}^{2^{m+1}}_{2^{-m-1}r}$ with boundary condition $-r$ such that $\overline{O} \cap \partial \BD \neq \emptyset$. As in the proof of~\Cref{lemma link TVS BCLE}, there exists a loop  of $\mathbb{B}^{2^{m}}_{2^{-m-1}r}$ with boundary condition $-r/2$, whose interior $O'$ contains $O$ and such that $O$ is the interior of a loop of $\mathbb{B}^{2^{m}}_{2^{-m-1}r}(O')$ with boundary condition $-r/2$. Note that $\overline{O} \cap \partial O' \neq \emptyset$ and $\overline{O'} \cap \partial \BD \neq \emptyset $. Then, by induction hypothesis, we know that $O'$ is the interior of a loop of $\mathbb{B}^1_{r/2}$ with boundary condition $-r/2$ and that $O$ is the interior of a loop of $\mathbb{B}^1_{r/2}(O')$ with boundary condition $-r/2$. In particular, by definition of $\mathbb{B}^2_{r/2}$, we see that $O$ is the interior of a loop of $\mathbb{B}^2_{r/2}$ with boundary condition $-r$. By the case $m=1$, we conclude that $O$ is the interior of a loop of $\mathbb{B}^1_r$ with boundary condition $-r$. Moreover, by~\Cref{lemma link TVS BCLE}, we know that all the loops with boundary condition $-r$ of $\mathbb{B}^1_r$ are loops of $\mathbb{B}^{2^{m+1}}_{2^{-m-1}r}$. Recalling that the loops of $\mathbb{B}^1_r$ are loops of a $\cwBCLE_4(-r/\lambda)$ and thus intersect $\partial \BD$, this ends the induction step.

Furthermore, by~\Cref{lemma link TVS uniform exploration} (combined with~\Cref{rem:counterclockwise_loops_bcle_4}), the collection of the connected components of $\BD \setminus \SCB_r(\partial \BD)$ is the collection of the interiors of the loops in $\mathbb{B}_{r 2^{-n}}^{2^n}$ with boundary condition $-r$ as $n \in \BN$ varies. Therefore, the collection of the interiors of the false $\cwBCLE_4(-r/\lambda)$ loops is the collection of the connected components of $\BD \setminus \SCB_r(\partial \BD)$ whose boundary intersects $\partial \BD$.
\end{proof}

Next, let us state another consequence of~\cite{TVSGFF}.
\begin{lemma}\label{lemma density of loops}
	Almost surely, for all $t>0$, the explored region $\SCB_t(\partial \BD)$ is the closure of the union of the domains encircled by loops of the $\CLE_4$ discovered up to time $t$.
\end{lemma}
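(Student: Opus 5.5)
The plan is to prove the two inclusions separately. Write $\mathcal{U}_t$ for the closure of the union of $\mathrm{int}(\SCL)$ over the loops $\SCL \in \Gamma$ with $t_\SCL \le t$. The inclusion $\mathcal{U}_t \subseteq \SCB_t(\partial \BD)$ is the easy one. If $t_\SCL \le t$ and $x \in \mathrm{int}(\SCL) \cap \BQ^2$, then $x$ belongs to no $C_t(z)$. Indeed, if $x \in C_t(z)$, then $C_t(z)$ is the component of the unexplored region containing $x$, and by construction of the exploration $\SCL(x)=\SCL$ is discovered strictly after time $t$, a contradiction. Hence $\mathrm{int}(\SCL) \cap \BQ^2 \subseteq \SCB_t(\partial\BD)$. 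Since $\SCB_t(\partial\BD)$ is closed, the whole of $\mathcal{U}_t$ is contained in it.

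The reverse inclusion is the substantive one. Let $w \in \SCB_t(\partial\BD) \setminus \mathcal{U}_t$. Since $\mathcal{U}_t$ is closed, there is a ball $B(w,\varepsilon)$ disjoint from $\mathcal{U}_t$. I would argue that every rational point of $B(w,\varepsilon)$ lies in some $C_t(z)$. Such a point $x$ is surrounded by $\SCL(x)$, and since $\SCL(x)$ meets $B(w,\varepsilon)$ only if its interior does, we get $t_{\SCL(x)}>t$. By the description of the exploration targeted at $x$ in \Cref{subsec:labeled_cle_4}, we then have $x \in C_t(x)$. Hence $B(w,\varepsilon)\cap\BQ^2$ is contained in the open unexplored region $\bigcup_z C_t(z)$. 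However, this only places $w$ in the closure of the unexplored region, which is not a contradiction, because $\SCB_t(\partial\BD)$ may a priori contain parts not adjacent to any discovered loop, namely accumulation "dust" of the explored set.

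The real content is therefore to show that the explored set has no such dust: almost surely, simultaneously for all $t$, every point of $\SCB_t(\partial\BD)\cap\BD$ is a limit of interiors of loops with label $\le t$. For this I would use the GFF coupling of \Cref{prop coupling TVS uniform explo} and \Cref{lemma link TVS uniform exploration}. First I would treat dyadic $t=2^{-k}$ and, by the scaling in \Cref{rem:counterclockwise_loops_bcle_4}, all rational multiples. At such a time, $\SCB_t(\partial\BD)$ is the complement of the increasing union of the interiors of the loops with boundary condition $-t$ in $\mathbb{B}^{2^{n-k}}_{2^{-n}}$. Equivalently, it is the closure of the union of the sets $\mathbb{B}^{2^{n-k}}_{2^{-n}}$, which are made of two-valued sets of levels $-s$ and $2\lambda-s$ with $s \le t$. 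The complementary components of these sets with boundary value of the form $2\lambda-s$ are further explored, and by the thin-set property together with the construction of $\mathbb{B}_0$ they are filled by $\CLE_4$ loops with label at most $t$; this uses that $j_n2^{-n}\to t_\SCL$ as in the proof of \Cref{lemma link TVS uniform exploration}. The key input from \cite{TVSGFF} is that each two-valued set $\mathbb{A}_{-a,b}$ equals the closure of the union of the boundaries of its complementary components. Granting this, every point of the explored set is a limit of boundaries of the $2\lambda-s$ components, and hence a limit of interiors of loops with label $\le t$. I expect this to be the main obstacle, namely extracting this density statement for the iterated sets from \cite{TVSGFF}.

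Finally, I would pass from countably many times to all $t$ by monotonicity in $t$ together with right-continuity of the exploration. For general $t$, take rational $t_m\downarrow t$. Then $\SCB_t(\partial\BD)=\bigcap_m \SCB_{t_m}(\partial\BD)$, because the components $C_t(z)$ are Carathéodory limits. One must also check that $\bigcap_m \mathcal{U}_{t_m}=\mathcal{U}_t$. This holds because the only loops entering $\mathcal{U}_{t_m}$ but not $\mathcal{U}_t$ have labels in $(t,t_m]$. Each of them lies in some $C_t(z)$ and is small in the conformal coordinates of that component for $m$ large (Poisson intensity), so it accumulates only near $\partial C_t(z)\subseteq \mathcal{U}_t$, the last inclusion coming from the rational-time case applied at nearby times.
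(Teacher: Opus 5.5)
Your inclusion $\mathcal{U}_t \subseteq \SCB_t(\partial\BD)$ is fine, and you correctly see that the reverse inclusion is a ``no dust'' statement. However, the argument you sketch for it at dyadic times does not establish it.

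First, the description of the explored set is wrong. The sets $\mathbb{B}^{2^{n-k}}_{2^{-n}}$ all lie in $\mathbb{B}_0$, which has empty interior, so the closure of their union cannot be $\SCB_t(\partial\BD)$. By \Cref{lemma link TVS uniform exploration}, $\SCB_t(\partial\BD)\cap\BD$ is instead the \emph{decreasing intersection} over $n$ of $\mathbb{B}^{2^{n-k}}_{2^{-n}}$ together with its complementary components of boundary value $2\lambda-s$. The potential dust lies among the points belonging to $\mathbb{B}^{2^{n-k}}_{2^{-n}}$ for every $n$.

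Second, and more seriously, the input you propose to grant does not exclude that dust. Knowing that a two-valued set is the closure of the union of the boundaries of its complementary components says nothing about \emph{which} components approximate a given point. The components with boundary value $-t$ are unexplored at time $t$, and by \Cref{lemma link TVS BCLE} they remain such at every finer level. A point approached only through boundaries of $-t$ components is exactly the dust you must rule out. So ``hence a limit of boundaries of the $2\lambda-s$ components'' is a non sequitur: you would need a label-specific density statement, which you neither formulate nor prove. The paper does not derive this. It cites \cite[Definition~6.2 and Section~6.4.3]{TVSGFF}, where $\mathbb{B}^t_0$ is \emph{defined} as the closure of the union of the domains encircled by loops with boundary value at least $2\lambda-t$ and is then identified with the uniform exploration. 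That gives the dyadic case at once.

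Your passage to general $t$ is circular as written. The right-approximation needs $\partial C_t(z)\cap\BD\subseteq\mathcal{U}_t$, which is essentially the lemma at time $t$. The rational case at nearby times only yields this once you know that $\SCB_t(\partial\BD)$ equals $\overline{\bigcup_{s<t}\SCB_s(\partial\BD)}$ together with the closed interior of the loop discovered at time $t$, if any. With that description, the lemma at time $t$ is immediate, since $\overline{\bigcup_{s<t,\, s \text{ dyadic}}\mathcal{U}_s}\subseteq\mathcal{U}_t$. This is exactly the paper's route:
\begin{itemize}
\item treat $t\mapsto\SCB_t(\partial\BD)$ as c\`adl\`ag;
\item at continuity times, approximate from below by dyadic times;
\item at jump times, which are loop discovery times, add the newly discovered loop.
\end{itemize}
The Poisson-smallness argument over $(t,t_m]$ is then unnecessary, and as written it also lacks uniformity over the infinitely many components $C_t(z)$. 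Finally, the statement also requires $\partial\BD\subseteq\mathcal{U}_t$, which your plan, restricted to points of $\BD$, does not address.
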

\begin{proof}
	The statement when $t>0$ is dyadic is a consequence of the definition of the sets $\mathbb{B}^t_0$ for $t\ge 0$ (the superscript in $\mathbb{B}^t_0$, imported from~\cite{TVSGFF}, is a label cutoff, not a number of iterations as in the notation $\mathbb{B}^j_r$ of~\Cref{subsec:tvs_gff}) as the closure of the union of the domains encircled by loops of the $\CLE_4$ with boundary condition at least $2\lambda-t$ in \cite[Definition 6.2]{TVSGFF} and of the identification of $(\mathbb{B}^t_0)_{t\ge 0}$ with the uniform exploration $(\SCB_t(\partial \BD))_{t\ge 0}$ made in \cite[Section 6.4.3]{TVSGFF}.
	
	For general $t>0$, we view $(\SCB_t(\partial \BD))_{t\ge 0}$ as a c\`adl\`ag process taking its values in the space of the compact subsets of $\overline{\BD}$ equipped with the Hausdorff distance (left limits exist by monotonicity, and right-continuity holds since the loops discovered in a component of the unexplored region shortly after time $t$ are small and pinned at its boundary). When $t>0$ is a time of left continuity of $(\SCB_t(\partial \BD))_{t\ge 0}$, we obtain directly the result by approximating $\SCB_t(\partial \BD)$ using dyadic times (the dyadic case holding a.s.\ simultaneously for all dyadic $t$). When $t$ is a jump time, then it is a time of a loop discovery in the PPP of loops (the unexplored components change only at such times), so that $\SCB_t(\partial \BD)$ is obtained as the union of the left limit $\SCB_t^-(\partial \BD)\defeq \overline{\bigcup_{s<t} \SCB_s(\partial \BD)}$ together with the closure of the domain encircled by the loop discovered at time $t$. This concludes the proof of the lemma.
\end{proof}

\subsection{Background on Carath\'eodory convergence}
\label{subsec:caratheodory}

We gather here some equivalent definitions of Carath\'eodory convergence. Let $(U_n)_{n\ge 0}$ be a sequence of simply connected domains $U_n \subsetneq \BC$. Let $U \subsetneq \BC$ be a simply connected domain. Let $z \in U$. When $z \in U_n$, let $f_n$ (resp.\ $f$) be the unique conformal map from $\BD$ to $U_n$ (resp.\ $U$) such that $f_n(0)=z$ (resp.\ $f(0)=z$) and $f_n'(0)>0$ (resp.\ $f'(0)>0$). We say that $(U_n,z)$ converges to $(U,z)$ as $n\to \infty$ in the Carath\'eodory sense if $z \in U_n$ for $n$ large enough and $f_n$ converges to $f$ uniformly on compact subsets of $\BD$. The convergence does not depend on the choice of $z\in U$, so we will often simply say that $U_n$ converges to $U$ in the Carath\'eodory sense.

Let us also recall the original definition of Carath\'eodory convergence. 
\begin{definition}
	Let $U\subset \BC$ be a domain, let $z \in U$, and let $(U_n)_{n\ge 0}$ be a sequence of domains in $\BC$ containing $z$. The sequence $(U_n)_{n\ge 0}$ converges in the Carath\'eodory sense toward $U$ with respect to $z$ if: 
	\begin{itemize}
		\item For every compact set $K\subset U$, we have $K\subset U_n$ for all $n$ large enough;
		\item For every connected open set $V\ni z$, if $V \subset U_n$ for infinitely many $n\ge 0$, then $V \subset U$.
	\end{itemize}
\end{definition}
The above definition is more general since it does not assume that the domains are simply connected. The equivalence between the two definitions in the case of simply connected domains is the content of the Carath\'eodory kernel theorem.

Furthermore, using the Arzelà--Ascoli theorem, the convergence in the Carath\'eodory sense implies the uniform convergence of $f_n^{-1}$ toward $f^{-1}$ on every compact subset of $U$. 

We shall repeatedly use the following elementary fact. Let $\SCL_n \to \SCL$ in the Hausdorff topology, where $\SCL$ is a Jordan curve in $\BD$ surrounding a fixed point $x$, as does each $\SCL_n$. Then every compact $K \subseteq \mathrm{int}(\SCL)$ (resp.\ $K \subseteq \overline{\BD} \setminus \overline{\mathrm{int}(\SCL)}$) satisfies $K \subseteq \mathrm{int}(\SCL_n)$ (resp.\ $K \cap \overline{\mathrm{int}(\SCL_n)} = \emptyset$) for $n$ large. 
In particular $\overline{\mathrm{int}(\SCL_n)} \to \overline{\mathrm{int}(\SCL)}$. This applies to $\CLE_4$ loops, which are Jordan curves.\label{rem:interior membership}

We now define the space of domains. We denote by $\SD$ the space of decreasing families of simply connected domains $(D_t)_{t\ge 0}$ such that for all $t\ge 0$, we have $0 \in D_t$ and the conformal radius of $D_t$ seen from zero is $e^{-t}$: in other words, if $f_t$ is the unique conformal map from $\BD$ to $D_t$ sending $0$ to $0$ with $f'_t(0)>0$, then $f'_t(0) = e^{-t}$.

The space $\SD$ is equipped with an extension of the Carath\'eodory topology: if $(D_t)_{t\ge 0}, (\widetilde{D}_t)_{t\ge 0} \in \SD$ and $(f_t)_{t \geq 0}$, $(\widetilde{f}_t)_{t \geq 0}$ are the associated conformal maps, then we set
\begin{equation}\label{eq space of domains}
	d_\SD \left( (D_t)_{t\ge 0}, (\widetilde{D}_t)_{t\ge 0}\right) \defeq \sum_{j,k \in \BN}
	2^{-j-k} \min\left\{\sup_{s \in [0, 2^k]} \sup_{z \in (1-1/j)\BD} \vert f_s(z)- \widetilde{f}_s(z) \vert, 1\right \}.
\end{equation}
The metric space $(\SD, d_\SD)$ is complete and separable. See, e.g.,~\cite[Section 6.1]{MS16QLE}.

\subsection{Properties of SLEs with force points and BCLEs}\label{subsec:properties_sles}
Recall that for each $t \geq 0$, $C_t(0)$ is the connected component of the unexplored region of the uniform exploration containing the origin.  This subsection is aimed at proving the following result.
\begin{proposition}\label{prop BCLE rho to zero}
	 As $t \to 0$, $C_t(0)$ converges in probability in the Carath\'eodory sense to $\BD$ (i.e.\ $\BP[K \subseteq C_t(0)] \to 1$ for every compact $K \subseteq \BD$) 
	 and the maximum of the diameters of the connected components of $\BD \setminus \overline{C_t(0)}$ converges in probability to zero. In particular, for all $\delta>0$, with probability $1-o(1)$ as $t \to 0$, for all $a \in \partial \BD$, there exists $x \in \partial C_t(0) \cap \partial \BD$ such that $\vert a - x \vert \le \delta$.
\end{proposition}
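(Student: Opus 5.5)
The plan is to go through the coupling of \Cref{rem:counterclockwise_loops_bcle_4} and \Cref{lem:bcle_uniform_exploration}. For $t\in(0,2\lambda)$, the component $C_t(0)$ equals the interior of the false $\cwBCLE_4(-t/\lambda)$ loop surrounding $0$ on the event that such a loop exists. Otherwise $0$ lies in a component of $\BD\setminus\SCB_t(\partial\BD)$ not touching $\partial\BD$, or inside the interior of a true loop, or inside a loop discovered before time $t$. The probability that the loop surrounding $0$ has been discovered by time $t$ is $1-e^{-t}\to 0$. So it suffices to understand the $\cwBCLE_4(\rho)$ with $\rho=-t/\lambda\uparrow 0$. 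Such a $\cwBCLE_4(\rho)$ is a branching $\SLE_4(\rho,-2-\rho)$ exploration, realized through the level lines of $h+a-\lambda$ with $a=t\to0$ as in \Cref{subsec:tvs_gff}.

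\textbf{Step 1: a single chordal curve degenerates.} An $\SLE_4(\rho;-2-\rho)$ in $\BD$ from $-\ri$ to $\ri$ converges, as $\rho\uparrow0$, to the boundary arc. Concretely, the weight on the false side is $\rho\to0$ and the weight on the true side is $-2-\rho\to-2$, so the curve hugs the boundary on the true side. I would prove this by a GFF comparison using the monotonicity of level lines in their heights from the interaction rules of~\cite{LevelLineGFFI}. The level line of $h+a-\lambda$ is the level line of $h$ at height $\lambda-a$. As $a\downarrow0$, this height increases to $\lambda$, which is the boundary value on the relevant arc. By monotonicity the curves move monotonically toward the right boundary arc.

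Identifying the limit is the delicate point. I would argue that, for fixed $\delta>0$ and a fixed compact $K\subset\BD$, the probability that the level line at height $\lambda-a$ hits $K$ tends to $0$. A direct way is to use the SDE: with weight $-2-\rho$ near $-2$, the Bessel dimension $\delta(4,-2-\rho)=1+2(-\rho)/4\to 1$. The driving function therefore spends all its time at the force point, and the Loewner chain converges to the trivial chain swallowing the boundary. Equivalently, the harmonic measure of $K$ seen along the curve is controlled.

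\textbf{Step 2: iterate over the branching.} The false loop containing $0$ is obtained after finitely many chordal steps, since each step separates $0$ from the boundary with probability bounded below. With high probability only a bounded number of steps is needed before $0$ is enclosed in a false loop whose boundary is within $\delta$ of $\partial\BD$ in the Hausdorff sense. Using conformal invariance, I would rerun Step 1 in each subsequent component. This gives both $\BP[K\subseteq C_t(0)]\to1$ and the statement that every component of $\BD\setminus\overline{C_t(0)}$ is small. The latter holds because each such component is cut out by a piece of true-side curve that stays within $\delta$ of an arc of $\partial\BD$ between consecutive points of $\partial C_t(0)\cap\partial\BD$. Those points are $\delta$-dense, since the intersection of an $\SLE_4(\rho,-2-\rho)$ with the boundary becomes dense as its boundary-touching dimension tends to $1$.

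\textbf{Consequence and main obstacle.} The final claim follows directly: if every component of $\BD\setminus\overline{C_t(0)}$ has diameter at most $\delta$, then every $a\in\partial\BD$ either lies in $\partial C_t(0)$ or is within $\delta$ of the boundary of such a component, which meets $\partial C_t(0)\cap\partial\BD$. The main obstacle is Step 1 together with the quantitative density of boundary hitting points. For that I would fall back on an explicit computation with the Bessel process of dimension near $1$: the process returns to $0$ on every time scale, so the force point is hit densely.
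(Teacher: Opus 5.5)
\textbf{Gap: the excursion-diameter estimate is missing.} Your reduction is the paper's. Via \Cref{rem:counterclockwise_loops_bcle_4} and \Cref{lem:bcle_uniform_exploration}, everything comes down to the degeneration of chordal $\SLE_4(\rho;-2-\rho)$ curves as $\rho\uparrow 0$. Your final step (small components $\Rightarrow$ $\delta$-density of $\partial C_t(0)\cap\partial\BD$) is fine. What is missing is the estimate that carries the proposition: with probability tending to one, \emph{every} excursion of the chordal curve away from the boundary arc it hugs has small Euclidean diameter. This is \Cref{prop diameter of arcs}, which the paper imports from \cite[Proposition~3.1]{DKM24}. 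None of your suggested routes gives it:
\begin{itemize}
\item Near the force point, $V^R-W$ behaves like $2$ times a Bessel process of dimension $1-\rho/2\downarrow 1$. This tends to reflected Brownian motion, whose zero set is Lebesgue-null, so the driving function does not ``spend all its time at the force point''. What degenerates is the drift $\int_0^t 2\,\rd s/(V^R_s-W_s)$ of the force point. That is a statement in capacity time, and small capacity does not exclude thin excursions of macroscopic length along the boundary. Converting it into Euclidean diameters needs distortion estimates for the Loewner maps, which you do not supply.
\item ``Returns to $0$ on every time scale'' holds for every fixed $\rho$, so it carries no uniformity as $\rho\uparrow0$.
\item Boundary-intersection dimension tending to $1$ gives no high-probability bound on the largest gap between hit points.
\item Even Hausdorff convergence of the level lines at heights $\lambda-a$ to the arc, together with $\delta$-density of the hit points, would not suffice. (Hausdorff convergence is the most the monotone coupling could give.) An excursion between two nearby hit points can be a thin finger running parallel to $\partial\BD$ past its endpoints, staying inside $B_\delta(\partial\BD)$, with the rest of the curve passing underneath it. The pocket it cuts out then has diameter of order one.
\end{itemize}

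\textbf{Two further steps are asserted rather than proved.}

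First, the control must hold along the whole arc, including near the target point. An estimate like \Cref{prop diameter of arcs}, which concerns excursions with an endpoint in a bounded interval $[0,a]$, only covers the part near the starting point. The paper handles the target end by reversibility of $\SLE_4(\rho;-\rho-2)$ \cite[Theorem~1.1.6]{LevelLineGFFI}.

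Second, in your reduction you only bound the probability that the loop surrounding $0$ is discovered by time $t$. You also need that, with probability $1-o(1)$, $0$ lies in a \emph{false} loop, rather than in a true loop or in a component of $\BD\setminus\SCB_t(\partial\BD)$ not touching $\partial\BD$. You further need that every arc of that false loop is an excursion of one of just two curves: $\gamma$ from $-1$ to $1$, hugging the lower half-circle, and $\gamma'$ from $1$ to $-1$ (the branch of the coupled $\ccwBCLE_4(-2-\rho)$), hugging the upper one. Your Step 2 (``a bounded number of steps'', ``rerun Step 1 in each component'') states this without argument. Transporting a diameter bound to a random subdomain by conformal invariance would also require boundary distortion control of the uniformizing map. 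The paper obtains these facts from \cite[Lemma~3.6]{TVSGFF} together with the level line interaction rules, and from \cite[Theorem~1.10]{LSYZ24}.
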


The key result for establishing~\Cref{prop BCLE rho to zero} is the following.
	\begin{proposition}\label{prop diameter of arcs}
		For each $a>0$ and $\rho \in (-2,0)$, let $d_\rho(a)$ be the supremum of the diameters of the excursions away from $\BR$ (i.e.\ the closures of the connected components of $\gamma \setminus \BR$) of a chordal $\SLE_4(\rho; -\rho-2)$ $\gamma$ in $\BH$ from $0$ to $\infty$, with force points at $0^-$ and $0^+$, which have an endpoint in $[0,a]$. Then,
		\[
		d_\rho(a) \mathop{\longrightarrow}\limits_{\rho \uparrow 0}^{(\BP)} 0.
		\]
	\end{proposition}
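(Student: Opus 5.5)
We will show that as $\rho \uparrow 0$ the chordal $\SLE_4(\rho;-\rho-2)$ curve $\gamma$ collapses onto $\BR_+$ near the origin, so that all its excursions with an endpoint in $[0,a]$ become uniformly small. The natural tool is the GFF/level-line coupling of \Cref{subsec:level_lines_of_gff}. With force points at $0^-$ and $0^+$, $\gamma$ is the zero level line of a GFF $h$ on $\BH$ with boundary values $-\lambda(1+\rho)$ on $\BR_-$ and $\lambda(1-\rho-2)=-\lambda(1+\rho)$ on $\BR_+$. Thus the boundary data are the constant $-\lambda(1+\rho)$, and $\gamma$ is a level line of height $0$ of a GFF with constant boundary value $-\lambda(1+\rho)$. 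Equivalently, $\gamma$ is the level line at height $\lambda(1+\rho)=\lambda-\lambda|\rho|$ of a zero-boundary GFF. This is the boundary-touching level line used to build $\mathbb{A}_{-a,-a+2\lambda}$ with $a=\lambda|\rho|\to 0$ (cf.\ \Cref{subsec:tvs_gff}).

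\emph{Step 1: a monotone coupling.} By the interaction rules for level lines (\cite[Theorem~1.1.4]{LevelLineGFFI}), all the curves $\gamma_\rho$ can be realized as level lines of a single zero-boundary GFF $h_0$ on $\BH$ from $0$ to $\infty$, at heights $u_\rho=\lambda(1+\rho)$. As $\rho\uparrow 0$, $u_\rho\uparrow\lambda$, and the curves are ordered monotonically. The limiting height $\lambda$ corresponds to the degenerate value $\rho=0$ on the left and $-2$ on the right, i.e.\ the curve is forced onto $\BR_+$. The excursions of $\gamma_\rho$ away from $\BR$ lie between $\gamma_\rho$ and the boundary, so monotonicity lets us bound $d_\rho(a)$ by the corresponding quantity for any $\rho'\le\rho$ restricted to the region between $\gamma_{\rho'}$ and $\BR_+$. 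This reduces the claim to showing that the region between $\gamma_\rho$ and $\BR_+$, intersected with a neighbourhood of $[0,a]$, shrinks to $[0,a]$ in probability.

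\emph{Step 2: identifying the limit.} Compute the harmonic function $\mathbb{E}[h_0(z)\mid\gamma_\rho]$. To the right of $\gamma_\rho$ (between $\gamma_\rho$ and $\BR_+$) the boundary values are $0$ on $\BR_+$ and $u_\rho+\lambda=2\lambda-\lambda|\rho|$ on $\gamma_\rho$. Now take a fixed ball $B$ at positive distance from $[0,a]$, or a point $z$ with $\mathrm{Im}\, z\ge\varepsilon$ and $\mathrm{Re}\, z\in[-1,a+1]$, and suppose with probability bounded below that $z$ lies in a component of $\BH\setminus\gamma_\rho$ to the right of $\gamma_\rho$ attached to $\BR_+$. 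The harmonic measure of $\gamma_\rho$ from $z$ in that component is then bounded below. The level line property gives that $h_0$ restricted to that component has mean at least $c\lambda$ near $z$. Separately, the left region would carry mean value about $-\lambda(1+\rho)+\ldots$, and one obtains an inconsistency in the limit. Concretely, take a subsequential limit in law of $\gamma_\rho$ in the Hausdorff sense. A level line of height $u_\rho\to\lambda$ starting from $0$ converges to the height-$\lambda$ level line, which does not exist as a nontrivial curve: its boundary data would be $0$ on the left and $-2\lambda$-type (i.e.\ $\rho=-2$) on the right, which absorbs it into $\BR_+$. A cleaner version uses the Bessel dimension: the driving function of $\SLE_4(\rho;-\rho-2)$ relative to the right force point is a Bessel process of dimension $\delta=1+2(-\rho-2+2)/4=1-\rho/2\downarrow1$. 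The bubbles (excursions) are then those of $\SLE_4$ with a Bessel-$\delta$ driver, and their sizes are governed by the excursion measure, whose mass above a fixed size scales like $(\delta-1)\to0$.

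\emph{Step 3: the Bessel estimate (main obstacle).} I expect the decisive, and hardest, step to be the quantitative version of the last remark. Fix $\varepsilon>0$ and run the Loewner flow until the right force point $V^R$ (which tracks the image of the rightmost point swallowed in $[0,a]$) exceeds $g_t(a)$. Each excursion of $W-V^R$ away from $0$ produces one bubble of $\gamma$ attached to $\BR_+$, and we need to show that with high probability no excursion produces a bubble of diameter $\ge\varepsilon$ before that time. In the local time parametrisation, the excursions of a Bessel process of dimension $\delta$ form a Poisson point process with intensity proportional to $(2-\delta)\,\cdot$ (the $\delta$-excursion measure). The mass of excursions reaching height $x$ is $c_\delta x^{\delta-2}$ with $c_\delta\asymp (\delta-1)$-independent normalisation, but the total local time accumulated before $V^R$ passes $g_t(a)$ should be $O(\delta-1)$, since at $\delta=1$ the process is pinned at zero. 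Combining these gives the expected number of macroscopic bubbles $O(\delta-1)\to0$. The delicate points are, first, translating the height of an excursion of $W-V^R$ into the Euclidean diameter of the bubble uniformly in the (random) conformal maps $g_t$; and second, controlling the left force point, which carries weight $\rho\to0$ and is harmless. For the first point I would use a scaling argument with Koebe distortion. For the second I would use absolute continuity with $\SLE_4(-2+\ldots)$ away from $0^-$. Excursions to the left of $0$, i.e.\ those with endpoint in $\BR_-$, are irrelevant because only endpoints in $[0,a]$ are counted.
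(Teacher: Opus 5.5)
Your identification of $\gamma$ as the level line of height $u_\rho=\lambda(1+\rho)$ of a single zero-boundary GFF is correct. So is the resulting monotone coupling, in which $\gamma_\rho$ moves to the right as $\rho\uparrow 0$. But this only shows that the right-hand region shrinks monotonically, so that it would suffice to show that the monotone limit vanishes almost surely. It does not show that the limit is trivial near $[0,a]$, and your ``reduction'' is just a restatement of the claim.

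Step~2 does not supply the missing part either. The promised ``inconsistency'' is never derived. Even a pointwise statement (each fixed $z$ eventually leaves the right-hand region) would not exclude macroscopic right-hand components that become ever thinner. And ``the height-$u$ level line is absorbed into $\BR_+$ as $u\uparrow\lambda$'' is exactly what has to be proved.

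So everything rests on Step~3, which you leave as an expectation. There, the justification that the local time is $O(\delta-1)$ ``since at $\delta=1$ the process is pinned at zero'' is wrong: a Bessel process of dimension $1$ is reflected Brownian motion. The actual mechanism is that $\rd V^{\mathrm R}_t=2\,\rd t/(V^{\mathrm R}_t-W_t)$ and $\int_0^t \rd s/X_s$ is of order $L^0_t/(\delta-1)$. Heuristically, $V^{\mathrm R}$ therefore sweeps across $[0,a]$ after local time $O(\delta-1)$.

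The more serious gap is the passage from excursion heights to Euclidean diameters, which you flag but do not carry out. Koebe distortion is not the right tool for it. A small excursion of $X$ on $[S,T]$ only tells you that $g_S(\gamma([S,T]))$ is small. The Euclidean bubble is its image under $g_S^{-1}$ near the \emph{boundary} point $V^{\mathrm R}_S$, where interior distortion estimates say nothing. Moreover, $g_S$ contracts $\BR_+$ near the hull: for a vertical slit, $g'(u)=u/\sqrt{u^2+c}\to 0$ at its base. Hence a long Euclidean bubble from $\gamma(S)$ to $y$ can correspond to an arbitrarily short interval $[V^{\mathrm R}_S,g_S(y)]$, unless you already know the earlier hull is flat.

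Some additional geometric input is needed here. For example: a hull of half-plane capacity $c$ lies below height $O(\sqrt{c})$, so a small capacity time up to the swallowing of $a+1$ forces the whole hull into a thin strip. Combine this with a distortion estimate showing that, for such flat hulls, $g_S$ is close to a translation at scales much larger than the strip height.

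The left force point is also not harmless as claimed. Since $\rho_{\mathrm L}=\rho<0$, the curve a.s.\ hits $\BR_-$, so $X$ is not a Bessel process: the drift $\rho/(W-V^{\mathrm L})$ is unbounded near those times. A Radon--Nikodym comparison with the one-force-point process degenerates at time $0$, since both force points start at the tip. In addition, excursions joining a point of $[0,a]$ to $\BR_-$ \emph{are} counted in $d_\rho(a)$, contrary to your last sentence.

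For comparison, the paper does not prove the statement from scratch. It imports \cite[Proposition~3.1]{DKM24}, with the diameter statement implicit in the proof of \cite[Lemma~3.8]{DKM24}. That argument transfers to $\kappa=4$ by replacing the imaginary-geometry input with the level-line theory of \cite{LevelLineGFFI}.
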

\begin{proof}
This is a consequence of \cite[Proposition~3.1]{DKM24}. We note that this result is stated and proved for $\kappa \in (0,4)$ using the tools from~\cite{IG1}. The same argument, however, works for $\kappa=4$ using the results from~\cite{LevelLineGFFI}. The statement of \cite[Proposition~3.1]{DKM24} does not explicitly mention that the diameter of the components tends to $0$ as $\rho \uparrow 0$; however, this is implicit in the proof of \cite[Lemma~3.8]{DKM24}.
\end{proof}

\Cref{prop BCLE rho to zero} follows from the definition of BCLEs using SLEs with force points and from the coupling of~\Cref{subsec:gff_labeled_cle_4}. 

\begin{proof}[Proof of~\Cref{prop BCLE rho to zero} assuming~\Cref{prop diameter of arcs}]
Let $h$ denote the zero-boundary GFF used to generate the uniform $\CLE_4$ exploration as in~\Cref{subsec:gff_labeled_cle_4}. Let $\gamma$ be a chordal $\SLE_4(\rho;-2-\rho)$ curve in $\BD$ from $-1$ to $1$ which is the branch of the exploration tree of a $\cwBCLE_4(\rho)$ process $\Gamma$ in $\BD$. Let $\gamma'$ be the chordal $\SLE_4(\rho;-2-\rho)$ from $1$ to $-1$ in $\BD$ of the $\ccwBCLE_4(-2-\rho)$ process $\Gamma'$ in $\BD$ where $\Gamma$, $\Gamma'$ are coupled together so that the true (resp.\ false) loops of $\Gamma$ coincide with the false (resp.\ true) loops of $\Gamma'$.

Combining~\Cref{prop diameter of arcs} with the reversibility of $\SLE_4(\rho; -\rho -2)$ processes \cite[Theorem~1.1.6]{LevelLineGFFI} (which transfers to the arcs near the target point the control near the starting point), the maximum of the diameters of the arcs of $\gamma$ touching $\{e^{\ri\theta}: \theta \in [\pi,2\pi]\}$ and of the arcs of $\gamma'$ touching $\{e^{\ri\theta}: \theta \in [0,\pi]\}$ converges to zero in probability as $\rho \uparrow 0$.  But \cite[Lemma~3.6]{TVSGFF} combined with the level line interaction rules \cite[Theorem~1.1.4]{LevelLineGFFI} implies that every arc of the $\cwBCLE_4(\rho)$ loop around the origin is then one of these arcs so that the maximum of the diameters of the arcs of the $\cwBCLE_4(\rho)$ false loop around the origin tends to zero in probability as $\rho \uparrow 0$. Moreover, another consequence of~\cite{TVSGFF} is that with probability going to $1$ as $\rho \uparrow 0$, the loop of the $\cwBCLE_4(\rho)$ around the origin is a false loop (see, e.g.,~\cite[Theorem~1.10]{LSYZ24} for more details).

Next, by~\Cref{rem:counterclockwise_loops_bcle_4}, for all $t \in (0,2\lambda)$, $C_t(0)$ is the interior of the false $\cwBCLE_4(-t/\lambda)$ loop around the origin on an event of probability $1-o(1)$ as $t \to 0$. Hence, for all $\delta>0$, with probability $1-o(1)$, all the arcs of $\partial C_t(0)$ away from $\partial\BD$ have diameter at most $\delta$. On this event, $\partial C_t(0)$ lies within distance $\delta$ of $\partial \BD$, so $B_{1-\delta}(0) \subseteq C_t(0)$; every $a \in \partial\BD$ is within distance $2\delta$ of $\partial C_t(0) \cap \partial\BD$ (consider the point of $\partial C_t(0)$ on $[0,a]$ and an endpoint of the arc containing it); and every component of $\BD \setminus \overline{C_t(0)}$, bounded by one such arc and the arc of $\partial\BD$ between its endpoints, has diameter at most $3\delta$. This proves the three assertions.
\end{proof}

 \section{When the boundary is identified with a point}
\label{sec:boundary_is_a_point}
 In this section, we construct the metric $D^\partial$ as a subsequential scaling limit of the graph distance on the non-simple CLE when the boundary $\partial \BD$ is also seen as a point of the metric space. In particular, we prove that the scaling limits of the balls from the boundary are given by the balls of the uniform exploration.

\subsection{The graph distance on non-simple CLEs}\label{subsec:graph_distance}

{Let $\kappa \in (4, 8)$.} Let $\Gamma^\kappa$ be a non-nested $\CLE_\kappa$ in $\BD$. We shall write $\widehat\Gamma^\kappa \defeq \Gamma^\kappa \cup \{\partial \BD \}$. We shall write $D^{\kappa, \partial} \colon \widehat\Gamma^\kappa \times \widehat\Gamma^\kappa \to \BR$ for the graph distance associated with $\widehat\Gamma^\kappa$, where distinct $\SCL_1, \SCL_2 \in \widehat\Gamma^\kappa$ are adjacent if and only if $\SCL_1 \cap \SCL_2 \neq \emptyset$. For all deterministic $x \in \BC$, we shall write $\SCL^\kappa(x)$ for the loop of $\widehat\Gamma^\kappa$ that surrounds $x$ (with the convention that $\SCL^\kappa(x) = \partial\BD$ for $x \notin \BD$), which exists a.s. We note that if $x \in \BD$, then the random variable $D^{\kappa, \partial}(\SCL^\kappa(x), \partial \BD)$ is geometric (starting at $1$) with success probability given by the quantity already introduced in~\Cref{subsec:main_results}, namely
\begin{equation}\label{eq touching proba}
	1/\ka_\kappa = \BP\lbrack\SCL^\kappa(x) \cap \partial \BD \neq \emptyset\rbrack = 1 - \frac{\sin(\pi(\kappa/4 + 8/\kappa))}{\sin(\pi(\kappa/4 - 1))}.
\end{equation}
The second equality is a consequence of the main result of~\cite{BdryTouchingNonsimCLE}; the particular formula for $\ka_\kappa$ is not important for this paper, however, as we just need that $\ka_\kappa \to \infty$ as $\kappa \downarrow 4$, which also follows directly from the fact that $\CLE_4$ loops a.s.\ do not touch the domain boundary. The assertion made just before~\eqref{eq touching proba} follows from the domain Markov property of the $\CLE_\kappa$. Indeed, it follows from \cite[Section~5.1]{TreeCLE} that if we condition on the closure $\chi$ of the union of the $\CLE_{\kappa}$ loops which intersect $\partial \BD$, then the law of the remaining loops is given by an independent $\CLE_{\kappa}$ in each connected component of $\BD \setminus \chi$. So, if $\SCL^{\kappa}(0) \cap \partial \BD = \emptyset$, then we can iterate inside the connected component $U_1$ of $\BD \setminus \chi$ containing $0$. By conformal invariance, we still have that the conditional probability that $\SCL^{\kappa}(0) \cap \partial U_1 \neq \emptyset$ is equal to $1 / \ka_{\kappa}$. If $\SCL^{\kappa}(0) \cap \partial U_1 = \emptyset$, then we can iterate the above procedure and at each step, we have that the conditional probability that $\SCL^{\kappa}(0)$ intersects the domain boundary is equal to $1/\ka_{\kappa}$. It follows that $D^{\kappa,\partial}(\SCL^{\kappa}(0), \partial \BD)$ has the law of a geometric random variable with success probability given by~\eqref{eq touching proba}. By conformal invariance, the same is true for the loop surrounding a point $x \in \BD$.

For all $t \ge 0$ and $\SCL \in \widehat{\Gamma}^\kappa$, let us denote by $\SCB_t^{\kappa, \partial}(\SCL)$ the closure of the union of the domains encircled by the loops $\SCL' \in \widehat{\Gamma}^\kappa$ which are at $D^{\kappa, \partial}$-distance at most $t$ from $\SCL$. For all pairs of loops $\SCL_1, \SCL_2 \in \widehat{\Gamma}^\kappa$, let $K^{\kappa, \partial}_{\SCL_1, \SCL_2}$ be the union of all the loops which lie on some shortest path between $\SCL_1$ and $\SCL_2$, so that $K^{\kappa, \partial}_{\SCL_1, \SCL_2}$ is a well-defined random compact set and $K^{\kappa, \partial}_{\SCL_1, \SCL_2}=K^{\kappa, \partial}_{\SCL_2, \SCL_1}$.

\subsection{Tightness of distances and geodesics}

\begin{proposition}\label{prop subsequential limit}
	Let $\kappa_n \downarrow 4$. Then there exists a subsequence of $(\kappa_n)_{n\ge 0}$, again denoted by $(\kappa_n)_{n\ge 0}$, such that we have the joint convergence in law:
	\begin{equation}\label{eq cv subsequence}
		\begin{pmatrix}
			(\SCL^{\kappa_n}(x))_{x \in \BQ^2} \\
			(\ka_{\kappa_n}^{-1}D^{\kappa_n, \partial}(\SCL^{\kappa_n}(x), \SCL^{\kappa_n}(y)))_{x,y \in \BQ^2}			
			\\
			(K^{\kappa_n, \partial}_{\SCL^{\kappa_n}(x), \SCL^{\kappa_n}(y)})_{x, y \in \BQ^2}			
		\end{pmatrix}
		\mathop{\longrightarrow}\limits_{n\to \infty}^{(\mathrm{d})}
		\begin{pmatrix}
			(\SCL(x))_{x \in \BQ^2} \\
			(D^\partial(\SCL(x), \SCL(y)))_{x,y \in \BQ^2}
			\\
			(K^\partial_{\SCL(x), \SCL(y)})_{x, y \in \BQ^2}
		\end{pmatrix},
	\end{equation}
	where the loops $(\SCL(x))_{x \in \BQ^2} $ have the same law as the loops of a $\CLE_4$ $\Gamma$ in $\BD$ (and will be identified with the loops of $\Gamma$), where we write $\widehat{\Gamma} \defeq \Gamma \cup \{\partial \BD\}$, where $D^\partial$ is a random pseudometric on $\widehat{\Gamma}$, and where $K^\partial_{\SCL(x), \SCL(y)}$ 
	are random compact subsets of $\overline{\BD}$. The above convergence holds for the product topology, and the compact sets converge in the Hausdorff topology.
\end{proposition}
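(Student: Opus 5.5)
The proof is a tightness argument followed by a diagonal extraction. Since the index set $\BQ^2\times\BQ^2$ is countable, the product space is Polish, and the space of compact subsets of $\overline{\BD}$ with the Hausdorff metric is compact, Prokhorov's theorem reduces the claim to tightness of each coordinate separately. A countable product of tight families is tight in the product topology, so the joint law is then tight automatically. We treat the three rows in turn.

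\emph{Compact sets and loops.} The third row takes values in the compact space of compact subsets of $\overline{\BD}$, so it is trivially tight. For the first row, we invoke the result of Aru, Holden, Powell and Sun~\cite{BECriLQG}: as $\kappa\downarrow 4$, $(\SCL^{\kappa}(x))_{x\in\BQ^2}$ converges in law, in the product Hausdorff topology, to the family of loops of a $\CLE_4$ surrounding the rational points, with the convention $\SCL(x)=\partial\BD$ for $x\notin\BD$. This row is therefore tight. Moreover, its limit is identified along every subsequence, so the loops $(\SCL(x))$ in any subsequential limit have the law of those of a non-nested $\CLE_4$ $\Gamma$. The fact that $\CLE_4$ loops are distinct Jordan curves surrounding distinct rational points lets us index $\Gamma$ by rationals, i.e.\ identify $\SCL(x)$ with the loop of $\Gamma$ around $x$.

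\emph{Distances.} By the triangle inequality through $\partial\BD$,
\[
D^{\kappa,\partial}(\SCL^\kappa(x),\SCL^\kappa(y))\le D^{\kappa,\partial}(\SCL^\kappa(x),\partial\BD)+D^{\kappa,\partial}(\partial\BD,\SCL^\kappa(y)).
\]
By~\eqref{eq touching proba}, each term on the right is geometric with mean $\ka_\kappa$. Markov's inequality then gives
\[
\BP\bigl[\ka_\kappa^{-1}D^{\kappa,\partial}(\SCL^\kappa(x),\SCL^\kappa(y))>M\bigr]\le 2/M
\]
uniformly in $\kappa$, so every coordinate is tight in $[0,\infty)$. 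In fact $\ka_\kappa^{-1}\times\mathrm{Geom}(1/\ka_\kappa)$ converges in law to an exponential of mean one, because $\ka_\kappa\to\infty$.

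\emph{Extraction and properties of the limit.} Combining the three tightness statements, we extract a subsequence along which the whole vector converges jointly in law. Using Skorokhod's representation, we may assume almost sure convergence. The limiting $D^\partial$ is then defined on $\widehat\Gamma$ by reading its value at $(\SCL(x),\SCL(y))$. It is well defined because, almost surely, $\SCL(x)=\SCL(x')$ in the limit exactly when $\SCL^{\kappa_n}(x)=\SCL^{\kappa_n}(x')$ for all large $n$. The pseudometric properties then pass to the limit: symmetry, vanishing on the diagonal, and the triangle inequality are closed conditions holding for every $n$, hence almost surely for the limit.

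\emph{Main obstacle.} The delicate point is this consistency of the indexing. Two rational points surrounded by the same $\CLE_4$ loop may, for each $n$, be surrounded by distinct but adjacent $\CLE_{\kappa_n}$ loops. The claim that they are nonetheless surrounded by the same loop for all large $n$ is what lets the limiting distance depend only on the loop and not on the rational labels. To prove it, we will use the joint convergence of the loops together with the elementary fact recalled in~\Cref{subsec:caratheodory}. Since $x'\in\mathrm{int}(\SCL(x))$, that fact applied to $\SCL^{\kappa_n}(x)\to\SCL(x)$ gives $x'\in\mathrm{int}(\SCL^{\kappa_n}(x))$ for large $n$. Because loops in a non-nested CLE are not nested, this forces $\SCL^{\kappa_n}(x')=\SCL^{\kappa_n}(x)$. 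This argument relies only on Hausdorff convergence of loops to Jordan curves, and not on the finer fatness results of later sections.
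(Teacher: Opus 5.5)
Your proposal is correct and follows the paper's proof. In both, the loops are handled via \cite{BECriLQG}, the rescaled distances are shown tight by the triangle inequality through $\partial\BD$ together with the geometric law~\eqref{eq touching proba}, the geodesic sets are tight by compactness of the Hausdorff space, and the result follows by joint subsequential extraction. Your extra check that $D^\partial$ is well defined on $\widehat\Gamma$ is valid and is a detail the paper leaves implicit: two rationals surrounded by the same $\CLE_4$ loop are eventually surrounded by the same $\CLE_{\kappa_n}$ loop, by the interior-membership fact of~\Cref{subsec:caratheodory} and the non-nestedness of the loops.
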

\begin{proof}
	 The convergence of the first coordinate is a direct consequence of \cite[Proposition~3.12]{BECriLQG}. 
	 
	 Recall from~\eqref{eq touching proba} that $D^{\kappa, \partial}(\SCL^\kappa(x), \partial \BD)$ is a geometric random variable with success probability $1/\ka_{\kappa}$. In particular, $\ka_{\kappa}^{-1}D^{\kappa, \partial}(\SCL^\kappa(x), \partial \BD)$ converges in distribution to an exponential random variable of parameter $1$ as $\kappa \downarrow 4$. As a result, the tightness (and almost sure finiteness of the limit) of the second coordinate is a simple consequence of the triangle inequality. The function $D^\partial$ is a.s.\ a pseudometric as a limit of pseudometrics. The tightness of the last coordinate stems from the fact that the space of compact subsets of $\overline{\BD}$ equipped with the Hausdorff distance is compact since $\overline{\BD}$ is compact. Since the three coordinates are tight,~\eqref{eq cv subsequence} follows by extracting a subsequence along which they converge jointly.
\end{proof}

\subsection{Convergence to the uniform exploration}
\label{subsec:convergence_to_uniform_exploration}

This subsection is devoted to the proof of the following result, showing that the scaling limit of the graph distance to the boundary is given by the uniform exploration of the $\CLE_4$. 

\begin{proposition}\label{prop cv explo unif}
	For all $y \in \BQ^2$, let $C^{\kappa_n}_t(y)$ be the connected component of $\BD \setminus \SCB^{\kappa_n, \partial}_{\ka_{\kappa_n}t}(\partial \BD)$ containing $y$ when it exists and the empty set otherwise (these components are simply connected, $\SCB^{\kappa_n, \partial}_{\ka_{\kappa_n}t}(\partial \BD)$ being a connected closed subset of $\overline{\BD}$ containing $\partial \BD$). Then, jointly with the convergence~\eqref{eq cv subsequence}, we have
	\begin{equation}\label{eq cv ball}
        (C^{\kappa_n}_t(y))_{t \in \BQ_{\ge 0}, y \in \BQ^2}
        \mathop{\longrightarrow}\limits_{n\to \infty}^{(\mathrm{d})}
		(C_t(y))_{t \in \BQ_{\ge 0}, y \in \BQ^2}
	\end{equation}
	for the product topology where the space of simply connected domains containing $y$ is equipped with the Carath\'eodory topology viewed from $y$, to which we adjoin $\emptyset$ as the limit of domains whose conformal radius seen from $y$ tends to $0$, 
	where ${C_t(y)}$ is the connected component of $\BD \setminus \SCB_t^\partial(\partial \BD)$ containing $y$, and where $(\SCB_t^\partial(\partial \BD))_{t\ge 0}$ is the explored region at time $t$ in the uniform exploration of the $\CLE_4$ (denoted $\SCB_t(\partial \BD)$ in~\Cref{subsec:labeled_cle_4}) (where the time parametrization is that of the PPP of SLE bubbles).
\end{proposition}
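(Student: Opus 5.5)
The plan is to reduce the statement to the convergence result of Aru, Holden, Powell and Sun~\cite{BECriLQG}: the branching $\SLE_\kappa(\kappa-6)$ exploration converges to the uniform $\CLE_4$ exploration as $\kappa\downarrow 4$. Concretely, we first identify the discrete ball $\SCB^{\kappa,\partial}_{\ka_\kappa t}(\partial\BD)$ with a stopped branching exploration. Condition on the closure $\chi_1$ of the union of loops touching $\partial\BD$ (the \cite[Section~5.1]{TreeCLE} argument recalled after~\eqref{eq touching proba}). Then iterate in each component of $\BD\setminus\chi_1$, obtaining nested sets $\chi_1\subset\chi_2\subset\cdots$. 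By construction, $\SCB^{\kappa,\partial}_{k}(\partial\BD)$ is the closure of $\chi_k$ together with the interiors of the loops in it. Seen from a fixed $y$, the component $C^{\kappa}_{t}(y)$ is obtained by $\lfloor \ka_\kappa t\rfloor$ i.i.d.\ (after conformal normalization) steps of ``remove the loops touching the boundary''. This is exactly the Markovian structure of the $\SLE_\kappa(\kappa-6)$ exploration targeted at $y$, stopped at the $\lfloor\ka_\kappa t\rfloor$-th time the explored component changes by a full boundary layer.

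Second, for fixed $y$, we encode $(C^\kappa_t(y))_{t\ge0}$ via the log conformal radius seen from $y$, as an element of (a time-changed version of) the space $\SD$ from \eqref{eq space of domains}. We then invoke \cite[Proposition~3.12]{BECriLQG} and the results around it. These give the convergence in law of the exploration targeted at $y$, with time counted as $\ka_\kappa^{-1}$ times the number of boundary-touching layers, to the uniform exploration $(C_t(y))$ with the PPP time parametrization. The consistency check is that the loop surrounding $y$ is discovered at time $\ka_\kappa^{-1}\,\mathrm{Geom}(1/\ka_\kappa)\to\mathrm{Exp}(1)$, matching the normalization of $\mu$.

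Convergence of the finite-dimensional marginals at rational $t$ follows from convergence in $\SD$ once we know that a fixed $t$ is a.s.\ a continuity time of the limit. This holds because, for fixed $t$, a.s.\ no bubble of the PPP is discovered at time $t$; Carathéodory convergence at time $t$ then follows. After $y$ separates from the boundary, that is, once its loop is discovered, both sides are $\emptyset$ or have conformal radius tending to zero, which is handled by the convention in the statement.

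Third, we pass from a single $y$ to the joint law over $y\in\BQ^2$, jointly with~\eqref{eq cv subsequence}. The branching structure gives, for $y,y'$, that the explorations coincide until the disconnection time and evolve independently afterwards, in both the discrete and continuum settings. The two-point statement in \cite{BECriLQG} (convergence of the branching tree jointly with the loops $\SCL^\kappa(x)$) therefore gives joint convergence for finitely many targets. Since~\eqref{eq cv subsequence} already holds along the subsequence, we use tightness to extract a further joint limit, and we identify the loops in it as those discovered by the limiting exploration. This identification uses that $\SCL^\kappa(x)$ is the boundary of the last component before $x$ is cut off; that fact is part of the \cite{BECriLQG} joint statement. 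The main obstacle is matching the time parametrizations. The discrete time is the number of layers, whereas the \cite{BECriLQG} convergence may be stated with a different time (e.g.\ log conformal radius or a Lévy-process time). I would try to show the layer count rescaled by $\ka_\kappa^{-1}$ converges to the PPP time using the i.i.d.\ structure. The number of layers between successive macroscopic disconnection events is geometric-like with rate $\ka_\kappa^{-1}$, so a law of large numbers within each conformally normalized step should identify the rescaled layer count with the local time/PPP clock.
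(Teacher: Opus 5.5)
Your architecture matches the paper's. You identify $C^{\kappa}_t(y)$ with the component of the radial $\SLE_\kappa(\kappa-6)$ exploration toward $y$ after its $\lfloor \ka_\kappa t\rfloor$-th clockwise loop around $y$ (the renewal times $\tau^\kappa_i$). You then use the convergence of that exploration from \cite{BECriLQG}, and extend to several targets via the domain Markov property, target invariance and convergence of separation times.

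The gap is in your second step, which presupposes the main point. The results of \cite{BECriLQG} give convergence of $(D^\kappa_t)_{t\ge0}$ in log-conformal-radius time, jointly with $\tau^\kappa_{\mathrm{ccw}}$ and $\theta^\kappa$ (their Proposition~2.5), and convergence of the loops (Proposition~3.12). They do not give convergence of the layer-count clock $\ka_\kappa^{-1}\#\{i:\tau^\kappa_i\le t\}$. Proving that this clock converges, jointly with $(D^\kappa_t,\theta^\kappa)$, to $\frac1{4\pi}L^0_t(\theta)$ is the real content of the proof, and you only gesture at it. Here $L^0$ is the local time at $0$ of the limiting reflected Brownian motion, which is the PPP clock via $C_u(0)=D_{\tau_{4\pi u}}$. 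Your consistency check on the law of the discovery time of $\SCL(y)$ matches a one-dimensional marginal only and cannot identify the time change.

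The mechanism you sketch is also not right as stated. The number of layers between two consecutive macroscopic events is not geometric with parameter $\ka_\kappa^{-1}$. After division by $\ka_\kappa$ it does not concentrate, so there is no law of large numbers ``within each step''.

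What works is a two-scale argument.
\begin{enumerate}
\item Cut at the excursions of $\theta^\kappa$ of height at least $2^{-n}$. The numbers of clockwise loops in the gaps $[T^{\kappa,n}_{j-1},S^{\kappa,n}_j]$ are i.i.d.\ geometric with some parameter $p^{\kappa,n}$, independent of the excursions and of their number $\Lambda^{\kappa,n}$.
\item The total count before $\tau^\kappa_{\mathrm{ccw}}$ is geometric with mean $\ka_\kappa-1$, so Wald's identity gives $p^{\kappa,n}=(1+\BP[T^{\kappa,n}_1\ge\tau^\kappa_{\mathrm{ccw}}](\ka_\kappa-1))^{-1}$.
\item The convergence of $\theta^\kappa$ to reflected Brownian motion of speed $4$ gives $\BP[T^{\kappa,n}_1\ge\tau^\kappa_{\mathrm{ccw}}]\to 2^{-n}/(2\pi)$, hence $\ka_\kappa p^{\kappa,n}\to\pi2^{n+1}$. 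So for fixed $n$ the rescaled gap counts converge to i.i.d.\ exponentials of mean $1/(\pi 2^{n+1})$, independent of the excursions.
\item Concentration appears only as $n\to\infty$. Conditionally on the excursions, the mean is $\frac{1}{\pi2^{n+1}}$ times the number of excursions of height at least $2^{-n}$ before $t$. This tends to $\frac1{4\pi}L^0_t(\theta)$ by the excursion-count characterisation of local time, and the conditional variance vanishes.
\item Sandwich the true count between the sums over gaps with $S^{\kappa,n}_{j+1}\le t$ and with $T^{\kappa,n}_{j-1}\le t$. This lets you take $\kappa\downarrow4$ before $n\to\infty$, with uniformity in $t$ from monotonicity.
\end{enumerate}

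With that lemma in hand, the remainder of your plan goes through as in the paper: a fixed rational $t$ is a.s.\ not an atom time of the PPP, and the multi-target step works as you describe.
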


\subsubsection{Background on branching $\SLE_\kappa(\kappa-6)$}
\label{subsec:branching_sle_kappa_minus_6}

Let us recall from~\cite{TreeCLE, BECriLQG} the radial $\SLE_\kappa(\kappa-6)$ exploration of the $\CLE_\kappa$ and its approximation. Let $B$ be a standard Brownian motion and let $\theta^\kappa$ be the unique process taking values in $[0,2\pi]$ starting from $0$ that is adapted to the natural filtration of $B$, is reflected instantaneously at $0$ and at $2\pi$, and solves the SDE
\begin{equation}\label{eq diffusion theta}
\rd \theta^\kappa_t = \sqrt{\kappa} \, \rd B_t + \left(\frac{\kappa-4}{2}\right)\cot(\theta^\kappa_t/2) \, \rd t
\end{equation}
on time intervals for which $\theta^\kappa_t \notin \{0, 2\pi\}$. By \cite[Proposition~4.2]{TreeCLE}, we know that the above process $\theta^\kappa$ is uniquely defined. 
The driving function $(W^\kappa_t)_{t\ge 0}$ is then defined by setting for all $t\ge 0$,
\begin{equation*}
W^\kappa_t \defeq \exp\left(\ri \theta^\kappa_t -\ri \int_0^t \cot(\theta_s^\kappa/2) \, \rd s\right).
\end{equation*}
The integral defining $W^\kappa$ is a.s.\ absolutely convergent: $\cot(\theta/2)$ blows up like $2/\theta$ at $0$ and like $-2/(2\pi-\theta)$ at $2\pi$, while near either endpoint $\theta^\kappa/\sqrt{\kappa}$ evolves as a Bessel process of dimension $3-8/\kappa>1$, for which $\int_0^t \rd s/X_s < \infty$ a.s.\ (see, e.g.,~\cite[Chapter~XI]{RY05}).
Next, let $g_t^\kappa(z)$ for $z \in \BD$ be the solution of the radial Loewner equation
\begin{equation}\label{eq Loewner radial}
g^\kappa_0 (z)=z \qquad \text{and} \qquad \partial_t g^\kappa_t (z) = g^\kappa_t(z) \frac{W_t^\kappa + g^\kappa_t (z)}{W_t^\kappa - g^\kappa_t (z)},
\end{equation}
which is well-defined until time $t_z^\kappa\defeq \inf \{ t \ge 0: g^\kappa_t(z) = W^\kappa_t\}$.

Let $(D^\kappa_t)_{t\ge 0}$ be the radial Loewner chain associated with $W^\kappa$, defined by $D^\kappa_t \defeq \{z \in \BD: t_z^\kappa >t\}$. It follows from~\cite{IG4} that there exists a.s.\ a continuous non-self-crossing curve $\eta_0^\kappa: [0, \infty) \to \overline{\BD}$ such that for all $t\ge 0$, the set $D^\kappa_t$ is the connected component of $\BD \setminus \eta_0^\kappa([0,t])$ which contains the origin. The time $\tau^\kappa_{\mathrm{ccw}} \defeq \inf\{t\ge 0: \theta_t^\kappa = 2 \pi\}$ (which is a.s.\ finite) is the first time that $ \eta_0^\kappa$ draws a counterclockwise loop around the origin. By taking the unique conformal mapping $\phi \colon \BD \to \BD$ with $\phi(0)=z$ and $\phi(1)=1$, one can define the $\SLE_\kappa(\kappa-6)$ branch $\eta_z^\kappa \defeq \phi \circ \eta^\kappa_0$ toward $z$. By \cite[Proposition~3.13]{TreeCLE}, $\eta_z^\kappa$ has the law of a radial $\SLE_\kappa(\kappa-6)$ in $\BD$ from $1$ targeted at $z$. Moreover, by \cite[Proposition~3.14]{TreeCLE}, one can couple $\eta_z^\kappa$ with $\eta_w^\kappa$ for $z \neq w \in \BD$ in such a way that the two curves coincide up to a time change until the time at which $z$ and $w$ are disconnected by the curve. By performing such a coupling for all $z \in \BD_\BQ$, we construct the branching $\SLE_\kappa(\kappa-6)$. Recall that $\tau^\kappa_{\mathrm{ccw}} <\infty$ a.s.. We denote by $\SCL^\kappa(0)$ the non-simple loop obtained as follows. Let $\acute{\tau}^\kappa_{\mathrm{ccw}} \defeq \sup \{ t\le \tau^\kappa_{\mathrm{ccw}}: \theta^\kappa_t =0 \}$ and let $\widetilde{\eta}$ be the branch $\eta^\kappa_{\eta^\kappa_0(\acute{\tau}^\kappa_{\mathrm{ccw}} )}$ reparameterized so that $\widetilde{\eta}\vert_{ [0,\tau^\kappa_{\mathrm{ccw}}]}  = \eta^\kappa_0 \vert_{ [0,\tau^\kappa_{\mathrm{ccw}}]} $. The loop $\SCL^\kappa(0)$ is then defined as $\widetilde{\eta}\vert_{ [\acute{\tau}^\kappa_{\mathrm{ccw}}, \infty]} $, where $\widetilde{\eta}(\infty) \defeq \lim_{t \to \infty}\widetilde{\eta}(t) = \eta^\kappa_0(\acute{\tau}^\kappa_{\mathrm{ccw}})$ is the target of the branch, so that this path is indeed a loop. Similarly, for all $z \in \BD$, let us denote by $\SCL^\kappa(z)$ the loop of the $\CLE_\kappa$ which is drawn by the curve $\eta^\kappa_z$ around $z$ when it exists. We note that for arbitrary $z \in \BD$, the loop $\SCL^{\kappa}(z)$ exists if and only if the curve $\eta^{\kappa}_z$ makes a counterclockwise loop around $z$.

The $\CLE_\kappa$ can then be constructed as the countable collection of (non-simple) loops $\SCL^\kappa(z)$ for $z \in \BD_\BQ$; that this collection satisfies the axioms of~\Cref{subsec:cle} is the content of~\cite{CLE,TreeCLE}. It is made of non-simple loops which can touch each other as well as the boundary of $\BD$, and we equip it with the graph distance of~\Cref{subsec:graph_distance}. By \cite[Lemma~5.2]{TreeCLE}, we know that the loop $\SCL^\kappa(z)$ touches $\partial \BD$ if and only if $\eta^\kappa_z$ has not drawn a clockwise loop around $z$ before drawing the counterclockwise loop around $z$. Moreover, when $\eta^\kappa_z$ draws its first clockwise loop around $z$, the encircled region is either a connected component of the complement of a loop of the  $\CLE_\kappa$ or a connected component of the complement of a \emph{chain of loops} of the $\CLE_\kappa$, i.e.\ of a finite sequence of loops each of which intersects the next. Since this loop or chain was drawn before that first time, it touches $\partial \BD$. The same applies to each subsequent clockwise loop, with $\partial \BD$ replaced by the boundary of the region encircled by the previous one. See~\Cref{fig:explo_CLE_kappa} for an illustration.

\begin{figure}[h]
	\centering
	\includegraphics[scale=0.8]{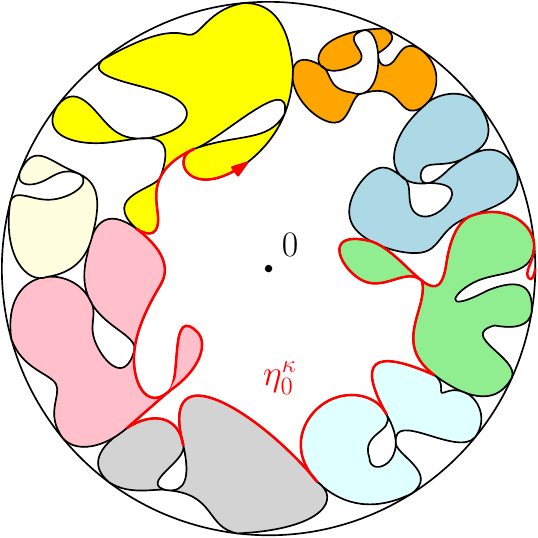}
	\caption{Illustration of the radial (totally asymmetric) $\SLE_\kappa(\kappa-6)$ exploration of the $\CLE_\kappa$ for $\kappa>4$ targeted at $0$. Here, the curve $\eta^{\kappa}_0$ has not yet drawn a clockwise loop around $0$, so the loops it discovers are still at graph distance one from $\partial \BD$.}
	\label{fig:explo_CLE_kappa}
\end{figure}

Using the fact that the times $\tau^\kappa_1, \tau^\kappa_2, \ldots$ at which $\eta^\kappa_z$ draws a clockwise loop around $z$ are renewal times, in the sense that if we conformally map the encircled region to $\BD$ sending $z$ to $0$ and the tip of $\eta^\kappa_z$ to~$1$, then the image of the remainder of the curve is independent of what has been drawn before and has the same law as $\eta^\kappa_0$, we can see by induction that for all $k\ge 1$, the loop $\SCL^\kappa(z)$ is at graph distance $k$ from $\partial \BD$ if and only if the counterclockwise loop around $z$ is drawn by $\eta^\kappa_z$ between times $\tau^\kappa_{k-1}$ and $\tau^\kappa_k$, where by convention we set $\tau^\kappa_0=0$.

Next, let us recall the following approximation of the $\SLE_\kappa(\kappa-6)$ from~\cite{BECriLQG}. Let $n\ge 1$. As in~\cite{BECriLQG}, let $T^{\kappa,n}_0\defeq 0$ and for all $k\ge 1$, let 
\begin{equation}\label{eq excursion intervals}
	\left\lbrace
	\begin{matrix}
	R^{\kappa,n}_k &\defeq &\inf\bigl\{ t\ge T^{\kappa, n}_{k-1}: \theta^\kappa_t \ge 2^{-n}\bigr\}, \\
	S^{\kappa,n}_k & \defeq  &\sup\bigl\{t \le R^{\kappa,n}_k: \theta^\kappa_t =0\bigr\}, \\
	T^{\kappa,n}_k & \defeq &\inf\bigl\{ t\ge R^{\kappa,n}_k: \theta^\kappa_t =0\bigr\},
	\end{matrix}
	\right.
\end{equation}
so that the intervals $[S^{\kappa,n}_k, T^{\kappa,n}_k]$ for $k\ge 1$ are the intervals where $\theta^\kappa$ makes an excursion $e^{\kappa,n}_k$ of height at least $2^{-n}$. Let $\Lambda^{\kappa,n} \defeq \max \{k \ge 1: S^{\kappa,n}_k \le \tau^\kappa_{\mathrm{ccw}} \}$. Let $l^{\kappa, n}_k \defeq T^{\kappa, n}_k - S^{\kappa, n}_k$ for $k< \Lambda^{\kappa, n}$, let $l^{\kappa,n}_{\Lambda^{\kappa,n}} \defeq \tau^\kappa_{\mathrm{ccw}} - S^{\kappa,n}_{\Lambda^{\kappa,n}}$ and $L^{\kappa, n}_k \defeq \sum_{j=1}^k l_j^{\kappa,n}$ for all $0 \le k\le \Lambda^{\kappa,n}$ (so that $L^{\kappa,n}_0 = 0$). We also write $\tau^{\kappa,n}\defeq L^{\kappa,n}_{\Lambda^{\kappa,n}}$. For all $k\le \Lambda^{\kappa,n}$, for all $t \in [L_{k-1}^{\kappa, n}, L^{\kappa,n}_k)$, we write
\begin{equation*}
W^{\kappa,n}_t \defeq W^\kappa_{S^{\kappa,n}_k+ t-L^{\kappa,n}_{k-1}}.
\end{equation*}
Let $(D^{\kappa, n}_t)_{t\ge 0}$ be the Loewner chain driven by $W^{\kappa,n}$.

By the proof of \cite[Lemma~2.10]{BECriLQG}, we know that $(D^{\kappa, n}_t)_{t\ge 0}$ converges in probability to $(D^{\kappa}_t)_{t\ge 0}$ as $n\to \infty$ uniformly in $\kappa \in (4,8)$ in the metric space $\SD$ defined in~\eqref{eq space of domains}, i.e., for all $\varepsilon>0$,
\begin{equation}\label{eq approx D kappa}
	\sup_{\kappa \in (4,8)} \BP\Bigl\lbrack d_{\SD}\bigl((D^{\kappa, n}_t)_{t\ge 0}, (D^{\kappa}_t)_{t\ge 0}\bigr) + \bigl\lvert \tau^{\kappa,n}- \tau^\kappa_{\mathrm{ccw}}\bigr\rvert \ge \varepsilon \Bigr\rbrack \to 0 \quad \text{as } n \to \infty.
\end{equation}
Indeed, by \cite[Equation (2.9)]{BECriLQG}, we have  $\sup_{\kappa \in (4,8)} \BP[  \bigl\lvert \tau^{\kappa,n}- \tau^\kappa_{\mathrm{ccw}}\bigr\rvert \ge \varepsilon ]{\to 0 \quad \text{as } n \to \infty}$. Then, note that by \cite[Proposition~6.1]{MS16QLE} {(see also \cite[Remark~2.1]{BECriLQG})}, the mapping from the space of driving measures equipped with the topology of weak convergence to $\SD$, which sends the driving measure to its corresponding Loewner chain, is continuous, together with the fact that excising the intervals of the excursions of height less than $2^{-n}$ makes the corresponding driving measures converge weakly, in probability and uniformly in $\kappa$ (where we equip the space of driving measures with the Lévy--Prokhorov distance). This proves~\eqref{eq approx D kappa}.

\subsubsection{Approximation of the uniform exploration}\label{subsection approximation of the uniform exploration}

Similarly, in the case $\kappa=4$, the uniform exploration of the $\CLE_4$ can be approximated as follows. Define $R^n_k, S^n_k, T^n_k, l^n_k, L^n_k, \Lambda^n$ as above using a reflected Brownian motion $\theta_t$ with speed $4$, i.e., the process starting at zero, satisfying the SDE $\rd \theta_t = 2 \, \rd B_t$ on time intervals for which $\theta_t \not\in \{0,2\pi\}$, and reflected at $\{0,2\pi\}$. The time $\tau^\kappa_\mathrm{ccw}$ is replaced here by $\tau\defeq \inf\{t\ge 0: \theta_t = 2\pi\}$ and we also define $\tau^n\defeq L^n_{\Lambda^n}$. Independently, for all $k\ge 1$, let $X^n_k$ be a uniform random variable on $\partial \BD$. Then, define for all $k\ge 1$, for all $t \in [L_{k-1}^{ n}, L^{n}_k)$,
\begin{equation*}
W^{n}_t \defeq X_k^n \exp \left(\ri \left(\theta_{S^n_k+ t - L^n_{k-1}} - \int_{S^n_k}^{S^n_k + t - L^n_{k-1}} \cot(\theta_s/2) \, \rd s\right)\right).
\end{equation*}
Let $(D^n_t)_{t\ge 0}$ be the Loewner chain driven by $W^n$. Using the same argument as in \cite[Section 4]{CoInCLEExpl}, we know that 
\begin{equation*}
(D^n_t)_{t\ge 0} \mathop{\longrightarrow}\limits_{n\to \infty}^{(\mathrm{d})} (D_t)_{t\ge 0}
\end{equation*}
in the space $\SD$, where $(D_t)_{t\ge 0}$ is known as the uniform exploration of the $\CLE_4$ targeted at zero (this process is constructed below as the Loewner chain driven by $W$, the convergence stated here being the content of~\Cref{lemme approx explo unif}) and parametrized by minus the logarithm of the conformal radius. Let us emphasize that $(D_t)_{t\ge 0}$ is not exactly the same as $(C_t(0))_{t\ge 0}$ defined in \Cref{subsec:labeled_cle_4}. More precisely, every excursion of the reflected Brownian motion $\theta$ giving the time parametrization of $(D_t)_{t\ge 0}$ is associated to an $\SLE_4$ bubble in the Poisson point process defining $(C_t(0))_{t\ge 0}$, so that $(C_t(0))_{t\ge 0}$ is obtained from $(D_t)_{t\ge 0}$ via a time-change. This will be made precise after the following definitions and notation.

The above convergence can be upgraded to a convergence in probability by choosing the $X^n_k$'s in a way that for all $n<n'$, for all $k,k'$ such that $S^n_k= S^{n'}_{k'}$ we have $X^n_k=X^{n'}_{k'}$, i.e., we take the same uniform random variable in the unit circle for the same excursion as will be done in~\Cref{lemme approx explo unif}.

Let $L^0_t(\theta) $ for $t\ge 0$ be the local time at zero of the reflected Brownian motion $\theta$, which can be defined as the limit in probability of $2^{1-n}\#\{k\ge 1: S^n_k\le t\}$ as $n\to \infty$ by \cite[Theorem~(1.10) Chapter VI]{RY05}. Let us denote by $(\tau_u)_{u\ge 0}$ its right-continuous inverse defined by $\tau_u= \inf\{t\ge 0: L^0_t(\theta) > u\}$. For all $u\ge 0$, for all $t\ge 0$, let $e_u(t)= \theta_{(\tau_{u-}+t)\wedge \tau_u}$ be the excursion of $\theta$ away from zero on the time interval $(\tau_{u-}, \tau_u)$. We then define the measurable function $W$ taking its values in $\partial \BD$ by setting, for all $u\ge 0$, for all $t \in [\tau_{u-}, \tau_u)$,
\begin{align*}
    W_t &\defeq X_u \exp\left(\ri \left(e_u(t-\tau_{u-}) - \int_0^{t-\tau_{u-}}    	
    	\cot(e_u(s)/2) \, \rd s \right) \right) \\
    &= X_u \exp\left(\ri \left(\theta_t - \int_{\tau_{u-}}^{t} \cot(\theta_s/2) \, \rd s\right)\right),
\end{align*}
where the $X_u$ are i.i.d.~uniform random variables on $\partial \BD$ indexed by the countably many $u$ with $\tau_{u-} < \tau_u$, so that $(X_u, e_u)$ is a marked Poisson point process. This defines $W$ outside a Lebesgue-null set of times, which is enough since $W$ enters only through the driving measure $\delta_{W_t}(\rd z) \, \rd t$. The same applies to $W^n$ and $W$ for $t<\tau$: there $\theta$ is a reflected Brownian motion, for which $\int_0^t \rd s/\theta_s = \infty$, but each integral above runs over a single excursion of $\theta$ away from $0$, and a Brownian excursion behaves near its endpoints as a Bessel process of dimension $3$. Note that for all $n\ge 1$, for all $k\ge 1$, there is a unique $u\ge 0$ such that $S^n_k= \tau_{u-}$ and $T^n_k = \tau_u$. We thus set $X^n_k \defeq X_u$ and $e^n_k \defeq e_u$. Let $(D_t)_{t\ge 0}$ be the radial Loewner chain driven by $W$, i.e., by the driving measure $\delta_{W_t}(\rd z) \, \rd t$.

\begin{lemma}\label{lemme approx explo unif}
	With the above choice of $X^n_k$'s we have the convergence in probability 
	\begin{equation}\label{eq approx explo unif}
		\left((D^n_t)_{t\ge 0}, \tau^n \right) \mathop{\longrightarrow}\limits_{n\to \infty}^{(\BP)} \left((D_t)_{t\ge 0}, \tau \right)
	\end{equation}
	in the space $\SD \times \BR$.
\end{lemma}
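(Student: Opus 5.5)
Both components of \eqref{eq approx explo unif} will be handled separately, reusing the Loewner-chain continuity already invoked for \eqref{eq approx D kappa}. For the time component, $\tau^n = L^n_{\Lambda^n}$ is $\tau$ minus the total duration spent by $\theta$ before $\tau$ in excursions of height less than $2^{-n}$ (together with a final correction for the portion between $\tau_{u-}$ and $\tau$ of the excursion straddling $\tau$, which is kept in full since it has height $2\pi \ge 2^{-n}$). Since $\theta$ is a reflected Brownian motion, the Lebesgue measure of the set $\{t \le \tau : \theta_t \in \text{an excursion of height} < 2^{-n}\}$ is at most $\int_0^\tau \mathbf 1_{\{\theta_t < 2^{-n}\}} \, \rd t$. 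By the occupation times formula this integral tends to $0$ a.s.\ (the local times are continuous and $\tau<\infty$ a.s.). Hence $\tau^n \to \tau$ a.s., which is consistent with \cite[Equation (2.9)]{BECriLQG} at $\kappa=4$.

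For the domain component, we use \cite[Proposition~6.1]{MS16QLE}: the map from driving measures, equipped with the topology of weak convergence on compact time intervals, to $\SD$ is continuous. It therefore suffices to show that the driving measure $\delta_{W^n_t}(\rd z)\,\rd t$ converges in probability to $\delta_{W_t}(\rd z)\,\rd t$. The key observation is that, with the coupled choice $X^n_k = X_u$, the process $W^n$ is exactly $W$ with the time intervals of the small excursions excised and the remaining pieces concatenated. Indeed, on each retained excursion interval, $W^n$ agrees with $W$ shifted by the time $\sigma_n(t)$ removed before it, because the phase $X_u$ and the integral $\int_{\tau_{u-}}^{\cdot}\cot(\theta_s/2)\,\rd s$ are both reset at the start of each excursion. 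Consequently, for any continuous test function $f$ on $[0,T]\times\partial\BD$,
\[
\int_0^T f(t,W^n_t)\,\rd t - \int_0^T f(t,W_t)\,\rd t
\]
is bounded by $\|f\|_\infty$ times the total excised time up to time $T+\sigma_n(T)$, plus the modulus of continuity of $f$ in $t$ evaluated at $\sigma_n(T)$. Here $\sigma_n(T)$ is the excised time before the $n$-th approximation reaches time $T$, which is bounded by the occupation integral above taken up to a time of order $T+1$. Both terms tend to $0$ in probability. Note that unlike the case $\kappa>4$, there is no need to worry that removing excursions changes the drift $\int\cot$, since the drift is excursion-local.

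The main obstacle is the behaviour after time $\tau$. The definitions of $W^n$ and $W$ stop at $\Lambda^n$ and $\tau$ respectively, so elements of $\SD$ must be extended to all $t\ge0$ (for instance by continuing the Poissonian construction with fresh excursions beyond $\tau$, using the same coupling). Only then does the argument above apply on every $[0,2^k]$. With this convention in place, the weak convergence on each compact time interval gives convergence in $d_\SD$ by \eqref{eq space of domains}. Combining this with $\tau^n\to\tau$ yields the joint convergence in probability in $\SD\times\BR$.
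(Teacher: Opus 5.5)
Your proposal is correct and is essentially the paper's argument: $\tau^n\to\tau$ because $\theta$ spends zero Lebesgue time at $0$, and the convergence in $\SD$ follows from the weak convergence of the driving measures $\delta_{W^n_t}(\rd z)\,\rd t$ combined with \cite[Proposition~6.1]{MS16QLE}. Your explicit identity $W^n_t=W_{t+\sigma_n(t)}$, which comes from the coupling $X^n_k=X_u$, and your remark that both Loewner chains need a common continuation past $\tau^n$ and $\tau$ make explicit what the paper's short proof leaves implicit; your aside about the drift for $\kappa>4$ is unnecessary, since there too $W^{\kappa,n}$ is $W^\kappa$ evaluated at shifted times.
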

\begin{proof}
	The convergence of $\tau^n$ to $\tau$ stems from the fact that the set of zeros of the Brownian motion has zero Lebesgue measure. For the convergence of $(D^n_t)_{t\ge 0}$, it suffices to notice that the convergence of $\tau^n$ to $\tau$ implies that for all $T>0$, for all continuous bounded functions $F: \partial \BD \times [0,T] \to \BR$,
	\begin{equation*}
	    \int_0^T F(W^n_t, t) \, \rd t  \mathop{\longrightarrow}\limits_{n\to \infty}^{(\BP)} \int_0^T F(W_t, t) \, \rd t
	\end{equation*}
{(see also \cite[Remark~2.1]{BECriLQG})}. One can then apply \cite[Proposition~6.1]{MS16QLE}.
\end{proof}

Let us also recall that $\tau= \inf\{t\ge 0: \theta_t = 2\pi\}$ is the time at which the Loewner curve driven by $(W_t)_{\tau_{L^0_\tau(\theta)-} \le t \le \tau}$ draws a loop around the origin, which is the loop of the $\CLE_4$ surrounding the origin. Note also that the PPP $(X_u, e_u)_{u\ge 0}$ gives rise to the PPP of SLE bubbles defined in~\cite{CoInCLEExpl} in the chordal setting, so that each curve drawn by $(W_t)_{\tau_{u-} \le t \le \tau_u}$ is a loop of the $\CLE_4$ and the time $u$ is the label of that loop, i.e.\ the time at which it is discovered by the exploration. More precisely, we use the normalization of the bubble measure fixed in~\Cref{subsec:labeled_cle_4}. Moreover, note that, by scaling of the Brownian excursion measure, the first time $u\ge 0$ such that the excursion $e_u$ reaches height $2\pi$ is an exponential random variable of parameter $1/(4\pi)$. 
 Coming back to the reflected Brownian motion $(\theta_t)_{t\ge 0}$, for each time $t\in (0,\tau)$ such that $\theta_t>0$, the Loewner chain driven by $W$ is drawing a loop which is at distance $L^0_t(\theta)/(4 \pi)$ from the boundary $\partial \BD$. In other words, we have
 \begin{equation}\label{eq time change uniform exploration}
 	(C_u(0))_{u\ge 0}=(D_{\tau_{4 \pi u}})_{u\ge 0}.
 \end{equation}This follows from the above correspondence between the two Poisson point processes together with the normalization of the bubble measure.

Next, let us recall the convergence result of~\cite{BECriLQG}.

\begin{lemma}[{Consequence of \cite[Lemma~2.11 and Lemma~2.12]{BECriLQG}}]\label{lemme approx kappa to 4}
	Jointly for all $n\ge 1$,
	\begin{equation*}
	\left( (D^{\kappa,n}_t)_{t\ge 0}, \tau^{\kappa,n}, (e^{\kappa,n}_k)_{1 \le k \le \Lambda^{\kappa,n}}\right) \mathop{\longrightarrow}\limits_{\kappa \downarrow 4}^{(\mathrm{d})}
	\left( (D^n_t)_{t\ge 0}, \tau^n, (e^{n}_k)_{1 \le k \le \Lambda^n}\right),
	\end{equation*}
	where the first coordinate converges in the space $\SD$, the second one converges in $\BR$ and where the excursions converge for the topology of uniform convergence on compact subsets.
\end{lemma}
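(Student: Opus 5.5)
The plan is to reduce the statement to convergence of finitely many objects for each fixed $n$, all of which are continuous functionals of the driving process $\theta^\kappa$ restricted to its large excursions. Fix $n$. The data $(D^{\kappa,n}_t)_{t\ge0}$, $\tau^{\kappa,n}$ and $(e^{\kappa,n}_k)_{k\le\Lambda^{\kappa,n}}$ are determined by the finitely many excursions of $\theta^\kappa$ of height at least $2^{-n}$ before $\tau^\kappa_{\mathrm{ccw}}$, together with the phase $\exp(-\ri\int\cot(\theta^\kappa_s/2)\,\rd s)$ accumulated between consecutive such excursions. I will invoke \cite[Lemma~2.11]{BECriLQG} for the convergence as $\kappa\downarrow4$ of these excursions, and of $\Lambda^{\kappa,n}$, to those of the reflected Brownian motion $\theta$ with speed $4$, uniformly on compacts. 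I will invoke \cite[Lemma~2.12]{BECriLQG} for the fact that the accumulated rotations between big excursions become asymptotically i.i.d.\ uniform on $\partial\BD$ and independent of the excursions. The latter is exactly the law of $(X^n_k)_k$. These two facts give joint convergence in law of the driving functions $W^{\kappa,n}$ on $[0,\tau^{\kappa,n}]$ to $W^n$ on $[0,\tau^n]$: on each excursion interval $W^{\kappa,n}$ is the accumulated rotation times $\exp(\ri(\theta^\kappa-\int_{S_k}\cot(\theta^\kappa_s/2)\,\rd s))$. Since $\Lambda^{\kappa,n}$ converges, there are only finitely many pieces.

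The next step is to pass from driving functions to Loewner chains. I will compute the driving measures $\delta_{W^{\kappa,n}_t}(\rd z)\,\rd t$ and show that they converge weakly to $\delta_{W^n_t}(\rd z)\,\rd t$. The integral term inside each excursion is a continuous functional of the excursion away from its endpoints. Near the endpoints, both the $\kappa$-process and the limit behave like Bessel processes of dimension close to $3$, uniformly in $\kappa$ near $4$. This tail control shows that the small-time contributions are negligible, and it does not affect weak convergence of the measures in any case. Continuity of the Loewner map from driving measures to $\SD$, \cite[Proposition~6.1]{MS16QLE}, then yields $(D^{\kappa,n}_t)\to(D^n_t)$ in $\SD$. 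The convergence of $\tau^{\kappa,n}=L^{\kappa,n}_{\Lambda^{\kappa,n}}$ follows because it is the sum of the excursion lengths, with the last one truncated at the hitting time of $2\pi$. Hitting times of $2\pi$ are continuous under uniform convergence, since the Brownian limit crosses $2\pi$ transversally almost surely.

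Finally, joint convergence over all $n$ follows from a diagonal/product argument. The collections for different $n$ are nested: the excursions of height $\ge2^{-n}$ form a subset of those of height $\ge2^{-n-1}$, and they use the same rotation variables, matching the consistent choice of the $X^n_k$. So it suffices to prove convergence of the level-$N$ data for each $N$ and to read off all $n\le N$ as continuous functions of it; the product topology only needs finite-dimensional marginals.

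The main obstacle I expect is the independence and uniformity of the accumulated rotations, since $\int\cot(\theta^\kappa/2)$ over the many small excursions diverges as $\kappa\downarrow4$. This is the content of \cite[Lemma~2.12]{BECriLQG}, which I would cite directly rather than reprove.
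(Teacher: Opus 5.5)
Your proposal is correct and follows the same route as the paper. For fixed $n$ the paper simply cites \cite[Lemmas~2.11 and~2.12]{BECriLQG}, with the joint convergence with the excursions coming from the proof of Lemma~2.11 (your excursion convergence, asymptotically uniform independent rotations and Loewner-continuity steps unpack exactly this). It then deduces joint convergence over all $n$ from the nested definition of the $e^n_k$ and the shared variables $X^n_k$, as in your last step.

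One minor inaccuracy: $\theta$ is reflected at $2\pi$, so it cannot ``cross $2\pi$ transversally''. The convergence of $\tau^{\kappa,n}$ should instead be taken from the cited lemma, or justified by a uniform-in-$\kappa$ estimate that $\theta^\kappa$ hits $2\pi$ quickly once it is close to $2\pi$.
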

 The fact that the convergence  of $(D^{\kappa,n}_t)_{t\ge 0}$ and $\tau^{\kappa,n}$ holds jointly with $(e^{\kappa,n}_k)_{1 \le k \le \Lambda^{\kappa,n}}$ comes from the proof of \cite[Lemma~2.11]{BECriLQG} and the fact that the convergence holds jointly for all $n\ge 1$ comes from our definition of the excursions $e^n_k$ and the variables $X^n_k$.

But note that since $\tau^{\kappa,n}$ converges in probability to $\tau^\kappa_{\mathrm{ccw}}$ uniformly in $\kappa \in (4,8)$ and since $\tau^n \to \tau$ in probability as $n\to \infty$, we deduce from the convergence of $(e^{\kappa,n}_k)_{1 \le k \le \Lambda^{\kappa,n}}$ the convergence 
\begin{equation}\label{eq cv theta}
	(\theta^\kappa_t)_{0 \le t \le \tau^\kappa_{\mathrm{ccw}}} \mathop{\longrightarrow}\limits_{\kappa\downarrow 4}^{(\mathrm{d})}(\theta_t)_{0 \le t \le \tau}
\end{equation}
jointly with the convergence of~\Cref{lemme approx kappa to 4}. The topology which is used above, for continuous functions of the form $f\colon [0, t]\to \BR$, can be defined via the topology of uniform convergence on compact sets for the functions extended to $\BR_{\ge 0}$ by setting $f(s)=f(t)$ for all $s\ge t$.

From~\Cref{lemme approx kappa to 4} and~\eqref{eq cv theta} we also get jointly the convergence of the excursion times
\begin{equation}\label{eq cv excursion times}
	\left( S^{\kappa,n}_k, R^{\kappa,n}_k, T^{\kappa, n}_k\right)_{1 \le k \le \Lambda^{\kappa,n}}
	\mathop{\longrightarrow}\limits_{\kappa\downarrow 4}^{(\mathrm{d})}
	\left( S^{n}_k, R^{n}_k, T^{n}_k\right)_{1 \le k \le \Lambda^{n}}.
\end{equation}
Here, and in the similar convergences below, we use that the level-hitting times of $\theta^\kappa$ are a.s.\ continuous functionals at the limiting reflected Brownian path, which a.s.\ crosses each fixed level immediately after hitting it.

\subsubsection{Convergence of the rescaled graph distance}

Next, we prove the following result that extends \cite[Proposition~2.5]{BECriLQG}.

\begin{lemma}\label{lemma number of clockwise loops}
	We have the following convergence
	\begin{equation*}
	\left( (D_t^\kappa)_{t\ge 0}, \tau^\kappa_\mathrm{ccw}, \theta^\kappa,\left( \ka_\kappa^{-1} \# \{i\ge 1: \tau^\kappa_i \le t\}\right)_{0 \le t \le \tau^\kappa_\mathrm{ccw}} \right)
	\mathop{\longrightarrow}\limits_{\kappa\downarrow 4}^{(\mathrm{d})}
	\left( (D_t)_{t\ge 0}, \tau, \theta,\left( \frac{1}{4\pi} L^0_t(\theta)\right)_{0 \le t \le \tau} \right),
	\end{equation*}
	where the first coordinate converges in $\SD$, the second one in $\BR$ and the third and fourth ones converge for the topology of uniform convergence on compact subsets of $\BR_{\ge 0}$, using the extension described below \eqref{eq cv theta}.
\end{lemma}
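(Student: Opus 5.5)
The first three coordinates already converge jointly: $(D^\kappa_t)$ and $\tau^\kappa_{\mathrm{ccw}}$ by combining \eqref{eq approx D kappa}, \Cref{lemme approx kappa to 4} and \Cref{lemme approx explo unif}, and $\theta^\kappa$ by \eqref{eq cv theta}. The real work is the counting process $N^\kappa_t \defeq \#\{i\ge1:\tau^\kappa_i\le t\}$. I would show that $\ka_\kappa^{-1}N^\kappa$ is asymptotically a deterministic functional of the excursions of height at least $2^{-n}$, and that this functional converges to $\frac{1}{4\pi}L^0(\theta)$.

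The key structural fact is the renewal property. A clockwise loop around the target is completed exactly when $\theta^\kappa$ hits $0$ after an excursion started from $2\pi$. Here I mean the convention in which $\theta^\kappa$ is reflected at both $0$ and $2\pi$, with clockwise loops closing at hits of $0$ coming from $2\pi$; in the totally asymmetric picture this is equivalent to each excursion of $\theta^\kappa$ away from $0$ ending in a clockwise loop. Between renewals, $\kappa$ enters only through the drift. Since $N^\kappa_{\tau^\kappa_{\mathrm{ccw}}}+1$ is geometric with parameter $1/\ka_\kappa$, the probability that one excursion of $\theta^\kappa$ reaches $2\pi$ before returning to $0$ scales like $\ka_\kappa^{-1}$ times the excursion-measure mass of excursions reaching $2\pi$.

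The plan is to make this quantitative through local time. Near $0$, $\theta^\kappa/\sqrt\kappa$ is a Bessel process of dimension $3-8/\kappa\downarrow1$. Normalize its local time at $0$ so that $L^{0,\kappa}_t$ is the limit of $2^{1-n}\#\{k: S^{\kappa,n}_k\le t\}$, up to a $\kappa$-dependent constant that tends to $1$. I would then prove three things.

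\begin{enumerate}
\item Clockwise-loop completions occur as a Poisson process in the local time of $\theta^\kappa$ at $0$, with rate $c_\kappa$. This is a Cox or thinning structure: each excursion produces a clockwise loop independently via the reflection at $2\pi$.
\item $c_\kappa/\ka_\kappa\to 1/(4\pi)$. To identify the constant I would use the geometric law: $\tau^\kappa_{\mathrm{ccw}}$ occurs at local time $\approx\mathrm{Exp}(\text{rate of excursions reaching }2\pi\text{ ccw})$, and the count before it is geometric with mean $\ka_\kappa-1$. Comparing with the $\kappa=4$ statement that the loop around $0$ is found at local time $\mathrm{Exp}(1/(4\pi))$ then forces the normalization.
\item For each fixed $n$, the local time accumulated by $\theta^\kappa$ on $[0,t]$ is approximated uniformly in $\kappa$ by $2^{1-n}\#\{k:S^{\kappa,n}_k\le t\}$, and by \eqref{eq cv excursion times} this count converges to its $\kappa=4$ analogue.
\end{enumerate}

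From these I would conclude as follows. Given the excursion process, a Poisson process of rate $c_\kappa\to\infty$ in local time, divided by $\ka_\kappa$, converges uniformly to $\frac{1}{4\pi}L^0$ by the law of large numbers, with Doob's inequality giving uniform control on $[0,\tau^\kappa_{\mathrm{ccw}}]$. Combined with the approximation in step 3 and $\tau^\kappa_{\mathrm{ccw}}\to\tau$, this gives the joint convergence. This is in the spirit of extending \cite[Proposition~2.5]{BECriLQG}, which likely already handles the value at $\tau^\kappa_{\mathrm{ccw}}$; I would follow its proof and upgrade it to the process level using monotonicity of $N^\kappa$ and continuity of the limit.

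The main obstacle is step 3: proving uniform-in-$\kappa$ control of the error between the true local time and the dyadic excursion count. The Bessel dimension degenerates to $1$, and clockwise loops may also arise from small excursions. I would first try an $L^2$ estimate via the excursion measure of Bessel processes of dimension $\delta\in(1,2)$ near $\delta=1$, whose normalized height-tail constants converge. If that fails, I would argue that excursions of height below $2^{-n}$ contribute an expected number of clockwise loops of order $o(\ka_\kappa)$ uniformly.
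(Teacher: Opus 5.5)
\paragraph{Main gap: how clockwise loops arise.} Your treatment of the first three coordinates matches the paper. The problem is the counting process, and the mechanism you describe is not the one at play.
\begin{itemize}
\item $\tau^\kappa_{\mathrm{ccw}}$ is the \emph{first} hitting time of $2\pi$. So every $\tau^\kappa_i<\tau^\kappa_{\mathrm{ccw}}$ occurs before $\theta^\kappa$ ever visits $2\pi$; returns to $0$ from $2\pi$ can only happen after $\tau^\kappa_{\mathrm{ccw}}$.
\item It is not true that each excursion of $\theta^\kappa$ ends with a clockwise loop. $\theta^\kappa$ makes infinitely many excursions in every interval $[0,\varepsilon]$, while $\#\{i:\tau^\kappa_i<\tau^\kappa_{\mathrm{ccw}}\}$ is a.s.\ finite (geometric).
\item Only a sparse set of returns to $0$ are clockwise-loop times. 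Which ones is a deterministic function of the path of $\theta^\kappa$, since the Loewner chain is driven by $W^\kappa$, a functional of $\theta^\kappa$. In general it depends on how much of the current layer's boundary is still visible from the target, i.e.\ on the history since the previous $\tau^\kappa_i$, not on the excursion alone.
\end{itemize}
There is therefore no independent thinning, and your step~1 (a Poisson process in local time with deterministic rate $c_\kappa$) is unsupported. Step~2 and the concluding law of large numbers both rest on it.

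Your fallback cannot work either. Before $\tau^\kappa_{\mathrm{ccw}}$, only $\Lambda^{\kappa,n}$ excursions reach height $2^{-n}$; this is a tight family as $\kappa\downarrow4$, and each of their endpoints is at most one $\tau^\kappa_i$. The total count, however, is of order $\ka_\kappa$. So almost all clockwise loops occur in the stretches of small excursions, and those cannot contribute only $o(\ka_\kappa)$.

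\paragraph{Step 3 is also set up incorrectly.} Near $0$, $\theta^\kappa/\sqrt\kappa$ is a Bessel process of dimension $3-8/\kappa\in(1,2)$. Hence $\#\{k:S^{\kappa,n}_k\le t\}$ grows like $2^{n(8/\kappa-1)}$ times the local time, and $2^{1-n}\#\{k:S^{\kappa,n}_k\le t\}\to0$ as $n\to\infty$ for each fixed $\kappa>4$. No $\kappa$-dependent constant fixes this. Uniform-in-$\kappa$ control of the correctly normalized local time is precisely the estimate you leave open.

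\paragraph{How the paper avoids both issues.} It needs neither a local time for $\kappa>4$ nor any Poisson structure.
\begin{itemize}
\item It writes $\#\{i:\tau^\kappa_i<\tau^\kappa_{\mathrm{ccw}}\}=\sum_{j\le\Lambda^{\kappa,n}}\#\{i:T^{\kappa,n}_{j-1}\le\tau^\kappa_i\le S^{\kappa,n}_j\}$, using that the $\tau^\kappa_i$ are zeros of $\theta^\kappa$.
\item Via the renewal property at the $\tau^\kappa_i$, it treats the summands as i.i.d.\ geometric with a parameter $p^{\kappa,n}$, independent of $\Lambda^{\kappa,n}$ and of the big excursions.
\item It identifies $p^{\kappa,n}$ by taking expectations against the known geometric law of the total: $\ka_\kappa-1=(1/p^{\kappa,n}-1)/\BP[T^{\kappa,n}_1\ge\tau^\kappa_{\mathrm{ccw}}]$.
\item By \eqref{eq cv theta} and \eqref{eq cv excursion times}, this probability tends to $\BP[T^n_1\ge\tau]=2^{-n}/(2\pi)$. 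Hence $\ka_\kappa p^{\kappa,n}\to 2\pi\, 2^n$, and the rescaled summands converge to i.i.d.\ exponentials of parameter $\pi 2^{n+1}$, jointly with and independent of the big excursions.
\item Local time enters only at $\kappa=4$. From $2^{-n}\#\{k:S^n_k\le t\}\to\frac12L^0_t$ and a conditional mean/variance computation, the sum of these exponentials over the gaps before $t$ tends to $\frac1{4\pi}L^0_t$ as $n\to\infty$.
\item Finally, the true count is sandwiched between the sums over $j$ with $S^{\kappa,n}_{j+1}\le t$ and with $T^{\kappa,n}_{j-1}\le t$. Letting $\kappa\downarrow4$ and then $n\to\infty$, and using monotonicity and continuity of the limit, gives the process-level convergence.
\end{itemize}
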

\begin{proof}
	\stepx{step:cw-first}{Convergence of the first coordinates} The convergence of $(D_t^\kappa)_{t\ge 0}, \tau^\kappa_\mathrm{ccw}$, which corresponds to \cite[Proposition~2.5]{BECriLQG}, stems from the convergence of~\Cref{lemme approx kappa to 4}, together with the convergences in probability~\eqref{eq approx D kappa} and~\eqref{eq approx explo unif}.
	
	\stepx{step:cw-decomposition}{Decomposition of the number of clockwise loops} The only convergence that remains to be proven is thus the convergence of the last coordinate. Recall that $\#\{i\ge 1: \tau_i^\kappa< \tau^\kappa_\mathrm{ccw} \}$ is a geometric random variable starting at zero with success probability $1/\ka_\kappa$. Let $n\ge 1$. One can write
	\begin{equation}\label{eq decomposition number of clockwise loops}
		\#\{i\ge 1: \tau_i^\kappa< \tau^\kappa_\mathrm{ccw} \} = \sum_{j=1}^{\Lambda^{\kappa,n}}
		\#\{i\ge 1: T^{\kappa,n}_{j-1} \le \tau^\kappa_i \le S^{\kappa,n}_j\}.
	\end{equation}
	Note that the $\tau^\kappa_i$ are zeros of $\theta^\kappa$, hence lie in no excursion interval $(S^{\kappa,n}_j, T^{\kappa,n}_j)$, which is what makes~\eqref{eq decomposition number of clockwise loops} an identity; moreover $S^{\kappa,n}_{\Lambda^{\kappa,n}} \le \tau^\kappa_{\mathrm{ccw}}$ by definition of $\Lambda^{\kappa,n}$, so the last interval in the sum is complete. Notice that the terms of the sum on the right-hand side are i.i.d.~geometric random variables (starting at zero) since the times when the radial $\SLE_\kappa(\kappa-6)$ draws a clockwise loop are renewal times, and they are independent of $\Lambda^{\kappa,n}$. Let us denote by $p^{\kappa,n}$ their parameter.
	
	\stepx{step:cw-geometric}{Convergence of the geometric variables} Moreover, $\Lambda^{\kappa,n}$ is a geometric random variable starting at $1$ with parameter $\BP[T^{\kappa,n}_1 \geq \tau^\kappa_\mathrm{ccw}]$. By taking the expectation in~\eqref{eq decomposition number of clockwise loops}, one gets
	\begin{equation*}
	    \ka_\kappa-1=(1/p^{\kappa,n} - 1)/\BP[T^{\kappa,n}_1 \geq \tau^\kappa_\mathrm{ccw}],
	\end{equation*}
	so that
	\begin{equation*}
	    p^{\kappa,n} = \bigl(1+ \BP[T^{\kappa,n}_1 \geq \tau^\kappa_\mathrm{ccw}] (\ka_\kappa-1)\bigr)^{-1}.
	\end{equation*}
	By~\eqref{eq cv theta} and~\eqref{eq cv excursion times}, we have
	\begin{align*}
		\BP[T^{\kappa,n}_1 \geq \tau^\kappa_\mathrm{ccw}] &= \BP[(\theta^\kappa_t)_{t\ge R^{\kappa,n}_1}\text{ reaches } 2\pi \text{ before touching }0] \\
		&\mathop{\longrightarrow}\limits_{\kappa \downarrow 4}\BP[(\theta_t)_{t\ge R^{n}_1}\text{ reaches } 2\pi \text{ before touching }0] \\
		&=\BP[T^n_1 \geq \tau].
	\end{align*}
	Therefore, using the fact that $\ka_\kappa \to \infty$ as $\kappa \downarrow 4$, we deduce that
	\begin{equation*}
	\ka_\kappa p^{\kappa,n} \mathop{\longrightarrow}\limits_{\kappa \downarrow 4} 1/\BP[T^n_1 \geq \tau] .
	\end{equation*}
	As a result, jointly with the convergences of~\Cref{lemme approx kappa to 4},~\eqref{eq cv theta} and~\eqref{eq cv excursion times},
	\begin{equation}\label{eq cv geometric exponential}
		\left(\ka_\kappa^{-1} \sum_{j=1}^{k}
		\#\{i\ge 1: T^{\kappa,n}_{j-1} \le \tau^\kappa_i \le S^{\kappa,n}_j\}
		\right)_{1 \le k \le \Lambda^{\kappa,n}}
		\mathop{\longrightarrow}\limits_{\kappa \downarrow 4}^{(\mathrm{d})} 
		\left( \sum_{j=1}^{k} \mathcal{E}^n_j\right)_{1\le k \le \Lambda^n}
	\end{equation}
	where the variables $\mathcal{E}^n_j$ are i.i.d.~exponential random variables of parameter ${1}/\BP[T^n_1\geq \tau]$ and are independent of $\Lambda^n$. By scaling of the Brownian excursion measure, one computes $\BP[T^n_1 \geq \tau] = 2^{-n}/(2\pi)$, so the variables $\mathcal{E}^n_j$ have parameter $\pi 2^{n+1}$. Note that the family of the $\#\{i\ge 1: T^{\kappa,n}_{j-1} \le \tau^\kappa_i \le S^{\kappa,n}_j\}$'s is actually independent of $(e^{\kappa, n}_k)_{1 \le k \le \Lambda^{\kappa, n}}$, so that conditionally on $\Lambda^n$ and on $(e^n_k)_{1\le k \le \Lambda^n}$, the $\mathcal{E}^n_j$ are i.i.d.~exponential random variables of parameter ${1}/\BP[T^n_1\geq \tau]$. 

	\stepx{step:cw-localtime}{Identification of the limit with the local time} Recall that the local time $L^0_t(\theta)$ is such that for all $t\in [0,\tau]$, 
	\begin{equation*}
	\frac{1}{2^n} \# \{k\ge 1: S^n_k \le t\} \mathop{\longrightarrow}\limits_{n\to \infty}^{(\BP)}
	\frac{1}{2} L^0_t(\theta).
	\end{equation*}
	Note that the above convergence holds uniformly in $t$ by P\'olya's theorem (also known as Dini's second theorem), since both sides are non-decreasing in $t$ and the limit is continuous (applied along a subsequence, to convert the convergence in probability into an almost sure one). In particular, for all $n_0\ge 1$ and $k_0\in [1, \Lambda^{n_0}]_\BZ$, 
	\begin{equation*}
		\frac{1}{2^n} \# \{k\ge 1: S^n_k \le S^{n_0}_{k_0}\} \mathop{\longrightarrow}\limits_{n\to \infty}^{(\BP)}
		\frac{1}{2} L^0_{S^{n_0}_{k_0}}(\theta).
	\end{equation*}
	
	Therefore, for all $n_0\ge 1$ and $k_0\in [1, \Lambda^{n_0}]_\BZ$, 
	\begin{equation*}
	\BE\!\left[ \left. \sum_{j\ge 1 \text{ s.t. }S^n_{j+1} \le S^{n_0}_{k_0}} \mathcal{E}^n_j \right\vert 
	\Lambda^n, (e^n_k)_{1 \le k \le \Lambda^n}
	\right] = \frac{1}{\pi 2^{n+1}} \#\{j\ge 1: S^n_{j+1} \le S^{n_0}_{k_0}\}  \mathop{\longrightarrow}\limits_{n\to \infty}^{(\BP)}
	\frac{1}{4\pi} L^0_{S^{n_0}_{k_0}}(\theta).
	\end{equation*}
	Moreover, the conditional variance converges to zero in probability thanks to the conditional independence of the variables $\mathcal{E}^n_j$. Thus, for all $n_0\ge 1$ and $k_0\in [1, \Lambda^{n_0}]_\BZ$, 
	\begin{equation*}
		\sum_{j\ge 1 \text{ s.t.~}S^n_{j+1} \le S^{n_0}_{k_0}} \mathcal{E}^n_j   \mathop{\longrightarrow}\limits_{n\to \infty}^{(\BP)}
		\frac{1}{4\pi} L^0_{S^{n_0}_{k_0}}(\theta).
	\end{equation*}
	Moreover, the set of the $S^{n_0}_{k_0}$ for $n_0\ge 1$ and $k_0 \in [1, \Lambda^{n_0}]$ is dense in the intersection of $[0,\tau]$ with the image of the right-continuous inverse $(\tau_u)_{u\ge 0}$. Recall also that the process $(L^0_t(\theta))_{t\ge 0}$ is continuous and non-decreasing. Consequently, for all $t\in [0,\tau]$,
	\begin{equation}\label{eq cv local time}
		\sum_{j\ge 1 \text{ s.t.~}S^n_{j+1} \le t} \mathcal{E}^n_j   \mathop{\longrightarrow}\limits_{n\to \infty}^{(\BP)}
		\frac{1}{4\pi} L^0_t(\theta),
	\end{equation}
	and the same convergence holds for the sum over $j\ge 1$ such that $T^n_{j-1} \le t$.
	
	\stepx{step:cw-conclusion}{Conclusion} Then, note that
	\begin{align*}
	    \ka_\kappa^{-1} \sum_{\substack{j\ge 1 \\ S^{\kappa,n}_{j+1}\le t}} \#\{i\ge 1: T^{\kappa,n}_{j-1} \le \tau^\kappa_i \le S^{\kappa,n}_j\} &\le \ka_\kappa^{-1} \# \{i\ge 1: \tau^\kappa_i \le t\} \\
        &\le \ka_\kappa^{-1} \sum_{\substack{j\ge 1 \\ T^{\kappa,n}_{j-1} \le t}} \#\{i\ge 1: T^{\kappa,n}_{j-1} \le \tau^\kappa_i \le S^{\kappa,n}_j\}.
	\end{align*}
	By~\eqref{eq cv geometric exponential} and~\eqref{eq cv local time}, the left-hand side and the right-hand side converge in distribution as $\kappa \downarrow 4$ and then $n\to \infty$ toward the same limit $1/(4\pi) L^0_t(\theta)$ (the middle term does not depend on $n$, so it converges to the common value of the two iterated limits) and the convergence holds as processes indexed by $t \in [0, \tau^\kappa_{\mathrm{ccw}}]$ (for the product topology and thus for the topology of uniform convergence on compact sets since the limiting process is continuous and increasing). Consequently, jointly with the convergences of~\Cref{lemme approx kappa to 4},~\eqref{eq cv theta} and~\eqref{eq cv excursion times},
	\begin{equation}\label{eq cv clockwise local time}
		\left(\ka_\kappa^{-1} \# \{i\ge 1: \tau^\kappa_i \le t\}\right)_{t \in [0, \tau^\kappa_{\mathrm{ccw}}]}
		\mathop{\longrightarrow}\limits_{\kappa\downarrow 4}^{(\mathrm{d})}
		\left(
		\frac{1}{4\pi} L^0_t(\theta)
		\right)_{t\in [0,\tau]}.
	\end{equation}
	This concludes the proof.
\end{proof}

\begin{proof}[Proof of~\Cref{prop cv explo unif}]
    By~\Cref{lemma number of clockwise loops}, we deduce the convergence of $(C^{\kappa_n}_t(0))_{t \in \BQ_{\ge 0}}$ toward $(C_t(0))_{t \in \BQ_{\ge 0}}$ since $(C_t(0))_{t\ge 0}$ is obtained by parameterizing $(D_t)_{t\ge 0}$ by the local time $t \mapsto \frac{1}{4\pi} L^0_t(\theta)$ as in \eqref{eq time change uniform exploration}. Then, the joint convergence of the other branches comes from the domain Markov property and conformal invariance of the $\CLE_\kappa$, from the target invariance and conformal invariance of the uniform exploration, and from the convergence of the separation times stated in \cite[Proposition~3.9]{BECriLQG}.
\end{proof}

\subsection{Convergence of balls} 

Next, let us derive the convergence of balls from~\Cref{prop cv explo unif}, starting with balls from the boundary.

\begin{corollary}\label{cor cv Hausdorff ball boundary}
	Jointly with~\eqref{eq cv subsequence} and~\eqref{eq cv ball}, 
	\begin{equation}\label{eq cv Hausdorff ball}
		\left( \SCB^{\kappa_n, \partial}_{\ka_{\kappa_n}t}(\partial \BD)\right)_{t \in \BQ_{\ge 0}}
		\mathop{\longrightarrow}\limits_{n\to \infty}^{(\mathrm{d})}
		\left( \SCB^\partial_t(\partial \BD) \right)_{t \in \BQ_{\ge 0}}
	\end{equation}
	for the product topology in $t \in \BQ_{\ge 0}$, where the space of compact sets is equipped with the Hausdorff topology, where $(\SCB_t^\partial(\partial \BD))_{t\ge 0}$ is as above the explored region in the uniform exploration of the $\CLE_4$.
\end{corollary}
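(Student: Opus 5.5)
We derive the Hausdorff convergence of the balls from the Carathéodory convergence of the unexplored components given by \Cref{prop cv explo unif}, together with the loop convergence in \eqref{eq cv subsequence}. By Skorokhod's representation theorem, we work on a probability space where \eqref{eq cv subsequence} and \eqref{eq cv ball} hold almost surely. Since the space of compact subsets of $\overline{\BD}$ is compact, the family $(\SCB^{\kappa_n,\partial}_{\ka_{\kappa_n}t}(\partial\BD))_{t\in\BQ_{\ge0}}$ is tight, so after a further extraction it converges almost surely to some limits $(K_t)_{t\in\BQ_{\ge0}}$. It then suffices to show $K_t=\SCB^\partial_t(\partial\BD)$ a.s. for each fixed rational $t$; this identifies the limit and removes the dependence on the extraction.

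\emph{Inclusion $K_t\subseteq \SCB^\partial_t(\partial\BD)$.} Let $z\notin\SCB^\partial_t(\partial\BD)$. Then $z\in C_t(y)$ for some rational $y$, and we may take $y$ close to $z$, so that a small closed disk $\overline{B}$ around $z$ lies in $C_t(y)$. By Carathéodory convergence viewed from $y$, the disk $\overline B$ lies in $C^{\kappa_n}_t(y)$ for $n$ large. Since $C^{\kappa_n}_t(y)$ is disjoint from $\SCB^{\kappa_n,\partial}_{\ka_{\kappa_n}t}(\partial\BD)$, we get $z\notin K_t$.

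\emph{Inclusion $\SCB^\partial_t(\partial\BD)\subseteq K_t$.} By \Cref{lemma density of loops}, the set $\SCB^\partial_t(\partial\BD)$ is the closure of $\partial\BD$ together with the interiors of the loops discovered by time $t$. Since $K_t$ is closed and contains $\partial\BD$, it suffices to treat points $z\in\mathrm{int}(\SCL(x))$ with label $t_{\SCL(x)}<t$ and $x$ rational; by density we may take $z=x$. By \eqref{eq cv ball}, at time $s$ rational with $t_{\SCL(x)}<s<t$, the component $C_s(x)$ is empty, i.e.\ it has zero conformal radius. Hence the conformal radius of $C^{\kappa_n}_s(x)$ seen from $x$ tends to $0$. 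This means there are points of $\SCB^{\kappa_n,\partial}_{\ka_{\kappa_n}s}(\partial\BD)$ at distance $o(1)$ from $x$, so $x\in K_s\subseteq K_t$, using monotonicity of the balls in $t$.

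The main obstacle is a subtlety in the second inclusion. A vanishing conformal radius only places the discrete ball close to $x$; it does not place $x$ inside it. The actual reason $x$ lies in the discrete ball is that $\SCL^{\kappa_n}(x)$ itself lies within graph distance $\ka_{\kappa_n}s$ of $\partial\BD$, so that $x\in\mathrm{int}(\SCL^{\kappa_n}(x))\subseteq\SCB^{\kappa_n,\partial}_{\ka_{\kappa_n}s}(\partial\BD)$. To obtain this, I would instead use the convergence of distances in \eqref{eq cv subsequence} together with \Cref{lemma number of clockwise loops}. By the latter, $\ka_{\kappa_n}^{-1}D^{\kappa_n,\partial}(\SCL^{\kappa_n}(x),\partial\BD)$ converges jointly to the label $t_{\SCL(x)}$, because the label equals $L^0_\tau(\theta)/(4\pi)$ via \eqref{eq time change uniform exploration}. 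Since $t_{\SCL(x)}<s$, for $n$ large the whole loop $\SCL^{\kappa_n}(x)$ and its interior lie in the discrete ball. This gives $x\in K_s$ directly and avoids any regularity issues for the discrete balls.
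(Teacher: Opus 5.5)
Your proof is correct. The first inclusion is the same as the paper's. For the reverse inclusion you take a different route.

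The paper argues by contraposition. If $x\in\BD\setminus\Lambda$, pick rational $y,\varepsilon$ with $x\in B_{\varepsilon/2}(y)$ and $\overline{B_\varepsilon(y)}\cap\Lambda=\emptyset$. Hausdorff convergence gives $B_{\varepsilon/2}(y)\subseteq C^{\kappa_n}_t(y)$ for all large $n$. The second half of the Carath\'eodory kernel definition says that a connected open set containing $y$ which lies in infinitely many of the domains lies in the limit. This gives $B_{\varepsilon/2}(y)\subseteq C_t(y)$, so $x\notin\SCB^\partial_t(\partial\BD)$. That argument uses nothing beyond \eqref{eq cv ball}. Your direct route also needs \Cref{lemma density of loops}, a non-trivial input from the two-valued-set coupling, to reduce to rational points inside discovered loops.

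The obstacle you identify is not real: your first version was already complete. $K_s$ is a Hausdorff limit, hence closed, so it suffices that $\mathrm{dist}\bigl(x,\SCB^{\kappa_n,\partial}_{\ka_{\kappa_n}s}(\partial\BD)\bigr)\to 0$. You never need $x$ to lie inside the discrete ball. This distance bound follows from the vanishing conformal radius. By the Schwarz lemma, $\mathrm{dist}(x,\partial C^{\kappa_n}_s(x))$ is at most the conformal radius of $C^{\kappa_n}_s(x)$ seen from $x$. Moreover $\partial C^{\kappa_n}_s(x)\subseteq\SCB^{\kappa_n,\partial}_{\ka_{\kappa_n}s}(\partial\BD)$.

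The detour through \Cref{lemma number of clockwise loops} is valid. For each fixed rational $x$, a M\"obius map sending $x$ to $0$ gives joint convergence of $\ka_{\kappa_n}^{-1}D^{\kappa_n,\partial}(\SCL^{\kappa_n}(x),\partial\BD)$ with $C^{\kappa_n}_\cdot(x)$. Hence $D^\partial(\SCL(x),\partial\BD)=t_{\SCL(x)}$ almost surely. This identification of distances to the boundary with labels is a useful by-product, but it is extra machinery not needed for the corollary.

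One small point: since $t$ is a fixed rational, you should say that almost surely no loop has label exactly $t$. That justifies restricting to loops with label strictly less than $t$ and choosing $s\in(t_{\SCL(x)},t)$.
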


\begin{proof}
	Let $t \in \BQ_{\ge 0}$. Note that the sequence $\SCB^{\kappa_n, \partial}_{\ka_{\kappa_n}t}(\partial \BD)$ is tight for the Hausdorff topology since it is a sequence of compact subsets of the compact space $\overline{\BD}$. Thus, we may assume that 
	\begin{equation}\label{eq preuve cv Hausdorff balls}
		\SCB^{\kappa_n, \partial}_{\ka_{\kappa_n}t}(\partial \BD) \mathop{\longrightarrow}\limits_{n\to \infty}^{(\mathrm{d})}
		\Lambda
	\end{equation}
	for the Hausdorff topology along some subsequence again denoted by $\kappa_n$, where $\Lambda$ is a random compact set. 
	
	Let us assume that the above convergence and~\eqref{eq cv subsequence} and~\eqref{eq cv ball} hold a.s.\ (for a countable set of times including $t$) by Skorokhod's representation theorem. It is enough to show that $\Lambda= \SCB_t^\partial(\partial \BD)$.
	
	Let $x\in \BD
	\setminus \SCB^\partial_t(\partial \BD)$. Since $ \SCB_t^\partial(\partial \BD)$ is closed, there exists $\varepsilon\in \BQ_{>0}$ and $y \in \BD_\BQ$ such that $x \in B_\varepsilon(y)$ and the Euclidean closed ball $\overline{B_\varepsilon(y)}$ is included in $\BD \setminus \SCB_t^\partial(\partial \BD)$. By the Carath\'eodory convergence~\eqref{eq cv ball}, we deduce that a.s.\ for all $n$ large enough, $\overline{B_\varepsilon(y)} \subseteq  \BD \setminus  \SCB^{\kappa_n,\partial}_{\ka_{\kappa_n} t}(\partial \BD)$. Therefore, by~\eqref{eq preuve cv Hausdorff balls}, we conclude that $x \in \BD \setminus \Lambda$. Since $y$ and $\varepsilon$ lie in a countable set, we obtain that a.s.\@ for all $x \in\BD\setminus \SCB^\partial_t(\partial \BD)$, we have $x \in \BD\setminus \Lambda$.
	
	Conversely, take $x \in \BD
	 \setminus \Lambda$. Since $\BD \setminus \Lambda$ is open, there exists $\varepsilon\in \BQ_{>0}$ and $y\in \BD_\BQ$ such that $x\in B_{\varepsilon/2}(y)$ and the closed Euclidean ball $\overline{B_\varepsilon(y)}$ is included in $\BD\setminus \Lambda$. By the Hausdorff convergence~\eqref{eq preuve cv Hausdorff balls}, we see that a.s.\@ for all $n$ large enough, the open ball $B_{\varepsilon/2}(y)$ is included in $\BD \setminus \SCB^{\kappa_n,\partial}_{\ka_{\kappa_n}t}(\partial \BD)$. By the Carath\'eodory convergence of the connected component $C_t(y)$, we conclude that $B_{\varepsilon/2}(y) \subseteq  \BD\setminus \SCB^\partial_t(\partial \BD)$, so that $x \in\BD\setminus \SCB^\partial_t(\partial \BD)$. Since $y$ and $\varepsilon$ lie in a countable set, we obtain that a.s.\@ for all $x \in \BD\setminus \Lambda$, we have $x \in\BD\setminus \SCB^\partial_t(\partial \BD)$. Combining the two previous paragraphs, the open sets $\BD \setminus \Lambda$ and $\BD \setminus \SCB^\partial_t(\partial \BD)$ are equal; both $\Lambda$ and $\SCB^\partial_t(\partial \BD)$ contain $\partial \BD$, so they are equal.
\end{proof}

Since the distances to the boundary are given by the uniform exploration, we can already deduce that the pseudometric $D^\partial$ is actually a metric.
\begin{corollary}\label{corollary D partial is a metric}
	Almost surely, $D^\partial$ is a metric on $\widehat{\Gamma}$.
\end{corollary}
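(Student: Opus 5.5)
We need to show that a.s. $D^\partial(\SCL_1,\SCL_2)>0$ for distinct $\SCL_1,\SCL_2\in\widehat\Gamma$. Symmetry and the triangle inequality pass to the limit, so $D^\partial$ is already a pseudometric by \Cref{prop subsequential limit}; only positivity is at stake.

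\emph{Pairs involving the boundary.} By \Cref{cor cv Hausdorff ball boundary} and \eqref{eq cv subsequence}, for $x\in\BD_\BQ$ the limit $D^\partial(\SCL(x),\partial\BD)$ is the label of $\SCL(x)$ in the uniform exploration. That label is a.s.\ strictly positive: its law is exponential of parameter one by the normalization in \Cref{subsec:labeled_cle_4}. Since every loop of $\Gamma$ is $\SCL(x)$ for some rational $x$, a.s.\ every loop has positive distance to $\partial\BD$.

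\emph{Two distinct loops.} Fix $x,y\in\BD_\BQ$ and work on the event $\SCL(x)\neq\SCL(y)$. Non-nested $\CLE_4$ loops are disjoint, so the loops are separated. The key claim is that $D^\partial(\SCL(x),\SCL(y))\ge c$ for some random $c>0$. To get it, I would reduce to the boundary case using the Markov structure of the exploration.
\begin{itemize}
\item Let $\sigma$ be the separation time of $x$ and $y$ in the uniform exploration.
\item If some loop is discovered before both, the whole exploration from the boundary can be used. On the event that the branch toward $x$ discovers $\SCL(x)$ at time $t_x$ while $y$ still lies in a different unexplored component $C$, I want to compare with the $\CLE_{\kappa_n}$ picture.
\item For large $n$, the approximating loop $\SCL^{\kappa_n}(y)$ lies in a component $C^{\kappa_n}$ of the complement of the ball of radius $\ka_{\kappa_n}(t_x-\varepsilon)$ around $\partial\BD$.
\item By \eqref{eq cv ball}, $C^{\kappa_n}$ is Carath\'eodory-close to $C_{t_x-\varepsilon}(y)$.
\end{itemize}

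The cleaner route is probably the inversion invariance announced in the section outline. Conditionally on the exploration up to the time when $y$'s component is separated, apply the domain Markov property of $\CLE_{\kappa_n}$ in the component containing $\SCL^{\kappa_n}(y)$. Any path of loops from $\SCL^{\kappa_n}(x)$ to $\SCL^{\kappa_n}(y)$ must cross the boundary of that component. By the triangle inequality through the chain of boundary loops, its length is then at least the graph distance from $\SCL^{\kappa_n}(y)$ to the boundary of that component. After rescaling, this converges (by conformal invariance and \Cref{prop cv explo unif} applied in that component) to an exponential variable, which is a.s.\ positive.

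The main obstacle is making this rigorous. The separating set in the discrete picture is a ball $\SCB^{\kappa_n,\partial}$ containing loops, not a fixed domain boundary, and a geodesic may exit through the boundary point $\partial\BD$ of $\widehat\Gamma$. That exit is harmless, since it costs $D^\partial(\SCL(x),\partial\BD)>0$ already. For the other case, I would condition on the explored region at a stopping time $s<$ the discovery time of $\SCL(y)$, with $s>\sigma$ (using the branching structure). The rescaled distance from $\SCL^{\kappa_n}(y)$ to $\SCB^{\kappa_n,\partial}_{\ka_{\kappa_n}s}(\partial\BD)$ then converges to $t_y-s>0$. Meanwhile $\SCL^{\kappa_n}(x)$ lies in the ball or outside the component of $y$, so any path from it to $\SCL^{\kappa_n}(y)$ must enter that component from its boundary. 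Hence $D^\partial(\SCL(x),\SCL(y))\ge t_y-s>0$. Taking a countable union over $x,y$ finishes the proof.
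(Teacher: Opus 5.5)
Your argument can be completed, but it takes a heavier route than the paper's.

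\textbf{The paper's route.} The paper never returns to the $\CLE_{\kappa_n}$ picture for two loops. Since $D^\partial$ is already a pseudometric on $\widehat\Gamma$ and $\partial\BD$ is one of its points, the reverse triangle inequality gives
\[
D^\partial(\SCL(x),\SCL(y))\ge\bigl|D^\partial(\partial\BD,\SCL(x))-D^\partial(\partial\BD,\SCL(y))\bigr|.
\]
So it suffices that two distinct loops a.s.\ receive distinct labels in the uniform exploration. This follows from the branching structure of the limit alone. Either one loop is discovered (or encircles the other point) before $x$ and $y$ are separated, so its label is strictly smaller. Or, after the separation time $t_{x,y}$, the labels are $t_{x,y}+\mathcal{E}$ and $t_{x,y}+\mathcal{E}'$ with $\mathcal{E},\mathcal{E}'$ conditionally independent exponentials, hence a.s.\ distinct.

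\textbf{Your route.} You bound the distance by the cost of crossing the explored region at a rational time $s\in(\sigma,t_y)$. This yields the stronger estimate $D^\partial(\SCL(x),\SCL(y))\ge\max(t_x,t_y)-\sigma$, but making it rigorous needs three extra inputs.
\begin{enumerate}
\item You must take $y$ to be the loop with the larger label and know that $\sigma<\max(t_x,t_y)$ a.s. This is exactly the Markov input the paper uses.
\item You need $x,y$ to be separated in the discrete exploration at time $\ka_{\kappa_n}s$ for $n$ large, or $\SCL^{\kappa_n}(x)$ to lie in the discrete ball. This does not follow from the Carath\'eodory convergence in \eqref{eq cv ball}, since thin channels joining the two components can persist in the prelimit. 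You need the convergence of separation times \cite[Proposition~3.9]{BECriLQG}.
\item You need the discrete topological fact that a path of loops joining different complementary components of $\SCB^{\kappa_n,\partial}_{m}(\partial\BD)$ contains a loop within graph distance $m+1$ of $\partial\BD$.
\end{enumerate}
Once these are in place, your bound is again just the triangle inequality through that loop. In the subcase $t_x\le s$ it is literally the reverse triangle inequality above. In the separated subcase you could skip items 2 and 3 entirely by observing that $t_x\neq t_y$ a.s., which is the paper's proof.

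The opening bullet list and the appeal to inversion invariance play no role in your final argument and can be dropped.
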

\begin{proof}
	Let $x,y \in \BD_\BQ$ be two distinct points. We already know that $D^\partial(\partial \BD, \SCL(x))>0$ a.s.\ since it is an exponential random variable. Besides, note that a.s., on the event that $\SCL(x) \neq \SCL(y)$, the uniform exploration targeted at $x$ will either encircle $y$ with a loop or separate the unexplored components containing $x$ and $y$ before drawing the loop surrounding $x$.
	
	If the uniform exploration targeted at $x$ encircles $y$ with a loop before drawing the loop surrounding $x$, then we have $D^\partial(\partial \BD, \SCL(y))<D^\partial(\partial \BD, \SCL(x))$.
	
	In the other case, let $C_{t_{x,y}}(x)$ and $C_{t_{x,y}}(y)$ be the connected components of the unexplored region at the time $t_{x,y}$ when the uniform exploration separates $x$ from $y$. Recall that, by definition of the uniform exploration, conditionally on $(t_{x,y},C_{t_{x,y}}(x),C_{t_{x,y}}(y))$, the processes $(C_{t_{x,y}+s}(x))_{s\ge 0}$ and $(C_{t_{x,y}+s}(y))_{s\ge 0}$ are independent uniform explorations in the respective domains $C_{t_{x,y}}(x)$ and $C_{t_{x,y}}(y)$ targeting respectively $x$ and $y$. Therefore, one can write $D^\partial(\partial \BD, \SCL(x))= t_{x,y}+ \mathcal{E}$ and $D^\partial(\partial \BD, \SCL(y))= t_{x,y}+ \mathcal{E}'$ where $\mathcal{E}$ and $\mathcal{E}'$ are two conditionally independent exponential random variables given $t_{x,y}$.
	
	In both cases, we deduce that a.s.\ on the event that $\SCL(x) \neq \SCL(y)$, we have $D^\partial(\partial \BD, \SCL(x)) \neq D^\partial(\partial \BD, \SCL(y)) $, so that $D^\partial(\SCL(x), \SCL(y)) \ge \vert D^\partial(\partial \BD, \SCL(x)) - D^\partial(\partial \BD, \SCL(y)) \vert>0$.
\end{proof}

Furthermore, one can define the uniform exploration started from a loop of the $\CLE_4$ as follows. Let $x \in \BD_\BQ$. Consider the annulus $\SA$ made of $\BD$ minus the closure of the region encircled by the loop $\SCL(x)$. Let $Z$ be a point chosen uniformly at random with respect to the Lebesgue measure in $\SA$ (the point $Z$ serves only to fix the rotational degree of freedom of $\phi$). Let $\phi \colon \SA \to \mathbb{A}_r$ be the unique conformal mapping from $\SA$ to $\mathbb{A}_r\defeq (1/r)\BD \setminus r\overline{\BD}$ such that $\phi^\prime(Z)>0$ and such that $\phi(\partial \BD)= (1/r) \partial \BD$ (in the sense that if $z_n \to z \in \partial \BD$, then $\dist(\phi(z_n), (1/r)\partial \BD) \to 0$), where $r>0$ depends on $\SA$. Note that $\phi(\SCL(x))= r \partial \BD$. By \cite[Proposition~3.5]{SimCLEDblConnDom}, we know that conditionally on $\SCL(x)$, the collection of loops $\phi(\Gamma \setminus \{\SCL(x)\})$ has the law of a $\CLE_4$ in the annulus $\mathbb{A}_r$ (we refer to~\cite{SimCLEDblConnDom} for the definition of $\CLE_4$ in a doubly connected domain). But, conditionally on $\SCL(x)$, by \cite[Proposition~3.2]{SimCLEDblConnDom}, the image of $\phi(\Gamma \setminus \{\SCL(x)\})$ under $\iota \colon z \mapsto 1/z$ has the same law as $\phi(\Gamma \setminus \{\SCL(x)\})$. In particular, $(\phi^{-1} \circ \iota \circ \phi)(\widehat{\Gamma})$ has the same law as $\widehat{\Gamma}$ (conditionally on $\SCL(x)$, the image of $\Gamma \setminus \{\SCL(x)\}$ has the same law as $\Gamma \setminus \{\SCL(x)\}$, while the two sets $\partial \BD$ and $\SCL(x)$ are exchanged; it then remains to integrate over $\SCL(x)$), and $(\phi^{-1} \circ \iota \circ \phi)(\SCL(x))$ now plays the role of $\partial \BD$ while $(\phi^{-1} \circ \iota \circ \phi)(\partial \BD)$ is the loop $\SCL(x)$. One can then sample a uniform exploration of $(\phi^{-1} \circ \iota \circ \phi)(\widehat{\Gamma})$ starting from $\partial \BD$. By taking the image under $(\phi^{-1} \circ \iota \circ \phi)$, we thus obtain a uniform exploration of $\widehat{\Gamma}$ starting from $\SCL(x)$. 

\begin{corollary}\label{cor uniform exploration from x}
	Jointly with the convergences~\eqref{eq cv subsequence} and~\eqref{eq cv Hausdorff ball}, along a subsequence which is again denoted by $(\kappa_n)_{n \ge 0}$,
	\begin{equation}\label{eq cv Hausdorff balls}
		\left( \SCB_{\ka_{\kappa_n}t}^{\kappa_n,\partial}( \SCL^{\kappa_n}(x))\right)_{t \in \BQ_{\ge 0}, x \in \BQ^2}
		\mathop{\longrightarrow}\limits_{n\to \infty}^{(\mathrm{d})}
		\left( \SCB_t^\partial( \SCL(x))\right)_{t \in \BQ_{\ge 0}, x \in \BQ^2}
	\end{equation}
	for the product topology in $x \in \BQ^2$ and $t \in \BQ_{\ge 0}$, where the space of compact sets is equipped with the Hausdorff topology, and where $(\SCB^\partial_t(\SCL(x)))_{t\ge 0}$ has the law of the explored region in the uniform exploration of the $\CLE_4$ started from $\SCL(x)$ (where the time parametrization is that of the PPP of SLE bubbles).
\end{corollary}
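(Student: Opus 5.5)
The plan is to transfer \Cref{cor cv Hausdorff ball boundary} to balls centred at $\SCL^{\kappa_n}(x)$ by an inversion that swaps the loop and the boundary. This works both for $\CLE_{\kappa_n}$ and in the limit.

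\emph{Step 1: inversion for $\CLE_\kappa$.} Fix $x \in \BD_\BQ$. Condition on $\SCL^\kappa(x)$ and let $\SA^\kappa$ be the connected component of $\BD\setminus\SCL^\kappa(x)$ adjacent to $\partial\BD$. Since $\SCL^\kappa(x)$ may touch $\partial \BD$, I would rather use the unbounded complementary region in the sphere description. By the inversion invariance of $\CLE_\kappa$ on the Riemann sphere~\cite{CoInCLERiemSph}, combined with the Markov decomposition, the loops of $\widehat\Gamma^\kappa$ outside $\SCL^\kappa(x)$ are invariant in law under an anti-conformal-free swap exchanging $\partial\BD$ and $\SCL^\kappa(x)$, in the same way as described above for $\kappa = 4$. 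Concretely, I would realize the map $\Psi^\kappa = (\phi^\kappa)^{-1}\circ\iota\circ\phi^\kappa$ with $\phi^\kappa$ the conformal map of the outer region to a round annulus, normalized by a uniform point $Z$. The graph distance $D^{\kappa,\partial}$ is a function of the intersection pattern alone, and this pattern is preserved by homeomorphisms. Hence $\SCB^{\kappa,\partial}_{s}(\SCL^\kappa(x))$, intersected with the outer region, is the image under $\Psi^\kappa$ of the ball from the boundary of the inverted ensemble, and this inverted ensemble has the law of $\widehat\Gamma^\kappa$ conditioned on its loop around $\Psi^\kappa(\text{inner})$. The ball also contains the closed interior of $\SCL^\kappa(x)$.

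\emph{Step 2: passing to the limit.} First, I would apply \Cref{prop subsequential limit}: $\SCL^{\kappa_n}(x)\to\SCL(x)$ in Hausdorff. The loops are Jordan in the limit, so by the elementary fact recorded in \Cref{subsec:caratheodory} the outer regions converge in the Carath\'eodory sense. The moduli $r^{\kappa_n}\to r$ converge and $\phi^{\kappa_n}\to\phi$ locally uniformly, with $Z$ chosen common. Next, I would apply \Cref{cor cv Hausdorff ball boundary} to the inverted ensembles, jointly with their loops. The subtle point is that this corollary is stated in the disk rather than in the annulus with the inner loop marked. To handle it, I would apply it in the disk bounded by $\Psi^{\kappa_n}(\partial\BD)^{c}$-side, i.e.\ in the region enclosed by the inverted outer boundary; this is a simply connected domain converging in the Carath\'eodory sense, so conformal invariance gives the statement there. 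Finally, I would push the result forward by $\Psi^{\kappa_n}\to\Psi$, which converges uniformly on compacts of the annulus. I would upgrade this to Hausdorff convergence of the images near the two boundary components using the fact that $\Psi$ exchanges them: points near $\partial\BD$ map near $\SCL$, and these lie in the ball anyway. The limit is then exactly the uniform exploration started from $\SCL(x)$ as constructed above.

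\emph{Step 3: joint convergence.} Tightness is automatic because the space is compact. I would extract a further subsequence so that all $(x,t)$ in the countable index set converge jointly with~\eqref{eq cv subsequence} and~\eqref{eq cv Hausdorff ball}. The limit for each fixed $x$ is identified by Step 2, which justifies the phrase ``along a subsequence''; the joint law across different $x$ is simply whatever subsequential limit arises.

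The main obstacle is Step 2's boundary behaviour. $\SCL^{\kappa_n}(x)$ is non-simple and may touch $\partial\BD$, so the outer region need not be a nice annulus, and the convergence of $\Psi^{\kappa_n}$ up to the boundary is delicate. I would first try to restrict to the event, of probability $1-o(1)$, that $\SCL^{\kappa_n}(x)$ stays at distance $\delta$ from $\partial\BD$; this holds since $\ka_{\kappa}\to\infty$ and the limit loop is at positive distance from $\partial\BD$. I would then control the boundary contributions by equicontinuity of $\Psi^{\kappa_n}$ on a slightly shrunken annulus.
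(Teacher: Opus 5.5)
Your proposal follows the paper's proof essentially step for step: the paper also maps the outer region $\BD\setminus\overline{\mathrm{int}(\SCL^{\kappa_n}(x))}$ (on the event that $\SCL^{\kappa_n}(x)$ avoids $\partial\BD$) to a round annulus normalized at a uniform point $Z_n$ (coupled so that $Z_n\to Z$, rather than a common $Z$). It then identifies $\phi_n^{-1}\circ\iota\circ\phi_n$ of the balls from $\SCL^{\kappa_n}(x)$ in law with the balls from $\partial\BD$ via \cite[Theorem~2.17 and Corollary~2.15]{CoInCLERiemSph}, applies \eqref{eq cv Hausdorff ball}, and concludes from the Carath\'eodory convergence of the annuli together with \cite[Theorem~3.7]{Com13} ($r_n\to r$, $\phi_n\to\phi$ and $\phi_n^{-1}\to\phi^{-1}$ locally uniformly).

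The ``subtle point'' you raise is not an issue. The inverted ensemble $\{\SCL^{\kappa_n}(x)\}\cup\Psi^{\kappa_n}(\Gamma^{\kappa_n}\setminus\{\SCL^{\kappa_n}(x)\})$ has the law of a $\CLE_{\kappa_n}$ in $\BD$ itself, and its loop around $x$ is $\SCL^{\kappa_n}(x)$. So \eqref{eq cv Hausdorff ball} applies to it directly and jointly with the convergence of that loop, which is exactly the joint convergence you need to push forward by $\Psi^{\kappa_n}\to\Psi$.
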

\begin{proof}
	Let $x \in \BD_\BQ$. Let $\SA_n$ be the annulus made of $\BD$ minus the closure of the region encircled by $\SCL^{\kappa_n}(x)$ (on the event that $\SCL^{\kappa_n}(x)$ does not touch $\partial \BD$, which occurs with probability tending to $1$ as $n \to \infty$). Let $Z_n$ be a point chosen uniformly at random with respect to the Lebesgue measure in $\SA_n$. Let $\phi_n \colon \SA_n \to \mathbb{A}_{r_n}$ be the unique conformal mapping from $\SA_n$ to $\mathbb{A}_{r_n} \defeq (1/r_n) \BD \setminus r_n \overline{\BD}$ such that $\phi_n^\prime(Z_n)>0$ and such that $\phi_n(\partial \BD) = (1/r_n) \partial \BD$. 
	
	Then, note that $((\phi_n^{-1} \circ \iota \circ \phi_n) (\SCB^{\kappa_n, \partial}_t(\SCL^{\kappa_n}(x))))_{t\ge 0}$ has the same law as $(\SCB^{\kappa_n, \partial}_t(\partial \BD))_{t\ge 0}$. This is a consequence of \cite[Theorem~2.17 and Corollary~2.15]{CoInCLERiemSph} (passing from the conditional to the unconditional statement as in the previous subsection). Indeed, by \cite[Theorem~2.17]{CoInCLERiemSph}, we know that conditionally on $\SCL^{\kappa_n}(x)$, the collection of loops $\phi_n(\Gamma^{\kappa_n}\setminus \{\SCL^{\kappa_n}(x)\} )$ is a $\CLE_{\kappa_n}$ in the annulus $\mathbb{A}_{r_n}$ and by \cite[Corollary~2.15]{CoInCLERiemSph} the $\CLE_{\kappa_n}$ in the annulus is invariant in distribution under inversion.

	As a consequence, by~\eqref{eq cv Hausdorff ball},
	\begin{equation*}
	\left((\phi_n^{-1} \circ \iota \circ \phi_n) (\SCB^{\kappa_n, \partial}_{\ka_{\kappa_n}t}(\SCL^{\kappa_n}(x)))\right)_{t \in \BQ_{\ge 0}}
	\mathop{\longrightarrow}\limits_{n\to \infty}^{(\mathrm{d})}
	\left( (\phi^{-1} \circ \iota \circ \phi) (\SCB^\partial_t(\SCL(x))) \right)_{t \in \BQ_{\ge 0}}.
	\end{equation*}
	Using Skorokhod's representation theorem, let us assume that the above convergence and~\eqref{eq cv subsequence}, \eqref{eq cv Hausdorff ball} hold a.s.\ along some subsequence again denoted by $(\kappa_n)_{n \ge 0}$ for an arbitrary finite collection of times $t_1<\ldots < t_d$
	. Since $Z_n$ is uniformly distributed in $\SA_n$ and $Z$ is uniformly distributed in $\SA$, thanks to the Hausdorff convergence of $\SCL^{\kappa_n}(x)$ toward $\SCL(x)$ we may also assume that $Z_n \to Z$ a.s.\ (here we use $\mathrm{Leb}(\SA_n \,\triangle\, \SA) \to 0$, which follows from the Hausdorff convergence of the filled loops
	).
	
	To conclude, it is enough to prove that a.s.\ $r_n \to r$, that $\phi_n \to \phi$ {a.s.}\ uniformly on compact subsets of $\SA$ and that $\phi^{-1}_n$ also converges uniformly on compact subsets of $\mathbb{A}_{r}$ toward $\phi^{-1}$. Indeed, we will have a.s. 
	\begin{equation*}
	\left( \SCB^{\kappa_n, \partial}_{\ka_{\kappa_n}t_i}(\SCL^{\kappa_n}(x)) \right)_{1 \le i \le d} \mathop{\longrightarrow}\limits_{n\to \infty}
	\left( \SCB_{t_i}^\partial(\SCL(x)) \right)_{1 \le i \le d},
	\end{equation*}
	which will end the proof. 
	
	In order to do so, note that, by~\Cref{rem:interior membership}, the Hausdorff convergence of $\SCL^{\kappa_n}(x)$ toward $\SCL(x)$ implies the almost sure Carath\'eodory convergence of the annulus $(\SA_n, Z_n)$ toward $(\SA,Z)$, i.e., a.s., for all compact subsets $K \subseteq \SA$, we have $K\subseteq \SA_n$ for $n$ large enough, and for all connected open sets $V$ containing $Z$ such that $V\subseteq \SA_n$ for infinitely many $n$, we have $V \subseteq \SA$.
	
	Then, by \cite[Theorem~3.7]{Com13} (Theorem 3.2 in the arXiv version) applied in the particular case of doubly connected domains, we deduce that $r_n \to r$, that $\phi_n \to \phi$ uniformly on compact subsets of $\SA$ and that $\phi^{-1}_n$ also converges uniformly on compact subsets of $\mathbb{A}_{r}$ toward $\phi^{-1}$. This ends the proof.
\end{proof}

\begin{remark}\label{remark balls are balls}
	Note that by~\Cref{cor uniform exploration from x}, the time of discovery of a loop $\SCL(y)$ by the uniform exploration from $\SCL(x)$ is the limit of $\ka_{\kappa_n}^{-1}D^{\kappa_n, \partial}(\SCL^{\kappa_n}(x), \SCL^{\kappa_n}(y))$ as $n \to \infty$, so it corresponds to $D^\partial(\SCL(x), \SCL(y))$. In particular, {a.s.}, a loop $\SCL(y)$ belongs to $\SCB^\partial_t(\SCL(x))$ if and only if $D^\partial(\SCL(x), \SCL(y))\le t$. But since any point $y \in \BQ^2$ is surrounded by a loop $\SCL(y)$, using the definition of the uniform exploration from a loop and~\Cref{lemma density of loops}, one can then see that for all $t \in \BQ_{\ge 0}$, $\SCB^\partial_t(\SCL(x))$ is the closure of the union of the domains encircled by the loops which are at $D^\partial$ distance at most $t$ from $\SCL(x)$. 
\end{remark}

\section{When the boundary is not a point}
\label{sec:boundary not a point}

The purpose of this section is to construct another metric $D$, coupled with the objects $D^\partial, \SCB^\partial, K^\partial$ constructed in~\Cref{sec:boundary_is_a_point}, in which the boundary is not identified with a point of the metric space. We follow the same recipe as in~\Cref{sec:boundary_is_a_point} except that the metric balls are not directly described by the uniform exploration. The corresponding distances, metric balls, and geodesics will be denoted as before, but without the $\partial$ superscript.

\subsection{Tightness of the distances, geodesics and metric balls at rational times}
Let $D^{\kappa}$ be the graph distance on $\Gamma^\kappa$, with the same adjacency relation as in~\Cref{subsec:graph_distance} but without the extra vertex $\partial \BD$. Note that $\Gamma^\kappa$ need not be connected as a graph, so $D^\kappa$ could a priori be infinite; it is a.s.\ finite by the stochastic domination established in the proof of~\Cref{prop subsequential limit boundary}.

For all segments $\alpha\subseteq \partial \BD$, for all $\SCL_1^\kappa \in \Gamma^\kappa$, we also set
\begin{equation*}
D^{\kappa}(\alpha, \SCL_1^\kappa)\defeq 1+ \inf_{\substack{\SCL_2^\kappa \in \Gamma^\kappa\\ \SCL_2^\kappa \cap \alpha \neq \emptyset }}D^{\kappa}(\SCL^\kappa_1, \SCL^\kappa_2).
\end{equation*}
More generally, for non-empty $A, B \subseteq \overline{\BD}$, we let $D^\kappa(A,B)$ be the graph distance between $A$ and $B$ in the graph with vertex set $\Gamma^\kappa \cup \{A,B\}$ in which two distinct vertices are adjacent when the corresponding sets intersect (with $\inf\emptyset = \infty$). A \emph{path of loops} from $A$ to $B$ is the collection of loops of $\Gamma^\kappa$ along such a path, identified with its union; it is \emph{shortest} if its length is minimal, and one exists whenever $D^\kappa(A,B) < \infty$, lengths being positive integers.

For all $t\ge 0$, for all $\SCL \in \Gamma^\kappa$ (resp.~for all segments $\alpha$) we denote by $\SCB_t^{\kappa}(\SCL)$ (resp.~$\SCB^{\kappa}_t(\alpha)$) the closure of the union of $\mathrm{int}(\SCL)$ (resp.\@ $\alpha$) with the domains encircled by the loops which are at $D^{\kappa}$ distance at most $t$ from $\SCL$ (resp.~$\alpha$). For all pairs of loops $\SCL_1, \SCL_2 \in \Gamma^\kappa$, let $K^{\kappa}_{\SCL_1, \SCL_2}$ be the union of all the loops which lie on some shortest path between $\SCL_1$ and $\SCL_2$, as in~\Cref{subsec:graph_distance}, so that $K^{\kappa}_{\SCL_1, \SCL_2}=K^{\kappa}_{\SCL_2, \SCL_1}$. We define similarly $K^\kappa_{\alpha, \SCL_1}$ and $K^\kappa_{\alpha, \beta}$ when $\alpha$ and $\beta$ are two segments of $\partial \BD$. We note that all of these sets are automatically closed since a shortest path of loops connecting any pair of loops, a loop to the boundary, or two boundary segments consists of finitely many loops for $\kappa > 4$.

Let $\SCS$ be the set of arcs of $\partial \BD$ with positive length (those appearing in~\Cref{def:weak_axioms,def:weak_axioms2}; we use ``segment'' and ``arc'' interchangeably) and let $\SCS'$ be a countable subset of $\SCS$. We allow $\alpha = \partial \BD$ below, with the convention that $\partial \BD \in \SCS'$.
\begin{proposition}\label{prop subsequential limit boundary}
	There exists a subsequence of $(\kappa_n)_{n\ge 0}$, again denoted by $(\kappa_n)_{n\ge 0}$, such that jointly with~\eqref{eq cv subsequence} and~\eqref{eq cv Hausdorff balls}, we have the joint convergence in law:
	\begin{equation}\label{eq cv subsequence boundary}
		\begin{pmatrix}[1.3]
			(\SCL^{\kappa_n}(x))_{x \in \BD_\BQ} \\
			({\ka_{\kappa_n}^{-1}}D^{\kappa_n}(\SCL^{\kappa_n}(x), \SCL^{\kappa_n}(y)))_{x,y \in \BD_\BQ} \\
			({\ka_{\kappa_n}^{-1}}D^{\kappa_n}(\alpha, \SCL^{\kappa_n}(y)))_{y \in \BD_\BQ, \alpha \in \SCS'} 
			\\
			((\SCB^{\kappa_n }_{\ka_{\kappa_n} t}(\SCL^{\kappa_n}(x)))_{t \in \BQ_{\ge 0}})_{ x\in \BD_\BQ}
			\\
			((\SCB^{\kappa_n }_{\ka_{\kappa_n} t}(\alpha))_{t \in \BQ_{\ge 0}})_{ \alpha \in \SCS'}
			\\
			(K^{\kappa_n}_{\SCL^{\kappa_n}(x), \SCL^{\kappa_n}(y)})_{x, y \in \BD_\BQ}\\
			{(K^{\kappa_n}_{\alpha, \SCL^{\kappa_n}(x)})_{\alpha \in \SCS', x \in \BD_{\BQ}}} \\
			{(K^{\kappa_n}_{\alpha, \beta})_{\alpha, \beta \in \SCS'}} 
		\end{pmatrix}
		\mathop{\longrightarrow}\limits_{n\to \infty}^{(\mathrm{d})}
		\begin{pmatrix}[1.4]
			(\SCL(x))_{x \in \BD_\BQ} \\
			(D (\SCL(x), \SCL(y)))_{x,y \in \BD_\BQ}
			\\
			(D (\alpha, \SCL(y)))_{y \in \BD_\BQ, \alpha \in \SCS'}
			\\
			((\SCB_{t}(\SCL(x)))_{t \in \BQ_{\ge 0}})_{x\in \BD_\BQ}
			\\
			((\SCB_{t}(\alpha))_{t \in \BQ_{\ge 0}})_{\alpha \in \SCS'}
			\\
			(\widetilde{K}_{\SCL(x), \SCL(y)})_{x, y \in \BD_\BQ }\\
			{(\widetilde{K}_{\alpha, \SCL(x)})_{\alpha \in \SCS', x \in \BD_\BQ}}\\
			{(\widetilde{K}_{\alpha, \beta})_{\alpha, \beta \in \SCS'}}
		\end{pmatrix}
	\end{equation}
	where the loops $(\SCL(x))_{x \in \BD_\BQ} $ are the loops of a $\CLE_4$ $\Gamma$ in $\BD$,  $D$ is a random metric on $\Gamma$, and the $\widetilde{K}_{\SCL(x), \SCL(y)}$'s, $\widetilde{K}_{\alpha, \SCL(x)}$'s, $\widetilde{K}_{\alpha, \beta}$'s, $\SCB_{t}(\SCL(x))$ and $\SCB_t(\alpha)$ are random compact subsets of $\overline{\BD}$. The above convergence holds for the product topology and the compact sets converge in the Hausdorff topology.
\end{proposition}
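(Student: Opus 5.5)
The plan is a tightness argument followed by a diagonal extraction, as in the proof of \Cref{prop subsequential limit}. All the index sets ($x,y \in \BD_\BQ$, $\alpha,\beta \in \SCS'$, $t \in \BQ_{\ge 0}$) are countable, and the product topology is metrizable. So it suffices to show that each coordinate in~\eqref{eq cv subsequence boundary} is tight. Given tightness, Prokhorov's theorem applied to the whole vector, jointly with the already convergent vector from~\eqref{eq cv subsequence} and~\eqref{eq cv Hausdorff balls}, gives a further subsequence along which everything converges jointly. The first coordinate already converges by \cite[Proposition~3.12]{BECriLQG}, and its limit is identified with $\CLE_4$ loops as before.

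The compact-set coordinates $\SCB^{\kappa_n}_{\ka_{\kappa_n}t}(\cdot)$ and $K^{\kappa_n}_{\cdot,\cdot}$ are automatically tight, because the space of compact subsets of $\overline{\BD}$ is compact for the Hausdorff metric. The main work is tightness of the rescaled distances $\ka_{\kappa_n}^{-1}D^{\kappa_n}$, and this is where the obstacle lies. Unlike $D^{\kappa,\partial}$, paths in $D^\kappa$ may not pass through $\partial\BD$, so the triangle inequality through the boundary vertex is not available.

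My approach to the distances uses the domination $D^{\kappa,\partial} \le D^\kappa$. It gives lower bounds trivially, but upper bounds are what is needed. For $D^\kappa(\alpha,\SCL^\kappa(y))$ I would argue by a renewal structure as in \Cref{subsec:graph_distance}: explore the radial $\SLE_\kappa(\kappa-6)$ branch toward $y$ started from a point of $\alpha$. The successive clockwise loops around $y$ are bounded by chains of loops. Each chain is connected to the previous boundary, and the first one touches $\partial\BD$ near the starting arc. The number of loops needed is then stochastically dominated by a sum of geometric variables of mean of order $\ka_\kappa$. This should give $D^\kappa(\alpha,\SCL^\kappa(y)) \preceq$ a constant times $D^{\kappa,\partial}(\partial\BD,\SCL^\kappa(y))$ plus $O(\ka_\kappa)$ with high probability. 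The delicate point is to control the extra loops used to stay connected without passing through $\partial\BD$, and I would try to bound their number by the number of chain links per clockwise loop.

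Given these boundary-to-loop bounds, for loop-to-loop distances I would use
\[
D^\kappa(\SCL^\kappa(x),\SCL^\kappa(y)) \le D^\kappa(\alpha,\SCL^\kappa(x)) + D^\kappa(\alpha,\SCL^\kappa(y)) + D^\kappa(\alpha_x,\alpha_y),
\]
where $\alpha_x,\alpha_y$ are the boundary segments touched by the two paths. To avoid needing \Cref{sec: distance between two segments}, I would rather run the inversion argument of \Cref{cor uniform exploration from x}, which conditionally maps $\SCL^\kappa(x)$ to the boundary. This reduces loop-to-loop distance to a boundary-to-loop estimate in an annulus. Alternatively, I would take the branch targeted at $y$ rooted on $\SCL^\kappa(x)$ after conformally mapping the exterior component.

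Finiteness of the limit then follows from tightness. Positivity, i.e.\ that $D$ is a genuine metric, follows from $D \ge D^\partial$, which passes to the limit, together with \Cref{corollary D partial is a metric}. The triangle inequality and symmetry pass to the limit directly.
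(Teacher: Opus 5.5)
Your reduction is the same as the paper's: only the two families of rescaled distances need tightness, the Hausdorff coordinates live in a compact space, and metricity follows from $D\ge D^\partial$ together with \Cref{corollary D partial is a metric}. The gap is in the two tightness estimates themselves, which are the whole content of the proof.

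\emph{Segment to loop.} Following the radial $\SLE_\kappa(\kappa-6)$ branch toward $y$ does not give a path of loops that starts at $\alpha$. The chain bounding the first clockwise loop around $y$ is only known to touch $\partial\BD$ somewhere, and nothing places it near $\alpha$. Reaching that chain from $\alpha$ without using the vertex $\partial\BD$ is exactly the difficulty: $\Gamma^\kappa$ is not even a priori connected as a graph. You also have no control, uniform in $\kappa$, on chain lengths. So the bound by a constant times $D^{\kappa,\partial}(\partial\BD,\SCL^\kappa(y))$ plus $O(\ka_\kappa)$ is asserted, not derived.

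The missing idea is to grow metric balls from $\alpha$ itself. By rotational invariance and the definition of $\ka_\kappa$, $\SCL^\kappa(0)$ touches a given arc $\alpha$ with probability at least $\len(\alpha)/(2\pi\ka_\kappa)$. If it does not, the Markov property makes the restriction of $\Gamma^\kappa$ to the component $C\ni 0$ of $\BD\setminus\SCB^\kappa_1(\alpha)$ a $\CLE_\kappa$ in $C$. Let $\psi\colon C\to\BD$ with $\psi(0)=0$. The arc $\psi(\partial C\cap\SCB^\kappa_1(\alpha))$ has length at least $\len(\alpha)$, because $\alpha\subseteq\SCB^\kappa_1(\alpha)$ and so harmonic measure from $0$ can only increase. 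Iterating this gives domination by a geometric variable with parameter $\len(\alpha)/(2\pi\ka_\kappa)$.

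\emph{Loop to loop.} Going through $D^\kappa(\alpha_x,\alpha_y)$ would need segment-to-segment bounds for random segments, which you rightly want to avoid. However, the inversion does not remove the difficulty. It exchanges $\SCL^\kappa(x)$ and $\partial\BD$, so $D^\kappa(\SCL^\kappa(x),\SCL^\kappa(y))$ becomes a distance from $\partial\BD$ in $\widehat\Gamma^\kappa$ with one loop (the image of $\partial\BD$) forbidden. That is the same kind of constraint, not the unconstrained geometric distance. Your other option also fails: the exterior of $\SCL^\kappa(x)$ is, with high probability, doubly connected, so it cannot be mapped to $\BD$.

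The paper instead proceeds as follows.
\begin{itemize}
\item Set $T=D^{\kappa,\partial}(\SCL^\kappa(x),\partial\BD)-1$. Up to radius $T$, the balls from $\SCL^\kappa(x)$ for $D^\kappa$ and for $D^{\kappa,\partial}$ coincide.
\item Let $C_n(y)$ be the component of $\BD\setminus\SCB^\kappa_T(\SCL^\kappa(x))$ containing $y$. If its boundary does not meet $\partial\BD$, then $D^\kappa=D^{\kappa,\partial}$ for this pair and tightness is immediate.
\item Otherwise, by the Markov property and the segment estimate, $D^\kappa-T$ is dominated by $1+D^\kappa(\alpha_\delta,\SCL^\kappa(0))$ for a fixed arc $\alpha_\delta$ of length $\delta$.
\item The arc length is bounded below with high probability. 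The ball contains $\SCL^{\kappa_n}(x)$, which converges in the Hausdorff sense to a macroscopic loop, so its harmonic measure seen from $y$ is bounded below.
\end{itemize}
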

The limits above are denoted by $\widetilde{K}$ rather than $K^\partial$ because they are limits of the geodesics for $D^{\kappa_n}$ on $\Gamma^{\kappa_n}$, whereas those of~\Cref{prop subsequential limit} are limits for $D^{\kappa_n, \partial}$ on $\widehat{\Gamma}^{\kappa_n}$. Note that since the above result holds for an arbitrary countable subset $\SCS' \subset \SCS$, using the Kolmogorov extension theorem, we may define $D(\alpha, \SCL)$ and $\SCB_t(\alpha)$ for all $\alpha \in \SCS$ jointly. In the above result (and several others later in this section), we restrict ourselves to $\SCS'$ to perform a diagonal argument.

\begin{proof}
	\stepx{step:sub-reduction}{Reduction to two tightness statements} It suffices to show the tightness of $\ka_{\kappa_n}^{-1}D^{\kappa_n}(\SCL^{\kappa_n}(x), \SCL^{\kappa_n}(y)) $ for $x,y \in \BD_\BQ$ and the tightness of $
	\ka_{\kappa_n}^{-1}D^{\kappa_n}(\alpha, \SCL^{\kappa_n}(y))$ for $y \in \BD_\BQ$ and $\alpha \in \SCS'$. The remaining coordinates are automatically tight, the loops and compact sets taking values in compact spaces, and~\eqref{eq cv subsequence boundary} follows by extracting a subsequence along which all the coordinates converge jointly. {The fact that $D$ is a.s.\ a metric will follow from the inequality $D(\SCL(x), \SCL(y)) \ge D^\partial (\SCL(x), \SCL(y))$ (which comes from~\eqref{eq cv subsequence} and~\eqref{eq cv subsequence boundary}) and from~\Cref{corollary D partial is a metric}.}
	
	\stepx{step:sub-segment}{Tightness of the distance from a segment} We start with the tightness of $\ka_{\kappa_n}^{-1}D^{\kappa_n}(\alpha, \SCL^{\kappa_n}(y))$. Let $\phi \colon \BD \to \BD$ {be a conformal mapping} such that $\phi(y) = 0$. Then, by conformal invariance, it suffices to prove the tightness of $\ka_{\kappa_n}^{-1}D^{\kappa_n}(\phi(\alpha), \SCL^{\kappa_n}(0))$. By~\eqref{eq touching proba} and by rotational invariance of the $\CLE_\kappa$, we know that 
	\begin{equation*}
	\BP\bigl[ \SCL^{\kappa_n}(0) \cap  \phi(\alpha) \neq \emptyset\bigr] \ge \frac{\len(\phi(\alpha))}{2\pi \ka_{\kappa_n}},
	\end{equation*}
	where $\len(\phi(\alpha))$ is the length of the arc $\phi(\alpha)$. Indeed, if $\SCL^{\kappa_n}(0)$ touches $\partial \BD$ at a point $p$, then $p \in e^{\ri \theta}\phi(\alpha)$ for a set of $\theta \in [0,2\pi)$ of Lebesgue measure $\len(\phi(\alpha))$, so $\int_0^{2\pi} \BP[\SCL^{\kappa_n}(0) \cap e^{\ri \theta}\phi(\alpha) \neq \emptyset] \, \rd \theta \ge \len(\phi(\alpha))/\ka_{\kappa_n}$, the integrand being independent of $\theta$ by rotational invariance. Moreover, on the event that $\SCL^{\kappa_n}(0) \cap \phi(\alpha) = \emptyset$, i.e., $0 \not\in \SCB^{\kappa_n}_1(\phi(\alpha))$, let $\psi$ be the conformal mapping from the connected component $C$ of $\BD \setminus \SCB^{\kappa_n}_1(\phi(\alpha))$ containing $0$ onto $\BD$ sending $0$ to $0$ and such that $\psi^\prime(0)>0$. Let $\gamma$ be the arc $\psi(\partial C \cap \SCB^{\kappa_n}_1(\phi(\alpha)) )$. The set $\gamma$ is indeed an arc since $ \SCB^{\kappa_n}_1(\phi(\alpha))$ is connected, since $\partial C \cap \SCB^{\kappa_n}_1(\phi(\alpha)) $ is closed.
	
	Then, we have the equality of events
	\begin{equation*}
	\bigl\{ 0 \in \SCB^{\kappa_n }_2(\phi(\alpha))\bigr\} = \bigl\{ \SCL^{\kappa_n}(0) \cap \SCB^{\kappa_n}_1(\phi(\alpha)) \neq \emptyset\bigr\}  = \bigl\{\psi(\SCL^{\kappa_n}(0 )) \cap \gamma \neq \emptyset\bigr\},
	\end{equation*}
	and by the domain Markov property, we know that conditionally on $\SCB^{\kappa_n}_1(\phi(\alpha))$, 
	\begin{equation*}
	\BP\bigl[\psi(\SCL^{\kappa_n}(0 )) \cap \gamma \neq \emptyset\bigr]  \ge \frac{\len(\gamma)}{2\pi \ka_{\kappa_n}}.
	\end{equation*} 
	Furthermore, notice that the probability that a planar Brownian motion started from $0$ and stopped upon reaching $\partial \BD$ hits $\SCB^{\kappa_n}_1(\phi(\alpha))$ is larger than the probability that it hits $\phi(\alpha)$ (here we use that $\phi(\alpha) \subseteq \SCB^{\kappa_n}_1(\phi(\alpha))$
	, so that a Brownian motion exiting through $\phi(\alpha)$ must meet $\SCB^{\kappa_n}_1(\phi(\alpha))$; identifying $\len(\gamma)/(2\pi)$ with the probability of hitting it is the conformal invariance of harmonic measure under $\psi$), hence $\len(\gamma) \ge \len (\phi(\alpha))$. By induction, we see that $D^{\kappa_n}(\phi(\alpha), \SCL^{\kappa_n}(0))$ is stochastically dominated by a geometric random variable with success probability $\len(\phi(\alpha))/(2\pi \ka_{\kappa_n})$, hence the desired tightness.

	Next, let us prove the tightness of $\ka_{\kappa_n}^{-1}D^{\kappa_n}(\SCL^{\kappa_n}(x), \SCL^{\kappa_n}(y))$. Let $x,y \in \BD_\BQ$. Let
	\begin{equation*}
	T^{\kappa_n} \defeq \inf\left\{t\ge 0: \SCB_t^{\kappa_n, \partial}(\SCL^{\kappa_n}(x)) \cap \partial \BD \neq \emptyset\right\} =D^{\kappa_n, \partial} (\SCL^{\kappa_n}(x), \partial \BD)-1.
	\end{equation*}
	Then, by definition of $T^{\kappa_n}$, for all $t {\le}T^{\kappa_n}$,
	\begin{equation*}
	\SCB_t^{\kappa_n}(\SCL^{\kappa_n}(x))=  \SCB_t^{\kappa_n, \partial }(\SCL^{\kappa_n}(x)).
	\end{equation*}
	Indeed, a path from $\SCL^{\kappa_n}(x)$ passing through $\partial \BD$ has length at least $T^{\kappa_n}+2$.
	
	\stepx{step:sub-loops}{Tightness of the distance between two loops} To prove the tightness of the rescaled distance $
	\ka_{\kappa_n}^{-1}D^{\kappa_n}(\SCL^{\kappa_n}(x), \SCL^{\kappa_n}(y))$, it is thus sufficient to work on the event that $D^{\kappa_n}(\SCL^{\kappa_n}(x), \SCL^{\kappa_n}(y))>T^{\kappa_n}$.
	
	Let $C_n(y)$ be the connected component of $\BD \setminus \SCB_{T^{\kappa_n}}^{\kappa_n}(\SCL^{\kappa_n}(x))$ containing $y$. If $\partial C_n(y) \cap \partial \BD = \emptyset$, then we have $D^{\kappa_n}(\SCL^{\kappa_n}(x), \SCL^{\kappa_n}(y))= D^{\kappa_n, \partial}(\SCL^{\kappa_n}(x), \SCL^{\kappa_n}(y))$ (any path using $\partial \BD$ must re-enter $C_n(y)$ through a loop of $\SCB_{T^{\kappa_n}}^{\kappa_n}(\SCL^{\kappa_n}(x))$, and rerouting it through that loop does not increase its length) so that we get directly the tightness.
	
	If $\partial C_n(y) \cap \partial\BD \neq \emptyset$, let $\phi_n \colon \BD \to C_n(y) $ be the unique conformal mapping such that $\phi_n(0)= y$, $\phi_n^\prime(0)>0$. Then the restriction of $\Gamma^{\kappa_n}$ to $C_n(y)$ is a $\CLE_{\kappa_n}$ (the domain Markov property applies here because $\SCB_{T^{\kappa_n}}^{\kappa_n}(\SCL^{\kappa_n}(x))$ is a stopping set for the exploration of $\Gamma^{\kappa_n}$ by graph distance from $\SCL^{\kappa_n}(x)$) and $\phi_n^{-1}(\partial C_n(y) \cap  \partial \SCB_{T^{\kappa_n}}^{\kappa_n }(\SCL^{\kappa_n}(x)))$ is a segment $\alpha_n$. Indeed, $\SCB_{T^{\kappa_n}}^{\kappa_n }(\SCL^{\kappa_n}(x))$ is connected and $\partial C_n(y) \cap  \partial \SCB_{T^{\kappa_n}}^{\kappa_n }(\SCL^{\kappa_n}(x))$ is closed.
	
	Let us show that the length $\len(\alpha_n)$ of $\alpha_n$ is such that $\lim_{\delta \to 0} \liminf_{n\to \infty}\BP\lbrack \len(\alpha_n) >\delta\rbrack=1$. By Skorokhod's representation theorem, let us assume that~\eqref{eq cv subsequence},~\eqref{eq cv Hausdorff balls} hold a.s. The desired result then comes from the fact that $\SCL^{\kappa_n}(x) \subseteq \SCB_{T^{\kappa_n}}^{\kappa_n}(\SCL^{\kappa_n}(x))   $ and $\SCL^{\kappa_n}(x) $ converges in the Hausdorff topology toward $\SCL(x)$, so that the diameter of $\SCL^{\kappa_n}(x)$ is bounded from below, and thus the probability that a planar Brownian motion started from $y$ and stopped when it exits $\BD$ hits $\SCB_{T^{\kappa_n}}^{\kappa_n}(\SCL^{\kappa_n}(x)) $ is a.s.\ bounded from below. Therefore, $\lim_{\delta \to 0} \liminf_{n\to \infty}\BP\lbrack \len(\alpha_n) >\delta\rbrack=1$.
	
	Thus, on $\{\len(\alpha_n) \ge \delta\}$, the conditional law of $D^{\kappa_n}(\SCL^{\kappa_n}(x), \SCL^{\kappa_n}(y)) - T^{\kappa_n}$ is stochastically dominated by that of $1+D^{\kappa_n}(\alpha_\delta, \SCL^{\kappa_n}(0))$ for a deterministic arc $\alpha_\delta$ of length $\delta$. Combining with the tightness of the distance to a segment and with $\lim_{\delta \to 0}\liminf_{n\to \infty}\BP\lbrack \len(\alpha_n) >\delta\rbrack=1$, and letting $\delta \downarrow 0$, $\ka_{\kappa_n}^{-1}D^{\kappa_n}(\SCL^{\kappa_n}(x), \SCL^{\kappa_n}(y))$ is tight.
\end{proof}

\begin{remark}\label{remark removing the boundary increases the distance}
	Note that for all $\SCL_1, \SCL_2 \in \Gamma$, for all $t \in \BQ_{\ge 0}$, for all $\alpha \in \SCS'$, we have $D^\partial(\SCL_1, \SCL_2) \le D(\SCL_1, \SCL_2)$, $D^\partial(\partial \BD, \SCL_2) \le D(\alpha, \SCL_2)$, $\SCB_t(\SCL_1) \subseteq \SCB^\partial_t(\SCL_1)$, and $\SCB_t (\alpha) \subseteq \SCB_t^\partial(\partial \BD)$. Moreover, if $\alpha \subseteq \beta \in \SCS'$, then $\SCB_t(\alpha) \subseteq \SCB_t(\beta)$. These inequalities come from~\eqref{eq cv subsequence},~\eqref{eq cv Hausdorff balls} and~\eqref{eq cv subsequence boundary} and from the analogous inequalities for the graph distance. One can also see that $\SCB^\partial_t (\partial \BD)= \SCB_t(\partial \BD)$ (indeed, a path of loops from $\partial \BD$ realizing either of the two distances never has an interior vertex equal to $\partial \BD$, so that the two graph distances from $\partial \BD$ agree) and that $K^\partial_{\partial \BD, \SCL_1} =\widetilde{K}_{\partial \BD, \SCL_1}$.
\end{remark}

The remainder of the present section is devoted to the study of metric balls: we prove that loops are dense in balls and we extend the definition of metric balls to irrational times.

\subsection{Study of metric balls at rational times} Let us state the following technical lemma which gives a stronger relation between $\SCB_t(\alpha)$, $\SCB_t(\SCL)$, and the uniform exploration. For all $\kappa>4$, for all $s\ge 0$, let $\SCB^{\kappa, \partial}_s(\SCB_t^{\kappa} (\alpha))$ be the union of $\SCB_t^{\kappa} (\alpha)$ with the closure of the union of the interiors of the loops of $\widehat{\Gamma}^\kappa$ at $D^{\kappa, \partial}$ distance at most $s$ from $\SCB_t^{\kappa} (\alpha)$ (the loops of $\widehat{\Gamma}^\kappa$ which touch $\SCB_t^{\kappa}(\alpha)$ but which are not included in $\SCB_t^{\kappa} (\alpha)$ are at distance $1$ from $\SCB_t^{\kappa} (\alpha)$, etc)
. One can write the same definition for $\SCB^{\kappa, \partial}_s(\SCB_t^{\kappa} (\SCL))$ for $\SCL \in \Gamma^\kappa$. Similarly, we also let $K^{\kappa, \partial}_{\SCB_t^{\kappa} (\alpha), \SCL^\prime}$ and $K^{\kappa, \partial}_{\SCB_t^{\kappa} (\SCL), \SCL^\prime}$ be the unions of all the loops lying on some $D^{\kappa, \partial}$ shortest path from $\SCB_t^{\kappa} (\alpha) \cup \partial \BD$ and $\SCB_t^{\kappa} (\SCL)$ respectively to $\SCL^\prime$ (without including $\SCB_t^{\kappa} (\alpha) \cup \partial \BD$ or $\SCB_t^{\kappa} (\SCL)$ in the path), as in~\Cref{subsec:graph_distance}.
\begin{lemma}\label{lem:good_convergence}
	Jointly with~\eqref{eq cv subsequence},~\eqref{eq cv Hausdorff balls} and~\eqref{eq cv subsequence boundary}, for all $t \in \BQ_{\ge 0}$, $x \in \BD_\BQ$, $\alpha \in \SCS'$, along a subsequence again denoted by $(\kappa_n)_{n \ge 0}$, we have
	\begin{equation*}
		\left(
		\SCB^{\kappa_n, \partial}_{\ka_{\kappa_n} s}(\SCB_{\ka_{\kappa_n}t}^{\kappa_n} (\SCL^{\kappa_n}(x))), \SCB^{\kappa_n, \partial}_{\ka_{\kappa_n} s}(\SCB_{\ka_{\kappa_n}t}^{\kappa_n} (\alpha))
		\right)_{s \in \BQ_{\ge 0}}
		\mathop{\longrightarrow}\limits_{n\to \infty}^{(\mathrm{d})}
		\left( 
		\SCB^\partial_{ s}(\SCB_{t} (\SCL(x))), \SCB_{ s}^\partial(\SCB_{t} (\alpha))
		\right)_{s \in \BQ_{\ge 0}},
	\end{equation*}
	for the product topology where each component is equipped with the Hausdorff topology and $(\SCB^\partial_{ s}(\SCB_{t} (\alpha)))_{s\ge 0}$ (resp.~$(\SCB_{ s}^\partial(\SCB_{t} (\SCL(x))))_{s\ge 0}$) has the same law as a collection of independent uniform explorations of the $\CLE_4$, started from the whole boundary of each connected component of $\BD \setminus \SCB_{t}(\alpha)$ (resp.~$\BD \setminus\SCB_{t}(\SCL(x))$). Moreover, jointly with~\eqref{eq cv subsequence},~\eqref{eq cv Hausdorff balls},~\eqref{eq cv subsequence boundary}, and the above convergence, for all $t \in \BQ_{\ge 0}$, $x,y \in \BD_\BQ$, $\alpha \in \SCS'$,
	\begin{equation}
		\label{eq cv geodesics from balls}
		\left(K^{\kappa_n, \partial}_{\SCB_{\ka_{\kappa_n} t}^{\kappa_n} (\alpha), \SCL^{\kappa_n}(y)}, K^{\kappa_n, \partial}_{\SCB_{\ka_{\kappa_n} t}^{\kappa_n} (\SCL^{\kappa_n}(x)), \SCL^{\kappa_n}(y)} \right)
		\mathop{\longrightarrow}\limits_{n\to \infty}^{(\mathrm{d})}
		\left( K^\partial_{\SCB_t(\alpha), \SCL(y)}, K^\partial_{\SCB_t(\SCL(x)), \SCL(y)}
		\right),
	\end{equation}
	for the Hausdorff topology, where $K^\partial_{\SCB_t(\alpha), \SCL(y)}, K^\partial_{\SCB_t(\SCL(x)), \SCL(y)}$ are random compact subsets of $\overline{ \BD}$.
\end{lemma}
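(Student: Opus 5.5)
The plan is to reduce to the boundary case (\Cref{prop cv explo unif} and \Cref{cor cv Hausdorff ball boundary}) by applying the domain Markov property of $\CLE_{\kappa_n}$ in each complementary component of the ball $\SCB^{\kappa_n}_{\ka_{\kappa_n}t}(\alpha)$ (the case of $\SCB^{\kappa_n}_{\ka_{\kappa_n}t}(\SCL^{\kappa_n}(x))$ is identical).

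\emph{Step 1: reduction to a single component.} The set $\SCB^{\kappa_n}_{\ka_{\kappa_n}t}(\alpha)$ is a stopping set for the graph-distance exploration from $\alpha$. So, conditionally on it, the restrictions of $\Gamma^{\kappa_n}$ to the components $O$ of its complement are independent $\CLE_{\kappa_n}$'s. Inside such a component, $\SCB^{\kappa_n,\partial}_s(\SCB^{\kappa_n}_{\ka_{\kappa_n}t}(\alpha))\cap \overline O$ is exactly the $D^{\kappa_n,\partial}$-ball of radius $s$ from $\partial O$ for the CLE in $O$, because loops touching the ball are at distance $1$, and so on. Mapping $O$ to $\BD$ by $\psi_O$ (normalized at a rational point $y\in O$) gives the law of $\SCB^{\kappa_n,\partial}_s(\partial\BD)$. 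Likewise, $K^{\kappa_n,\partial}_{\SCB(\alpha),\SCL^{\kappa_n}(y)}$ is the preimage of a boundary geodesic set in $O(y)$.

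\emph{Step 2: convergence of the components.} By Skorokhod's theorem, assume \eqref{eq cv subsequence boundary} holds a.s. I need the components $O_n(y)$ of $\BD\setminus\SCB^{\kappa_n}_{\ka_{\kappa_n}t}(\alpha)$ containing $y$ to converge in the Carath\'eodory sense to the component $O(y)$ of $\BD\setminus\SCB_t(\alpha)$. Hausdorff convergence of the closed sets gives the kernel-type inclusion in one direction. I expect the other direction to be the \textbf{main obstacle}: the limit $\SCB_t(\alpha)$ could a priori have thin parts that disappear, or components that pinch off, so that $O(y)$ is strictly larger than the kernel of the $O_n(y)$. The argument I would try first is to work with the kernel $\widetilde O(y)$ of the $O_n(y)$ and to identify the limiting objects with uniform explorations in $\widetilde O(y)$. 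Since $\widetilde O(y)$ is a subsequential limit and the components are determined by the countable dense family of points $y$, this is enough for a convergence statement. It would also show a posteriori that the limiting boundary explorations fill $\BD\setminus \bigcup \widetilde O$ consistently.

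\emph{Step 3: passing to the limit.} Given Carath\'eodory convergence $\psi_{O_n(y)}^{-1}\to\psi_{O(y)}^{-1}$ uniformly on compacts, the conditional independence together with \Cref{cor cv Hausdorff ball boundary} and \Cref{prop subsequential limit} (for the geodesic sets $K^{\kappa_n,\partial}_{\partial\BD,\cdot}$) gives joint convergence of the images in each component. The remaining issue is mass near $\partial O$, where uniform convergence fails. There I use that the explored regions contain $\partial O_n$ and, by \Cref{prop BCLE rho to zero}, the exploration fills a neighbourhood of the boundary for small times. This gives Hausdorff convergence of $\psi^{-1}$ of the balls, and of the geodesic sets after adding their closures near $\partial O$. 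Only countably many components of diameter $\ge\varepsilon$ matter, and for any $\varepsilon$ there are finitely many of them. Taking the union with $\SCB_t(\alpha)$ and using product topology over rational $s$ yields the claimed limit. Finally, I extract a further subsequence so that \eqref{eq cv geodesics from balls} holds jointly (the sets are tight in a compact space). The stated law of the limit as independent uniform explorations in the components is then inherited from the conditional description in Step 1.
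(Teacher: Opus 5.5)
Your strategy is the paper's: its proof is a one-line appeal to the domain Markov property of $\CLE_{\kappa}$ together with \eqref{eq cv Hausdorff balls} and \eqref{eq cv subsequence boundary}. However, the obstacle you single out in Step~2 does not exist, and your fallback would not have worked if it did.

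The Hausdorff convergence $\SCB^{\kappa_n}_{\ka_{\kappa_n}t}(\alpha)\to\SCB_t(\alpha)$, which is part of \eqref{eq cv subsequence boundary}, gives \emph{both} kernel conditions at every base point $y\notin\SCB_t(\alpha)$.
\begin{itemize}
\item Take a compact subset of $O(y)$ and enlarge it to a connected compact set containing $y$. This set is at positive distance from $\SCB_t(\alpha)$, so it is disjoint from the prelimit balls for $n$ large, hence contained in $O_n(y)$.
\item Take a connected open $V\ni y$ contained in $O_n(y)$ for infinitely many $n$. It cannot meet $\SCB_t(\alpha)$: every point of $\SCB_t(\alpha)$ is a limit of points of the prelimit balls, and those points would eventually lie in $V$.
\end{itemize}
So the kernel is always $O(y)$. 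The paper uses this implication explicitly in the proof of \Cref{prop locality} and in \eqref{eq cv mapping out functions at a stopping time}. Had the kernel $\widetilde O(y)$ really been smaller, identifying the limit with explorations in $\widetilde O(y)$ would not give the asserted law, which is prescribed on the components of $\BD\setminus\SCB_t(\alpha)$.

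In Step~3, \Cref{prop BCLE rho to zero} is the wrong input. It says the explored region shrinks to $\partial\BD$ as $s\downarrow 0$. For fixed $s>0$ the exploration does not fill a neighbourhood of the boundary: some unexplored components have boundary on $\partial\BD$, see \Cref{lem:bcle_uniform_exploration}. In fact no input is needed here.
\begin{itemize}
\item Both the prelimit and the limiting sets contain their sources, $\SCB^{\kappa_n}_{\ka_{\kappa_n}t}(\alpha)\cup\partial\BD$ and $\SCB_t(\alpha)\cup\partial\BD$ respectively, and these are Hausdorff close.
\item The complement of an $\varepsilon$-neighbourhood of $\SCB_t(\alpha)\cup\partial\BD$ is compact, so it is covered by finitely many components, and your conformal maps converge uniformly on compact subsets of these. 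This, not ``diameter $\ge\varepsilon$'', is the right finiteness criterion.
\item Alternatively, copy the proof of \Cref{cor cv Hausdorff ball boundary}, using the Carath\'eodory convergence of the unexplored component containing each rational point.
\end{itemize}

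Two things are genuinely missing from your proposal.

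First, pinching does matter, but for the independence claim rather than for Carath\'eodory convergence. Two rational points in distinct components of $\BD\setminus\SCB_t(\alpha)$ may lie in a single component of $\BD\setminus\SCB^{\kappa_n}_{\ka_{\kappa_n}t}(\alpha)$, joined through a channel that closes only in the limit; Hausdorff convergence does not exclude this. Step~1 then gives one exploration for both points, so independence across limiting components is not simply inherited. You must show either that such merging has vanishing probability, or that the exploration decouples across vanishing channels. The paper's one-line proof does not address this either.

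Second, ``the loop case is identical'' is wrong when $t<D(\SCL(x),\partial\BD)$. In that case the component of $\BD\setminus\SCB_t(\SCL(x))$ adjacent to $\partial\BD$ is doubly connected, so there is no map $\psi_O$ onto $\BD$, and the restricted ensemble is a $\CLE$ in an annulus. The $D^{\kappa_n,\partial}$-growth in that component starts from the inner boundary only. This component therefore needs the annulus/inversion argument of \Cref{cor uniform exploration from x}, not Step~1.
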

\begin{proof}
	This is a direct consequence of the domain Markov property for the $\CLE_\kappa$, of~\eqref{eq cv Hausdorff balls}, and of~\eqref{eq cv subsequence boundary}.
\end{proof}

Note that for all $t \in \BQ_{\ge 0}$, for all $s \in \BQ_{\ge 0}$, for all $\alpha \in \SCS'$, for all $\SCL \in \Gamma$, we have $\SCB_{t+s}(\alpha) \subseteq \SCB^\partial_s(\SCB_t (\alpha)) $ and $\SCB_{t+s}(\SCL) \subseteq \SCB_s^\partial(\SCB_t (\SCL))$. Both follow by passing to the limit in the analogous inclusions for the graph distances (a shortest path realizing $t+s$ from $\alpha$ meets $\SCB^{\kappa_n}_{t}(\alpha)$), using~\eqref{eq cv subsequence} and~\eqref{eq cv Hausdorff balls}.

For all $t \in \BQ_{\ge 0}$, for all $s \in \BQ_{\ge 0}$, let $\widetilde{\SCB}^\partial_s(\SCB_t (\alpha))$ be the closure of the union of $\SCB_t (\alpha)$ with all the connected components $C$ of $ \SCB^\partial_s(\SCB_t (\alpha))\setminus \SCB_t(\alpha)$ such that $C \cap  \overline{\partial \BD\setminus \alpha} = \emptyset$ (see~\Cref{fig:ball_from_a_segment} for a sketch). Similarly, for all $t \in \BQ_{\ge 0}$, for all $s \in \BQ_{\ge 0}$, let $\widetilde{\SCB}_s^\partial (\SCB_t(\SCL))$ be the closure of the union of $\SCB_t(\SCL)$ with all the connected components $C$ of $\SCB_s^\partial(\SCB_t(\SCL))\setminus \SCB_t(\SCL)$ such that $C \cap \overline{\partial \BD \setminus \SCB_t(\SCL)} = \emptyset$. Note that when $t+s < D(\SCL, \partial \BD)$, we have $\widetilde{\SCB}_s^\partial (\SCB_t(\SCL)) = \SCB_{t+s}(\SCL) = \SCB^\partial_{t+s}(\SCL)$. Indeed, none of the balls involved has then reached $\partial \BD$, so that $D$ and $D^\partial$ agree on the loops they contain, and no connected component of $\SCB_s^\partial(\SCB_t(\SCL)) \setminus \SCB_t(\SCL)$ meets $\partial \BD$.

Let us state a technical lemma.
\begin{lemma}\label{lemma one point}
	Almost surely, for all $x \in \BD_\BQ$ the intersection $K^\partial_{\partial \BD, \SCL(x)} \cap \partial \BD $ (where here we use the definition of the geodesic without the endpoints) contains exactly one point. As a result, a.s., for all $t \in \BQ_{\ge 0}$, for all $\alpha \in \SCS'$, for all $y \in \BD_\BQ \setminus  \SCB_t(\alpha)$, the intersection $K^\partial_{\SCB_t(\alpha), \SCL(y)} \cap ( \SCB_t(\alpha)\cup \partial \BD)$ contains exactly one point. The same is true for the intersection $K^\partial_{\SCB_t(\SCL(x)), \SCL(y)} \cap ( \SCB_t(\SCL(x))\cup \partial \BD)$ for all $x \in \BD_\BQ$ on the event that $D(\SCL(x), \partial \BD)<t$.
\end{lemma}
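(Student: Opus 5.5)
The plan is to pass to the limit in the exploration description of the discrete geodesic to the boundary. I would first prove the first assertion for a fixed $x \in \BD_\BQ$; the first assertion for all $x$ simultaneously then follows because $\BD_\BQ$ is countable. The other two assertions are obtained from the first by the domain Markov property and conformal invariance, exactly as in the proof of \Cref{lem:good_convergence}.

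\emph{Lower bound: at least one point.} Fix $x$ and use Skorokhod's representation theorem so that \eqref{eq cv subsequence} and \eqref{eq cv Hausdorff ball} hold almost surely. For each $n$, every $D^{\kappa_n,\partial}$ shortest path from $\partial\BD$ to $\SCL^{\kappa_n}(x)$ has its first loop touching $\partial\BD$, so $K^{\kappa_n,\partial}_{\partial\BD,\SCL^{\kappa_n}(x)}\cap\partial\BD\neq\emptyset$. Hausdorff limits of sets that meet the compact set $\partial\BD$ still meet $\partial\BD$, so the limit intersection is nonempty.

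\emph{Upper bound: at most one point.} This uses the radial exploration of \Cref{subsec:branching_sle_kappa_minus_6}.
\begin{itemize}
\item Any loop at graph distance $1$ from $\partial\BD$ that lies on a shortest path to $\SCL^{\kappa_n}(x)$ must touch the boundary of the component $U^{(n)}_1$ of $\BD\setminus\chi$ containing $x$, where $\chi$ is the union of the loops touching $\partial\BD$.
\item By the renewal structure of the branch $\eta^{\kappa_n}_x$, the set $\partial U^{(n)}_1\cap\partial\BD$ is contained in the portion of $\partial\BD$ traced by the excursion of $\theta^{\kappa_n}$ that ends at the first clockwise time $\tau_1^{\kappa_n}$.
\item As $\kappa_n\downarrow4$, \Cref{lemma number of clockwise loops} shows that $\tau_1^{\kappa_n}\to0$ in the parametrisation of $\theta$ (rescaled by the local time). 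Together with \Cref{prop BCLE rho to zero} and \eqref{eq cv theta}, the loops at distance $1$ that disconnect $x$ from most of $\partial\BD$ should then concentrate near a single point, namely the root $X_{u_0}$ of the first macroscopic bubble of the uniform exploration targeted at $x$.
\end{itemize}

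More precisely, I would show that for every $\delta>0$, with probability tending to $1$, all loops of $K^{\kappa_n,\partial}_{\partial\BD,\SCL^{\kappa_n}(x)}$ that touch $\partial\BD$ lie in a $\delta$-neighbourhood of a random point $p_n$ with $p_n\to p$. The argument runs as follows.
\begin{itemize}
\item Stop the uniform exploration toward $x$ at a small time $\varepsilon$.
\item By \Cref{prop BCLE rho to zero}, the complement components of $C_\varepsilon(x)$ have diameter $o(1)$ as $\varepsilon\to0$. Combined with \Cref{cor cv Hausdorff ball boundary}, the discrete ball $\SCB^{\kappa_n,\partial}_{\ka_{\kappa_n}\varepsilon}(\partial\BD)$ then has all its pieces hanging off $\partial\BD$ of small diameter, except for a set of small probability.
\item A geodesic from $\partial\BD$ to $\SCL^{\kappa_n}(x)$ passes through $\partial C^{\kappa_n}_\varepsilon(x)$. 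Its initial segment lies in one small piece, because the connected path of loops of length about $\varepsilon\ka_{\kappa_n}$ cannot reach beyond the component of the ball it starts in.
\end{itemize}
So all geodesics touch $\partial\BD$ within distance $o(1)$ of one another, provided the ``gate'' where the geodesic enters $C_\varepsilon(x)$ is itself localised. This needs that, at the level of the limit, only one macroscopic hull separating $x$ from $\partial\BD$ is pinned at $\partial\BD$ near time $0$. The almost sure uniqueness of the point then follows by letting $\varepsilon,\delta\to0$ along countable sequences.

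\emph{Extension and main obstacle.} For the statements about $\SCB_t(\alpha)$ and $\SCB_t(\SCL(x))$, I would apply the first statement inside each component of the complement, using \Cref{lem:good_convergence}. By that lemma the explorations there are independent uniform explorations from the full component boundary. Mapping conformally to $\BD$ transfers uniqueness, since boundary points correspond to prime ends and the limiting components have locally connected boundaries (obtained via the exploration). The main obstacle is the upper bound: geodesics could a priori reach $\partial\BD$ at two macroscopically separated places with equal length. The step I would try first is to exclude this by showing that the two candidate boundary touch points would give two independent exponential clocks in the uniform exploration that coincide. This has probability zero, by the same argument as in \Cref{corollary D partial is a metric}.
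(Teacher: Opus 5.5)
Your lower bound is fine, and your reduction of the second and third assertions to the first via the discrete domain Markov property matches the paper. The gap is the upper bound (``at most one point''), which your argument does not establish.

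\begin{itemize}
\item \emph{The ``gate'' argument is circular.} Where a geodesic enters $C_\varepsilon(x)$ is exactly the original question, posed for the exploration restarted from $\partial C_\varepsilon(x)$, and $C_\varepsilon(x)\to\BD$ as $\varepsilon\to0$. So it localises nothing.
\item \emph{The identification with the first macroscopic bubble is unjustified.} Nothing forces the touch point to be near the root of the first macroscopic bubble, since that bubble need not lie on the ancestral chain of bubbles leading to $\SCL(x)$.
\item \emph{The ``two independent exponential clocks'' fallback does not apply.} Every geodesic from $\partial\BD$ has the same length $D^\partial(\partial\BD,\SCL(x))$, so two separated touch points are not a coincidence of two independent random variables. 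At best this becomes $D(\alpha,\SCL(x))=D(\beta,\SCL(x))=D(\partial\BD,\SCL(x))$ for disjoint arcs $\alpha,\beta$. These are not independent exponentials, and nothing available gives their joint law.
\end{itemize}

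The underlying difficulty is that at the start of a geodesic the bubbles accumulate at $\partial\BD$, and there is no single first bubble to point to.

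The missing idea is the inversion already used in \Cref{cor uniform exploration from x}. The map $\phi^{-1}\circ\iota\circ\phi$ on the annulus $\BD\setminus\overline{\mathrm{int}(\SCL(x))}$ swaps $\partial\BD$ and $\SCL(x)$, and sends $(\SCB^\partial_t(\SCL(x)))_{t\ge0}$ to a uniform exploration from the boundary. This turns the start of the geodesic into its end, where the structure is clean:
\begin{itemize}
\item $\partial\BD$ becomes a loop discovered at time $T=D^\partial(\SCL(x),\partial\BD)$, namely an $\SLE_4$ bubble rooted at a single point of the boundary of the unexplored component.
\item All other loops of the geodesic set lie in the ball of radius less than $T$, and this ball meets that bubble only at its root.
\end{itemize}
Since $\SCL(x)$ is a Jordan curve, the inversion extends to a homeomorphism of the closed annulus. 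Hence $K^\partial_{\partial\BD,\SCL(x)}\cap\partial\BD$ is exactly one point. Your remaining steps then go through.
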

\begin{proof}
	This is a consequence of the fact that $\SCB^\partial_t(\SCL(x))$ is the uniform exploration starting from $\SCL(x)$ defined above~\Cref{cor uniform exploration from x}, so that the boundary $\partial \BD$ corresponds to a loop and each loop is attached to the explored region at exactly one point according to the PPP of $\SLE_4$ bubbles.
	
	More precisely, recall that the annulus $\SA$ is made of $\BD$ minus the closure of the region encircled by the loop $\SCL(x)$, that $Z$ is a point chosen uniformly at random with respect to the Lebesgue measure in $\SA$. Recall that $\phi \colon \SA \to \mathbb{A}_r$ is the unique conformal mapping from $\SA$ to $\mathbb{A}_r\defeq (1/r)\BD \setminus r\overline{\BD}$ such that $\phi^\prime(Z)>0$ and such that $\phi(\partial \BD)= (1/r) \partial \BD$, where $r>0$ depends on $\SA$. Recall that $\left( (\phi^{-1} \circ \iota \circ \phi) (\SCB^\partial_t(\SCL(x))) \right)_{t\ge 0}$ has the law of $(\SCB_t^\partial(\partial \BD))_{t\ge 0}$ and, in this coupling, $(\phi^{-1} \circ \iota \circ \phi)(\partial \BD)$ corresponds to $\SCL(x)$. We then conclude that $K^\partial_{\partial \BD, \SCL(x)} \cap \partial \BD $ contains exactly one point using the facts that each loop of the uniform exploration is attached at exactly one point on the boundary of the unexplored region when it is discovered and that $(\phi^{-1} \circ \iota \circ \phi)$ is a homeomorphism (a conformal map between the annuli, which extends to a homeomorphism of their closures since $\SCL(x)$ is a Jordan curve).
	
	Next, for the case of a segment $\alpha \in \SCS'$, we use the fact that conditionally on $\SCB^{\kappa_n}_{\ka_{\kappa_n}t}(\alpha)$, the restriction of $\Gamma^{\kappa_n}$ to the connected component of $\BD \setminus \SCB^{\kappa_n}_{\ka_{\kappa_n}t}(\alpha)$ containing $y$ has the law of a $\CLE_{\kappa_n}$. Therefore, we can apply the previous case by letting $n\to \infty$. For the last point of the lemma, the same reasoning works.
\end{proof}

 \begin{figure}[h]
	\centering
	\includegraphics[scale=0.7]{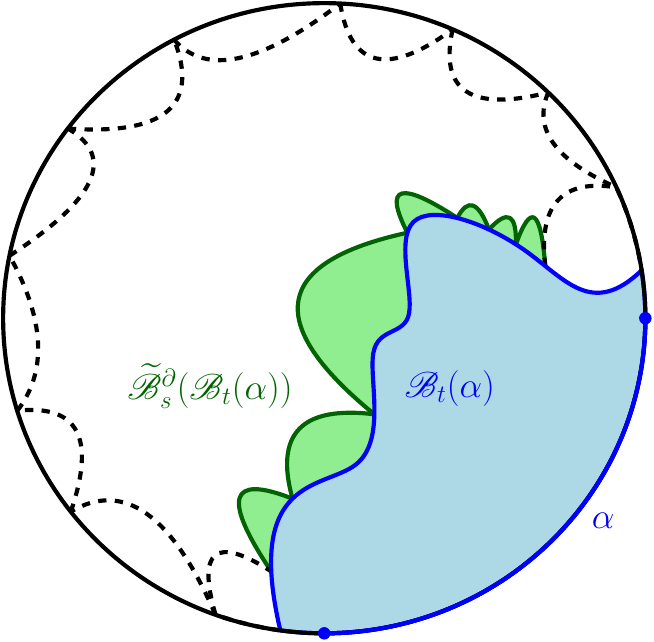}
	\includegraphics[scale=0.7]{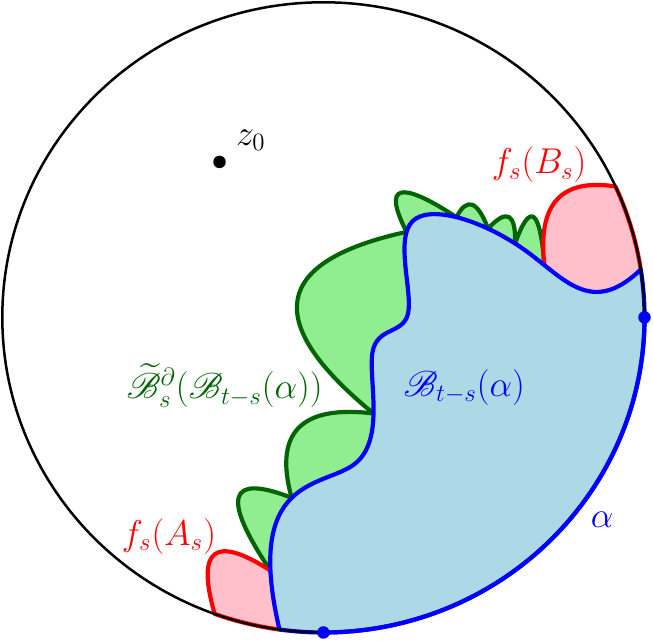}
	\caption{\textbf{Left:} Illustration of the definition of $\widetilde{\SCB}^\partial_s(\SCB_{t}(\alpha))$ which can be informally described as the union of the blue and green regions. Note that none of these components is simply connected; they are drawn as simply connected since we focus on the intersection with $\overline{\partial \BD\setminus \alpha}$. The connected components of $\SCB^\partial_s(\SCB_{t}(\alpha)) \setminus \SCB_{t}(\alpha)$ which are not kept are sketched using dotted lines.  \textbf{Right:} Illustration for the proof of~\Cref{lemma approx balls from segments with unif explo}. Informally, the ball of radius $t-s$ around $\alpha$ is in blue, the connected components of $\SCB^\partial_s(\SCB_{t-s}(\alpha)) \setminus \SCB_{t-s}(\alpha)$ which are kept in $\widetilde{\SCB}^\partial_s(\SCB_{t-s}(\alpha)) \setminus \SCB_{t-s}(\alpha)$ are in green, and the two connected components $f_s(A_s)$ and $f_s(B_s)$ whose diameters go to zero as $s\to 0$ are drawn in pink. More precisely, the blue region is $\BD \setminus C_s$, the green regions are the connected components of $C_s \setminus \overline{{C}'_s}$ whose closures do not touch $\overline{\partial \BD\setminus \alpha}$ and the pink regions are the connected components of $C_s \setminus \overline{{C}'_s}$ whose closures touch both $\overline{\partial \BD\setminus \alpha}$ and $\SCB_{t-s}(\alpha)$.}
\label{fig:ball_from_a_segment}
\end{figure}
\begin{lemma}\label{lemma approx balls from segments with unif explo}
	We have for all $t \in \BQ_{\ge 0}$ and all $\alpha \in \SCS'$, a.s.,
	\begin{equation*}
	\overline{\bigcup_{s\in(0,t) \cap \BQ} \widetilde{\SCB}^\partial_s(\SCB_{t-s} (\alpha)) } = \SCB_t(\alpha),
	\end{equation*}
	where the union is non-decreasing as $s\downarrow 0$. Moreover,
	\begin{equation*}
	\overline{\bigcup_{s\in(0,t) \cap \BQ} \left( \widetilde{\SCB}^\partial_s(\SCB_{t-s} (\alpha)) \setminus  \SCB_{t-s} (\alpha) \right)}  \supset \partial \SCB_t (\alpha).
	\end{equation*}
	{The same is true if we replace the segment $\alpha$ by a loop $\SCL \in \Gamma$.}
\end{lemma}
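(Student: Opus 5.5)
We prove the two inclusions separately and then the boundary statement. The inclusion $\subseteq$ comes from passing to the limit in graph-distance inclusions. For $\supseteq$ we show that, as $s\downarrow 0$, the extra pieces that $\SCB^\partial_s(\SCB_{t-s}(\alpha))$ picks up near $\overline{\partial\BD\setminus\alpha}$ have vanishing diameter; this is where \Cref{prop BCLE rho to zero} enters, through \Cref{lem:good_convergence}. The case of a loop $\SCL$ is handled identically, replacing $\alpha$ by $\SCB_{t-s}(\SCL)\cap\partial\BD$.

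\emph{Monotonicity and the inclusion $\subseteq$.} First we show $\widetilde{\SCB}^\partial_s(\SCB_{t-s}(\alpha))\subseteq \SCB_t(\alpha)$. At the discrete level, take a loop $\SCL'$ at $D^{\kappa_n,\partial}$-distance at most $\ka_{\kappa_n}s$ from $\SCB^{\kappa_n}_{\ka_{\kappa_n}(t-s)}(\alpha)$, along a path that avoids $\partial\BD$ and stays in a component not meeting $\overline{\partial\BD\setminus\alpha}$. Such a loop is at $D^{\kappa_n}$-distance at most $\ka_{\kappa_n}t$ from $\alpha$, since the only shortcut $D^{\kappa_n,\partial}$ allows is through $\partial\BD$, i.e.\ through $\alpha$ itself.

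To pass this to the limit we need some care, because the connected components of $\SCB^\partial_s(\SCB_{t-s}(\alpha))\setminus\SCB_{t-s}(\alpha)$ are not obviously limits of discrete components. We will use \eqref{eq cv geodesics from balls} together with \Cref{lemma one point}. For a rational $y$ whose loop lies in a kept component $C$, the geodesic $K^\partial_{\SCB_{t-s}(\alpha),\SCL(y)}$ attaches to $\SCB_{t-s}(\alpha)\cup\partial\BD$ at exactly one point. Since $C$ does not meet $\overline{\partial\BD\setminus\alpha}$, that point lies in $\SCB_{t-s}(\alpha)$ and away from $\partial\BD\setminus\alpha$. Hence for $n$ large the discrete geodesics do not use $\partial\BD\setminus\alpha$, so $D(\alpha,\SCL(y))\le t$. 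Taking closures over such $y$ and using density of loops (\Cref{lemma density of loops} applied in each component, via \Cref{lem:good_convergence}) gives the inclusion.

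Monotonicity in $s$ follows the same way from $\SCB^\partial_{s'}(\SCB_{t-s'}(\alpha))\subseteq \SCB^\partial_{s}(\SCB_{t-s}(\alpha))$ for $s'<s$. This inclusion holds at the discrete level and passes to the limit.

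\emph{The inclusion $\supseteq$.} Fix a rational $y$ with $D(\alpha,\SCL(y))\le t$, or more precisely with $\SCL(y)\subseteq \SCB_t(\alpha)$. By density, it suffices to show that $\SCL(y)$ lies in $\widetilde{\SCB}^\partial_s(\SCB_{t-s}(\alpha))$ for some small $s$. If $y\in\SCB_{t-s}(\alpha)$ for some $s$, we are done. Otherwise let $C_s$ be the component of $\BD\setminus\SCB_{t-s}(\alpha)$ containing $y$. By \Cref{lem:good_convergence}, inside $C_s$ the set $\SCB^\partial_s$ is a uniform exploration run for time $s$ from $\partial C_s$. Map $C_s$ to $\BD$ by $f_s^{-1}$. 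By \Cref{prop BCLE rho to zero}, the explored region near $\partial\BD$ at time $s$ consists of small pieces, and the unexplored component $C'_s$ containing $f_s^{-1}(y)$ converges to $\BD$.

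The components of $C_s\setminus\overline{C'_s}$ that touch both $\overline{\partial\BD\setminus\alpha}$ and $\SCB_{t-s}(\alpha)$ are the pink regions $f_s(A_s)$ and $f_s(B_s)$ of \Cref{fig:ball_from_a_segment}. The main obstacle is to show that their Euclidean diameters tend to $0$, uniformly enough to be combined with the discrete approximation. The plan is as follows:
\begin{itemize}
\item use that $\partial C_s\cap\partial\BD$ and $\partial C_s\cap \SCB_{t-s}(\alpha)$ meet at only two points, near the endpoints of the arc of $\partial\BD$ adjacent to $\alpha$;
\item use that $f_s$ is uniformly continuous up to the boundary in the Carath\'eodory limit, since $C_s$ converges as $s\downarrow0$, being a monotone limit;
\item invoke the last assertion of \Cref{prop BCLE rho to zero}, giving boundary points of $C'_s$ within $\delta$ of every point of $\partial\BD$.
\end{itemize}
Then every other component is kept, and by the geodesic identity at the discrete level the loop $\SCL(y)$ is reached within time $s$ of $\SCB_{t-s}(\alpha)$. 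This shows $\SCL(y)\subseteq\widetilde{\SCB}^\partial_s(\SCB_{t-s}(\alpha))$ with probability $1-o(1)$ as $s\to0$. Monotonicity upgrades this to an almost sure statement.

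\emph{The boundary statement.} Every point of $\partial\SCB_t(\alpha)$ is a limit of points of loops at distance exactly close to $t$. Such loops are not contained in $\SCB_{t-s}(\alpha)$ but are contained in kept components for small $s$, by the previous step. This gives $\supset\partial\SCB_t(\alpha)$.
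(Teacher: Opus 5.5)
Your proof of $\widetilde{\SCB}^\partial_s(\SCB_{t-s}(\alpha))\subseteq\SCB_t(\alpha)$, via \eqref{eq cv geodesics from balls} and \Cref{lemma one point}, is the paper's argument. The reverse inclusion and the boundary statement, however, are circular.

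\emph{The reduction to loops.} You reduce ``by density'' to rational $y$ with $\SCL(y)\subseteq\SCB_t(\alpha)$. For the boundary statement you assert that every point of $\partial\SCB_t(\alpha)$ is a limit of points of loops at distance close to $t$. Both steps presuppose that $\SCB_t(\alpha)$ is the closure of the interiors of the loops it contains. At this stage $\SCB_t(\alpha)$ is only a Hausdorff limit of discrete balls. That density statement is \Cref{lemma right-continuity at rational times and density of loops}, and its boundary part is deduced from the second assertion of the very lemma you are proving. Without it you only reach the closure of the loops contained in $\SCB_t(\alpha)$. You miss possible boundary points arising as limits of thin or shrinking discrete loops, which is exactly what this lemma must control.

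\emph{The missing idea: argue with points, not loops.} By the discrete inclusion recorded just before the lemma, $\SCB_t(\alpha)\subseteq\SCB^\partial_s(\SCB_{t-s}(\alpha))$ for every $s$. The right-hand side is a union of uniform explorations (\Cref{lem:good_convergence}) in which loops are dense (\Cref{lemma density of loops}). Take $z_0$ outside the closure of $\bigcup_s\SCB_{t-s}(\alpha)$, and let $C\subseteq C_s$ be the corresponding components containing $z_0$. Then every point of $\SCB_t(\alpha)\cap\overline C$ lies in one of three places:
\begin{itemize}
\item in $\SCB_{t-s}(\alpha)$;
\item in a kept component;
\item in one of the at most two components touching both $\overline{\partial\BD\setminus\alpha}$ and $\SCB_{t-s}(\alpha)$.
\end{itemize}
Everything then reduces to showing that these last components shrink. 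Your claim ``every other component is kept'' is also false as written. Components touching only $\overline{\partial\BD\setminus\alpha}$ are discarded. To exclude them you need a connectedness argument: points of $\SCB_t(\alpha)$ are joined to $\SCB_{t-s}(\alpha)$ inside $\SCB^\partial_s(\SCB_{t-s}(\alpha))$.

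\emph{The shrinking step.} Your plan here does not work as stated. Carath\'eodory convergence of $C_s$ only gives locally uniform convergence of $f_s$ on compact subsets of $\BD$, with no control up to $\partial\BD$. Moreover, $\partial C_s$ is built from $\CLE_4$ loops and has no uniform regularity, so ``uniform continuity of $f_s$ up to the boundary'' is unjustified. The paper avoids boundary behaviour entirely:
\begin{itemize}
\item In the mapped picture, \Cref{prop BCLE rho to zero} shows that the two explored pieces adjacent to the images of the two junction points have diameter tending to $0$. Hence their harmonic measure seen from $0$ tends to $0$.
\item By conformal invariance, their images under $f_s$ have small harmonic measure seen from $z_0$ in $C_s$, hence also in $C\subseteq C_s$.
\item Since these images are connected and attached to the boundary, a Beurling-type estimate turns small harmonic measure into small diameter.
\end{itemize}
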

\begin{proof}
	\stepx{step:approx-first}{The first inclusion} Let us first show that 
	\begin{equation}\label{eq first inclusion approx balls from segments}
		\forall s \in (0,t) \cap \BQ, \quad \widetilde{\SCB}^\partial_s(\SCB_{t-s} (\alpha))  \subseteq \SCB_t(\alpha).
	\end{equation}
	Let $s \in (0,t) \cap \BQ$. Take a loop $\SCL(x) \subseteq C$ with $x \in C \cap \BQ^2$, where $C$ is a connected component of $ \SCB^\partial_s(\SCB_{t-s} (\alpha))\setminus \SCB_{t-s}(\alpha)$ such that $C \cap  \overline{\partial \BD\setminus \alpha} = \emptyset$. 
	
	Then, the compact set $K^\partial_{\SCB_{t-s}(\alpha), \SCL(x)}$ is connected (as a Hausdorff limit of connected sets). It is a subset of $ \overline{\widetilde{\SCB}^\partial_s(\SCB_{t-s} (\alpha)) \setminus \SCB_{t-s} (\alpha) }$. It intersects $\partial \BD \cup \SCB_{t-s} (\alpha)$ and $C$, so it intersects $\partial \SCB_{t-s}(\alpha) $. By~\Cref{lemma one point}, we deduce that the only point of $K^\partial_{\SCB_{t-s}(\alpha), \SCL(x)} \cap (\partial \BD \cup \SCB_{t-s} (\alpha))$ is in $\partial \SCB_{t-s}(\alpha)$ (and is not in $\overline{\partial\BD \setminus \alpha}$). As a result, by~\eqref{eq cv geodesics from balls} we deduce that for $n$ large enough, $K^{\kappa_n, \partial}_{\SCB_{\ka_{\kappa_n} (t-s)}^{\kappa_n} (\alpha), \SCL^{\kappa_n}(x)}$ does not intersect $\overline{\partial\BD \setminus \alpha}$.  

Moreover,  since $\SCB_{\ka_{\kappa_n} s}^{\kappa_n,\partial}(\SCB_{\ka_{\kappa_n} (t-s)}^{\kappa_n}(\alpha))$ converges to $\SCB_s^{\partial}(\SCB_{t-s}(\alpha))$ as $n \to \infty$ in the Hausdorff sense by Lemma~\ref{lem:good_convergence},  we obtain that  $\SCL^{\kappa_n}(x) \subseteq \SCB_{\ka_{\kappa_n} s}^{\kappa_n,\partial}(\SCB_{\ka_{\kappa_n}(t-s)}^{\kappa_n}(\alpha))$ for all $n$ large enough.  In particular,  we have that the $D^{\kappa_n,\partial}$ distance between $\SCL^{\kappa_n}(x)$ and $\alpha$ is at most $\ka_{\kappa_n} t$ for all $n$ large enough.  Therefore,  since $K_{\SCB_{\ka_{\kappa_n}(t-s)}^{\kappa_n,\partial}(\alpha),\SCL^{\kappa_n}(x)}^{\kappa_n,\partial}$ does not intersect $\overline{\partial \BD \setminus \alpha}$ for $n$ large enough,  it follows that the distances between $\SCL^{\kappa_n}(x)$ and $\SCB_{\ka_{\kappa_n}(t-s)}^{\kappa_n}(\alpha)$ with respect to the metrics $D^{\kappa_n,\partial}$ and $D^{\kappa_n}$ are the same for such $n$.  Combining,  we obtain that $\SCL^{\kappa_n}(x)$ is at distance at most $\ka_{\kappa_n} t$ from $\alpha$ with respect to $D^{\kappa_n}$ for all $n$ large enough. Thus, $\SCL(x) \subseteq  \SCB_t(\alpha)$, hence~\eqref{eq first inclusion approx balls from segments}.
	
\stepx{step:approx-second}{The reverse inclusion} Next, let us show the other inclusion. Let $\SCB^{ -}_t(\alpha)$ be the closure of the increasing union of the $\SCB_{t-s}(\alpha)$ as $s \downarrow 0$. Let $z_0 \in  \BD_\BQ \setminus \SCB^{ -}_t(\alpha)$ and let $C$ be the connected component of $\BD \setminus \SCB^{ -}_t(\alpha)$ containing $z_0$. Similarly, let $C_{s}$ (resp.\ ${C}'_s$) be the connected component of $\BD \setminus \SCB_{t-s}(\alpha)$ (resp.\ $\BD \setminus \widetilde{\SCB}^\partial_{s}(\SCB_{t-s}(\alpha))$) containing $z_0$.
	
	\stepx{step:approx-small}{Smallness of the added components} To conclude the proof of the two points of the lemma, it is enough to show that any point in $\SCB_t(\alpha) \cap \overline{C}$ is in 
	\begin{equation*}
	\overline{\bigcup_{s\in(0,t) \cap \BQ} \left( \widetilde{\SCB}^\partial_s(\SCB_{t-s} (\alpha)) \setminus  \SCB_{t-s} (\alpha) \right)}.
	\end{equation*}
	To see this, it is enough to see that the diameters of the (at most two) connected components of 
	\begin{equation*}
	C_{s} \setminus \overline{{C}'_s}
	\end{equation*}
	whose closures touch both $\overline{\partial \BD \setminus \alpha}$ and $\partial \SCB_{t-s}(\alpha)$ go to zero. Let $f_{s} \colon \BD \to C_{s}$ be the unique conformal mapping such that $f_{s}(0)=  z_0$ and $f'_{s}(0)>0$. Let us write
	\begin{equation*}
	\beta_s \defeq f_s^{-1}(\overline{\partial \BD \setminus \alpha} \cap \overline{C_s}), 	
	\end{equation*}
	and let $a_s, b_s \in \partial \BD$ be the two endpoints of the segment $\beta_s$.
	
	Then, the two connected components of $C_{s} \setminus \overline{C'_s}$ whose closures touch both $\overline{\partial \BD \setminus \alpha}$ and $\partial \SCB_{t-s}(\alpha)$ are the images under $f_s$ of the two connected components $A_s$ and $B_s$ of $\BD \setminus \overline{\check{C}_s(0)}
	$ such that $a_s \in \overline{A_s} $ and $b_s \in \overline{B_s}$, where $\check{C}_s(0)
	$ is the branch targeting the origin of a uniform exploration starting from $\partial \BD$. By \Cref{prop BCLE rho to zero}, we know that the diameters of $A_s$ and $B_s$ go to zero as $s \downarrow 0$. So, the probability that a planar Brownian motion starting at zero and killed in $\partial \BD$ hits $A_s$ or $B_s$ goes to zero as $s\downarrow 0$. Using the fact that $C$ is the interior of the decreasing intersection of the $C_s$'s as $s\downarrow 0$ we see that the diameters of $f_{s}(A_s)$ and $f_{s}(B_s)$ go to zero as $s\downarrow 0$ in probability (here we use that $f_s(A_s)$ and $f_s(B_s)$ are connected sets whose closures meet $\partial \BD$, so that by a Beurling-type estimate their diameters are controlled by their harmonic measure seen from $z_0$). Indeed, since $C\subseteq C_s$ the (conditional) probability that a planar Brownian motion started at $z_0$ and stopped when it exits $C$ hits $f_{s}(A_s)$ or $f_{s}(B_s)$ is smaller than or equal to the probability that the Brownian motion stopped when it exits $C_s$ hits $f_{s}(A_s)$ or $f_{s}(B_s)$, and the latter one goes to zero as $s\downarrow 0$ by conformal invariance of the Brownian motion. See~\Cref{fig:ball_from_a_segment}. 
	
	\stepx{step:approx-loop}{The case of a loop} In the case where $\alpha$ is replaced by a loop $\SCL \in \Gamma$, we first use the fact that $\SCB_t(\SCL) = \SCB^\partial_t(\SCL)$ for all $t {<}D(\SCL, \partial \BD)$. (The case $t = D(\SCL, \partial \BD)$ needs no treatment: $t$ being a fixed rational number and $D(\SCL, \partial \BD)$ having a continuous law, it a.s.\ does not occur.) {For $t > D(\SCL, \partial \BD)$, the ball $\SCB_t(\SCL)$ has already hit $\partial \BD$, so we have to do the same reasoning as above. More precisely, the exact same reasoning as above shows the analog of~\eqref{eq first inclusion approx balls from segments}, i.e., that
	\[
	\forall s \in (0,t)\cap \BQ, \quad \widetilde{\SCB}^\partial_s(\SCB_{t-s} (\SCL))  \subseteq \SCB_t(\SCL).
	\]
	Then, to show the other inclusion, we define $\SCB^-_t(\SCL)$ as the closure of the increasing union of the $\SCB_{t-s}(\SCL)$ as $s\downarrow0$. We take as before $z_0 \in \BD_\BQ \setminus \SCB_t^-(\SCL)$ and we look at the connected component $C$ of $\BD \setminus \SCB_t^-(\SCL)$ containing $z_0$. Similarly, let $C_{s}$ (resp.\ $C'_s$) be the connected component of $\BD \setminus \SCB_{t-s}(\SCL)$ (resp.\ $\BD \setminus \widetilde{\SCB}^\partial_s(\SCB_{t-s}(\SCL))$) containing $z_0$. It is enough to show that any point in $\SCB_t(\SCL) \cap \overline{C}$ is in
	\begin{equation*}
		\overline{\bigcup_{s\in(0,t) \cap \BQ} \left( \widetilde{\SCB}^\partial_s(\SCB_{t-s} (\SCL)) \setminus  \SCB_{t-s} (\SCL) \right)}.
	\end{equation*}
	Because $t>D(\SCL, \partial \BD)$, we know that for $s<t-D(\SCL, \partial \BD)$, the set $C_s$ is simply connected, and that its boundary is either included in $\partial \SCB_{t-s}(\SCL)$ or is the union of two arcs, one of them being included in $\partial \SCB_{t-s}(\SCL)$ and the other in $\partial \BD$. Thus, as before, it is enough to see that the diameters of the (at most two) connected components of 
	\begin{equation*}
		C_{s} \setminus \overline{C'_s}
	\end{equation*}
	whose closures touch both $\overline{\partial \BD \setminus \SCB_{t-s}(\SCL)}$ and $\partial \SCB_{t-s}(\SCL)$ go to zero. The exact same reasoning as in the case of a segment $\alpha$ yields this result.} This ends the proof of the lemma.
\end{proof}

\subsection{Definition of metric balls for all times: Skorokhod convergence} \label{subsec:skorokhod}
\begin{lemma}\label{lemma Skorokhod tightness}
	For all $\alpha \in \SCS'$, the sequence of the processes $(\SCB^{\kappa_n}_{\ka_{\kappa_n} t}(\alpha))_{t\ge 0}$ for $n\ge 1$ is tight in the space of c\`adl\`ag functions taking their values in the space of compact subsets of $\overline{\BD}$ with the Hausdorff topology, this space of c\`adl\`ag functions being itself equipped with the J$_1$ Skorokhod topology. The same is true for $(\SCB^{\kappa_n}_{\ka_{\kappa_n} t}(\SCL^{\kappa_n}(x)))_{t\ge 0}$ for all $x \in \BD_\BQ$.
\end{lemma}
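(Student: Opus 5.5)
We will use the standard criterion for tightness in the J$_1$ Skorokhod space $D([0,\infty), \mathcal K)$, where $\mathcal K$ is the compact metric space of compact subsets of $\overline{\BD}$ with the Hausdorff distance. Since $\mathcal K$ is compact, compact containment is automatic. It therefore suffices to control the modulus of continuity $w'$: for every $T,\varepsilon>0$ we need $\delta>0$ with
\[
\limsup_n \BP\bigl[w'_T(\SCB^{\kappa_n}_{\ka_{\kappa_n}\cdot}(\alpha),\delta)>\varepsilon\bigr]<\varepsilon .
\]

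The processes are non-decreasing for inclusion, which simplifies matters considerably. For a non-decreasing family, the Hausdorff distance between $\SCB_u$ and $\SCB_v$ ($u<v$) is at most the largest distance from a point of $\SCB_v$ to $\SCB_u$. So $w'$ can only be large if there is a time interval $[u,v]$ of length $\le\delta$ with a sizeable macroscopic increment, which is not one single jump. We will show that, with high probability uniformly in $n$, the process has at most finitely many increments of Hausdorff size $\ge\varepsilon$ on $[0,T]$. Moreover, these occur at well-separated times, and between them all increments are small.

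The plan has three steps.

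\emph{Step 1 (isolated jumps).} Fix a finite $\varepsilon/4$-net of rational points $y$. If $\SCB_u$ and $\SCB_v$ are at Hausdorff distance $\ge\varepsilon$, some ball $B_{\varepsilon/2}(y)$ is disjoint from $\SCB_u$ but meets $\SCB_v$. This means that some loop at (rescaled) distance in $(u,v]$ reaches near $y$. We control this with the boundary-point exploration: by \Cref{remark removing the boundary increases the distance}, balls from $\alpha$ are contained in $\SCB^\partial$-balls. More usefully, by \Cref{lem:good_convergence} and the domain Markov property, the growth of $\SCB^{\kappa_n}$ after time $u$, inside the complementary components, is dominated by independent $D^{\kappa_n,\partial}$-explorations from the boundaries of those components. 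These converge to uniform explorations, by \Cref{prop cv explo unif} and \Cref{cor cv Hausdorff ball boundary}.

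\emph{Step 2 (small increments over short times).} For the uniform exploration, \Cref{prop BCLE rho to zero} gives that over a short time $s$, the explored region from a boundary grows only by small pieces, apart from discovering individual loops. Applying this along a rational grid of starting times $u\in\delta\BZ\cap[0,T]$, and passing to the limit using the Hausdorff convergences in \Cref{lem:good_convergence}, we obtain the following. With probability close to one uniformly in $n$, on each grid interval the increment consists of pieces of diameter $<\varepsilon$, together with a bounded number of large loops (by local finiteness of $\CLE_4$). Each large loop is discovered at one graph step, hence at one rescaled time. Two large loops discovered within time $\delta$ of each other has probability $O(\delta)$, since discovery times of large loops converge to the uniform exploration labels, which have continuous (exponential-type) laws.

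\emph{Step 3 (Aldous-type conclusion).} Combining these, on the high-probability event we can choose a partition of $[0,T]$ with mesh $>\delta$ whose points are the (rescaled) discovery times of the large loops. On each cell, the oscillation is then $<\varepsilon$, which bounds $w'$. The case of balls from $\SCL^{\kappa_n}(x)$ is identical: before $D(\SCL,\partial\BD)$ one uses \Cref{cor uniform exploration from x} directly, and afterwards the same domination argument applies.

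The main obstacle is Step 2. It requires a uniform-in-$n$ (not just limiting) statement that short-time growth of the $D^{\kappa_n}$-ball is small except at single large loops. Growth at the portion of $\partial\BD\setminus\alpha$ is not covered by the uniform exploration, so we will need to handle it as in the proof of \Cref{lemma approx balls from segments with unif explo}. There, the only extra components near the endpoints of $\alpha$ were the two pieces $f_s(A_s)$ and $f_s(B_s)$, whose diameters vanish by \Cref{prop BCLE rho to zero}. We will first try to reduce to finitely many rational start times and invoke the convergence in law at those times, so that the limiting estimates transfer to large $n$.
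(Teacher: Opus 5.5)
Your Step 1 has the right domination mechanism: after time $u$, inside each complementary component, $X^n_{u+s}\defeq\SCB^{\kappa_n}_{\ka_{\kappa_n}(u+s)}(\alpha)$ is contained in $\SCB^{\kappa_n,\partial}_{\ka_{\kappa_n}s+1}(X^n_u)$. Pulled back to $\BD$, the latter is a fresh boundary exploration. The gap is in how you turn this into a bound on $w'$.

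You apply the domination only at deterministic grid times, and you plan to transfer limiting estimates through convergence in law at finitely many rational times. Fixed-time information cannot locate the macroscopic jumps of the prelimit process inside a grid cell. To certify that a cell is ``one large jump plus small growth'', you must control the growth just before and just after the random jump time $\sigma$, and exclude a second macroscopic jump shortly after $\sigma$. Containment of the set at the end of the cell in the dominating set does not give this.

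Several further claims are also not available:
\begin{itemize}
\item \Cref{prop BCLE rho to zero} does not say ``small pieces apart from individual loops''. It says the whole explored region is close to $\partial\BD$ at small times, with high probability.
\item Your bound on ``a bounded number of large loops'', uniformly in $n$ and each matched with a $\CLE_4$ loop, does not follow from local finiteness of $\CLE_4$. Uniform control of large $\CLE_{\kappa_n}$ loops is essentially \Cref{prop:thin-loops}, proved much later and by quite different means.
\item The times at which $X^n$ absorbs large loops converge to $D(\alpha,\cdot)$, not to uniform-exploration labels, so the no-atoms claim you rely on is not known.
\item Cells between consecutive large-jump times can be long. You would still need a small-increment estimate on short jump-free cells, which is the same difficulty again.
\end{itemize}

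The missing idea is to use the Markov property at \emph{stopping times}, which is exactly what Aldous' criterion packages. Since the space of compact subsets of $\overline{\BD}$ is compact, it suffices to show $d_{\mathrm H}(X^n_{\tau_n+\delta_n},X^n_{\tau_n})\to0$ in probability for all stopping times $\tau_n$ of $(X^n_t)$ and all $\delta_n\downarrow0$. The argument then runs as follows.
\begin{itemize}
\item Extract a subsequence along which $X^n_{\tau_n}\to K$ (almost surely, by Skorokhod). Then the uniformizing maps of the components $C_n(z)$ of $\BD\setminus X^n_{\tau_n}$ converge.
\item Since $X^n_{\tau_n}$ is a stopping set, conditionally on it the pull-back of $C_n(z)\cap\SCB^{\kappa_n,\partial}_{\ka_{\kappa_n}\delta+1}(X^n_{\tau_n})$ has the law of $\SCB^{\kappa_n,\partial}_{\ka_{\kappa_n}\delta+1}(\partial\BD)$.
\item This converges to $\SCB^\partial_\delta(\partial\BD)$, which is Hausdorff-close to $\partial\BD$ for small $\delta$ by \Cref{prop BCLE rho to zero}.
\item Only finitely many components fail to lie in $B_\varepsilon(K)$, which concludes.
\end{itemize}
No large loop ever needs to be identified. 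The growth near $\partial\BD\setminus\alpha$ that you flagged as the main obstacle is harmless: $D^{\kappa_n,\partial}$ treats $\partial\BD$ as a single vertex, so the domination already over-counts it.
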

\begin{proof}
	\stepx{step:skoro-aldous}{Aldous' tightness criterion} Let $X^n_t \defeq  \SCB^{\kappa_n}_{\ka_{\kappa_n} t}(\alpha)$. In order to prove tightness, we use Aldous' tightness criterion. See \cite[Theorem~23.11]{Kal21}. Note that the space of compact subsets of $\overline{\BD}$ equipped with the Hausdorff distance $d_\rH$ is compact. So we only need to check that for all sequences of stopping times $\tau_n$ (for the natural filtration of $(X_t^n)_{t \geq 0}$), for all sequences $\delta_n \downarrow 0$, we have
	\begin{equation}\label{eq tightness Aldous}
	d_\rH(X^n_{\tau_n+ \delta_n}, X^n_{\tau_n}) \mathop{\longrightarrow}\limits_{n\to \infty}^{(\BP)} 0.
	\end{equation}

	\stepx{step:skoro-components}{Convergence of the components at the stopping times} Since the space of compact subsets of $\overline{\BD}$ equipped with the Hausdorff topology $d_\rH$ is compact, we may take a subsequence so that $X^n_{\tau_n}$ converges in distribution to some random compact set $K$; using Skorokhod's representation theorem, we may and do assume that this convergence holds a.s. Let $\delta\in \BQ_{>0}$.
	
	For all $z \in \BD_\BQ$, let $C_n(z)$ be the connected component of $\BD \setminus X^n_{\tau_n}$ containing $z$ and $C(z)$ be the connected component of $\BD \setminus K$ containing $z$ (when it exists). Let $\phi_n^z \colon \BD \to C_n(z)$ (resp.~$\phi^z \colon \BD \to C(z)$) be the unique conformal mapping such that $\phi^z_n(0)=z$ and $(\phi^z_n)^\prime(0)>0$ (resp.~$\phi^z(0)=z$ and $(\phi^z)^\prime(0)>0$). By the convergence of $X^n_{\tau_n}$ to $K$, we get the convergence jointly in $z \in \BD_\BQ$
	\begin{equation}\label{eq cv mapping out functions at a stopping time}
		\phi^z_n \mathop{\longrightarrow}\limits_{n\to \infty}^{(\mathrm{d})} \phi^z,
	\end{equation}
	for the topology of uniform convergence on compact subsets of $\BD$.
	
	Note that $X^n_{\tau_n+ \delta}\subseteq \SCB^{\kappa_n, \partial}_{\ka_{\kappa_n} \delta +1} (X^n_{\tau_n})$, since a shortest path from $\alpha$ to a loop of $X^n_{\tau_n+\delta}$ passes through a loop of $X^n_{\tau_n}$, its remaining portion having length at most $\ka_{\kappa_n}\delta+1$, where the $+1$ accounts for the rounding of the radii to integers. Moreover, by the domain Markov property of the $\CLE_\kappa$ and because $\tau_n$ is a stopping time for the filtration generated by $(X^n_t)_{t\ge 0}$, we see that for all $z \in \BD_\BQ$, conditionally on $X^n_{\tau_n}$, the closure in $\overline{\BD}$ of the set $(\phi^z_n)^{-1}(C_n(z) \cap \SCB^{\kappa_n, \partial}_{\ka_{\kappa_n} \delta +1} (X^n_{\tau_n}))$ has the same law as $\SCB^{\kappa_n, \partial}_{\ka_{\kappa_n} \delta +1}(\partial \BD)$ (the source of the growth is all of $\partial \BD$, since $X^n_{\tau_n}$ touches $\partial \BD$ and $D^{\kappa_n,\partial}$ treats $\partial \BD$ as a single vertex), which converges in distribution to $\SCB_\delta^\partial(\partial \BD)$ as $n\to \infty$ in the Hausdorff topology. 
	
	\stepx{step:skoro-rightcont}{Right-continuity of the exploration at time zero} Moreover, {a.s.}, in the Hausdorff topology,
	\begin{equation}\label{eq continuite a droite explo unif}
		\partial \BD= \SCB^\partial_0(\partial \BD)= \lim_{t\downarrow 0} \SCB^\partial_t(\partial \BD).
	\end{equation}
	Indeed, first we note that $(\SCB^{\partial}_t(\partial \BD))_{t \geq 0}$ has the law of the collection of the explored regions in the uniform $\CLE_4$ exploration. Then, $\SCB^{\partial}_t(\partial \BD)$ is contained in the closure of the union of the connected components of $\BD \setminus \overline{C_t(0)}$, each of which touches $\partial \BD$, so every point of it lies within the maximal diameter of these components from $\partial \BD$; since $\partial \BD \subseteq \SCB^{\partial}_t(\partial \BD)$,~\Cref{prop BCLE rho to zero} implies that $\SCB^{\partial}_t(\partial \BD)$ converges to $\partial \BD$ in probability as $t \downarrow 0$ (with respect to $d_\rH$). Thus, the claim follows since the family of sets $(\SCB^{\partial}_t(\partial \BD))_{t \geq 0}$ is non-decreasing in $t$.
	
	\stepx{step:skoro-conclusion}{Conclusion} As a result, by~\eqref{eq continuite a droite explo unif}, we have for all $\varepsilon>0$,
	\begin{equation*}
	\lim_{\delta \to 0} \sup_{n\ge 1} \BP\!\left[ d_\rH\left(\overline{(\phi^z_n)^{-1}(C_n(z) \cap \SCB^{\kappa_n, \partial}_{\ka_{\kappa_n} \delta +1} (X^n_{\tau_n}))}, \partial \BD\right) \ge \varepsilon\right] =0.
	\end{equation*}
	Therefore, 
	\begin{equation*}
	{d_\rH}\left(\overline{ \partial \BD \cup (\phi^z_n)^{-1}(C_n(z) \cap (X^n_{\tau_n+\delta_n}))}, \partial \BD\right) 
	\mathop{\longrightarrow}\limits_{n\to \infty}^{(\BP)} 0.
	\end{equation*}
	Note that if a connected component $C$ of $\BD \setminus K$ is not included in $B_{\varepsilon}(K)$, then there exists $x \in C$ such that $\dist(x,K) \geq \varepsilon$, and $C$ then contains $B_\varepsilon(x) \cap \BD$, whose area is bounded from below by a constant depending only on $\varepsilon$. Since the area of $\BD$ is finite, the number of such components is finite. Therefore, combining everything, we obtain that
\begin{align*}
d_\rH(X^n_{\tau_n + \delta_n}, X^n_{\tau_n}) \to 0 \quad \text{as} \quad n \to \infty
\end{align*}
in probability. This proves~\eqref{eq tightness Aldous} and completes the proof of the lemma. The same reasoning works if we replace $\alpha$ by a loop.
\end{proof}

The remainder of the present subsection aims to prove that loops are dense in metric balls. We start with metric balls at rational times.

\begin{lemma}\label{lemma right-continuity at rational times and density of loops}
	For all $t \in \BQ_{\ge 0}$ and for all $\alpha \in \SCS'$, a.s., for all $\SCL \in \Gamma$, the compact set $\SCB_t(\alpha)$ (resp.~$\SCB_t(\SCL)$) is the closure of the union of the regions encircled by the loops of $\Gamma$ at $D$-distance at most $t$ from $\alpha$ (resp.~$\SCL$).
\end{lemma}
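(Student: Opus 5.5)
Write $E_t(\alpha)$ for the closure of the union of $\mathrm{int}(\SCL)$ over loops $\SCL\in\Gamma$ with $D(\alpha,\SCL)\le t$. The plan is to prove the two inclusions $E_t(\alpha)\subseteq\SCB_t(\alpha)$ and $\SCB_t(\alpha)\subseteq E_t(\alpha)$ separately, working under Skorokhod's representation so that~\eqref{eq cv subsequence},~\eqref{eq cv Hausdorff balls},~\eqref{eq cv subsequence boundary} and~\Cref{lem:good_convergence} hold a.s. It suffices to treat loops of the form $\SCL(x)$, $x\in\BD_\BQ$, since every loop of $\Gamma$ is of this form.

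\textbf{Inclusion $E_t(\alpha)\subseteq\SCB_t(\alpha)$.} If $D(\alpha,\SCL(y))<t$, then $\ka_{\kappa_n}^{-1}D^{\kappa_n}(\alpha,\SCL^{\kappa_n}(y))<t$ for all $n$ large. Hence $\SCL^{\kappa_n}(y)\subseteq\SCB^{\kappa_n}_{\ka_{\kappa_n}t}(\alpha)$, and by Hausdorff convergence $\SCL(y)\subseteq\SCB_t(\alpha)$. The boundary case $D(\alpha,\SCL(y))=t$ does not need to be excluded by hand. It is handled by the Hausdorff limit together with~\Cref{lemma approx balls from segments with unif explo}: from the first step of that proof, any loop lying in a kept component of $\widetilde{\SCB}^\partial_s(\SCB_{t-s}(\alpha))$ is inside $\SCB_t(\alpha)$. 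Alternatively, one can note that a.s.\ no loop has distance exactly equal to a fixed rational $t$, since the distances have continuous laws conditionally on the past (as in~\Cref{corollary D partial is a metric}).

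\textbf{Inclusion $\SCB_t(\alpha)\subseteq E_t(\alpha)$, which is the main obstacle.} The danger is that a limit ball could contain "ghost" regions: limits of long chains of small $\CLE_\kappa$ loops that do not correspond to any $\CLE_4$ loop at distance $\le t$. To rule this out I will use~\Cref{lemma approx balls from segments with unif explo}, which gives
\[
\SCB_t(\alpha)=\overline{\bigcup_s\widetilde{\SCB}^\partial_s(\SCB_{t-s}(\alpha))}.
\]
It is therefore enough to show that each $\widetilde{\SCB}^\partial_s(\SCB_{t-s}(\alpha))$ is contained in $E_t(\alpha)$, and then take closures.

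\textbf{Step 1.} Write $\widetilde{\SCB}^\partial_s(\SCB_{t-s}(\alpha))$ as $\SCB_{t-s}(\alpha)$ together with the kept components of $\SCB^\partial_s(\SCB_{t-s}(\alpha))\setminus\SCB_{t-s}(\alpha)$. By~\Cref{lem:good_convergence}, the latter is a union of independent uniform explorations of duration $s$ in the complementary components. By~\Cref{lemma density of loops}, applied in each component after conformal mapping, these explored regions are closures of unions of interiors of $\CLE_4$ loops discovered by time $s$. The first step of the proof of~\Cref{lemma approx balls from segments with unif explo} shows that each such loop, lying in a kept component, is at $D$-distance $\le t$ from $\alpha$. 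Hence the added parts lie in $E_t(\alpha)$.

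\textbf{Step 2.} The remaining piece $\SCB_{t-s}(\alpha)$ is handled by iterating the same reasoning. Either I induct on a dyadic grid of rational times, or I argue directly with the closure: by the second statement of~\Cref{lemma approx balls from segments with unif explo}, $\partial\SCB_t(\alpha)$ is covered by the closure of the added parts, and these lie in $E_t(\alpha)$. Interior points of $\SCB_t(\alpha)$ need a separate argument. Take $z\in\mathrm{int}\,\SCB_t(\alpha)$ not in $E_t(\alpha)$. Then some rational $y$ near $z$ has its loop $\SCL(y)\subseteq\SCB_t(\alpha)$, because the ball is a Hausdorff limit and the $\CLE_4$ loops surrounding rationals are dense. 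If $D(\alpha,\SCL(y))>t$, this should contradict the fact that the whole of $\SCB_t(\alpha)$ is reached by the finite-step approximations: $\SCL(y)$ would have to sit in a kept component at some stage $s$, which forces $D(\alpha,\SCL(y))\le t$.

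The same argument applies verbatim with $\SCL$ in place of $\alpha$. For $t<D(\SCL,\partial\BD)$ the statement reduces directly to~\Cref{remark balls are balls}, since $\SCB_t(\SCL)=\SCB^\partial_t(\SCL)$ there. I expect the delicate point to be making the interior argument in Step 2 rigorous: it requires checking that no region of $\SCB_t(\alpha)$ escapes the union of the approximations at all scales $s$, which is exactly what the closure identity in~\Cref{lemma approx balls from segments with unif explo} should provide.
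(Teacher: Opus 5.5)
Your proposal has two genuine gaps.

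\textbf{The interior step of $\SCB_t(\alpha)\subseteq E_t(\alpha)$.} This is where you anticipated trouble, and your mechanism there is not justified. You argue that ``$\SCL(y)$ would have to sit in a kept component at some stage $s$'', but this need not happen. A loop inside $\SCB_t(\alpha)$ may instead lie in $\SCB_{t-s}(\alpha)$, or only in the closure of $\bigcup_s\SCB_{t-s}(\alpha)$. In that case the closure identity of \Cref{lemma approx balls from segments with unif explo} says nothing about its distance. Iterating that identity also fails: each step again involves balls at times arbitrarily close to the current one, so there is no base case, and neither the dyadic induction nor the ``whole ball is reached'' argument closes.

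The missing idea is a direct use of the prelimit. Let $y\in\BD_\BQ$ lie in the interior of $\SCB_t(\alpha)$, and suppose $\SCL^{\kappa_n}(y)\not\subseteq\SCB^{\kappa_n}_{\ka_{\kappa_n}t}(\alpha)$ for infinitely many $n$.
\begin{itemize}
\item For those $n$, the open set $\mathrm{int}(\SCL^{\kappa_n}(y))$ is disjoint from that discrete ball, since regions surrounded by distinct loops are disjoint.
\item Every compact subset of $\mathrm{int}(\SCL(y))$ lies in $\mathrm{int}(\SCL^{\kappa_n}(y))$ for $n$ large (\Cref{rem:interior membership}).
\item Hence the Hausdorff limit $\SCB_t(\alpha)$ misses $\mathrm{int}(\SCL(y))\ni y$, which is a contradiction.
\end{itemize}
So $\ka_{\kappa_n}^{-1}D^{\kappa_n}(\alpha,\SCL^{\kappa_n}(y))\le t$ for $n$ large, and \eqref{eq cv subsequence boundary} gives $D(\alpha,\SCL(y))\le t$ directly. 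You essentially wrote the first half of this (``$\SCL(y)\subseteq\SCB_t(\alpha)$ because the ball is a Hausdorff limit'') but did not extract the distance bound from it. With this in hand, interior points follow by density of rationals. Boundary points are handled exactly as in your Step~1 together with the second statement of \Cref{lemma approx balls from segments with unif explo}. No induction is needed.

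\textbf{The case $D(\alpha,\SCL)=t$ in the easy inclusion.} Neither of your justifications works.
\begin{itemize}
\item The first runs the wrong way. The first step of the proof of \Cref{lemma approx balls from segments with unif explo} shows that loops in kept components lie in $\SCB_t(\alpha)$. Nothing places a loop at distance exactly $t$ in a kept component.
\item The second, that no loop sits at distance exactly $t$, is not available at this stage. \Cref{corollary D partial is a metric} only concerns the exponential variables $D^\partial(\partial\BD,\SCL(x))$. No atomlessness of $D(\alpha,\SCL(y))$ or $D(\SCL,\SCL(y))$ has been established.
\end{itemize}
The paper instead notes that such a loop lies in $\SCB_{t+\varepsilon}(\alpha)$ for every rational $\varepsilon>0$, by the strict case you already proved. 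It then concludes using the right-continuity of the balls, which comes from the J$_1$ tightness of \Cref{lemma Skorokhod tightness}.
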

\begin{proof}
	First, note that by~\eqref{eq cv subsequence boundary}, every loop which is included in $\SCB_t(\alpha)$ is at $D$-distance at most $t$ from $\alpha$. Moreover, again by~\eqref{eq cv subsequence boundary}, every loop which is at distance strictly less than $t$ from $\alpha$ is included in $\SCB_t(\alpha)$. But by right-continuity (consequence of~\Cref{lemma Skorokhod tightness}), we deduce that all the loops which are at distance at most $t$ from $\alpha$ are included in $\SCB_t(\alpha)$. As a result, the closure of the union of the domains encircled by the loops at $D$-distance at most $t$ from $\alpha$ is included in $\SCB_t(\alpha)$.
	
	It remains to prove the other inclusion. One can first see that for all $x $ in the interior of $\SCB_t(\alpha)$, there exists a sequence $(x_k)_{k\ge 1} $ in $\BQ^2 $ in the interior of $\SCB_t(\alpha)$ such that  $x_k \to x$ as $k\to \infty$. Then, $\SCL(x_k) \subseteq \SCB_t(\alpha)$ for all $k$ (because otherwise we cannot have $x_k \in \SCB_t(\alpha)$ by~\eqref{eq cv subsequence boundary}). Moreover, $x$ is arbitrarily close to the union of the regions encircled by the $\SCL(x_k)$'s. Thus, the interior of $\SCB_t(\alpha)$ is included in the closure of the union of the domains encircled by the loops at distance at most $t$ from $\alpha$.
	
	To conclude, it remains to check that any point on the boundary can be approached by loops at distance at most $t$ from $\alpha$. This is a direct consequence of the two points of~\Cref{lemma approx balls from segments with unif explo} and of the analogous property for the uniform exploration (see~\Cref{remark balls are balls}). The exact same argument works to prove the claim of the lemma with a loop $\SCL$ in place of $\alpha$, since~\Cref{remark balls are balls} and~\Cref{lemma approx balls from segments with unif explo,lemma Skorokhod tightness} still hold with $\SCL$ in place of $\alpha$.	
\end{proof}
We denote by $(\SCB_t(\alpha))_{t\ge 0}$ and $(\SCB_t(\SCL))_{t\ge 0}$ the right-continuous processes
\begin{equation*}
\SCB_t(\alpha) \defeq \bigcap_{s\ge t: s \in \BQ} \SCB_s (\alpha) \qquad \text{and}
\qquad
\SCB_t(\SCL) \defeq \bigcap_{s\ge t: s \in \BQ} \SCB_s (\SCL),
\end{equation*}

Combining the tightness of~\Cref{lemma Skorokhod tightness} with the convergence of the finite-dimensional marginals
, which identifies the J$_1$ limit, we obtain that jointly with~\eqref{eq cv subsequence},~\eqref{eq cv Hausdorff balls},~\eqref{eq cv subsequence boundary}, jointly in $\alpha \in \SCS'$ and $x \in \BD_\BQ$, 
\begin{equation}\label{eq cv Skorokhod}
	\left(\SCB^{\kappa_n}_{\ka_{\kappa_n} t}(\alpha)\right)_{t\ge 0}\mathop{\longrightarrow}\limits_{n\to \infty}^{(\mathrm{d})} \left(\SCB_t(\alpha)\right)_{t\ge 0}
	\qquad
	\text{and}
	\qquad
	\left(\SCB^{\kappa_n}_{\ka_{\kappa_n} t}(\SCL^{\kappa_n}(x))\right)_{t\ge 0}\mathop{\longrightarrow}\limits_{n\to \infty}^{(\mathrm{d})} \left(\SCB_t(\SCL(x))\right)_{t\ge 0}
\end{equation}
for the J$_1$ Skorokhod topology.
Then, by construction, $\left(\SCB_t(\alpha)\right)_{t\ge 0}$ and $\left(\SCB_t(\SCL)\right)_{t\ge 0}$ are right-continuous. It remains to check that ``loops are dense'' in $\SCB_t(\alpha)$ and $\SCB_t(\SCL)$ for all $t$.

\begin{lemma}\label{lem:density_of_loops_in_balls}
	For all $\alpha \in \SCS'$, a.s., for all $t \ge 0$, the compact set $\SCB_t(\alpha)$ is the closure of the union of the regions encircled by the loops at $D$-distance at most $t$ from $\alpha$. The same result holds if we replace $\alpha$ by a loop $\SCL \in \Gamma$.
\end{lemma}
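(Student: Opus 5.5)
The plan is to reduce the statement to the rational-time case, \Cref{lemma right-continuity at rational times and density of loops}, using the right-continuous definition $\SCB_t(\alpha)=\bigcap_{s\ge t,\,s\in\BQ}\SCB_s(\alpha)$. Fix $\alpha\in\SCS'$ and work on the almost sure event where that lemma holds simultaneously for all rational $s$. For $t\ge 0$, let $A_t$ denote the closure of the union of the regions encircled by loops at $D$-distance at most $t$ from $\alpha$. The goal is $A_t=\SCB_t(\alpha)$ for every real $t$.

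\emph{The inclusion $A_t\subseteq\SCB_t(\alpha)$.} This is immediate. If $D(\alpha,\SCL)\le t$, then $D(\alpha,\SCL)\le s$ for every rational $s\ge t$. By the rational case, $\mathrm{int}(\SCL)\subseteq\SCB_s(\alpha)$, so $\mathrm{int}(\SCL)$ lies in the intersection defining $\SCB_t(\alpha)$, which is closed.

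\emph{The inclusion $\SCB_t(\alpha)\subseteq A_t$ at interior points.} Take $z\in\SCB_t(\alpha)$. If $z$ lies in the interior of $\SCB_t(\alpha)$, approximate it by rational points $x_k$ in that interior. For every rational $s>t$ we have $x_k\in\SCB_s(\alpha)$, which forces $\SCL(x_k)\subseteq\SCB_s(\alpha)$ and hence $D(\alpha,\SCL(x_k))\le s$. Letting $s\downarrow t$ gives $D(\alpha,\SCL(x_k))\le t$, so $z\in A_t$.

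\emph{Boundary points.} Suppose $z\in\partial\SCB_t(\alpha)$. Since $\SCB_{s}(\alpha)=A_{s}$ for rational $s$, a diagonal argument gives loops $\SCL_k$ and points $z_k\in\overline{\mathrm{int}(\SCL_k)}$ with $z_k\to z$ and $D(\alpha,\SCL_k)\le s_k$, where $s_k\downarrow t$ are rational. If infinitely many $\SCL_k$ satisfy $D(\alpha,\SCL_k)\le t$, then $z\in A_t$ and we are done. Otherwise $D(\alpha,\SCL_k)\in(t,s_k]$ for all large $k$. By local finiteness of $\CLE_4$, only finitely many loops have diameter at least $\varepsilon$. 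Each of them has a fixed distance from $\alpha$, which cannot lie in $(t,s_k]$ for all large $k$. Hence $\mathrm{diam}(\SCL_k)\to0$ and $\SCL_k\to z$ in the Hausdorff sense.

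\emph{Main obstacle.} It remains to show that a point approached by shrinking loops whose distances decrease strictly to $t$ is itself approached by loops at distance at most $t$. I would argue at the discrete level, where $\SCB^{\kappa_n}_{\ka_{\kappa_n}t}(\alpha)$ is a genuine stopping set even for irrational $t$.
\begin{itemize}
\item As in the proof of \Cref{lemma Skorokhod tightness}, the loops at $D^{\kappa_n}$-distance in $(\ka_{\kappa_n}t,\ka_{\kappa_n}s]$ from $\alpha$ lie in $\SCB^{\kappa_n,\partial}_{\ka_{\kappa_n}(s-t)+1}(\SCB^{\kappa_n}_{\ka_{\kappa_n}t}(\alpha))$.
\item By the domain Markov property, \Cref{prop BCLE rho to zero} and the estimate \eqref{eq continuite a droite explo unif}, this set is Hausdorff-close to $\SCB^{\kappa_n}_{\ka_{\kappa_n}t}(\alpha)$ uniformly in $n$ as $s\downarrow t$.
\item Passing to the limit with the J$_1$ convergence \eqref{eq cv Skorokhod}, and choosing $t$ to be a continuity point or handling jump times through the left and right limits, shows that $\SCB_s(\alpha)$ is Hausdorff-close to $\SCB_t(\alpha)$.
\item More importantly, the small loops $\SCL_k$ are then attached, via pieces of geodesics of vanishing length (the sets $K^{\kappa_n,\partial}$ from balls, as in \Cref{lemma approx balls from segments with unif explo}), to loops at distance at most $t$ lying within vanishing Euclidean distance. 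This gives $z\in A_t$.
\end{itemize}
The delicate point is that the loops at distance exactly at most $t$ must be identified in the limit, not merely the limiting set. If the direct attachment argument fails, I would fall back on the structure of $\widetilde{\SCB}^\partial$ from \Cref{lemma approx balls from segments with unif explo}, applied from the side $s\downarrow t$ rather than $s\uparrow t$.

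The same argument, with \Cref{remark balls are balls} and the loop versions of \Cref{lemma approx balls from segments with unif explo,lemma Skorokhod tightness}, treats a loop $\SCL$ in place of $\alpha$.
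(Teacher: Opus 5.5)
Your preliminary steps are correct. The inclusion $A_t\subseteq\SCB_t(\alpha)$ holds, every rational point of $\SCB_t(\alpha)$ has its loop at $D$-distance at most $t$, and a point of $\SCB_t(\alpha)\setminus A_t$ must be a limit of shrinking loops with $D(\alpha,\SCL_k)\downarrow t$ strictly. You also miss a simplification: at a continuity time one has directly $\SCB_t(\alpha)=\SCB^-_t(\alpha)=\overline{\bigcup_{s<t,\,s\in\BQ}A_s}\subseteq A_t$, so the whole difficulty is at jump times. The gap is your ``main obstacle'' step, which is exactly the content of the lemma and is not proved.

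The attachment argument only shows that prelimit loops at graph distance in $(\ka_{\kappa_n}t,\ka_{\kappa_n}s]$ are Euclidean-close to $\SCB^{\kappa_n}_{\ka_{\kappa_n}t}(\alpha)$, i.e.\ to prelimit loops at graph distance at most $\ka_{\kappa_n}t$. Those loops may be microscopic and need not converge to loops of $\Gamma$ at $D$-distance at most $t$. At a jump time, J$_1$ convergence only says that subsequential limits of $\SCB^{\kappa_n}_{\ka_{\kappa_n}t}(\alpha)$ lie in $\{\SCB^-_t(\alpha),\SCB_t(\alpha)\}$; for the J$_1$ time changes $\lambda_n$, $\SCB^{\kappa_n}_{\ka_{\kappa_n}\lambda_n(t)}(\alpha)\to\SCB_t(\alpha)$. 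So ``close to the discrete ball'' yields at best ``close to $\SCB_t(\alpha)$'', which is circular. The real question is whether the limit of the discrete one-step jump contains, besides the macroscopic loops at distance $t$, an accumulation of microscopic loops away from $A_t$. Two further problems:
\begin{itemize}
\item The Aldous/Markov estimate you borrow holds in probability at fixed (stopping) times, whereas the statement requires an a.s.\ conclusion at all (random) jump times simultaneously.
\item Your fallback on $\widetilde{\SCB}^\partial$ is not spelled out, and \Cref{lemma approx balls from segments with unif explo} is only proved at fixed rational times.
\end{itemize}

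The missing idea is an identification of the jumps themselves. One must show that at every jump time $t$, for each component $C_t(z)$ of $\BD\setminus\SCB^-_t(\alpha)$, the increment $\overline{C_t(z)}\cap\SCB_t(\alpha)$ is either empty or equal to $\overline{\mathrm{int}(\SCL)}$ for a single loop $\SCL\in\Gamma$. The paper obtains this from three ingredients:
\begin{itemize}
\item the J$_1$ convergence of the jumps and of the pre-jump components together with their uniformizing maps;
\item a coupling, via the domain Markov property, in which each one-step increment of the discrete ball inside an unexplored component, mapped to $\BD$, is contained in an independent copy of $\SCB^{\kappa_n}_1(\partial\BD)\setminus\partial\BD$;
\item the convergence of the rescaled sequence of these copies, restricted to $r\overline{\BD}$, to the Poisson point process of $\SLE_4$ bubbles, which adds at most one loop at a time away from the boundary.
\end{itemize}
Some argument of this kind, controlling what the discrete jump converges to, is needed to complete your proof.
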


\begin{proof}
	\stepx{step:dens-cont}{Continuity times} For continuity times we can approximate $\SCB_t(\alpha)$ from below with rational times and apply~\Cref{lemma right-continuity at rational times and density of loops}. More precisely, let $t \ge 0$ be a continuity time of the process $\left(\SCB_t(\alpha)\right)_{t\ge 0}$. Then, we know that
	\[
	\SCB_t(\alpha) = \overline{\bigcup_{s \in [0,t] \cap \BQ} \SCB_s(\alpha)}.
	\]
	By~\Cref{lemma right-continuity at rational times and density of loops}, we deduce that
	\begin{equation}\label{eq continuity time}
	\SCB_t(\alpha) = \overline{\bigcup_{s \in [0,t] \cap \BQ} \overline{\bigcup_{\substack{\SCL\in \Gamma\\ D(\alpha, \SCL) \le s}} \mathrm{int}(\SCL)}} = \overline{\bigcup_{\substack{\SCL \in \Gamma \\ D(\alpha, \SCL) < t}} \mathrm{int}(\SCL)}.
	\end{equation}
	Moreover, there is no loop $\SCL$ such that $D(\alpha, \SCL)=t$. Indeed, assume by contradiction that there exists such a loop $\SCL \in \Gamma$. By Skorokhod's representation theorem, we may assume that~\eqref{eq cv subsequence boundary} and \eqref{eq cv Skorokhod} hold a.s. Then, a.s., there exists $\SCL^{\kappa_n} \in \Gamma^{\kappa_n}$ such that $\ka_{\kappa_n}^{-1} D^{\kappa_n}(\alpha, \SCL^{\kappa_n}) \to D(\alpha, \SCL)$. In particular, for all $\varepsilon>0$, we have $\mathrm{int}(\SCL) \subseteq \SCB_{t+ \varepsilon}(\alpha)$. By right-continuity, we deduce that $\mathrm{int}(\SCL) \subseteq \SCB_t(\alpha)$. By~\eqref{eq continuity time}, the latter is the closure of the union of the interiors of the loops at $D$-distance strictly less than $t$ from $\alpha$, each disjoint from $\mathrm{int}(\SCL)$; the non-empty open set $\mathrm{int}(\SCL)$ would then lie in the complement of the union of the interiors of all the loops of $\Gamma$, which has zero Lebesgue measure, hence empty interior. This is absurd. We thus conclude that 
	\[
	\SCB_t(\alpha) = \overline{\bigcup_{\substack{\SCL \in \Gamma \\ D(\alpha, \SCL) \le t}} \mathrm{int}(\SCL)}.
	\]
	The same reasoning works when $\alpha$ is replaced by a loop of $\Gamma$.
	
	\stepx{step:dens-jump}{Reduction to the jump times} Next, in order to prove the result for jump times, we will actually show that each jump is obtained by attaching at most one loop in each connected component of the unexplored region.
	
	The J$_1$ Skorokhod convergence gives the convergence of the jumps as a point process (convergence in the J$_1$ topology implies that of the jump times and of the corresponding jumps; see, e.g.,~\cite{Kal21}) and of the metric balls just before the jumps.
	
	More precisely, for all $t\ge 0$, let us denote by $\SCB^{\kappa_n,-}_{\ka_{\kappa_n} t}(\alpha)$ (resp.~$\SCB^{ -}_t(\alpha)$) the closure of the increasing union of $\SCB^{\kappa_n}_{\ka_{\kappa_n} s}(\alpha)$ (resp.~$\SCB_s(\alpha))$ as $s \uparrow t$. For all $z \in \BD_\BQ$, let $C^n_t(z)$ (resp.~$C_t(z)$) be the connected component of $\BD \setminus\SCB^{\kappa_n,-}_{\ka_{\kappa_n} t}(\alpha)$ (resp.~$\BD \setminus \SCB^{ -}_t(\alpha)$) containing $z$ (we set $C^n_t(z) = \emptyset $ if $ z\in \SCB^{\kappa_n, -}_{\ka_{\kappa_n} t}(\alpha)$ and $C_t(z)= \emptyset$ if $z \in \SCB^{-}_t(\alpha)$ by convention). When possible, we let $\phi^{n,z}_{t} \colon \BD \to C^n_t(z)$ (resp.~$\phi^z_t \colon \BD \to C_t(z)$) be the unique conformal mapping such that $\phi^{n,z}_{t}(0) = z$ and $(\phi^{n,z}_{t})^\prime(0)>0$ (resp.~$\phi^z_t (0)= z$ and $(\phi^z_t)^\prime(0)>0$).
	
	To end the proof of the lemma, it is enough to show that a.s., for all $t \ge 0$ and $z \in \BD_\BQ$, the set $C_t(z) \cap \SCB_t(\alpha)$ is the domain encircled by a loop of $\Gamma$ whenever $C_t(z) \neq \emptyset$. Note also that any loop in $\Gamma$ which is contained in $\SCB_t(\alpha)$ has $D$-distance from $\alpha$ at most $t$. This is the purpose of the remainder of the proof.
	
	\stepx{step:dens-conv}{Convergence of the unexplored components} By Skorokhod's representation theorem, let us assume that the convergences~\eqref{eq cv subsequence},~\eqref{eq cv subsequence boundary},~\eqref{eq cv Skorokhod} hold a.s. 
	
	By~\eqref{eq cv Skorokhod}, we know that there exist increasing (random) homeomorphisms $\lambda_n: \BR_{\ge 0} \to \BR_{\ge 0}$ such that $\lambda_n$ converges uniformly toward $x \mapsto x$ and 
	\begin{equation}\label{eq cv unif balls segments}
	\left(\SCB^{\kappa_n}_{\ka_{\kappa_n} \lambda_n(t)}(\alpha)\right)_{t\ge 0}\mathop{\longrightarrow}\limits_{n\to \infty}^{(\mathrm{a.s.})} \left(\SCB_t(\alpha)\right)_{t\ge 0}
	\end{equation}
	uniformly on compact sets of $\BR_{\ge 0}$. In particular, a.s., for all $z \in \BD_\BQ$, the function $(t,w) \mapsto \phi^{n,z}_{\lambda_n(t)}(w)$ converges uniformly on compact subsets of $\BR_{\ge 0} \times\BD$ towards $(t,w) \mapsto \phi^z_t(w)$. 
	Indeed, let us work on the event that the convergence \eqref{eq cv unif balls segments} occurs. Let $(t_n, w_n) \to (t, w) \in \BR_{\ge 0}\times \BD$. Then, the components $C_{\lambda_n(t_n)}(z)$ converges to $C_t(z)$ in the sense of Carathéodory due to \eqref{eq cv unif balls segments}. Moreover, recall that the convergence $C_{\lambda_n(t_n)}(z)$ to $C_t(z)$ in the sense of Carathéodory is equivalent to the convergence of $ \phi^{n,z}_{\lambda_n(t_n)}$ to $\phi^z_t$ uniformly on compact subsets of $\BD$. Therefore, $\phi^{n,z}_{\lambda_n(t_n)}(w_n) \to \phi^z_t(w)$ as $n\to \infty$.
	
	Let $\varepsilon\in (0,1)$. For all $j\in \BZ_{\ge 1}$, let $t_j$ (resp.\ $t^n_j$) be the $j$-th time $t$ such that $(\phi^z_t)^{-1}\left( C_t(z) \cap \SCB_{t}(\alpha)\right) \cap (1-\varepsilon)\BD \neq \emptyset$ (resp.\ $(\phi^{n,z}_{t})^{-1} \left( C^n_t(z)\cap \SCB^{\kappa_n}_{\ka_{\kappa_n} t}(\alpha) \right) \cap (1-\varepsilon)\BD \neq \emptyset$). By convention, this time is infinite when it does not exist. By \eqref{eq cv unif balls segments} and by the convergence of $\phi^{n,z}_{\lambda_n(t)}$, we get the almost sure convergence of $t^n_j$ to $t_j$ as $n\to \infty$.
	
	\stepx{step:dens-domination}{Stochastic domination by the balls from the boundary} Furthermore, let $\alpha^n_k \defeq (\phi^{n,z}_{k/\ka_{\kappa_n}})^{-1} ( \partial C^n_{k/\ka_{\kappa_n}}(z) \cap \SCB^{\kappa_n}_{k-1}(\alpha))$ for all $k\ge 1$ and $\alpha^n_0= \alpha$ by convention. By the domain Markov property of the $\CLE_\kappa$, we know that for all integers $k\ge 0$, the conditional law of $(\phi^{n,z}_{k/\ka_{\kappa_n}})^{-1} ( C^n_{k/\ka_{\kappa_n}}(z) \cap \SCB^{\kappa_n}_{k}(\alpha) )$ given $\SCB^{\kappa_n}_{k-1}(\alpha)$ is that of $\SCB^{\kappa_n}_1(\alpha^n_k) \setminus \partial \BD$. In particular, it is stochastically dominated by a copy $B^n_k$ of $\SCB^{\kappa_n, \partial}_1(\partial \BD) \setminus \partial \BD = \SCB^{\kappa_n}_1 (\partial \BD) \setminus \partial \BD$. Note that by induction, we can construct the $B^n_k$'s so that they are independent. Therefore, there is a coupling of the $B^n_k$'s with the $(\phi^{n,z}_{k/\ka_{\kappa_n}})^{-1} ( C^n_{k/\ka_{\kappa_n}}(z) \cap \SCB^{\kappa_n}_{k}(\alpha) )$'s for $k \in \BZ_{\ge 0}$ such that the $B^n_k$'s are i.i.d.\ with the same law as $\SCB^{\kappa_n, \partial}_1(\partial \BD) \setminus \partial \BD = \SCB^{\kappa_n}_1 (\partial \BD) \setminus \partial \BD$ and such that all the $\CLE_{\kappa_n}$ loops forming $(\phi^{n,z}_{k/\ka_{\kappa_n}})^{-1} ( C^n_{k/\ka_{\kappa_n}}(z) \cap \SCB^{\kappa_n}_{k}(\alpha) )$ are loops of $B^n_k$. For all $t \ge 0$ such that $\ka_{\kappa_n} t \notin \BZ$, we set $B^n_{\ka_{\kappa_n} t} = \emptyset$.
	
	\stepx{step:dens-ppp}{Identification of the jumps via the bubble process} Besides, observe that the convergence~\eqref{eq cv Skorokhod} holds in the particular case $\alpha=\partial \BD$ and that this yields the convergence in distribution for all $r \in (0,1)$
	\begin{equation}\label{eq cv PPP}
		\left( r\overline{\BD} \cap B^n_{\ka_{\kappa_n}t}\right)_{t\ge 0}  \mathop{\longrightarrow}\limits_{n\to \infty}^{(\mathrm{d})}
		\left( r\overline{\BD} \cap \overline{\mathop{\mathrm{int}}(\gamma_t)} \right)_{t\ge 0},
	\end{equation}
	where the point process takes its values in the set of closed subsets of $\BD$, equipped with the Hausdorff topology, where $(\gamma_t)_{t\ge 0}$ is the PPP of $\SLE_4$ bubbles rooted uniformly at random on $\partial \BD$ and where the empty set is seen as a cemetery point (with the convention that $\gamma_t = \emptyset$ off the countably many atom times). Indeed, the PPP on the right-hand side comes from the fact that $\SCB^\partial_t(\partial \BD) = \SCB_t(\partial \BD)$ is the uniform exploration of the $\CLE_4$. 
	
	As a result, combining the above remark with the Hausdorff convergences of the loops stated in \eqref{eq cv subsequence boundary}, we deduce that for all $j\ge 1$ for all $r \in (1-\varepsilon, 1)$, with probability $1-o(1)$ as $n\to \infty$, there is one loop of $(\phi^{n,z}_{t^n_j})^{-1} \left( \Gamma^{\kappa_n}\vert_{C^n_{t^n_j}(z)\cap \SCB^{\kappa_n}_{\ka_{\kappa_n} t^n_j}(\alpha) }\right)$ that intersects $r\overline{\BD}$ and the image of this loop by $\phi^{n,z}_{t^n_j}$ converges a.s.\ to one loop $\SCL_j$ of $\Gamma$. In particular, we get that $\overline{\mathrm{int}(\SCL_j)} \subseteq  \overline{C_{t_j}(z)} \cap \SCB_t(\alpha)$.
		
	Finally, \eqref{eq cv PPP} and the stochastic domination described above imply that $\overline{C_{t_j}(z)} \cap \SCB_{t_j}(\alpha) = \overline{\mathrm{int}(\SCL_j)}$. This concludes the proof of the lemma.
\end{proof}

\section{Large non-simple CLE loops are fat}
\label{sec:big non simple CLE loops are fat}

This section can be skipped on a first reading. Its goal is to establish a strengthened convergence for the loops of a non-simple CLE as $\kappa' \downarrow 4$ (\Cref{lemma cv Hausdorff union of loops}), which will be a crucial tool in the subsequent sections for verifying that our subsequential limiting metric satisfies the axioms of~\Cref{def:weak_axioms}. Throughout this section, we will denote the parameter of the non-simple CLE by $\kappa'\in (4,8)$ and we will write $\kappa=16/\kappa'$ for the parameter of the corresponding simple CLE. The sequence which was denoted by $\kappa_n$ in the previous sections will be denoted by $\kappa'_n$ here. We already know from~\eqref{eq cv subsequence boundary} that the loop surrounding any given rational point converges in the Hausdorff topology as $\kappa' \downarrow 4$. We will strengthen this convergence by showing that the closure of the union of loops intersecting a given compact subset of $\overline{\BD}$ also converges in the Hausdorff topology as $\kappa' \downarrow 4$. Proving this strengthening is highly non-trivial because it requires us to rule out the existence of macroscopic but ``skinny'' loops uniformly in $\kappa'$ as $\kappa' \downarrow 4$. The bulk of this section is therefore devoted to proving~\Cref{prop:thin-loops}, a result of independent interest which states that large non-simple CLE loops are ``fat'', meaning they must contain a large Euclidean ball.

\begin{lemma}
	\label{lemma cv Hausdorff union of loops}
	Let $K \subset \overline{\BD}$ be a connected compact subset of $\overline{\BD}$. For each $n\ge 1$, let $F_n$ (resp.\ $F$) be the closure of the union of $K$ and the domains encircled by the loops of $\Gamma^{\kappa'_n}$ (resp.\ $\Gamma^4$) which intersect $K$. Then, jointly with~\eqref{eq cv subsequence boundary} and~\eqref{eq cv Skorokhod}, we have
	\[
	F_n \mathop{\longrightarrow}\limits_{n\to \infty}^{(\mathrm{d})} F,
	\]
	in the Hausdorff topology.
\end{lemma}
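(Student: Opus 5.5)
By compactness of the space of compact subsets of $\overline{\BD}$, the sequence $(F_n)$ is tight jointly with~\eqref{eq cv subsequence boundary} and~\eqref{eq cv Skorokhod}. So I would extract a further subsequence along which $F_n \to F'$ in law, and use Skorokhod's representation theorem to make all these convergences hold almost surely. It then suffices to show $F' = F$ a.s. Both sets contain $K$ and are closures of unions of loop interiors. I would compare them via rational points and the a.s.\ Hausdorff convergence $\SCL^{\kappa'_n}(x) \to \SCL(x)$ for $x \in \BD_\BQ$, together with the elementary fact recalled in~\Cref{subsec:caratheodory} about interiors of converging Jordan curves.

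\emph{Inclusion $F \subseteq F'$.} Take a loop $\SCL \in \Gamma^4$ with $\SCL \cap K \neq \emptyset$. Pick rational $x \in \mathrm{int}(\SCL)$, so that $\SCL = \SCL(x)$. Since $\CLE_4$ loops do not touch each other and $K$ is connected, I expect that a.s.\ either $K \subseteq \overline{\mathrm{int}(\SCL)}$ or $K$ crosses $\SCL$. In the second case $K$ meets both $\mathrm{int}(\SCL)$ and the exterior of $\overline{\mathrm{int}(\SCL)}$, hence contains compact pieces on each side. By the interior-membership fact and Hausdorff convergence of $\SCL^{\kappa'_n}(x)$, these pieces lie respectively inside and outside $\mathrm{int}(\SCL^{\kappa'_n}(x))$ for $n$ large. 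Connectedness of $K$ then forces $\SCL^{\kappa'_n}(x) \cap K \neq \emptyset$, so $\overline{\mathrm{int}(\SCL^{\kappa'_n}(x))} \subseteq F_n$, and passing to the limit gives $\overline{\mathrm{int}(\SCL)} \subseteq F'$. The degenerate case where $K$ touches $\SCL$ only from one side, or where $K \subseteq \overline{\mathrm{int}(\SCL)}$, needs a small separate argument. Alternatively, I would handle it by approximating $K$ by slightly fattened connected compacts $K^\delta$ and using monotonicity in $K$; the resulting $\delta$-errors vanish by the same fatness estimate used below. Since $F$ is the closure of $K$ together with these interiors, this gives $F \subseteq F'$.

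\emph{Inclusion $F' \subseteq F$; this is the main obstacle.} Suppose $z \in F' \setminus F$. Choose a rational $y$ and $\varepsilon > 0$ with $\overline{B_{2\varepsilon}(y)} \cap F = \emptyset$ and $z \in B_\varepsilon(y)$. Then for infinitely many $n$ there is a loop $\SCL_n \in \Gamma^{\kappa'_n}$ that meets both $K$ and $B_\varepsilon(y)$, so $\mathrm{diam}(\SCL_n) \geq \varepsilon' > 0$ with $\varepsilon'$ depending only on the distance from $y$ to $K$. This is exactly where skinny macroscopic loops must be excluded, and I would invoke~\Cref{prop:thin-loops}. With probability at least $1-\eta$, uniformly in $n$, every loop of diameter at least $\varepsilon'$ surrounds a Euclidean ball of radius $\delta = \delta(\eta, \varepsilon')$. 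I would refine this slightly: the loop's filled region is $\delta$-fat near each of its macroscopic parts, or at least such a loop surrounds a ball of radius $\delta$ centred within distance $o(1)$ of any chosen point of it at scale $\varepsilon'$. Then $\SCL_n$ surrounds a rational point $x$ from a finite $\delta/2$-net, and $\SCL_n = \SCL^{\kappa'_n}(x)$. Passing to a further subsequence on which $x$ is constant, we get $\SCL^{\kappa'_n}(x) \to \SCL(x)$ in the Hausdorff sense, so $\SCL(x)$ meets $K$ (the limit of intersection points) and comes within $\varepsilon$ of $y$. Hence $\overline{\mathrm{int}(\SCL(x))} \subseteq F$ intersects $\overline{B_{2\varepsilon}(y)}$, a contradiction. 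Letting $\eta \to 0$ gives the claim a.s.

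The delicate part is that~\Cref{prop:thin-loops} as stated only provides one fat ball per loop, which already suffices to identify $\SCL_n$ with some $\SCL^{\kappa'_n}(x)$ for $x$ in a finite net. Hausdorff convergence of that loop then carries its intersections with $K$ and with $B_\varepsilon(y)$ to the limit. So in fact the single-ball version is enough, and the real work is~\Cref{prop:thin-loops} itself.
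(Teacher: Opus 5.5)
Your inclusion $F'\subseteq F$ is fine and uses \Cref{prop:thin-loops} the same way the paper does. One point of care: the fatness event has probability at least $1-\eta$ for each $n$ separately, so run the contradiction on $\limsup_n$ of these events. That works because the offending loops exist for all large $n$.

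The gap is in $F\subseteq F'$. The ``degenerate case'' you defer is the whole difficulty of that inclusion. Suppose that with positive probability some loop $\SCL(x)$ of $\Gamma^4$ meets $K$ from one side only. An example is $K\subseteq\overline{\mathrm{int}(\SCL(x))}$ with $K\cap\SCL(x)\neq\emptyset$, which your expected dichotomy explicitly allows. Then nothing in your argument prevents $\SCL^{\kappa'_n}(x)$ from missing $K$ for all large $n$, for instance by surrounding it. In that case $F'$ need not contain $\overline{\mathrm{int}(\SCL(x))}$, although $F$ does. Hausdorff convergence of the individual loops says nothing about this.

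What you need is that a.s.\ every $\CLE_4$ loop meeting $K$ has points of $K$ strictly on both of its sides. The paper proves exactly this via the $\SLE_4(-2)$ exploration tree. It reduces to a chordal $\SLE_4$ hitting a deterministic connected compact set and applies \cite[Lemmas~4.4 and~4.5]{miller2017intersections}. At the stopping times when the curve first comes $2^{-k}$-close to $K$, infinitely often the image of $K$ under the centred Loewner map lies in a cone $\{\arg z\in(\delta,\pi-\delta)\}$, which forces points of $K$ onto both sides. With this non-tangency, the selection ``meets $K$'' stabilises for the finitely many grid loops of diameter at least $\varepsilon$, and both inclusions follow together.

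Your fattening fallback does not close the gap. Monotonicity gives $F_n(K)\subseteq F_n(K^\delta)$, which is the wrong direction for a lower bound on $F_n(K)$. Controlling $d_{\mathrm H}(F_n(K^\delta),F_n(K))$ uniformly in $n$ would require that a macroscopic loop of $\Gamma^{\kappa'_n}$ passing within $\delta$ of $K$ actually touches $K$. That is again a tangency statement about the limiting loops $\SCL(x)$. A Euclidean fatness estimate such as \Cref{prop:thin-loops} cannot supply it: fatness only identifies such loops with some $\SCL^{\kappa'_n}(x)$ for $x$ in a finite net, not whether they touch $K$. For Lebesgue-a.e.\ $\delta$ there is indeed no $\CLE_4$ loop tangent to $K^\delta$, but passing from $K^\delta$ back to $K$ brings back the same issue.
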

The main ingredient of the proof of the above lemma is the following result which says that large loops of $\Gamma^{\kappa_n'}$ surround, with probability close to $1$ uniformly in $n$, Euclidean balls whose radius is bounded from below. 

\begin{proposition}\label{prop:thin-loops}
		For all $\varepsilon, \varepsilon'>0$, there exists $\delta>0$ such that for all $n$, with probability at least $1-\varepsilon'$, all the loops of $\Gamma^{\kappa'_n}$ with diameter at least $\varepsilon$ surround a Euclidean ball of radius $\delta$.
\end{proposition}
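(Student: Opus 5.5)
We follow the strategy announced in the introduction: first prove the statement for simple CLEs as $\kappa\uparrow 4$ (\Cref{prop:big_loops_are_fat_simple_cle}), and then transfer it to $\kappa'=16/\kappa\downarrow 4$ through the CLE percolation couplings of~\cite{CLEPerc}.

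\emph{Simple case.} For $\kappa\in(8/3,4]$, $\CLE_\kappa$ is realized as the outer boundaries of outermost clusters of a Brownian loop soup of intensity $c(\kappa)\uparrow 1$ as $\kappa\uparrow4$. We would show that the loops, viewed as closed curves modulo reparametrization (not only as compact sets), converge as $\kappa\uparrow 4$ to the $\CLE_4$ loops. The plan is to use monotone coupling of loop soups in the intensity: the clusters increase with $c$, and the cluster structure at $c=1$ is the limit. Continuity of the loops as curves should follow from the equicontinuity estimates (Aizenman--Burchard type, uniform in $\kappa$ near $4$) available for $\SLE_\kappa$-type curves.

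Once the curves converge, a loop of diameter $\ge\varepsilon$ failing to surround a ball of radius $\delta$ along a sequence $\kappa_k\uparrow4$, $\delta_k\to0$ would yield, by compactness, a limiting $\CLE_4$ loop. This limit would have diameter $\ge\varepsilon$, and it would be a non-simple closed curve or have empty interior. That contradicts the fact that $\CLE_4$ loops are Jordan curves, and only finitely many of them have diameter $\ge\varepsilon$. Local finiteness, uniform in $\kappa$, is needed to prevent infinitely many large loops; we would get it from the same convergence together with standard loop-soup crossing estimates.

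\emph{Transfer to $\kappa'$.} By~\cite{CLEPerc}, a $\CLE_{\kappa'}$ can be built from a $\CLE_\kappa$ with $\kappa=16/\kappa'$ by exploring $\BCLE_\kappa(\rho)$ / $\BCLE_{\kappa'}(\rho')$ iteratively. In this construction each $\CLE_{\kappa'}$ loop $\SCL'$ surrounds, or has outer boundary made of, arcs of $\BCLE$ loops, and each region it encloses contains a nested $\CLE_\kappa$ structure. Concretely, we would use the statement that the interior of a $\CLE_{\kappa'}$ loop contains, via the coupling, a $\CLE_\kappa$ loop whose size is comparable to that of $\SCL'$, uniformly as $\kappa'\downarrow4$. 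Since the simple-case result makes that $\CLE_\kappa$ loop fat, the fatness is inherited by $\SCL'$.

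To obtain the comparability uniformly, we would use that as $\kappa'\downarrow 4$ the parameters $\rho,\rho'$ in the coupling converge to their $\kappa=4$ values. The $\BCLE$ loops then converge; for instance, \Cref{prop diameter of arcs}-type estimates control arc diameters. A large $\CLE_{\kappa'}$ loop that encloses only tiny $\CLE_\kappa$ loops would, in the limit, produce a macroscopic region of the $\CLE_4$ picture containing no macroscopic loop, which is impossible.

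\emph{Main obstacle.} The main obstacle is this transfer step, namely getting estimates uniform in $\kappa'\downarrow4$ from the~\cite{CLEPerc} coupling, since space-filling SLE degenerates in this limit. We would first attempt a contradiction/compactness argument with all objects — the $\CLE_\kappa$ loops as curves, the $\BCLE$ loops, and the $\CLE_{\kappa'}$ loops in the Hausdorff sense — converging jointly.
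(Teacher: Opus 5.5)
\textbf{The gap is in the transfer step.} You assert that a $\CLE_{\kappa'}$ loop of diameter at least $\varepsilon$ surrounds a $\CLE_\kappa$ loop of diameter bounded below, uniformly as $\kappa'\downarrow 4$. The coupling of~\cite{CLEPerc} does not give you this. It is essentially the whole content of the proposition, and your compactness heuristic does not produce it, for three reasons.
\begin{enumerate}
\item There is no nondegenerate joint limit to pass to. With $\beta=1$ the construction iterates $\cwBCLE_{\kappa'}(0)$ and $\ccwBCLE_\kappa(-\kappa/2)=\cwBCLE_\kappa(3\kappa/2-6)$. The parameter $3\kappa/2-6$ tends to the degenerate endpoint $\kappa-4$, and the boundary-touching $\CLE_{\kappa'}$ loops collapse onto the domain boundary. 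Estimates like \Cref{prop diameter of arcs} quantify this collapse; they do not give convergence of BCLE loops to macroscopic limits.
\item Your contradiction targets the wrong configuration. A large $\CLE_{\kappa'}$ loop enclosing a macroscopic region is harmless. The loops to be excluded are exactly those enclosing no macroscopic region. Their Hausdorff limits have empty interior, and they can lie along $\partial\BD$ or be squeezed between $\CLE_\kappa$ loops without contradicting anything in the $\CLE_4$ picture.
\item Even if a large $\CLE_{\kappa'}$ loop came close to a large fat $\CLE_\kappa$ loop, fatness would not be inherited unless the coupling turns contact into surrounding.
\end{enumerate}

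\textbf{Two ideas are missing.}

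(i) In the $\beta=1$ coupling, a $\CLE_{\kappa'}$ loop that intersects a $\CLE_\kappa$ loop must surround it (\Cref{lem:loop_surrounds_another_loop}). This rests on $\CLE_{\kappa'}$ loops having no cut points at their mutual intersection points.

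(ii) You need a mechanism forcing such an intersection. The paper's runs as follows.
\begin{itemize}
\item With high probability, large $\CLE_{\kappa'}$ loops do not touch $\partial\BD$. Boundary-touching loops lie in the rescaled balls around $\partial\BD$ of any fixed radius, and these collapse onto $\partial\BD$ by the convergence to the uniform exploration and \Cref{prop BCLE rho to zero}. Flow-line ``shields'' of the generating GFF then prevent a large loop from living in a thin boundary collar.
\item So a large loop crosses a small annulus in the bulk. The $\CLE_\kappa$ crossings of that annulus cut it into conformal rectangles with tight moduli (\Cref{prop:conformal_rectangles_tight}).
\item A Markovian CPI exploration shows that $\CLE_{\kappa'}$ segments inside a rectangle are segments of a $\CLE_{\kappa'}$ in that rectangle.
\item A top-to-bottom crossing of a rectangle that avoids its sides has vanishing probability as $\kappa'\downarrow4$ (\Cref{lem:unlikeliness}).
\end{itemize}
The degeneration you identify as the obstacle is what drives this argument.

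\textbf{Your simple case also needs repair.} The compactness step presupposes that every limit of large $\CLE_{\kappa_k}$ loops is a $\CLE_4$ loop. In the monotone loop-soup coupling, a loop can hug $\SCL^4(X_i)$ inside $\mathrm{int}(\SCL^4(X_i))\setminus\overline{\mathrm{int}(\SCL^{\kappa_k}(X_i))}$. Such a loop converges to a degenerate curve on $\SCL^4(X_i)$, not to a $\CLE_4$ loop, so the Jordan property of $\CLE_4$ loops gives no contradiction. Excluding it needs a separate argument: the paper uses inversion invariance of CLE in the annulus (\Cref{lemma no thin simple loop}). Aizenman--Burchard tightness alone does not identify the limit.
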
    
The above proposition is not easy to prove and the main goal of this section is to prove it. Let us explain how it implies~\Cref{lemma cv Hausdorff union of loops}.
\begin{proof}[Proof of~\Cref{lemma cv Hausdorff union of loops} using~\Cref{prop:thin-loops}]
	Let $\varepsilon>0$ be such that a.s.\ no loop of $\Gamma^4$ has diameter exactly $\varepsilon$ (see~\Cref{prop:big_loops_are_fat_simple_cle}). Let $F_n^\varepsilon$ (resp.\ $F^\varepsilon$) be the closure of the union of $K$ and of the domains encircled by loops of $\Gamma^{\kappa'_n}$ (resp.\ $\Gamma^4$) intersecting $K$ that have diameter at least $\varepsilon$. Note that 
	\begin{equation}\label{eq approx F n loops with diameter at least epsilon}d_{\mathrm{H}}(F_n^\varepsilon, F_n) \le \varepsilon \qquad \text{and}
	\qquad d_{\mathrm{H}}(F^\varepsilon, F)\le \varepsilon. 
	\end{equation}
	Let $\delta>0$ be given by~\Cref{prop:thin-loops} applied with $\varepsilon' = \varepsilon$ (taken rational, so that the grid below lies in $\BD_\BQ$). By possibly taking $\delta$ smaller, by the local finiteness of $\CLE_4$, we also know that with probability at least $1-\varepsilon$, all the loops of $\Gamma^4$ with diameter at least $\varepsilon$ surround a Euclidean ball of radius $\delta$. Let $F_n^{\varepsilon, \delta}$ (resp.\ $F^{\varepsilon, \delta}$) be the closure of the union of $K$ and of the domains encircled by loops of $\Gamma^{\kappa'_n}$ (resp.\ $\Gamma^4$) intersecting $K$ that have diameter at least $\varepsilon$ and surround a point in $(\delta/2)\BZ^2$. By Skorokhod's representation theorem, we may assume that the convergences~\eqref{eq cv subsequence boundary} and~\eqref{eq cv Skorokhod} hold a.s. 
	
	Since the set $(\delta/2)\BZ^2 \cap \BD$ is finite, by~\eqref{eq cv subsequence boundary} and~\Cref{rem:interior membership}, we get the a.s.\ convergence of $F_n^{\varepsilon, \delta}$ to $F^{\varepsilon, \delta}$. Indeed, the selection by ``intersects $K$'' stabilizes by~\Cref{rem:interior membership}, since a.s.\ no loop of $\Gamma^4$ is tangent to $K$, i.e.\ every loop $\SCL$ meeting $K$ is such that $\mathrm{int}(\SCL)$ intersects $K$. Indeed, using the $\SLE_4(-2)$ exploration tree, this comes from the analogous fact that on the event that a chordal $\SLE_4$ $\eta$ from $0$ to $\infty$ intersects a deterministic connected compact set $K\subset \BH$, there are points of $K$ in both connected components of $\BH\setminus \eta$. Such a result can be seen as a consequence of \cite[Lemmas 4.4 and 4.5]{miller2017intersections}: let $f_t=g_t-W_t$ be the associated centered Loewner flow and consider the stopping times $\tau_k = \inf\{ t\ge 0: \mathrm{dist}(\eta(t), K) \le 2^{-k} \}$ for all $k\ge 1$. By choosing $\delta$ small, a.s.\ for infinitely many $k$, we have that $f_{\tau_k}(K) \cap \BD$ is included in $\{z \in \BH: \arg(z) \in (\delta, \pi-\delta)\}$, so that the curve $\eta$ will have points of $K$ on its left and right sides. 
	
	But, note that by~\Cref{prop:thin-loops}, with probability at least $1- \varepsilon$, we have $F_n^\varepsilon= F_n^{\varepsilon, \delta}$. The same is true for $F^\varepsilon$ and $F^{\varepsilon,\delta}$ by our choice of $\delta$. By taking~\eqref{eq approx F n loops with diameter at least epsilon} into account, this concludes the proof.
\end{proof}
The same proof shows that the lemma remains true if the loop surrounding a fixed $z_0 \in \BD_\BQ$ is excluded from $F_n$ and $F$: this only removes from $(\delta/2)\BZ^2 \cap \BD$ the points surrounded by $\SCL^{\kappa_n'}(z_0)$, and for each fixed such $z$, by~\Cref{rem:interior membership}, for $n$ large $\SCL^{\kappa_n'}(z_0)$ surrounds $z$ if and only if $\SCL^4(z_0)$ does. We use this with $z_0 = 0$ in~\Cref{sec: distance between two segments}.

The proof of~\Cref{prop:thin-loops} will be achieved by combining the following three steps.
\begin{enumerate}
		\item \label{it:first_step} 
		We will prove the same result as in the statement of~\Cref{prop:thin-loops} but with $\CLE_{\kappa_n'}$ replaced by $\CLE_{\kappa_n}$ in $\BD$ with $\kappa_n =16/\kappa'_n$. This result is much easier because $\CLE_\kappa$ for $\kappa \in (8/3,4]$ can be coupled on a common probability space so as to be monotone in $\kappa$ \cite{CLE}.
		\item \label{it:second_main_step}
		We will use a specific coupling of  $\CLE_{\kappa_n'}, \CLE_{\kappa_n}$, and a GFF on $\BD$ which comes from the results of~\cite{CLEPerc}, which ensures that a.s.\ whenever a $\CLE_{\kappa_n'}$ loop intersects a $\CLE_{\kappa_n}$ loop, it has to surround it.
		\item \label{it:third_main_step}
		Under the coupling from~\eqref{it:second_main_step}, we have that the following holds with probability tending to $1$ as $n \to \infty$. Every $\CLE_{\kappa_n'}$ loop with large Euclidean diameter has to intersect a $\CLE_{\kappa_n}$ loop with large Euclidean diameter. We will prove this by showing that large $\CLE_{\kappa_n'}$ loops cannot stay close to $\partial \BD$ and therefore must cross a macroscopic annulus strictly contained in $\BD$. We will then explore the simple $\CLE_{\kappa_n}$ loops crossing this annulus to form a mesh of conformal rectangles, and show that it is highly unlikely for a non-simple loop to cross such a rectangle without intersecting its simple $\CLE$ boundaries.
\end{enumerate}

The claim in item~\eqref{it:first_step} will be shown in~\Cref{sec:big_simple_loops_are_fat}. In~\Cref{sec:cle-percolation}, we will describe the coupling between the $\CLE_{\kappa_n'}$, the $\CLE_{\kappa_n}$, and the GFF on $\BD$ that we are going to use. Finally in~\Cref{sec:proof_of_main_result}, we will prove the claim in item~\eqref{it:third_main_step} and hence complete the proof of~\Cref{prop:thin-loops}.

\subsection{Big simple CLE loops are fat}
\label{sec:big_simple_loops_are_fat}

In this section, we will prove that with probability tending to $1$ as $\kappa \uparrow 4$, we have that every loop in a non-nested $\CLE_{\kappa}$ in $\BD$ with large Euclidean diameter surrounds a Euclidean ball with large Euclidean diameter. 

For all $\kappa \in (8/3, 4]$, let $\Gamma^\kappa$ be a non-nested $\CLE_\kappa$ in $\BD$. In this subsection, we have two main results. The first is the following.
\begin{proposition}
	\label{prop:big_loops_are_fat_simple_cle}
		Let $(\kappa_n)_{n \ge 1}$ be a sequence in $(8/3,4)$ with $\kappa_n \uparrow 4$. For all $\varepsilon>0$ such that a.s.\ no loop of $\Gamma^4$ has diameter exactly $\varepsilon$ (all but countably many $\varepsilon$, since $\varepsilon \mapsto \BE[\#\{\SCL \in \Gamma^4 \colon \diam(\SCL) \ge \varepsilon\}]$ is finite and non-increasing, hence has countably many discontinuities), the number of loops of $\Gamma^{\kappa_{n}}$ with diameter at least $\varepsilon$ converges in law as $n \to \infty$ to the number of loops of $\Gamma^4$ with diameter at least $\varepsilon$. Moreover, for all $\varepsilon, \varepsilon'>0$, there exists $\delta>0$ (depending on $\varepsilon$, $\varepsilon'$, and on the sequence $(\kappa_n)_{n \ge 1}$) such that for all $n\ge 1$, with probability at least $1-\varepsilon'$, all the loops of $\Gamma^{\kappa_n}$ with diameter at least $\varepsilon$ encircle a Euclidean ball of radius $\delta$.
\end{proposition}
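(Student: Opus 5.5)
We work throughout with the Brownian loop soup construction of \cite{CLE}. For $\kappa\in(8/3,4]$, let $c(\kappa)\in(0,1]$ be the central charge. A non-nested $\CLE_\kappa$ in $\BD$ is then the collection of outer boundaries of the outermost clusters of a Brownian loop soup of intensity $c(\kappa)$ in $\BD$. Since loop soups are monotone in the intensity, we can couple $\Gamma^{\kappa_n}$ and $\Gamma^4$ on one probability space by taking a single Poisson point process of Brownian loops on $\BD\times[0,\infty)$ and keeping the loops with label at most $c(\kappa_n)/2$, resp.\ at most $1/2$. In this coupling every cluster at intensity $c(\kappa_n)$ is contained in a cluster at intensity $c(4)=1$. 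So every loop of $\Gamma^{\kappa_n}$ is surrounded by, or equal to, the outer boundary of a loop of $\Gamma^4$.

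\textbf{Step 1: almost sure convergence of large loops.} Fix an outermost cluster $\mathcal{C}$ at intensity $1$ with diameter $\ge\varepsilon$. I will show that, as $n\to\infty$, the largest sub-cluster of $\mathcal{C}$ at intensity $c(\kappa_n)$ has an outer boundary converging to $\partial\mathcal{C}$ in the Hausdorff sense and as a closed curve. The argument has three parts.
\begin{itemize}
\item The loop-soup loops with label in $(c(\kappa_n)/2,1/2]$ form a Poisson process whose intensity tends to $0$. Hence any fixed finite collection of soup loops is eventually contained in intensity $c(\kappa_n)$.
\item The cluster $\mathcal{C}$ is connected by chains of finitely many loops, and the big loops are finitely many.
\item Consequently, for every $\eta>0$ there is a finite chain of soup loops inside $\mathcal{C}$ that comes within $\eta$ of every point of $\overline{\mathcal{C}}$. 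For $n$ large, all loops of this chain lie in the intensity-$c(\kappa_n)$ soup, hence in one $\CLE_{\kappa_n}$ cluster.
\end{itemize}
Together with the domination above, the $\CLE_{\kappa_n}$ loop surrounding this chain is then $\eta$-close to $\partial\mathcal{C}$.

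Conversely, take $\varepsilon$ with no loop of $\Gamma^4$ of diameter exactly $\varepsilon$. Any $\CLE_{\kappa_n}$ loop of diameter $\ge\varepsilon$ lies inside a $\CLE_4$ loop of diameter $\ge\varepsilon$. There are finitely many such $\CLE_4$ loops, so a.s.\ none has diameter in $(\varepsilon-\eta,\varepsilon)$ for small $\eta$. The remaining issue is that two distinct big $\CLE_{\kappa_n}$ loops could sit inside the same big $\CLE_4$ cluster. I would rule this out using the first part: the big chain eventually absorbs every soup loop of diameter $\ge\eta$ in $\mathcal{C}$. A second $\CLE_{\kappa_n}$ cluster of diameter $\ge\varepsilon$ disjoint from it would therefore be made of loops of diameter $<\eta$ that avoid this absorbing chain. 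Local finiteness of the soup, together with the fact that the chain is $\eta$-dense, makes this impossible once $\eta$ is small. This gives a.s.\ convergence of the number of loops of diameter $\ge\varepsilon$, and hence convergence in law.

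\textbf{Step 2: fatness.} By Step 1, a.s.\ for $n$ large the big $\CLE_{\kappa_n}$ loops are in bijection with the big $\CLE_4$ loops, and each converges as a closed curve. For $\CLE_4$, each of the finitely many big loops is a Jordan curve and surrounds some ball $B(z,2\delta_0)$. Convergence as curves, or even just Hausdorff convergence of the filled loop, gives via \Cref{rem:interior membership} that $B(z,\delta_0)$ lies inside the approximating loop for $n\ge N(\omega)$. I then choose $\delta_0$ and $N$ so that this event has probability $\ge1-\varepsilon'/2$. The finitely many $n<N$ are handled separately: each fixed $\CLE_{\kappa_n}$ has finitely many loops of diameter $\ge\varepsilon$, each surrounding some ball, so I can shrink $\delta$ to cover them.

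\textbf{Main obstacle.} The hard step is the second half of Step 1: showing that the big sub-clusters do not fragment, i.e.\ that convergence holds as closed curves rather than merely as a chain approximation. For the point of $\partial\mathcal{C}$ closest to the boundary I may need the known fact that a $\CLE_4$ cluster boundary is reached by finite chains. The absence of double points for $\CLE_4$ loops is used to pass from curve convergence to interior membership.
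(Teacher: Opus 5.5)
\textbf{There is a genuine gap in the converse half of your Step 1, which is where the real difficulty lies.} You must exclude, for $n$ large, a second loop of $\Gamma^{\kappa_n}$ of diameter $\ge\varepsilon$ inside the same $\CLE_4$ loop $\partial\mathcal{C}$ as the main approximating loop $\ell_n$.

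Your argument is that such a cluster would consist of soup loops of diameter $<\eta$, lying in the $\eta$-neighbourhood of the absorbing chain without touching it, and that local finiteness and $\eta$-density forbid this. Nothing deterministic forbids it:
\begin{itemize}
\item Local finiteness controls only loops of diameter $\ge\eta$, not macroscopic clusters made of small ones.
\item A macroscopic cluster of tiny loops can run alongside another cluster at distance $<\eta$. For a fixed $\kappa<4$, two macroscopic loops at distance $<\eta$ occur with positive probability.
\item A soft argument shows that each fixed bad cluster eventually merges, since its bridging loops have labels in $(c(\kappa_n)/2,1/2]$. It cannot exclude that a new bad cluster, built from ever smaller loops, exists at each $n$.
\end{itemize}
Excluding this requires a quantitative estimate uniform in $n$, namely \Cref{lemma no thin simple loop}. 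The paper proves it by mapping $\BD\setminus\overline{\mathrm{int}(\SCL^{\kappa_m}(x))}$ to a round annulus and using inversion invariance of $\CLE_{\kappa_m}$ there. A thin macroscopic loop hugging $\SCL^{\kappa_m}(x)$ becomes a macroscopic loop hugging $\partial\BD$, which the monotone coupling with $\CLE_4$ rules out (finitely many macroscopic loops, none touching $\partial\BD$).

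Your argument also overlooks second loops coming from clusters that, at intensity $1$, are nested inside holes of $\mathcal{C}$. These can contain big loops, and they become outermost at intensity $c(\kappa_n)$ as long as the surrounding part of $\mathcal{C}$ is not yet connected.

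\textbf{The first half of Step 1 also does not deliver what Step 2 uses.} An $\eta$-dense finite chain in $\overline{\mathcal{C}}$ only shows that every point of $\partial\mathcal{C}$ is within $\eta$ of $\ell_n$. It does not show that the filling of the chain contains a given compact $K\subset\mathrm{int}(\partial\mathcal{C})$, because holes of $\mathcal{C}$ can leak through gaps between finitely many loops. So you obtain neither two-sided Hausdorff closeness nor $B(z,\delta_0)\subset\mathrm{int}(\ell_n)$.

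What you need is the exhaustion $\bigcup_n\mathrm{int}(\SCL^{\kappa_n}(x))=\mathrm{int}(\SCL^4(x))$. The paper gets it without any chain topology:
\begin{itemize}
\item the union is increasing and contained in $\mathrm{int}(\SCL^4(x))$;
\item by \Cref{lemma cv Hausdorff simple loops}, its conformal radius seen from $x$ has the same law as that of $\mathrm{int}(\SCL^4(x))$;
\item hence the two conformal radii agree a.s., and the Schwarz lemma gives equality of the domains.
\end{itemize}
With this exhaustion, the lower bound on the count and the fatness follow directly. Any extra big loop then lies in $\mathrm{int}(\SCL^4(X_i))\setminus\mathrm{int}(\SCL^{\kappa_n}(X_i))$, hence close to $\SCL^{\kappa_n}(X_i)$, and \Cref{lemma no thin simple loop} applies whatever cluster it comes from. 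Note also that this route gives convergence of the count in probability, not almost surely as you claim.
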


Let us now turn to state our second main result. Let us first introduce some notation. Fix $0 <s<t$ and $z \in \BD$ such that the annulus $A=A_{s,t}(z)= B_t(z) \setminus \overline{B_s(z)} \subseteq \BD$. Given $\SCL \in \Gamma^\kappa$, the \emph{segments} of $\SCL$ in $A$ are the closures of the connected components of $\SCL \cap A$; each has both endpoints on $\partial A$. Such a segment is a \emph{crossing} if its endpoints lie on different components of $\partial A$, and an \emph{excursion from the outer (resp.\ inner) boundary} if they both lie on $\partial B_t(z)$ (resp.\ $\partial B_s(z)$); $\SCL$ \emph{crosses} $A$ if one of its segments is a crossing. A \emph{conformal rectangle} is a simply connected domain with four marked prime ends, splitting its boundary into its top, bottom, left and right boundaries. Let $\Gamma_A^{\kappa,\mathrm{out}}$ (resp.~$\Gamma_A^{\kappa, \mathrm{in}}$) denote the collection of segments of loops in $\Gamma^{\kappa}$ which make an excursion from the outer (resp.~inner) boundary of $A$. Let also $\Gamma^\kappa_A$ denote the collection of loops in $\Gamma^\kappa$ which cross $A$.

Next, for all $\SCL \in \Gamma^{\kappa, \mathrm{out}}_A \cup \Gamma^{\kappa, \mathrm{in}}_A$, we denote by $\mathrm{int}(\SCL)$ the intersection of the region encircled by the associated loop and $A$. In particular, distinct segments of one and the same loop have the same $\mathrm{int}$. We will prove the following result that we are going to need in~\Cref{sec:proof_of_main_result}.
\begin{proposition}
\label{prop:conformal_rectangles_tight}
	We have the following convergences:
	\begin{enumerate}[label=(\roman*)]
		\item The number $N^\kappa$ of crossings of $A$ by loops in $\Gamma^\kappa_A$ converges in law to the number $N^4$ of crossings of $A$ by loops in $\Gamma^4_A$ as $\kappa \uparrow 4$. More precisely, for all $\kappa_n \uparrow 4$ there exists an ordering of the crossings $\eta_1^n, \ldots, \eta^n_{N^{\kappa_n}}$ of $\Gamma^{\kappa_n}_A$ for which the random vectors $(N^{\kappa_n}, \eta^n_1, \ldots, \eta^n_{N^{\kappa_n}})$ converge in law to $(N^4, \eta_1, \ldots, \eta_{N^4})$, where $\eta_1, \ldots, \eta_{N^4}$ are the crossings of $A$ by loops in $\Gamma^4_A$, for the topology of Hausdorff convergence.
		\item Jointly with the above convergence, the compact sets $\overline{\bigcup_{\SCL \in \Gamma^{\kappa_n, \mathrm{out}}_A} \mathrm{int}(\SCL)}$ and $\overline{\bigcup_{\SCL \in \Gamma^{\kappa_n, \mathrm{in}}_A} \mathrm{int}(\SCL)}$ converge in distribution for the Hausdorff distance to $\overline{\bigcup_{\SCL \in \Gamma^{4, \mathrm{out}}_A} \mathrm{int}(\SCL)}$ and $\overline{\bigcup_{\SCL \in \Gamma^{4, \mathrm{in}}_A} \mathrm{int}(\SCL)}$.
		\item In particular, the conformal rectangles whose left and right sides are given by the crossings of $A$ by loops in $\Gamma^{\kappa_n}_A$ and whose top and bottom sides are respectively included in $\overline{\bigcup_{\SCL \in \Gamma^{\kappa_n, \mathrm{out}}_A} \SCL}$ and $\overline{\bigcup_{\SCL \in \Gamma^{\kappa_n, \mathrm{in}}_A} \SCL}$ converge in distribution as $n \to \infty$ in the Carath\'eodory sense (jointly with their corners) to the conformal rectangles whose left and right sides are given by the crossings of $A$ by loops in $\Gamma^{4}_A$ and whose top and bottom sides are respectively included in $\overline{\bigcup_{\SCL \in \Gamma^{4, \mathrm{out}}_A} \SCL}$ and $\overline{\bigcup_{\SCL \in \Gamma^{4, \mathrm{in}}_A} \SCL}$, and their moduli are tight in $(0,\infty)$.
	\end{enumerate}
\end{proposition}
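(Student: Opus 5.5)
The plan is to derive all three items from the convergence of macroscopic simple loops in \Cref{prop:big_loops_are_fat_simple_cle}, taken in the monotone Brownian loop soup coupling of~\cite{CLE}. We will also use the statement announced in the outline that, as $\kappa \uparrow 4$, the loops of $\Gamma^{\kappa}$ with diameter at least $\varepsilon$ converge as closed paths. Since this convergence is not contained in the statement of \Cref{prop:big_loops_are_fat_simple_cle}, we take it as the companion result of that subsection. Throughout we use Skorokhod's representation theorem.

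Fix $\varepsilon$ rational with $\varepsilon < t-s$, such that a.s.\ no $\CLE_4$ loop has diameter exactly $\varepsilon$. With probability tending to one, the finitely many loops of $\Gamma^{\kappa_n}$ with diameter at least $\varepsilon$ can be matched with those of $\Gamma^4$. Moreover, after reparametrization, the matched loops converge uniformly.

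Every loop that crosses $A$ has diameter at least $t-s$, so $\Gamma^{\kappa_n}_A$ is contained in this matched finite family. The first key input is a non-tangency statement: a.s.\ no $\CLE_4$ loop is tangent to $\partial B_s(z)$ or $\partial B_t(z)$. More precisely, whenever a loop meets one of these circles, it has points strictly on both sides of that circle arbitrarily close to each intersection point. We prove this exactly as in the proof of \Cref{lemma cv Hausdorff union of loops}, via the $\SLE_4(-2)$ exploration tree and \cite[Lemmas~4.4 and~4.5]{miller2017intersections}, applied to a deterministic circle.

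Using non-tangency together with uniform convergence of parametrized loops, we show that crossings pass to the limit both ways.
\begin{itemize}
\item Each crossing of the limit loop is approximated by exactly one crossing of the approximating loop. The limit crossing enters the open annulus transversally at both ends, so a crossing exists nearby. Two distinct approximating crossings cannot both converge to the same limiting crossing, because between them the approximating loop would have to exit $A$ and re-enter it near an interior point of a limit segment.
\item Conversely, any subsequential Hausdorff limit of approximating crossings is a sub-arc of the limit loop joining the two circles inside $\overline{A}$. By non-tangency, this sub-arc is a genuine crossing.
\end{itemize}
This gives $N^{\kappa_n} \to N^4$. We then order the crossings by the argument of their endpoint on $\partial B_t(z)$; these endpoints are a.s.\ distinct in the limit, since the $\CLE_4$ loops are disjoint and simple. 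This ordering yields the convergence of $(N^{\kappa_n}, \eta^n_1, \ldots, \eta^n_{N^{\kappa_n}})$ and proves (i).

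For (ii), fix the outer collection; the inner one is symmetric. Let $F^{\varepsilon}_n$ (resp.\ $F^\varepsilon$) denote the closure of the union of $\mathrm{int}(\SCL)$ over the outer excursions $\SCL$ of loops with diameter at least $\varepsilon$.
\begin{itemize}
\item Every omitted excursion belongs to a loop of diameter less than $\varepsilon$, and $\mathrm{int}(\SCL)$ lies in the interior of that loop. Hence the omitted part lies within Hausdorff distance $\varepsilon$ of that loop, and in particular of its excursion. So $d_{\rH}(F^\varepsilon_n, F_n) \le \varepsilon$, and similarly in the limit.
\item For the matched finitely many macroscopic loops, non-tangency implies that the property of having an excursion from the outer boundary is stable. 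Uniform convergence of the Jordan curves then gives convergence of their closed interiors (by \Cref{rem:interior membership}), hence of their intersections with $\overline{A}$. Again using non-tangency, this yields $F^\varepsilon_n \to F^\varepsilon$.
\end{itemize}
Letting $\varepsilon \downarrow 0$ along a countable sequence gives (ii), jointly with (i).

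For (iii), each conformal rectangle is the component of $A \setminus (\text{outer union} \cup \text{inner union})$ lying between two consecutive crossings $\eta_i$ and $\eta_{i+1}$. Its corners are the endpoints of the crossings on the outer and inner boundary sets.
\begin{itemize}
\item In the limit, distinct $\CLE_4$ loops are disjoint and do not touch $\partial\BD$. Therefore $\eta_i$, $\eta_{i+1}$, the outer union and the inner union bound a nondegenerate Jordan domain with four distinct corners.
\item Hausdorff convergence of all four boundary pieces, together with convergence of a fixed interior point (for example a rational point in the limit rectangle), gives Carath\'eodory convergence via the kernel theorem. The corners converge because they are the endpoints of the crossings.
\item Moduli, i.e.\ extremal lengths between the left and right sides, are continuous under Carath\'eodory convergence with converging corners when the limit is a Jordan quadrilateral. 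Hence the moduli converge in law to a.s.\ finite and positive limits, which gives tightness in $(0,\infty)$.
\end{itemize}

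We expect the main obstacle to be the stability of crossings and excursions under curve convergence. Concretely, we must exclude that approximating loops create spurious crossings or excursions by nearly grazing $\partial B_s(z)$ or $\partial B_t(z)$. We must also exclude that the outer and inner unions, or two consecutive crossings, come into contact in the limit, which would make a rectangle degenerate. The first issue is handled by the non-tangency lemma. The second uses only that $\CLE_4$ loops are pairwise disjoint and at positive distance from each other within any compact region.
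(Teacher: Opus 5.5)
Your architecture is the paper's: truncate at a macroscopic diameter, identify the macroscopic loops across $\kappa_n$, transfer crossings through non-tangency to $\partial A$, then read off the rectangles. The gap is that the input on which all your transfer arguments rest is not available. You assume that the matched macroscopic loops converge uniformly as parametrized closed curves. This is not part of \Cref{prop:big_loops_are_fat_simple_cle}, and the paper never proves it; the remark after \Cref{lemma improved convergence loops} says explicitly that it is expected but not pursued. The outline's mention of closed paths is realized only by that lemma, which is weaker. For the loop surrounding a \emph{fixed} point, along a subsequence and in an $\varepsilon$-dependent coupling, it gives a uniformly converging simple curve $\gamma_m$ plus an uncontrolled arc inside $B_\varepsilon(Z_m)$. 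Two steps are therefore missing.
\begin{enumerate}
\item You must identify the macroscopic loops with the loops surrounding finitely many fixed grid points, so that the lemma applies to each of them. As in \Cref{prop cv crossing segments}, this uses the fatness in \Cref{prop:big_loops_are_fat_simple_cle}, the increasing union \eqref{eq increasing union loop} in the monotone coupling, and \Cref{rem:interior membership}.
\item You must redo your two-sided crossing argument for $\gamma_m$ together with the exceptional arc. This includes the claim that exactly one approximating crossing lies near each limiting one. For $\varepsilon$ small that arc has diameter less than $t-s$, so it cannot carry a crossing by itself. You still have to check that it neither creates nor destroys crossings near the root. You also have to absorb its Hausdorff error through a diagonal argument in $\varepsilon$.
\end{enumerate}
Item (ii) needs no curve convergence at all: the paper derives it from the Hausdorff convergence of the grid-identified macroscopic loops.

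Two further steps fail as written. In (ii), the bound on the Hausdorff distance between $F^\varepsilon_n$ and $F_n$ does not follow from your reasoning. An omitted point is close to its own small loop, but that loop is not in $F^\varepsilon_n$, and there may be no retained loop nearby. The paper avoids this by adding the outer (resp.\ inner) circle to the truncated sets, so that omitted pieces are within $\varepsilon$ of that circle. This is harmless for (iii), since the circles do not meet $A$. In (iii), the principle you invoke fails even for a Jordan limit: moduli are not continuous under Carath\'eodory convergence with corners converging as points. Take the unit square minus the slit $[0,1-1/n]\times\{1/n\}$, with the same four corners. It converges to the square in the Carath\'eodory sense. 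However, the slit belongs to its left side and the tip lies at distance $1/n$ from the right side, so the left--right extremal distance tends to $0$. Continuity is not needed. Tightness in $(0,\infty)$ follows, as in the paper, by comparing extremal lengths with fixed-width channels in the limit rectangle joining opposite sides. After slight shortening, these channels lie in the prelimit rectangles and still reach the correct sides, because the crossings and the outer and inner unions converge in the Hausdorff sense.
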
	

To prove the above results, let us use an appropriate coupling. Let $(S^\kappa)_{\kappa \in (8/3,4]}$ be an increasing coupling of Brownian loop soups on $\BD$ such that $S^\kappa$ has intensity $c_\kappa = (3\kappa-8)(6-\kappa)/(2\kappa)$. Recall from \cite[Theorem~1.6]{CLE} that the outer boundaries of the clusters of the loops of $S^\kappa$ form a non-nested $\CLE_{\kappa}$ denoted by $\Gamma^{\kappa}$. Note that in this coupling, for all $8/3<\kappa_1\le \kappa_2\le 4$, every loop of $\Gamma^{\kappa_1}$ is surrounded by a loop of $\Gamma^{\kappa_2}$.

In order to give the proof of~\Cref{prop:conformal_rectangles_tight}, we will first need to prove a number of preparatory lemmas.
\begin{enumerate}[label=(\roman*)]
\item In~\Cref{subsubsec:loops_converge}, we will prove a result related to the convergence of the origin-surrounding loop, as a path, as $\kappa_m \uparrow 4$.
\item In~\Cref{subsubsec:big_loops_convergence}, we will prove a result related to the convergence of the large loops as $\kappa_m \uparrow 4$. This will, in particular, give us~\Cref{prop:big_loops_are_fat_simple_cle}.
\item In~\Cref{subsubsec:annulus_segments}, we will prove the convergence as $\kappa_m \uparrow 4$ of the crossings of an annulus. This will, in particular, give us~\Cref{prop:conformal_rectangles_tight}.
\end{enumerate}

\subsubsection{Convergence of the origin-surrounding loop as a path}
\label{subsubsec:loops_converge}
Next, let us describe the convergence of loops. We start by recalling the Hausdorff convergence of loops surrounding the origin, which was obtained in~\cite{AG23} (see also \cite[Lemma A.3]{ACSW24} for more details or \cite[Lemma 3.5]{Kam25distancesonCLE4and32maps} for a closely related result).

\begin{lemma}[{\cite[Lemma 3.6]{AG23}}]
\label{lemma cv Hausdorff simple loops}
	The origin-containing loop $\SCL^\kappa(0)$ of $\Gamma^\kappa$ converges in distribution in the Hausdorff topology toward the origin-containing loop $\SCL^4(0)$ of $\Gamma^4$.
\end{lemma}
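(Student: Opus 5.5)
The plan is to realize $\SCL^\kappa(0)$ through its description as the outer boundary of the Brownian loop soup cluster surrounding the origin in the increasing coupling $(S^\kappa)_{\kappa\in(8/3,4]}$ fixed above. I will first obtain tightness in the Hausdorff topology, which is automatic because $\overline{\BD}$ is compact. It then remains to identify every subsequential limit as $\SCL^4(0)$. In the monotone coupling, the cluster of $S^\kappa$ surrounding $0$ is contained in the cluster of $S^4$ surrounding $0$. Hence $\overline{\mathrm{int}(\SCL^\kappa(0))}\subseteq\overline{\mathrm{int}(\SCL^4(0))}$, and $\mathrm{int}(\SCL^\kappa(0))$ increases with $\kappa$. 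Write $O$ for the increasing union $\bigcup_{\kappa<4}\mathrm{int}(\SCL^\kappa(0))$. Since these are nested Jordan domains containing $0$, $O$ is a simply connected domain contained in $\mathrm{int}(\SCL^4(0))$.

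The core step is to show that $O=\mathrm{int}(\SCL^4(0))$ a.s. I would argue as follows. Every loop of $S^4$ belongs to $S^\kappa$ for some $\kappa<4$, because in the Poissonian coupling the intensity $c_\kappa\uparrow c_4=1$. The cluster $\mathcal C$ of $S^4$ surrounding $0$ is a countable union of loops connected through finite chains. So any finite chain of loops of $\mathcal C$ lies in a single cluster of $S^\kappa$ for $\kappa$ close to $4$. Such a chain that disconnects a point $z$ from $\partial\BD$ therefore forces $z$ into $\overline{\mathrm{int}(\SCL^\kappa(0))}$ as soon as it also disconnects $0$, which it does for the relevant chains. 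This is where I must use that a point $z\in\mathrm{int}(\SCL^4(0))$ is separated from $\partial \BD$ by a finite sub-chain of $\mathcal C$ that also surrounds $0$. That follows from compactness of a path from $z$ to $0$ inside $\mathrm{int}(\SCL^4(0))$ together with the fact that the cluster's outer boundary is the closure of the union of its loops' outer parts. Hence every compact subset of $\mathrm{int}(\SCL^4(0))$ is eventually inside $\mathrm{int}(\SCL^\kappa(0))$, which gives Carath\'eodory convergence of the interiors viewed from $0$.

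To upgrade Carath\'eodory convergence to Hausdorff convergence of the loops themselves, I need to rule out the boundaries $\SCL^\kappa(0)$ having long thin fjords into $\mathrm{int}(\SCL^4(0))$ that persist in the limit. Here I would use monotonicity once more. Any Hausdorff subsequential limit $L$ satisfies $L\subseteq \overline{\mathrm{int}(\SCL^4(0))}$, and $L$ contains $\SCL^4(0)$ because the domains increase to a domain whose boundary is $\SCL^4(0)$. Conversely, a point $w\in L\cap\mathrm{int}(\SCL^4(0))$ would lie on $\SCL^{\kappa}(0)$ for $\kappa$ arbitrarily close to $4$, up to a small error. A small ball around $w$ lies in $\mathrm{int}(\SCL^\kappa(0))$ for large $\kappa$ by the previous paragraph, and since the interiors are monotone this contradicts $w$ being approximated by boundary points. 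This gives $L=\SCL^4(0)$.

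The delicate point, and the main obstacle, is the finite-chain argument in the core step. Loops of $S^\kappa$ are nested into larger clusters, and connectivity of a cluster is only through possibly infinitely many overlapping loops, so showing that enough of $\mathcal C$ is already present at intensity $c_\kappa<1$ requires care. If the direct argument fails, I would instead cite the convergence statement of \cite{AG23} (or \cite[Lemma A.3]{ACSW24}), which is precisely this lemma.
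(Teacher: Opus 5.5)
The paper does not prove this lemma. It imports it from \cite[Lemma 3.6]{AG23} and also points to \cite[Lemma A.3]{ACSW24}. Your fallback of citing is therefore literally what the paper does, and the real question is whether your direct argument stands on its own.

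\textbf{What works.} The skeleton is sound. In the monotone coupling, $O_\kappa=\mathrm{int}(\SCL^\kappa(0))$ increases to a domain $O\subseteq\mathrm{int}(\SCL^4(0))$. Your Hausdorff upgrade is correct as written: any subsequential limit $L$ lies in $\overline O$. It misses $O$, because compact subsets of $O$ are eventually inside $O_\kappa$. It contains the Jordan curve $\partial O$, because each of its points is approached both from $O$ and from $\BC\setminus\overline O\subseteq\BC\setminus\overline{O_\kappa}$.

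\textbf{The gap.} It is the identification $O=\mathrm{int}(\SCL^4(0))$. You claim that every $z\in\mathrm{int}(\SCL^4(0))$ is separated from $\partial\BD$ by finitely many loops of the $S^4$-cluster $\mathcal C$. This is not a consequence of compactness. The loops are closed sets, not an open cover. Moreover, the natural separating sets, namely $\SCL^4(0)$ itself or the boundary of a hole of $\overline{\mathcal C}$, lie in $\overline{\mathcal C}$ but not in $\mathcal C$. Indeed, $\SCL^4(0)\cap\mathcal C$ is contained in countably many Brownian frontiers, which have dimension $4/3$, while $\SCL^4(0)$ has dimension $3/2$. A countable union of closed sets can enclose a region only after taking the closure, for instance a chain of ever smaller loops accumulating to close a gap. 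Excluding this for loop-soup clusters at $c=1$ is a genuine structural statement, and your argument does not provide it.

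\textbf{A fix.} A clean way around this avoids clusters entirely.
\begin{itemize}
\item The law of $\mathrm{CR}(0,O_\kappa)$ is given explicitly by the Schramm--Sheffield--Wilson formula, which is continuous in $\kappa$ up to $\kappa=4$. Hence $\mathrm{CR}(0,O_\kappa)$ converges in law to $\mathrm{CR}(0,\mathrm{int}(\SCL^4(0)))$.
\item By the kernel theorem, $\mathrm{CR}(0,O_\kappa)\uparrow\mathrm{CR}(0,O)$.
\item The inclusion $O\subseteq\mathrm{int}(\SCL^4(0))$ gives $\mathrm{CR}(0,O)\le\mathrm{CR}(0,\mathrm{int}(\SCL^4(0)))$ a.s.
\item Two a.s.\ ordered random variables with the same law coincide a.s., and the Schwarz lemma then forces $O=\mathrm{int}(\SCL^4(0))$.
\end{itemize}
This is exactly the device the paper uses for \eqref{eq increasing union loop} in the proof of \Cref{prop:big_loops_are_fat_simple_cle}. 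There, however, the law of the limiting conformal radius is taken from the present lemma. Using the explicit formula instead makes your argument non-circular. Combined with your monotonicity and upgrade steps, it gives a complete proof along any sequence $\kappa_n\uparrow 4$, with a.s.\ convergence in the coupling.
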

Let us improve this convergence in the following convergence of curves:
\begin{lemma}\label{lemma improved convergence loops}
	For all $\varepsilon>0$, for all sequences $\kappa_m \uparrow 4$, there exist a subsequence, again denoted by $(\kappa_m)_{m \ge 1}$, and a coupling of the $\Gamma^{\kappa_m}$, $m \ge 1$, with random variables $Z_m \in \SCL^{\kappa_m}(0)$ and random simple curves $\gamma_m:[0,1] \to \overline{\BD}$ for $m\ge 1$ such that $(Z_m,\gamma_m)$ converges a.s.\ in $\overline{\BD} \times \SCC^0([0,1],\overline{\BD})$, where $\SCC^0([0,1], \overline{\BD})$ is the space of continuous functions on $[0,1]$ equipped with the topology of uniform convergence, and a.s.\ for all $m$ large enough,
	\[\gamma_m([0,1]) \subseteq \SCL^{\kappa_m}(0)
	\subset B_\varepsilon(Z_m) \cup \gamma_m([0,1]).
	\]
\end{lemma}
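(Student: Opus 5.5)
The plan is to exploit the monotone Brownian loop soup coupling $(S^\kappa)_{\kappa \in (8/3,4]}$ together with the Hausdorff convergence of \Cref{lemma cv Hausdorff simple loops}. The idea is to realize $\SCL^{\kappa_m}(0)$, up to an $\varepsilon$-small defect, as a simple arc that converges uniformly. First I would parametrize $\SCL^4(0)$, which is a Jordan curve, and fix finitely many points $w_1,\dots,w_N$ on it, spaced so that consecutive arcs have diameter at most $\varepsilon/4$. I would also fix a point $Z$ of $\SCL^4(0)$. The arc $\gamma$ of $\SCL^4(0)$ obtained by removing a small open sub-arc around $Z$ of diameter less than $\varepsilon/2$ is a simple curve. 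The goal is to produce simple sub-arcs $\gamma_m$ of $\SCL^{\kappa_m}(0)$ that converge uniformly to $\gamma$, with $Z_m \to Z$, such that the complementary arc of $\SCL^{\kappa_m}(0)$ lies in $B_\varepsilon(Z_m)$.

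The key mechanism is as follows. By \Cref{lemma cv Hausdorff simple loops} and Skorokhod's representation (after passing to a subsequence), $\SCL^{\kappa_m}(0)\to \SCL^4(0)$ a.s.\ in the Hausdorff sense. By the elementary fact recorded in \Cref{rem:interior membership}, the interiors converge as well. Hausdorff convergence alone does not control the parametrization, because the simple loops could have long thin fjords. I would rule this out by a Carathéodory-type argument applied from both sides. Let $f_m\colon \BD\to \mathrm{int}(\SCL^{\kappa_m}(0))$ be the conformal maps fixing $0$. These converge locally uniformly to $f$ by the kernel theorem. Similarly, the exterior maps from $\BD$ to the unbounded components (in the Riemann sphere) of the complement converge as well. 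Uniform convergence of the boundary extensions $f_m|_{\partial\BD}\to f|_{\partial \BD}$ would follow from a uniform modulus of continuity. Such a modulus is equivalent to the absence of \emph{bottlenecks}: two points of $\SCL^{\kappa_m}(0)$ at distance $\le\eta$ whose arc between them has diameter $\ge \varepsilon/4$ on both sides. I would show that, with probability tending to $1$ as $\eta\to0$ uniformly in $m$, no such near-pinch exists except possibly one localized near a single point, which will be absorbed into the ball $B_\varepsilon(Z_m)$.

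To control pinches, I would use the loop soup. A near-pinch of $\SCL^{\kappa_m}(0)$ either creates an inner fjord, which must then be filled by the cluster interior, or an outer fjord, which must be free of loops of $S^{\kappa_m}$ touching the cluster. Monotonicity $S^{\kappa_m}\subseteq S^4$ shows that $\mathrm{int}(\SCL^{\kappa_m}(0))\subseteq \mathrm{int}(\SCL^4(0))$. Hence an outer fjord of macroscopic depth that persists in the limit would force $\SCL^4(0)$ itself to have a near-pinch in its interior. This is impossible uniformly, since $\SCL^4(0)$ is a fixed Jordan curve whose modulus of continuity is a.s.\ finite. Inner fjords would be handled by the Hausdorff convergence of the exterior: points of $\SCL^4(0)$ are approached from outside by points of loops of $\Gamma^4$ (local finiteness) that are also loops of nearby $\Gamma^{\kappa_m}$ clusters. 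I expect this two-sided pinch control to be the main obstacle. If it fails to be uniform, I would take the fallback permitted by the statement: allow a single bad region. I would choose $Z_m$ as a point of $\SCL^{\kappa_m}(0)$ where the boundary parametrization fails to converge, and define $\gamma_m$ as the image under $f_m$ of a closed arc of $\partial\BD$ avoiding a small neighbourhood of $f_m^{-1}(Z_m)$. A prime-end argument should give $f_m\to f$ uniformly on that arc. The Hausdorff convergence combined with diameter control, via a Beurling estimate on harmonic measure, then forces the omitted part to lie in $B_\varepsilon(Z_m)$ for all large $m$.
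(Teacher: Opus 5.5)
There is a genuine gap, at the step you yourself flag as the main obstacle, and the argument you give for it does not work. Your reduction is sound: equicontinuity of $f_m|_{\partial\BD}$ is equivalent to uniform local connectedness of the Jordan curves $\SCL^{\kappa_m}(0)$, i.e.\ to the absence of macroscopic near-pinches uniformly in $m$. But Hausdorff convergence plus the monotonicity $\mathrm{int}(\SCL^{\kappa_m}(0))\subseteq\mathrm{int}(\SCL^4(0))$ cannot exclude such near-pinches.

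Consider an outer fjord of width $\eta_m\to0$ and macroscopic length running tangentially, just inside $\SCL^{\kappa_m}(0)$, alongside a fixed arc of $\SCL^4(0)$. This is compatible with all of the following:
\begin{itemize}
\item $\SCL^{\kappa_m}(0)\to\SCL^4(0)$ in the Hausdorff sense;
\item the monotone loop-soup coupling;
\item $\SCL^4(0)$ being a Jordan curve with no pinch at all.
\end{itemize}
The channel does not ``persist in the limit''; it collapses onto a single arc. Yet $\SCL^{\kappa_m}(0)$ traverses that arc three times, so no parametrization converges uniformly. The claim that such a fjord ``would force $\SCL^4(0)$ itself to have a near-pinch'' is therefore false. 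The same configuration is also an inner fjord (the thin strip of interior between the channel and the loop), and it lies entirely inside $\mathrm{int}(\SCL^4(0))$. So approximating $\SCL^4(0)$ from outside by $\Gamma^4$ loops says nothing about it.

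Excluding these configurations uniformly in $m$ would need a genuinely new quantitative input, for instance a uniform-in-$\kappa$ arm-exponent bound for near-pinches of cluster boundaries. You do not supply one. The fallback does not help either, for three reasons:
\begin{itemize}
\item Nothing localizes the failure of equicontinuity at a single point. There may be several fjords, while the statement allows only one $\varepsilon$-ball.
\item The ``prime-end argument'' for uniform convergence away from $Z_m$ is exactly the missing equicontinuity.
\item Small harmonic measure of an arc of $\partial\BD$ does not bound the diameter of its image under $f_m$; a thin fjord is precisely a set of small harmonic measure and large diameter. So the Beurling step cannot force the omitted piece into $B_\varepsilon(Z_m)$.
\end{itemize}

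The paper obtains curve-level control from the radial $\SLE_\kappa(\kappa-6)$ exploration with $\beta=1$, not from the loop soup. Once the excursion of $\theta^\kappa$ that traces $\SCL^\kappa(0)$ has reached height $2^{-n}$, at time $R^{\kappa,n}_{K^{\kappa,n}}$, the rest of the loop is conditionally a chordal $\SLE_\kappa$ in the unexplored domain, from the tip to the force point \cite[Proposition~3.12]{TreeCLE}. By \cite{KS17}, this converges uniformly as a parametrized curve to a chordal $\SLE_4$. Combined with three further inputs, this gives uniform convergence of the middle portion of the loop to a simple arc of $\SCL^4(0)$:
\begin{itemize}
\item subsequential convergence of the mapping-out maps to an injective limit (Montel and Hurwitz);
\item convergence of the excursion index $K^{\kappa,n}$;
\item the fact that the limiting $\SLE_4$ arc does not touch the boundary.
\end{itemize}
Control is lost only on the initial segment $[S,R]$ of the excursion and near the two ends of the chordal piece, all of which lie next to the root of the loop. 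This is why the exceptional ball is centred at $Z_m$, the first point hit by the trunk. The paper then proves these pieces have small diameter, via the choice of $\delta_n<A_n$ and a subsequential Hausdorff-limit argument that uses the simplicity of the limiting arc.
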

In the statement of~\Cref{lemma improved convergence loops}, the point $Z_m$ will be the first place on the loop $\SCL^{\kappa_m}(0)$ visited by the trunk of the $\SLE_{\kappa_m}(\kappa_m-6)$ process and $\gamma_m$ will be given by $\SCL^{\kappa_m}(0)$ outside of a small neighborhood of $Z_m$.

\begin{remark}
	By conformal invariance, the above statements hold if we replace $0$ by any point $z \in \BD$.
\end{remark}
\begin{remark}
	We expect that~\Cref{lemma improved convergence loops} and~\Cref{lemma cv Hausdorff simple loops} imply the convergence of the loops as simple closed paths, but we do not pursue this, as we do not need it.
\end{remark}
Before proving~\Cref{lemma improved convergence loops}, let us show the following result. Let $n\ge 1$. Recall from~\eqref{eq excursion intervals} that the intervals $[S^{\kappa,n}_k, T^{\kappa,n}_k]$ for $k\ge 1$ are the intervals where $\theta^\kappa$ makes an excursion $e^{\kappa,n}_k$ of height at least $2^{-n}$ and that $R^{\kappa, n}_k$ is the first time where $e^{\kappa,n}_k$ reaches $2^{-n}$. Now, let $K^{\kappa, n}$ be the first $k$ such that $e^{\kappa,n}_k$ reaches $2\pi$.

\begin{lemma}
	\label{lemma cv number of excursion}
	For all $n\ge 1$, the random variable $K^{\kappa, n}$ converges in distribution as $\kappa \uparrow 4$ to a geometric random variable with success probability $2^{-n}/(2\pi)$.
\end{lemma}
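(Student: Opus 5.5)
The plan is to show that, for each fixed $\kappa \in (8/3,4)$, the variable $K^{\kappa,n}$ is \emph{exactly} geometric (starting at $1$) with an explicit success probability $p^{\kappa,n}$. It then only remains to check that $p^{\kappa,n} \to 2^{-n}/(2\pi)$ as $\kappa \uparrow 4$, since geometric laws depend continuously on their parameter.

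\emph{Renewal structure.} Here $\theta^\kappa$ is the diffusion~\eqref{eq diffusion theta}, reflected at $0$ and $2\pi$, and the stopping times $R^{\kappa,n}_k, S^{\kappa,n}_k, T^{\kappa,n}_k$ are as in~\eqref{eq excursion intervals}.
\begin{itemize}
\item Each $T^{\kappa,n}_{k-1}$ is a stopping time with $\theta^\kappa_{T^{\kappa,n}_{k-1}} = 0$.
\item By the strong Markov property (with the uniqueness of $\theta^\kappa$ from~\cite[Proposition~4.2]{TreeCLE}), the path after $T^{\kappa,n}_{k-1}$ is a fresh copy of $\theta^\kappa$ started from $0$.
\item The event that $e^{\kappa,n}_k$ reaches $2\pi$ is the event that $\theta^\kappa$ started at time $R^{\kappa,n}_k$ from the level $2^{-n}$ hits $2\pi$ before $0$.
\end{itemize}
Applying the strong Markov property again at $R^{\kappa,n}_k$ shows that these events are i.i.d.\ across $k$, each with probability
\[
p^{\kappa,n} \defeq \BP_{2^{-n}}\bigl[\theta^\kappa \text{ hits } 2\pi \text{ before } 0\bigr].
\]
Since $\theta^\kappa$ a.s.\ reaches $2\pi$ in finite time ($\tau^\kappa_{\mathrm{ccw}}<\infty$), we have $p^{\kappa,n} > 0$, and $K^{\kappa,n}$ is geometric starting at $1$ with parameter $p^{\kappa,n}$.

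\emph{Scale function computation.} On $(0,2\pi)$ the generator is $\frac{\kappa}{2}f'' + \frac{\kappa-4}{2}\cot(\theta/2) f'$. A scale function therefore satisfies
\[
s'(\theta) = \exp\Bigl(-\tfrac{\kappa-4}{\kappa}\int \cot(u/2)\,\rd u\Bigr) = \sin(\theta/2)^{(8-2\kappa)/\kappa}.
\]
For $\kappa \in (8/3,4)$ the exponent $(8-2\kappa)/\kappa$ lies in $(0,1)$, so $s'$ is bounded and positive on $(0,2\pi)$. Before the hitting time of $\{0,2\pi\}$ the process is unreflected, so
\[
p^{\kappa,n} = \frac{\int_0^{2^{-n}} \sin(\theta/2)^{(8-2\kappa)/\kappa}\,\rd\theta}{\int_0^{2\pi} \sin(\theta/2)^{(8-2\kappa)/\kappa}\,\rd\theta}.
\]

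\emph{Limit.} As $\kappa \uparrow 4$ the exponent tends to $0$. The integrands are bounded by $1$ and converge pointwise to $1$ on $(0,2\pi)$, so dominated convergence gives $p^{\kappa,n} \to 2^{-n}/(2\pi)$, which proves the lemma. This value agrees with $\BP[T^n_1 \ge \tau] = 2^{-n}/(2\pi)$ for the reflected Brownian motion computed in the proof of \Cref{lemma number of clockwise loops}.

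The only delicate point is the renewal step. One has to confirm that the reflection at $0$ does not affect the law of the path after $T^{\kappa,n}_{k-1}$, beyond the fact that it restarts from $0$. One also has to confirm that no boundary term at $0$ enters the exit probability from $2^{-n}$, which holds because the hitting problem concerns the process killed at $\{0,2\pi\}$. Both points are standard for this one-dimensional diffusion; its Bessel dimension near the endpoints is $3-8/\kappa \in (0,1)$, so the endpoints are reflecting and regular.
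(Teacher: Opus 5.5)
Your proposal is correct and follows essentially the same argument as the paper. Both establish the geometric law via the strong Markov property of $\theta^\kappa$ at the excursion times. Both compute the success probability from the scale function with $s'(\theta)=\sin(\theta/2)^{2(4-\kappa)/\kappa}$, justified by optional stopping for the process killed at $\{0,2\pi\}$, and then conclude by dominated convergence as $\kappa\uparrow 4$.
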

\begin{proof}
	Note that $K^{\kappa, n}$ follows a geometric distribution (by the strong Markov property of $\theta^\kappa$ at the times $T^{\kappa,n}_k$) of success probability $\BP_\varepsilon[\theta^\kappa \text{ hits } 2\pi \text{ before } 0]$, where under $\BP_\varepsilon$, the process $\theta^\kappa$ is the diffusion~\eqref{eq diffusion theta} starting from $\varepsilon$ with $\varepsilon= 2^{-n}$. In this proof, we kill $\theta^\kappa$ when it hits $\{0,2\pi\}$. Therefore, to conclude it suffices to prove that $\BP_\varepsilon[\theta^\kappa \text{ hits } 2\pi \text{ before } 0]$ converges to a limit in $(0,1)$ as $\kappa \uparrow 4$.
	
	By \cite[Chapter~VII,  Proposition~3.2]{RY05}, there is a function $s^\kappa$, called a scale function, such that
	\[
	\BP_\varepsilon[\theta^\kappa \text{ hits } 2\pi \text{ before } 0] = \frac{s^\kappa(\varepsilon)-  s^\kappa(0)}{s^\kappa(2\pi)-s^\kappa(0)}.
	\]
	By \cite[Chapter~VII,  Exercise (3.20)]{RY05}, the scale function $s^\kappa$ associated with the diffusion $\theta^\kappa$ on $(0,2\pi)$ is
	\[
	s^\kappa(x) = \int_\pi^x \exp\left(-  \int_\pi^y 2\frac{\kappa-4}{2} \cot(z/2)/\kappa \mathrm{d}z  \right) \mathrm{d}y = \int_\pi^x \exp\left(\frac{2(4-\kappa)}{\kappa} \log (\sin(y/2))\right) \mathrm{d}y.
	\]
	
	The formula is only stated in~\cite{RY05} for diffusions on $\BR$, but the reader may check that $(s^\kappa(\theta^\kappa_t))_{t\ge 0}$ is a bounded martingale and apply the optional stopping theorem.
	
	By letting $\kappa \uparrow 4$ and applying the dominated convergence theorem, we see that $s^\kappa(x)\to x-\pi$ so that
	\[
	\BP_\varepsilon[\theta^\kappa \text{ hits } 2\pi \text{ before } 0] \mathop{\longrightarrow}\limits_{\kappa \uparrow 4} \frac{\varepsilon}{2\pi}.
	\]
	This yields the desired result.
\end{proof}

Let us end this subsection by proving~\Cref{lemma improved convergence loops}.
\begin{proof}[Proof of~\Cref{lemma improved convergence loops}]
	Consider the radial $\SLE_\kappa(\kappa-6)$ targeted at $0$ constructed in~\Cref{subsec:branching_sle_kappa_minus_6}, i.e.\ the totally asymmetric one ($\beta=1$); we fix this choice throughout the present proof, since $Z^\kappa$ and the curves $\gamma_m$ below depend on it. Let $Z^\kappa$ be the first point of $\SCL^\kappa(0)$ which is hit by the trunk of the exploration (in the sense of~\cite{CLEPerc}). Let $n\ge 1$. Recall from~\eqref{eq diffusion theta} the definition of $\theta^\kappa$ (the definition was stated for $\kappa>4$ but the same definition works for $\kappa<4$). 
	
	Recall that $g_t^\kappa$ is the solution of the radial Loewner equation~\eqref{eq Loewner radial} and that $g^\kappa_t\colon D^\kappa_t \to \BD$ is a conformal mapping such that $(g_t^\kappa)'(0)>0$ and $g_t^\kappa(0)=0$. Recall that $\eta^\kappa$ is the $\SLE_\kappa(\kappa-6)$ curve associated with the exploration.
	
	\stepx{step:ilc-far}{Subsequential limit of the curve far from the endpoints} Let $n\ge 1$ and $k\ge 1$. By Montel's theorem one can see that the family of random conformal maps defined on the unit disk $((g^\kappa_{R^{\kappa,n}_k})^{-1})_{8/3<\kappa \le 4}$ is tight as $\kappa \uparrow 4$ for the topology of uniform convergence on compact subsets of~$\BD$.
	
	Furthermore, since $R^{\kappa,n}_k$ is a stopping time for $\theta^\kappa$, by \cite[Proposition~3.12]{TreeCLE}, we know that conditionally on $(\theta^\kappa_s)_{0 \le s \le R^{\kappa,n}_k}$, the curve $g^\kappa_{R^{\kappa,n}_k}(\eta^\kappa|_{[R^{\kappa, n}_k, T^{\kappa, n}_k]})$ is a chordal $\SLE_\kappa$ from $W^\kappa_{R^{\kappa,n}_k}$ to the associated force point that we denote by $O^\kappa_{R^{\kappa,n}_k}$. Recall from the discussion above \cite[Equation (4.1)]{TreeCLE} that $\theta^\kappa_{R^{\kappa,n}_k} = \arg (W^\kappa_{R^{\kappa,n}_k})- \arg(O^\kappa_{R^{\kappa,n}_k})$. Furthermore, we know that $\theta^\kappa_{R^{\kappa,n}_k} = 2^{-n}$. Let $\check{\gamma}_k^{\kappa,n}:[0, \infty]\to \overline{\BD}$ be the curve $g^\kappa_{R^{\kappa,n}_k}(\eta^\kappa|_{[R^{\kappa, n}_k, T^{\kappa, n}_k]})$ parameterized by half-plane capacity (by conformally mapping $\BD$ to $\BH$, the starting point to $0$ and the endpoint to $\infty$ and the midpoint on the clockwise arc between the starting and ending points to $1$). By \cite[Theorem~1.10]{KS17}, we know that a chordal $\SLE_\kappa$ curve from $0$ to $\infty$ in $\BH$ parameterized by half-plane capacity converges in distribution to a chordal $\SLE_4$ curve from $0$ to $\infty$ in $\BH$ in the topology of uniform convergence on compact subsets of $\BR_{\ge 0}$. Therefore, $(W^\kappa_{R^{\kappa,n}_k})^{-1} \cdot \check{\gamma}_k^{\kappa,n}$ converges in distribution uniformly on compact subsets as $\kappa \uparrow 4$ toward a chordal $\SLE_4$ in $\BD$ from $1$ to $e^{-\ri 2^{-n}}$ denoted by $\widetilde{\gamma}^n_k$. Moreover, the convergence is uniform on $[0,\infty]$: by scaling, $\BP[\check\gamma([T,\infty]) \not\subseteq B_\delta(\text{target})]$ is the probability that the $\BH$-curve returns to $B_{R/\sqrt{T}}(0)$ after time $1$ for some $R=R(\delta)$, which tends to $0$ as $T\to\infty$ uniformly in $\kappa \in (8/3,4]$.
	
	Let $\kappa_m \uparrow 4$. By Prokhorov's theorem and Skorokhod's representation theorem, we may assume that along a subsequence again denoted by $(\kappa_m)_{m\ge 1}$, for all $n,k\ge 1$, as $m \to \infty$, a.s.\ $W^{\kappa_m}_{R_k^{\kappa_m, n}} \to X_k^n$ for some random variable $X_k^n$ in $\partial \BD$, but also $(g^{\kappa_m}_{R^{\kappa_m, n}_k})^{-1} \to f_k^n$ uniformly on compact subsets of $\BD$ for some random conformal map $f^n_k$, as well as $\SCL^{\kappa_m}(0) \to \SCL^4(0)$ in the Hausdorff topology by~\Cref{lemma cv Hausdorff simple loops}, $K^{\kappa_m,n} \to K^{n}$ for some random variable $K^{n}$ in $\BZ_{\ge0}$ by~\Cref{lemma cv number of excursion} and
	\begin{equation}\label{eq cv SLE kappa to SLE 4}
		(W^{\kappa_m}_{R^{\kappa_m,n}_k})^{-1} \cdot \check{\gamma}_k^{\kappa_m,n} \mathop{\longrightarrow}\limits_{m \to \infty}  \widetilde{\gamma}_k^n,
	\end{equation}
	uniformly on $[0,\infty]$. Note that, as a consequence of Hurwitz' theorem for sequences of holomorphic functions, the holomorphic function $f^n_k$ is either injective or constant as a limit of injective holomorphic functions, but since $(g^{\kappa_m}_{R^{\kappa_m, n}_k})^{-1}(\BD)$ contains the domain encircled by $\SCL^{\kappa_m}(0)$, hence a fixed ball around $0$ for $m$ large (\Cref{rem:interior membership}), $f^n_k$ is not constant. Hence, $f^n_k$ is injective.
	
	Furthermore, the convergences stated above imply that a.s., uniformly on $[0,\infty]$,
	\begin{equation}\label{eq uniform convergence SLE}
		\check{\gamma}_k^{\kappa_m, n} \mathop{\longrightarrow}\limits_{m \to \infty} \check{\gamma}^n_k \defeq X^n_k \cdot \widetilde{\gamma}_k^n.
	\end{equation}
	Since $\check{\gamma}_k^n$ is a chordal $\SLE_4$, it does not touch the boundary except at its endpoints. Therefore, we have a.s.\ for all $n,k\ge 1$
	\begin{equation}\label{eq cv unif on compact sets simple loop}
		\left((g^{\kappa_m}_{R^{\kappa_m, n}_k})^{-1} \circ  \check{\gamma}_k^{\kappa_m, n}\right) \mathop{\longrightarrow}\limits_{m\to \infty}
		\gamma^n_k \defeq (f^n_k \circ  \check{\gamma}^n_k)
	\end{equation}
	uniformly on compact subsets of $(0,\infty)$. Note that the curve $\gamma^n_k$ is a simple curve since $\check{\gamma}^n_k$ is an $\SLE_4$ curve and since $f^n_k$ is injective.
	
	Now, recall that $K^{\kappa_m, n}$ is the first $k$ such that the corresponding excursion reaches $2\pi$. In other words, the loop $\eta^{\kappa_m}|_{[S^{\kappa_m, n}_{K^{\kappa_m,n}}, T^{\kappa_m, n}_{K^{\kappa_m,n}}]}$ is $\SCL^{\kappa_m}(0)$. Since $K^{\kappa_m,n} \to K^{n}$ a.s.\ as $m\to \infty$, we know that a.s.\ for all $m$ large enough, $K^{\kappa_m, n}=K^{n}$.

	By the Hausdorff convergence of $\SCL^{\kappa_m}(0)$ toward $\SCL^{4}(0)$ and~\eqref{eq cv unif on compact sets simple loop}, we have $\gamma^n_{K^n}(\BR_{\ge 0}) \subseteq \SCL^4(0)$.	Note that because $2^{-n}$ is decreasing, for all $m\ge 1$ and $n\ge 1$, we have $R^{\kappa_m, n+1}_{K^{\kappa_m,n+1}}\le R^{\kappa_m, n}_{K^{\kappa_m,n}}\le T^{\kappa_m, n}_{K^{\kappa_m,n}} = T^{\kappa_m, n+1}_{K^{\kappa_m,n+1}}$, so that  $\eta^{\kappa_m}([R^{\kappa_m, n}_{K^{\kappa_m,n}}, T^{\kappa_m, n}_{K^{\kappa_m,n}}]) \subseteq \eta^{\kappa_m}([R^{\kappa_m, n+1}_{K^{\kappa_m,n+1}}, T^{\kappa_m, n+1}_{K^{\kappa_m,n+1}}])$. 
	
	\stepx{step:ilc-all}{The subsequential limit far from the endpoints gives all of the loop} Let us prove that there exist two sequences of random variables $0<\delta_n<A_n$ such that a.s., the probability that a planar Brownian motion starting from zero hits $\partial\BD$ before hitting $\gamma^n_{K^n}([\delta_n, A_n])$ goes to zero as $n\to \infty$.
	
	To construct the sequence $((\delta_n, A_n))_{n\ge 1}$, we reason as follows. Note that the $\SLE_4$ curve $\widetilde{\gamma}^n_{K^n}\colon [0, \infty] \to \overline{\BD}$ is continuous at $0$ and $\infty$, and recall that $\widetilde{\gamma}^n_{K^n}(0) = 1$ and $\widetilde{\gamma}^n_{K^n}(\infty)= e^{-\ri 2^{-n}}$. Moreover, $\widetilde{\gamma}^n_{K^n}$ separates $0$ from the long arc of $\partial \BD$ between $1$ and $e^{-\ri 2^{-n}}$ (the prelimit curves do, and the limit a.s.\ does not pass through $0$), so the only way for a planar Brownian motion starting at zero to hit $\partial \BD$ before hitting $\widetilde{\gamma}^n_{K^n}$ is to hit the short arc $[e^{-\ri 2^{-n}}, 1]$ before $\widetilde{\gamma}^n_{K^n}$. Since the length of this arc goes to zero as $n\to \infty$, the probability that a planar Brownian motion starting at $0$ hits $\partial \BD$ before hitting $\widetilde{\gamma}^n_{K^n}$ goes to zero as $n\to \infty$. By continuity of $\widetilde{\gamma}^n_{K^n}\colon [0, \infty] \to \overline{\BD}$ at $0$ and $\infty$, we can thus choose some (random) $0<\delta_n<A_n$ so that the probability that a planar Brownian motion starting at $0$ hits $\partial \BD$ before hitting $\widetilde{\gamma}^n_{K^n}([\delta_n, A_n])$ converges to zero as $n\to \infty$. By conformal invariance of planar Brownian motion and by definition of $\gamma^n_{K^n}$ (noting that $f^n_{K^n}(0)=0$), we get that the probability that a planar Brownian motion starting at $0$ hits $\partial f^n_{K^n}(\BD)$ before hitting $\gamma^n_{K^n}([\delta_n, A_n])$ goes to zero as $n\to \infty$. Since $f^n_{K^n}(\BD) \subseteq \BD$, we have that the probability of hitting $\partial \BD$ before hitting $\gamma^n_{K^n}([\delta_n, A_n])$ also goes to zero as $n\to \infty$.

	Therefore, recalling once more that $\gamma^n_{K^n}([\delta_n, A_n])\subset \SCL^4(0)$, we see that a.s.,
	\begin{equation}\label{eq cv Hausdorff gamma loop}
		\gamma^n_{K^n}([\delta_n, A_n]) \mathop{\longrightarrow}\limits_{n\to \infty} \SCL^4(0)
	\end{equation}
	in the Hausdorff topology. Indeed, if it is not the case then there is $\varepsilon>0$ such that for infinitely many $n$, there is an $\varepsilon$-ball centered at a point $x_n$ of $\SCL^4(0)$ which does not intersect $\gamma^n_{K^n}([\delta_n, A_n])$. By compactness, we can assume that $x_n \to x \in \SCL^4(0)$ and $B_{\varepsilon/2}(x) \cap \gamma^n_{K^n}([\delta_n,A_n]) = \emptyset$ for infinitely many $n$; since a planar Brownian motion started at $0$ crosses $\SCL^4(0)$ inside $B_{\varepsilon/2}(x)$ and then hits $\partial \BD$ before returning to $\SCL^4(0)$ with probability $p(x,\varepsilon)>0$, this contradicts the choice of $(\delta_n,A_n)$.
	
	Next, let us prove that $\mathrm{diam}(\gamma^n_{K^n}([ A_n, \infty))\cup\gamma^n_{K^n}([ 0, \delta_n])) \to 0$ a.s.\ as $n\to \infty$. Recall that the loop $\SCL^4(0)$ is simple, so that $\SCL^4(0)\setminus \gamma^n_{K^n}([\delta_n, A_n])$ is a connected simple path. 
	More precisely, let us show that a.s.,
	\begin{equation}\label{eq the complement in the loop goes to zero}
		\mathrm{diam}(\SCL^4(0)\setminus \gamma^n_{K^n}([\delta_n, A_n]))\mathop{\longrightarrow}\limits_{n\to \infty}0.
	\end{equation}
	The loop $\SCL^4(0)$ being a Jordan curve, the inverse of its parameterization is uniformly continuous, so if the arc $\SCL^4(0)\setminus\gamma^n_{K^n}([\delta_n,A_n])$ had diameter at least $c>0$ for infinitely many $n$, its midpoint would be at distance bounded below from $\gamma^n_{K^n}([\delta_n,A_n])$, contradicting~\eqref{eq cv Hausdorff gamma loop}. This shows \eqref{eq the complement in the loop goes to zero}. In particular $|\gamma^n_{K^n}(\delta_n)-\gamma^n_{K^n}(A_n)|\to0$.
	
	\stepx{step:ilc-endpoints}{The parts close to the endpoints of the loop are negligible} Let us show that a.s.,
	\begin{align}
	&\limsup_{m \to \infty} \mathrm{diam}(\eta^{\kappa_m}([S^{\kappa_m,n}_{K^n}, R^{\kappa_m, n}_{K^n}]))\mathop{\longrightarrow}\limits_{n\to \infty}0 \quad\text{and} \label{eq diameter of small parts in the loop}\\
		&\limsup_{m\to \infty} \mathrm{diam}\left(\left((g^{\kappa_m}_{R^{\kappa_m, n}_{K^n}})^{-1} \circ  \check{\gamma}_{K^n}^{\kappa_m, n} \right)\left([0, \delta_n]  \right)\right) \notag\\
		&\quad+\limsup_{m\to \infty} \mathrm{diam}\left(\left((g^{\kappa_m}_{R^{\kappa_m, n}_{K^n}})^{-1} \circ  \check{\gamma}_{K^n}^{\kappa_m, n} \right) \left( [A_n, \infty) \right)\right)
		\mathop{\longrightarrow}\limits_{n\to \infty}0. \label{eq diameter of small parts in the loop2}
	\end{align}
	Let us start with~\eqref{eq diameter of small parts in the loop}. Since $\eta^{\kappa_m}([S^{\kappa_m,n}_{K^n}, R^{\kappa_m, n}_{K^n}])\subset \SCL^{\kappa_m}(0)$, any subsequential limit in the Hausdorff topology of $\eta^{\kappa_m}([S^{\kappa_m,n}_{K^n}, R^{\kappa_m, n}_{K^n}])$ as $m\to \infty$ must be a connected subset of $\SCL^4(0)$. 
	Let $x_n, y_n$ be the two endpoints of a subsequential limit $\SCK_n$ (which can be seen as a closed interval of $\SCL^4(0)$) of $\eta^{\kappa_m}([S^{\kappa_m,n}_{K^n}, R^{\kappa_m, n}_{K^n}])$ for all $n\ge 1$ (if~\eqref{eq diameter of small parts in the loop} failed, such limits could be chosen with $\mathrm{diam}(\SCK_n) \ge c>0$ along a subsequence in $n$). Note that $x_n, y_n \in \SCL^4(0)$. By compactness, we may assume that $x_n \to x \in \SCL^4(0)$ and $y_n \to y\in \SCL^4(0)$ along a subsequence. But, by~\eqref{eq cv Hausdorff gamma loop}, we see that there exists $t_n, t'_n \in [\delta_n, A_n]$ such that $\gamma^n_{K^n}(t_n) \to x$ as $n\to \infty$ and $\gamma^n_{K^n}(t'_n) \to y$ as $n\to \infty$. Assume for contradiction that $x\neq y$. Then, by~\eqref{eq the complement in the loop goes to zero}, there are two cases. 
	\begin{itemize}
		\item If the interval $[x,y]$ in $\SCL^4(0)$ (corresponding to the subsequential limit of $\SCK_n$) is the Hausdorff limit of $\gamma^n_{K^n}([t_n\wedge t'_n, t_n \vee t'_n])$, then this means that for $n$ large enough, there exist $t^n_1< t^n_2$ such that $\gamma^n_{K^n}([t^n_1, t^n_2])\subset \SCK_n$ and has diameter at least $\vert x-y\vert/2$ ($\SCK_n$ and $\gamma^n_{K^n}([t_n\wedge t'_n,t_n\vee t'_n])$ being sub-arcs of the same Jordan curve $\SCL^4(0)$ with endpoints converging to $x$ and $y$, their intersection contains such a sub-arc). This is absurd since $\gamma^n_{K^n}$ is a simple curve which is included (except its endpoints $\gamma^n_{K^n}(0)$, $\gamma^n_{K^n}(\infty)$) in $f^n_{K^n}(\BD)$ which does not intersect $\SCK_n$. Indeed, since $(g^{\kappa_m}_{R^{\kappa_m, n}_{K^n}})^{-1} \to f_{K^n}^n$ uniformly on compact sets, the domain $D^{\kappa_m}_{R^{\kappa_m, n}_{K^n}}$ converges in the Carath\'eodory sense toward $f_{K^n}^n(\BD)$. So if we had $f^n_{K^n}(\BD)\cap \SCK_n \neq \emptyset$, we would have a point $w$ of $\SCK_n$ and $\delta>0$ such that $\overline{B_\delta(w)} \subset f^n_{K^n}(\BD)$, but then this compact set $\overline{B_\delta(w)}$ would be in $D^{\kappa_m}_{R^{\kappa_m, n}_{K^n}}$ for all $m$ large enough, hence contradicting the fact that $\SCK_n$ is a subsequential limit of  $\eta^{\kappa_m}([S^{\kappa_m,n}_{K^n}, R^{\kappa_m, n}_{K^n}])$ (which does not intersect $D^{\kappa_m}_{R^{\kappa_m, n}_{K^n}}$).
		\item Otherwise, the interval $[x,y]$ is the Hausdorff limit of $\gamma^n_{K^n}([\delta_n,t_n \wedge t'_n]) \cup \gamma^n_{K^n}([t_n \vee t'_n, A_n))$ as $n\to \infty$. Then for $n$ large enough, there exist $t^n_1<t^n_2$ such that $\gamma^n_{K^n}([t^n_1, t^n_2]) \subset \SCK_n$ and has diameter at least $\vert x-y\vert/4$ (the denominator $4$ comes from the fact that in this case we cover $[x,y]$ using two intervals, so at least one of them has diameter at least $\vert x-y\vert/4$). We conclude as in the first case.
	\end{itemize}
	Next, let us prove~\eqref{eq diameter of small parts in the loop2}. Let us deal with $[0, \delta_n]$, the same reasoning works for $[A_n, \infty)$ by reversibility of the $\SLE_4$. For all $n$, let $\SCK_n$ be a subsequential limit of $\left((g^{\kappa_m}_{R^{\kappa_m, n}_{K^n}})^{-1} \circ  \check{\gamma}_{K^n}^{\kappa_m, n} \right)\left([0, \delta_n]  \right)$ as $m\to \infty$. Let $x_n, y_n$ be the two endpoints of $\SCK_n$, seen as an interval of $\SCL^4(0)$ and take some subsequential limits $x_n \to x$ and $y_n \to y$. As in the above paragraph, assume by contradiction that $x\neq y$. Then, by~\eqref{eq cv Hausdorff gamma loop}, there exists $t_n, t'_n \in [\delta_n, A_n]$ such that $\gamma^n_{K^n}(t_n) \to x$ as $n\to \infty$ and $\gamma^n_{K^n}(t'_n) \to y$ as $n\to \infty$. Again, we have the same two cases as above. 
	\begin{itemize}
		\item If the interval $[x,y]$ in $\SCL^4(0)$ (corresponding to the subsequential limit of $\SCK_n$) is the Hausdorff limit of $\gamma^n_{K^n}([t_n\wedge t'_n, t_n \vee t'_n])$, then for all $n$ large enough, there exist $t^n_1<t^n_2$ such that $\gamma^n_{K^n}([t_1^n, t_2^n]) \subset \SCK_n$ and  $\mathrm{diam}(\gamma^n_{K^n}([t_1^n, t_2^n])) $ is at least $\vert x-y \vert /2$. But since $\mathrm{diam}(\gamma^n_{K^n}([ A_n, \infty))\cup\gamma^n_{K^n}([ 0, \delta_n])) \to 0$ by~\eqref{eq the complement in the loop goes to zero}, we may assume that $\delta_n \le t^n_1 < t^n_2 \le A_n$. 
		
		Let us take such $n$ large enough. Note that $\SCK'_n \coloneqq \gamma^n_{K^n}([t_1^n, t_2^n]) \subset \SCK_n$ is a compact subset of $f^n_{K^n}(\BD)$ by~\eqref{eq cv unif on compact sets simple loop}. Let $z_n, w_n \in \SCK'_n$ such that $\vert z_n- w_n \vert \ge \vert x-y\vert /2$. Since $\SCK'_n \subseteq \SCK_n$, we have, along a subsequence, some $u^m_n, v^m_n \in [0, \delta_n]$ such that 
		\[
		\left((g^{\kappa_m}_{R^{\kappa_m, n}_{K^n}})^{-1} \circ  \check{\gamma}_{K^n}^{\kappa_m, n}\right) (u^m_n)
		\mathop{\longrightarrow}\limits_{m\to \infty} z_n \qquad \text{and}\qquad
		\left((g^{\kappa_m}_{R^{\kappa_m, n}_{K^n}})^{-1} \circ  \check{\gamma}_{K^n}^{\kappa_m, n}\right) (v^m_n)
		\mathop{\longrightarrow}\limits_{m\to \infty} w_n.
		\]
		By the uniform convergence of $g^{\kappa_m}_{R^{\kappa_m, n}_{K^n}}$ toward $(f^n_{K^n})^{-1}$ on compact subsets of $f^n_{K^n}(\BD)$ and by the convergence of $W^{\kappa_m}_{R^{\kappa_m,n}_{K^n}}$ to $X^n_{K^n}$, we deduce that along a subsequence,
		\begin{align*}
		(W^{\kappa_m}_{R^{\kappa_m,n}_{K^n}})^{-1} \cdot  \check{\gamma}_{K^n}^{\kappa_m, n}(u^m_n)
		&= 
		(W^{\kappa_m}_{R^{\kappa_m,n}_{K^n}})^{-1} \cdot g^{\kappa_m}_{R^{\kappa_m, n}_{K^n}}\left(\left((g^{\kappa_m}_{R^{\kappa_m, n}_{K^n}})^{-1} \circ  \check{\gamma}_{K^n}^{\kappa_m, n}\right) (u^m_n)\right)\\
		&\mathop{\longrightarrow}\limits_{m\to \infty} (X^n_{K^n})^{-1} \cdot (f^n_{K^n})^{-1}( z_n).
		\end{align*}
		But since $z_n, w_n \in \SCK'_n$, there exist $s_1^n, s_2^n \in [t^n_1, t^n_2]$ such that $z_n = \gamma^n_{K^n}(s_1^n)$ and $w_n = \gamma^n_{K^n}(s_2^n)$. By definition of $\gamma^n_{K^n}$, 
		\[
		(X^n_{K^n})^{-1} \cdot (f^n_{K^n})^{-1}( z_n)= (X^n_{K^n})^{-1} \cdot \check{\gamma}^n_{K^n}(s^n_1)  = \widetilde{\gamma}^n_{K^n}(s^n_1),
		\] 
		and similarly
		\[
		(X^n_{K^n})^{-1} \cdot (f^n_{K^n})^{-1}( w_n)= (X^n_{K^n})^{-1} \cdot \check{\gamma}^n_{K^n}(s^n_2)  = \widetilde{\gamma}^n_{K^n}(s^n_2).
		\] 
		Moreover, since $u^m_n, v^m_n \in [0, \delta_n]$, along a (sub-)subsequence, we may assume that $u^m_n \to u_n$  and $v^m_n \to v_n$ as $m \to \infty$. Thus, by~\eqref{eq cv SLE kappa to SLE 4}, along this (sub-)subsequence,
		\[
		(W^{\kappa_m}_{R^{\kappa_m,n}_{K^n}})^{-1} \cdot  \check{\gamma}_{K^n}^{\kappa_m, n}(u^m_n) \mathop{\longrightarrow}\limits_{m\to \infty} \widetilde{\gamma}^n_{K^n}(u_n)
		\qquad \text{and} \qquad
		(W^{\kappa_m}_{R^{\kappa_m,n}_{K^n}})^{-1} \cdot  \check{\gamma}_{K^n}^{\kappa_m, n}(v^m_n) \mathop{\longrightarrow}\limits_{m\to \infty} \widetilde{\gamma}^n_{K^n}(v_n).
		\]
		Since $\widetilde{\gamma}^n_{K^n}$ is a simple curve in $\BD$ from $1$ to $e^{-\ri 2^{-n}}$, we have necessarily $s_1^n=u_n = \delta_n$ and $s_2^n=v_n= \delta_n$. This is absurd since it implies that $z_n= w_n$.
		\item Otherwise, the interval $[x,y]$ is the Hausdorff limit of $\gamma^n_{K^n}([\delta_n,t_n \wedge t'_n]) \cup \gamma^n_{K^n}([t_n \vee t'_n, A_n))$ as $n\to \infty$. Then for $n$ large enough, there exist $t^n_1<t^n_2$ such that $\gamma^n_{K^n}([t^n_1, t^n_2]) \subset \SCK_n$ and has diameter at least $\vert x-y\vert/4$. We conclude as in the first case.
	\end{itemize}
	This ends the proof of~\eqref{eq diameter of small parts in the loop2}.

	\stepx{step:ilc-construction}{Construction of $\gamma_m$} We take a (random) integer $n$, chosen large enough below, and set for all $t \in [0, 1]$, \[\gamma_m(t)\defeq\left((g^{\kappa_m}_{R^{\kappa_m, n}_{K^n}})^{-1} \circ  \check{\gamma}_{K^n}^{\kappa_m, n}\right) (	\delta_n+ (A_n-\delta_n)t).\]
	By~\eqref{eq cv unif on compact sets simple loop}, we know that $\gamma_m$ converges uniformly on $[0,1]$ as $m \to \infty$ a.s. Furthermore, we have
	\begin{align*}
		\SCL^{\kappa_m}(0) =& \eta^{\kappa_m}([S^{\kappa_m, n}_{K^{\kappa_m, n}}, T^{\kappa_m,n}_{K^{\kappa_m, n}}]) \\
		=&  \eta^{\kappa_m}([S^{\kappa_m, n}_{K^{\kappa_m, n}}, R^{\kappa_m,n}_{K^{\kappa_m, n}}]) \cup \left((g^{\kappa_m}_{R^{\kappa_m, n}_{K^{\kappa_m, n}}})^{-1} \circ  \check{\gamma}_{K^{\kappa_m, n}}^{\kappa_m, n}\right)([0, \delta_n])\\
		&\cup \left((g^{\kappa_m}_{R^{\kappa_m, n}_{K^{\kappa_m, n}}})^{-1} \circ  \check{\gamma}_{K^{\kappa_m, n}}^{\kappa_m, n}\right)([ A_n, \infty))\cup
		\gamma_m([0,1]) 
	\end{align*}
	By~\eqref{eq diameter of small parts in the loop} and~\eqref{eq diameter of small parts in the loop2}, we know that the diameters of the first three terms of the union are a.s.\ small, provided $m$ is large enough and $n$ is chosen large enough (possibly random).	Notice that their union is connected (using $\eta^{\kappa_m}(T^{\kappa_m,n}_{K^{\kappa_m,n}}) = \eta^{\kappa_m}(S^{\kappa_m,n}_{K^{\kappa_m,n}})$), so that the diameter of the union of the first three terms is a.s.\ small, provided $m$ is large enough and $n$ is chosen large enough (possibly random). Set $Z_m \defeq Z^{\kappa_m}$. Since $Z_m$ is the first point of the loop hit by the trunk, it is the root of the loop \cite{CLEPerc}, meaning $Z_m = \eta^{\kappa_m}(S^{\kappa_m, n}_{K^{\kappa_m, n}})$. Consequently, $Z_m$ belongs to the first term of the union above. Because the union of the first three terms is connected, contains $Z_m$, and has an arbitrarily small diameter, it is entirely contained in $B_\varepsilon(Z_m)$ for all $m$ large enough and $n$ chosen large enough. Thus, $\SCL^{\kappa_m}(0) \subset B_\varepsilon(Z_m) \cup \gamma_m([0,1])$, and $(Z_m,\gamma_m)$ converges a.s.\ by~\eqref{eq cv unif on compact sets simple loop}. This ends the proof.
\end{proof}

\subsubsection{Convergence of large loops}
\label{subsubsec:big_loops_convergence}
Next, let us show the convergence of the loops of diameter larger than $\varepsilon>0$. Before doing so, we rule out the existence of thin loops in the following lemma, in which we write $B_\varepsilon(A)$ for the $\varepsilon$-neighborhood of a set $A$.
\begin{lemma}\label{lemma no thin simple loop}
	For all $\kappa_m \uparrow 4$, for all $x \in \BD$, for all $\beta>0$,
	\[
	\lim_{\varepsilon \to 0} \sup_{m\ge 1} \BP[\exists \SCL \in \Gamma^{\kappa_m} \setminus \{\SCL^{\kappa_m}(x)\}, \ \SCL \subset B_\varepsilon(\SCL^{\kappa_m}(x)) \text{ and } \mathrm{diam}(\SCL)\ge \beta]=0.
	\]
\end{lemma}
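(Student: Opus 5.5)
We argue by contradiction via compactness: assume there are $\beta>0$, $\eta>0$, $\varepsilon_j\to0$ and indices $m_j$ such that with probability at least $\eta$ some loop $\SCL\neq\SCL^{\kappa_{m_j}}(x)$ with $\mathrm{diam}(\SCL)\ge\beta$ lies in $B_{\varepsilon_j}(\SCL^{\kappa_{m_j}}(x))$. For each fixed $m$ the probability tends to zero as $\varepsilon\to0$: a fixed $\CLE_{\kappa}$ with $\kappa<4$ has finitely many loops of diameter $\ge\beta$, all disjoint from $\SCL^{\kappa}(x)$ and hence at positive distance from it. So we may assume $m_j\to\infty$, i.e. $\kappa_{m_j}\uparrow4$. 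By conformal invariance take $x=0$.

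\textbf{Step 1: limits of the two loops.} By \Cref{lemma cv Hausdorff simple loops} and \Cref{lemma improved convergence loops} (after Skorokhod), along a subsequence $\SCL^{\kappa_{m_j}}(0)\to\SCL^4(0)$ in the Hausdorff sense, together with the uniform convergence of the curves $\gamma_{m_j}$ covering the loop up to a small ball around $Z_{m_j}$. On the bad event choose the offending loop $\SCL_j$. By Hausdorff compactness $\SCL_j\to\Lambda$ and $\overline{\mathrm{int}(\SCL_j)}\to\Lambda'$, with $\Lambda\subseteq\SCL^4(0)$ and $\mathrm{diam}(\Lambda)\ge\beta$. Because $\SCL_j$ is disjoint from $\SCL^{\kappa_{m_j}}(0)$ and does not surround $0$, it sits in the complement of $\overline{\mathrm{int}(\SCL^{\kappa_{m_j}}(0))}$. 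Since $\SCL_j$ hugs the loop, $\mathrm{int}(\SCL_j)$ must therefore be a thin region lying outside $\SCL^{\kappa_{m_j}}(0)$ and within distance $\varepsilon_j$ of it along an arc of diameter $\ge\beta$.

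\textbf{Step 2: contradiction via the monotone coupling.} In the Brownian loop soup coupling, $\SCL_j$ is surrounded by a loop of $\Gamma^4$. Because $\SCL_j$ lies outside $\SCL^{\kappa_{m_j}}(0)$ and $\mathrm{int}(\SCL^{\kappa_{m_j}}(0))\subseteq\mathrm{int}(\SCL^4(0))$, while $\SCL_j$ approaches $\SCL^4(0)$ from outside in the limit, the surrounding $\Gamma^4$ loop cannot be $\SCL^4(0)$ itself. Indeed, if it were, $\SCL_j$ would lie inside $\mathrm{int}(\SCL^4(0))\setminus\mathrm{int}(\SCL^{\kappa_{m_j}}(0))$, a region whose Lebesgue measure tends to $0$. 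That case is still possible, so instead we use harmonic measure.

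We construct a Brownian motion started at $0$ that crosses $\SCL^4(0)$ near a point $y\in\Lambda$ (away from $\lim Z_{m_j}$) and reaches $\partial\BD$ before returning to $\SCL^4(0)$; this happens with probability $p>0$. Using the uniform convergence of $\gamma_{m_j}$ and the simplicity of the limit, the prelimit loop $\SCL^{\kappa_{m_j}}(0)$ is, near $y$, a curve uniformly close to $\SCL^4(0)$. A thin loop $\SCL_j$ squeezed between $\SCL^{\kappa_{m_j}}(0)$ and distance $\varepsilon_j$ along a $\beta$-arc would then force the boundary of the domain obtained by removing $\mathrm{int}(\SCL^{\kappa_{m_j}}(0))\cup\mathrm{int}(\SCL_j)$ to have two nearly coincident sides. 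We contradict this with the domain Markov/restriction property: conditionally on $\SCL^{\kappa}(0)$ (explored via the $\SLE_\kappa(\kappa-6)$ exploration from the outside), the remaining loops near $y$ form a CLE in a domain whose boundary near $y$ converges nicely. The probability that a CLE loop of diameter $\ge\beta$ stays within $\varepsilon$ of the boundary can then be bounded uniformly in $\kappa\in(8/3,4]$ by comparison with the loop soup. Such a loop must be the outer boundary of a cluster containing Brownian loops of diameter $\gtrsim\beta$ that remain in an $\varepsilon$-strip, and the loop-soup mass of those loops tends to $0$ as $\varepsilon\to0$ uniformly over $c_\kappa\le1$.

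\textbf{Main obstacle.} The hardest step is the last one. A cluster can be made of many small loops chained along the strip, so the one-loop mass bound does not suffice. We would first try a chaining/percolation estimate: cover the strip by $O(\beta/\varepsilon)$ boxes of size $\varepsilon$ and use the uniform (in $c\le1$) bound from subcriticality at $c<1$ near the boundary. An alternative is to use the monotone coupling with $\Gamma^4$, applying local finiteness and the fact that $\CLE_4$ loops do not touch each other. The difficulty is to carry this uniformity in $\kappa$ through the conditioning on $\SCL^{\kappa}(0)$.
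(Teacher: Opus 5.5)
Your reduction to $m_j\to\infty$ matches the paper's. The proposal then has a genuine gap, exactly at the step you flag.

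After Step~1 you still need a bound, uniform in $\kappa$, on the probability that some loop of diameter at least $\beta$ stays within $\varepsilon$ of the \emph{random} loop $\SCL^{\kappa}(x)$. That is the lemma itself, and your sketch does not deliver it. Conditioning on $\SCL^\kappa(x)$ leaves a $\CLE_\kappa$ in the doubly connected domain $\BD\setminus\overline{\mathrm{int}(\SCL^\kappa(x))}$, i.e.\ a loop soup conditioned on no cluster surrounding the hole, and this conditioning destroys the monotone coupling with $\CLE_4$. In the unconditioned monotone coupling, the thin loop and $\SCL^{\kappa}(x)$ both lie inside the same $\CLE_4$ loop $\SCL^4(x)$. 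So the thin loop does not approach $\SCL^4(0)$ ``from outside'', and the fact that $\CLE_4$ loops do not touch gives no information. As you note, the single-Brownian-loop mass bound does not control clusters, and you supply no chaining estimate. \Cref{lemma improved convergence loops} does not help here either.

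The missing idea is the inversion invariance of $\CLE_\kappa$ in an annulus \cite{SimCLEDblConnDom}, which trades the random inner boundary for the deterministic outer one. Set $K_n=\overline{\mathrm{int}(\SCL^{\kappa_{m_n}}(x))}$, let $Z_n$ be an independent uniform point of $\BD\setminus K_n$, and let $f_n$ map $\BD\setminus K_n$ conformally onto $\{r_n<|w|<1/r_n\}$ with $\partial K_n$ sent to the inner circle and $f_n'(Z_n)>0$. With $\iota(w)=1/w$, the ensemble $\{\SCL^{\kappa_{m_n}}(x)\}\cup f_n^{-1}\circ\iota\circ f_n(\Gamma^{\kappa_{m_n}}|_{\BD\setminus K_n})$ has the law of $\Gamma^{\kappa_{m_n}}$. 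The argument then runs as follows.
\begin{itemize}
\item Since $K_n\to K$ and $Z_n\to Z$, the annuli converge in the Carath\'eodory sense, so $r_n\to r$ and $f_n\to f$ \cite{Com13}.
\item A loop of diameter at least $\beta$ within $\varepsilon_n$ of $K_n$ is hit from $Z_n$, before $K_n\cup\partial\BD$, with probability bounded below. By Beurling's estimate, Brownian motion started on it reaches $\partial\BD$ before $K_n$ with probability $O(\sqrt{\varepsilon_n})$.
\item By conformal invariance of harmonic measure, its image under $f_n^{-1}\circ\iota\circ f_n$ has diameter bounded below: it is hit with probability bounded below from the converging point $f_n^{-1}\iota f_n(Z_n)$, which stays at definite distance from it. The same invariance places the image in $B_\varepsilon(\partial\BD)$.
\item The unconditioned monotone coupling now applies. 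The image loop is surrounded by a $\CLE_4$ loop of diameter bounded below that meets $B_\varepsilon(\partial\BD)$. This event has probability independent of $n$ that tends to $0$ as $\varepsilon\to0$, since a.s.\ only finitely many $\CLE_4$ loops exceed a given diameter and none touches $\partial\BD$.
\end{itemize}
This is the paper's argument. It needs neither a chaining estimate nor any control of the conditioning on $\SCL^\kappa(x)$.
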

\begin{proof}
	\stepx{step:thin-reduction}{Reduction to the case $m_n \to \infty$} Let $\mathcal{A}^m(\varepsilon)$ be the above event. Assume for a contradiction that there exist $\delta>0$, $\varepsilon_n\downarrow 0$ and $m_n\ge 1$ such that for all $n\ge 1$,
	\[
	\BP[\mathcal{A}^{m_n}(\varepsilon_n)]\ge \delta.
	\]
	By taking a subsequence, we may assume that $m_n \to m \in \BZ_{\ge 1} \cup \{\infty\}$. If $m<\infty$, then $m_n=m$ for all $n$ large enough. We then get a contradiction since the loops of the $\CLE_{\kappa_m}$ do not touch each other and $\CLE_{\kappa_m}$ is locally finite a.s.
	
	\stepx{step:thin-inversion}{The inverted configuration} If $m_n \to \infty$, then we recall from~\Cref{lemma cv Hausdorff simple loops} that the loop $\SCL^{\kappa_{m_n}}(x)$ converges in law in the Hausdorff topology to $\SCL^4(x)$. Let $K_n$ (resp.\ $K$) be the closure of the domain encircled by $\SCL^{\kappa_{m_n}}(x)$ (resp.\ $\SCL^4(x)$). Moreover, the restriction of $\Gamma^{\kappa_{m_n}}$ to $\BD \setminus K_n$ has the law of a $\CLE_{\kappa_{m_n}}$ in $\BD \setminus K_n$ in the sense of~\cite{SimCLEDblConnDom}: by the Poissonian structure of the loop soup, given $K_n$ the remaining loops form a loop soup in $\BD \setminus K_n$ conditioned on no cluster surrounding $K_n$.
	
	Let $Z_n$ be an independent uniform random point in $\BD \setminus K_n$. By taking a subsequence again denoted by $m_n$ and applying Skorokhod's representation theorem, we may assume that $K_n\to K$ a.s.\ in the Hausdorff topology (by~\Cref{rem:interior membership}) and that $Z_n \to Z$ a.s.\ (using $\mathrm{Leb}(K_n \,\triangle\, K) \to 0$, as in the proof of~\Cref{cor uniform exploration from x}), where $Z$ is uniform in $\BD \setminus K$ conditionally on $K$. Let $f_n \colon \BD  \setminus K_n \to ({1}/{r_n})\BD \setminus r_n \overline{\BD}$ be the unique conformal map (for some unique $r_n\in (0, 1)$) such that $f_n'(Z_n)>0$ and $f_n(\partial K_n)=r_n\partial \BD$. Since $K_n\to K$ in the Hausdorff topology, we deduce that $\BD \setminus K_n \to \BD \setminus K$ in the Carath\'eodory sense. Note that $\BD \setminus K$ is a.s.\ a topological annulus. By \cite[Theorem~3.7]{Com13}, we deduce that $r_n \to r$ and $f_n \to f$ uniformly on compact sets of $\BD \setminus K$, where $f\colon \BD \setminus K \to({1}/{r}) \BD \setminus r \overline{\BD}$ is the unique conformal map such that $f(\partial K)=r\partial \BD$ and $f'(Z)>0$.
	
	Recall that we denote by $\iota \colon z \mapsto 1/z$ the inversion. Note that conditionally on $K_n$, by inversion invariance of the CLE \cite{SimCLEDblConnDom}, $\iota(f_n(\Gamma^{\kappa_{m_n}} \vert_{\BD \setminus K_n}))$ has the law of a CLE$_{\kappa_{m_n}}$ in $(1/r_n)\BD \setminus r_n \BD$. In particular, the set 
		\[
		\widetilde{\Gamma}^{\kappa_{m_n}} \defeq \{\SCL^{\kappa_{m_n}}(x) \} \cup (f_n)^{-1} \left(\iota(f_n(\Gamma^{\kappa_{m_n}} \vert_{\BD \setminus K_n}))\right)
		\]
		has the same law as $\Gamma^{\kappa_{m_n}}$.
	
	\stepx{step:thin-diameter}{The inverted loop has a macroscopic diameter} Conditionally on $K$, let $p$ be the infimum over connected subsets $\eta \subset \partial K=\SCL^4(x)$ with diameter at least $\beta$ of the probability that an independent planar Brownian motion starting from $Z$ and stopped when it first hits $\partial \BD \cup K$ hits $\eta$ ($p>0$ a.s.: the harmonic measure of a sub-arc of the Jordan curve $\partial K$ seen from $Z$ is a continuous positive function of its endpoints, and the set of sub-arcs of diameter at least $\beta$ is compact). On the event $\mathcal{A}^{m_n}(\varepsilon_n)$, let $\SCL_n$ be a loop of diameter at least $\beta$ which is included in the $\varepsilon_n$-neighborhood of $K_n$. Conditionally on $K$ and $\SCL_n$, let $p_n$ be the probability that a planar Brownian motion starting from $Z_n$ stopped when it first hits $\partial \BD \cup K_n$ hits $\SCL_n$. Note that since $Z_n \to Z$ and $K_n \to K$, and since $\SCL_n$ is included in the $\varepsilon_n$-neighborhood of $K_n$, for all $\varepsilon'>0$, a.s., for all $n$ large enough, 
	\begin{equation*}
		p_n {\bf 1}_{\mathcal{A}^{m_n}(\varepsilon_n)} \ge (p-\varepsilon'){\bf 1}_{\mathcal{A}^{m_n}(\varepsilon_n)}.
	\end{equation*}
	By conformal invariance of the planar Brownian motion, the probability $p_n$ is the probability that a planar Brownian motion starting from $f_n(Z_n)$ and stopped at $\partial ((1/r_n)\BD) \cup r_n\overline{\BD}$ hits $f_n(\SCL_n)$. Again by conformal invariance, $p_n$ is the probability that a planar Brownian motion starting from $(f_n)^{-1}(\iota(f_n(Z_n)))$ and stopped at $\partial \BD \cup K_n$ hits $(f_n)^{-1}(\iota(f_n(\SCL_n)))$. Moreover, $f_n^{-1} \circ \iota \circ f_n \to f^{-1} \circ \iota \circ f$ uniformly on compact subsets of $\BD \setminus K$ so that $(f_n)^{-1}(\iota(f_n(Z_n)))$ converges to $f^{-1}(\iota(f(Z)))$. In particular, a.s., there exists $\varepsilon''>0$ such that for all $n$ large enough, $\mathrm{dist}((f_n)^{-1}(\iota(f_n(Z_n))), (f_n)^{-1}(\iota(f_n(\SCL_n))))\ge \varepsilon''$.
	
	Therefore, on the event $\mathcal{A}^{m_n}(\varepsilon_n)$, the diameter of the loop $(f_n)^{-1}(\iota(f_n(\SCL_n)))$ of $\widetilde{\Gamma}^{\kappa_{m_n}} $ stochastically dominates a random variable $Y>0$ (which does not depend on $n$) (a set at distance at least $\varepsilon''$ from the starting point that is hit with probability at least $p/2$ has diameter bounded below in terms of $p$ and $\varepsilon''$). Moreover, for all $\varepsilon>0$ and $n$ large, on $\mathcal{A}^{m_n}(\varepsilon_n)$ the loop $(f_n)^{-1}(\iota(f_n(\SCL_n)))$ lies in $B_\varepsilon(\partial\BD)$: since $\SCL_n$ is within distance $\varepsilon_n$ of the connected set $K_n$, by Beurling's estimate and conformal invariance a Brownian motion started from any point of $(f_n)^{-1}(\iota(f_n(\SCL_n)))$ hits $K_n$ before $\partial\BD$ with probability at most $C\sqrt{\varepsilon_n}$, whereas from any point at distance at least $\varepsilon$ from $\partial\BD$ this probability is bounded below (by that of hitting a fixed ball $B \subset \mathrm{int}(K)$, which lies in $K_n$ for $n$ large).
	
	\stepx{step:thin-contradiction}{Conclusion} Hence, if $\BP[Y\ge y]\ge 1-\delta/4$ with $y>0$, then for every $\varepsilon>0$ and $n$ large, with probability at least $\delta/2$, $\widetilde\Gamma^{\kappa_{m_n}}$, which has the law of $\Gamma^{\kappa_{m_n}}$, contains a loop of diameter at least $y$ inside $B_\varepsilon(\partial\BD)$. In the loop-soup coupling with a $\CLE_4$, such a loop is surrounded by a $\CLE_4$ loop of diameter at least $y$ meeting $B_\varepsilon(\partial\BD)$, whose probability does not depend on $n$ and tends to $0$ as $\varepsilon\to0$ (a.s.\ finitely many $\CLE_4$ loops have diameter at least $y$, and none of them touches $\partial\BD$). This is a contradiction.
\end{proof}
We are now able to show the convergence of the large loops. 

\begin{proof}[Proof of~\Cref{prop:big_loops_are_fat_simple_cle}]
	Recall that we couple the $\Gamma^\kappa$'s with increasing Brownian loop soups. In particular, any loop of $\Gamma^\kappa$ with diameter larger than $\varepsilon$ is encircled by a loop of $\Gamma^4$ with diameter larger than $\varepsilon$. Let $\kappa_n \uparrow 4$.
	
	By~\Cref{lemma cv Hausdorff simple loops} and~\Cref{lemma improved convergence loops} (combined with conformal invariance), we know that for all $x\in \BD$, the loop $\SCL^{\kappa_n}(x)$ converges in distribution to $\SCL^4(x)$ in the Hausdorff topology, but also in the more precise sense of~\Cref{lemma improved convergence loops} as a continuous closed curve. Using the fact that in our coupling, $\mathrm{int}(\SCL^{\kappa_n}(x))$ is increasing in $n$ for inclusion, we can identify the law of the limit (the increasing union $U$ of the $\mathrm{int}(\SCL^{\kappa_n}(x))$ is contained in $\mathrm{int}(\SCL^4(x))$ and its conformal radius seen from $x$ is the limit of those of the $\mathrm{int}(\SCL^{\kappa_n}(x))$, hence has the law of that of $\mathrm{int}(\SCL^4(x))$ by~\Cref{lemma cv Hausdorff simple loops} and~\Cref{rem:interior membership}; by the Schwarz lemma, $U=\mathrm{int}(\SCL^4(x))$ a.s.) so that a.s., for all $x \in \BD_\BQ$,
	\begin{equation}\label{eq increasing union loop}
		\bigcup_{n\ge 1} \mathrm{int}(\SCL^{\kappa_n}(x))= \mathrm{int}(\SCL^4(x)),
	\end{equation}
	where the union is increasing.
	
	Let $X_1, \ldots, X_N$ in $\BD_\BQ$ be such that the loops of $\Gamma^4$ with diameter larger than $\varepsilon$ are $\SCL^4(X_1),\allowbreak \ldots,\allowbreak \SCL^4(X_N)$. Let $N_n$ be the number of loops of $\Gamma^{\kappa_n}$ with diameter larger than $\varepsilon$. Note that by~\eqref{eq increasing union loop}, a.s.\ for all $n$ large enough, we have $N_n\ge N$.
	
	Let us show the converse inequality. Let $\SCL_n$ be a sequence of loops of $\Gamma^{\kappa_n}$ with diameter larger than $\varepsilon$. For all $n\ge 1$, let $i_n \in [1, N]_\BZ$ be the unique $i$ such that $\SCL_n$ is encircled by $\SCL^4(X_i)$. By~\eqref{eq increasing union loop} and~\Cref{rem:interior membership} (the loop $\SCL_n$ lies in $\mathrm{int}(\SCL^4(X_{i_n}))\setminus\mathrm{int}(\SCL^{\kappa_n}(X_{i_n}))$, hence within distance $\varepsilon'$ of $\SCL^4(X_{i_n})$ for $n$ large), for all $\varepsilon'>0$, a.s.\ for all $n$ large enough, the loop $\SCL_n$ is in the $2\varepsilon'$-neighborhood of $\SCL^{\kappa_n}(X_{i_n})$. By~\Cref{lemma no thin simple loop} (we take $X_i$ to be the first point of $\mathrm{int}(\SCL^4(X_i))$ in a fixed enumeration of $\BD_\BQ$, so that $X_1,\ldots,X_N$ lie in a fixed finite set with probability close to $1$, and apply the lemma to each point of that set), the probability that there exists a loop $\SCL_n \neq \SCL^{\kappa_n}(X_{i_n})$ satisfying this inclusion tends to $0$ as $n \to \infty$.  Thus,
	\[
	N_n \mathop{\longrightarrow}\limits_{n\to \infty}^{(\BP)} N.
	\]
	This proves the first part of the proposition. The second part comes from the fact that the $\SCL^4(X_i)$'s for $1 \le i\le N$ encircle Euclidean balls centered at $X_i$ of random radii $R_i>0$. Then, by \eqref{eq increasing union loop}, we deduce that with probability $1-o(1)$ as $n\to \infty$, the $\SCL^{\kappa_n}(X_i)$'s surround such Euclidean balls, hence the desired result (for the finitely many $n$ below a large threshold, a small enough $\delta$ works by local finiteness of $\CLE_{\kappa_n}$).
\end{proof}

\subsubsection{Convergence of the crossings of the annulus}
\label{subsubsec:annulus_segments}

Next, let us prove the convergence of the crossings of the annulus $A$.
\begin{proposition}\label{prop cv crossing segments}
	The number $N^\kappa$ of crossings of $A$ by loops in $\Gamma^\kappa_A$ converges in law as $\kappa \uparrow 4$ to the number $N^4$ of crossings of $A$ by loops in $\Gamma^4_A$. More precisely, for all $\kappa_n \uparrow 4$ there exists an ordering $\eta_1^n, \ldots, \eta^n_{N^{\kappa_n}}$ of the crossings of $A$ by loops in $\Gamma^{\kappa_n}_A$ for which they converge in law to $\eta_1, \ldots, \eta_{N^4}$, which are the crossings of $A$ by loops in $\Gamma^4_A$, with respect to the Hausdorff topology.
\end{proposition}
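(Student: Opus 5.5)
The plan is to reduce the statement to the convergence of finitely many large loops given by \Cref{prop:big_loops_are_fat_simple_cle}, together with the path convergence of \Cref{lemma improved convergence loops}. We work in the increasing Brownian loop soup coupling, so that each $\CLE_{\kappa_n}$ loop is surrounded by a $\CLE_4$ loop. Any loop crossing $A$ has diameter at least $t-s$. By \Cref{prop:big_loops_are_fat_simple_cle}, on an event of probability close to $1$ the loops of $\Gamma^{\kappa_n}$ with diameter at least $t-s$ are, for $n$ large, exactly $\SCL^{\kappa_n}(X_1),\ldots,\SCL^{\kappa_n}(X_N)$. Here $X_1,\ldots,X_N$ lie in a fixed finite subset of $\BD_\BQ$, and $\SCL^4(X_1),\ldots,\SCL^4(X_N)$ are the $\CLE_4$ loops of diameter at least $t-s$. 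It therefore suffices to treat one fixed point $x=X_i$. For it we show that the crossings of $A$ by $\SCL^{\kappa_n}(x)$ converge in number and in the Hausdorff sense to those of $\SCL^4(x)$.

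For a fixed $x$, we use \Cref{lemma improved convergence loops} with the roles transported by conformal invariance. Along a subsequence and in a suitable coupling, $\SCL^{\kappa_n}(x)=\gamma_n([0,1])\cup(\text{a piece inside } B_\varepsilon(Z_n))$, where $\gamma_n\to\gamma$ uniformly and $Z_n\to Z$. By \eqref{eq cv Hausdorff gamma loop}, $\gamma([0,1])\cup\{Z\}$ fills $\SCL^4(x)$ up to an $\varepsilon$-small arc around $Z$. The key input is a genericity property of the limit: a.s.\ $\SCL^4(x)$ is not tangent to $\partial A$. More precisely, each time the loop meets $\partial B_s(z)$ or $\partial B_t(z)$, it crosses the circle transversally in the topological sense, entering both sides in every neighbourhood. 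This follows from the same $\SLE_4$ argument used in the proof of \Cref{lemma cv Hausdorff union of loops}, based on \cite[Lemmas 4.4 and 4.5]{miller2017intersections} applied to the deterministic circles. Since $\SCL^4(x)$ is a Jordan curve with finitely many crossings of $A$, its crossings are separated by excursions that stay at positive distance from the opposite boundary circle. Uniform convergence $\gamma_n\to\gamma$ combined with transversality then shows the following. For $n$ large, each crossing of $\gamma$ is approximated by exactly one crossing of $\gamma_n$, with Hausdorff-close traces. Moreover, $\gamma_n$ has no additional crossings, because any extra crossing would force $\gamma$ to touch both circles within a region where it does not cross.

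Next we handle the small piece near $Z_n$. We choose $\varepsilon$ smaller than half of $t-s$. Then the part of $\SCL^{\kappa_n}(x)$ inside $B_\varepsilon(Z_n)$ cannot itself contain a full crossing. It can, however, glue together or cut the crossings of $\gamma_n$ at their ends. To control this, we choose $\varepsilon$ along a sequence such that $B_\varepsilon(Z)$ a.s.\ does not meet $\partial A$; the law of $Z$ has no atoms in the relevant sense, and the set of exceptional $\varepsilon$ is countable. With this choice, for $n$ large the segments of $\SCL^{\kappa_n}(x)\cap A$ that meet $B_\varepsilon(Z_n)$ lie strictly inside $A$ in the portion near $Z_n$. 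The crossings are then exactly those of $\gamma_n$, and the same holds in the limit. Summing over $i\le N$ and letting the probability of the good events tend to $1$ gives the convergence of $N^{\kappa_n}$ and of the ordered crossings. The ordering is induced by the index $i$ and by the order along $\gamma_n$.

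The main obstacle is the transversality/non-tangency statement together with its robustness under uniform convergence. Uniform convergence alone does not prevent $\gamma_n$ from producing a new crossing through a near-tangency of $\gamma$ with a circle. I would first try to prove that $\SCL^4(x)$ a.s.\ has no point of $\partial A$ where it locally stays on one side, by adapting the Loewner-flow argument with the stopping times $\tau_k$ from the proof of \Cref{lemma cv Hausdorff union of loops}. Deducing the absence of spurious crossings from this is then a deterministic compactness argument.
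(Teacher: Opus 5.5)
Your proposal follows the paper's proof: reduce, via \Cref{prop:big_loops_are_fat_simple_cle} in the increasing loop-soup coupling, to the finitely many loops $\SCL^{\kappa_n}(X_i)$, then combine the path convergence of \Cref{lemma improved convergence loops} with the non-tangency of $\SCL^4(X_i)$ to $\partial A$; you are in fact more explicit than the paper about the local form of non-tangency that is needed. Two points to tighten. First, that local form does not follow from the two-sided $\SLE_4$ statement in the proof of \Cref{lemma cv Hausdorff union of loops} applied to the whole circles; apply it simultaneously to all subarcs of $\partial A$ with rational endpoints instead, since a one-sided touch at $p$ would leave every small arc around $p$ on a single side of the loop. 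Second, for fixed $\varepsilon$ you cannot have $B_\varepsilon(Z)\cap\partial A=\emptyset$ almost surely, so what you need is $\BP[Z\in\partial A]=0$ together with a choice of $\varepsilon$ making $\BP[\mathrm{dist}(Z,\partial A)\le 2\varepsilon]$ small. You should then count crossings of $\gamma_n$ closed up through the small piece, because when that piece lies in $A$ it can merge the two end segments of $\gamma_n$ into one additional crossing.
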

\begin{proof}
	Let $\kappa_n \uparrow 4$.  Let us work under the same coupling as in the proof of~\Cref{prop:big_loops_are_fat_simple_cle}. For all $x \in \BD_\BQ$, recall that we denote by $\SCL^{\kappa_n}(x)$ (resp.\ $\SCL^4(x)$) the loop of $\Gamma^{\kappa_n}$ (resp.\ $\Gamma^{4}$) which encircles $x$. 
	
	Note that the loops of $\Gamma^{\kappa_n}$ crossing $A$ have diameter at least $t-s$. Let $\varepsilon'>0$ and let $\delta>0$ be given by~\Cref{prop:big_loops_are_fat_simple_cle} for $\varepsilon=t-s$, so that for all $n$, with probability at least $1-\varepsilon'$, the loops of $\Gamma^{\kappa_n}$ crossing $A$ encircle a point of $G\defeq(\delta/2)\BZ^2\cap\BD$; shrinking $\delta$, the same holds for $\Gamma^4$. For $x,y\in G$, a.s.\ for $n$ large, $\SCL^{\kappa_n}(x)$ crosses $A$ if and only if $\SCL^4(x)$ does, and $\SCL^{\kappa_n}(x)=\SCL^{\kappa_n}(y)$ if and only if $\SCL^4(x)=\SCL^4(y)$ (by~\eqref{eq increasing union loop} and~\Cref{rem:interior membership}, a.s.\ no loop of $\Gamma^4$ being tangent to $\partial A$). Hence, with probability at least $1-2\varepsilon'-o(1)$, the loops of $\Gamma^{\kappa_n}$ crossing $A$ are $\SCL^{\kappa_n}(X_1),\ldots,\SCL^{\kappa_n}(X_N)$, where $\SCL^4(X_1),\ldots,\SCL^4(X_N)$ are those of $\Gamma^4$ crossing $A$. Finally, the number of crossings of $A$ by $\SCL^{\kappa_n}(X_i)$ converges in law to that by $\SCL^4(X_i)$: along any subsequence, \Cref{lemma improved convergence loops} gives a further subsequence and a coupling in which $\SCL^{\kappa_m}(X_i)$ is, up to a piece of diameter at most $\varepsilon$, a uniformly converging curve, and $\SCL^4(X_i)$ is a.s.\ not tangent to $\partial A$. The second part stems from the Hausdorff convergence of the loops.
\end{proof}

\begin{proof}[Proof of~\Cref{prop:conformal_rectangles_tight}]
	We work using the same coupling as in the proof of~\Cref{prop cv crossing segments}. The first item is already shown in~\Cref{prop cv crossing segments}. Let us prove the convergence of $\overline{\bigcup_{\SCL \in \Gamma^{\kappa_n, \mathrm{out}}_A} \mathrm{int}(\SCL)}$; the second one is proved using the same reasoning. Let $\varepsilon>0$. Let $K^n_\varepsilon$ (resp.\ $K_\varepsilon$) be the closure of the union of the outer boundary of the annulus $A$ together with the $\mathrm{int}(\SCL)$'s for $\SCL \in \Gamma^{\kappa_n, \mathrm{out}}_A$ (resp.\ $\SCL \in \Gamma^{4, \mathrm{out}}_A$) which have a diameter at least $\varepsilon$. Note that the Hausdorff distance between $K^n_\varepsilon$ (resp.\ $K_\varepsilon$) and $\overline{\bigcup_{\SCL \in \Gamma^{\kappa_n, \mathrm{out}}_A}\mathrm{int}(\SCL)}$ (resp.\ $\overline{\bigcup_{\SCL \in \Gamma^{4, \mathrm{out}}_A} \mathrm{int}(\SCL)}$) is at most $\varepsilon$. Therefore, it suffices to show that $K^n_\varepsilon \to K_\varepsilon$ in probability.
	
	By~\Cref{prop:big_loops_are_fat_simple_cle}, we know that for all $\varepsilon'>0$, with probability at least $1- \varepsilon'$, all the loops with diameter at least $\varepsilon$ encircle a Euclidean ball of radius $\delta$. By considering a grid $(\delta/2)\BZ^2 \cap \BD$ and using~\Cref{lemma cv Hausdorff simple loops} (together with the same coupling as in~\Cref{prop cv crossing segments}), we obtain the Hausdorff convergence of all the loops with diameter at least $\varepsilon$ in probability. We also get the convergence $K^n_\varepsilon \to K_\varepsilon$ in probability and this ends the proof of the second item of the proposition. Finally, (iii) follows from (i) and (ii): a compact subset of a limiting rectangle is at positive distance from the limiting bounding sets, hence lies in a prelimit rectangle for $n$ large, and a connected open set lying in prelimit rectangles for infinitely many $n$ meets none of the limiting bounding sets; the corners converge with the crossings. The moduli are tight since a.s.\ each limiting rectangle contains a channel of fixed width joining its top and bottom sides and one joining its left and right sides; slightly shortened, these lie in the prelimit rectangles for $n$ large and still reach their sides, and their extremal lengths bound the moduli.
\end{proof}

\subsection{Coupling of the GFF with simple and non-simple CLEs}\label{sec:cle-percolation}

Before proceeding to the proof of~\Cref{prop:thin-loops}, let us review the coupling between the simple and non-simple CLEs and the GFF that we will consider. We start by recalling the coupling between BCLEs and GFF which is detailed in \cite[Section 8]{CLEPerc} and which is summarized in \cite[Tables 1 and 2]{CLEPerc}.

Let $\kappa \in (2, 4]$ and $\rho \in (-2, \kappa - 4]$, where $\rho = \kappa-4$ is understood in the degenerate sense that the $\cwBCLE_\kappa(\kappa-4)$ consists of the single true loop given by the domain boundary (cf.~\Cref{re:critical_bcle}). Let $\Gamma$ be a branching $\SLE_\kappa(\rho; \kappa - 6 - \rho)$ process starting from $0$ and targeting all other boundary points. If we equip $\Gamma$ with the orientation from $0$ toward other boundary points, then $\Gamma$ is referred to as a \emph{$\cwBCLE_\kappa(\rho)$}. Moreover, the clockwise (resp.\ counterclockwise) loops formed from $\Gamma$ are referred to as the \emph{true} loops (resp.\ \emph{false} loops) of the $\cwBCLE_\kappa(\rho)$. We shall refer to a $\cwBCLE_\kappa(\kappa - 6 - \rho)$ as a \emph{$\ccwBCLE_\kappa(\rho)$}; to the true loops (resp.\ false loops) of $\cwBCLE_\kappa(\kappa - 6 - \rho)$ as the \emph{false} loops (resp.\ \emph{true} loops) of $\ccwBCLE_\kappa(\rho)$.

\begin{remark}\label{re:GFF-coupling-simple-BCLE}
	Let $\Psi$ be a GFF on $\BH$ with boundary values $-\lambda(1 + \rho)$ (resp.\ $\lambda(1 + (\kappa - 6 - \rho)) = -\lambda(1 + \rho) - 2\pi\chi$) on $\BR_{<0}$ (resp.\ $\BR_{>0}$), where $\lambda=\pi/\sqrt{\kappa}$ and $\chi= 2/\sqrt{\kappa} - \sqrt{\kappa}/2$. Then the branching flow line $\Gamma$ of $\Psi$ starting from $0$ and targeting all other boundary points is a branching $\SLE_\kappa(\rho; \kappa - 6 - \rho)$ process starting from $0$ and targeting all other boundary points, hence a $\cwBCLE_\kappa(\rho)$. Moreover, for any true (resp.\ false) loop $\SCL$, if we write $x$ for the first (resp.\ last) point on $\SCL$ and $y$ for the last (resp.\ first) point on $\SCL$ visited by the loop from $0$ to $0$ that traces the domain boundary $\partial \BH =\BR \cup \{\infty\}$ counterclockwise and $\varphi$ for a conformal mapping from $\BH$ onto the interior of $\SCL$ with $\varphi(0) = x$ and $\varphi(\infty)= y$, then $\Psi \circ \varphi - \chi\arg(\varphi^\prime)$ has the law of a GFF on $\BH$ with boundary values
	\begin{center}
		\begin{tabular}{c|c|c}
			$\SCL$ & $\BR_{<0}$ & $\BR_{>0}$ \\
			\hline
			true & $\lambda$ & $\lambda - 2\pi\chi$ \\
			\hline
			false & $-\lambda + 2\pi\chi$ & $-\lambda$
		\end{tabular}
	\end{center}
\end{remark}

In a similar vein, let $\kappa^\prime \in (4, 8)$ and $\rho^\prime \in [\kappa^\prime / 2 - 4, \kappa^\prime / 2 - 2]$. Let $\Gamma^\prime$ be a branching $\SLE_{\kappa^\prime}(\rho^\prime; \kappa^\prime - 6 - \rho^\prime)$ process starting from $0$ and targeting all other boundary points. If we equip $\Gamma^\prime$ with the orientation from $0$ toward other boundary points, then we shall refer to $\Gamma^\prime$ as a \emph{$\cwBCLE_{\kappa^\prime}(\rho^\prime)$}. Moreover, the clockwise (resp.\ counterclockwise) loops formed from $\Gamma^\prime$ are referred to as the \emph{true} loops (resp.\ \emph{false} loops) of the $\cwBCLE_{\kappa^\prime}(\rho^\prime)$. We shall refer to a $\cwBCLE_{\kappa^\prime}(\kappa^\prime - 6 - \rho^\prime)$ as a \emph{$\ccwBCLE_{\kappa^\prime}(\rho^\prime)$}; to the true loops (resp.\ false loops) of $\cwBCLE_{\kappa^\prime}(\kappa^\prime - 6 - \rho^\prime)$ as the \emph{false} loops (resp.\ \emph{true} loops) of $\ccwBCLE_{\kappa^\prime}(\rho^\prime)$.

\begin{remark}\label{re:GFF-coupling-nonsimple-BCLE}
	Let $\Psi$ be a GFF on $\BH$ with boundary values $\lambda^\prime(1 + \rho^\prime)$ (resp.\ $-\lambda^\prime(1 + (\kappa' - 6 - \rho^\prime)) = \lambda^\prime(1 + \rho^\prime) - 2\pi\chi$) on $\BR_{<0}$ (resp.\ $\BR_{>0}$), where $\lambda'= \pi/\sqrt{\kappa'}$ and we recall that $\chi= 2/\sqrt{\kappa}-\sqrt{\kappa}/2$ with $\kappa=16/\kappa'$. Then the branching counterflow line $\Gamma^\prime$ of $\Psi$ starting from $0$ and targeting all other boundary points is a branching $\SLE_{\kappa^\prime}(\rho^\prime; \kappa^\prime - 6 - \rho^\prime)$ process starting from $0$ and targeting all other boundary points, hence a $\cwBCLE_{\kappa^\prime}(\rho^\prime)$. Moreover, write $\eta^\prime$ for the loop from $0$ to $0$ that traces the domain boundary counterclockwise except that each time we encounter a true loop of $\Gamma^\prime$ for the first time, we trace the entire loop clockwise before continuing. Then for any connected component $D$ of the interior of a true (resp.\ false) loop $\SCL^\prime$, if we write $x$ for the first point (resp.\ last point) on $\partial D$ visited by $\eta^\prime$, and let $\varphi$ be a conformal mapping from $\BH$ onto $D$ which maps $0$ to $x$ and $\infty$ to any other fixed point $y \in \partial D \setminus \{x\}$, then $\Psi \circ \varphi - \chi\arg(\varphi^\prime)$ has the law of a GFF on $\BH$ with boundary values 
	\begin{center}
		\begin{tabular}{c|c|c}
			$\SCL^\prime$ & $\BR_{<0}$ & $\BR_{>0}$ \\
			\hline
			true & $-\lambda^\prime$ & $-\lambda^\prime - 2\pi\chi$ \\
			\hline
			false & $\lambda^\prime + 2\pi\chi$ & $\lambda^\prime$
		\end{tabular}
	\end{center}
\end{remark}

Note that by ``forgetting the orientations'', a $\cwBCLE_\kappa(\rho)$ and a $\ccwBCLE_\kappa(\rho)$ have the same law, which we shall simply denote by \emph{$\BCLE_\kappa(\rho)$}. In a similar vein, we shall use the notation \emph{$\BCLE_{\kappa^\prime}(\rho^\prime)$}.

We shall use the following result.

\begin{theorem}(\cite[Theorem~7.4]{CLEPerc} and \cite[Theorem~1.6]{SimCLELQG})\label{A021}
	Let $\kappa \in (8/3, 4)$ and set $\kappa^\prime \defeq 16/\kappa \in (4,6)$. Let $\rho^\prime \in [\kappa^\prime - 6, 0]$. Set $\rho_\BR \defeq -(\kappa / 4)(\rho^\prime + 2)$ and $\rho_\rL \defeq \kappa / 2 - 4 - \rho_\BR$. Let $\Gamma^\prime$ be a $\cwBCLE_{\kappa^\prime}(\rho^\prime)$. Inside of each connected component of each true (resp.\ false) loop of $\Gamma^\prime$, we sample an independent $\ccwBCLE_\kappa(\rho_\BR)$ (resp.\ $\cwBCLE_\kappa(\rho_\rL)$), and we write $\Gamma$ for the collection of all the true loops of all these simple BCLEs. Write $\eta^\prime$ for the loop from $0$ to $0$ that traces the domain boundary counterclockwise except that each time we encounter a true loop of $\Gamma^\prime$ for the first time, we trace the entire loop before continuing. Write $\eta$ for the loop from $0$ to $0$ that traces $\eta^\prime$ except that each time we encounter a true loop of $\Gamma$ for the first time, we trace the entire loop (clockwise if the loop lies to the left of $\eta^\prime$ and counterclockwise if it lies to the right of $\eta^\prime$) before continuing. For each $x \in \partial\BH$, write $\eta_x^\prime$ (resp.\ $\eta_x$) for the curve $\eta^\prime$ (resp.\ $\eta$) parameterized by the half-plane capacity seen from $x$ (note that $\eta'_x$ and $\eta_x$ are then curves from $0$ to $x$). Let $\beta = \beta(\kappa^\prime, \rho^\prime) \in [-1, 1]$ be such that
	\begin{equation*}
		\frac{1 - \beta}2 = \frac{\sin(\pi\rho^\prime / 2)}{\sin(\pi\rho^\prime / 2) - \sin(\pi(\kappa^\prime - \rho^\prime) / 2)}.
	\end{equation*}
	Then for each $x \in \partial\BH$, $\eta_x$ has the law of an $\SLE_\kappa^\beta(\kappa - 6)$ targeting $x$ and $\eta_x^\prime$ is the trunk of $\eta_x$; moreover, $\eta_x$ is equal to the path that traces $\eta_x^\prime$ except that each time we encounter a true loop of $\Gamma$ for the first time, we trace the entire loop before continuing. Furthermore, we have that the map $\beta \mapsto \rho'(\beta,\kappa')$ is an increasing bijection from $[-1,1]$ onto $[\kappa'-6,0]$. In particular, an $\SLE_\kappa^\beta(\kappa - 6)$ is a.s.\ generated by a continuous curve, and its trunk has the law of an $\SLE_{\kappa^\prime}(\rho^\prime; \kappa^\prime - 6 - \rho^\prime)$. 
\end{theorem}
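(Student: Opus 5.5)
This statement is imported from the literature: it combines \cite[Theorem~7.4]{CLEPerc} with \cite[Theorem~1.6]{SimCLELQG}. The plan is therefore to assemble it from the GFF couplings recalled in~\Cref{re:GFF-coupling-simple-BCLE,re:GFF-coupling-nonsimple-BCLE}, rather than to reprove the CLE percolation theory.

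\textbf{Step 1: the trunk as a counterflow line.} Take a GFF $\Psi$ on $\BH$ whose boundary data make its branching counterflow line a $\cwBCLE_{\kappa'}(\rho')$ $\Gamma'$. This is possible by~\Cref{re:GFF-coupling-nonsimple-BCLE}. For each $x$, the branch $\eta'_x$ is then an $\SLE_{\kappa'}(\rho';\kappa'-6-\rho')$. In each component of a true or false loop, the remark gives the conditional law of $\Psi\circ\varphi-\chi\arg\varphi'$ as a GFF with explicit boundary values.

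\textbf{Step 2: the simple BCLEs as flow lines.} Inside each such component, run the branching flow line of the conditional field. Compare the boundary values from the tables of~\Cref{re:GFF-coupling-simple-BCLE} and~\Cref{re:GFF-coupling-nonsimple-BCLE}, using $\lambda'=\lambda-\tfrac{\pi}{2}\chi$-type identities with $\kappa'=16/\kappa$. This should show that the flow line is a $\ccwBCLE_\kappa(\rho_\BR)$ inside true loops and a $\cwBCLE_\kappa(\rho_\rL)$ inside false loops, with $\rho_\BR=-(\kappa/4)(\rho'+2)$ and $\rho_\rL=\kappa/2-4-\rho_\BR$. This is a linear bookkeeping computation.

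\textbf{Step 3: identifying the concatenated path.} Let $\eta$ be obtained by concatenating $\eta'$ with the true loops of these simple BCLEs. We must show that $\eta_x$ is an $\SLE^\beta_\kappa(\kappa-6)$. Following~\cite{CLEPerc}, use the fact that the concatenation is the flow-line exploration of a single GFF, so it satisfies the radial/chordal domain Markov property with target invariance. Its driving process is then a Bessel-type process with excursion signs chosen independently, and such a law is of the form $\SLE^\beta_\kappa(\kappa-6)$ for some $\beta$. The value of $\beta$ is identified through the probability that an excursion goes to the left. This probability can be read off from the proportion of true versus false loops of $\Gamma'$ adjacent to the trunk, computed via $\SLE_{\kappa'}(\rho')$ one-point hitting estimates; this yields the displayed sine formula.

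\textbf{Main obstacle: monotonicity of $\beta\mapsto\rho'$.} The claim that $\beta\mapsto\rho'(\beta,\kappa')$ is an increasing bijection from $[-1,1]$ onto $[\kappa'-6,0]$ is what \cite[Theorem~1.6]{SimCLELQG} supplies; it fills the gap left in~\cite{CLEPerc}, where the relation was identified but monotonicity was only partially proved. I would check it directly from the formula. Write $f(\rho')=\sin(\pi\rho'/2)/(\sin(\pi\rho'/2)-\sin(\pi(\kappa'-\rho')/2))$. The endpoints should satisfy $f(0)=0$ and $f(\kappa'-6)=1$. Then show that $f$ is monotone on the interval by differentiating and using sign information on the sines for $\kappa'\in(4,6)$.

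\textbf{Continuity.} The final assertion, that $\SLE^\beta_\kappa(\kappa-6)$ is generated by a continuous curve, follows because $\eta_x$ is a concatenation of a continuous trunk with continuous simple loops, which are locally finite. Surjectivity of $\beta\mapsto\rho'$ then makes this hold for every $\beta\in[-1,1]$.
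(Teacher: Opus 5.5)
Your framing matches the paper, which only imports this theorem, but your reconstruction puts the difficulty in the wrong place. The gap is in Step 3, where you identify $\beta$.

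\textbf{The sine formula.} Your Markov/regeneration argument shows at best that $\eta_x$ is an $\SLE^\beta_\kappa(\kappa-6)$ for \emph{some} $\beta=\beta(\rho')$. That is the content of \cite[Theorem~7.4]{CLEPerc}, where this function is not made explicit. Your way of pinning it down, reading it off from ``the proportion of true versus false loops of $\Gamma'$ adjacent to the trunk'' via $\SLE_{\kappa'}(\rho')$ one-point hitting estimates, does not work, for three reasons.
\begin{itemize}
\item The parameter $\beta$ is the sign bias of the excursions of the Bessel-type driving process, i.e.\ a ratio of intensities (per unit local time) of loops traced to the left and to the right of the trunk.
\item There are infinitely many small loops, so there is no ``proportion'' to compute.
\item The relevant loops are the simple loops of $\Gamma$, produced by two different BCLEs with $\rho_\BR\neq\rho_\rL$, not the loops of $\Gamma'$.
\end{itemize}
No hitting estimate for the trunk alone determines this ratio. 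The displayed formula is exactly what \cite[Theorem~1.6]{SimCLELQG} contributes, and it is proved there with Liouville quantum gravity. The exploration is drawn on an independent quantum surface, boundary lengths evolve as stable L\'evy processes, and $(1-\beta)/2$ comes from comparing the rates at which loops are cut out on the two sides of the trunk. Unless you import that theorem, the quantitative content of the statement is left unproved.

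\textbf{The step you call the main obstacle.} Once the formula is available, monotonicity is elementary, so your attribution between the two references is reversed. Put $u=-\pi\rho'/2\in[0,\pi(6-\kappa')/2]$ and $c=\pi\kappa'/2\in(2\pi,3\pi)$. Then
\[
\frac{1-\beta}{2}=\frac{\sin u}{\sin u+\sin(c+u)}=\bigl(1+\cos c+\sin c\,\cot u\bigr)^{-1}.
\]
Since $\sin c>0$ and $\cot u$ is decreasing, this increases from $0$ at $\rho'=0$ to $1$ at $\rho'=\kappa'-6$.

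\textbf{Step 2.} Comparing boundary values inside a single loop of $\Gamma'$ cannot single out $\rho_\BR$ and $\rho_\rL$: any $\rho$ can be matched by adjusting the additive constant $c_\BR$ (resp.\ $c_\rL$). What forces $\rho_\BR=-(\kappa/4)(\rho'+2)$ and $\rho_\rL=\kappa/2-4-\rho_\BR$ is a global requirement: the simple loops must be flow lines of $\Psi$ with fixed angles. Equivalently, $\Psi+c_\BR$ and $\Psi+c_\rL$ must have on $\partial\BH$ the $\rho'$-independent boundary data $(\lambda+2\pi\chi,\lambda)$ and $(-\lambda,-\lambda-2\pi\chi)$ on $(\BR_{<0},\BR_{>0})$. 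These are the data of the $\SLE_\kappa(\kappa-6;0)$ and $\SLE_\kappa(0;\kappa-6)$ explorations. The Markov property you invoke in Step 3 also relies on this consistency.
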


\begin{remark}\label{re:GFF-coupling-0}
	With the notation of~\Cref{re:GFF-coupling-simple-BCLE,re:GFF-coupling-nonsimple-BCLE,A021}, let $\Psi$ be a GFF on $\BH$ with boundary values $\lambda^\prime(1 + \rho^\prime)$ (resp.\ $\lambda^\prime(1 + \rho^\prime) - 2\pi\chi$) on $\BR_{<0}$ (resp.\ $\BR_{>0}$) such that $\Gamma^\prime$ is generated as the branching counterflow line of $\Psi$ starting from $0$ and targeting all other boundary points (cf.~\Cref{re:GFF-coupling-nonsimple-BCLE}). Write 
	\begin{equation*}
		c_\BR \defeq -\lambda(1 + (\kappa - 6 - \rho_\BR)) + \lambda^\prime = \lambda(1 + \rho_\BR) + 2\pi\chi + \lambda^\prime; \quad c_\rL \defeq -\lambda(1 + \rho_\rL) - 2\pi\chi - \lambda^\prime. 
	\end{equation*}
	Then by~\cite[Section~8]{CLEPerc}, $\Gamma$ can be coupled with $\Psi$ so that conditionally on $\Gamma'$, inside each true (resp.\ false) loop $\SCL^\prime$ of $\Gamma^\prime$, the restriction of $\Gamma$ is generated as the branching flow line of $\Psi + c_\BR$ (resp.\ $\Psi + c_\rL$) starting from the first (resp.\ last) point on $\SCL^\prime$ visited by the loop from $0$ to $0$ that traces the domain boundary counterclockwise, and targeting all other boundary points (cf.~\Cref{re:GFF-coupling-simple-BCLE}). 
\end{remark}

\begin{corollary}\label{A025}
	Let $\kappa \in (8/3, 4)$ and $\beta \in [-1, 1]$. Let $\rho^\prime$, $\rho_\BR$, and $\rho_\rL$ be as in~\Cref{A021} such that $\beta = \beta(\kappa^\prime, \rho^\prime)$. Consider the following algorithm:
	\begin{itemize}
		\item One samples a $\cwBCLE_{\kappa^\prime}(\rho^\prime)$ in $\BH$. Then, inside each connected component of each true (resp.\ false) loop of this $\cwBCLE_{\kappa^\prime}(\rho^\prime)$, one samples an independent $\ccwBCLE_\kappa(\rho_\BR)$ (resp.\ $\cwBCLE_\kappa(\rho_\rL)$).
		\item Inside each (both true and false) loop of the $\ccwBCLE_\kappa(\rho_\BR)$'s and $\cwBCLE_\kappa(\rho_\rL)$'s of the previous step, one samples an independent $\cwBCLE_{\kappa^\prime}(\rho^\prime)$. Then, inside each connected component of each true (resp.\ false) loop of these $\cwBCLE_{\kappa^\prime}(\rho^\prime)$'s, one samples an independent $\ccwBCLE_\kappa(\rho_\BR)$ (resp.\ $\cwBCLE_\kappa(\rho_\rL)$). Then, we iterate this step.
	\end{itemize}
	Then the collection of all the true loops of all the simple BCLEs has the law of a nested $\CLE_\kappa^\beta$ in $\BH$.
\end{corollary}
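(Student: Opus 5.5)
The plan is to identify the iterative construction with the $\CLE_\kappa^\beta$ exploration of \Cref{A021}, applied recursively via the domain Markov property. The construction itself is given by \Cref{A021}: a $\cwBCLE_{\kappa'}(\rho')$, followed by independent simple BCLEs inside its loop components, generates a curve $\eta$ whose branch $\eta_x$ toward each boundary point $x$ is an $\SLE_\kappa^\beta(\kappa-6)$, and whose true loops are exactly the loops traced off the trunk $\eta'_x$.

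\emph{Step 1: first layer.} The first layer of the algorithm produces the collection $\Gamma$ of true loops of the simple BCLEs. Using \Cref{A021}, together with the target invariance of the branching tree (branches toward $x$ and $y$ agree until separation), I will argue that these true loops are precisely the loops of the non-nested boundary-touching layer of the branching $\SLE_\kappa^\beta(\kappa-6)$ exploration. Concretely, these are the loops discovered by $\eta_x$ before any loop disconnects $x$ from the remaining target points. This follows the way a $\CLE_\kappa^\beta$ is built from the branching $\SLE_\kappa^\beta(\kappa-6)$, as in \cite{TreeCLE,CLEPerc}.

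\emph{Step 2: the complementary regions.} I will identify the complementary regions in which the branching exploration restarts. These are the regions enclosed by the loops of the simple BCLEs, both true and false, since every point of $\BH$ not lying on the curves belongs to such a region once the whole first layer has been traced. I then need to check that the restriction of the branching $\SLE_\kappa^\beta(\kappa-6)$ to each such region is, after conformal mapping, again an independent branching $\SLE_\kappa^\beta(\kappa-6)$.
\begin{itemize}
\item For true loops this is the usual renewal of the exploration: the region is the interior of a discovered $\CLE$ loop, where a nested $\CLE$ is resampled.
\item For false loops one uses that these are exactly the disconnection regions at which the radial branch targeting an interior point restarts from a fresh boundary point.
\end{itemize}
This uses the GFF coupling of \Cref{re:GFF-coupling-0}: conditionally on the first layer, the field inside each loop has the boundary data that makes the next $\cwBCLE_{\kappa'}(\rho')$ the branching counterflow line. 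Iterating via the Markov property of the GFF/flow-line coupling then gives independence across regions and the law of the subsequent layers.

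\emph{Step 3: exhaustion.} Finally, I will verify that the iteration exhausts $\BH$, so that every point is surrounded by infinitely many layers and the union over all steps of the true loops is the full nested $\CLE_\kappa^\beta$. The argument is to compare with the radial exploration toward a fixed $z$. Each step corresponds to a renewal time of the $\SLE_\kappa^\beta(\kappa-6)$ targeting $z$, and the conformal radius decreases by i.i.d.\ nontrivial factors, so infinitely many steps occur.

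The main obstacle is Step 2 for false loops, namely justifying that restarting a fresh $\cwBCLE_{\kappa'}(\rho')$, rather than continuing the same branch, gives the right law. I would first try to deduce it directly from the boundary value table in \Cref{re:GFF-coupling-simple-BCLE}: a false loop of the simple BCLE carries boundary data $-\lambda+2\pi\chi$ / $-\lambda$, and after adding the constant shift $c_\rL$ or $c_\BR$ this should match the data $\lambda'(1+\rho')$ / $\lambda'(1+\rho')-2\pi\chi$ required in \Cref{re:GFF-coupling-nonsimple-BCLE}, up to the marked-point choice.
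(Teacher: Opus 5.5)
Your proposal is correct and is essentially the paper's argument. The paper deduces the corollary directly from \Cref{A021}, applied to the first layer and then, via the renewal of the branching $\SLE_\kappa^\beta(\kappa-6)$ at the point where it closes off a region, inside each cut-off region (as in the proofs of \cite[Theorems~7.8, 7.9]{CLEPerc}); your Steps 1--3 make this identification, the renewal and the exhaustion explicit.

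Two small corrections.
\begin{itemize}
\item For $\kappa<4$ the first-layer loops are not ``boundary-touching'': no $\CLE_\kappa$ loop meets $\partial\BH$, and disconnections are caused by the trunk, not by loops. They are the loops attached to the trunk, which \Cref{A021} identifies directly.
\item Your ``main obstacle'' is already settled by that renewal together with \Cref{A021} in the cut-off region. If you still do the GFF check, note that the false loops of a $\ccwBCLE_\kappa(\rho_\BR)$ carry the ``true'' row of the table, and that in both cases the data match only up to an additive $\pm 2\pi\chi$, which is harmless.
\end{itemize}
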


\begin{proof}
	This follows immediately from~\Cref{A021} (see also the proofs of \cite[Theorems~7.8, 7.9]{CLEPerc}).
\end{proof}

\begin{remark}\label{re:critical_bcle}
	We note that $\cwBCLE_\kappa(\kappa -4)$ corresponds to a single loop which is equal to the domain boundary. Thus, in the special case that $\beta = 1$ (equivalently, $\rho_{\mathrm{L}} = \kappa-4$) in~\Cref{A025}, at every step of the iteration, we sample independent $\cwBCLE_{\kappa^\prime}(0)$s inside the false loops of the $\cwBCLE_{\kappa^\prime}(0)$s sampled at the previous step of the iteration. This is exactly the result of \cite[Theorem~7.8]{CLEPerc}.
\end{remark}

\begin{remark}\label{re:GFF-coupling-1}
	With the notation of~\Cref{A025}, let $\Psi$ be a GFF on $\BH$ with boundary values $\lambda^\prime(1 + \rho^\prime)$ (resp.\ $\lambda^\prime(1 + \rho^\prime) - 2\pi\chi$) on $\BR_{<0}$ (resp.\ $\BR_{>0}$). Then it follows immediately from~\Cref{re:GFF-coupling-0} that we may couple the simple and non-simple BCLEs of~\Cref{A021} with $\Psi$ so that they arise as branching flow lines and branching counterflow lines of $\Psi$, respectively.
\end{remark}

\subsection{First part of the proof of~\Cref{prop:thin-loops}: strategy and construction of the exploration}
\label{sec:proof_of_main_result}

In this subsection, we are going to construct an exploration in order to prove~\Cref{prop:thin-loops}. Let us first give a brief overview of the main strategy of the proof.

By covering $\BD$ using Euclidean annuli centered at a sufficiently dense set of points, it suffices to prove that the following is true (this covering argument is carried out in the proof of~\Cref{lem:big_loops_surround_other_loops}). Fix $0<s<t$ and $z \in \BD$, and consider the annulus $A_{s,t}(z) = B_t(z) \setminus  \overline{B_s(z)} \subseteq \BD$. First, we note that for every $\varepsilon>0$, by taking $t$ sufficiently close to $s$, we obtain that with probability at least $1-\varepsilon$ for all $n$ large enough (on the event that there is no loop in the $\CLE_{\kappa_n}$ surrounding $\partial B_s(z)$), there exists a loop in the $\CLE_{\kappa_n}$ crossing $A_{s,t}(z)$. Therefore, it suffices to show that with probability tending to $1$ as $n \to \infty$, we have that whenever a $\CLE_{\kappa_n'}$ loop crosses $A_{s,t}(z)$, it has to intersect one of the $\CLE_{\kappa_n}$ loops crossing $A_{s,t}(z)$.

Let $V_1^n,\ldots,V_{N_n}^n$ denote the conformal rectangles obtained by removing from $A_{s,t}(z)$ the loops in the $\CLE_{\kappa_n}$ intersecting $\partial B_s(z) \cup \partial B_t(z)$. Note that we have already shown in~\Cref{prop:conformal_rectangles_tight} that $V_j^n$ converges to $V_j$ as $n \to \infty$ in the Carath\'eodory sense, where $V_1,\ldots,V_N$ denote the conformal rectangles obtained by removing from $A_{s,t}(z)$ the loops in the $\CLE_4$ intersecting $\partial B_s(z) \cup \partial B_t(z)$. 

We will show in~\Cref{subsec:unlikeliness_of_crossing} (see~\Cref{lem:unlikeliness}) that the following is true.  With probability tending to $1$ as $n \to \infty$, we have that a loop of a $\CLE_{\kappa_n'}$ in $V_j^n$ intersecting the top and bottom boundaries of $V_j^n$ has to intersect either the left or the right boundary of $V_j^n$. In particular, it intersects some loop in the original $\CLE_{\kappa_n}$ in $\BD$. We would like to use this result to prove~\Cref{prop:thin-loops}. However, the restriction of the original $\CLE_{\kappa_n'}$ in $\BD$ to $V_j^n$ does not have the law of a $\CLE_{\kappa_n'}$ in $V_j^n$. To get around that issue, we will introduce in~\Cref{subsec:cpi_exploration,subsec:discovering_all_loops} a Markovian way to explore the loops of the $\CLE_{\kappa_n}$ in $\BD$ that intersect $\partial B_s(z) \cup \partial B_t(z)$, which has the following properties.
	\begin{enumerate}
		\item At each step, all the simple loops contained in the unexplored region are part of the original $\CLE_{\kappa_n}$ in $\BD$.
		\item Every segment of a loop in the original $\CLE_{\kappa_n'}$ in $\BD$ which is contained in the unexplored region is a segment of a loop in a $\CLE_{\kappa_n'}$ in the unexplored region (see~\Cref{lem:excursion}).
\end{enumerate}

The above properties of the exploration imply that any segment of a loop in the original $\CLE_{\kappa_n'}$ in $\BD$ that makes a crossing between the top and bottom boundaries of the conformal rectangle $V_j^n$ is part of a loop of a $\CLE_{\kappa_n'}$ in $V_j^n$. Therefore, we will complete the proof of~\Cref{prop:thin-loops} in~\Cref{subsec:completion_of_the_proof} by combining with~\Cref{lem:unlikeliness}, the Carath\'eodory convergence of $V_j^n$ to $V_j$ as $n \to \infty$ (\Cref{prop:conformal_rectangles_tight}), and the fact that in our coupling between $\CLE_{\kappa_n}$ and $\CLE_{\kappa_n'}$, we have that the following holds a.s.\ (see~\Cref{lem:loop_surrounds_another_loop}). Every loop in the $\CLE_{\kappa_n'}$ intersecting some loop in the $\CLE_{\kappa_n}$ has to surround it.

\subsubsection{Unlikeliness of non-simple CLE loops crossing a rectangle}
\label{subsec:unlikeliness_of_crossing}

Next, we show that as $\kappa' \downarrow 4$, it is very unlikely that there exists a loop $\SCL$ in a $\CLE_{\kappa'}$ in a rectangle $(0,L) \times (0,1)$ for a fixed $L>0$, such that $\SCL$ makes a crossing of the rectangle between its top and bottom boundaries without intersecting either its left or right boundaries. In particular, we will prove the following.

\begin{lemma}\label{lem:unlikeliness}
	Let $L_n \to L > 0$ and $\kappa_n^\prime \downarrow 4$. For each $n$, write $R_n \defeq \{z \in \BC: 0 < \Re(z) < L_n \text{ and } 0 < \Im(z) < 1\}$ and let $\Gamma_n^\prime$ be a $\CLE_{\kappa_n^\prime}$ in $R_n$. Then the probability that there exists a segment of a loop of $\Gamma_n^\prime$ that touches the top and bottom sides of $R_n$ but neither the left nor the right side of $R_n$ goes to $0$ as $n \to \infty$. 
\end{lemma}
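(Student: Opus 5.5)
Intuitively, as $\kappa'\downarrow 4$ the loops of a $\CLE_{\kappa'}$ touch the domain boundary with probability of order $1/\ka_{\kappa'}\to 0$ only locally. A loop crossing from top to bottom of $R_n$ must touch the top and bottom sides, hence must come very close to $\partial R_n$ along a macroscopic stretch. We expect such a loop to touch the boundary many times and hence, with high probability, also to touch the left or right side, or else not to exist at all. The plan is to reduce the statement to an estimate on boundary-touching loops, using the domain Markov property and conformal invariance.

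\textbf{Step 1: only finitely many macroscopic loops, all near the boundary.} Map $R_n$ conformally onto $\BD$; since $L_n\to L$, these maps converge (Carath\'eodory with corners). By~\Cref{prop:thin-loops} (or directly by the convergence $\CLE_{\kappa'}\to\CLE_4$ of~\cite{BECriLQG}, together with the fact that $\CLE_4$ loops stay at positive distance from $\partial\BD$), we first show the following. For every $\varepsilon>0$, with probability $1-o(1)$, every loop of $\Gamma'_n$ that touches $\partial R_n$ and has diameter at least $\varepsilon$ must, since $\CLE_4$ loops do not approach the boundary, lie in an $\varepsilon$-neighbourhood of $\partial R_n$. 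The key quantitative input here is the convergence of $(\SCB^{\kappa_n,\partial}_{1}(\partial\BD))$ to $\SCB^\partial_0(\partial\BD)=\partial\BD$, from~\Cref{prop cv Hausdorff ball boundary}. Indeed, by~\eqref{eq cv Hausdorff ball} with $t\to0$ and~\eqref{eq continuite a droite explo unif}, the union of the loops touching the boundary (graph distance $1$, i.e.\ time $1/\ka_{\kappa_n}\to0$) converges in Hausdorff distance to $\partial\BD$. Pulled back to $R_n$, this says that all loops touching $\partial R_n$ lie within $o(1)$ of $\partial R_n$ in probability.

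\textbf{Step 2: a crossing segment forces touching the sides.} A segment touching both the top and the bottom sides belongs to a loop touching $\partial R_n$. By Step 1, that loop is contained in the $\varepsilon$-neighbourhood of $\partial R_n$. Being connected and joining the top to the bottom, it must travel along the $\varepsilon$-neighbourhood of the left or the right side, near which it necessarily passes. It remains to upgrade ``passes within $\varepsilon$ of the left side'' to ``touches the left side'' for the given segment. Two sub-cases arise: the segment itself follows the side, or the loop returns to the side elsewhere.

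\textbf{Main obstacle.} The main obstacle is this last upgrade: the segment could stay within $\varepsilon$ of the left side without touching it. To handle it, I would explore the loops touching the top side (a Markovian exploration from an arc, as in the proof of~\Cref{prop subsequential limit boundary}). In the resulting unexplored region, the relevant part of the left side is still a boundary arc of length bounded below. By the domain Markov property and the geometric stochastic domination used there, a loop running within $\varepsilon$ of that arc over a macroscopic stretch must intersect it, except with probability $o(1)$. Indeed, the corner regions near the arc converge to $\CLE_4$ configurations in which no loop follows the boundary. I would try to quantify this via the convergence of the complement components in~\Cref{prop BCLE rho to zero}.
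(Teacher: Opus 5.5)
Your Step~1 is correct. It is essentially the paper's \Cref{lem:unlikeliness-proof}: the boundary-touching loops lie in $\SCB^{\kappa_n}_{\ka_{\kappa_n}t}(\partial\BD)$ once $\ka_{\kappa_n}t\ge 1$. By \Cref{cor cv Hausdorff ball boundary} and \Cref{prop BCLE rho to zero}, this set is with high probability inside an $\varepsilon_n$-neighbourhood of $\partial\BD$ for some $\varepsilon_n\to0$.

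The gap is the step you yourself flag as the main obstacle, and the mechanism you propose for it cannot work. First, the loop carrying the offending segment touches the top side. If you explore the loops touching the top side, that loop is among the explored ones, so the domain Markov property in the unexplored region says nothing about it. Second, and more fundamentally, every tool you invoke is macroscopic: convergence to $\CLE_4$, \Cref{prop BCLE rho to zero}, and Hausdorff convergence of balls. In the limit all boundary-touching loops collapse onto $\partial\BD$. No such statement can distinguish a loop that touches the left side from one that runs along it at distance $o(1)$ without touching, and the event you must exclude lives at the vanishing scale $\varepsilon_n$. The geometric domination from the proof of \Cref{prop subsequential limit boundary} also goes the wrong way. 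It only says that a loop touches a given arc with probability at least of order $1/\ka_{\kappa_n}\to0$, not with high probability. The claim that a loop hugging an arc over a macroscopic stretch must hit it except with probability $o(1)$ is, after Step~1, essentially the whole content of the lemma, and your proposal does not prove it.

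What is missing is a deterministic obstruction at scale $\varepsilon_n$, repeated about $\varepsilon_n^{-1}$ times with nearly independent trials. The paper builds it from the GFF $\Psi_n$ that generates $\Gamma'_n$ through branching counterflow lines (\Cref{re:GFF-coupling-nonsimple-BCLE}).
\begin{itemize}
\item On each of the two arcs of $\partial\BD\setminus(I_n\cup J_n)$ it places about $\varepsilon_n^{-1}$ points at mutual distance at least $6\varepsilon_n$.
\item From each point it runs a flow line of $\Psi_n$, which is a chordal $\SLE_{\kappa_n}(-1;-1)$.
\item By \Cref{lem:likeliness} (convergence to $\SLE_4(-1;-1)$ plus scale invariance), each flow line reaches distance $\varepsilon_n$ from $\partial\BD$ before leaving $B_{2\varepsilon_n}$ of its starting point, with probability bounded below uniformly in $n$.
\item These events are local, so \Cref{lem:independence-across-disjoint-balls} gives, with probability tending to one, at least one such shield on each arc.
\end{itemize}
Counterflow lines cannot cross flow lines of the same field. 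Hence a loop segment confined to $B_{\varepsilon_n}(\partial\BD)$ that joins $I_n$ to $J_n$ without touching $\partial\BD\setminus(I_n\cup J_n)$ would have to cross a shield, which is impossible. Your argument needs some replacement for this local blocking structure and the near-independence across many small boxes; without it, it does not close.
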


Equivalently, by conformal invariance, it suffices to show that the following is true: Let $I_n$, $J_n$ be connected arcs of $\partial\BD$ such that $I_n \to I$ and $J_n \to J$ in the Hausdorff sense with $\dist(I, J) > 0$. Let $\Gamma_n^\prime$ be a $\CLE_{\kappa_n^\prime}$ in $\BD$. Then the probability that there exists a segment of a loop of $\Gamma_n^\prime$ that touches both $I_n$ and $J_n$ but not $\partial\BD \setminus (I_n \cup J_n)$ goes to $0$ as $n \to \infty$. 

One of the main ingredients in the proof of~\Cref{lem:unlikeliness} is the following lemma. It states that with probability tending to $1$ as $n \to \infty$, we have that every loop in $\Gamma_n'$ intersecting $\partial \BD$ has to stay close to $\partial \BD$ in the Hausdorff sense.

\begin{lemma}\label{lem:unlikeliness-proof}
	For each $n$, write $B_n$ for the closure of the union of the loops of $\Gamma_n^\prime$ that touch $\partial\BD$. Then $B_n$ converges in probability to $\partial\BD$ with respect to the Hausdorff topology. That is to say, for each $\varepsilon > 0$, it holds with probability tending to one as $n \to \infty$ that $B_n \subset B_\varepsilon(\partial\BD)$. 
\end{lemma}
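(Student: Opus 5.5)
Write $B_n$ for the closure of the union of the boundary-touching loops of $\Gamma_n^\prime$. The plan is to describe $B_n$ through the $\CLE_{\kappa}$/$\BCLE$ machinery and then pass to the limit $\kappa_n^\prime \downarrow 4$, where loops no longer touch the boundary.

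\textbf{Step 1: reduction to a single target point.} By the branching $\SLE_{\kappa^\prime}(\kappa^\prime-6)$ construction recalled in~\Cref{subsec:branching_sle_kappa_minus_6}, $B_n$ is the region explored by the branching exploration before any branch draws its first clockwise loop. Equivalently, it is the union over $z$ of the pieces of $\eta^{\kappa_n^\prime}_z$ up to the first clockwise loop around $z$, stopped when $z$ is disconnected. So $\BD \setminus B_n$ contains the component $U_1^n(z)$ of $\BD\setminus\chi$ containing $z$, where $\chi$ is as in~\Cref{subsec:graph_distance}.

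Fix $\varepsilon>0$ and a finite $\varepsilon/4$-net $\{z_i\}$ of $\overline{B_{1-\varepsilon}(0)}$, with $z_i\in\BD_\BQ$. It then suffices to show that for each fixed $z$, with probability $1-o(1)$:
\begin{itemize}
\item $B_{\varepsilon/2}(z)\subseteq U_1^n(z)$, i.e.\ $z$ is separated from $\partial\BD$ by loops of graph distance one only at distance $\ge\varepsilon/2$;
\item no boundary-touching loop penetrates $B_{1-\varepsilon}(0)$ through the gaps between the balls $B_{\varepsilon/2}(z_i)$.
\end{itemize}
The second bullet follows from the first, applied to a fine enough finite net. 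A loop of $B_n$ reaching $w\in B_{1-\varepsilon}(0)$ would then have to enter some $B_{\varepsilon/2}(z_i)\subseteq U_1^n(z_i)$, which is impossible.

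\textbf{Step 2: the one-point statement via the convergence of~\Cref{lemma number of clockwise loops}.} By conformal invariance, take $z=0$. $U_1^n(0)$ is the region $D^{\kappa_n^\prime}_{\tau_1}$ at the first clockwise-loop time $\tau_1^{\kappa_n^\prime}$, or the interior of $\SCL^{\kappa_n^\prime}(0)$ if that loop touches $\partial\BD$, an event of probability $1/\ka_{\kappa_n^\prime}\to0$. By~\Cref{lemma number of clockwise loops}, $\ka^{-1}\#\{i:\tau_i\le t\}\to L^0_t(\theta)/(4\pi)$, jointly with $(D^{\kappa}_t)\to(D_t)$ in $\SD$. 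Since $\ka\to\infty$, the first clockwise-loop time $\tau_1^{\kappa_n^\prime}$ converges in probability to $0$. Indeed, $\ka^{-1}\#\{i:\tau_i\le t\}>0$ for every $t>0$ with probability tending to one, because $L^0_t(\theta)>0$ for all $t>0$ a.s.

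Hence $D^{\kappa_n^\prime}_{\tau_1}$ has conformal radius tending to $1$, i.e.\ it converges to $\BD$ in the Carath\'eodory sense viewed from $0$. By Koebe's theorem this already gives $B_{1-\delta}(0)\subseteq D^{\kappa_n^\prime}_{\tau_1}$ with probability $1-o(1)$ for each $\delta>0$. Indeed, conformal radius $\ge 1-\eta$ forces $\dist(0,\partial D)\ge 1-C\sqrt\eta$ for subdomains of $\BD$, by a standard comparison (e.g.\ the Schwarz lemma applied after mapping). This is the key quantitative input, and it yields the first bullet for the center point.

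\textbf{Step 3: uniformity over the net, and the main obstacle.} A single target point only controls the component containing it. Applying Step~2 at each $z_i$ via a M\"obius map sending $z_i$ to $0$, a union bound over the finite net gives every $z_i$ at distance $\ge 1-\delta$ (in the hyperbolic-normalized sense) from $\partial U_1^n(z_i)$. Pulled back, this means $B_{r}(z_i)\subseteq U_1^n(z_i)$ with $r$ comparable to $(1-|z_i|)$, which is $\ge\varepsilon/2$ for $|z_i|\le1-\varepsilon$ after choosing $\delta$ small. Then $B_n\cap B_{1-\varepsilon}(0)=\emptyset$, since $B_n$ is disjoint from each $U_1^n(z_i)$ and these cover $B_{1-\varepsilon}(0)$.

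The main obstacle is making sure that $U_1^n(z)$ really avoids all of $B_n$, and not just the loops traced by the branch toward $z$. Other boundary-touching loops lie outside $U_1^n(z)$ by the domain Markov description of $\chi$ in~\Cref{subsec:graph_distance}, so this reduces to that identification. The quantitative Koebe step is routine.
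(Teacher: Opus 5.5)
Your argument is correct, but it takes a different route from the paper. The paper obtains the lemma from machinery already in place:
\begin{itemize}
\item $B_n\subseteq\SCB^{\kappa'_n}_{\ka_{\kappa'_n}t}(\partial\BD)$ as soon as $\ka_{\kappa'_n}t\ge 1$;
\item by \Cref{prop subsequential limit boundary}, these balls converge in law in the Hausdorff topology, along subsequences, to the uniform-exploration hull $\SCB_t(\partial\BD)$;
\item $\SCB_t(\partial\BD)\to\partial\BD$ as $t\to0$ by \Cref{prop BCLE rho to zero}, and Portmanteau concludes.
\end{itemize}
You instead work with the single radial branch toward $0$. The domain $D_{\tau_1}$ enclosed by the first clockwise loop avoids $B_n$. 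Its conformal radius $e^{-\tau_1}$ tends to $1$, because \Cref{lemma number of clockwise loops} together with $L^0_t(\theta)>0$ forces $\tau_1\to0$ in probability. An elementary estimate for subdomains of $\BD$ then upgrades this to $B_{1-\delta}(0)\subseteq D_{\tau_1}$. This bypasses the multi-target convergence of balls and the BCLE input behind \Cref{prop BCLE rho to zero}. It also works along the whole family $\kappa'\downarrow4$ rather than along subsequences. The paper's route is shorter only because that machinery is needed elsewhere anyway.

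The one point you should actually argue is $D_{\tau_1}\cap B_n=\emptyset$. Saying that $U_1^n(0)$ avoids $\chi$ is tautological; what is needed is $D_{\tau_1}\subseteq U_1^n(0)$. This follows from the facts recalled in \Cref{subsec:branching_sle_kappa_minus_6}:
\begin{itemize}
\item for every rational $w\in D_{\tau_1}$, the branch toward $w$ agrees with the branch toward $0$ up to $\tau_1$;
\item hence it draws a clockwise loop around $w$ before its counterclockwise one, so $\SCL^{\kappa'_n}(w)$ does not touch $\partial\BD$ by \cite[Lemma~5.2]{TreeCLE};
\item by the renewal property at $\tau_1$, every loop meeting $D_{\tau_1}$ is of this form.
\end{itemize}

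Two smaller remarks. First, the estimate you need is not Koebe's $1/4$ theorem, which only gives distance at least $(1-\eta)/4$. It is the Schwarz lemma applied to $f$ composed with a branch of $\sqrt{(z-w)/(1-\bar w z)}$ and a M\"obius map. This shows that if $D\subseteq\BD$ omits a point $w$ with $|w|=d$, its conformal radius from $0$ is at most $2\sqrt d/(1+d)$, which indeed gives your $1-C\sqrt\eta$ bound. Second, the net and the M\"obius maps of Step~3 are superfluous. Step~2 at $z=0$ alone already yields $B_n\cap B_{1-\delta}(0)=\emptyset$, which is the whole statement.
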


\begin{proof}
	We will use the same notation as in~\Cref{prop subsequential limit boundary}. First, we note that~\Cref{prop BCLE rho to zero} implies that $\SCB_t(\partial \BD)$ converges in probability to $\partial \BD$ as $t \to 0$ with respect to the Hausdorff metric induced by the Euclidean metric.  Moreover,~\Cref{prop subsequential limit boundary} (applied along a subsequence of any given subsequence of $(\kappa'_n)$, which suffices) implies that $\SCB_{\ka_{\kappa_n} t}^{\kappa_n}(\partial \BD)$ converges in distribution to $\SCB_t(\partial \BD)$ as $n \to \infty$ with respect to the Hausdorff metric; since $\{K : d_{\mathrm H}(K,\partial\BD) < \varepsilon\}$ is open, the Portmanteau theorem gives $\liminf_n \BP[\SCB^{\kappa_n}_{\ka_{\kappa_n}t}(\partial\BD) \subseteq B_\varepsilon(\partial\BD)] \ge \BP[d_{\mathrm H}(\SCB_t(\partial\BD),\partial\BD)<\varepsilon]$. Therefore, we can conclude the proof of the lemma by combining with the fact that $B_n \subseteq \SCB_{\ka_{\kappa_n} t}^{\kappa_n}(\partial \BD)$ as soon as $\ka_{\kappa_n}t\ge 1$, which holds for $n$ large enough for any fixed $t>0$.
\end{proof}

Thus, it remains to rule out the bad situations illustrated in~\Cref{fig:bad}. To do this, we will construct ``shields'' of flow lines of the GFF used to generate $\Gamma_n'$ which prevent the pathological cases illustrated in~\Cref{fig:bad} from occurring. Note that the local behavior of flow lines is a.s.\ determined by the GFF generating them. Hence, to prove that we have sufficiently many ``shields'' of flow lines with high probability, we will use the following lemma.

\begin{figure}[ht!]
	\centering
	\includegraphics[width=.49\linewidth]{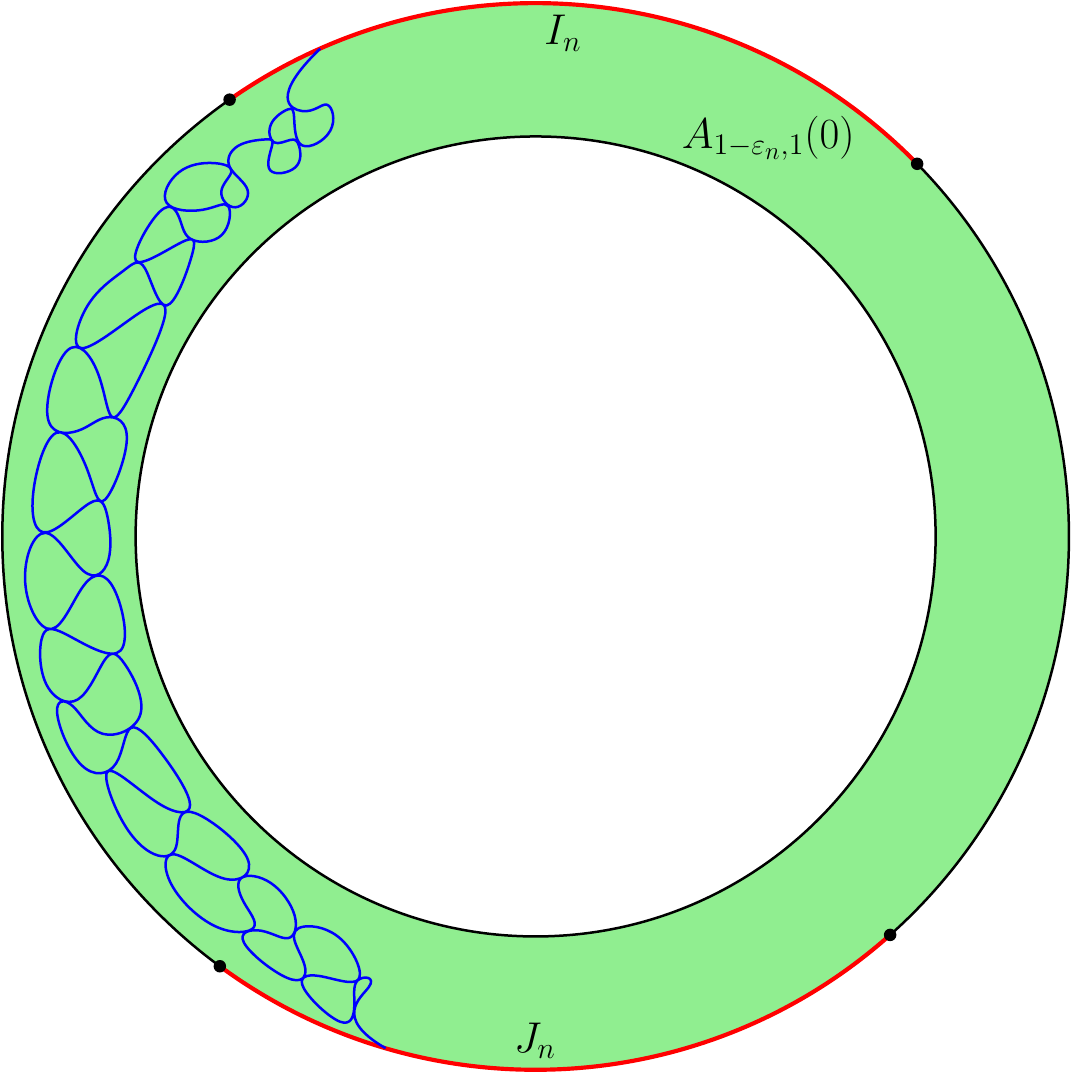}
	\includegraphics[width=.49\linewidth]{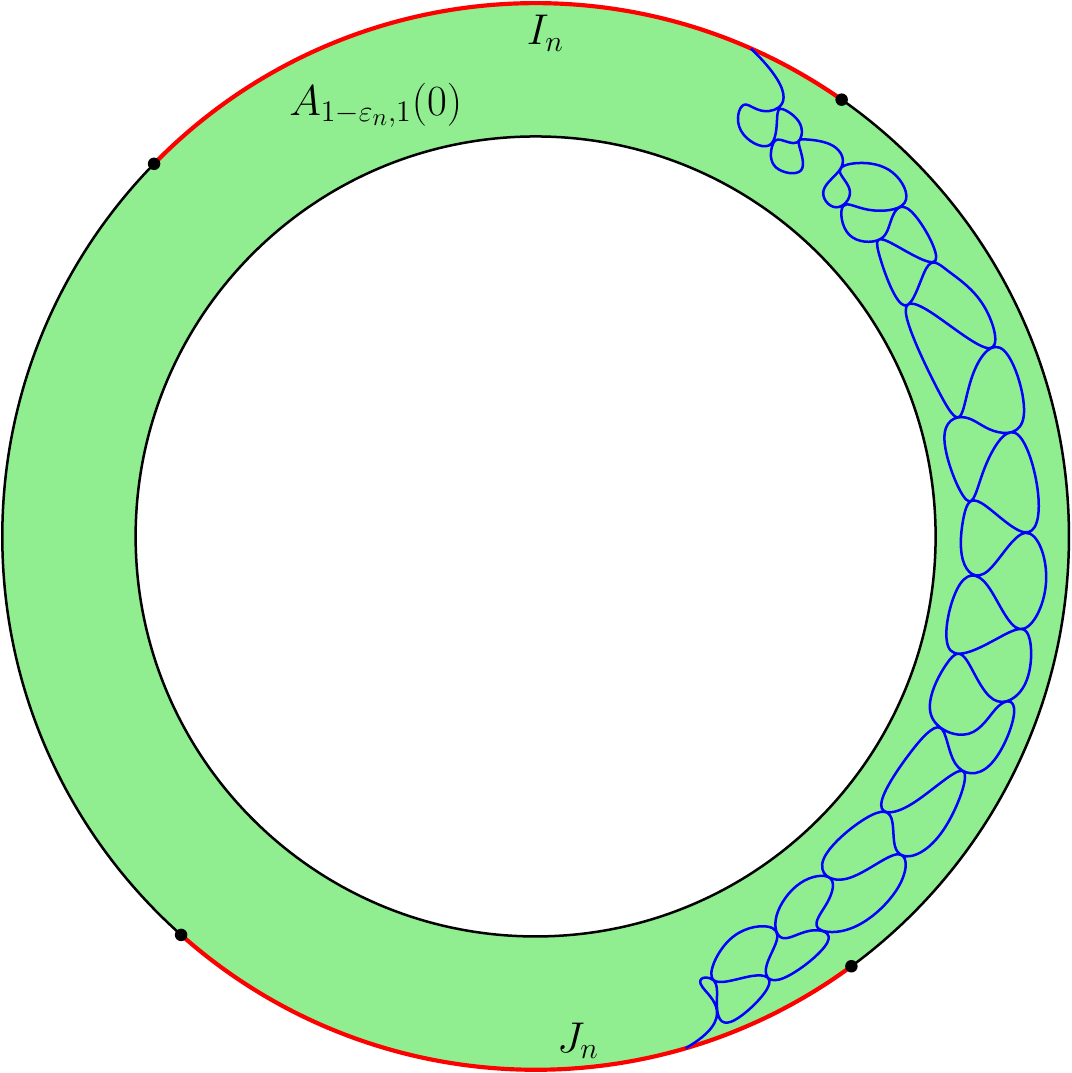}
	\caption{Illustration of the bad scenarios which are ruled out in the proof of~\Cref{lem:unlikeliness}. We have shown in~\Cref{lem:unlikeliness-proof} that non-simple loops of $\Gamma'_n$ that intersect $\partial \BD$ have to stay close to $\partial \BD$ when $n\to \infty$. Therefore, the two bad scenarios that have to be dealt with here (as in the proof of \cite[Lemma~2.7]{ExUniLQG}) are the events that a segment of a non-simple loop joins $I_n$ to $J_n$ through the left or right-hand side of the green annulus $A_{1-\varepsilon_n, 1}(0)$.}
	\label{fig:bad}
\end{figure}

\begin{lemma}\label{lem:independence-across-disjoint-balls}
For each $M>0$, $p,q \in (0,1)$, and $s>0$, there exists $n_* = n_*(M,p,q,s) \in \BN$ such that the following is true. Let $x_1,\ldots,x_n \in \partial \BD$ with $n \geq n_*$. Let $r>0$. Suppose that $|x_j - x_k| \geq 2(s+1) r$ for all distinct $j,k \in [1,n]_{\BZ}$. Let also $\Psi$ be a GFF on $\BD$ with boundary conditions given by a measurable function bounded by $M$. Let $E_1,\ldots,E_n$ be a family of events such that each $E_j$ is measurable with respect to the $\sigma$-algebra generated by $\Psi|_{B_r(x_j) \cap \BD}$. Suppose that $\BP[E_j] \geq p$ for all $j \in [1,n]_{\BZ}$. Then
	\begin{equation*}
		\BP\!\left\lbrack\bigcup_{j = 1}^n E_j\right\rbrack \ge q. 
\end{equation*} 
\end{lemma}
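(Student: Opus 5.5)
The plan is to condition on the field away from the balls, so that the events $E_j$ become conditionally independent, and to show that each $E_j$ retains conditional probability bounded below for most $j$.

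\textbf{Setup.} Set $U_j \defeq B_{(s+1)r}(x_j) \cap \BD$; these sets are pairwise disjoint by the separation assumption. Let $\SF$ be the $\sigma$-algebra generated by $\Psi|_{\BD \setminus \bigcup_j U_j}$. By the Markov property, conditionally on $\SF$ we can write $\Psi|_{U_j} = \Psi^0_j + \mathfrak{h}_j$, where:
\begin{itemize}
\item the $\Psi^0_j$ are independent zero-boundary GFFs on the $U_j$, and independent of $\SF$;
\item $\mathfrak{h}_j$ is harmonic in $U_j$ and $\SF$-measurable;
\item $\mathfrak{h}_j$ has boundary values bounded by $M$ on $\partial\BD \cap \overline{U_j}$ and equal to $\Psi$ on $\partial U_j \cap \BD$.
\end{itemize}
In particular the events $E_j$ are conditionally independent given $\SF$. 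It therefore suffices to find $c = c(M,p,s) > 0$ and $\varepsilon > 0$ with $\varepsilon \le (1-q)/2$, so that with probability at least $1 - (1-q)/2$ we have $\BP[E_j \mid \SF] \ge c$ for at least $n/2$ indices $j$. Then
\[
\BP\Bigl[\bigcap_j E_j^c\Bigr] \le (1-q)/2 + (1-c)^{n/2},
\]
and choosing $n_*$ large makes this at most $1-q$.

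\textbf{Comparison with a reference field.} Fix a smooth cutoff $\phi_j$ equal to $1$ on $B_r(x_j)$ and supported in $B_{(1+s/2)r}(x_j)$, and use as reference law the zero-boundary GFF $\Psi^0_j$ on $U_j$. On $B_r(x_j) \cap \BD$, the conditional law of $\Psi$ given $\SF$ is the law of $\Psi^0_j + \phi_j \mathfrak{h}_j$. By Cameron--Martin this is absolutely continuous with respect to $\Psi^0_j$, with Radon--Nikodym derivative $Z_j$ satisfying $\BE[Z_j^{\pm 2}] \le \exp\bigl(C \|\phi_j \mathfrak{h}_j\|_\nabla^2\bigr)$.

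The key estimate is
\[
\|\phi_j \mathfrak{h}_j\|_\nabla^2 \le C(s)\Bigl( M^2 + \sup_{B_{(1+s/2)r}(x_j)\cap\BD} |\mathfrak{h}_j|^2 \Bigr).
\]
It follows from interior and boundary gradient estimates for harmonic functions with bounded boundary data, after rescaling $B_{(s+1)r}(x_j)$ to unit size. The Dirichlet energy is scale invariant, and $\partial\BD$ becomes uniformly close to a line for small $r$, so a compactness argument over $r$ gives a constant uniform in $r$ and in $x_j$. Moreover $\sup |\mathfrak{h}_j|$ on the intermediate region has Gaussian tails, uniformly in $r$ and $j$, by the same scaling and Borell--TIS. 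Hence for every $\varepsilon > 0$ there is $K$ with $\BP\bigl[\|\phi_j\mathfrak{h}_j\|_\nabla > K\bigr] \le \varepsilon$ for every $j$.

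\textbf{Lower bound on the conditional probabilities.} First transfer the hypothesis to the reference field. From $p \le \BP[E_j] = \BE\bigl[\BE^0[Z_j \mathbf{1}_{E_j}]\bigr]$, Cauchy--Schwarz, and truncation on the event $\|\phi_j\mathfrak{h}_j\|_\nabla \le K$ (with $K$ chosen as above for $\varepsilon = p/2$), we obtain $\BP^0[E_j] \ge c_1(M,p,s) > 0$. Then, on the event $\|\phi_j \mathfrak{h}_j\|_\nabla \le K$, Cauchy--Schwarz in the other direction gives
\[
\BP[E_j \mid \SF] \ge \BP^0[E_j]^2 \big/ \BE^0[Z_j^{-1}] \ge c.
\]
Finally, Markov's inequality applied to the number of bad indices $\#\{j : \|\phi_j\mathfrak{h}_j\|_\nabla > K\}$, whose expectation is at most $\varepsilon n$ with $\varepsilon$ small, gives at least $n/2$ good indices with probability at least $1 - 2\varepsilon$.

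\textbf{Main obstacle.} I expect the main obstacle to be the uniformity in $r$ and in the positions of the points of the energy bound and of the Gaussian tail near the curved boundary. I would handle it by pulling back via a Möbius map of $\BD$ to a half-disk configuration and using the explicit Poisson kernel.
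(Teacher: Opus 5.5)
Your architecture is the same as the paper's: the Markov property on the disjoint sets $U_j$, conditional independence of the $E_j$ given the outside field, a Cameron--Martin comparison on a high-probability event, two Cauchy--Schwarz steps, and a Markov-inequality count of good indices. However, the comparison step fails as you set it up.

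You take the zero-boundary GFF $\Psi^0_j$ on $U_j$ as reference and assert that the law of $\Psi^0_j + \phi_j \mathfrak{h}_j$ is absolutely continuous with respect to it on $B_r(x_j)\cap\BD$. But $\mathfrak{h}_j$ carries the boundary data of $\Psi$ on the arc $\partial\BD\cap\overline{U_j}$. So $\phi_j\mathfrak{h}_j$ does not vanish on $\partial\BD$ and is not in the Cameron--Martin space $H^1_0(U_j)$.

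This is not a technicality. Suppose the boundary data is not a.e.\ zero on $\partial\BD\cap B_r(x_j)$, as in the applications of the lemma. Then the two laws restricted to $B_r(x_j)\cap\BD$ are mutually singular, because the field near a boundary arc determines its boundary values there (e.g.\ by a law of large numbers for small semicircle averages across dyadic scales). So no Radon--Nikodym derivative $Z_j$ exists.

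Your key estimate is also false as stated. If the data has a jump on the arc, $\int|\nabla(\phi_j\mathfrak{h}_j)|^2$ diverges logarithmically at the jump, so no bound by $C(s)(M^2+\sup|\mathfrak{h}_j|^2)$ can hold.

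The missing idea is to centre. Let $\bar{\mathfrak{h}}_j \defeq \BE[\mathfrak{h}_j]$, the deterministic harmonic function on $U_j$ with the true boundary data on $\partial\BD$, and use $\Psi^0_j + \bar{\mathfrak{h}}_j$ as the reference law.
\begin{itemize}
\item The shift $\phi_j(\mathfrak{h}_j - \bar{\mathfrak{h}}_j)$ vanishes on $\partial\BD$, hence lies in $H^1_0(U_j)$.
\item Its Dirichlet energy is bounded by $C(s)\sup_{B_{(1+s/2)r}(x_j)\cap\BD}|\mathfrak{h}_j - \bar{\mathfrak{h}}_j|^2$. To see this, reflect across the circular arc and use interior gradient estimates, with $\phi_j$ supported slightly inside the region where the supremum is taken.
\item This supremum has Gaussian tails uniformly in $r$ and $j$, by scaling.
\end{itemize}
This is what the paper does. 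It controls $S_j = \sup|\varphi_j - \BE\varphi_j|$ and compares two realizations $\varphi,\varphi'$ of the harmonic part, whose difference vanishes on $\partial\BD$.

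With this change, the rest of your argument goes through as written and coincides with the paper's proof. The real obstacle is the nonzero boundary data, not the uniformity in $r$ that you flagged.
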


\begin{proof}
This follows from the spatial Markov property of the GFF, using an argument similar to the one applied in the proof of \cite[Lemma~2.7]{ExUniLQG}. Let $\widetilde U_j \defeq B_{(1+s)r}(x_j)\cap\BD$ and $U_j \defeq B_r(x_j)\cap\BD$. By the Markov property, $\Psi|_{\widetilde U_j} = h_j + \varphi_j$ with $\varphi_j$ harmonic on $\widetilde U_j$ and the $h_j$ independent zero-boundary GFFs on $\widetilde U_j$, independent of $(\varphi_j)_j$. By scaling and the buffer of width $sr$ between $U_j$ and $\partial \widetilde U_j\setminus\partial\BD$, the 
random variable $S_j \defeq \sup_{B_{(1+s/2)r}(x_j)\cap\BD}|\varphi_j - \BE\varphi_j|$ has tails bounded uniformly in $j$ and $r$, so we may fix $C = C(q,s)$ with $\BP[G_j] \ge 1-(1-q)/4$, where $G_j \defeq \{S_j \le C\}$. If $\varphi,\varphi'$ both satisfy this bound, the laws of $(h_j+\varphi)|_{U_j}$ and $(h_j+\varphi')|_{U_j}$ are mutually absolutely continuous with a Radon--Nikodym derivative of second moment bounded in terms of $C$ and $s$ (Cameron--Martin, applied to $(\varphi-\varphi')\chi$ with $\chi$ equal to $1$ on $U_j$ and supported in $B_{(1+s/2)r}(x_j)$). Since $\BP[E_j] \ge p$, Cauchy--Schwarz gives $\BP[E_j \mid \varphi_j] \ge c(p,q,s) > 0$ on $G_j$. The $E_j$ being conditionally independent given $(\varphi_j)_j$, and at least $n/2$ of the $G_j$ occurring with probability at least $1-(1-q)/2$, we get $\BP[\bigcup_j E_j] \ge 1-(1-q)/2-(1-c)^{n/2} \ge q$ for $n$ large.
\end{proof}

Let us now mention a useful consequence of \cite[Proposition~2.6]{Leh23}.
\begin{lemma}\label{lemma cv SLE kappa rho Caratheodory}
	Let $\kappa^n \to \kappa>0$, $\rho^n_1\to \rho_1>-2$ and $\rho^n_2\to \rho_2>-2$ with $\kappa^n>0, \rho^n_1>-2$ and $\rho^n_2>-2$ for all $n\ge 1$. Let $f^n_t\colon \BH \setminus K^n_t \to \BH$ (resp.\ $f_t\colon \BH \setminus K_t \to \BH$) for $t\ge 0$ be the Loewner chain associated with a chordal $\SLE_{\kappa^n}(\rho^n_1;\rho^n_2)$ (resp.\ $\SLE_{\kappa}(\rho_1;\rho_2)$) from $0$ to $\infty$ with the force points located at $0^-$ and $0^+$ respectively, where $K^n_t$ (resp.\ $K_t$) is the associated compact hull. Then, there is a coupling of the $(f^n_t)_{t\ge 0}$'s with $(f_t)_{t\ge 0}$ such that a.s., 
	\[
	f^n \mathop{\longrightarrow}\limits_{n\to \infty} f
	\]
	uniformly on $[0, T] \times (\BH \setminus B_\varepsilon(K_T))$ for all $T,\varepsilon>0$. 

\end{lemma}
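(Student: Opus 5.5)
The plan is to deduce \Cref{lemma cv SLE kappa rho Caratheodory} from \cite[Proposition~2.6]{Leh23}, which I expect to give convergence in law of the driving functions (equivalently, of the driving/force-point triple $(W^n, V^{n,L}, V^{n,R})$) of $\SLE_{\kappa^n}(\rho^n_1;\rho^n_2)$ to that of $\SLE_\kappa(\rho_1;\rho_2)$, uniformly on compact time intervals, when the parameters converge inside the range $\rho_i>-2$. Concretely, I would work in the Bessel-process representation: with force points at $0^\pm$, the processes $(W_t - V^L_t)/\sqrt{\kappa}$ and $(V^R_t-W_t)/\sqrt{\kappa}$ are, up to the standard time-change, Bessel-type processes whose dimensions $1+2(\rho_i+2)/\kappa$ depend continuously on the parameters. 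I would check that these converge in law uniformly on compacts (this is where \cite{Leh23} is invoked), and then use Skorokhod's representation theorem to couple everything so that $W^n\to W$ uniformly on $[0,T]$ almost surely, for every $T$.

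Given a.s.\ uniform convergence of the driving functions on $[0,T]$, the second step is deterministic Loewner theory. For $z\in \BH\setminus B_\varepsilon(K_T)$, the limiting flow satisfies $\Im f_t(z)>0$ and $|f_t(z)-W_t|$ is bounded below uniformly in $(t,z)\in[0,T]\times(\BH\setminus B_\varepsilon(K_T))$ (by compactness plus the behaviour at infinity, where $f_t(z)\sim z$). I would compare $f^n_t(z)$ and $f_t(z)$ via the ODE $\partial_t f_t(z)=2/(f_t(z)-W_t)$: as long as $|f^n_s(z)-W^n_s|$ stays above half the lower bound $c$, the right-hand sides are Lipschitz with constant $O(c^{-2})$, so Gronwall gives $\sup_{t\le T}|f^n_t(z)-f_t(z)|\le C_T\|W^n-W\|_{[0,T]}$, and a continuity/bootstrap argument shows the lower bound indeed persists for $n$ large. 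This yields uniform convergence on $[0,T]\times(\BH\setminus B_\varepsilon(K_T))$, including the unbounded part since the estimate is uniform in $z$ (for $|z|$ large one can alternatively use $|f_t(z)-z|\le C T/|z|$ for both flows).

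The point requiring care, and which I expect to be the main obstacle, is the first step: one must make sure \cite[Proposition~2.6]{Leh23} is formulated (or can be upgraded) to give convergence of the driving function itself uniformly on compacts, including near the times where $W$ meets a force point, where the drift $\rho/(W-V)$ is singular. If the cited proposition only gives convergence of the Bessel components, I would recover $W$ from the identity $W_t = \sqrt{\kappa}\,B_t + \rho_1\int_0^t \frac{\rd s}{W_s-V^L_s}+\rho_2\int_0^t\frac{\rd s}{W_s-V^R_s}$ and argue that the integrals converge using uniform integrability of $\int_0^T \rd s/X_s$ for Bessel processes of dimension bounded away from $1$ (valid since $\rho_i>-2$ fixes dimensions $>1$ in the limit, hence for $n$ large), together with $V^{L/R}_t = 2\int_0^t \rd s/(V_s-W_s)$ expressed through the same integrals. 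Once this is settled the rest is the routine Gronwall argument above.
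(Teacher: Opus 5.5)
Your proposal is correct and follows the paper's proof: \cite[Proposition~2.6]{Leh23} gives convergence in law of the driving functions in the local uniform topology, Skorokhod's representation theorem upgrades this to a.s.\ uniform convergence on each $[0,T]$, and deterministic continuity of the Loewner chain in its driving function finishes the argument. The only difference is in that last step: the paper cites \cite[Proposition~4.47]{lawler2008conformally}, whereas you reprove it with the Gronwall/bootstrap argument, which is the content of that proposition.
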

\begin{proof}
	The result from \cite[Proposition~2.6]{Leh23} establishes that as $\kappa^n \to \kappa$ and $\rho^n_i \to \rho_i$, the driving functions $W^n$ of the $\SLE_{\kappa^n}(\rho^n_1; \rho^n_2)$ processes converge in distribution in the local uniform topology to the driving function $W$ of the $\SLE_{\kappa}(\rho_1; \rho_2)$ process. By Skorokhod's representation theorem, we can construct a coupling of these processes such that this local uniform convergence of the driving functions holds a.s. Under this coupling, we have almost sure uniform convergence of $W^n \to W$ on $[0,T]$ for any $T > 0$. According to \cite[Proposition~4.47]{lawler2008conformally}, the uniform convergence of driving functions on compact time intervals implies that the corresponding Loewner maps $f^n_t$ converge to $f_t$ uniformly on $[0, T] \times (\BH \setminus B_\varepsilon(K_T))$ for any $T,\varepsilon>0$. This completes the proof.
\end{proof}
	
In the following, write $\kappa_n \defeq 16/\kappa_n^\prime$. Thus, $\kappa_n \uparrow 4$. Let us now define the main event involving flow lines that we are going to consider and prove that it occurs with positive probability.
	
\begin{lemma}\label{lem:likeliness}
Fix distinct $x,y \in \partial \BD$ and let $\varepsilon \in (0,1)$ be such that $\varepsilon < \min\{1/2, |x-y| / 3\}$. For each $n \in \BN$, let $\eta_n$ be a chordal $\SLE_{\kappa_n}(-1;-1)$ in $\BD$ starting from $x$ and targeting $y$, where the force points are located at $x^-$ and $x^+$ respectively. Then, there exists a universal constant $p \in (0,1)$ such that the following holds for all $n \in \BN$. With probability at least $p$, we have that $\eta_n$ hits $B_{\varepsilon}((1-2\varepsilon)x)$ before exiting $B_{2\varepsilon}(x)$.   
\end{lemma}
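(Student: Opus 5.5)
The plan is to reduce to a statement uniform in $n$ by combining positivity for each fixed $n$ with convergence as $n\to\infty$ (using \Cref{lemma cv SLE kappa rho Caratheodory}). Since $p$ is allowed to depend only on nothing (universal), but the geometry $(x,y,\varepsilon)$ varies, I would first use conformal invariance to normalize: map $\BD$ to $\BH$ with $x\mapsto 0$, $y\mapsto\infty$. The balls $B_\varepsilon((1-2\varepsilon)x)$ and $B_{2\varepsilon}(x)$ become, after an additional scaling and using $\varepsilon<|x-y|/3$ together with $\varepsilon<1/2$, sets which contain (resp.\ are contained in) fixed model sets: a fixed ball $B_{c}(\ri)$ and a fixed half-disk $B_C(0)\cap\BH$ (with $C$ not necessarily large, because the image of $B_{2\varepsilon}(x)$ contains a half-disk of comparable size to the image of the target ball's distance from $0$). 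By Koebe distortion applied to the Möbius map (whose distortion on $B_{2\varepsilon}(x)$ is bounded because $y$ is at distance $\ge 3\varepsilon$), I expect to find universal constants $0<c<1<C$ such that it suffices to show: with probability at least $p$, a chordal $\SLE_{\kappa_n}(-1;-1)$ in $\BH$ from $0$ to $\infty$ hits $B_c(\ri)$ before exiting $B_C(0)$, where I arrange the model target ball to be contained in the image of $B_\varepsilon((1-2\varepsilon)x)$ and the model half-disk contained in the image of $B_{2\varepsilon}(x)$.

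Next, for each fixed $\kappa\in(8/3,4]$, the probability $q(\kappa)$ of this model event is positive. For $\kappa=4$, $\SLE_4(-1;-1)$ is a level line of a GFF with boundary values $0$ on both sides (force weights $-1$ give boundary data $-\lambda(1-1)=0$), and support-type arguments (absolute continuity with $\SLE_4$ away from the boundary, and the fact that $\SLE_4$ follows any given tube with positive probability) give $q(4)>0$. The same support argument works for each $\kappa_n$, via absolute continuity of $\SLE_\kappa(\rho_1;\rho_2)$ with respect to $\SLE_\kappa$ after the curve leaves a neighborhood of $0$.

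For uniformity, I would apply \Cref{lemma cv SLE kappa rho Caratheodory} with $\rho_i^n=\rho_i=-1>-2$ and $\kappa^n=\kappa_n\to4$: in the resulting coupling the Loewner maps converge uniformly away from the limiting hull, and one would upgrade this to the statement that the event ``the limiting curve enters $B_{c/2}(\ri)$ before exiting $B_{C/2}(0)$'' (an open-type event for the limit, of probability at least $q'>0$ by the fixed-$\kappa=4$ argument with shrunken sets) implies the model event for $\eta_n$ for all $n$ large. Here is the main obstacle: Loewner-map convergence does not directly give curve convergence, so I would use the hitting criterion through the maps: if the limit curve enters $B_{c/2}(\ri)$ at a time $T$ while staying in $B_{C/2}(0)$, then the limit hull $K_T$ is inside $B_{C/2}(0)$ and the point $\ri$ has small conformal radius relative to $\BH\setminus K_T$; uniform convergence of $f^n_t$ on $\BH\setminus B_\delta(K_T)$ transfers the statement ``$\ri$ is almost swallowed/near the tip and the hull has not reached $\partial B_C(0)$'' to $K^n_T$, using that $\SLE_{\kappa_n}$ hulls for $\kappa_n\le 4$ are curves so the hull being close to $\ri$ means the curve is. Then $\liminf_n \BP[\text{event}_n]\ge q'$, and combining with $q(\kappa_n)>0$ for the finitely many remaining $n$ gives a universal $p=\min(q'/2,\min_{n\le n_0} q(\kappa_n))$.
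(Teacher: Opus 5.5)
Your architecture matches the paper's: positivity of a slightly strengthened event for $\SLE_4(-1;-1)$, the coupling of \Cref{lemma cv SLE kappa rho Caratheodory}, comparison of hulls at the time the $\kappa=4$ curve reaches the shrunken target, and positivity for the finitely many remaining $n$. The step that fails as stated is the normalization that is supposed to make $p$ universal.

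In general there is no $c>0$ such that, after mapping to $\BH$ with $x\mapsto 0$, $y\mapsto\infty$ and rescaling, $B_c(\ri)$ lies in the image of $B_\varepsilon((1-2\varepsilon)x)$. The preimage of $\ri\BR_+$ is the hyperbolic geodesic from $x$ to $y$. Consider the admissible regime $\varepsilon\to 0$, $|x-y|/\varepsilon\downarrow 3$, and blow up at $x$ by $1/\varepsilon$. This gives $\BH$, $x=0$, $y=3$, target $B_1(2\ri)$, and the geodesic becomes the semicircle $\{|z-3/2|=3/2\}$. Since $|2\ri-3/2|=5/2$, this semicircle is exactly tangent to the target. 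So for admissible configurations near this limit, every ball centered on $\ri\BR_+$ inside the rescaled image of the target has vanishing relative radius.

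This is repairable in either of two ways:
\begin{itemize}
\item Compactify the space of configurations $(\varepsilon,\ \text{position of } y)$, including such blow-up limits. Extract finitely many model events, each with a fixed target ball and a fixed allowed neighborhood of $0$ in $\overline{\BH}$ that work for a whole neighborhood of configurations, and run your argument for each of them.
\item First reduce to a fixed normalized configuration, as the paper does by combining \cite[Lemma~2.8]{miller2017intersections} with conformal invariance and Koebe distortion to reduce to $\varepsilon=1/3$.
\end{itemize}

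There are also some smaller slips in the transfer step.
\begin{itemize}
\item Shrinking the allowed region to $B_{C/2}(0)$ can make the strengthened event empty, because $B_{c/2}(\ri)\cap B_{C/2}(0)=\emptyset$ unless $c+C>2$. Shrink instead to $B_{C'}(0)$ with $1-c/2<C'<C$.
\item The hulls of $\SLE_{\kappa_n}(-1;-1)$ are not curves. Since $-1<\kappa_n/2-2$, the curve $\eta_n$ hits $\BR$ and swallows regions. What you actually need is only $\partial K^n_T\cap\BH\subseteq\eta_n([0,T])$: approaching $\eta(T)$ from outside $K^n_T$, away from $\BR$, then produces points of $\eta_n$.
\item Uniform convergence of $f^n_T$ on $\BH\setminus B_\delta(K_T)$ directly gives only $K^n_T\subseteq B_\delta(K_T)$, which handles ``not exiting''. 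That $K^n_T$ comes close to $\eta(T)$ requires the Carath\'eodory kernel convergence of $\BH\setminus K^n_T$ to $\BH\setminus K_T$, obtained by a normal-families argument. A statement about the conformral radius at $\ri$ does not give this.
\item For positivity at fixed $\kappa$, the clean tool is \cite[Lemma~2.3]{miller2017intersections}: an $\SLE_\kappa(\rho_L;\rho_R)$ with $\rho_L,\rho_R>-2$ follows any given simple path with positive probability. Absolute continuity with respect to $\SLE_\kappa$ is delicate here, because the tip hits the force points each time the curve touches $\BR$.
\end{itemize}
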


\begin{proof}
 Let $\eta$ denote a chordal $\SLE_4(-1;-1)$ process in $\BD$ from $x$ to $y$ with the force points located at $x^-$ and $x^+$ respectively, and let $\varepsilon \in (0,1)$ be as in the lemma statement. Note that combining \cite[Lemma~2.8]{miller2017intersections} with the conformal invariance of the laws of $\SLE_{\kappa}(\rho^{\mathrm{L}}; \rho^{\mathrm{R}})$ processes and the Koebe distortion theorem (the conformal map to $\BH$ sending $x$ to $0$ and $y$ to $\infty$ distorts the balls $B_{2\varepsilon}(x)$, $B_\varepsilon((1-2\varepsilon)x)$ by a bounded factor, uniformly in $\varepsilon$), we obtain that it suffices to prove the claim in the lemma statement in the case that $\varepsilon = 1/3$. Henceforth, assume that $\varepsilon = 1/3$.

For each $n$, we parameterize the curve $\eta_n$ according to half-plane capacity as seen from $y$, and let $(K_t^n)_{t \geq 0}$ denote the family of the corresponding hulls in $\BD$. We parameterize $\eta$ and define the family of hulls $(K_t)_{t \geq 0}$ for $\eta$ similarly. Let $f_t\colon \BD\setminus K_t\to \BD$ (resp.\ $f^n_t\colon \BD\setminus K^n_t\to \BD$) be the Loewner chain associated with $(K_t)_{t\ge 0}$ (resp.\ $(K^n_t)_{t\ge 0}$). Let $\sigma$ be the hitting time of $B_{1/6}(x/3)$ by $\eta$ and $E$ the event that $\sigma$ precedes the exit time of $B_{2/3}(x)$; by \cite[Lemmas~2.3 and~2.8]{miller2017intersections} and the conformal invariance of the law of $\eta$, $\BP[E] \ge p$ for some universal $p \in (0,1)$. Couple $(\eta_n)_{n \in \BN}$ and $\eta$ as in~\Cref{lemma cv SLE kappa rho Caratheodory}. On $E$, $K_\sigma$ is a compact subset of $B_{2/3}(x)$ meeting $\overline{B_{1/6}(x/3)}$, so the 
convergence $f^n\to f$, which gives $K^n_\sigma \subseteq B_\delta(K_\sigma)$ for every $\delta>0$, yields for $n$ large that $K^n_\sigma \subseteq B_{2/3}(x)$ and $K^n_\sigma\cap B_{1/3}(x/3) \ne \emptyset$, i.e.\ $\eta_n$ hits $B_{1/3}(x/3)$ before exiting $B_{2/3}(x)$. For the finitely many remaining $n$, the probability is positive by the same lemmas.
\end{proof}

We are now ready to complete the proof of~\Cref{lem:unlikeliness}.

\begin{proof}[Proof of~\Cref{lem:unlikeliness}]

 \stepx{step:unlik-setup}{Setup} Suppose that we have the setup described just after the statement of~\Cref{lem:unlikeliness}. Fix $z \in I_n$ and $w \in J_n$. We may assume without loss of generality that $z \in I_n$ and $w \in J_n$ for all $n \in \BN$. Also, for each $n$, we let $\Psi_n$ be a GFF on $\BD$ rooted at $z$ that generates $\Gamma_n'$ (see~\Cref{re:GFF-coupling-nonsimple-BCLE}). That is to say, if $\phi_n: \BH \to \BD$ is a conformal transformation mapping $0$ to $z$, $\infty$ to $w$, and the clockwise (resp.\ counterclockwise) arc of $\partial \BD$ to $\BR_-$ (resp.\ $\BR_+$), then the boundary conditions of $\Psi_n \circ \phi_n - \chi_n \arg(\phi_n')$ are given by $\lambda_n'$ on $(-\infty,0)$ and $\lambda_n' - 2\pi \chi_n$ on $(0,\infty)$, where $\lambda_n^\prime \defeq \pi\sqrt{\kappa_n}/4$ and $\chi_n \defeq 2/\sqrt{\kappa_n} - \sqrt{\kappa_n}/2$. Moreover, the branching tree of counterflow lines of $\Psi_n$ centered at $z$ and targeted at any $u \in \overline{\BD}$ is used to construct $\Gamma_n'$ as in \cite[Section~4]{TreeCLE}.

\stepx{step:unlik-shields}{Construction of the shields} By~\Cref{lem:unlikeliness-proof}, it suffices to verify that for each sequence $\varepsilon_n \to 0$ as $n \to \infty$, the bad scenarios illustrated in~\Cref{fig:bad} occur with probability tending to zero. To show the latter, for each $n$, we may choose points $x_1^n,\ldots,x_{N_n}^n$ and $y_1^n,\ldots,y_{N_n}^n$ on the two connected components of $\partial \BD \setminus (I_n \cup J_n)$, respectively, such that all of $x_1^n,\ldots,x_{N_n}^n$ (resp.\ $y_1^n,\ldots,y_{N_n}^n$) lie on the clockwise (resp.\ counterclockwise) arc of $\partial \BD$ from $z$ to $w$, and such that $|x_j^n - x_k^n| \wedge |y_j^n - y_k^n| \geq 6\varepsilon_n$ for all distinct $j,k \in [1,N_n]_{\BZ}$ and $N_n$ is of order $\varepsilon_n^{-1}$ as $n \to \infty$.

Let $\eta_j^n$ (resp.\ $\widetilde{\eta}_j^n$) denote the flow line of $\Psi_n$ of angle $-\lambda_n' / \chi_n$ (resp.\ $2\pi - \lambda_n' / \chi_n$) starting from $x_j^n$ (resp.\ $y_j^n$) and targeted at $z$, so that $\eta_j^n$ (resp.\ $\widetilde{\eta}_j^n$) has the law of a chordal $\SLE_{\kappa_n}(-1;-1)$ on $\BD$ from $x_j^n$ (resp.\ $y_j^n$) to $z$ with the force points located at $(x_j^n)^-$ and $(x_j^n)^+$ (resp.\ $(y_j^n)^-$ and $(y_j^n)^+$) (these are the zero-angle flow lines of $\Psi_n-\lambda'_n$ and $\Psi_n+2\pi\chi_n-\lambda'_n$, whose boundary values vanish near $x^n_j$, $y^n_j$). Consider the event $E_j^n$ that $\eta_j^n$ hits $B_{\varepsilon_n}((1-2\varepsilon_n) x_j^n)$ before exiting $B_{2\varepsilon_n}(x_j^n)$ (which is measurable with respect to $\Psi_n|_{B_{2\varepsilon_n}(x^n_j)\cap\BD}$, flow lines being local~\cite{IG1}). Define $F_j^n$ in a similar way but with $y_j^n$ in place of $x_j^n$. Then, it follows from~\Cref{lem:independence-across-disjoint-balls,lem:likeliness} (the former applied separately to $(E^n_j)_j$ and $(F^n_k)_k$ with $r=2\varepsilon_n$, $s=1/2$) that it holds with probability tending to $1$ as $n \to \infty$ that $E_j^n \cap F_k^n$ occurs for some $j$ and $k$.

\stepx{step:unlik-conclusion}{Conclusion} Recall that counterflow lines do not cross flow lines generated by the same random field a.s. Indeed, it follows from \cite[Theorem~1.13]{IG4} that the left and right outer boundaries of counterflow lines can be described in terms of flow lines with angles $\pi / 2$ and $-\pi / 2$ respectively. Thus, in order for a counterflow line $\eta'$ to cross a flow line $\eta$, we need to have that both the flow lines with angles $\pi/2$ and $-\pi/2$ forming the left and right boundaries of $\eta'$ cross $\eta$. But this does not happen a.s.\ due to the flow line interaction rules (see \cite[Theorem~1.7]{IG4}). Therefore, since every segment of a loop in $\Gamma_n'$ is part of a counterflow line of $\Psi_n$, combining with the considerations of the previous paragraph, we obtain that the bad situations illustrated in~\Cref{fig:bad} occur with probability tending to zero as $n \to \infty$. Indeed, on $E^n_j\cap F^n_k$ the stopped flow lines join $\partial\BD$ to depth at least $\varepsilon_n$ on each of the two arcs of $\partial\BD\setminus(I_n\cup J_n)$, so that a segment of a loop of $\Gamma'_n$ contained in $B_{\varepsilon_n}(\partial\BD)$ and joining $I_n$ to $J_n$ would have to cross one of them.
\end{proof}

\subsubsection{CPI exploration}\label{subsec:cpi_exploration}

In this section, we will describe in a more detailed way the exploration of simple $\CLE_{\kappa}$ loops introduced in the statement of~\Cref{A021}, and we will state and prove two useful consequences of it (\Cref{lem:markovian_cpi_exploration,lem:excursion}).

By setting $\beta = 1$ in~\Cref{A025}, we obtain that the following is true. Let $\kappa \in (8/3, 4)$. Consider the following algorithm:
\begin{itemize}
\item One samples a $\cwBCLE_{\kappa^\prime}(0)$. Then, inside each connected component of each true loop of this $\cwBCLE_{\kappa^\prime}(0)$, one samples an independent $\ccwBCLE_\kappa(-\kappa/2)$. 

\item Inside each connected component of each false loop of the $\cwBCLE_{\kappa^\prime}(0)$'s and $\ccwBCLE_\kappa(-\kappa/2)$'s of the previous step, one samples an independent $\cwBCLE_{\kappa^\prime}(0)$. Then, inside each connected component of each true loop of these $\cwBCLE_{\kappa^\prime}(0)$'s, one samples an independent $\ccwBCLE_\kappa(-\kappa/2)$. Then, iterate this step.
\end{itemize}
Then the collection of all the true loops of all the $\ccwBCLE_\kappa(-\kappa/2)$'s has the law of a non-nested $\CLE_\kappa$. Moreover, we note that the collection of all the outermost true loops of all the $\cwBCLE_{\kappa^\prime}(0)$'s has the law of a non-nested $\CLE_{\kappa^\prime}$.

By iterating this construction inside each loop of the non-nested $\CLE_\kappa$, we obtain a nested $\CLE_\kappa$. But, for our purpose, we will only use the non-nested $\CLE_\kappa$.

Recall from~\Cref{re:GFF-coupling-0} that, once a root on the domain boundary is chosen, all the $\ccwBCLE_\kappa(-\kappa/2)$'s and $\cwBCLE_{\kappa^\prime}(0)$'s above may be coupled with a GFF $\Psi$ with boundary conditions given by $\lambda^\prime$ on $(-\infty, 0)$ and $\lambda^\prime - 2\pi\chi$ on $(0, \infty)$ (after a conformal transformation) as flow lines and counterflow lines, respectively. 

\textbf{CPI exploration}. Let us recall the exploration described in~\Cref{A021}. Suppose that we start with the $\cwBCLE_{\kappa^\prime}(0)$/$\ccwBCLE_\kappa(-\kappa/2)$ iteration on $\BD$ rooted at some point $x \in \partial \BD$. Then, we can define a path $\eta$ as follows. First, we let $\eta'$ denote the path that traces the domain boundary in a counterclockwise way except that each time it encounters a true loop of the first $\cwBCLE_{\kappa^\prime}(0)$ used in the above construction, it traverses the entire loop clockwise before continuing. Let $\Gamma$ denote the collection of all true loops in the $ \ccwBCLE_\kappa(-\kappa/2)$ process that we have considered inside the true loops of that $\cwBCLE_{\kappa^\prime}(0)$. Then, we define the path $\eta$ by following $\eta'$ except that each time we first encounter a point on a loop in $\Gamma$ we traverse the loop in a counterclockwise way (recall that $\beta=1$, so that all of these loops lie on the same side of $\eta'$; cf.~\Cref{A021}). Then, the path $\eta$ traces the true loops of the $\cwBCLE_{\kappa^\prime}(0)$'s and the $ \ccwBCLE_\kappa(-\kappa/2)$'s used in the first level of the iteration.

To trace the loops of the $\cwBCLE_{\kappa^\prime}(0)$'s and the $ \ccwBCLE_\kappa(-\kappa/2)$'s used in the second level of the iteration, we construct a branching collection of exploration paths as follows. Each time the path $\eta$ finishes tracing a false loop at some point $y$ of the $\cwBCLE_{\kappa^\prime}(0)$ or one of the $ \ccwBCLE_\kappa(-\kappa/2)$'s used in the first level of the iteration, we construct a path inside the domain surrounded by that loop in the exact same way that we constructed $\eta$ except that the root of the new loop is given by $y$. Then, we iterate the above construction inside the false loops of the $\cwBCLE_{\kappa^\prime}(0)$'s  and the $ \ccwBCLE_\kappa(-\kappa/2)$'s used in the second level of the iteration in the exact same way as we described in the previous two steps.

We call the branching collection of paths described in the previous two paragraphs the \emph{CPI (conformal percolation interface) exploration} of~\cite{CLEPerc} of the iterated $\cwBCLE_{\kappa^\prime}(0)$/$\ccwBCLE_\kappa(-\kappa/2)$ construction.

\textbf{Obtaining the entire  iterated $\cwBCLE_{\kappa^\prime}(0)$/$\ccwBCLE_\kappa(-\kappa/2)$ construction from the CPI exploration}. For all $z \in \partial \BD \setminus \{x\}$, we let $\eta_z$ denote the branch of the CPI exploration targeted at $z$ and parameterized by capacity as seen from $z$. Then, the collection of the true loops of the $\ccwBCLE_\kappa(-\kappa/2)$'s used in the first level of iteration will be given by the loops attached to the paths $\eta_z$'s for $z \in \partial \BD \setminus \{x\}$. Moreover, the boundary branching exploration tree used to construct the $\cwBCLE_{\kappa^\prime}(0)$ in the first level of iteration will be given by the tree formed by the trunks of the $\eta_z$'s for $z \in \partial \BD \setminus \{x\}$. This gives the $\cwBCLE_{\kappa^\prime}(0)$ and the $\ccwBCLE_\kappa(-\kappa/2)$'s used in the first level of iteration. By iterating the same procedure using the branching property of the CPI exploration inside the false loops of the above $\cwBCLE_{\kappa^\prime}(0)$'s and  $\ccwBCLE_\kappa(-\kappa/2)$'s, we obtain the rest of the $\cwBCLE_{\kappa^\prime}(0)$ and  $\ccwBCLE_\kappa(-\kappa/2)$'s used in the next levels of iteration.

\textbf{Re-rooting the iterated $\cwBCLE_{\kappa^\prime}(0)$/$\ccwBCLE_\kappa(-\kappa/2)$ construction}. Let us now explain how to re-root the above $\cwBCLE_{\kappa^\prime}(0)$/$\ccwBCLE_\kappa(-\kappa/2)$ iteration. Fix $y \in \partial \BD \setminus \{x\}$ and re-parameterize the path $\eta$ defined above so that it starts and ends at $y$. Then, inside each connected component $U$ of $\BD \setminus \eta$ where there is branching of $\eta$, we re-parameterize the path that discovers the true loops of the $\ccwBCLE_\kappa(-\kappa/2)$ in the second level of iteration which is contained in $U$, so that the path starts and ends at the last point on $\partial U$ hit by $\eta$. Then, we iterate the above in the remaining connected components to obtain a branching exploration tree of paths. Hence, we can construct an iterated process of loops as explained in the previous paragraph by using the aforementioned exploration tree.

It follows from the results in~\cite{CLEPerc} that the iterated process of loops described in the above paragraph has the same law as the iterated $\cwBCLE_{\kappa^\prime}(0)$/$\ccwBCLE_\kappa(-\kappa/2)$ described above, and the collection of true loops in the $  \ccwBCLE_\kappa(-\kappa/2)$'s in the original iterated $\cwBCLE_{\kappa^\prime}(0)$/$\ccwBCLE_\kappa(-\kappa/2)$ construction (before re-rooting) is the same as the true loops in the $  \ccwBCLE_\kappa(-\kappa/2)$'s in the new iterated $\cwBCLE_{\kappa^\prime}(0)$/$\ccwBCLE_\kappa(-\kappa/2)$ construction (after re-rooting).

Next, we state and prove the following useful Markovian property of the CPI exploration.

\begin{lemma}\label{lem:markovian_cpi_exploration}
	Fix $z \in \overline{\BD}$ and let $\eta_z$ denote the branch of the CPI exploration targeted at $z$ of an iterated $\cwBCLE_{\kappa^\prime}(0)$/$\ccwBCLE_\kappa(-\kappa/2)$ construction. Let also $\tau$ be a stopping time for $\eta_z$ such that $\eta_z(\tau)$ is not on a simple loop a.s. Let $D_{\tau}$ denote the connected component of $\BD \setminus \eta_z([0,\tau])$ containing $z$. Then, we have that the iterated process of loops in $D_{\tau}$ obtained from the restriction to $D_{\tau}$ of the CPI exploration tree rooted at $\eta_z(\tau)$ has the law of an iterated $\cwBCLE_{\kappa^\prime}(0)$/$\ccwBCLE_\kappa(-\kappa/2)$ construction on $D_{\tau}$ conditionally on $\eta_z|_{[0,\tau]}$. Moreover, the simple true loops obtained from the new construction are the same as the simple loops of the original $\cwBCLE_{\kappa^\prime}(0)$/$\ccwBCLE_\kappa(-\kappa/2)$ construction which are contained in $D_{\tau}$.
	\end{lemma}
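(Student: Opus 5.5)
The plan is to derive the statement from the GFF coupling of~\Cref{re:GFF-coupling-0,re:GFF-coupling-1}, in which the whole iterated $\cwBCLE_{\kappa^\prime}(0)$/$\ccwBCLE_\kappa(-\kappa/2)$ construction is generated by a single GFF $\Psi$. The simple loops arise as branching flow lines and the non-simple trunks as branching counterflow lines. The key step is to show that $\eta_z([0,\tau])$, together with the loops traced up to time $\tau$, is a local set for $\Psi$. Since $\eta_z$ is made of pieces of flow and counterflow lines, and each such curve stopped at a stopping time is local (\cite{IG1,IG4}), I would obtain locality by approximating $\tau$ with stopping times taking countably many values and using that countable unions of local sets generated by concatenation remain local.

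Next I will identify the boundary data of $\Psi$ on $\partial D_\tau$. Because $\eta_z(\tau)$ is a.s.\ not on a simple loop, the tip lies on the trunk, i.e.\ on the counterflow line part. The boundary of $D_\tau$ then consists of arcs of $\partial\BD$, arcs of the left and right outer boundaries of the trunk (flow lines of angles $\pm\pi/2$ by \cite[Theorem~1.13]{IG4}), and arcs of completed simple loops or false loops.

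Using the tables in~\Cref{re:GFF-coupling-simple-BCLE,re:GFF-coupling-nonsimple-BCLE}, I would check that after mapping $D_\tau$ to $\BH$ with $\eta_z(\tau)\mapsto 0$, the boundary values of $\Psi|_{D_\tau}$ are $\lambda'$ on the counterclockwise-side arc and $\lambda'-2\pi\chi$ on the other, modulo the winding term $-\chi\arg\varphi'$. This is exactly the boundary data of the root configuration. Conditionally on the local set, $\Psi|_{D_\tau}$ is therefore a GFF with the same boundary data, and the CPI exploration rooted at $\eta_z(\tau)$ restricted to $D_\tau$ is the same deterministic functional of it. This gives the first claim, with target invariance and the re-rooting discussion above handling the choice of root.

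For the second claim, the simple loops are flow lines of $\Psi$ determined locally by $\Psi$. Any simple true loop of the original construction contained in $D_\tau$ is drawn by the continuation of the exploration after $\tau$, and flow lines started from the same points with the same angles coincide a.s. They are therefore the same loops as those produced by the new construction in $D_\tau$. The converse follows because the new construction's loops are flow lines of $\Psi|_{D_\tau}$ which, by the interaction rules, stay in $D_\tau$ and are traced by the original tree.

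I expect the main obstacle to be the boundary-value bookkeeping at the points where $\partial D_\tau$ switches between trunk boundary, simple-loop arcs and $\partial\BD$. One must verify that the winding terms match the prescribed two-valued data, so that no extra force points appear. To handle this, I would first treat stopping times at which the trunk is on $\partial\BD$ or on a false-loop boundary, and then extend by continuity.
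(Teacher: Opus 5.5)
Your proposal takes essentially the same route as the paper. The paper uses the locality of the stopped branch for the GFF $\Psi$ and identifies the boundary data of $\Psi$ on $D_\tau$ with the root data through the boundary values along flow and counterflow lines. For the second claim, it observes that the branching flow lines of $\Psi|_{D_\tau}$ are pieces of flow lines of $\Psi$, and these cannot cross $\partial D_\tau$.

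One correction: the boundary values on $D_\tau$ equal $\lambda'$ and $\lambda'-2\pi\chi$ only up to an additive constant $c$. This $c$ is the shift of $\Psi$ accumulated at the iteration level being explored at time $\tau$, built from constants such as $c_\BR$ and $c_\rL$ in \Cref{re:GFF-coupling-0}. The new construction is therefore generated by $\Psi|_{D_\tau}-c$, not by $\Psi|_{D_\tau}$ itself.

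Your fallback of proving the statement at special stopping times and ``extending by continuity'' is unnecessary, and it would not be justified as stated. The identification of the boundary data holds directly at any stopping time at which the tip lies on the trunk.
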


	\begin{proof}
Let $\Psi$ denote the GFF on $\BD$ used to generate the $\cwBCLE_{\kappa^\prime}(0)$'s and the $  \ccwBCLE_\kappa(-\kappa/2)$'s in the statement of the lemma. It follows from~\Cref{sec:cle-percolation} and especially the proof of~\Cref{A021} (cf.~\cite[Figure~9.2]{CLEPerc}) that the boundary conditions of $\Psi$ on $D_{\tau}$ are, up to an additive constant $c$ (the constant by which $\Psi$ is shifted at the level of the iteration being explored at time $\tau$, cf.~\Cref{re:GFF-coupling-0}), given by $\lambda'$ on $(-\infty,0)$ and $\lambda' - 2\pi \chi$ on $(0,\infty)$ (after a conformal transformation onto $\BH$) (by the strong Markov property of $\Psi$ at $\tau$, $\eta_z$ being a local set, and the boundary values of $\Psi$ along flow and counterflow lines~\cite{IG1,IG4}). Furthermore, the iterated construction of loops in $D_{\tau}$ described in the statement of the lemma can be obtained as an iterated $\cwBCLE_{\kappa^\prime}(0)$/$\ccwBCLE_\kappa(-\kappa/2)$ construction on $D_{\tau}$ using the field $\Psi|_{D_{\tau}}-c$. This proves the first claim in the statement of the lemma. Finally, since the simple loops in the latter construction are constructed using branching flow lines of $\Psi|_{D_{\tau}}$ (which are also parts of branching flow lines of $\Psi$) and no flow line of $\Psi$ can cross $\partial D_{\tau}$ (due to the flow line interaction rules, see, e.g.,~\cite[Theorem~1.7]{IG4}), we obtain the second claim of the lemma (both inclusions, since by the description given below the loops of the iterations inside the first level $\cwBCLE_{\kappa'}(0)$ of $D_\tau$ are exactly the original ones). This completes the proof of the lemma.
\end{proof}

Let us now give a more detailed description of the relation  between the $\cwBCLE_{\kappa^\prime}(0)$/$\ccwBCLE_\kappa(-\kappa/2)$ iteration in the remaining domain and the original $\cwBCLE_{\kappa^\prime}(0)$/$\ccwBCLE_\kappa(-\kappa/2)$ iteration in the context of~\Cref{lem:markovian_cpi_exploration}.

The relation between the $\cwBCLE_{\kappa^\prime}(0)$/$\ccwBCLE_\kappa(-\kappa/2)$ iteration in the remaining domain and the original $\cwBCLE_{\kappa^\prime}(0)$/$\ccwBCLE_\kappa(-\kappa/2)$ iteration can be described as follows (see~\Cref{fig:markov-01,fig:markov-23} for an illustration): Let $x$ be a point that is on the boundary of the remaining domain. Since we always finish tracing the entire loop at the first time we encounter a true loop of some $\ccwBCLE_\kappa(-\kappa/2)$, and $\eta_z(\tau)$ is not on a loop of some $\ccwBCLE_\kappa(-\kappa/2)$, we see that $x$ is either 
\begin{itemize}
	\item on the false $\ccwBCLE_\kappa(-\kappa/2)$ loop that surrounds the $\cwBCLE_{\kappa^\prime}(0)$ that we were tracing at time $\tau$ (by convention, the domain boundary is also viewed as a false $\ccwBCLE_\kappa(-\kappa/2)$ loop);
	\item or a false loop of the $\ccwBCLE_\kappa(-\kappa/2)$ that is on the right-hand side of $\eta_z(\tau)$ (note that $\eta_z(\tau)$ is on both a true and a false loop of the $\cwBCLE_{\kappa^\prime}(0)$ that we were tracing at time $\tau$; the true one is on the right-hand side of $\eta_z(\tau)$; the false one is on the left);
	\item or the false $\cwBCLE_{\kappa^\prime}(0)$ loop that we were tracing at time $\tau$ which is on the left-hand side of $\eta_z(\tau)$.
\end{itemize}
Then the branch of the first level $\cwBCLE_{\kappa^\prime}(0)$ in the remaining domain from $\eta_z(\tau)$ to $x$ can be described as follows: 
\begin{itemize}
	\item in the first case, we follow the branch of the $\cwBCLE_{\kappa^\prime}(0)$ that we were tracing at time $\tau$ targeting $x$; 
	\item in the second case, the root of the false $\ccwBCLE_\kappa(-\kappa/2)$ loop that contains $x$ is on the true $\cwBCLE_{\kappa^\prime}(0)$ loop that we were tracing at time $\tau$ and is to the right of $\eta_z(\tau)$; then we follow this true $\cwBCLE_{\kappa^\prime}(0)$ loop until the root of the false $\ccwBCLE_\kappa(-\kappa/2)$ loop that contains $x$, then we trace the branch of the $\cwBCLE_{\kappa^\prime}(0)$ inside this false $\ccwBCLE_\kappa(-\kappa/2)$ loop from its root to $x$; 
	\item in the third case, we follow the false $\cwBCLE_{\kappa^\prime}(0)$ loop to the left of $\eta_z(\tau)$ until we close it, then we follow the branch of the $\cwBCLE_{\kappa^\prime}(0)$ inside this false $\cwBCLE_{\kappa^\prime}(0)$ loop targeting $x$. 
\end{itemize}

Once this first level $\cwBCLE_{\kappa^\prime}(0)$ is constructed, by the description above, we observe that each connected component of each true (resp.~false) loop of this $\cwBCLE_{\kappa^\prime}(0)$ is exactly the connected component of some true (resp.~false) loop of some original $\cwBCLE_{\kappa^\prime}(0)$. Thus, the second level $\ccwBCLE_\kappa(-\kappa/2)$ and the third level $\cwBCLE_{\kappa^\prime}(0)$, and so on, inside them, are exactly given by the original ones inside them.

\begin{figure}[ht!]
\centering
\includegraphics[width=0.485\linewidth]{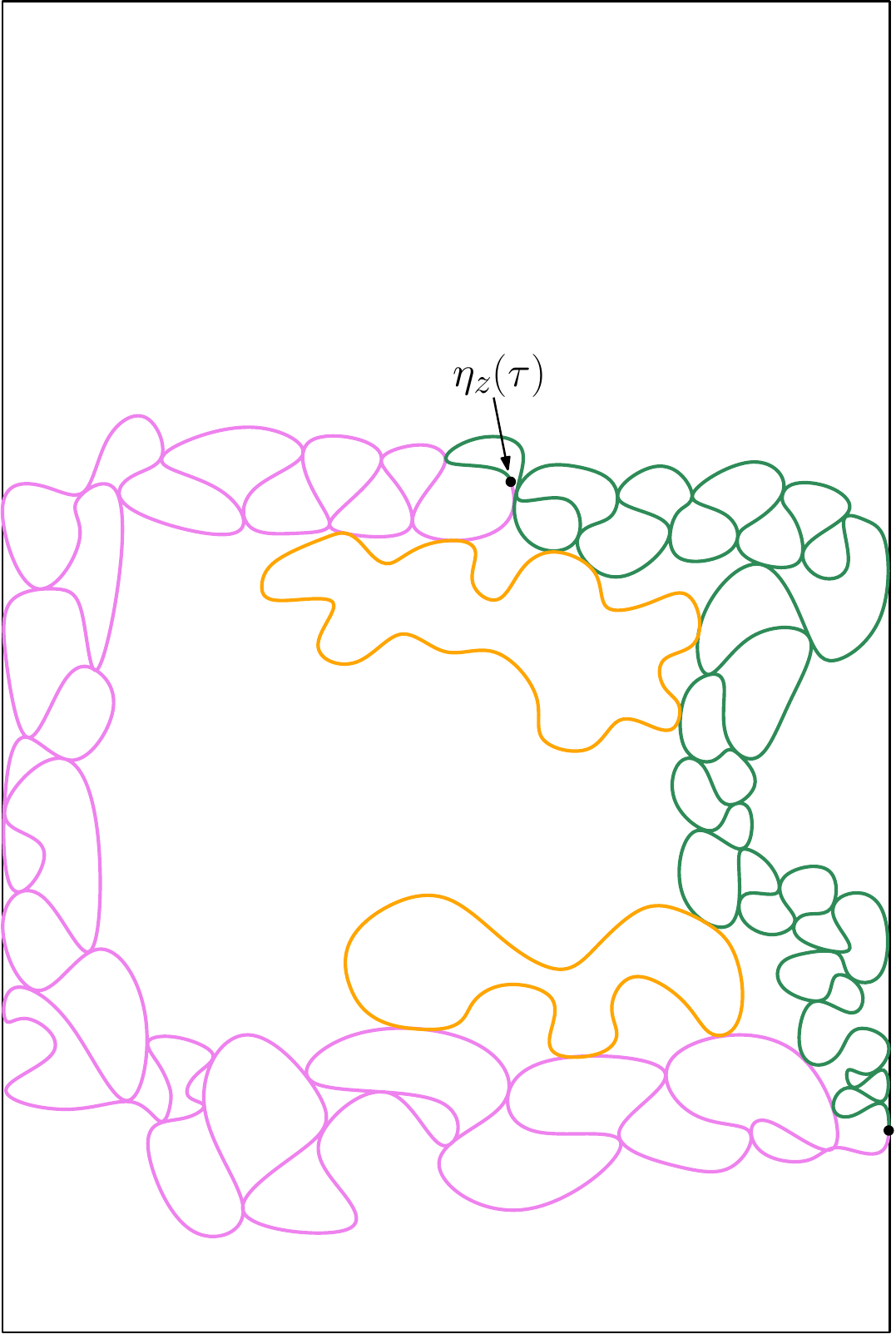}
\includegraphics[width=0.50\linewidth]{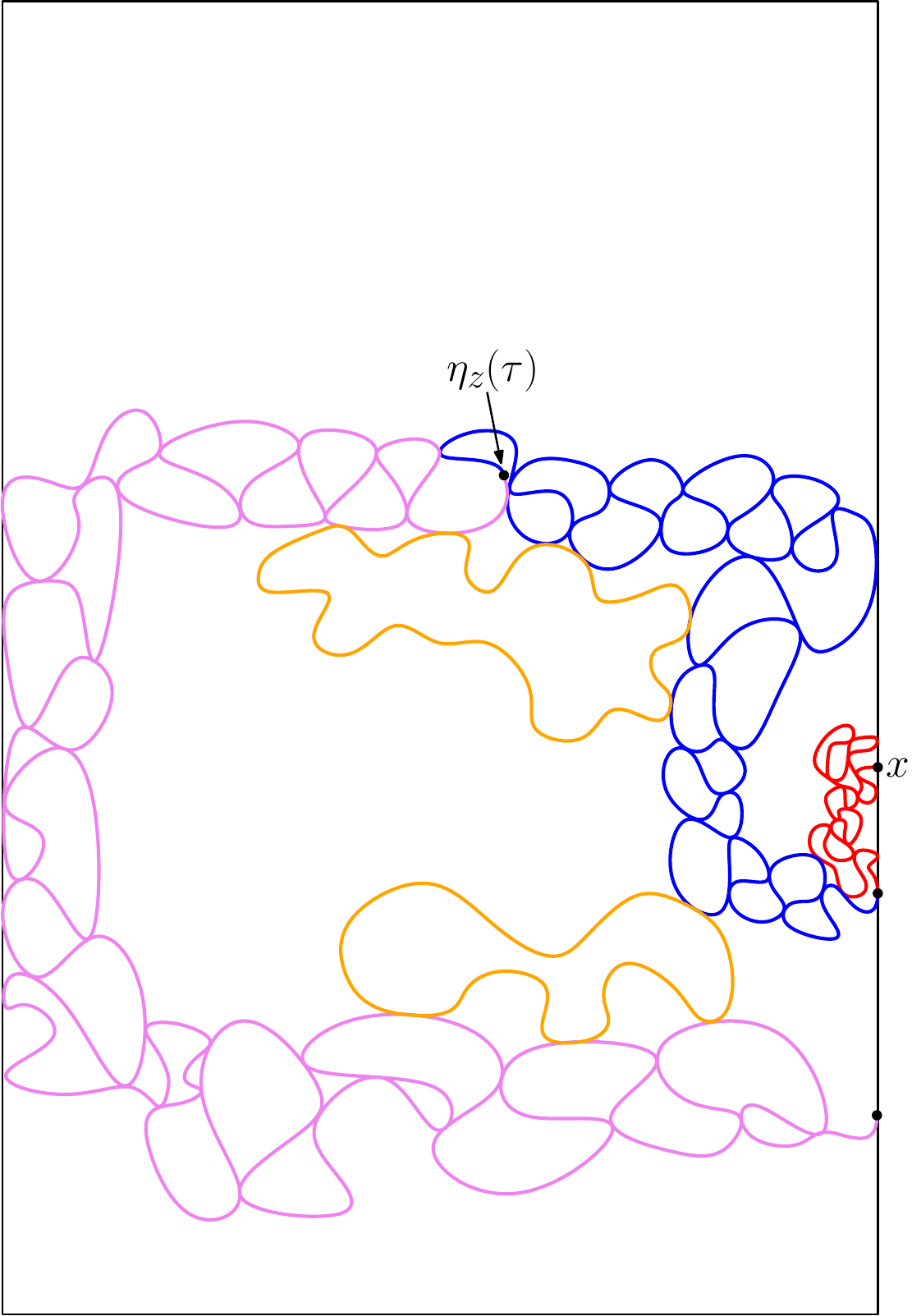}

\caption{Illustration of the exploration and the $\cwBCLE_{\kappa^\prime}(0)$/$\ccwBCLE_\kappa(-\kappa/2)$ iteration construction in the remaining domain when we stopped at time $\tau$. {\bfseries Left:} The purple depicts the $\cwBCLE_{\kappa^\prime}(0)$ branch that we were tracing at time $\tau$. One true loop of this $\cwBCLE_{\kappa^\prime}(0)$ is to the right of $\eta_z(\tau)$ and one false loop is to the left. The light green depicts the remainder of this true $\cwBCLE_{\kappa^\prime}(0)$ loop. The orange loops are two false $\ccwBCLE_\kappa(-\kappa/2)$ loops inside this true $\cwBCLE_{\kappa^\prime}(0)$ loop. {\bfseries Right:} The first case of $x$, where $x$ is on the false $\ccwBCLE_\kappa(-\kappa/2)$ loop that surrounds the $\cwBCLE_{\kappa^\prime}(0)$ that we were tracing at time $\tau$ (including the case where $x$ is on the domain boundary). In this case, the $\cwBCLE_{\kappa^\prime}(0)$ branch in the remaining domain from $\eta_z(\tau)$ to $x$ is given by following the branch of the original $\cwBCLE_{\kappa^\prime}(0)$ that we were tracing at time $\tau$ targeting $x$.}
\label{fig:markov-01}
\end{figure}

\begin{figure}[ht!]
\centering
\includegraphics[width=0.49\linewidth]{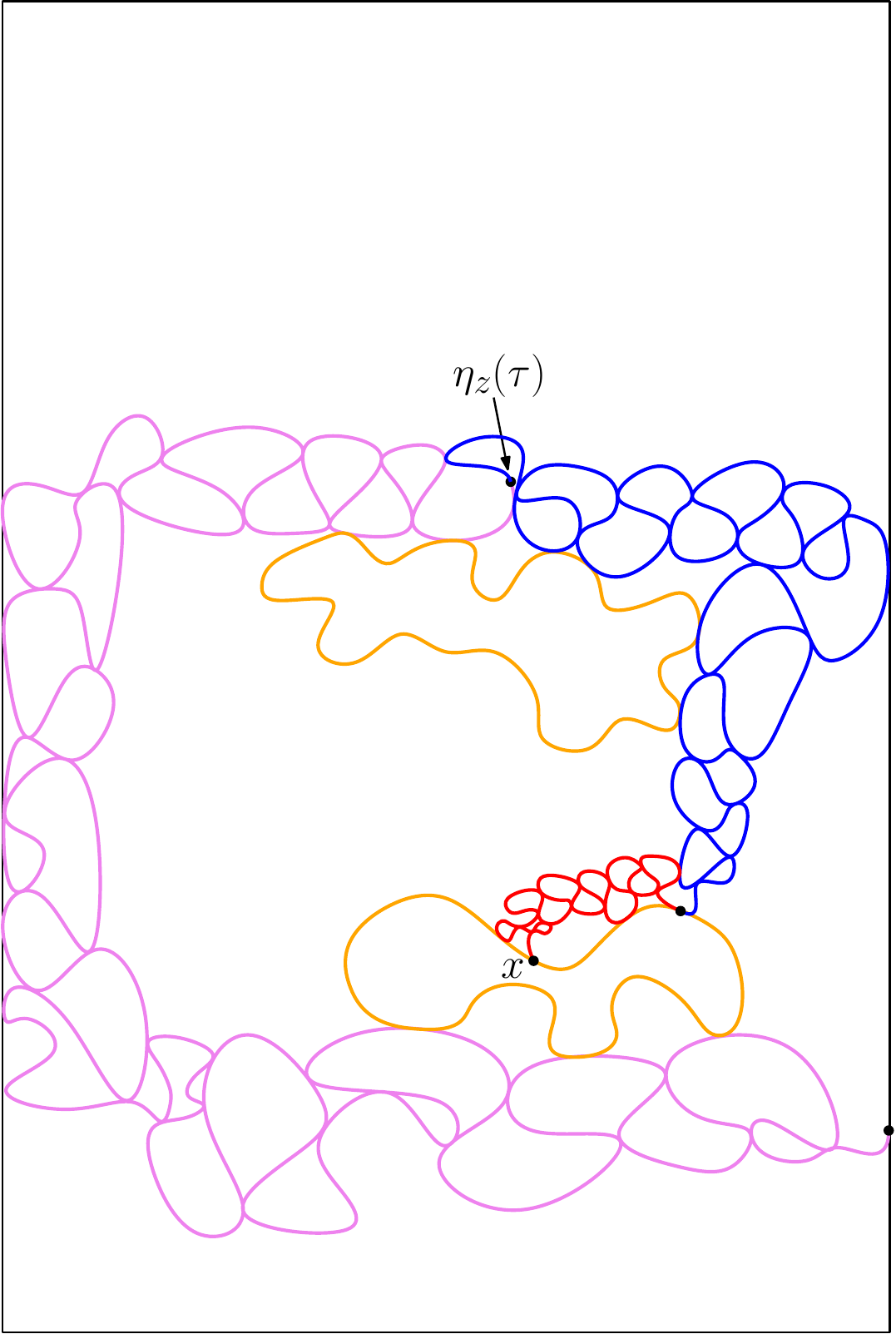}
\includegraphics[width=0.492\linewidth]{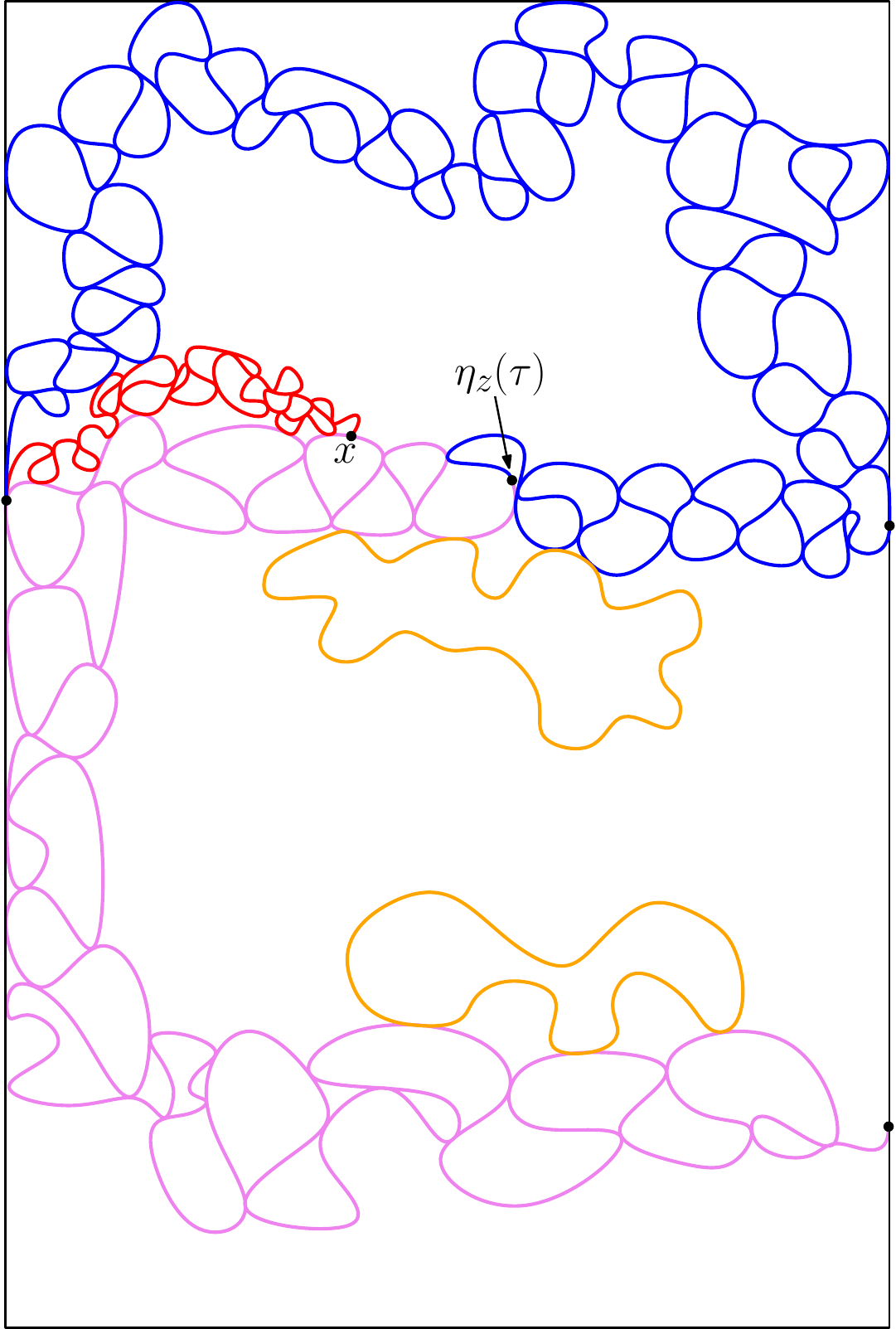}
\caption{{\bfseries Left:} The second case of $x$, where $x$ is on a false loop of the $\ccwBCLE_\kappa(-\kappa/2)$ inside the true $\cwBCLE_{\kappa^\prime}(0)$ loop that is on the right-hand side of $\eta_z(\tau)$. In this case, the $\cwBCLE_{\kappa^\prime}(0)$ branch in the remaining domain from $\eta_z(\tau)$ to $x$ is given by following the true $\cwBCLE_{\kappa^\prime}(0)$ loop that is on the right-hand side of $\eta_z(\tau)$ until closing the false $\ccwBCLE_\kappa(-\kappa/2)$ loop that contains $x$, and then follow the branch of the $\cwBCLE_{\kappa^\prime}(0)$ inside this false $\ccwBCLE_\kappa(-\kappa/2)$ loop from its root targeting $x$. {\bfseries Right:} The third case of $x$, where $x$ is on the false $\cwBCLE_{\kappa^\prime}(0)$ loop that we were tracing at time $\tau$ which is on the left-hand side of $\eta_z(\tau)$. In this case, the $\cwBCLE_{\kappa^\prime}(0)$ branch in the remaining domain from $\eta_z(\tau)$ to $x$ is given by following the $\cwBCLE_{\kappa^\prime}(0)$ that we were tracing at time $\tau$ until we close this false $\cwBCLE_{\kappa^\prime}(0)$ loop, then we follow the branch of the $\cwBCLE_{\kappa^\prime}(0)$ inside this false $\cwBCLE_{\kappa^\prime}(0)$ loop targeting $x$.}
\label{fig:markov-23}
\end{figure}

Note that the loops of the $\cwBCLE_{\kappa^\prime}(0)$'s in the remaining domain are not necessarily loops of the original $\cwBCLE_{\kappa^\prime}(0)$'s. However, we have the following.

\begin{lemma}\label{lem:excursion}

Suppose that we have the same setup as in~\Cref{lem:markovian_cpi_exploration}. Recall that the set of outermost true loops of all the $\cwBCLE_{\kappa^\prime}(0)$'s has the law of a non-nested $\CLE_{\kappa^\prime}$. Let $\SCI$ be a connected arc of some loop of this non-nested $\CLE_{\kappa^\prime}$. Suppose that $\SCI$ is contained in $D_{\tau}$ (hence it does not intersect $\eta_z([0,\tau])$). Then $\SCI$ is contained in some loop of the non-nested $\CLE_{\kappa^\prime}$ associated with the iterated $\cwBCLE_{\kappa^\prime}(0)$/$\ccwBCLE_\kappa(-\kappa/2)$ construction in $D_{\tau}$. 
\end{lemma}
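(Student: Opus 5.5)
The plan is to use the three-case description of the first level $\cwBCLE_{\kappa'}(0)$ of $D_\tau$ given right after the proof of \Cref{lem:markovian_cpi_exploration}, and to identify the loop $\SCL'$ of the original non-nested $\CLE_{\kappa'}$ containing $\SCI$ with (part of) a true loop of some $\cwBCLE_{\kappa'}(0)$ in the new construction. Concretely, $\SCL'$ is an outermost true loop of some original $\cwBCLE_{\kappa'}(0)$, say the one sampled inside a false loop (or the domain boundary) $O$ of the previous level. Since $\SCI \subseteq D_\tau$ and $\SCI$ is connected, $\SCI$ lies in a single connected component of $\SCL' \cap D_\tau$. We then split into two cases according to whether $\eta_z([0,\tau])$ has entered $O$ or not.

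\emph{Case 1: $\eta_z$ has not entered $O$ by time $\tau$.} Then $O$ is entirely contained in $D_\tau$, and $O$ is contained in a component (true or false) of some loop that the new first-level $\cwBCLE_{\kappa'}(0)$ produces. By the observation following \Cref{lem:markovian_cpi_exploration} (each connected component of each true/false loop of the new first-level $\cwBCLE_{\kappa'}(0)$ is a connected component of a true/false loop of an original $\cwBCLE_{\kappa'}(0)$, and the further iterations inside are the original ones), the whole iteration inside $O$, in particular the $\cwBCLE_{\kappa'}(0)$ in $O$ and its true loop $\SCL'$, is reproduced verbatim. It remains to check that $\SCL'$ is still \emph{outermost}, i.e.\ not surrounded by a true loop of a $\cwBCLE_{\kappa'}(0)$ in the new construction at an earlier level. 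This holds because the nesting levels traversed to reach $O$ inside $D_\tau$ alternate between $\ccwBCLE_\kappa(-\kappa/2)$ false/true loops and $\cwBCLE_{\kappa'}(0)$ false loops exactly as originally, except that the ancestral path is shortened.

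\emph{Case 2: $\eta_z$ is currently tracing (or has traced part of) the $\cwBCLE_{\kappa'}(0)$ in $O$.} This is the case where $\SCL'$ may be cut by $\eta_z([0,\tau])$. In the new construction rooted at $\eta_z(\tau)$, the first-level $\cwBCLE_{\kappa'}(0)$ branches are the continuations of the original branches in $O$ (first case of the description) or run along the true loop to the right of $\eta_z(\tau)$ (second case) or close the false loop to the left (third case). Accordingly, the part of $\SCL'\cap D_\tau$ containing $\SCI$ is traced by the new branching counterflow line as part of the boundary of a true loop of the new first-level $\cwBCLE_{\kappa'}(0)$. Here I would use the GFF coupling: both the original and the new branching counterflow lines are counterflow lines of the same field $\Psi|_{D_\tau}-c$ with the same boundary data (proof of \Cref{lem:markovian_cpi_exploration}), and their outer boundaries are flow lines of angles $\pm\pi/2$ \cite[Theorem~1.13]{IG4}. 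Target invariance of branching counterflow lines then shows that the arc $\SCI$, being a piece of such an outer boundary away from $\eta_z([0,\tau])$, is a piece of the boundary of a true loop of the new $\cwBCLE_{\kappa'}(0)$. That new true loop is at the first level in $D_\tau$, hence outermost, so it is a loop of the non-nested $\CLE_{\kappa'}$ of $D_\tau$.

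The main obstacle is Case 2: the loop of the new $\CLE_{\kappa'}$ containing $\SCI$ is generally \emph{not} equal to $\SCL'$, since it may combine pieces of $\SCL'$ with pieces of $\partial D_\tau$. The argument must therefore be local. I would first try to make this rigorous via flow-line determinism: a segment of a counterflow-line loop is determined locally by $\Psi$ as the union of the left and right boundaries given by flow lines \cite{IG4}. Flow lines of $\Psi$ started in $D_\tau$ are the same whether viewed in $\BD$ or $D_\tau$, up to hitting $\partial D_\tau$. This would give that $\SCI$ lies on the new loop.
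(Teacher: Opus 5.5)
\textbf{There is a genuine gap in Case~1.}

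Consider the false loop $O$ of the $\cwBCLE_{\kappa'}(0)$ being traced at time $\tau$ that lies immediately to the left of $\eta_z(\tau)$. It has been partly traced by $\eta_z([0,\tau])$ but not entered, so it falls into your Case~1. Yet it is \emph{not} contained in a single complementary component of the new first-level $\cwBCLE_{\kappa'}(0)$ of $D_\tau$.

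By the third case of the description following \Cref{lem:markovian_cpi_exploration}, the new branch toward a target $x\in\partial O\cap\eta_z([0,\tau])$ first closes $O$. It then follows the original $\cwBCLE_{\kappa'}(0)$ inside $O$ toward $x$. So part of the second-level construction inside $O$ is absorbed into the new first level.

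The observation you quote goes from new components to original ones. You need its converse here, and the converse fails for this $O$. Hence ``reproduced verbatim, with $\SCL'$ still outermost'' is unjustified for $O$ and for everything nested inside it.

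This is exactly the case the paper treats separately, with two subcases:
\begin{itemize}
\item If the original loop containing $\SCI$ touches $\eta_z([0,\tau])$, it becomes a true loop of the new first-level $\cwBCLE_{\kappa'}(0)$.
\item Otherwise it lies in a region cut out of $O$ that is a false loop of the new first level. It is then a true loop of the $\cwBCLE_{\kappa'}(0)$ sampled there, which must be identified with the restriction of the original one.
\end{itemize}
Your treatment of the other false loops of the current $\cwBCLE_{\kappa'}(0)$ is fine. So is your remark that the statement is vacuous unless the current $\cwBCLE_{\kappa'}(0)$ is not surrounded by a true loop.

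\textbf{Case~2 identifies the right branches, but the justification does not work.} $\SCI$ is an arc of the trace. For $\kappa'\in(4,8)$, the trace of an $\SLE_{\kappa'}$-type counterflow line is strictly larger than the union of its left and right boundaries, i.e.\ the flow lines of angles $\pm\pi/2$. So ``flow-line determinism of the outer boundaries'' does not place $\SCI$. Nor does local determinism tell you whether the new loop through $\SCI$ is a \emph{true} loop of the \emph{first} level.

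The needed argument is combinatorial, as in the paper. Suppose $\SCI$ lies on the true loop to the right of $\eta_z(\tau)$. By the first case of the description, the new branch from $\eta_z(\tau)$ toward a point of $\partial D_\tau$ lying on the boundary of the domain of the current $\cwBCLE_{\kappa'}(0)$ is just the original branch. So $\SCI$ lies on a new first-level branch. A BCLE branch only splits into distinct loops at its boundary-hitting points, and $\SCI$ avoids $\partial D_\tau$. Hence $\SCI$ lies in a single true loop of the new first level. All other true loops of the current $\cwBCLE_{\kappa'}(0)$ that lie in $D_\tau$ are unchanged loops of the new first level.
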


\begin{figure}[ht!]
\centering
\includegraphics[width=0.49\linewidth]{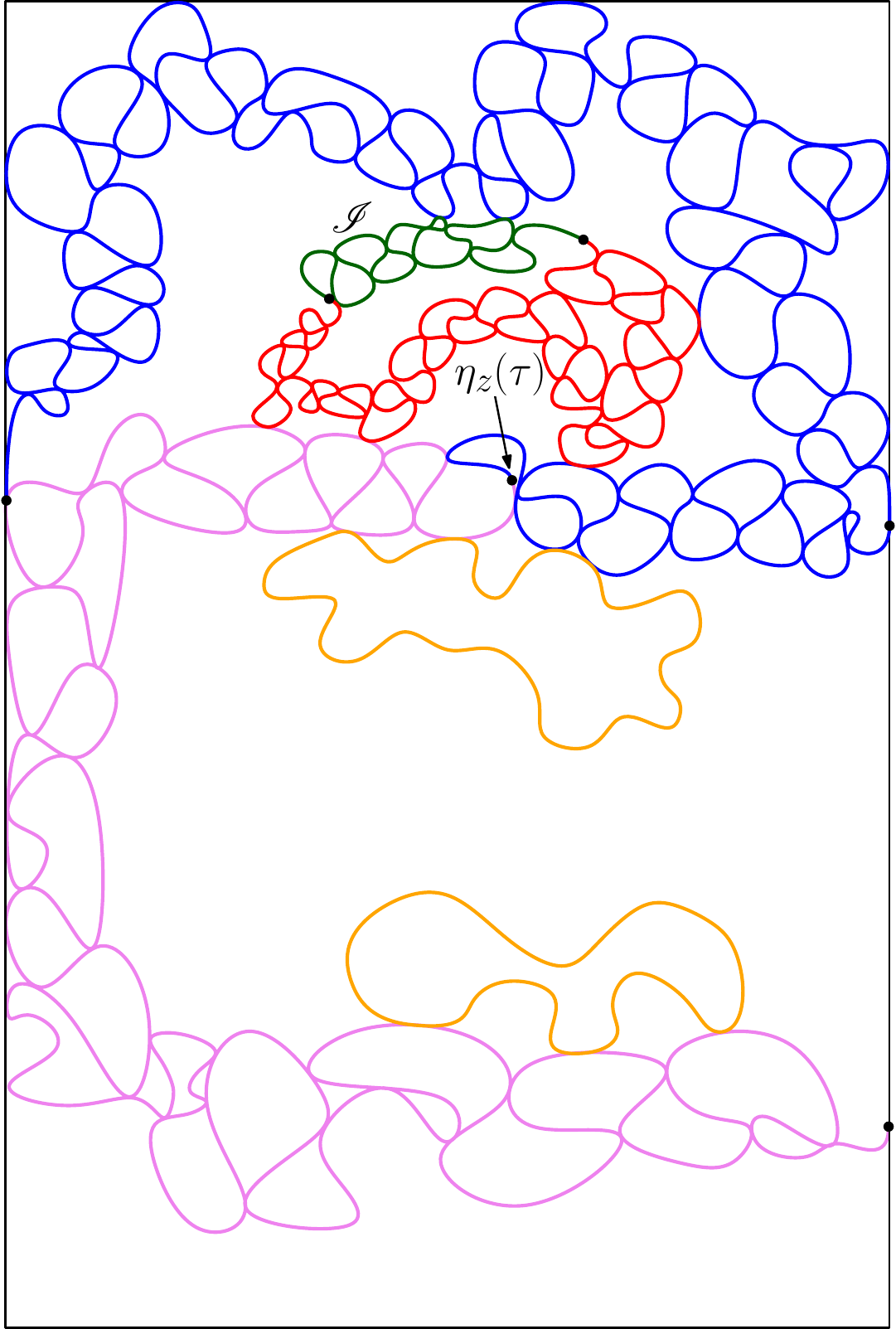}
\includegraphics[width=0.49\linewidth]{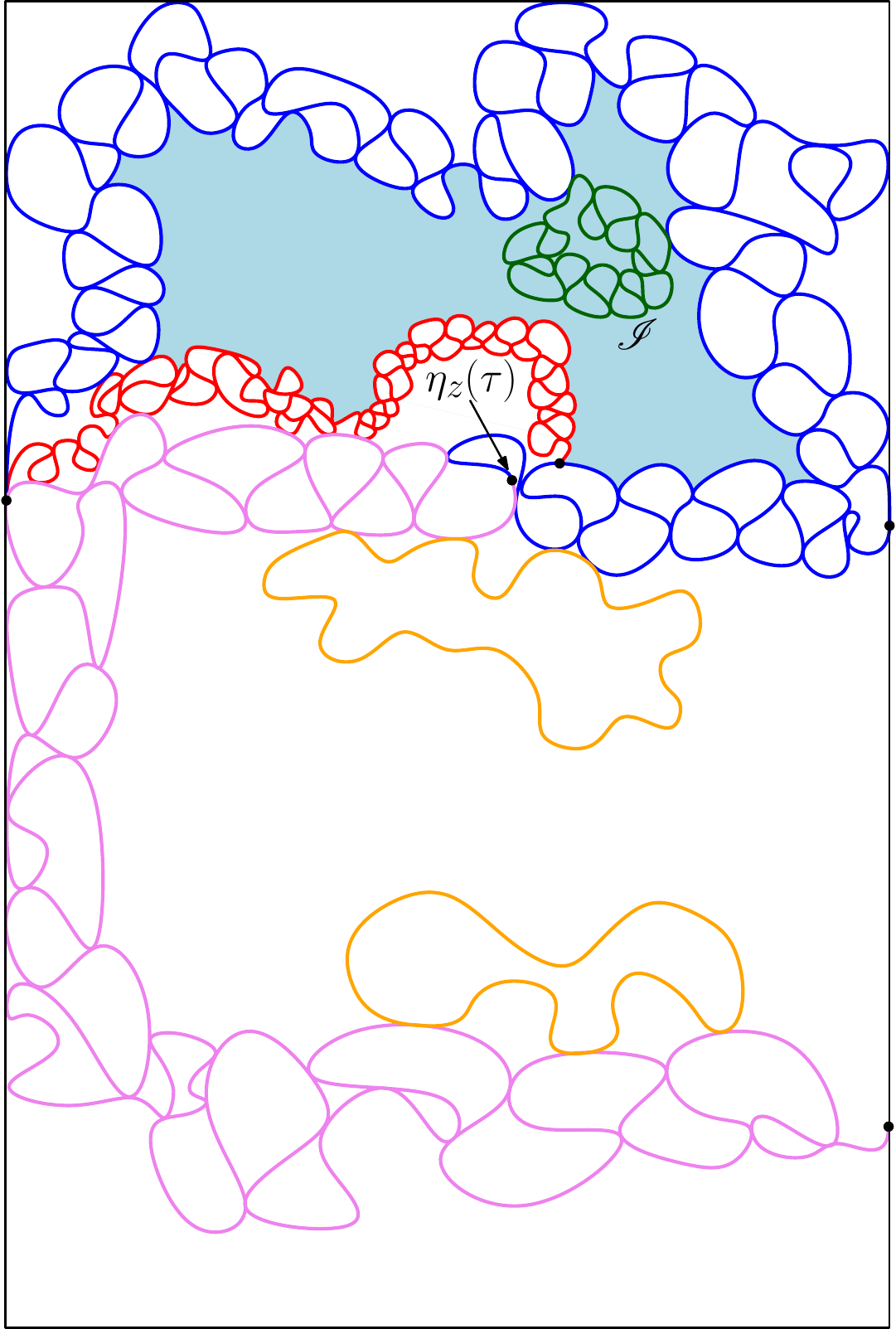}
\caption{Illustration of one situation of the proof of~\Cref{lem:excursion}, where $\SCI$ is contained in the false $\cwBCLE_{\kappa^\prime}(0)$ loop that lies immediately to the left of $\eta_z(\tau)$. The connected arc $\SCI$ is depicted in green. {\bfseries Left:} The original loop that contains $\SCI$ (the union of the red and green arcs) touches $\eta_z|_{[0, \tau]}$, in which case this loop is again a true loop of the first level $\cwBCLE_{\kappa^\prime}(0)$ in the remaining domain. {\bfseries Right:} The original loop that contains $\SCI$ does not touch $\eta_z|_{[0, \tau]}$. Then the light blue region is a connected component of a false loop of the first level $\cwBCLE_{\kappa^\prime}(0)$ in the remaining domain, and the original loop that contains $\SCI$ is a true loop of the $\cwBCLE_{\kappa^\prime}(0)$ inside it.}
\label{fig:excursion}
\end{figure}

\begin{proof}
\stepx{step:exc-cases}{The possible positions of $\SCI$} Consider the $\cwBCLE_{\kappa^\prime}(0)$ that we were tracing at time $\tau$. Since $\SCI$ is contained in the remaining domain and $\SCI$ is assumed to be contained in an outermost true loop of some $\cwBCLE_{\kappa^\prime}(0)$, it follows that this $\cwBCLE_{\kappa^\prime}(0)$ that we were tracing at time $\tau$ is not surrounded by some other true $\cwBCLE_{\kappa^\prime}(0)$ loop. Moreover, since $\SCI$ must be contained in a outermost true $\cwBCLE_{\kappa^\prime}(0)$ loop, it cannot be contained in any true or false loop of any $\ccwBCLE_\kappa(-\kappa/2)$ inside the true loops of the current $\cwBCLE_{\kappa^\prime}(0)$ that we were tracing at time $\tau$. Thus, either
	\begin{itemize}
		\item $\SCI$ is contained in the true loop of the $\cwBCLE_{\kappa^\prime}(0)$ that we were tracing at time $\tau$, or
		\item $\SCI$ is contained in the false loop of the $\cwBCLE_{\kappa^\prime}(0)$ that we were tracing at time $\tau$ (in a true loop of the outermost $\cwBCLE_{\kappa^\prime}(0)$ inside it, or deeper in the iteration), or
		\item $\SCI$ is outside of the true $\cwBCLE_{\kappa^\prime}(0)$ that we were tracing at time $\tau$ and outside of the false loop of the $\cwBCLE_{\kappa^\prime}(0)$ that we were tracing at time $\tau$.
	\end{itemize}
	Consider the domain that contains the $\cwBCLE_{\kappa^\prime}(0)$ that we were tracing at time $\tau$. (We may assume without loss of generality that this is the initial domain.)

\stepx{step:exc-first}{The first case} Consider the first case above. In this case, $\SCI$ must be part of the branch of the $\cwBCLE_{\kappa^\prime}(0)$ that we were tracing at time $\tau$ from $\eta_z(\tau)$ and targeting a point that is both on the domain boundary and the boundary of the remaining domain. Moreover, by the previous discussions, this branch must also be a branch of some outermost true $\cwBCLE_{\kappa^\prime}(0)$ loop in the remaining domain (cf.~\Cref{fig:markov-23} (left)). Since a branch of a BCLE can only separate into distinct loops at the boundary (every loop of a BCLE being a pocket cut out between two consecutive boundary hits of a branch), and $\SCI$ does not touch the boundary of the remaining domain, it follows that $\SCI$ is contained in some true loop of the outermost $\cwBCLE_{\kappa^\prime}(0)$ in the remaining domain.

\stepx{step:exc-second}{The false loop to the left of $\eta_z(\tau)$} Next, we consider the case that $\SCI$ is contained in the false loop that is immediately to the left of $\eta_z(\tau)$. See~\Cref{fig:excursion} for an illustration of this case. Note that it is possible that $\SCI$ touches the false loop that is immediately to the left of $\eta_z(\tau)$ (but not the portion of it that is contained in $\eta_z|_{[0, \tau]}$). Note that there are again two subcases: either the original $\CLE_{\kappa^\prime}$ loop that contains $\SCI$ touches $\eta_z(\tau)$ (cf.~\Cref{fig:excursion} (left)) or not (cf.~\Cref{fig:excursion} (right)). In the first subcase, by the previous discussions (cf.~\Cref{fig:markov-23} (right)), this loop is a true loop of the outermost $\cwBCLE_{\kappa^\prime}(0)$ in the remaining domain. In the second subcase, it is a true loop of some $\cwBCLE_{\kappa^\prime}(0)$ inside a false loop of the outermost $\cwBCLE_{\kappa^\prime}(0)$ in the remaining domain.

\stepx{step:exc-rest}{The remaining cases} Finally, we observe that, for the $\cwBCLE_{\kappa^\prime}(0)$ that we were tracing at time $\tau$, except for the true loop that is immediately to the right of $\eta_z(\tau)$ and the false loop that is immediately to the left of $\eta_z(\tau)$, all other loops (true or false) of this $\cwBCLE_{\kappa^\prime}(0)$ that are contained in the remaining domain are also loops of the outermost $\cwBCLE_{\kappa^\prime}(0)$ in the remaining domain, and the iteration constructions inside them are exactly the same for the original domain and for the remaining domain. Thus, except for the cases discussed in the preceding two paragraphs, other cases of $\SCI$ follow by this observation. This completes the proof of~\Cref{lem:excursion}.
\end{proof}

\subsubsection{The exploration gives all the loops}
\label{subsec:discovering_all_loops}

As explained at the beginning of~\Cref{sec:proof_of_main_result}, one of the main ingredients of the proof of~\Cref{prop:thin-loops} is a certain Markovian exploration of the loops of a simple (non-nested) $\CLE_{\kappa}$ in $\BD$ that intersect the inner and outer boundaries of a fixed Euclidean annulus. In this subsection, we are going to describe the aforementioned exploration in a detailed way and prove that it discovers all the simple loops intersecting the inner and outer boundaries of the annulus a.s.\ (\Cref{lemma all the loops are discovered}). We will also prove the following, for each fixed $\kappa'$ (\Cref{lem:segments_of_loops_inside_rectangles}). Every segment of a loop in the $\CLE_{\kappa'}$ in $\BD$ (which is coupled with the simple $\CLE_{\kappa}$ as in~\Cref{A021}) that is included in one of the conformal rectangles formed by the crossings of the annulus by $\CLE_{\kappa}$ loops (on the event that such crossings exist) is part of a loop of a $\CLE_{\kappa'}$ in the above conformal rectangle.

We continue to use the notation of the preceding subsection. Let $\Gamma$ denote the non-nested $\CLE_\kappa$ consisting of the true loops of all the $\ccwBCLE_\kappa(-\kappa/2)$'s. 

In the present subsection, we fix an annulus $A_{s,t}(z) = B_t(z) \setminus \overline{B_s(z)}$ inside $\BD$. On the event that there exists a loop of $\Gamma$ crossing between the inner and outer boundaries of $A_{s,t}(z)$, we define an exploration that explores all the loops of $\Gamma$ that intersect the boundary of $A_{s,t}(z)$, and such that the restriction of the $\cwBCLE_{\kappa^\prime}(0)$'s and $\ccwBCLE_\kappa(-\kappa/2)$'s in the remaining domains again has the law of the $\cwBCLE_{\kappa^\prime}(0)$/$\ccwBCLE_\kappa(-\kappa/2)$ iteration construction in the remaining domains. 

Recall that the collection of all the outermost true loops of all the $\cwBCLE_{\kappa^\prime}(0)$'s forms a non-nested $\CLE_{\kappa^\prime}$. Let us denote it by $\Gamma^\prime$. 

More precisely, the exploration is defined in the following order. Let $(Z_n)_{n\ge 1}$ be a sequence of i.i.d.\ uniform random variables on $\BD$. Then the exploration is defined inductively. 
We start with the iterated $\cwBCLE_{\kappa^\prime}(0)$/$\ccwBCLE_\kappa(-\kappa/2)$ construction used to generate $\Gamma$ and $\Gamma'$, and re-root it (in the sense of~\Cref{subsec:cpi_exploration}) at some point $w_1$ chosen (independently of everything else) according to the harmonic measure seen from $Z_1$ in $\partial \BD \setminus A_{s,t}(z)$ (which is actually $\partial \BD$ since $A_{s,t}(z)\subset \BD$). Then, we consider the branch of the CPI exploration associated with the iterated $\cwBCLE_{\kappa^\prime}(0)$/$\ccwBCLE_\kappa(-\kappa/2)$ construction re-rooted at $w_1$, started at $w_1$ and targeted at $Z_1$, and we stop it at the first time that it discovers a loop in $\Gamma$ that intersects $A_{s,t}(z)$ , or when it hits $\overline{A_{s,t}(z)}$ (but it might be the case that the branch of the CPI exploration is not stopped, in which case we draw the full branch to $Z_1$).

Assume that the exploration is constructed until step $n \geq 1$. The unexplored region at step $n$ is defined as the connected component containing $Z_{n+1}$ of the complement in $\BD$ of the union of all branches drawn during the first $n$ steps of the exploration and of the interiors of the $\CLE_{\kappa}$ loops discovered in the first $n$ steps of the exploration. Recall that~\Cref{lem:markovian_cpi_exploration} implies that the iterated process of loops constructed using the restriction to the unexplored region at step $n$ of the original CPI exploration in $\BD$ has the law of an iterated $\cwBCLE_{\kappa^\prime}(0)$/$\ccwBCLE_\kappa(-\kappa/2)$ construction in the unexplored region.

On the event that $Z_{n+1}$ does not belong to $A_{s,t}(z)$ and that $Z_{n+1}$ is in the unexplored region, we sample a point $w_{n+1}$ (independently of everything else) according to the harmonic measure seen from $Z_{n+1}$ on the subset of the boundary of the unexplored region which is not included in $A_{s,t}(z)$. Then, we consider the branch of the CPI exploration associated with the iterated $\cwBCLE_{\kappa^\prime}(0)$/$\ccwBCLE_\kappa(-\kappa/2)$ construction in the unexplored region re-rooted at $w_{n+1}$, started at $w_{n+1}$ and targeted at $Z_{n+1}$, and stopped at the first time that it discovers a loop in $\Gamma$ intersecting $A_{s,t}(z)$, or when it hits $\overline{A_{s,t}(z)}$. 

On the event that $Z_{n+1}$ belongs to $A_{s,t}(z)$ or is not in the unexplored region, we do nothing. 

Note that~\Cref{lem:markovian_cpi_exploration} implies that all the simple true loops discovered during the steps of the above exploration are loops in $\Gamma$.
Also, for each $n$, the conditional law of the path that we draw at step $n$ (on the event that $Z_{n}$ does not belong to $A_{s,t}(z)$ and is in the unexplored region) given all the paths that we have drawn during the first $n-1$ steps is that of a radial $\SLE_{\kappa}(\kappa-6)$ with $\beta=1$ in the unexplored region at step $n$, by~\Cref{lem:markovian_cpi_exploration} together with~\Cref{A021}.

Let us check that the above exploration indeed discovers all the loops intersecting the boundary of $A_{s,t}(z)$.
\begin{lemma}\label{lemma all the loops are discovered}
	Almost surely, the above exploration indeed discovers all the loops of $\Gamma$ intersecting the outer boundary of $A_{s,t}(z)$. The same is true for the inner boundary on the event that there is a $\CLE_\kappa$ loop which crosses $A_{s,t}(z)$.
	\end{lemma}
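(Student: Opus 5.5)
The plan is to argue by contradiction using the density of the i.i.d.\ uniform targets $(Z_n)_{n\ge1}$. Fix a loop $\SCL\in\Gamma$ intersecting $\partial B_t(z)$. Since $\SCL$ is a simple loop not touching $\partial\BD$ and crossing into the exterior of $\overline{A_{s,t}(z)}$, the region $\mathrm{int}(\SCL)\setminus\overline{A_{s,t}(z)}$ is a non-empty open set. A.s.\ infinitely many $Z_n$ fall in it, and in any fixed open set, by the Borel--Cantelli lemma. The first step is to show that as soon as some $Z_n\in\mathrm{int}(\SCL)\setminus\overline{A_{s,t}(z)}$ lies in the unexplored region at step $n-1$, the branch drawn at step $n$ discovers $\SCL$. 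The reason is the following. By~\Cref{lem:markovian_cpi_exploration}, the loops of $\Gamma$ contained in the unexplored region are exactly the $\CLE_\kappa$ loops of the restricted iterated construction there. The branch targeted at $Z_n$ is a radial $\SLE_\kappa(\kappa-6)$ (with $\beta=1$) in that region, and it must discover the loop surrounding its target before reaching it. Since $\SCL$ surrounds $Z_n$ and meets $A_{s,t}(z)$, the stopping rule is triggered no later than the discovery of $\SCL$. It remains to check that the branch is not stopped earlier at a different loop or on hitting $\overline{A_{s,t}(z)}$ with $\SCL$ undiscovered. In that case the stopped path separates $Z_n$ from part of the domain, but $Z_n$ and $\SCL$ still lie in the new unexplored component, so we proceed to the next step.

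The second step handles that scenario. If $Z_n$ lies in $\mathrm{int}(\SCL)$ and remains unexplored, the next index $m>n$ with $Z_m\in\mathrm{int}(\SCL)\setminus\overline{A_{s,t}(z)}$ in the same component is used. Each attempt from a boundary point chosen by harmonic measure outside $A_{s,t}(z)$ has a conditional probability, bounded below, of reaching $\SCL$ before stopping. Indeed, the harmonic measure of the part of $\partial(\text{unexplored})$ outside $A$ as seen from $Z_m$ cannot degenerate, because the region $\mathrm{int}(\SCL)\setminus \overline{A}$ contains a ball that is untouched by the explorations, since these never enter $\mathrm{int}(\SCL)$ before discovering $\SCL$. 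A conditional Borel--Cantelli (L\'evy) argument then gives discovery a.s. Because $\Gamma$ is countable and locally finite, we may take a union over the finitely many loops of diameter at least $\delta$ meeting $\partial B_t(z)$, then over $\delta\downarrow0$, which gives the claim for the outer boundary.

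For the inner boundary, on the event that some loop $\SCL_0\in\Gamma$ crosses $A_{s,t}(z)$, this loop is discovered by the outer-boundary argument. The same reasoning is then applied to loops meeting $\partial B_s(z)$. Each such loop $\SCL$ has $\mathrm{int}(\SCL)\setminus\overline{A}$ containing an open subset of $B_s(z)$ or of the exterior. If it lies inside $B_s(z)$, targets $Z_n\in B_s(z)$ can be reached from boundary points outside $A$ through $B_s(z)$. The boundary of the unexplored component containing such $Z_n$ includes parts of explored loops inside $B_s(z)$, for instance portions of $\SCL_0$, since the crossing loop touches $B_s(z)$. This gives positive harmonic measure off $A$ and the same argument applies. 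The main obstacle I expect is precisely this non-degeneracy of the harmonic measure of the non-$A$ boundary, especially for targets in $B_s(z)$, together with ruling out that the unexplored component containing a target inside an undiscovered loop shrinks to have no boundary outside $A$. I would first try to prove it by noting that explorations stop upon hitting $\overline A$, so every explored branch lies outside $A$ except for the discovered loops.
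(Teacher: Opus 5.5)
\textbf{There is a genuine gap.} Your architecture matches the paper's: dense i.i.d.\ targets, a conditional lower bound on the success of each attempt, and L\'evy's conditional Borel--Cantelli lemma. But the one step that carries all the content is asserted rather than proved, namely that each attempt has a conditional probability, bounded below uniformly over the past, of reaching $\SCL$ without being stopped.

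Your only justification is that the part of the boundary of the unexplored component lying outside $A_{s,t}(z)$ has harmonic measure bounded below from the target. That controls where the root $w_m$ falls, and nothing more. The radial $\SLE_\kappa(\kappa-6)$ branch may still touch $\overline{A_{s,t}(z)}$, or trace a $\Gamma$-loop meeting $A_{s,t}(z)$, before reaching the target. Moreover, the bound must be uniform over the random, step-dependent shape of the unexplored component.

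The paper closes this with three ingredients that are missing from your plan.
\begin{itemize}
\item[(i)] \emph{A corridor of definite width.} After mapping the unexplored component $U_j$ onto $\BD$ with the target sent to $0$, the obstacle $f_j(A_{s,t}(z)\cap U_j)$ has at most $1+N$ components, where $N$ is the number of $\Gamma$-loops crossing the annulus. On $\{N\le m\}$, a Beurling argument turns the harmonic-measure bound into a path from $\partial\BD$ to $0$ whose $2\varepsilon'$-neighbourhood avoids the obstacle, with $\varepsilon'$ depending only on $p,\delta,m$. A grid discretization then leaves finitely many candidate corridors, which is what makes the constants uniform in $j$. Without the component bound, harmonic measure alone does not give such a tube.
\item[(ii)] \emph{A tube estimate for the trunk.} The trunk is a radial $\SLE_{\kappa'}(0;\kappa'-6)$. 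By \cite[Lemma~2.3]{miller2017intersections}, it stays in the corridor until it reaches the image of $B_\delta(z')$ with probability at least $p_0(\varepsilon')$.
\item[(iii)] \emph{Control of the simple loops hung on the trunk.} This is exactly your scenario of being stopped ``at a different loop''. These loops are traced by a chordal $\SLE_\kappa(-\kappa/2;3\kappa/2-6)$. By \cite[Lemma~A.1]{CoInCLERiemSph}, this curve stays close to the right side of the trunk with conditional probability at least $p_1$, so no loop meeting $A_{s,t}(z)$ is discovered on the way.
\end{itemize}
Only the product of these bounds gives the uniform constant that you feed into Borel--Cantelli.

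\textbf{Two smaller points.} First, the operative reason for the harmonic-measure bound is not that a ball inside $\mathrm{int}(\SCL)$ is untouched; that only guarantees the target is still unexplored. For the outer boundary, the reason is that the target lies in a fixed ball $B_\delta(z')$ in the part of $\BD\setminus\overline{A_{s,t}(z)}$ connected to $\partial\BD$. Brownian motion from it therefore reaches $\partial\BD$, and so leaves the unexplored component through its boundary outside $A_{s,t}(z)$, with a deterministic probability $p$. For the inner boundary, the already discovered crossing loop, which enters $B_s(z)$, plays this role.

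Second, drawing targets from the random set $\mathrm{int}(\SCL)\setminus\overline{A_{s,t}(z)}$ is awkward for a conditional Borel--Cantelli argument, because $\SCL$ is not measurable with respect to the past of the exploration. The paper instead fixes deterministic balls $B_\delta(z')$ with $z'$ rational, and shows that the exploration a.s.\ eventually reaches each of them. This forces discovery of the loop surrounding $B_\varepsilon(z')$.
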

	\begin{proof}
\stepx{step:disc-reduction}{Reduction to hitting a fixed ball} Let us first deal with the loops intersecting the outer boundary of $A_{s,t}(z)$. Let $z' \in \BD \setminus \overline{A_{s,t}(z)}$ be a point that is not disconnected from $\partial \BD$ by $A_{s,t}(z)$ and let $\varepsilon>0$. It suffices to show that the loop $\SCL$ of $\Gamma$ surrounding $B_{\varepsilon}(z')$ (when it exists) will be a.s.\ discovered by the above exploration (a.s.\ no loop of $\Gamma$ is tangent to $\partial B_s(z)$ or $\partial B_t(z)$, so every loop meeting $\partial A_{s,t}(z)$ has interior points outside $\overline{A_{s,t}(z)}$). Let $\delta \in (0, \varepsilon)$ such that $B_{2\delta}(z') \cap A_{s,t}(z) = \emptyset$.

\stepx{step:disc-corridor}{Construction of a corridor of definite width} For all $j\ge 1$, let $n_j$ be the $j$-th integer $n$ such that $Z_n \in B_\delta(z')$. Let $U_j$ denote the connected component of the unexplored region at step $n_j-1$ containing $Z_{n_j}$. Let $f_j \colon U_j \to \BD$ be the conformal map from the connected component of the unexplored region containing $Z_{n_j}$ to $\BD$ such that $f_j(Z_{n_j})=0$ and $f_j'(Z_{n_j})>0$. Note that since $Z_{n_j} \in B_\delta(z')$ and $B_{2\delta}(z') \cap A_{s,t}(z) = \emptyset$, the probability that a planar Brownian motion starting from $Z_{n_j}$ hits $\partial \BD$ before $A_{s,t}(z)$ is bounded from below by a constant $p>0$ which does not depend on $j$. In particular, the probability that a planar Brownian motion starting from $Z_{n_j}$ hits the explored region before hitting $A_{s,t}(z)$ is bounded from below by $p$. Therefore, the probability that a planar Brownian motion starting from $0$ hits $\partial \BD$ before $f_j(A_{s,t}(z) \cap U_j)$ is bounded from below by $p$. Moreover, note that the number of connected components of $f_j(A_{s,t}(z)\cap U_j)$ is at most one plus the number of $\CLE_\kappa$ loops crossing $A_{s,t}(z)$, so that the number of connected components of $f_j(A_{s,t}(z)\cap U_j)$ is bounded by a random variable $N$ which does not depend on $j$. Thus, on the  event that $N \leq m$, for some fixed and deterministic $m \in \BN$, there exists $\varepsilon'\in (0, \delta)$ depending only on $p,\delta$, and $m$ which does not depend on $j$, and a path $\gamma_j$ from $\partial \BD$ to $0$ such that $B_{2\varepsilon'}(\gamma_j) \subset \BD \setminus f_j(A_{s,t}(z)\cap U_j)$ (by Beurling's estimate: if every path from $0$ to $\partial\BD$ in this complement had a bottleneck of width smaller than $\varepsilon'$, then, the complement having at most $m+1$ boundary components, the harmonic measure of $\partial \BD$ from $0$ in it would be smaller than $p$). For any path $\gamma$ in $\BD$, there exists a path $\widetilde{\gamma}$ in the grid $(\varepsilon'/8)\BZ^2$ which is $\varepsilon'/2$-close to $\gamma$ for the Hausdorff distance so actually we may divide $\varepsilon'$ by two to assume that $\gamma_j$ is the intersection with $\BD$ of a path in the grid $(\varepsilon'/8)\BZ^2$. Note that there are finitely many such simple paths.

\stepx{step:disc-trunk}{The trunk follows the corridor} Next, we apply \cite[Lemma 2.3]{miller2017intersections} to the trunk of a radial $\SLE_\kappa(\kappa-6)$ starting from a  random point on $\partial \BD \cap B_{\varepsilon'}(\gamma_j)$ sampled according to the Lebesgue measure and independently of everything else, and targeting $0$. The trunk is a radial $\SLE_{\kappa'}(0;\kappa'-6)$ targeting $0$ by \cite[Theorem~1.6]{SimCLELQG} and one can see that $\kappa'-6>-2$. 
	So, by  \cite[Lemma 2.3]{miller2017intersections}, with probability at least $p_0(\varepsilon')>0$, where $p_0(\varepsilon')$ is deterministic and only depends on $\varepsilon'$, the trunk $\eta$ of a radial $\SLE_\kappa(\kappa-6)$ starting from a random point on $\partial \BD \cap B_{\varepsilon'}(\gamma_j)$ sampled according to the Lebesgue measure and independently of everything else, and targeting $0$, stopped when it hits $f_j(B_\delta(z'))$, is included in $B_{\varepsilon'}(\gamma_j)$. Note that on this event the trunk stopped when it hits $f_j(B_\delta(z'))$ can be coupled with one chordal $\SLE_{\kappa'}(\kappa'-6)$ targeting a fixed point in $\partial \BD \setminus B_{\varepsilon'}(\gamma_j)$ \cite{SLE-CoC} so that we only apply \cite[Lemma 2.3]{miller2017intersections} to one chordal curve. The fact that $p_0(\varepsilon')$ does not depend on the choice of the path $\gamma_j$ stems from the fact that there are finitely many paths in the grid $(\varepsilon'/8)\BZ^2$ that are included in the unit disk.

\stepx{step:disc-loops}{The attached loops stay near the trunk} Recall that~\Cref{A021} implies that the radial $\SLE_\kappa(\kappa-6)$ is obtained by attaching $\ccwBCLE_\kappa(-\kappa/2)$ loops on the right-hand side of the trunk. These loops are traced by a chordal $\SLE_\kappa(-\kappa/2; 3\kappa/2 -6)$ from the right-hand side of the starting point of the trunk to the tip of the stopped trunk $\eta$. As in the previous paragraph, we conformally map the unexplored component containing $Z_{n_j}$ to the unit disk and $Z_{n_j}$ is sent to $0$ and we denote by $\widetilde{\eta}$ the image of the chordal $\SLE_\kappa(-\kappa/2; 3\kappa/2 -6)$. Recall that $\eta$ is the trunk stopped when it hits $f_j(B_\delta(z'))$ for the first time. So, conditionally on $\eta$, the curve $\widetilde{\eta}$ is a chordal $\SLE_\kappa(-\kappa/2; 3\kappa/2 -6)$ from the right-hand side of $\eta(0)$ to the endpoint of $\eta$, where its force points are located immediately to the left and right of $\eta(0)$ respectively \cite{CLEPerc}.

Let $V_j$ denote the connected component of $\BD \setminus \eta$ containing $0$, and let $g_j: V_j \to \BH$ be the conformal transformation mapping $V_j$ onto $\BH$ such that it maps the rightmost point of $\eta \cap \partial \BD$ to $0$, the tip of $\eta$ to $1$, the leftmost point of $\eta \cap \partial\BD$ to $\infty$, and the right-hand side of $\eta$ onto $[0,1]$. Then, $g_j(\widetilde{\eta})$ is a chordal $\SLE_\kappa(-\kappa/2; 3\kappa/2 -6)$ from $0$ to $1$ with the force points located at $0^-$ and $0^+$ respectively. Furthermore, note that there exists some deterministic constant $c(\varepsilon',\delta)>0$ depending only on $\varepsilon'$ and $\delta$ such that  with probability at least $c(\varepsilon', \delta)>0$, a planar Brownian motion starting from $0$ stopped when it hits $\partial \BD$ for the first time stays in the union $\overline{B_\delta(0)}\cup B^{\mathrm{R}}_{\varepsilon'}(\eta)$, where $B^{\mathrm{R}}_{\varepsilon'}(\eta)$ is the set of points of $\BD$ at distance smaller than $\varepsilon'$ from the right-hand side of $\eta$ (and the same is true if we replace the right-hand side of $\eta$ by the left-hand side of $\eta$). In particular, there exists $\varepsilon''>0$ depending only on $\varepsilon', \delta$ such that, noting that $[0,1]$ is the image by $g_j$ of the right-hand side of $\eta$, we have
	\[B_{\varepsilon''}([0,1]) \subset g_j(B^{\mathrm{R}}_{\varepsilon'}(\eta)).\]

Moreover, \cite[Lemma~A.1]{CoInCLERiemSph} implies that there exists some deterministic constant $p_1(\varepsilon'')>0$ depending only on $\varepsilon''$ such that a.s., with conditional probability at least $p_1(\varepsilon'')$, we have that $g_j(\widetilde{\eta}) \subseteq B_{\varepsilon''}([0,1])$. In particular, going back to the unit disk, we have that a.s., with conditional probability at least $p_1(\varepsilon'')$, we have that $\widetilde{\eta}$ does not hit $f_j(A_{s,t}(z) \cap U_j)$.	

\stepx{step:disc-outer}{Conclusion for the outer boundary} Thus, conditionally on the explored region at step $n_j-1$, with probability at least $p'>0$ for some $p'>0$ deterministic which does not depend on $j\ge 1$, the radial $\SLE_{\kappa}(\kappa-6)$ will hit $B_\delta(z')$. So, a.s., the exploration will hit $B_\delta(z')$ in finitely many steps 
. In particular, a.s., the exploration will discover the loop $\SCL$ when it exists.

\stepx{step:disc-inner}{The inner boundary} Finally, let us deal with the loops intersecting the inner boundary of $A_{s,t}(z)$ on the event that there is a $\CLE_\kappa$ loop crossing $A_{s,t}(z)$. Let $z'$ be in the connected component of $\BD \setminus A_{s,t}(z)$ which is disconnected from $\partial \BD$. Let $\delta>0$ such that $B_{2\delta}(z') \cap A_{s,t}(z) = \emptyset$. On the event that there is a $\CLE_\kappa$ loop crossing $A_{s,t}(z)$, we have already shown that such a loop will be explored a.s. Let $n_0$ be the step at which the first loop crossing $A_{s,t}(z)$ is discovered. Then, note that for all $n> n_0$ such that $Z_n \in B_\delta(z')$, the probability that a planar Brownian motion starting from $Z_n$ hits the explored region before $A_{s,t}(z)$ is bounded from below by some (random) constant $p''>0$ which does not depend on $n$. We can then apply the same reasoning as in \Crefrange{step:disc-corridor}{step:disc-outer}.
\end{proof}

Recall that, on the event that there is a loop of $\Gamma$ crossing the annulus, the loops of $\Gamma$ intersecting the boundary of $A_{s,t}(z)$ bound a finite number of conformal rectangles. Each rectangle is a connected component of $A_{s,t}(z)$ minus the closure of the union of the domains encircled by the loops of $\Gamma$ which intersect the boundary of $A_{s,t}(z)$. Let us also check that the connected component of the unexplored region at step $n$ of the exploration defined at the beginning of the current subsection containing the conformal rectangle converges to that conformal rectangle. More precisely, let $R_1, \ldots, R_k$ be the conformal rectangles bounded by the loops of $\Gamma$ intersecting $\partial A_{s,t}(z)$. Let $R^n_1, \ldots, R^n_k$ be the connected components of the unexplored region at step $n$ of the exploration containing respectively $R_1, \ldots, R_k$ (note that these connected components are not necessarily distinct).
\begin{lemma}\label{lemma cv Caratheodory unexplored components to rectangles}
On the event that there is a loop of $\Gamma$ crossing $A_{s,t}(z)$, a.s., for each conformal rectangle bounded by the loops of $\Gamma$ intersecting $\partial A_{s,t}(z)$, the connected component of the unexplored region at step $n$ converges as $n\to \infty$ in the Carath\'eodory sense to this rectangle.
\end{lemma}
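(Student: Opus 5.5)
The plan is to verify the two conditions in the original definition of Carath\'eodory convergence recalled in \Cref{subsec:caratheodory}, taking a reference point $w$ in a fixed rectangle $R_i$. Along the exploration the unexplored regions decrease, so $R^n_i$ is non-increasing in $n$. Each $R^n_i$ contains $R_i$, because the exploration never draws inside $R_i$: every branch is stopped once it hits $\overline{A_{s,t}(z)}$ or discovers a loop meeting $A_{s,t}(z)$, and the loops it discovers are loops of $\Gamma$ by \Cref{lem:markovian_cpi_exploration}, hence disjoint from $R_i$. The first condition is therefore automatic: every compact $K\subset R_i$ lies in $R^n_i$ for all $n$.

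It remains to show the second condition. If $V\ni w$ is a connected open set with $V\subseteq R^n_i$ for infinitely many $n$ (hence for all $n$, by monotonicity), then $V\subseteq R_i$. Equivalently, I would show that $\bigcap_n R^n_i$ has interior component containing $w$ equal to $R_i$. Suppose $V\not\subseteq R_i$. Then $V$ contains a point $v\in\partial R_i$ together with a neighbourhood of it. Points of $\partial R_i$ lie either on a loop of $\Gamma$ meeting $\partial A_{s,t}(z)$, or on $\partial B_s(z)\cup\partial B_t(z)$.

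In the first case, \Cref{lemma all the loops are discovered} says this loop is a.s.\ discovered at some finite step $n_0$. After that step, its closed interior and the branch tracing it are removed from the unexplored region, so $v\notin R^n_i$ for $n\ge n_0$, a contradiction. In the second case, $V$ would contain points strictly outside $\overline{A_{s,t}(z)}$ that can be joined to $w$ inside $V$, and I would show that such points are eventually separated from $R_i$. Since $v\in\partial R_i\cap\partial A_{s,t}(z)$ and $R_i$ is bounded by crossing loops and by loops touching $\partial A_{s,t}(z)$, any path from $w$ leaving $A_{s,t}(z)$ near $v$ must pass through the region beyond the top or bottom side of $R_i$.

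I expect this second case to be the main obstacle. The top and bottom sides of $R_i$ are formed by loops of $\Gamma$ touching $\partial B_t(z)$ or $\partial B_s(z)$, and a.s.\ none of them is tangent to these circles. The difficulty is that $\partial R_i\cap\partial A_{s,t}(z)$ may contain arcs of the circles between such loops (accumulations of small loops), so finitely many discoveries do not obviously seal them off. I would handle this with the density argument from \Cref{lemma all the loops are discovered}. The i.i.d.\ points $Z_n$ are a.s.\ dense, and whenever $Z_n$ lies in $\BD\setminus\overline{A_{s,t}(z)}$ and in the unexplored region, the branch drawn to $Z_n$ is a radial $\SLE_\kappa(\kappa-6)$ which, with conditional probability bounded below, reaches any fixed small ball near $v$ outside $\overline{A_{s,t}(z)}$.

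Hence every open set outside $\overline{A_{s,t}(z)}$ is eventually intersected by drawn branches, and together with the stopping rule the explored set is dense near $\partial A_{s,t}(z)$ outside the annulus. A point $v$ on $\partial B_t(z)\cap\partial R_i$ is then a limit of explored points. Any ball around $v$ eventually meets the explored set, whose components are connected to $\partial\BD$ or to discovered loops, contradicting $V\subseteq R^n_i$ for all $n$. This closes the argument. The inner circle $\partial B_s(z)$ is treated identically, using the part of the proof of \Cref{lemma all the loops are discovered} for the inner boundary, on the event that a crossing exists.
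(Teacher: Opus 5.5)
Your proof is correct. Its skeleton matches the paper's: $R^n_i$ is non-increasing with $R_i\subseteq R^n_i$, which gives the first Carath\'eodory condition, and \Cref{lemma all the loops are discovered} rules out $V$ meeting a loop of $\Gamma$ that touches $\partial A_{s,t}(z)$.

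The difference is in what you call the main obstacle, namely points of $\partial R_i\cap\partial A_{s,t}(z)$ that lie on no loop. The paper disposes of these with a one-line observation. Almost surely, every point of $\partial A_{s,t}(z)$ lies in the closure of the union of the domains encircled by loops of $\Gamma$ meeting $\partial A_{s,t}(z)$. To see this, take a countable dense subset of the two circles: each of its points is a.s.\ surrounded by a loop. On the crossing event, no loop of the non-nested $\CLE_\kappa$ can surround a whole circle, since it would then surround the crossing loop. So each surrounding loop meets the circle. Consequently, an open connected $V\ni w$ meeting no such loop cannot meet their interiors (as $w$ lies outside them). Hence it cannot meet $\partial A_{s,t}(z)$, so it stays in $A_{s,t}(z)$ and therefore in $R_i$. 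Everything reduces to the statement of \Cref{lemma all the loops are discovered}.

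You instead re-run the internal hitting argument from the proof of that lemma. You show that the explored set eventually meets every small ball $B_{2\delta}(z')\subseteq\BD\setminus\overline{A_{s,t}(z)}$, for countably many rational $(z',\delta)$ simultaneously. Then $V$, which contains a ball around $v$, eventually meets the explored set. This is valid and even gives slightly more (the explored set becomes dense next to the annulus). However, it is heavier and relies on the proof rather than the statement of the lemma.

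Two small imprecisions. First, the branch reaches a fixed small ball with conditional probability bounded below only for those $n$ with $Z_n$ inside that ball; that is how the lower bound is obtained in the lemma's proof. It does not hold for every $Z_n\in\BD\setminus\overline{A_{s,t}(z)}$. Second, your remark that components of the explored set are connected to $\partial\BD$ or to discovered loops is unnecessary. $V$ meeting the explored set already contradicts $V\subseteq R^n_i$.
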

\begin{proof}
Let $j \in [1,k]_\BZ$. By~\Cref{lemma all the loops are discovered}, we know that for any loop of $\Gamma$ intersecting the boundary of $A_{s,t}(z)$ there exists $n\ge 1$ such that the loop has been discovered at step $n$.

Let $K \subset R_j$ be a compact subset, then we automatically have $K \subset R^n_j$ for all $n\ge 1$ since $R_j \subseteq R^n_j$ by construction (the branches are stopped upon hitting $\overline{A_{s,t}(z)}$, and the interiors of the discovered loops do not meet the rectangles).

Let $w \in R_j$. Let $U\ni w$ be a connected open subset of $\BD$ such that $U \subseteq R^n_j$ for infinitely many $n\ge 1$. Let us show that $U \subseteq R_j$. Let $\SCL$ be a loop of $\Gamma$ which intersects $U$ and intersects $\partial A_{s,t}(z)$, then $\SCL$ will be discovered eventually by the exploration by~\Cref{lemma all the loops are discovered}, which is impossible since $U \subseteq R^n_j$ for infinitely many $n\ge 1$. Therefore, the loops of $\Gamma$ that intersect $U$ are either contained in $A_{s,t}(z)$ or in $\BD \setminus \overline{A_{s,t}(z)}$. Therefore, since $U$ is open and connected, we see that $U$ is included in $\BD$ minus the closure of the union of the loops of $\Gamma$ which intersect $\partial A_{s,t}(z)$. Thus, by definition of $R_j$ and by connectedness of $U$, we deduce that $U\subseteq R_j$ (here we use that a.s.\ every point of $\partial A_{s,t}(z)$ lies in the closure of the union of the domains encircled by the loops of $\Gamma$ meeting $\partial A_{s,t}(z)$, so that $U$ cannot leave $A_{s,t}(z)$).
\end{proof}
From the above lemma, we get in particular the convergence of the $\CLE_{\kappa'}$'s in the connected components of the unexplored regions $R^n_j$ containing the conformal rectangles $R^n_j$ for $j \in [1, k]_\BZ$ toward $\CLE_{\kappa'}$'s in the $R_j$'s.
\begin{lemma}\label{lemma cv CLE in the rectangles}
On the event that there is a loop of $\Gamma$ crossing $A_{s,t}(z)$ and that there are $k\ge 1$ conformal rectangles $R_1, \ldots, R_k$, for all $j \in [1, k]_\BZ$, let $\Gamma'_{R^n_j}$ be the non-nested $\CLE_{\kappa'}$ associated with the $\cwBCLE_{\kappa^\prime}(0)$/$\ccwBCLE_\kappa(-\kappa/2)$ iteration construction (see~\Cref{A025}) in the connected component of the unexplored region $R^n_j$ containing $R_j$. For all $x\in R_j \cap \BQ^2$, consider the loop $\SCL_j^n(x)$ of $\Gamma'_{R^n_j}$ surrounding $x$. Then, for all simply connected open subset $U \subset R_j$ such that $\overline{U} \subset R_j$ there exists a sequence $(U_n)_{n\ge 1}$ of simply connected open subsets such that $\overline{U_n} \to \overline{U}$ a.s.\@ in the Hausdorff topology, $U_n \to U$ a.s.\@ in the Carathéodory sense, and such that the conditional laws given $R^n_j$ of the $\overline{U_n}\cap \SCL^n_j(x)$'s for $x\in R_j \cap \BQ^2$ converge a.s.\ weakly, in the Hausdorff topology, to the conditional law given $R_j$ of the $\overline{U} \cap \SCL_j(x)$'s for $x \in R_j \cap \BQ^2$, where the $\SCL_j(x)$'s are the loops of a $\CLE_{\kappa'}$ in $R_j$. 
\end{lemma}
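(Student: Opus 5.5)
Our plan is to transport the $\CLE_{\kappa'}$ from $R^n_j$ to $R_j$ through conformal maps. We then use the restriction/domain Markov property of non-nested $\CLE_{\kappa'}$ to see that, away from the boundary, the loops in a domain close to $R_j$ look like those of a $\CLE_{\kappa'}$ in $R_j$.

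Fix $x_0 \in R_j \cap \BQ^2$. Let $\psi_n \colon R_j \to R^n_j$ be the conformal map with $\psi_n(x_0) = x_0$ and $\psi_n'(x_0) > 0$. By \Cref{lemma cv Caratheodory unexplored components to rectangles}, $R^n_j \to R_j$ in the Carath\'eodory sense. Since $R_j \subseteq R^n_j$ and the domains decrease, the kernel theorem gives $\psi_n \to \mathrm{id}$ uniformly on compact subsets of $R_j$; the same holds for $\psi_n^{-1}$ on compacts of $R_j$, with $\psi_n^{-1}$ defined there for $n$ large. We set $U_n \defeq \psi_n(U)$. Since $\overline U$ is a compact subset of $R_j$, uniform convergence on a neighbourhood of $\overline U$ gives $\overline{U_n} = \psi_n(\overline U) \to \overline U$ in the Hausdorff topology. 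It also gives $U_n \to U$ in the Carath\'eodory sense, because $\psi_n|_U \to \mathrm{id}$ uniformly and the maps are injective.

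By conformal invariance, conditionally on $R^n_j$ the family $\psi_n^{-1}(\Gamma'_{R^n_j})$ is a $\CLE_{\kappa'}$ in $R_j$. Indeed, by \Cref{lem:markovian_cpi_exploration}, the iteration in $R^n_j$ is an iterated $\cwBCLE_{\kappa'}(0)/\ccwBCLE_\kappa(-\kappa/2)$ construction there, and its outermost true loops form a $\CLE_{\kappa'}$. Hence we can take $\SCL_j(x) \defeq \psi_n^{-1}(\SCL^n_j(x))$ at the level of laws. Then $\overline{U_n} \cap \SCL^n_j(x) = \psi_n(\overline U \cap \SCL_j(x))$ exactly. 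So the conditional law of the family $(\overline{U_n}\cap\SCL^n_j(x))_x$ is the pushforward under $K \mapsto \psi_n(K)$ of the law of $(\overline U \cap \SCL_j(x))_x$.

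It remains to show that this pushforward converges weakly. Since $\psi_n \to \mathrm{id}$ uniformly on $\overline U$, for every compact $K \subseteq \overline U$ we have $d_{\mathrm H}(\psi_n(K), K) \le \sup_{\overline U}|\psi_n - \mathrm{id}| \to 0$ deterministically, given $R^n_j$. This holds uniformly in $K$ and jointly for all countably many $x$, which implies weak convergence in the product Hausdorff topology. On the event $\overline U \cap \SCL_j(x) = \emptyset$ both sides are empty, and $\emptyset$ is treated as an isolated point, so there is no issue there.

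The main obstacle is the conditioning. The statement concerns laws given $R^n_j$, and we need $R^n_j$ to be measurable with respect to the exploration, with the iteration inside it having the correct conditional law. This is exactly \Cref{lem:markovian_cpi_exploration}, applied inductively over the finitely many branches drawn up to step $n$: each branch is stopped at a stopping time that is not on a simple loop, namely at loop discovery or upon hitting $\overline{A_{s,t}(z)}$. We would check this stopping-time condition carefully.

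Note that the identification $\SCL_j(x) = \psi_n^{-1}(\SCL^n_j(x))$ is one in law for each $n$, not a coupling across $n$. This is harmless, since the target law is fixed and only weak convergence of conditional laws is claimed.
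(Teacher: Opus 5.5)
Your route is the paper's route, with $R_j$ instead of $\BD$ as the reference domain. Taking $z_j=x_0$, your $\psi_n$ is $f^n_j\circ f_j^{-1}$ and your $U_n=\psi_n(U)$ is exactly the paper's $f^n_j(f_j^{-1}(U))$. Your treatment of the convergence of $U_n$, and of the conditional law of $\Gamma'_{R^n_j}$ via \Cref{lem:markovian_cpi_exploration}, is fine.

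There is, however, one step that is wrong as written. The image $\psi_n^{-1}(\SCL^n_j(x))$ is the loop of the $\CLE_{\kappa'}$ $\widetilde{\Gamma}=\psi_n^{-1}(\Gamma'_{R^n_j})$ in $R_j$ that surrounds $\psi_n^{-1}(x)$, not the one that surrounds $x$. Hence the family $(\psi_n^{-1}(\SCL^n_j(x)))_x$ does not have the law of $(\SCL_j(x))_x$. It has the law of $(\widetilde{\SCL}(\psi_n^{-1}(x)))_x$, where $\widetilde{\SCL}(w)$ denotes the loop of $\widetilde{\Gamma}$ surrounding $w$, and this law depends on $n$. Your closing remark, that the identification is harmless \emph{because the target law is fixed}, is exactly where this is missed. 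Your pushforward argument only shows that the conditional law of $(\overline{U_n}\cap\SCL^n_j(x))_x$ is close to that of $(\overline{U}\cap\widetilde{\SCL}(\psi_n^{-1}(x)))_x$.

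The repair is short. Sample a single $\CLE_{\kappa'}$ $\widetilde{\Gamma}$ in $R_j$. For each $n$, realize the conditional law of $(\overline{U_n}\cap\SCL^n_j(x))_x$ as the law of $(\psi_n(\overline{U}\cap\widetilde{\SCL}(\psi_n^{-1}(x))))_x$. For fixed $x$, almost surely $x$ lies off the loops, in an open component of the complement of $\widetilde{\SCL}(x)$ all of whose points are surrounded by $\widetilde{\SCL}(x)$. Since $\psi_n^{-1}(x)\to x$ and there are countably many $n$, almost surely $\widetilde{\SCL}(\psi_n^{-1}(x))=\widetilde{\SCL}(x)$ for all $n$ large. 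In this coupling the $x$-coordinate therefore equals $\psi_n(\overline{U}\cap\widetilde{\SCL}(x))$ for $n$ large. This set is within Hausdorff distance $\sup_{\overline{U}}|\psi_n-\mathrm{id}|$ of $\overline{U}\cap\widetilde{\SCL}(x)$, or empty when the latter is, which gives the weak convergence in the product topology. The paper uses this same ``loop surrounding a moving point is eventually the loop surrounding a fixed point'' step explicitly for conformal invariance and locality in \Cref{sec:checking_the_axioms}, but leaves it implicit in its proof of this lemma.
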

\begin{proof}
The Carath\'eodory convergence of~\Cref{lemma cv Caratheodory unexplored components to rectangles} can be written as the almost sure uniform convergence on compact subsets of the conformal mapping $f^n_j\colon \BD \to R^n_j$ such that $f^n_j(0)=z_j$ and $(f^n_j)'(0)>0$ to the conformal mapping $f_j \colon \BD \to R_j$ such that $f_j(0)=z_j$ and $f_j'(0)>0$, where $z_j$ is a point of $R_j$ chosen arbitrarily in a deterministic way. Combining this convergence with the fact that $(f^n_j)^{-1}(\Gamma'_{R^n_j})$ is a $\CLE_{\kappa'}$ in the unit disk (the unexplored components being simply connected), we get the desired result, taking $U_n=  f^n_j(f_j^{-1}(U))$. Let $z'_j \in U$. Taking a conformal mapping $\phi\colon \BD \to f_j^{-1}(U)$ such that $(\phi \circ f_j)(0)=z'_j$ and $(\phi \circ f_j)'(0)>0$ we see that $\phi \circ f^n_j$ converges a.s.\ to $\phi \circ f_j$ uniformly on compact subsets of $\BD$ so that $U_n \to U$ almost surely in the Carathéodory sense.
\end{proof}
Combining the above lemma with~\Cref{lem:excursion}, we obtain the following lemma.
\begin{lemma}\label{lem:segments_of_loops_inside_rectangles}
	We work on the event that there is a loop of $\Gamma$ crossing $A_{s,t}(z)$. Recall that we denote by $R_1, \ldots, R_k$ be the conformal rectangles bounded by the loops of $\Gamma$ intersecting $\partial A_{s,t}(z)$. Then, there is a coupling with loop ensembles $\Gamma'_{R_1}, \ldots, \Gamma'_{R_k}$ in the respective conformal rectangles $R_1, \ldots, R_k$ that are $\CLE_{\kappa'}$'s conditionally on $R_1, \ldots, R_k$ and such that the following holds almost surely. For all segment $\SCI$ of a loop of $\Gamma'$ which is included a conformal rectangle $R_j$ for $j \in [1, k]_\BZ$, $\SCI$ is a segment of a loop of $\Gamma'_{R_j}$.
\end{lemma}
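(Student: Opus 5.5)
The plan is to build the ensembles $\Gamma'_{R_j}$ as limits of the $\CLE_{\kappa'}$'s $\Gamma'_{R^n_j}$ living in the unexplored components at step $n$. Each $\Gamma'_{R^n_j}$ is defined in \Cref{lemma cv CLE in the rectangles} through the iterated $\cwBCLE_{\kappa'}(0)$/$\ccwBCLE_\kappa(-\kappa/2)$ construction restricted to $R^n_j$. The structural inclusion will come from \Cref{lem:excursion}, and the passage to the limit from \Cref{lemma cv Caratheodory unexplored components to rectangles,lemma cv CLE in the rectangles}.

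The first step is to fix $n$ and a segment $\SCI$ of a loop of $\Gamma'$ with $\SCI\subseteq R_j$. Since $R_j\subseteq R^n_j$, the arc $\SCI$ lies in the unexplored component $R^n_j$. The exploration up to step $n$ is a finite concatenation of CPI branches stopped at stopping times, each stopped at a point not lying on a simple loop (or completed). I will therefore apply \Cref{lem:markovian_cpi_exploration} and \Cref{lem:excursion} inductively over the steps. If a connected arc of a $\CLE_{\kappa'}$ loop avoids the region explored so far, then it is contained in a loop of the $\CLE_{\kappa'}$ of the iterated construction in the current unexplored component. Iterating this step by step gives, for every $n$, that $\SCI$ is contained in a loop of $\Gamma'_{R^n_j}$. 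A point to check here is that $\SCI$ is contained in a loop, and not merely in one of its segments relative to $R_j$. I will handle this by noting that $\SCI$ is connected and contained in $R_j$, so it sits inside a single segment (in $R_j$) of that loop.

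The second step is to pass to $n\to\infty$ and define $\Gamma'_{R_j}$. I will use \Cref{lemma cv CLE in the rectangles} with an exhaustion of $R_j$ by simply connected $U^{(m)}$ with $\overline{U^{(m)}}\subset R_j$. Conditionally on $R^n_j$, the laws of the traces $\overline{U_n}\cap\SCL^n_j(x)$ converge to those of $\CLE_{\kappa'}$ traces in $R_j$. Since $(f^n_j)^{-1}(\Gamma'_{R^n_j})$ is exactly a $\CLE_{\kappa'}$ in $\BD$, I will instead couple directly. Pulling back by $f^n_j$ gives a sequence of $\CLE_{\kappa'}$'s $\widetilde\Gamma_n$ in $\BD$, jointly with $\Gamma'$ and the exploration. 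Tightness in the product Hausdorff topology indexed by rational points is automatic, so I can extract a joint subsequential limit $(\Gamma',\widetilde\Gamma)$, with $\widetilde\Gamma$ a $\CLE_{\kappa'}$ in $\BD$ independent of $R_j$. I then set $\Gamma'_{R_j}:=f_j(\widetilde\Gamma)$, which is a $\CLE_{\kappa'}$ in $R_j$ conditionally on the rectangles. Doing this jointly for $j\le k$, and using the conditional independence of distinct unexplored components, gives the joint conditional law.

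The main obstacle is to transfer the inclusion ``$\SCI\subseteq$ some loop of $\Gamma'_{R^n_j}$'' to the limit. Hausdorff convergence of loops only gives closed inclusions, and those could attach $\SCI$ to a limit of loops that merge or pinch. My plan is to use the fact that $\SCI$ has positive distance from $\partial R_j$, so it lies in some $\overline{U^{(m)}}$. On $f_j^{-1}(\overline{U^{(m)}})$ the maps $f_j\circ(f^n_j)^{-1}$ converge uniformly to the identity. The loop $\SCL^n$ containing $\SCI$ surrounds some rational point $x$ (near $\SCI$) for infinitely many $n$, so after a further subsequence $\SCL^n=\SCL^n_j(x)$ for a fixed $x$. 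Its image then converges in Hausdorff distance to $\SCL_j(x)\cap\overline{U^{(m)}}$, which yields $\SCI\subseteq\SCL_j(x)$. It remains to exhibit a fixed rational $x$ surrounded by $\SCL^n$ for infinitely many $n$. Here I would use that $\CLE_{\kappa'}$ loops of macroscopic diameter, for fixed $\kappa'$, surround a ball of definite radius, by local finiteness and the fatness argument for fixed $\kappa'$, uniformly in $n$ thanks to the conformal pullback. Countability of the segments of $\Gamma'$ lets me make this argument simultaneously for all $\SCI$ by a diagonal extraction. Finally I would use Skorokhod's theorem to realise everything almost surely.
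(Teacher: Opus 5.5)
This is essentially the paper's argument: \Cref{lem:excursion}, applied inductively over the first $n$ exploration steps, puts $\SCI$ inside a loop of $\Gamma'_{R^n_j}$. The Carath\'eodory convergence $R^n_j\to R_j$ from \Cref{lemma cv Caratheodory unexplored components to rectangles,lemma cv CLE in the rectangles} then produces a limiting $\CLE_{\kappa'}$ in $R_j$ in which the inclusion persists, and your fixed-$\kappa'$ fatness device (pinning the containing loop to the loop around a fixed rational point, so that the inclusion becomes a Hausdorff-closed condition) makes explicit what the paper's last sentence leaves implicit.

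The one step to change is the final appeal to Skorokhod. Re-representing $\Gamma'$ makes $\SCI$ depend on $n$, and segment inclusion is not closed for the Hausdorff topology on loops. Instead, keep $\Gamma$, $\Gamma'$ and the exploration fixed; since their law is fixed, your joint limit in law is a stable limit ($\BE[h\,g(\widetilde\Gamma_n)]$ converges for bounded measurable $h$ of that configuration and continuous $g$). Under a stable limit, the event that the image of $\SCI$ lies in the loop of $\widetilde\Gamma$ around one of finitely many grid points is closed in $\widetilde\Gamma$ for each fixed configuration, so it passes to the limit.
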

\begin{proof}
	Let $j \in [1,k]_\BZ$. Let $U\subset R_j$ by a simply connected open subset such that $\overline{U} \subset R_j$. By~\Cref{lemma cv CLE in the rectangles}, we know that there exists a sequence of simply connected open subsets $U_n\subset R^n_j$ such that $U_n \to U$ a.s.\@ in the Carathéodory sense, such that the conditional laws given $R^n_j$ of the $\overline{U_n}\cap \SCL^n_j(x)$'s for $x\in R_j \cap \BQ^2$ converge a.s.\ weakly, in the Hausdorff topology, to the conditional law given $R_j$ of the $\overline{U} \cap \SCL_j(x)$'s for $x \in R_j \cap \BQ^2$, where the $\SCL_j(x)$'s are the loops of a $\CLE_{\kappa'}$ in $R_j$ that we denote by $\Gamma'_{R_j}$. 
	
	Furthermore let $\SCI_1, \ldots, \SCI_r$ be segments of loops of $\Gamma'$ that are included in $U$. For all $n\ge 1$, by~\Cref{lem:excursion}, applied inductively at the first $n$ steps, using the notation of~\Cref{lemma cv CLE in the rectangles}, we know that $\SCI_{i}$ is a segment of a loop of $\Gamma'_{R^n_j}$ for all $i \in[1, r]_\BZ$. By taking the first part of the proof into account, we see that there is a coupling with $\Gamma'_{R_j}$ such that for all $i \in[1, r]_\BZ$, the segment $\SCI_i$ is a segment of a loop of $\Gamma'_{R_j}$.
\end{proof}

\subsection{Completion of the proof of~\Cref{prop:thin-loops}}
\label{subsec:completion_of_the_proof}

In this last subsection, $\Gamma_n$ is a (non-nested) $\CLE_{\kappa_n}$ and $\Gamma'_n$ is a (non-nested) $\CLE_{\kappa'_n}$. We would like to combine~\Cref{prop:big_loops_are_fat_simple_cle} with~\Cref{lem:unlikeliness,lem:segments_of_loops_inside_rectangles} to complete the proof of~\Cref{prop:thin-loops}. However, there are two main issues remaining. First, we must ensure that whenever a loop in $\Gamma_n'$ intersects a loop in $\Gamma_n$, it actually surrounds it, so that we can use~\Cref{prop:big_loops_are_fat_simple_cle}. Second,~\Cref{lem:segments_of_loops_inside_rectangles} requires the annulus $A_{s,t}(z)$ to be strictly contained in $\BD$, which means we must control the behavior of loops near $\partial \BD$.  To resolve these issues, we proceed in three steps:
\begin{enumerate}[label=(\roman*)]
\item In~\Cref{subsubsec:loop_surrounding}, we show that, under the setup of~\Crefrange{subsec:cpi_exploration}{subsec:discovering_all_loops}, whenever a loop in $\Gamma_n'$ intersects a loop in $\Gamma_n$, it necessarily surrounds it.
\item  In~\Cref{subsubsec:big_loops_do_not_intersect_the_boundary}, we establish that with probability tending to $1$ as $n \to \infty$, macroscopic loops in $\Gamma_n'$ do not intersect $\partial \BD$.
\item In~\Cref{subsubsec:completion_of_proof}, we combine these ingredients to complete the proof of~\Cref{prop:thin-loops}.
\end{enumerate}

\subsubsection{Every loop in the non-simple CLE that intersects a loop in the simple CLE has to surround it}
\label{subsubsec:loop_surrounding}

Fix $\kappa' \in (4,8)$ and let $\Gamma'$ (resp.\ $\Gamma$) denote a $\CLE_{\kappa'}$ (resp.\ $\CLE_{\kappa}$) in $\BD$ coupled as in \Crefrange{subsec:cpi_exploration}{subsec:discovering_all_loops} with $\kappa = 16 / \kappa'$. We will prove the following (thereby  resolving one of the two issues described above).

\begin{lemma}\label{lem:loop_surrounds_another_loop}
It is a.s.\ the case that the following holds. If $\SCL'$ is a loop in $\Gamma'$ that intersects some loop $\SCL$ in $\Gamma$, then $\SCL'$ surrounds $\SCL$.
\end{lemma}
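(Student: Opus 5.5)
The plan is to read the claim off the structure of the iterated $\cwBCLE_{\kappa'}(0)$/$\ccwBCLE_\kappa(-\kappa/2)$ construction of \Cref{A025} (case $\beta=1$, \Cref{re:critical_bcle}), which generates both $\Gamma'$ and $\Gamma$. Recall the two facts we will use. First, $\Gamma'$ consists of the outermost true loops of all the $\cwBCLE_{\kappa'}(0)$'s. Second, every loop of $\Gamma$ is a true loop of some $\ccwBCLE_\kappa(-\kappa/2)$ sampled inside a connected component $D$ of the interior of a true loop $\SCL'_0$ of some $\cwBCLE_{\kappa'}(0)$.

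Fix $\SCL\in\Gamma$ and let $D$ and $\SCL'_0$ be as above. The first step is to identify which true $\cwBCLE_{\kappa'}(0)$ loop carries $\SCL$.
\begin{itemize}
\item If $\SCL'_0$ is outermost, then $\SCL'_0\in\Gamma'$ and $\SCL'_0$ surrounds $\SCL$.
\item If $\SCL'_0$ is not outermost, it lies inside a false loop of a $\ccwBCLE_\kappa(-\kappa/2)$ or of a $\cwBCLE_{\kappa'}(0)$ from an earlier level. Such a false loop is in turn surrounded by an outermost true $\cwBCLE_{\kappa'}(0)$ loop $\SCL'_1\in\Gamma'$. In that case $\SCL$ lies strictly inside $\SCL'_1$, and it cannot be the case that $\SCL$ is a $\CLE_\kappa$ loop, because non-nested $\CLE_\kappa$ loops come only from the first $\ccwBCLE$ inside outermost true loops.
\end{itemize}
So we first check carefully, by induction on the level of the iteration, that every $\SCL\in\Gamma$ lies in the closure of the interior of a unique $\SCL'_\ast\in\Gamma'$. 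We also check that $\SCL$ touches no other loop of $\Gamma'$.

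For the second part, let $\SCL'\in\Gamma'$ with $\SCL'\neq\SCL'_\ast$. We argue that $\SCL'$ is disjoint from $\overline{\mathrm{int}(\SCL'_\ast)}$, up to the boundary $\SCL'_\ast$ itself, which $\SCL$ could only touch at points of $\SCL'_\ast$. The tool is the GFF coupling of \Cref{re:GFF-coupling-0,re:GFF-coupling-1}. The interior of the $\ccwBCLE_\kappa$ loop $\SCL$ lies inside $D$. Any other loop of $\Gamma'$ is generated by counterflow lines either outside $\SCL'_\ast$ or inside false loops nested strictly within. Such counterflow lines cannot cross the flow lines tracing $\SCL$; this follows from the flow/counterflow line interaction rules \cite[Theorems~1.7 and~1.13]{IG4}, used exactly as in the conclusion of the proof of \Cref{lem:unlikeliness}. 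Hence they cannot reach $\SCL$ except through $\SCL'_\ast$.

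It remains to rule out that a loop $\SCL'\in\Gamma'$ outside $\SCL'_\ast$ touches $\SCL$ at a point of $\SCL\cap\SCL'_\ast$. By \Cref{A021}, $\SCL$ is attached to the trunk (whose right boundary is $\SCL'_\ast$) on its right-hand side. The point $\SCL\cap\SCL'_\ast$ is therefore visited by the trunk, and a different $\CLE_{\kappa'}$ loop passing there would force a triple point of counterflow lines. The plan is to exclude this a.s. by a dimension count: the intersection points of two $\CLE_{\kappa'}$ loops are double points of the trunk, and they a.s. avoid the countably many attachment points of the simple loops, since those attachment points are determined by independent $\ccwBCLE_\kappa$ samplings given the trunk.

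Once this is done, any $\SCL'\in\Gamma'$ meeting $\SCL$ must equal $\SCL'_\ast$, which surrounds $\SCL$. The main obstacle is this last step: controlling boundary contacts at the attachment points, where the interaction rules alone allow touching. I would first try to use the conditional independence in \Cref{A021}: given the $\cwBCLE_{\kappa'}(0)$, the $\ccwBCLE_\kappa$ root points are sampled independently inside each component. If that fails, I would instead handle the contact points through the boundary values in the table of \Cref{re:GFF-coupling-nonsimple-BCLE}.
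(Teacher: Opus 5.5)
Your reduction is the right one. $\SCL$ lies in $\overline D$ for a component $D$ of the interior of a unique $\SCL'_\ast\in\Gamma'$. Since loops of $\Gamma'$ do not cross, any other $\SCL'\in\Gamma'$ can only meet $\SCL$ at points of $\SCL'_\ast$; the flow/counterflow interaction rules are not needed for this. However, the step you flag as the main obstacle is a genuine gap, and the route you propose for it does not work.

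Your plan rests on the premise that $\SCL\cap\SCL'_\ast$ is a single point, or a countable set of ``attachment points''. This is false. The branches of the $\ccwBCLE_\kappa(-\kappa/2)$ in $D$ are $\SLE_\kappa(-\kappa/2;3\kappa/2-6)$-type curves, and both weights lie in $(-2,\kappa/2-2)$. Such curves bounce off $\partial D$ on fractal sets of positive dimension, and the loops adjacent to these branch segments contain those contact sets. So the set you must avoid is uncountable. Conditional independence of the $\ccwBCLE_\kappa$ given the $\cwBCLE_{\kappa'}(0)$ would then only help if you knew that the set of points of $\partial D$ touched by other loops of $\Gamma'$ is negligible for that fractal contact set. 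Proving this, for instance by a dimension count through the conformal map of $D$ (which badly distorts boundary dimensions), is essentially the whole content of the lemma. The ``triple point'' heuristic is also unsupported: $\SCL'$ and $\SCL'_\ast$ need not be traced by the same trunk, and you give no reason why the trunk would have no triple points.

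The missing idea is topological. Suppose $x\in\SCL\cap\SCL'$ with $\SCL'\neq\SCL'_\ast$. Then $x\in\SCL'_\ast\cap\overline D$, so $x\in\partial D$, a complementary component surrounded by $\SCL'_\ast$. Also $x\in\SCL'$, which lies outside $\SCL'_\ast$, so $x$ is on the outer boundary of $\SCL'_\ast$. Hence $x$ is a cut point of $\SCL'_\ast$ lying on another $\CLE_{\kappa'}$ loop. This is excluded a.s.\ by \Cref{lemma no pinched point}: two distinct $\CLE_{\kappa'}$ loops never meet at a cut point of either. That lemma comes from \Cref{lem:sle_bubbles} (bubbles surrounded by an $\SLE_{\kappa'}(\kappa'-6)$ do not touch the domain boundary), hence \Cref{lem:cut_points_not_on_the_boundary}. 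It is transported into the bulk by exploring the loops that meet a rational polygonal path and using locality of $\CLE_{\kappa'}$. With it the proof takes a few lines and needs neither GFF interaction rules nor dimension counts.

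Your second bullet also misdescribes the construction. $\ccwBCLE_\kappa(-\kappa/2)$'s are sampled in every true $\cwBCLE_{\kappa'}(0)$ loop at every level. This includes non-outermost ones lying inside false $\ccwBCLE_\kappa$ loops, and their true loops belong to $\Gamma$. Your conclusion that $\SCL\subseteq\overline D$ for a unique $\SCL'_\ast$ survives. But in that case $\SCL$ is not attached to the trunk tracing $\SCL'_\ast$, so your attachment analysis would not apply, whereas the cut-point argument covers all cases uniformly.
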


The main ingredient in the proof of~\Cref{lem:loop_surrounds_another_loop} is the following lemma which says that any two distinct loops of any $\CLE_{\kappa'}$ for $\kappa' \in (4,8)$ a.s.\ do not intersect at points which are cut points of either of the loops,  where a point $x$ on a loop $\SCL$ is called a \emph{cut point} of $\SCL$ if $x$ lies on the intersection of the outer boundary of $\SCL$ (i.e., the boundary of the unbounded connected component of $\BC \setminus \SCL$) with the boundary of some connected component of $\BC \setminus \SCL$ which is surrounded by $\SCL$.

\begin{lemma}\label{lemma no pinched point}
Let $\Gamma'$ be a $\CLE_{\kappa'}$ in $\BD$ with $\kappa' \in (4,8)$.  Then,  it is a.s.\ the case that the following is true.  Let $\SCL,  \SCL'$ be two distinct loops in $\Gamma'$ such that $\SCL \cap \SCL' \neq \emptyset$.  Then,  $\SCL \cap \SCL'$ does not contain any cut points of $\SCL$ or of $\SCL'$.
\end{lemma}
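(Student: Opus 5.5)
We reduce the statement to a local question about the touching points of two loops, using the branching $\SLE_{\kappa'}(\kappa'-6)$ exploration and the GFF/imaginary geometry description of loop boundaries. Roughly, a point where two distinct loops touch must be a point where the boundary of one loop is \emph{double} (it is visited twice). A cut point of a loop is instead a single-visit point of its outer boundary that simultaneously bounds an inner pocket. So the goal is to show that these two kinds of points a.s.\ do not coincide. Since there are countably many loops and, by the domain Markov property together with root invariance of $\CLE_{\kappa'}$ \cite{TreeCLE,CLE}, each pair of touching loops is discovered by some branch targeted at a rational point, it suffices to treat a fixed rational target $w$. Let $\SCL=\SCL(w)$ be the loop around $w$, and let $\SCL'$ range over the loops touching $\SCL$ that are discovered before $\SCL$ along the branch $\eta_w$.

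\textbf{Case 1: a cut point of $\SCL'$ lies on $\SCL$.} Condition on the exploration up to the time $\SCL'$ is completed. The outer boundary of $\SCL'$ seen from $w$ then lies on the boundary of the remaining domain $U$. By the domain Markov property, the conditional law of the remaining loops in $U$ is a $\CLE_{\kappa'}$ in $U$. The set of cut points of $\SCL'$ on $\partial U$ is a set of boundary points determined by the past, and its Hausdorff dimension (the $\SLE_{\kappa'}$ cut-point dimension $3-3\kappa'/8$) is strictly less than $1$. Meanwhile, after mapping to $\BD$, the points where a $\CLE_{\kappa'}$ loop touches the boundary form a set whose intersection with any fixed boundary set of harmonic measure zero is a.s.\ empty. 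This follows from the boundary touching points being described by $\SLE_{\kappa'}(\kappa'-6)$ boundary intersections, whose law is absolutely continuous to rotations. A fixed set of zero length is therefore a.s.\ avoided, and this handles Case 1.

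\textbf{Case 2: a cut point of $\SCL$ lies on $\SCL'$.} This is the main obstacle, since the touching point is now generated by the future of the exploration rather than fixed by the past. I would use the reversed description: the outer boundary of $\SCL$ is made of flow lines of angles $\pm\pi/2$ of the GFF generating the exploration \cite{IG4}. A cut point of $\SCL$ is a point where the left and right boundary flow lines of $\SCL$ meet. A point of $\SCL\cap\SCL'$ on the outer boundary of $\SCL$ is a point where a flow line forming $\partial\SCL$ also hits a flow line forming the boundary of $\SCL'$. Being simultaneously a cut point would therefore force a triple intersection of flow lines with prescribed angle gaps. Using the flow line interaction rules \cite[Theorem~1.7]{IG4} and the intersection dimension computations of \cite{miller2017intersections}, I expect the corresponding dimension to be negative, so the configuration a.s.\ does not occur. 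Alternatively, one can apply Case~1 after re-rooting the exploration so that $\SCL$ is discovered before $\SCL'$, using target invariance; this symmetrizes the roles of the two loops.

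Both routes rely on exchanging which loop is explored first, and this exchange is justified by the root and target invariance of $\CLE_{\kappa'}$. The delicate step to check is that the boundary-avoidance claim in Case~1 holds uniformly over the countably many past-determined sets.
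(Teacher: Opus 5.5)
\textbf{There is a genuine gap in Case~1, the step everything rests on.} You use Case~1 directly, and you also fall back on it for Case~2 via re-rooting. The key claim there is false. Conditionally on the past, you need the future $\CLE_{\kappa'}$ in $U$, mapped to $\BD$, to avoid the image $A\subseteq\partial\BD$ of the cut points of $\SCL'$. You argue that the set $X\subseteq\partial\BD$ of boundary-touching points a.s.\ misses every fixed set of zero length, by rotation invariance. But rotation invariance only gives
\[
\BP[X\cap A\neq\emptyset]=\frac{1}{2\pi}\BE\bigl[\mathrm{Leb}\{\theta\in[0,2\pi): X\cap e^{\ri\theta}A\neq\emptyset\}\bigr].
\]
This vanishes when $A$ is countable, since $X$ is Lebesgue-null, but not for uncountable null sets. $X$ is a positive-dimensional random set (locally like the boundary trace of $\SLE_{\kappa'}$, of dimension $2-8/\kappa'$), and such a set hits, with positive probability, fixed compact sets of zero length whose dimension is close to $1$.

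The cut-point set of $\SCL'$ is uncountable. So you would need a quantitative estimate showing that its image under the uniformizing map of $U$ is small, in the capacity sense matched to $X$. Nothing in your proposal provides this. The Euclidean dimension $3-3\kappa'/8$ is not the relevant quantity, and it is in fact larger than $1$ when $\kappa'<16/3$. Your direct treatment of Case~2 is only a heuristic (``I expect the dimension to be negative''), so it does not close the gap either. A minor point: along $\eta_w$, ``the time $\SCL'$ is completed'' is not well defined, because $\eta_w$ only traces the part of $\SCL'$ visible from $w$.

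\textbf{The missing idea is to orient the argument the other way.} Only ever examine cut points of the loop lying in the \emph{unexplored} region; there a structural fact replaces any hitting estimate. The paper proceeds in two steps.
\begin{enumerate}
\item It first proves that a.s.\ no loop of a $\CLE_{\kappa'}$ has a cut point on the domain boundary (\Cref{lem:sle_bubbles,lem:cut_points_not_on_the_boundary}). The argument is topological and concerns the $\SLE_{\kappa'}(\kappa'-6)$ branches generating the boundary-touching loops. These branches do not cross themselves and hit each boundary point at most once, so a complementary component enclosed by the curve cannot reach $\partial\BD$.
\item It then takes rational $z\neq w$ and a rational polygonal path $\gamma$ from $\partial\BD$ to $w$ avoiding $\SCL(z)$, and conditions on the closure $K$ of $\gamma$ together with the loops meeting $\gamma$. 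By locality, $\SCL(z)$ is a loop of an independent $\CLE_{\kappa'}$ in a component $V_n$ of $\BD\setminus K$, while $\SCL(w)\subseteq K$. Hence $\SCL(z)\cap\SCL(w)\subseteq\partial V_n$ contains no cut point of $\SCL(z)$. Running over all ordered pairs of rational points and all such paths gives the lemma.
\end{enumerate}
In your terms, your Case~2, which you flag as the main obstacle, is the easy direction: it is exactly ``a loop of a fresh $\CLE_{\kappa'}$ in the remaining domain has no cut point on that domain's boundary.'' Your re-rooting goes the wrong way. Re-rooting, or the choice of $\gamma$, should turn Case~1 into Case~2, not the converse.
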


Lemma~\ref{lemma no pinched point} will follow from combining Lemmas~\ref{lem:sle_bubbles} and~\ref{lem:cut_points_not_on_the_boundary} below.  Lemma~\ref{lem:sle_bubbles} states that the complementary connected components of a chordal $\SLE_{\kappa'}(\kappa'-6)$ in $\BD$ that are surrounded by the curve do not intersect $\partial \BD$ a.s.  Lemma~\ref{lem:cut_points_not_on_the_boundary} states that the loops in a $\CLE_{\kappa'}$ in $\BD$ do not have cut points on $\partial \BD$ a.s.

\begin{lemma}\label{lem:sle_bubbles}
Fix $\kappa' \in (4,8)$ and let $\eta'$ be an $\SLE_{\kappa'}(\kappa'-6)$ curve in $\BD$ from $-\ri$ to $\ri$ with the force point located at $(-\ri)^+$.  Then,   it is a.s.\ the case that if $U$ is any connected component of $\BD \setminus \eta'$ such that $\partial U \subseteq \eta'$,  we have that $\partial U \cap \partial \BD = \emptyset$.
\end{lemma}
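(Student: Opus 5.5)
The plan is to reduce the statement to a boundary-hitting estimate for the force-point process, using the GFF/flow-line description of $\SLE_{\kappa'}(\kappa'-6)$ together with SLE duality. The components $U$ of $\BD\setminus\eta'$ with $\partial U\subseteq\eta'$ are the regions $\eta'$ disconnects from $\partial\BD$, i.e.\ the bubbles surrounded by the curve. Each such $U$ is cut off at some time $t$ when $\eta'$ closes a loop. Its boundary consists of points of $\eta'([0,t])$. So a point of $\partial U\cap\partial\BD$ is a point $x\in\partial\BD$ at which $\eta'$ has made a contact. In addition, $x$ is then separated from the remaining domain, and $x$ is adjacent to an interior component that is fully bounded by the curve.

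I will distinguish two kinds of contact. The first kind are points where $\eta'$ hits $\partial\BD$ on the force-point side, where the marked point is reflecting. The second kind are points on the opposite side, where $\eta'$ behaves as an ordinary $\SLE_{\kappa'}$ hitting the boundary. On the opposite side, the standard fact is that the hull of $\eta'$ swallows boundary intervals. The components touching a boundary point $x$ have boundary arcs on $\partial\BD$ adjacent to $x$, except at the two endpoints of a swallowed interval. Those endpoints are precisely where a component of the form ``bounded by $\eta'$ only'' could touch $\partial\BD$ at a single point. This would require $\eta'$ to hit the same boundary point twice, or to hit the boundary at a point it has already separated.

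The key step is to show two things almost surely. First, $\eta'$ has no double points on $\partial\BD$. Second, no point of $\partial\BD$ lies simultaneously on $\eta'([0,t])$ and on the boundary of a bubble that $\eta'$ closes off away from $\partial\BD$. For the first, I will use the GFF coupling of the branching counterflow line as in \Cref{re:GFF-coupling-nonsimple-BCLE}, combined with duality \cite[Theorem~1.13]{IG4}. The left and right boundaries of $\eta'$ stopped at any stopping time are flow lines of angles $\pm\pi/2$. A bubble $U$ with $\partial U\subseteq\eta'$ is bounded by a left-boundary arc and a right-boundary arc meeting at its two endpoints. A point $x\in\partial U\cap\partial\BD$ would then be a point where one of these flow lines, an $\SLE_\kappa(\underline\rho)$ with $\kappa=16/\kappa'$, touches $\partial\BD$. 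For the case distinction, I will compute the boundary data seen by these flow lines on each arc of $\partial\BD$. The weights $\underline\rho$ will be chosen so that, on the relevant arc, the outer boundary flow line meets the boundary only where the counterflow line itself hits. Meanwhile, the inner arc of a bubble is a flow line whose force-point weight is $\ge\kappa/2-2$ relative to $\partial\BD$, so it a.s.\ does not touch $\partial\BD$. The standard criterion is that $\SLE_\kappa(\rho)$ hits the boundary iff $\rho<\kappa/2-2$, and I will take it from \cite{IG1}.

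To make the argument uniform over all (countably many) bubbles, I will parametrize bubbles by rational points $q\in\BD$. For each $q$, I stop $\eta'$ at the disconnection time $\tau_q$ of $q$. I then apply the above flow-line argument to the outer boundary of $\eta'([0,\tau_q])$ seen from $q$, using the strong Markov property of the GFF at $\tau_q$. A countable union then gives the almost sure statement.

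The main obstacle is the boundary-value bookkeeping. The task is to verify that the flow line forming the ``closing'' side of a bubble has $\rho\ge\kappa/2-2$ against each arc of $\partial\BD$. This has to hold both on the force-point side $(-\ri,\ri)^{+}$, where the weight is $\kappa'-6$, and on the other side. The latter is the delicate case, since $\kappa'-6<\kappa'/2-2$ means the curve does hit that side. I would handle that side by using reversibility/target invariance of $\SLE_{\kappa'}(\kappa'-6)$ (as a branch of the exploration tree) to swap the roles of the two boundary arcs.
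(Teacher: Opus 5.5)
There is a genuine gap at the step you flag as the main obstacle. The inequality $\rho\ge\kappa/2-2$ that you want to verify for the flow lines bounding the bubbles is false in most cases, so boundary-value bookkeeping alone cannot prove the lemma.

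By the outer-boundary formula of \cite[Theorem~1.4]{IG1}, the outer boundary of $\eta'$ on the force-point side has weight $\kappa-4+\kappa(\kappa'-6)/4=-\kappa/2$ against the force-point arc. The outer boundary on the other side has weight $\kappa-4$ against the other arc. Both are $<\kappa/2-2$ since $\kappa=16/\kappa'\in(2,4)$, and these flow lines (and the outer boundaries of $\eta'([0,t])$) really do touch $\partial\BD$. Note also that $\eta'$ hits both arcs, so neither side is the ``easy'' one. Reversibility or target invariance cannot swap the arcs: they keep the force point on the same arc and at most exchange clockwise and counterclockwise bubbles.

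Whether $\partial U$ meets $\partial\BD$ is therefore a topological question, not a hitting question: at a contact point $x$, on which side of $\eta'$ does the wedge between the arriving and departing strands lie? Two smaller corrections. A bubble is not bounded by a left arc and a right arc; a clockwise bubble lies to the right of every piece of $\eta'$ on its boundary, which is why \Cref{re:GFF-coupling-nonsimple-BCLE} shows a single $2\pi\chi$ jump. And the Markov property at $\tau_q$ gives you nothing, since $\partial U$ is already determined by $\eta'([0,\tau_q])$.

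The one case where your bookkeeping reaches the threshold for every $\kappa'\in(4,8)$, with equality, is a clockwise bubble against the arc carrying no force point. That is precisely the case the paper does not prove itself but imports from Step~2 of the proof of \cite[Proposition~3.1]{doherty2025square}.

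What is missing for the other cases is the paper's topological argument. Your claim that a one-point contact would force a double boundary visit is not correct as stated: a single bounce of $\eta'$ off $\partial\BD$ whose wedge later becomes a bubble involves no double visit. The paper uses the double-visit exclusion as an input instead. It shows $\eta'$ visits each point of $\partial\BD$ at most once, via the proof of \cite[Theorem~5.4]{TreeCLE} for $\SLE_{\kappa'}$ together with target invariance; you list this as a goal but plan to derive it from the same weight argument.

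Then take a clockwise $U$ with first and last visit times $t_1<t_2$ and $w=\eta'(t_1)=\eta'(t_2)$, and suppose $\partial U$ met the force-point arc. Let $s$ be the last time before $t_2$ at which $\eta'$ hits that arc; single visits give $t_1<s<t_2$ and $w\notin\partial\BD$. Closing $\partial U$ clockwise after time $s$ forces $\eta'$ to enter the region $V$ cut off at time $s$ between the curve and that arc. Since $\partial V$ disconnects $V$ from $\ri$, this contradicts the fact that $\eta'$ does not cross itself. This is where the orientation of $\partial U$ enters, and the paper treats the counterclockwise case similarly.
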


\begin{proof}
Fix $z \in \BD_{\BQ}$ and let $U$ denote the connected component of $\BD \setminus \eta'$ containing $z$.  For the rest of the proof,  we will assume that we are working on the event that $\partial U \subseteq \eta'$.
Suppose first that $\eta'$ draws $\partial U$ in the clockwise way.  Then,  it was shown in Step 2 in the proof of \cite[Proposition~3.1]{doherty2025square} that it is a.s.\ the case that $\partial U$ does not intersect the clockwise arc of $\partial \BD$ from $-\ri$ to $\ri$.

We will show that $\partial U$ does not intersect the counterclockwise arc of $\partial \BD$ from $-\ri$ to $\ri$ and hence conclude that $\partial U \cap \partial \BD = \emptyset$.  Suppose that this does not hold and let $t_1$ (resp.\ $t_2$) be the first (resp.\ last) time that $\eta'$ intersects $\partial U$ and note that $w = \eta'(t_1) = \eta'(t_2)$.  Let $s \in [t_1,t_2]$ denote the last time before $t_2$ that $\eta'$ intersects the counterclockwise arc of $\partial \BD$ from $-\ri$ to $\ri$.  Note that it is explained in the proof of \cite[Theorem~5.4]{TreeCLE} that it is a.s.\ the case that $\SLE_{\kappa'}$ hits each point on the boundary of the domain that it is defined at most once. Thus, since an $\SLE_{\kappa'}(\kappa'-6)$ curve targeted at its force point has the law of an $\SLE_{\kappa'}$ curve (and, by target invariance~\cite{TreeCLE}, $\eta'$ agrees with the branch targeted at the force point up to the time at which the target points are disconnected),  we obtain that it is a.s.\ the case that $\eta'$ hits each point on $\partial \BD$ at most once.  In particular,  we have that $t_1 < s < t_2$ and $w \notin \partial \BD$.

Let $\tau$ be the last time before $t_1$ that $\eta'$ intersects the counterclockwise arc of $\partial \BD$ from $-\ri$ to $\ri$ and let $V$ denote the connected component of $\BD \setminus \eta'([0,s])$ whose boundary contains the counterclockwise arc of $\partial \BD$ from $\eta'(\tau)$ to $\eta'(s)$.  Then,  since $\partial U$ is drawn by $\eta'$ in the clockwise way and $\eta'$ does not trace itself and hits $w$ after time $s$,  we obtain that there exists $t \in (s,t_2)$ such that $\eta'(t) \in V$.  In particular,  we have that $\partial V$ disconnects $\eta'(t)$ from $\ri$ but this contradicts the fact that $\eta'$ does not cross itself a.s.  Therefore,  we obtain that $\partial U \cap \partial \BD = \emptyset$ on the event that $\eta'$ draws $\partial U$ in the clockwise way.

A similar argument shows that $\partial U \cap \partial \BD = \emptyset$ a.s.\ on the event that $\eta'$ draws $\partial U$ in the counterclockwise way.  Therefore,  this completes the proof of the lemma since $z \in \BD_{\BQ}$ was arbitrary and $\eta'$ does not hit fixed points a.s.
\end{proof}

\begin{lemma}\label{lem:cut_points_not_on_the_boundary}
Let $\Gamma'$ be a $\CLE_{\kappa'}$ in $\BD$ with $\kappa' \in (4,8)$.  Then,  a.s.,  the loops of $\Gamma'$ do not have any cut points on $\partial \BD$.
\end{lemma}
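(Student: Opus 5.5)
We plan to reduce the statement to \Cref{lem:sle_bubbles} through the branching $\SLE_{\kappa'}(\kappa'-6)$ construction of $\Gamma'$ in \Cref{subsec:branching_sle_kappa_minus_6}. Every loop of $\Gamma'$ touching $\partial\BD$ is drawn by the exploration before it draws its first clockwise loop around the target. Such a loop is $\SCL'(x)$ for some $x \in \BD_\BQ$, drawn by the branch $\eta'_x$ during an interval $[\acute\tau, \tau_{\mathrm{ccw}}]$ in which $\theta$ makes its final excursion from $0$ to $2\pi$. By target invariance, we may instead follow a chordal branch: pick a boundary point $y$ and use the chordal $\SLE_{\kappa'}(\kappa'-6)$ from $1$ to $y$. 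A countable family of such chordal curves (targets $y$ in a countable dense subset of $\partial\BD$, all force points at $1^+$, coupled as one branching tree) traces every loop that touches $\partial\BD$, up to the portion drawn after disconnection of the target. Since a countable union of null events is null, it suffices to treat a single fixed chordal branch $\eta'$.

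Fix such a chordal $\eta'$, mapped so that it runs from $-\ri$ to $\ri$ as in \Cref{lem:sle_bubbles}. A cut point $w \in \partial\BD$ of a loop $\SCL' \in \Gamma'$ lies both on the outer boundary of $\SCL'$ and on $\partial U$, where $U$ is a complementary component of $\SCL'$ surrounded by $\SCL'$. We will show that such a $U$ is a connected component of $\BD \setminus \eta'$ with $\partial U \subseteq \eta'$ whenever $\SCL'$ is traced by $\eta'$. Indeed, $\SCL'$ is a sub-path of $\eta'$, and the region it surrounds is cut off from the target and is not visited again by $\eta'$ (the part of $\eta'$ after completing the loop lies outside it). Hence $U$ is a bubble of $\eta'$ with boundary in $\eta'$. \Cref{lem:sle_bubbles} then gives $\partial U \cap \partial\BD = \emptyset$, which contradicts $w \in \partial U \cap \partial\BD$.

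The remaining case is a loop of $\Gamma'$ touching $\partial\BD$ that is not entirely traced by any of the countably many chordal branches. To handle it, we use the fact that for the loop $\SCL'(x)$ touching the boundary, the domain Markov property at the time $\acute\tau$ lets us map the unexplored component containing $x$ back to $\BD$. There, $\SCL'(x)$ is (a conformal image of) the boundary-touching loop drawn by a fresh chordal $\SLE_{\kappa'}(\kappa'-6)$ branch from the tip $\eta'(\acute\tau)$. The boundary of the unexplored domain contains the arc of $\partial\BD$ where the cut point would lie. The conformal map extends continuously there, since that arc is a boundary arc away from the curve. We therefore apply \Cref{lem:sle_bubbles} in that domain, conditionally on the past. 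We will make this precise by quantifying over the countably many stopping times given by first hitting times of rational balls, so that a.s.\ every loop is covered.

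The main obstacle is this covering step: making sure that every boundary-touching loop is, in some conditional frame obtained by a stopping time from a countable family, traced in full by a single chordal $\SLE_{\kappa'}(\kappa'-6)$ with $\partial\BD$ still partly visible. Equally, we must ensure that the cut-point component $U$ is genuinely a bubble bounded by that curve rather than a region partly bounded by earlier pieces of the exploration. For the latter, we would argue that points of $\partial U$ are all on $\SCL'$ by the definition of a cut point. A secondary check is that an interior component surrounded by the loop cannot also be bounded by $\partial\BD$ via some previously explored portion; this is excluded because $\partial U \subseteq \SCL'$.
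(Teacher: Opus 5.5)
Your core reduction is the paper's own. The paper proves the lemma in one line, via \Cref{lem:sle_bubbles} and the $\BCLE_{\kappa'}(0)$ description of the boundary-touching loops. The branches of that $\BCLE_{\kappa'}(0)$ are $\SLE_{\kappa'}(0;\kappa'-6)$ processes, which are exactly your chordal $\SLE_{\kappa'}(\kappa'-6)$ branches toward boundary points, with the force point just to the right of the root. Your argument that a surrounded component $U$ of a loop traced by such a branch is a complementary component of that branch, with $\partial U$ contained in the branch, is the intended step.

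The part where you go beyond the paper does not work as written. The time $\acute\tau=\sup\{t\le\tau_{\mathrm{ccw}}:\theta_t=0\}$ is a last-exit time: to know that the excursion starting there reaches $2\pi$ you need the future. So it is not a stopping time, and you cannot restart the exploration at $\acute\tau$ with a fresh $\SLE_{\kappa'}(\kappa'-6)$ whose force point is adjacent to the tip. After $\acute\tau$ the driving process is an excursion conditioned to reach $2\pi$. Hitting times of rational balls do not repair this. At such times one generally has $\theta\in(0,2\pi)$, so the force point is not adjacent to the tip, and \Cref{lem:sle_bubbles} is not stated for that configuration.

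The patch is also unnecessary, because you never need a whole loop to be traced by one branch, only the single component $U$ that carries the putative cut point.
\begin{itemize}
\item Take $z\in U\cap\BQ^2$. The branch toward $z$ pinches $U$ off at a double point $q$.
\item $q\notin\partial\BD$, since the curve visits boundary points at most once; this fact is already used in the proof of \Cref{lem:sle_bubbles}.
\item Suppose some boundary target $y_0$ stays connected to $z$ up to the pinching time. This is what the paper takes from the $\BCLE_{\kappa'}(0)$ construction, since $U$ is cut out by the boundary-targeted tree.
\item Then every target in the arc of $\partial\BD\setminus\eta([0,\tau_{\mathrm{ccw}}])$ containing $y_0$ also stays connected to $z$. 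So you can choose one from your countable dense family.
\item That target's chordal branch agrees with the branch toward $z$ up to the pinching time, traces all of $\partial U$, and never enters $U$ afterwards, because $\overline U\cap\partial\BD\subseteq\partial U$ lies on the curve. \Cref{lem:sle_bubbles} then applies to this single branch.
\end{itemize}
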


\begin{proof}
The claim in the statement of the lemma follows from Lemma~\ref{lem:sle_bubbles} combined with the $\BCLE_{\kappa'}(0)$ construction of the boundary touching loops of a $\CLE_{\kappa'}$ (the branches of the $\BCLE_{\kappa'}(0)$ being $\SLE_{\kappa'}(0;\kappa'-6)$ processes, i.e.\ the $\SLE_{\kappa'}(\kappa'-6)$ processes of~\Cref{lem:sle_bubbles}, a force point of weight $0$ having no effect).
\end{proof}

\begin{proof}[Proof of Lemma~\ref{lemma no pinched point}.]
Fix $z,w \in \BD_{\BQ}$ distinct points and let $\SCL(z)$ (resp.\ $\SCL(w)$) denote the loop in $\Gamma'$ surrounding $z$ (resp.\ $w$).  Suppose that we are working on the event that $\SCL(z) \neq \SCL(w)$.  Let $\mathcal{P}$ denote the collection of all simple polygonal paths $\gamma : [0,1] \rightarrow \overline{\BD}$ such that $\gamma(0) = e^{i\theta}$ for some $\theta \in [0,2\pi) \cap \BQ$,  $\gamma(1) = w$,  $\gamma((0,1)) \subseteq \BD$,  and $\gamma|_{(0,1)}$ consists of a finite union of segments with endpoints in $\BD_{\BQ}$.  Note that since $\SCL(z) \neq \SCL(w)$,  we have that $\SCL(z)$ does not disconnect $w$ from $\partial \BD$ and so there exists a path $\gamma$ in $\mathcal{P}$ such that $\gamma \cap \SCL(z) = \emptyset$.

Fix $\gamma \in \mathcal{P}$ and suppose that we are working on the event that $\gamma \cap \SCL(z) = \emptyset$.  Let $K$ denote the closure of the union of $\gamma$ and of the loops in $\Gamma'$ that intersect $\gamma$ and let $(V_j)_{j \geq 1}$ denote the collection of the connected components of $\BD \setminus K$.  Then,  since $K \cap \partial \BD \neq \emptyset$ and $K$ is connected (each of these loops meeting $\gamma$),  we have that $V_j$ is simply connected for all $j$.  Moreover,  the locality property of $\CLE_{\kappa'}$ \cite{TreeCLE,CoInCLERiemSph} implies that conditioned on $K$,  the conditional law of the restrictions of $\Gamma'$ to the $V_j$'s is that of independent copies of $\CLE_{\kappa'}$'s in each component.  

Since $\SCL(z) \cap \gamma = \emptyset$,  we have that there exists $n \in \BN$ such that $\SCL(z) \subseteq \overline{V_n}$ (the loop $\SCL(z)$ being connected, disjoint from $\gamma$, and not crossing the loops of $\Gamma'$).  But then,  combining the locality property of $\CLE$ with Lemma~\ref{lem:cut_points_not_on_the_boundary},  we obtain that it is a.s.\ the case that there are no cut points of $\SCL(z)$ which are contained in $\partial V_n$.  Therefore,  since $\SCL(w) \subseteq K$ and hence $\SCL(z) \cap \SCL(w) \subseteq \partial V_n$,  we obtain that it is a.s.\ the case that there are no cut points of $\SCL(z)$ which are contained in $\SCL(w)$.

Finally,  the proof of the lemma is complete since $z,w \in \BD_{\BQ}$ and $\gamma \in \mathcal{P}$ were arbitrary,  and since for every fixed and deterministic point in $\BD$,  it is a.s.\ the case that there exists a loop in $\Gamma'$ surrounding it.
\end{proof}

\begin{proof}[Proof of~\Cref{lem:loop_surrounds_another_loop}]
Recall that $\Gamma'$ corresponds to the outermost true  $\cwBCLE_{\kappa^\prime}(0)$ loops in the $\cwBCLE_{\kappa^\prime}(0)$/$\ccwBCLE_\kappa(-\kappa/2)$ construction described in~\Cref{A021}. In particular, if $\SCL,\SCL'$ are as in the statement of the lemma, and $\SCL'$ does not surround $\SCL$, then we have that the outermost true  $\cwBCLE_{\kappa^\prime}(0)$ loop $\SCL''$ surrounding $\SCL$ must intersect $\SCL'$ and be distinct from $\SCL'$. But then, we have that every point on $\SCL \cap \SCL'$ is a cut point in $\SCL''$ (the loops of $\Gamma'$ do not cross each other, so $\SCL \cap \SCL' \subseteq \SCL''$), and we have already shown in~\Cref{lemma no pinched point} that it is a.s.\ the case that there are no cut points of $\SCL''$ that are contained in $\SCL'$.  Therefore, we obtain that $\SCL'$ surrounds $\SCL$ a.s. This completes the proof of the lemma.
\end{proof}

\subsubsection{Large loops in $\Gamma_n'$ do not intersect the boundary with high probability when $n$ is large}
\label{subsubsec:big_loops_do_not_intersect_the_boundary}

Next, we will prove that with probability tending to $1$ as $n \to \infty$, there are no loops in $\Gamma_n'$ with large Euclidean diameter that intersect $\partial \BD$ (thereby resolving the second main issue mentioned at the beginning of~\Cref{subsec:completion_of_the_proof}). Formally, we will show the following.
\begin{lemma}\label{lem:big_loops_do_not_intersect_the_boundary}
	Fix $\varepsilon,p \in (0,1)$. Then, there exists $n_0 \in \BN$ such that the following is true for all $n \geq n_0$. With probability at least $p$, we have that there is no loop in $\Gamma_n'$ intersecting $\partial \BD$ whose diameter is at least $\varepsilon$.
	\end{lemma}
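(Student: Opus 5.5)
The plan is to reduce the statement directly to \Cref{lem:unlikeliness-proof}, which controls the closure $B_n$ of the union of all loops of $\Gamma_n'$ that touch $\partial\BD$. If a loop $\SCL'$ of $\Gamma_n'$ meets $\partial\BD$ and has diameter at least $\varepsilon$, then $\SCL'\subseteq B_n$. On the event $B_n\subseteq B_{\varepsilon/4}(\partial\BD)$, the loop $\SCL'$ lies in the thin annulus $A_{1-\varepsilon/4,1}(0)$. The diameter bound alone does not rule this out, since a loop can run along the boundary. So I do not expect a purely geometric conclusion, and will combine \Cref{lem:unlikeliness-proof} with a crossing argument in the spirit of \Cref{lem:unlikeliness}.

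Concretely, I cover $\partial\BD$ by a finite, deterministic family of pairs of disjoint arcs $(I^{(i)},J^{(i)})_{i\le M}$, where $M$ depends only on $\varepsilon$. The arcs are chosen so that the following holds for every connected set $S\subseteq B_{\varepsilon/8}(\partial\BD)$ that meets $\partial\BD$ and has diameter at least $\varepsilon$: for some $i$, the set $S$ contains a connected piece joining $I^{(i)}$ to $J^{(i)}$ inside the thin annulus, and that piece avoids the two ``gate'' arcs separating $I^{(i)}$ from $J^{(i)}$. Any such $S$ must travel along the boundary a macroscopic angular distance, so it passes over some gate. The crossing in question is a segment of $\SCL'$, and it stays within distance $\varepsilon/8$ of $\partial\BD$.

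The main step is to rule out such near-boundary crossings. I will reuse the shield construction from the proof of \Cref{lem:unlikeliness}. Along each gate arc, I place order $\varepsilon_n^{-1}$ points with spacing $6\varepsilon_n$, where $\varepsilon_n\to0$ is chosen so that $B_n\subseteq B_{\varepsilon_n}(\partial\BD)$ with probability $1-o(1)$; this is possible by \Cref{lem:unlikeliness-proof} and a diagonal choice. From each point I draw a flow line of angle $-\lambda_n'/\chi_n$ (the $\SLE_{\kappa_n}(-1;-1)$ of \Cref{lem:likeliness}). By \Cref{lem:likeliness} and \Cref{lem:independence-across-disjoint-balls}, with probability $1-o(1)$ at least one flow line on each gate reaches depth $\varepsilon_n$ before leaving a $2\varepsilon_n$-ball. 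Counterflow lines do not cross flow lines, and every segment of a loop of $\Gamma_n'$ is part of a counterflow line. Hence no segment lying in $B_{\varepsilon_n}(\partial\BD)$ can pass through a gate. This holds simultaneously for all $M$ pairs by a union bound, since $M$ is fixed.

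The main obstacle is that in \Cref{lem:unlikeliness} the segments were only required to avoid the gate arcs, whereas here the loop may touch $\partial\BD$ on the gate itself. I would handle this as follows. A loop touching the boundary at a gate point still has to pass around the shield on the side away from the boundary, which forces it to reach depth at least $\varepsilon_n$ outside $B_{\varepsilon_n}(\partial\BD)$. That contradicts the inclusion $B_n\subseteq B_{\varepsilon_n}(\partial\BD)$. One must also check that the shield flow lines, rooted at boundary points, separate boundary-touching pieces, using the flow-line versus counterflow-line interaction rules of \cite{IG4}. Finally, given $p$, I choose $n_0$ so that the total $o(1)$ error is below $1-p$.
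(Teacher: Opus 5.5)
There is a genuine gap. Your architecture matches the paper's in substance. There the lemma follows in two lines from \Cref{lem:unlikeliness-proof} together with \Cref{lem:big_loops_staying_close_to_the_boundary_have_to_intersect_it}, which says that no loop of diameter at least $\varepsilon$ lies in a fixed collar $\overline{\BD}\setminus B_{1-\delta}(0)$. That lemma is proved with the same flow-line shields you describe, at a fixed scale $\delta$ rather than $\varepsilon_n\to0$.

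The gap is the step you yourself flag as the main obstacle. Your shields are attached to $\partial\BD$: $\eta_j$ starts at $x_j\in\partial\BD$, and as an $\SLE_{\kappa_n}(-1;-1)$ it may touch $\partial\BD$ again near $x_j$. A loop of $\Gamma_n'$ that touches $\partial\BD$ can get from one side of a shield to the other through the points of $\partial\BD\cap\eta_j$. It never crosses $\eta_j$ in the interior and never leaves $B_{\varepsilon_n}(\partial\BD)$. Your claim that such a loop ``still has to pass around the shield on the side away from the boundary'' is exactly what fails topologically. The flow-line/counterflow-line non-crossing rule of \cite{IG4} concerns interior crossings, so it does not forbid this passage by itself. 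Deferring this to ``one must also check'' leaves the argument without its key step. For the same reason, your covering claim that the connecting piece avoids the gate arcs cannot be arranged, since the loop may touch $\partial\BD$ anywhere.

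The paper closes this with a different mechanism, in the proof of \Cref{lem:big_loops_staying_close_to_the_boundary_have_to_intersect_it}. It accepts that the loop must pass through the points of $\partial\BD\cap\eta_j$. A closed loop confined to the collar that does not surround the inner disk must traverse this barrier in both directions, so it would visit these boundary points twice. That is impossible, since a $\CLE_{\kappa'}$ loop hits each boundary point at most once \cite[Theorem~5.4]{TreeCLE}. The only remaining candidate is the loop surrounding the origin, which is handled separately. The paper uses the convergence of that loop to the origin loop of a $\CLE_4$, which is at positive distance from $\partial\BD$. In your boundary-touching setting it is even simpler: by \eqref{eq touching proba} that loop touches $\partial\BD$ with probability tending to $0$. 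With this double-visit argument and the separate treatment of the origin loop, your scheme would go through; without them, the shields do not block boundary-touching loops.
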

	
Before we prove~\Cref{lem:big_loops_do_not_intersect_the_boundary}, we state and prove two lemmas. The following lemma states that with probability tending to $1$ as $n \to \infty$, there are no loops in $\Gamma_n'$ with large Euclidean diameter which stay close to $\partial \BD$.

\begin{lemma}\label{lem:big_loops_staying_close_to_the_boundary_have_to_intersect_it}
Fix $\varepsilon>0$ and $p \in (0,1)$. Then, there exists $\delta \in (0,1)$ depending only on $p$ and $\varepsilon$ such that the following holds for all $n \in \BN$ sufficiently large (the threshold depending only on $\varepsilon$ and $p$). With probability at least $p$, there is no loop $\SCL$ in $\Gamma_n'$ with diameter at least $\varepsilon$ such that $\SCL \subseteq \overline{\BD} \setminus B_{1-\delta}(0)$.
\end{lemma}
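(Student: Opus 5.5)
The plan is to combine \Cref{lem:unlikeliness-proof}, which shows that boundary-touching loops stay in a thin neighborhood of $\partial\BD$, with a renewal argument based on the domain Markov property. Any loop contained in $\overline{\BD}\setminus B_{1-\delta}(0)$ with diameter at least $\varepsilon$ either touches $\partial\BD$ or lies inside a component of $\BD\setminus B_n$, where $B_n$ is the closure of the union of the boundary-touching loops.

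First I handle the loops that touch $\partial\BD$. \Cref{lem:unlikeliness-proof} does not by itself exclude a long loop hugging $\partial\BD$. The shield argument in the proof of \Cref{lem:unlikeliness} does, however. I cover $\partial\BD$ by $O(1/\varepsilon)$ arcs of length $\varepsilon/10$. A loop $\SCL$ of diameter at least $\varepsilon$ contained in $B_\delta(\partial\BD)$ must run along $\partial\BD$ over an arc of length of order $\varepsilon$. I then build shields from $N\asymp \varepsilon/\delta$ points inside that arc. These are flow lines of the GFF $\Psi_n$ of angle $-\lambda'_n/\chi_n$ started from boundary points, as in that proof. By \Cref{lem:likeliness} and \Cref{lem:independence-across-disjoint-balls}, with probability tending to $1$ as $\delta\to0$, uniformly in $n$ large, some shield reaches depth $\delta$ inside every one of the finitely many fixed arcs. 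Since counterflow lines do not cross flow lines, no segment of a loop of $\Gamma'_n$ inside $B_\delta(\partial\BD)$ can pass the shield. Hence the loop cannot travel distance $\varepsilon/2$ along the boundary while staying in $B_\delta(\partial\BD)$. Here I must choose which angle or starting point each shield uses so that it applies to both sides, and the proof of \Cref{lem:unlikeliness} does this with $\eta^n_j$ and $\widetilde\eta^n_j$.

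For loops that do not touch $\partial\BD$, I condition on $B_n$. By \Cref{lem:unlikeliness-proof}, $B_n\subset B_{\delta'}(\partial\BD)$ with high probability. A non-touching loop lies in a component $V$ of $\BD\setminus B_n$ and, by the domain Markov property, is a $\CLE_{\kappa'_n}$ loop in $V$ touching $\partial V$ after one iteration, or deeper after further ones. Components $V$ contained in $B_\delta(\partial\BD)$ that carry loops of diameter at least $\varepsilon$ must themselves have diameter at least $\varepsilon$ while being thin. There are two ways to handle them. One is to map conformally to $\BD$ and rerun the shield argument with harmonic measure control. The simpler alternative I will try first is the following: the shields above already cut $B_\delta(\partial\BD)$ into pieces of diameter less than $\varepsilon/2$ near $\partial\BD$, and every loop of $\Gamma'_n$ is a portion of a counterflow line of the same field $\Psi_n$. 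Hence no loop at all, whether touching or not, can cross a shield within $B_\delta(\partial\BD)$. The shields reach depth $\delta$ from the boundary and so bound the region $\overline{\BD}\setminus B_{1-\delta}(0)$ only if they also exit it. I therefore require that the shield hits $B_{\delta}((1-2\delta)x)$, which lies outside $\overline{\BD}\setminus B_{1-\delta}(0)$, exactly as in the event of \Cref{lem:likeliness}. A loop confined to $\overline{\BD}\setminus B_{1-\delta}(0)$ is then trapped between two consecutive shields, and so has diameter $O(\delta')\ll\varepsilon$.

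The main obstacle is the uniformity. I need \Cref{lem:likeliness} at scale $\delta$ with a probability bound $p$ independent of $n$ and $\delta$, which holds by scale invariance. I also need the spacing condition of \Cref{lem:independence-across-disjoint-balls}, namely that the number of candidate points per arc of length $\varepsilon/10$ tends to infinity as $\delta\to0$. The crossing claim between counterflow lines and flow lines started at different boundary points, together with the effect of additive constants in the iterated construction, needs care. I will rely on the fact that loops of $\Gamma'_n$ in the first level are counterflow lines of $\Psi_n$ itself, and on the non-crossing rule \cite[Theorem~1.7]{IG4} quoted in the proof of \Cref{lem:unlikeliness}.
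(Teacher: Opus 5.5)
Your shield construction matches the paper's: flow lines of $\Psi_n$ rooted on $\partial\BD$, order $\varepsilon/\delta$ candidate roots per arc, \Cref{lem:likeliness} with \Cref{lem:independence-across-disjoint-balls} to get one successful shield per arc, and shields required to reach $B_\delta((1-2\delta)x)\subset B_{1-\delta}(0)$. The gap is in how you use the shields. You claim that, because counterflow lines do not cross flow lines, a loop confined to $\overline{\BD}\setminus B_{1-\delta}(0)$ cannot pass a shield and is therefore trapped between two consecutive shields. Non-crossing does not give this. The loops of $\Gamma'_n$ touch $\partial\BD$. The stopped flow line $\eta$ meets $\partial\BD$ at its root and also at further points near it, since $\rho=-1<\kappa_n/2-2$. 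A loop can therefore get from one side of $\eta$ to the other, without crossing it, by passing through points of $\eta\cap\partial\BD$. In \Cref{lem:unlikeliness} this loophole is closed by the definition of the segments considered there: they may not touch $\partial\BD\setminus(I_n\cup J_n)$, which is where the shields are rooted. Here there is no such restriction. The missing ingredient is that a $\CLE_{\kappa'}$ loop hits each boundary point at most once \cite[Theorem~5.4]{TreeCLE}. A loop in the thin annulus that gets past a shield without surrounding $B_{1-\delta}(0)$ would have to come back through the same points of $\eta\cap\partial\BD$, which is impossible. So the shields only show that any offending loop surrounds $B_{1-\delta}(0)$, i.e.\ is the loop surrounding $0$. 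They do not show it is trapped.

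That remaining loop cannot be excluded by shields at all, since a wrapping loop passes each shield exactly once, and your proposal gives no argument for it. The paper handles it separately. The origin-surrounding loop of $\Gamma'_n$ converges in law to that of a $\CLE_4$ by \eqref{eq cv subsequence}, and the limit is a.s.\ at positive distance from $\partial\BD$. Since $\{K : K\cap B_{1-\delta}(0)=\emptyset\}$ is closed in the Hausdorff topology, the Portmanteau theorem makes the $\limsup_n$ of the probability that this loop avoids $B_{1-\delta}(0)$ as small as desired for $\delta$ small. This step is also why the statement only holds for $n$ sufficiently large. Without the at-most-once boundary hitting and this separate control of the origin loop, your trapping conclusion, and hence the proof, does not go through.
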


\begin{figure}[ht!]
	\centering
	\includegraphics[width=0.6\linewidth]{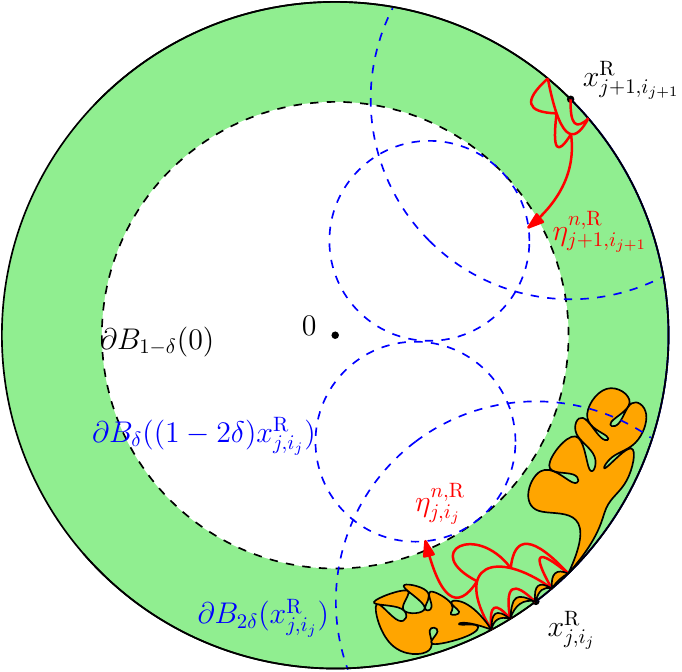}
	\caption{Illustration of the proof of~\Cref{lem:big_loops_staying_close_to_the_boundary_have_to_intersect_it}. The orange loop contained in the green annulus $\overline{\BD}\setminus B_{1-\delta}(0)$ cannot cross the flow line $\eta^{n,\mathrm{R}}_{j,i_j}$, so that all the points of $\eta^{n,\mathrm{R}}_{j,i_j}\cap \partial \BD$ are hit at least twice by the loop (except if the orange loop surrounds $B_{1-\delta}(0)$) which is impossible.}
	\label{fig:pinched}
\end{figure}

\begin{proof}
\stepx{step:close-setup}{Setup and the shield events} As in the proof of~\Cref{lem:unlikeliness}, for each $n$, we let $\Psi_n$ be a GFF on $\BD$ rooted at $-\ri$ that generates $\Gamma_n'$ as in~\Cref{re:GFF-coupling-nonsimple-BCLE}. Then, the branching tree of counterflow lines of $\Psi_n$ centered at $-\ri$ and targeted at any $z \in \overline{\BD}$ is used to construct $\Gamma_n'$. 

We partition the counterclockwise (resp.\ clockwise) arc of $\partial \BD$ from $-\ri$ to $\ri$ into arcs $I_1^{\mathrm{R}},\ldots,I_{m_{\varepsilon}}^{\mathrm{R}}$ (resp.\ $I_1^{\mathrm{L}},\ldots,I_{m_{\varepsilon}}^{\mathrm{L}}$) of length at most $\varepsilon / 2$, where $m_{\varepsilon} \in \BN$ depends only on $\varepsilon$. Fix $\delta \in (0,\varepsilon)$ sufficiently small (depending only on $p$ and $\varepsilon$) (only the smallness of $\delta$ is used, both here and at the end of the proof). By shrinking $\delta$ if necessary, we may assume without loss of generality that $\varepsilon / (10 \delta) \in \BN$. For each $j \in \{1,\ldots,m_{\varepsilon}\}$, we choose points $x_{j,1}^{\mathrm{R}},\ldots,x_{j,k_{\delta}}^{\mathrm{R}}$ (resp.\ $x_{j,1}^{\mathrm{L}},\ldots,x_{j,k_{\delta}}^{\mathrm{L}}$) on $I_j^{\mathrm{R}}$ (resp.\ $I_j^{\mathrm{L}}$) ordered in the counterclockwise (resp.\ clockwise) way such that for all $m \in \{1,\ldots,k_{\delta}-1\}$, the counterclockwise (resp.\ clockwise) arc of $\partial \BD$ from $x_{j,m}^{\mathrm{R}}$ to $x_{j,m+1}^{\mathrm{R}}$ (resp.\ from $x_{j,m}^{\mathrm{L}}$ to $x_{j,m+1}^{\mathrm{L}}$) has length equal to $5 \delta$, where $k_{\delta} = \varepsilon / (10 \delta) - 1 \in \BN$. Also, the counterclockwise (resp.\ clockwise) arcs of $I_j^{\mathrm{R}}$ (resp.\ $I_j^{\mathrm{L}}$) between one endpoint of $I_j^{\mathrm{R}}$ (resp.\ $I_j^{\mathrm{L}}$) and $x_{j,1}^{\mathrm{R}}$ (resp.\ $x_{j,1}^{\mathrm{L}}$), and the other endpoint of $I_j^{\mathrm{R}}$ (resp.\ $I_j^{\mathrm{L}}$) and $x_{j,k_{\delta}}^{\mathrm{R}}$ (resp.\ $x_{j,k_{\delta}}^{\mathrm{L}}$) both have length $5 \delta$.

Furthermore, for all $k \in \{1,\ldots,k_{\delta}\}$, we let $\eta_{j,k}^{n,\mathrm{R}}$ (resp.\ $\eta_{j,k}^{n,\mathrm{L}}$) denote the flow line of $\Psi_n$ of angle $2\pi - \lambda_n' / \chi_n$ (resp.\ $-\lambda_n' / \chi_n$) starting from $x_{j,k}^{\mathrm{R}}$ (resp.\ $x_{j,k}^{\mathrm{L}}$) and targeted at $-\ri$, where we recall that $\lambda_n' = \pi \sqrt{\kappa_n} / 4$ and $\chi_n = 2 / \sqrt{\kappa_n} - \sqrt{\kappa_n} / 2$. Also, for $q \in \{\mathrm{L}, \mathrm{R}\}$ and $k \in \{1,\ldots,k_{\delta}\}$, we let $E_{j,k}^{n,q}$ denote the event that $\eta_{j,k}^{n,q}$ hits $B_{\delta}((1-2\delta) x_{j,k}^q)$ before exiting $B_{2\delta}(x^q_{j,k})$, and note that~\Cref{lem:likeliness} implies that there exists some universal constant $\widetilde{p} \in (0,1)$ such that $\BP[E_{j,k}^{n,q}] \geq \widetilde{p}$.

\stepx{step:close-many}{Many shields occur} Therefore,~\Cref{lem:independence-across-disjoint-balls} implies that we can choose $\delta \in (0,1)$ sufficiently small (in a way that depends only on $p$ and $\varepsilon$) such that for all $n \in \BN$, the following holds with probability at least $p$. For all $j \in \{1,\ldots,m_{\varepsilon}\}$, there exist $i_j^{\mathrm{L}}, i_j^{\mathrm{R}} \in \{1,\ldots,k_{\delta}\}$ such that $E_{j,i_j^{\mathrm{L}}}^{n,\mathrm{L}} \cap E_{j,i_j^{\mathrm{R}}}^{n,\mathrm{R}}$ occurs. Suppose that we are working on that event and let $\SCL$ be a loop in $\Gamma_n'$ with diameter at least $\varepsilon$ such that $\SCL \subseteq \overline{\BD} \setminus B_{1-\delta}(0)$. Then, there exist $j \in \{1,\ldots,m_{\varepsilon}\}, q \in \{\mathrm{L},\mathrm{R}\}$ such that $\SCL$ makes a crossing between the left and right boundaries of the conformal rectangle bounded by $I_j^{q}, \partial B_{1-\delta}(0)$, and the two segments connecting the two endpoints of $I_j^{q}$ to $\partial B_{1-\delta}(0)$ (indeed, $\SCL$ joins two points at distance at least $\varepsilon$ while staying in $\overline{\BD}\setminus B_{1-\delta}(0)$, hence crosses one of these $2m_\varepsilon$ rectangles from side to side). Also, there exists $k \in \{1,\ldots,k_{\delta}\}$ (specifically, $k = i_j^q$) such that $E_{j,k}^{n,q}$ occurs.

\stepx{step:close-conclusion}{Conclusion} Note that the construction of $\Gamma_n'$ using the counterflow lines of $\Psi_n$ combined with the fact that counterflow lines do not cross flow lines of the same field implies that $\SCL$ cannot cross $\eta_{j,k}^{n,q}$ a.s. Therefore, we obtain that $\SCL$ has to contain all the points of $\partial \BD \cap \eta_{j,k}^{n,q}$. Note that $\SCL$ has to surround $B_{1-\delta}(0)$, otherwise the points of $\partial \BD \cap \eta_{j,k}^{n,q}$ will be hit at least twice by $\SCL$ (see~\Cref{fig:pinched}), which is impossible by \cite[Theorem 5.4]{TreeCLE}. In particular $\SCL$ is the origin-containing loop of $\Gamma'_n$ and does not meet $B_{1-\delta}(0)$. Since $\{K : K \cap B_{1-\delta}(0) = \emptyset\}$ is closed for the Hausdorff topology and, by~\eqref{eq cv subsequence}, the origin-containing loop of $\Gamma'_n$ converges in law to that of a $\CLE_4$, the Portmanteau theorem bounds the $\limsup_n$ of the probability of the above event by the probability that the origin-containing loop of a $\CLE_4$ does not meet $B_{1-\delta}(0)$, which tends to zero as $\delta \to 0$ (that loop being a.s.\ at positive distance from $\partial \BD$). Thus one can choose $\delta$ small enough (which is also admissible above) for the lemma to hold.
\end{proof}

\begin{proof}[Proof of~\Cref{lem:big_loops_do_not_intersect_the_boundary}]
Fix $\varepsilon,p \in (0,1)$. 
By~\Cref{lem:big_loops_staying_close_to_the_boundary_have_to_intersect_it}, there exist $\delta>0$ and $n_0 \in \BN$ depending only on $\varepsilon$ and $p$ such that the following holds for all $n \geq n_0$. With probability at least $1 - (1-p) / 100$, we have that there is no loop $\SCL$ of $\Gamma'_n$ with diameter at least $\varepsilon$ such that $\SCL \subseteq \overline{\BD}\setminus B_{1-\delta}(0)$. 
Let $F_n$ denote that event.

Next,~\Cref{lem:unlikeliness-proof} implies that there exists $n_1 \geq n_0$ depending only on $p$ and $\delta$, such that the following holds for all $n \geq n_1$. With probability at least $1 - (1-p) /100$, we have the following. For all $\SCL \in \Gamma_n'$ such that $\SCL \cap \partial \BD \neq \emptyset$, we have that $\SCL \subseteq \overline{\BD} \setminus B_{1-\delta}(0)$. Let $G_n$ denote that event.

Fix $n \geq n_1$ and suppose that $F_n \cap G_n$ occurs; then there is no loop in $\Gamma'_n$ intersecting $\partial \BD$ with diameter at least $\varepsilon$. Moreover, note that $\BP[ F_n \cap G_n] \geq p$. This ends the proof of the lemma. 
\end{proof}

\subsubsection{Proof of~\Cref{prop:thin-loops}}
\label{subsubsec:completion_of_proof}

Finally, we are ready to prove~\Cref{prop:thin-loops}. It will follow from~\Cref{lem:loops_crossing_annuli,lem:big_loops_surround_other_loops} below combined with~\Cref{prop:big_loops_are_fat_simple_cle}.

\begin{lemma}
\label{lem:loops_crossing_annuli}
Fix $z \in \BD$ and $0<s<t$ such that $\overline{A_{s,t}(z)} \subseteq \BD$. The probability of the event that there are loops in $\Gamma_n$ crossing $A_{s,t}(z)$ and that some loop of $\Gamma_n'$ crossing $A_{s,t}(z)$ does not surround any loop of $\Gamma_n$ crossing $A_{s,t}(z)$ tends to $0$ as $n \to \infty$.
\end{lemma}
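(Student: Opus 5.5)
We work in the coupling of \Crefrange{subsec:cpi_exploration}{subsec:discovering_all_loops} between $\Gamma_n$ and $\Gamma_n'$, on the event that some loop of $\Gamma_n$ crosses $A\defeq A_{s,t}(z)$. Let $R^n_1,\ldots,R^n_{k_n}$ be the conformal rectangles cut out of $A$ by the loops of $\Gamma_n$ meeting $\partial A$. Their left and right sides are crossings by loops of $\Gamma_n$; their top and bottom sides lie in the loops of $\Gamma_n$ touching $\partial B_t(z)$ and $\partial B_s(z)$, respectively, together with arcs of these circles. The argument rests on three facts. First, take a loop $\SCL'\in\Gamma_n'$ that crosses $A$. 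If it meets a loop of $\Gamma_n$ crossing $A$, then it surrounds that loop by \Cref{lem:loop_surrounds_another_loop}. So the bad event forces $\SCL'$ to avoid every crossing loop of $\Gamma_n$. Second, a crossing of $A$ by $\SCL'$ that avoids all crossing loops must pass through the annulus inside the closure of a single rectangle $R^n_j$. It cannot cross $A$ inside the interior of a loop of $\Gamma_n$ that touches only one boundary circle, because such a loop's interior does not reach the other circle. Third, by \Cref{lem:loop_surrounds_another_loop} again, $\SCL'$ cannot enter the interior of a loop of $\Gamma_n$ that it does not surround without touching that loop. So some segment $\SCI$ of $\SCL'$ lies in $R^n_j$, joins its top and bottom sides, and does not touch its left or right sides.

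Next we transfer the problem to a $\CLE_{\kappa_n'}$ inside the rectangle. By \Cref{lem:segments_of_loops_inside_rectangles}, conditionally on the rectangles, $\SCI$ is a segment of a loop of a $\CLE_{\kappa_n'}$ $\Gamma'_{R^n_j}$ in $R^n_j$. We then bound the conditional probability that some loop of $\Gamma'_{R^n_j}$ has a top-to-bottom segment avoiding the left and right sides. By conformal invariance, map $R^n_j$ to a rectangle $(0,L_{n,j})\times(0,1)$ with the corners sent to corners, where $L_{n,j}$ is the modulus. The top and bottom sides of $R^n_j$ are only contained in the unions of loops, so we take the honest prime-end sides of the rectangle.

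To pass to the limit we use \Cref{prop:conformal_rectangles_tight}. The number of rectangles $k_n$ is tight, and the moduli are tight in $(0,\infty)$. Along any subsequence we extract a further subsequence on which $k_n=k$ eventually and $L_{n,j}\to L_j>0$, almost surely by Skorokhod. Apply \Cref{lem:unlikeliness} with $L_n=L_{n,j}$ for each of the finitely many $j$. Since that lemma is stated for deterministic $L_n\to L$, we condition on the rectangles and argue by contradiction along subsequences. The conditional probability for each $j$ then tends to $0$. A union bound over $j\le k$, together with the tightness of $k_n$, gives that the bad event has probability $o(1)$.

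The main obstacle is the geometric reduction in the first paragraph: showing that a $\Gamma'_n$-crossing that avoids all $\Gamma_n$-crossings must cross top to bottom inside a single rectangle, without touching its left or right sides. The subtle point is loops of $\Gamma_n$ that touch only one boundary circle, and how the top and bottom sides sit inside the union of their interiors. We would handle this with \Cref{lem:loop_surrounds_another_loop}. A loop $\SCL'$ meeting such a loop $\SCL$ of $\Gamma_n$ surrounds $\SCL$, and hence avoids $\mathrm{int}(\SCL)$ except by surrounding it. We then cut $\SCL'$ at its last visit to the top side and its first subsequent visit to the bottom side, which produces the required segment $\SCI$ inside $R^n_j$.
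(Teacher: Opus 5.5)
Your proposal is correct and follows the paper's proof. You use \Cref{lem:loop_surrounds_another_loop} to force $\SCL'$ to avoid the crossing loops, reduce to a top-to-bottom segment inside a single rectangle, and handle that segment with \Cref{lem:segments_of_loops_inside_rectangles} and \Cref{lem:unlikeliness}, with the number and moduli of the rectangles controlled by \Cref{prop:conformal_rectangles_tight} and a union bound. Your cut at the last visit to the outer side and the first subsequent visit to the inner side is a more careful treatment than the paper's brief closing sentence of crossings that touch only excursion loops. Note, however, that the fact that $\SCL'$ never enters the interior of a loop of $\Gamma_n$ does not follow from \Cref{lem:loop_surrounds_another_loop}; it comes from the coupling itself, since the loops of $\Gamma_n'$ are sampled inside false loops of the simple BCLEs.
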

\begin{proof}
Suppose that we are working on the event that there are loops in $\Gamma_n$ crossing $A_{s,t}(z)$. These loops divide the annulus $A_{s,t}(z)$ into a set of conformal rectangles. By~\Cref{prop:conformal_rectangles_tight}, as $n \to \infty$, these conformal rectangles converge in distribution to those formed by the crossings of $A_{s,t}(z)$ by a $\CLE_4$ $\Gamma^4$, which implies in particular that their moduli are stochastically bounded away from $0$ and $\infty$. 

Let $\SCL'$ be a loop of $\Gamma_n'$ crossing $A_{s,t}(z)$ and $\SCI$ one of its crossing segments. Either $\SCI$ meets one of the loops of $\Gamma_n$ intersecting $\partial A_{s,t}(z)$, or it avoids all of them, hence is contained in one of the conformal rectangles and crosses it between its top and bottom boundaries. In the latter case, by~\Cref{lem:segments_of_loops_inside_rectangles} applied in a rectangle $R_j$ (which depends on $n$ since the conformal rectangles are defined via $\Gamma_n$)
, $\SCI$ is a segment of a loop of a $\CLE_{\kappa_n'}$ in $R_j$.

By~\Cref{lem:unlikeliness}, the probability that a $\CLE_{\kappa_n'}$ loop in such a conformal rectangle crosses between the top and bottom boundaries without intersecting the left or right boundaries (which are formed by the crossings of $\Gamma_n$) tends to $0$ as $n \to \infty$ (we apply~\Cref{lem:unlikeliness} in $R_j$, using the convergence of the rectangles $R_j$ as $n\to \infty$ and of their moduli given by~\Cref{prop:conformal_rectangles_tight}, together with a union bound over the number of rectangles, which is tight).

Therefore, off an event whose probability tends to $0$ as $n \to \infty$, any loop in $\Gamma_n'$ crossing $A_{s,t}(z)$ must intersect one of the loops of $\Gamma_n$ meeting $\partial A_{s,t}(z)$, hence surround it by~\Cref{lem:loop_surrounds_another_loop}. As $\SCL'$ crosses $A_{s,t}(z)$, that loop cannot be an excursion loop of $A_{s,t}(z)$ unless $\SCL'$ also surrounds a crossing loop; in either case $\SCL'$ surrounds a loop of $\Gamma_n$ crossing $A_{s,t}(z)$, which completes the proof.
\end{proof}

\begin{lemma}\label{lem:big_loops_surround_other_loops}
Fix $\varepsilon, p \in (0,1)$. Then, there exist $\widetilde{\varepsilon} \in (0,1)$ and $n_0 \in \BN$ depending only on $\varepsilon$ and $p$, such that the following holds for all $n \geq n_0$. With probability at least $p$, every loop in $\Gamma_n'$ with diameter at least $\varepsilon$ surrounds a loop in $\Gamma_n$ with diameter at least $\widetilde{\varepsilon}$.
\end{lemma}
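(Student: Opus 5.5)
We combine \Cref{lem:big_loops_do_not_intersect_the_boundary} with \Cref{lem:loops_crossing_annuli} through a covering of $\BD$ by finitely many annuli. We first use \Cref{lem:big_loops_do_not_intersect_the_boundary} to discard loops of $\Gamma_n'$ of diameter at least $\varepsilon$ that touch $\partial \BD$. Then we reduce to annuli of fixed size strictly inside $\BD$. Fix $\varepsilon,p$. By \Cref{lem:big_loops_do_not_intersect_the_boundary} and \Cref{lem:unlikeliness-proof}, for $n$ large, with probability at least $1-(1-p)/10$, no loop of $\Gamma_n'$ of diameter at least $\varepsilon$ meets $\partial\BD$. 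By \Cref{lem:big_loops_staying_close_to_the_boundary_have_to_intersect_it}, for some $\delta\in(0,\varepsilon/10)$ depending only on $\varepsilon,p$, with probability at least $1-(1-p)/10$, every such loop meets $B_{1-\delta}(0)$. On this event, a loop $\SCL'$ of diameter at least $\varepsilon$ contains a point $u\in B_{1-\delta}(0)$ and a point at distance at least $\varepsilon/2$ from $u$.

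\textbf{Covering.} Let $G$ be a finite grid of mesh $r\ll\delta\wedge\varepsilon$ in $B_{1-\delta}(0)$, and for each $z\in G$ consider the annulus $A_{s,t}(z)$ with $s=2r$, $t=\min(\delta/2,\varepsilon/4)$ (so $\overline{A_{s,t}(z)}\subset\BD$). The loop $\SCL'$ above passes within distance $r$ of some $z\in G$ and exits $B_t(z)$, hence crosses $A_{s,t}(z)$, unless it stays inside $B_{2r}(z)$, which is impossible given its diameter. Since $G$ is finite and depends only on $\varepsilon,p$, it suffices to prove, for each fixed annulus $A=A_{s,t}(z)$, that with probability tending to $1$ every loop of $\Gamma_n'$ crossing $A$ surrounds a loop of $\Gamma_n$ crossing $A$. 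Any such loop has diameter at least $t-s$, so $\widetilde\varepsilon\defeq t-s$ works.

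\textbf{Existence of simple crossings.} \Cref{lem:loops_crossing_annuli} gives the needed conclusion on the event that some loop of $\Gamma_n$ crosses $A$. The main obstacle is to ensure that this event has high probability for every annulus in the covering. We would replace the single annulus by a nested family. Split $[s,t]$ into $M$ consecutive sub-annuli $A_i=A_{s_i,s_{i+1}}(z)$ and apply \Cref{lem:loops_crossing_annuli} to each. If none of the $A_i$ is crossed by a loop of $\Gamma_n$, then each circle $\partial B_{s_i}(z)$ is, roughly, surrounded or avoided by loops of $\Gamma_n$, forcing many nested or disjoint loops of $\Gamma_n$ separating scales. Via \Cref{prop:big_loops_are_fat_simple_cle}, the convergence of the number of loops of diameter at least $\widetilde\varepsilon/M$ to the $\CLE_4$ count, and the non-nestedness of $\Gamma_n$, only boundedly many disjoint loops can have macroscopic size. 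Consequently, for $M$ large, the probability that no $A_i$ is crossed is small uniformly in $n$. There is one remaining case: a single loop of $\Gamma_n$ surrounding $B_{s_{i+1}}(z)$ with $\SCL'$ inside it. In that case $\SCL'$ meets this loop or lies in its interior, and we handle it by choosing $z$ and radii so that the grid point argument also applies, or by noting that the surrounding loop of $\Gamma_n$ gets surrounded by $\SCL'$ through \Cref{lem:loop_surrounds_another_loop}. If this counting turns out to be delicate, the fallback is to use the analogous $\CLE_4$ statement (a.s.\ some $A_i$ is crossed, by local finiteness and absence of tangencies) and transfer it to $\Gamma_n$ via \Cref{prop:conformal_rectangles_tight}(i), which gives convergence in law of crossing numbers. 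Finally, a union bound over $G$ and the $M$ sub-annuli completes the proof.
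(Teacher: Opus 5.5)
Your reduction to loops meeting $B_{1-\delta}(0)$, the covering by annuli around grid points, and the final union bound via \Cref{lem:loops_crossing_annuli} are what the paper does. The gap is in the step you flag yourself: showing that, with probability close to $1$ uniformly in $n$, some loop of $\Gamma_n$ crosses the annulus. Your argument for it is wrong, because absence of a crossing does not force macroscopic loops.
\begin{itemize}
\item An annulus can fail to be crossed by any single loop of $\Gamma_n$ while the carpet of $\Gamma_n$ crosses it. The annulus is then filled by excursions from its two boundary circles, loops contained in it, and carpet points, none of which need be large.
\item A single loop of $\Gamma_n$ surrounding $\overline{B_t(z)}$ prevents every $A_i$ from being crossed at once.
\end{itemize}
So the control on the number of loops of diameter at least $\widetilde\varepsilon/M$ from \Cref{prop:big_loops_are_fat_simple_cle} says nothing about the bad event. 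Splitting a thick annulus into $M$ pieces does not help by itself. Your fallback is false as stated: it is not a.s.\ true that some $A_i$ is crossed by a $\CLE_4$ loop, since a $\CLE_4$ loop surrounds $B_t(z)$ with positive probability. Even off that event you give no argument that a crossing is likely, so there is nothing to transfer through \Cref{prop:conformal_rectangles_tight}(i).

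The missing idea is that crossings become likely when the annulus is \emph{thin}, and that uniformity in $n$ comes from monotonicity in $\kappa$. A loop crossing $A_{2r,t}(z)$ also crosses $A_{s,t}(z)$ for every $s\in[2r,t)$, so one thin annulus per grid point suffices; the paper uses $A_{s,\delta/2}(z)$ with $s$ close to $\delta/2$. Fix $\kappa_1$ and use the increasing loop-soup coupling.
\begin{itemize}
\item The Brownian loop measure of loops crossing $A_{s,t}(z)$ tends to infinity as $s\uparrow t$. So for $s$ close to $t$, with probability close to $1$, each of the finitely many annuli is crossed by a Brownian loop of the soup of intensity $c_{\kappa_1}$.
\item That Brownian loop belongs to the soup of intensity $c_{\kappa_n}$ for every $n$. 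The loop $\SCL$ of $\Gamma_n$ surrounding it then either crosses $A_{s,t}(z)$ or surrounds $B_s(z)$.
\item If $\SCL$ crosses, \Cref{lem:loops_crossing_annuli} applies.
\item If $\SCL$ surrounds $B_s(z)$, a loop $\SCL'$ of $\Gamma_n'$ crossing $A_{s,t}(z)$ either meets $\SCL$, and then surrounds it by \Cref{lem:loop_surrounds_another_loop}, or lies in $\mathrm{int}(\SCL)$. Your sketch leaves the second alternative open. It is excluded because, in the coupling, loops of $\Gamma_n'$ are sampled inside false loops of the simple BCLEs, which are disjoint from the interiors of the true ones.
\end{itemize}
Finally, $\widetilde\varepsilon$ is the width $t-s$ of the thin annulus actually used. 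With your sub-annuli it would be the width of $A_i$, not the $t-s$ of your thick annulus.
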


\begin{proof}
Combining~\Cref{lem:big_loops_do_not_intersect_the_boundary,lem:big_loops_staying_close_to_the_boundary_have_to_intersect_it}, we obtain that there exist $\delta \in (0,\varepsilon/100), n_0 \in \BN$ depending only on $\varepsilon$ and $p$ such that the following is true for all $n \geq n_0$. With probability at least $1 - (1-p) / 100$, we have that for all $\SCL \in \Gamma_n'$ such that $\mathrm{diam}(\SCL) \geq \varepsilon$, there exists $x \in \SCL \cap B_{1-\delta}(0)$. Let $A_n$ denote that event.

Suppose that $A_n$ occurs and let $\SCL \in \Gamma_n'$ be such that $\mathrm{diam}(\SCL) \geq \varepsilon$. Then, there exists $z \in B_{1-\delta}(0)$ such that $\SCL$ crosses the annulus $A_{\delta / 4,\delta}(z)$. In particular, there exists $z \in (\frac{\delta}{100} \BZ^2) \cap B_{1-\delta}(0)$ such that $\SCL$ crosses $A_{\delta / 3,\delta/2}(z)$. Moreover, using the Brownian loop soup construction of $\CLE_{\kappa}$ for $\kappa \in (8/3,4]$ (see~\cite{CLE}), we obtain that the following is true. There exists $s \in (\delta / 4, \delta/2)$ such that the following holds for all $n \geq n_0$. With probability at least $1 - (1-p) / 100$, we have that for all $z \in (\frac{\delta}{100} \BZ^2) \cap B_{1-\delta}(0)$, there exists $\SCL \in \Gamma_n$ such that $\SCL$ crosses $A_{s,\delta/2}(z)$ or such that $\SCL$ encircles $A_{s, \delta/2}(z)$ (in the increasing loop-soup coupling, a $\CLE_{\kappa_1}$ loop crossing the annulus is surrounded by a $\CLE_{\kappa_n}$ loop, which therefore crosses or encircles it). We denote that event by $B_n$.

Note that $\BP[A_n \cap B_n] \geq 1 - (1-p)/50$ for all $n \geq n_0$. Fix $n \geq n_0$ and suppose that $A_n \cap B_n$ occurs. Then, for all $\SCL' \in \Gamma_n'$ with $\mathrm{diam}(\SCL') \geq \varepsilon$, there exists $z \in (\frac{\delta}{100} \BZ^2) \cap B_{1-\delta}(0)$ and $\SCL \in \Gamma_n$ such that both $\SCL'$ and $\SCL$ cross $A_{s,\delta/2}(z)$ (since on the event that $\SCL$ encircles $A_{s,\delta/2}(z)$, the loop $\SCL'$ cannot cross $A_{s,\delta/2}(z)$ in the coupling because it would have to intersect $\SCL$, and hence surround it by~\Cref{lem:loop_surrounds_another_loop}, or else lie inside $\SCL$, which is impossible since the loops of $\Gamma'_n$ are sampled inside the \emph{false} loops of the $\ccwBCLE_\kappa(-\kappa/2)$'s, which are disjoint from the interiors of the true ones). Note that $\mathrm{diam}(\SCL) \geq \delta / 2 - s >0$. Therefore, by~\Cref{lem:loops_crossing_annuli} applied to the finitely many annuli $A_{s,\delta/2}(z)$, $z \in (\frac{\delta}{100}\BZ^2) \cap B_{1-\delta}(0)$ (with a union bound and $n_0$ enlarged so that the total exceptional probability is at most $(1-p)/100$), the loop $\SCL'$ surrounds a loop of $\Gamma_n$ crossing $A_{s,\delta/2}(z)$, which completes the proof of the lemma.
\end{proof}

\begin{proof}[Proof of~\Cref{prop:thin-loops}]
Fix $\varepsilon \in (0,1)$ and let $p = 1 - \varepsilon / 2$. By~\Cref{lem:big_loops_surround_other_loops}, there exist $\widetilde{\varepsilon} \in (0,1)$ and $n_0 \in \BN$ depending only on $\varepsilon$ and $p$ such that the following holds for all $n \geq n_0$. With probability at least $p$, every loop in $\Gamma_n'$ with diameter at least $\varepsilon$ surrounds a loop in $\Gamma_n$ with diameter at least $\widetilde{\varepsilon}$. 
Furthermore, by~\Cref{prop:big_loops_are_fat_simple_cle} applied to $\widetilde{\varepsilon}$, there exists $\delta > 0$ such that for all $n \in \BN$, with probability at least $p$, every loop in $\Gamma_n$ with diameter at least $\widetilde{\varepsilon}$ surrounds a Euclidean ball of radius $\delta$.

Let $E_n$ be the event that every loop in $\Gamma_n'$ with diameter at least $\varepsilon$ surrounds a loop in $\Gamma_n$ with diameter at least $\widetilde{\varepsilon}$, and let $F_n$ be the event that every loop in $\Gamma_n$ with diameter at least $\widetilde{\varepsilon}$ surrounds a Euclidean ball of radius $\delta$.  Then, for all $n \geq n_0$, a union bound gives $\BP[E_n \cap F_n] \geq 1 - \BP[E_n^c] - \BP[F_n^c] \geq 1 - 2(1-p) = 1-\varepsilon$.

Suppose that $E_n \cap F_n$ occurs. Then, any loop in $\Gamma_n'$ with diameter at least $\varepsilon$ surrounds a loop in $\Gamma_n$ with diameter at least $\widetilde{\varepsilon}$, which in turn surrounds a Euclidean ball of radius $\delta$. Therefore, on $E_n \cap F_n$, any loop in $\Gamma_n'$ with diameter at least $\varepsilon$ surrounds a Euclidean ball of radius $\delta$.
To complete the proof, it suffices to note that for each fixed $n < n_0$, $\Gamma_n'$ is a non-nested $\CLE_{\kappa_n'}$, which has finitely many loops of diameter at least $\varepsilon$, each of which bounds an open domain and therefore surrounds a Euclidean ball. Hence, for each such $n$ there is $\delta_n>0$ such that with probability at least $1-\varepsilon$ the loops of $\Gamma_n'$ of diameter at least $\varepsilon$ all surround a Euclidean ball of radius $\delta_n$; replacing $\delta$ by $\delta \wedge \min_{n<n_0}\delta_n$ gives the claim for all $n$. This implies the statement of the proposition.
\end{proof}

\section{On the distance between two segments}\label{sec: distance between two segments}
Now, let us pick up the notation used in \Crefrange{sec:intro}{sec:boundary not a point}, with the sequence $\kappa_n \downarrow 4$. This section aims to prove the following lemma which states that one can find two disjoint segments that are close to each other for the $\CLE_{\kappa_n}$ graph distance.

	\begin{lemma}\label{lemma distance from two long segments is small}
		For all $\varepsilon>0$, there exist two disjoint segments $\alpha$ and $\beta$ of $\partial \BD$ such that for all $n$ large enough,
		\[
		\BP\!\left[ \ka_{\kappa_n}^{-1}D^{\kappa_n} (\alpha, \beta) \ge \varepsilon \right] \le \varepsilon.
		\]
		More precisely, there is $M>0$ such that for all $M' \ge M$ and $n$ large, if one considers a $\CLE_{\kappa_n}$ in $[0,M']\times[0,1]$, then $\CLE_{\kappa_n}$ graph distance between the top and bottom sides of $[0,M']\times[0,1]$ is less than $\varepsilon \ka_{\kappa_n}$ with probability at least $1-\varepsilon$; the statement follows by taking for $\alpha$, $\beta$ the images of these sides under a conformal map onto $\BD$.
	\end{lemma}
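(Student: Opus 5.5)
Our plan is to work in the long rectangle $R_{M'} = [0,M']\times[0,1]$ and produce, with high probability, a short chain of $\CLE_{\kappa_n}$ loops from the bottom side to the top side. The basic input is the boundary-touching structure from \Cref{sec:boundary_is_a_point}: by \Cref{prop cv explo unif} and \Cref{cor cv Hausdorff ball boundary}, $\SCB^{\kappa_n}_{\ka_{\kappa_n}t}(\partial\BD)$ converges to the uniform exploration ball $\SCB_t(\partial\BD)$. We combine this with the fact that for the segment distance $D^{\kappa_n}(\alpha,\cdot)$ the first step from $\alpha$ reaches loops touching $\alpha$.

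\emph{Step 1: a single unit square.} Fix $t=\varepsilon/2$. Consider the conformal image in $R_{M'}$ of the ball of radius $\ka_{\kappa_n} t$ grown from the bottom side; this is an arc of length $M'$. We want a connected set of loops at distance at most $\ka_{\kappa_n}t$ from the bottom side that reaches height $1/2$ somewhere in the rectangle. The event in a given unit square $[k,k+1]\times[0,1]$ that the ball from the bottom side reaches height $1/2$ has probability bounded below by some $p(t)>0$, uniformly in $n$ large and in $k$ away from the ends. To prove this, we map the unit square region to $\BD$, use the domination comparison of \Cref{prop subsequential limit boundary} (balls from a segment stochastically dominate balls grown from a shorter segment), and pass to the limit. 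In the limit, the uniform exploration from an arc at time $t>0$ attaches $\SLE_4$ bubbles with positive probability of macroscopic size. Applying the Portmanteau theorem to the open event ``the ball meets a given open set'' then gives the lower bound for large $n$.

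\emph{Step 2: many independent trials and gluing top and bottom.} Run the same construction symmetrically from the top side, and ask for both balls to reach the middle line and to intersect within the same square. Since the graph distance is subadditive, such an intersection yields $D^{\kappa_n}(\text{top},\text{bottom})\le 2\ka_{\kappa_n}t+1\le\varepsilon\ka_{\kappa_n}$. To obtain the intersection we use the domain Markov property: explore the bottom ball up to radius $\ka_{\kappa_n}t$, then grow from the top in the remaining domain. The top ball must meet the bottom ball's cluster with conditional probability bounded below in each square where the bottom cluster reached height $1/2$. This holds because, after conformally mapping the relevant component, the bottom cluster occupies a boundary arc of harmonic measure bounded below; we then argue exactly as in the tightness step of \Cref{prop subsequential limit boundary}, where an arc of harmonic length at least $\delta$ is hit within $O(\ka_{\kappa_n}/\delta)$ steps, with a rescaled exponential tail. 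To make trials over disjoint squares $k=1,3,5,\dots$ nearly independent, we use the domain Markov property for the $\CLE_{\kappa_n}$ restricted to loops not crossing the vertical lines $x=k$. Alternatively, we can use a block argument in which a fixed finite window of width $L$ succeeds with probability at least $q>0$. With $M'/L$ windows and success events conditionally independent given loops crossing the separating lines, the failure probability is at most $(1-q)^{M'/(2L)}<\varepsilon$ once $M'\ge M$.

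\emph{Main obstacle.} The hardest step is obtaining the uniform-in-$n$ positive lower bound $q$ together with enough independence between windows. Loops of $\CLE_{\kappa_n}$ can be long and thin, so loops crossing separating lines can correlate distant windows. Here we plan to invoke \Cref{prop:thin-loops} and \Cref{lem:big_loops_do_not_intersect_the_boundary}: with high probability no macroscopic loop touches the long sides over a large span. The windows are then separated by explored loop clusters of small diameter, and the domain Markov property applies to the components between them. The final statement for $\BD$ follows by conformally mapping $R_{M'}$ onto $\BD$ and using conformal invariance of $D^{\kappa_n}$.
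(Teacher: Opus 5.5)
There is a genuine gap in Step 2, and it is exactly where the difficulty of the lemma lies. To make the top ball meet the bottom cluster, you need a lower bound, uniform in $n$, on the probability that the $\CLE_{\kappa_n}$ graph distance between two disjoint boundary arcs of the unexplored component is at most $\ka_{\kappa_n}t$. The two arcs are the remaining part of the top side and the boundary of the bottom ball.

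The tightness step of \Cref{prop subsequential limit boundary} does not give this. It controls the distance from a boundary arc to the loop surrounding a fixed \emph{interior} point: rotational invariance shows that this loop touches the current ball with probability at least $\delta/(2\pi\ka_{\kappa_n})$ at each layer. There is no analogous per-layer bound for reaching a second boundary arc. Indeed, loops touching the boundary have small diameter with high probability as $\kappa_n\downarrow 4$ (\Cref{lem:big_loops_do_not_intersect_the_boundary}), so no geometric domination is available.

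Routing through an interior loop $\SCL^{\kappa_n}(w)$ with the triangle inequality does not rescue this. It produces two events for which the domination only yields lower bounds of order $\delta t$, and nothing guarantees that they occur together with probability bounded below.

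Passing to the limit does not help either. Two limiting balls intersecting in the Hausdorff limit does not produce a chain of loops in the prelimit. That implication is \Cref{prop hitting balls}, whose proof relies on the present lemma. So your base estimate ``a window of width $L$ succeeds with probability $q>0$'' is left unproven, and it is essentially the positive-probability version of the statement itself.

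The paper obtains this base case by a different mechanism (\Cref{lemma the geodesic does not touch the line,cor long segments can be close}). With positive probability, the uniform exploration discovers $\SCL(0)$ before time $\varepsilon'/2$ while $\SCL(0)$ lies within Hausdorff distance $1/N$ of $\partial\BD$. Radial cuts divide the thin region between $\partial\BD$ and $\SCL^{\kappa_n}(0)$ into many thin rectangles, and by \Cref{lemma distance between segments is large} each is costly to cross sideways. Grouping them into four blocks, crossing any block sideways costs more than $D^{\kappa_n}(\partial\BD,\SCL^{\kappa_n}(0))$. Hence some shortest path stays in one sector, and by rotational symmetry it avoids the cut along $[0,1]$ with probability bounded below. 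Cutting there gives a conformal rectangle of tight modulus whose top-to-bottom distance is less than $\varepsilon\ka_{\kappa_n}$ with positive probability. Monotonicity in the modulus then transfers this to a deterministic $[0,L]\times[0,1]$.

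Your amplification step is sound: conditionally independent blocks separated by cuts, together with monotonicity in the modulus. It plays the role of the paper's last step, which uses $K$ rectangles of modulus at least $L$ inside the region between $\partial\BD$ and a loop $\SCL^{\kappa_n}(0)$ close to it. But it has nothing to amplify until the base estimate is established.
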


Throughout this section we use the following form of the Markov property of $\CLE_{\kappa}$, $\kappa \in [4,8)$ \cite{TreeCLE, CoInCLERiemSph}: for a deterministic closed set $A \subseteq \overline{\BD}$ so that $A \cup \overline{\BD}$ is connected, conditionally on the loops meeting $A$, the remaining loops form independent $\CLE_\kappa$'s in the components of the complement of the closure of the union of $A$ and the domains encircled by those loops.

Let us start by a simple remark.
\begin{remark}\label{remark monotonicity rectangle}
	Let $\alpha$ and $\beta$ be two disjoint segments of $\partial \BD$. If $\alpha'\subset \alpha$ is another segment, then $D^{\kappa_n} (\alpha, \beta) \le D^{\kappa_n}(\alpha', \beta)$. Indeed, any path of loops of $\Gamma^{\kappa_n}$ from $\alpha'$ to $\beta$ is a path of loops from $\alpha$ to $\beta$.
\end{remark}

The proof of~\Cref{lemma distance from two long segments is small} relies on the following key steps:
\begin{enumerate}
	\item We first show in \Cref{lemma distance between segments is large} that the $\CLE_{\kappa_n}$ graph distance between two macroscopic segments on the boundary of the unit disk is bounded from below by a small constant times $\ka_{\kappa_n}$ with high probability.
	\item We then prove in \Cref{lemma the geodesic does not touch the line} that, with positive probability, the distance from $\partial \BD$ to $\SCL^{\kappa_n}(0)$ can be arbitrarily small, and the shortest path achieving this distance can avoid a given line segment (such as $[0,1]$). This relies on the fact that $\SCL(0)$ is macroscopically close to $\partial \BD$ with positive probability, combined with the lower bound from the previous lemma to force the shortest path to avoid the segment.
	\item Next, in \Cref{cor long segments can be close} we show that by cutting the unit disk along the line segment from the previous step and applying a conformal map, for every $\varepsilon>0$ there exist $L,p>0$ such that, for all $n$ large enough, the $\CLE_{\kappa_n}$ graph distance between the top and bottom sides of $[0,L] \times [0,1]$ is at most $\varepsilon \ka_{\kappa_n}$ with probability at least $p$ (``long'' meaning that the extremal length of the left--right crossings is large).
	\item Finally, to complete the proof of \Cref{lemma distance from two long segments is small}, on the event that $\SCL(0)$ is close to $\partial \BD$, we construct consecutive disjoint, long conformal rectangles connecting $\partial \BD$ to $\SCL^{\kappa_n}(0)$. By the domain Markov property, the $\CLE_{\kappa_n}$ distances across these rectangles are conditionally independent. Applying~\Cref{cor long segments can be close} to each rectangle and using independence allows us to deduce that the distance across one of these rectangles is small with probability at least $1-\varepsilon$. We then conclude using the fact that, conditionally on the loops outside the region between the outermost cuts, the restriction of the $\CLE_{\kappa_n}$ to this region is a $\CLE_{\kappa_n}$ there, whose top-to-bottom distance is therefore not affected by the conditioning.
\end{enumerate}

\begin{lemma}\label{lemma distance between segments is large}
	For all $\varepsilon>0$, there exist $\delta, c>0$ such that if $\alpha= \{ e^{\ri \theta}: \theta \in [0, \delta]\}$ and $\beta =  \{ e^{\ri \theta}: \theta \in [\pi, \pi+ \delta]\}$, then, for all $n\ge 1$ large enough,
	\[
	\BP\!\left[ D^{\kappa_n}(\alpha, \beta) \ge c \ka_{\kappa_n} \right] \ge 1-\varepsilon.
	\]
\end{lemma}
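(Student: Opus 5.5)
We will compare $D^{\kappa_n}(\alpha,\beta)$ with the distance from $\alpha$ to a fixed far-away loop. That distance is controlled by the uniform exploration, which we already know.

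\textbf{Reduction to a loop.} By \Cref{remark monotonicity rectangle}, shrinking $\alpha$ only increases $D^{\kappa_n}(\alpha,\beta)$. It therefore suffices to find a single $\delta$ that works. The key observation is that any path of loops from $\alpha$ to $\beta$ produces a connected set joining $\alpha$ to $\beta$. Such a set must separate $e^{\ri\pi/2}$ from $e^{-\ri\pi/2}$ in $\BD$, or at least pass near $0$ unless it hugs $\partial\BD$.

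We make this precise using \Cref{lem:unlikeliness-proof} together with \Cref{prop subsequential limit boundary}. Fix a small $t>0$. Then $\SCB^{\kappa_n}_{\ka_{\kappa_n}t}(\partial\BD)$ converges in law to $\SCB_t(\partial\BD)$. By \Cref{prop BCLE rho to zero}, $\SCB_t(\partial\BD)$ lies inside $B_{\eta}(\partial\BD)$ with probability at least $1-\varepsilon/3$, where $\eta\to0$ as $t\to0$.

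\textbf{Main argument.} Consider the union $P$ of the loops on a path of length at most $c\ka_{\kappa_n}$ from $\alpha$ to $\beta$. Every loop of $P$ lies in $\SCB^{\kappa_n}_{c\ka_{\kappa_n}}(\alpha)$, which is contained in $\SCB^{\kappa_n,\partial}_{c\ka_{\kappa_n}}(\partial\BD)$ by \Cref{remark removing the boundary increases the distance}. Taking $c\le t$ (with $t$ rational), with probability at least $1-\varepsilon/3$ for large $n$, $P\subseteq B_\eta(\partial\BD)$.

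Hence $P$ is a connected set inside the thin collar $B_\eta(\partial\BD)$ joining $\alpha$ to $\beta$. It must therefore travel along the collar around one of the two arcs $\{e^{\ri\theta}:\theta\in[\delta,\pi]\}$ or $\{e^{\ri\theta}:\theta\in[\pi+\delta,2\pi]\}$. Consequently it crosses the conformal rectangle $R_\pm$ bounded by $\partial\BD$, $\partial B_{1-\eta}$ and the radial segments at angles $\pi/2\pm$ small (respectively $3\pi/2$), from one radial side to the other, while staying inside the rectangle.

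To rule this out, we reuse the shield construction from the proof of \Cref{lem:big_loops_staying_close_to_the_boundary_have_to_intersect_it}. Each loop of $P$ is a union of counterflow-line pieces of the GFF $\Psi_n$, and counterflow lines cannot cross the flow lines $\eta^{n,q}_{j,k}$. By \Cref{lem:likeliness} and \Cref{lem:independence-across-disjoint-balls}, choosing $\eta$ small (many disjoint boundary balls fit in each $R_\pm$) gives the following: with probability at least $1-\varepsilon/3$, a shield flow line in each $R_\pm$ reaches depth at least $\eta$ from $\partial\BD$. It thus disconnects the two radial sides of $R_\pm$ within $B_\eta(\partial\BD)$.

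A chain of loops crossing $R_\pm$ must then contain a loop that meets both sides of the shield. Such a loop can only pass around the shield through points of $\partial\BD$, so it would have to hit the shield's boundary point twice, or touch the boundary on both sides. Alternatively, two consecutive loops would touch at a point on the shield. The first is excluded by \cite[Theorem~5.4]{TreeCLE}, as in \Cref{lem:big_loops_staying_close_to_the_boundary_have_to_intersect_it}.

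\textbf{Main obstacle.} The hardest step is the second alternative: two distinct loops crossing the shield by touching each other at a point of $\partial\BD\cap\eta^{n,q}_{j,k}$. I would first try to show that each loop individually stays on one side of every flow line from a boundary point, except possibly at its endpoint on $\partial\BD$. Then I would argue that two loops can meet at that boundary point only with probability zero, since $\CLE_{\kappa}$ loops touch a given boundary point at most once and flow-line starting points are fixed.

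If this fails, the fallback is to let the shield flow lines start slightly inside $\BD$, using angle-varying flow lines from interior points. This would avoid boundary issues entirely, at the cost of redoing \Cref{lem:likeliness} for interior starts.

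A union bound over the three bad events then gives probability at least $1-\varepsilon$.
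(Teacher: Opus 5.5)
Your first step is sound. A chain of at most $c\ka_{\kappa_n}$ loops from $\alpha$ lies in $\SCB^{\kappa_n}_{c\ka_{\kappa_n}}(\alpha)\subseteq\SCB^{\kappa_n,\partial}_{c\ka_{\kappa_n}}(\partial\BD)$. By \Cref{cor cv Hausdorff ball boundary} and \Cref{prop BCLE rho to zero}, this set lies in a thin collar with high probability. Shields exist with high probability by \Cref{lem:likeliness,lem:independence-across-disjoint-balls}.

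The gap is in the blocking step. That counterflow lines do not cross flow lines only tells you that each \emph{single} loop stays in the closure of one side of $\eta^{n,q}_{j,k}$. For a \emph{chain} you also need that no two distinct loops lying on opposite sides of $\eta^{n,q}_{j,k}$ share a point of it. Such a common point need not lie on $\partial\BD$: two adjacent loops could touch the shield at the same interior point of $\eta^{n,q}_{j,k}$, one from each side. Loops of $\Gamma^{\kappa_n}$ touch each other on a fractal set and may touch the shield on a fractal set, and nothing in your argument prevents these contact sets from meeting.

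Your proposed resolution, that two loops meet at the root of the flow line with probability zero, only covers contact on $\partial\BD$. The fallback with flow lines started in the interior does not address the issue either, and such shields would no longer disconnect the collar. Excluding interior contact across the shield would need a new imaginary-geometry estimate, similar in spirit to \Cref{lemma no pinched point} but for configurations involving a flow line. Nothing in the paper provides it, and it is far from routine. As written, the proposal does not prove the lemma.

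The paper avoids any blocking argument, and you could do the same:
\begin{itemize}
\item Apply \Cref{lemma cv Hausdorff union of loops} with $K=[0,1]$ to cut the annulus between $\partial\BD$ and $\SCL^{\kappa_n}(0)$ along the loops meeting the radius $[0,1]$. This gives a conformal rectangle $R_n$ with bottom side on $\partial\BD$, top side on $\SCL^{\kappa_n}(0)$, and all sides of harmonic measure bounded below.
\item Conditionally on the cut, $\Gamma^{\kappa_n}|_{R_n}$ is a $\CLE_{\kappa_n}$ in $R_n$.
\item Any top-to-bottom chain in $R_n$ is automatically a chain from $\partial\BD$ to $\SCL^{\kappa_n}(0)$. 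Its length is therefore at least the geometric variable $D^{\kappa_n}(\partial\BD,\SCL^{\kappa_n}(0))$, which has mean $\ka_{\kappa_n}$.
\item Mapping $R_n$ to $\BD$ and using \Cref{remark monotonicity rectangle} shows that $D^{\kappa_n}(\alpha,\beta)$ stochastically dominates this variable up to a small error.
\end{itemize}
The crossing is forced by where the top side sits, so no statement about loops touching across a curve is ever needed.
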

\begin{proof}
	Let $\varepsilon>0$. We know that with probability $1-1/\ka_{\kappa_n} \to 1$, the loop $\SCL^{\kappa_n}(0)$ does not touch $\partial \BD$. By Skorokhod's representation theorem, we may assume that~\eqref{eq cv subsequence boundary} and the convergence of~\Cref{lemma cv Hausdorff union of loops} with $K=[0,1]$ hold a.s. Let $F_n$ (resp.\ $F$) be the closure of the union of $[0,1]$ and the domains encircled by the loops of $\Gamma^{\kappa_n}$ (resp.\ $\Gamma$) meeting $[0,1]$, so that $F_n \to F$ and the components of $\BD \setminus F_n$ are simply connected. Fix $z \in \BD_\BQ \setminus F$ and let $R_n$ be the component of $\BD \setminus F_n$ containing $z$ (well defined for $n$ large). With probability $1-o(1)$, $R_n$ is a conformal rectangle with bottom boundary $B_n \subseteq \partial \BD$, top boundary $T_n \subseteq \SCL^{\kappa_n}(0)$, and left and right boundaries in loops of $\Gamma^{\kappa_n} \setminus \{\SCL^{\kappa_n}(0)\}$.

	By the Hausdorff convergence of $F_n$, we see that the probability that a planar Brownian motion started from $z$ stopped when it exits $R_n$ hits $B_n$ (resp.\ $T_n$) is a.s.\ bounded from below by a positive random variable, and the same is true for the left and right sides of the rectangle. In other words, if we let $\varphi_n\colon R_n \to \BD$ be the unique conformal mapping such that $\varphi_n(z)=0$ and $\varphi_n'(z)>0$, the lengths of the segments $\alpha_n \defeq \varphi_n(B_n)$ and $\beta_n \defeq \varphi_n(T_n)$ are bounded from below by a positive random variable, and the same is true for the images of the left and right sides. In particular, after taking a conformal mapping $\psi_n: \BD \to \BD$ sending the left endpoint of $\alpha_n$ to $1$, the right endpoint of $\alpha_n$ to $e^{\ri \mathrm{len}(\alpha_n)}$ and the right endpoint of $\beta_n$ to $-1$ (by possibly replacing $\alpha_n$ by a segment of length $\min(\mathrm{len}(\alpha_n), \pi/2)$, we may assume that the length of $\alpha_n$ is smaller than $\pi$ so that $\psi_n$ exists), letting $\alpha'_n\defeq\psi_n(\alpha_n)$ and $\beta'_n \defeq \psi_n(\beta_n)$ one can take $\delta>0$ small enough that with probability at least $1-\varepsilon$, for all $n$ large enough, we have $\mathrm{len}(\alpha_n'), \mathrm{len}(\beta_n')\ge \delta$.
	
	Note that conditionally on $F_n$ (and on the event that $z \notin F_n$), the loop ensemble $(\psi_n \circ \varphi_n)(\Gamma^{\kappa_n}\vert _{ R_n}) $ has the same law as a $\CLE_{\kappa_n}$ in $\BD$. Thus, on the above event, the conditional law given $F_n$ of $D^{\kappa_n}(\alpha, \beta)$ stochastically dominates that of $D^{\kappa_n}(\partial \BD, \SCL^{\kappa_n}(0))$ by~\Cref{remark monotonicity rectangle}. Finally, $\ka_{\kappa_n}^{-1} D^{\kappa_n}(\partial \BD, \SCL^{\kappa_n}(0))$ converges in law to $D(\partial \BD, \SCL(0))>0$ by~\eqref{eq cv subsequence boundary}, so $\BP[D^{\kappa_n}(\partial \BD, \SCL^{\kappa_n}(0)) \ge c \ka_{\kappa_n}] \ge 1-\varepsilon$ for some $c>0$ and $n$ large, which ends the proof (with $2\varepsilon$ in place of $\varepsilon$).
\end{proof}

The following lemma controls the loops of a $\CLE_4$ in a thin annular region (see~\cite{SimCLEDblConnDom} for $\CLE_4$ in doubly connected domains).
\begin{lemma}\label{lemma loops in thin annulus are small}
For all $\eta>0$, there exists $\delta>0$ such that, for every simply 
 connected domain $U \subseteq \BD$ whose complement 
 contains $B_{1-\delta}(0)$ and every $\CLE_4$ $\Gamma_U$ in $U$, with probability at least $1-\eta$ all the loops of $\Gamma_U$ have diameter at most $\eta$. The same is true for doubly a connected domain $U \subseteq \BD$ whose bounded complementary component 
 contains $B_{1-\delta}(0)$.
\end{lemma}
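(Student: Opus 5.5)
The plan is to reduce to a uniform statement about loops in a thin region, using monotonicity and the Brownian loop soup description of $\CLE_4$. The key observation is that $\CLE_4$ is the $\kappa\uparrow 4$ end of the loop-soup construction. Its loops are outer boundaries of clusters of a Brownian loop soup of intensity $c=1$ in $U$. Restriction is the main tool: the loop soup in $U$ is the restriction to $U$ of the loop soup in a larger domain. So each cluster in $U$ is contained in a cluster of the larger soup, and each loop of $\Gamma_U$ is surrounded by, or contained in the filling of, a cluster of the soup in the larger domain.

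\emph{Simply connected case.} Since $U\subseteq \overline{\BD}\setminus B_{1-\delta}(0)$ is simply connected, it lies in a slit annulus. Fix any $U$ of this form. Every Brownian loop in $U$ is contained in the annulus $A_\delta\defeq \BD\setminus\overline{B_{1-\delta}(0)}$, and it does not surround $B_{1-\delta}(0)$, since $U$ is simply connected and excludes that disk. Couple the soup in $U$ as the restriction of a loop soup $S$ in $\BD$. Then every cluster of the soup in $U$ lies in a cluster of the sub-soup $S_\delta$ of loops of $S$ contained in $A_\delta$ and not surrounding the origin. It therefore suffices to show that, with probability at least $1-\eta$, all clusters of $S_\delta$ have diameter at most $\eta$, uniformly in $U$; uniformity is automatic since $S_\delta$ does not depend on $U$.

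The main obstacle is exactly this claim, that clusters of critical intensity-$1$ soup in a thin annulus, made of non-surrounding loops, are small. I would argue by contradiction with the local finiteness of $\CLE_4$ in $\BD$. A cluster of $S_\delta$ of diameter $\ge\eta$ is contained in a cluster of $S$, whose outer boundary is a $\CLE_4$ loop in $\BD$, or lies inside a nested loop. That $\CLE_4$ loop, or the outermost $\CLE_4$ loop containing it, has diameter $\ge \eta$ and comes within distance $\delta$ of $\partial\BD$. Almost surely there are finitely many loops of $\Gamma$ of diameter at least $\eta$, and none touches $\partial\BD$, so the probability of such a loop approaching $\partial\BD$ tends to $0$ as $\delta\to0$. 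This is the same argument as in the conclusion of \Cref{lemma no thin simple loop}. Choosing $\delta$ with probability below $\eta$ then finishes the simply connected case.

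\emph{Doubly connected case.} I would use the $\CLE_4$ in doubly connected domains of~\cite{SimCLEDblConnDom}, which is defined via a loop soup in $U$ conditioned on no cluster surrounding the hole. The unconditioned soup in $U$ restricted from $\BD$ again has all non-surrounding clusters small with probability $\to1$ by the above argument; surrounding clusters can be handled by discarding them. The conditioning event has probability bounded below uniformly, because the probability that no cluster in the thin annulus surrounds the hole is bounded away from zero. So the conditioning costs at most a constant factor, and we obtain the claim after shrinking $\delta$. If this uniform lower bound fails, the fallback is the inversion-invariance of~\cite{SimCLEDblConnDom}, which maps the configuration to one near the outer boundary where the monotone coupling applies directly.
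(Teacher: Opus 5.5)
Your simply connected case is the paper's argument. You realize the soup in $U$ as the loops, contained in $U\subseteq\{|z|\ge 1-\delta\}$, of an intensity-$1$ soup $S$ in $\BD$. You then use that a.s.\ only finitely many $\CLE_4$ loops in $\BD$ have diameter at least $\eta_0=\min(\eta,1)$ and none touches $\partial\BD$. So the probability $p_\delta$ that some cluster of $S$ of diameter at least $\eta_0$ meets $\{|z|\ge 1-\delta\}$ tends to $0$, uniformly in $U$. Restricting to the non-surrounding sub-soup $S_\delta$ is unnecessary there. That restriction is exactly what hides the step you need in the doubly connected case.

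The gap is in the doubly connected case. Let $G_U$ be the event that no cluster of the soup in $U$ surrounds the hole. You need $\BP[G_U]$ bounded below uniformly in $U$ and in all small $\delta$; a constant that degenerates as $\delta\to0$ would not survive ``shrinking $\delta$''. Your justification, ``because the probability that no cluster in the thin annulus surrounds the hole is bounded away from zero'', just restates the claim. The inversion-invariance fallback is too vague to count as an argument.

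The missing observation is that a cluster surrounding the hole has diameter at least $2(1-\delta)>\eta_0$ for $\delta<1/2$. So it is itself a large cluster contained in a cluster of $S$ meeting $\{|z|\ge1-\delta\}$. Hence you should apply your estimate to all loops of $S$ contained in $\{|z|\ge 1-\delta\}$, including those surrounding the origin that $S_\delta$ discarded. On the event, of probability at least $1-p_\delta$, where every cluster of $S$ meeting $\{|z|\ge 1-\delta\}$ has diameter less than $\eta_0$, every cluster of the unconditioned soup in $U$ is small. Then $G_U$ holds automatically and no loop of diameter larger than $\eta$ appears. This gives $\BP[G_U]\ge 1-p_\delta$ and
\[
\BP\bigl[\exists\,\text{a loop of }\Gamma_U\text{ of diameter}>\eta \,\big|\, G_U\bigr]\le \frac{p_\delta}{1-p_\delta},
\]
which is at most $\eta$ for $\delta$ small, uniformly in $U$. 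This is precisely how the paper concludes, with no need for inversion invariance.
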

\begin{proof}
By 
	the Brownian loop soup construction, $\Gamma_U$ consists of the outer boundaries of the outermost clusters of a Brownian loop soup in $U$ with intensity $c=1$. 
	By the restriction property, this soup consists of the loops contained in $U$ of a loop soup $S$ in $\BD$ with $c=1$, so its clusters lie in clusters of $S$. By \cite[Theorem~1.6]{CLE}, the clusters of $S$ of diameter at least $\eta_0 \defeq \min(\eta,1)$ lie inside the finitely many $\CLE_4$ loops of diameter at least $\eta_0$, which do not meet $\partial \BD$; hence the probability $p_\delta$ that such a cluster meets $\{|z| \ge 1-\delta\}$ tends to $0$ as $\delta \to 0$. 

In the case where $U$ is doubly connected, we reason as follows. By \cite[Section~3]{SimCLEDblConnDom}, $\Gamma_U$ consists of the outer boundaries of the outermost clusters of a Brownian loop soup in $U$ with intensity $c=1$, conditioned on the event $G_U$ that no cluster surrounds the bounded complementary component of $U$. By the restriction property, this soup consists of the loops contained in $U$ of a loop soup $S$ in $\BD$ with $c=1$, so its clusters lie in clusters of $S$. By \cite[Theorem~1.6]{CLE}, the clusters of $S$ of diameter at least $\eta_0 \defeq \min(\eta,1)$ lie inside the finitely many $\CLE_4$ loops of diameter at least $\eta_0$, which do not meet $\partial \BD$; hence the probability $p_\delta$ that such a cluster meets $\{|z| \ge 1-\delta\}$ tends to $0$ as $\delta \to 0$. On the complementary event, all the clusters of the soup in $U \subseteq \{|z| > 1-\delta\}$ have diameter less than $\eta_0$, so $G_U$ holds (a cluster surrounding the hole would have diameter at least $2(1-\delta)$) and all the loops of $\Gamma_U$ have diameter at most $\eta$. Hence the probability that some loop of $\Gamma_U$ has diameter larger than $\eta$ is at most $p_\delta/(1-p_\delta) \le \eta$ for $\delta$ small, uniformly in $U$.
\end{proof}

\begin{lemma}\label{lemma the geodesic does not touch the line}
	For all $\varepsilon'>0$ and $n\ge 1$, let $\mathcal{A}_n(\varepsilon')$ be the event that $D^{\kappa_n}(\partial \BD, \SCL^{\kappa_n}(0)) < \varepsilon' \ka_{\kappa_n}$. Let $F_n$ be the closure of the union of the domains encircled by the loops of $\Gamma^{\kappa_n} \setminus \{\SCL^{\kappa_n}(0)\}$ meeting $[0,1]$. On the event that $\SCL^{\kappa_n}(0) \cap \partial \BD = \emptyset$ and $F_n \subseteq B_{1/2}(1)$, let $R_n$ be the component of $\BD \setminus (F_n \cup \overline{\mathrm{int}(\SCL^{\kappa_n}(0))})$ whose closure contains $\partial \BD \setminus B_{1/2}(1)$ (well defined, a thin collar along this arc lying in one component), and let $\mathcal{E}_n$ be the event that this holds and that some shortest path from $\partial \BD$ to $\SCL^{\kappa_n}(0)$ consists of loops of $\Gamma^{\kappa_n}\vert_{R_n}$ (which do not touch $[0,1]$).
Then, for all $\varepsilon' >0$, there exists $\delta>0$ such that for all $n$ large enough, we have
	\[
	\BP\!\left[ \mathcal{E}_n \cap \mathcal{A}_n(\varepsilon')\right]\ge \delta.
	\]
\end{lemma}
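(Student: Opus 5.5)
Fix $\varepsilon'>0$. The plan is to pass to the $\CLE_4$ limit using~\eqref{eq cv subsequence boundary}, \Cref{lemma cv Hausdorff union of loops} (with $K=[0,1]$, excluding the loop $\SCL(0)$ as in the remark after its proof), and Skorokhod's representation, and then to work on one event of positive probability in the limit. Two facts about the limiting configuration will be used: (a) with positive probability $\SCL(0)$ lies in the annulus $\{1-\eta<|z|<1\}$ for any prescribed $\eta>0$; (b) on that event, \Cref{lemma loops in thin annulus are small} (applied in the doubly connected region between $\SCL(0)$ and $\partial\BD$, after conditioning on $\SCL(0)$ via the inversion/annulus description) shows that with high conditional probability all loops meeting $[0,1]$ are tiny, so that $F\subseteq B_{1/4}(1)$. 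Fact (a) should follow from the uniform exploration: $C_t(0)\to\BD$ in the Carath\'eodory sense as $t\to0$ (\Cref{prop BCLE rho to zero}), and the exponential law of $D(\partial\BD,\SCL(0))$. More precisely, the event that the loop surrounding $0$ is discovered before time $\varepsilon'/2$ has probability at least $1-e^{-\varepsilon'/2}>0$. It remains to intersect this with the event that $\SCL(0)$ is close to $\partial\BD$. For this I would use that, conditionally on $C_t(0)$, the loop $\SCL(0)$ is the loop surrounding $0$ of a $\CLE_4$ in $C_t(0)$, whose conformal radius is arbitrarily close to $1$ with positive probability. Hence $\BP[D(\partial\BD,\SCL(0))<\varepsilon'/2,\ \SCL(0)\subseteq\{|z|>1-\eta\}]\ge\delta_0>0$.

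Transfer to $\kappa_n$. By \Cref{prop subsequential limit boundary} and \Cref{remark removing the boundary increases the distance}, $\ka_{\kappa_n}^{-1}D^{\kappa_n}(\partial\BD,\SCL^{\kappa_n}(0))\to D(\partial\BD,\SCL(0))$. Together with the Hausdorff convergence of $\SCL^{\kappa_n}(0)$ and of $F_n$, this gives, for $n$ large and with probability at least $\delta_0/2$: $\mathcal{A}_n(\varepsilon')$ holds, $\SCL^{\kappa_n}(0)\subseteq\{|z|>1-2\eta\}$, $\SCL^{\kappa_n}(0)\cap\partial\BD=\emptyset$ (this has probability $1-1/\ka_{\kappa_n}$), and $F_n\subseteq B_{1/2}(1)$. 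Thus $R_n$ is well defined.

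The main obstacle is forcing some geodesic to avoid $F_n$, i.e.\ to use only loops of $\Gamma^{\kappa_n}|_{R_n}$. My approach is a comparison of distances via \Cref{lemma distance between segments is large}. A path of loops from $\partial\BD$ to $\SCL^{\kappa_n}(0)$ that uses a loop touching $[0,1]$ (or one inside $F_n$) starts from $\partial\BD$ and ends at $\SCL^{\kappa_n}(0)$. Since $F_n$ is small and $\SCL^{\kappa_n}(0)$ is close to $\partial\BD$, I would instead reroute through the thin region near $B_{1/2}(1)$. Concretely, rotate the picture (by rotation invariance) so that the cut $F_n$ sits inside a small sector. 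Then, by the domain Markov property applied to the loops meeting $[0,1]$ conditionally on $F_n$, the loops of $R_n$ form a $\CLE_{\kappa_n}$ in $R_n$. Mapping $R_n$ conformally to $\BD$ turns $\partial\BD\setminus B_{1/2}(1)$ and the outer part of $\SCL^{\kappa_n}(0)$ into two long boundary arcs of macroscopic harmonic measure. The distance inside $R_n$ between them is stochastically dominated, by \Cref{remark monotonicity rectangle} and \Cref{prop subsequential limit boundary}, by something whose law under rescaling is tight. I then need its limit to be below $\varepsilon'$ with positive probability; this follows since it converges to the $D$-distance within $R$ in the limit, which on the event that $\SCL(0)$ is extremely close to $\partial\BD$ is at most the uniform-exploration time of $\SCL(0)$ from the long arc. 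If this step fails in this form, I would replace it by conditioning on $\mathcal{A}_n$ and using the symmetry of rotating the cut position. Since $[0,1]$ meets only a tiny sector while the geodesic's attachment point to $\partial\BD$ is spread out (cf.\ \Cref{lemma one point}), with probability bounded below the geodesic of the uniform exploration avoids an $\eta$-neighbourhood of $F$, and by Hausdorff convergence of $K^{\kappa_n}_{\partial\BD,\SCL^{\kappa_n}(0)}$ the prelimit shortest path avoids $F_n$ too. This second route, via convergence of $\widetilde K_{\partial\BD,\SCL(0)}$ and a rotation-averaging argument, is the one I expect to be most robust.
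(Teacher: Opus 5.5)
\textbf{There is a genuine gap: neither of your routes shows that some shortest path avoids the loops meeting $[0,1]$.}

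Your preliminary steps are in the spirit of the paper's first two steps. These are: an event of positive probability in the $\CLE_4$ limit on which $\SCL(0)$ hugs $\partial\BD$ and $D(\partial\BD,\SCL(0))<\varepsilon'/2$; smallness of the cut; and transfer to $\kappa_n$. There is one secondary gap here. The paper builds its event $\mathcal{C}_N$ from the first bubble of excursion height at least $\varepsilon$ precisely so that, by the strong Markov property of the exploration, the other loops form a $\CLE_4$ in a thin annulus \emph{conditionally on} $\mathcal{C}_N$. You condition only on $\SCL(0)$. Then \Cref{lemma loops in thin annulus are small} controls the cut on your target event only if $\BP[D(\partial\BD,\SCL(0))<\varepsilon'/2 \mid \SCL(0)\subseteq\{|z|>1-\eta\}]$ stays bounded below as $\eta\to0$, which you do not prove.

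Your first route fails for three reasons.
\begin{itemize}
\item Restricting to $R_n$ and to an arc can only \emph{increase} distances. So smallness of $D(\partial\BD,\SCL(0))$ gives no upper bound on the distance inside $R_n$. \Cref{prop locality} also only gives $D\le D_j$, the wrong direction, and only for deterministic $V$.
\item Tightness does not give that this restricted distance is below $\varepsilon'\ka_{\kappa_n}$ with positive probability. That statement is essentially \Cref{cor long segments can be close}, which the paper deduces from the present lemma, so the route is circular.
\item Even a small distance inside $R_n$ would not give $\mathcal{E}_n$, which asks that a \emph{global} shortest path lie in $R_n$.
\end{itemize}

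Your second route rests on the unproved claim that, with probability bounded below, the limiting geodesic set $\widetilde K_{\partial\BD,\SCL(0)}$ avoids a neighbourhood of $F$. \Cref{lemma one point} only says this set meets $\partial\BD$ at a single point; it says nothing about its angular extent. A connected set joining $\partial\BD$ to $\SCL(0)$ inside the thin annulus can wind around and meet every radial cut. A small $D$-length gives no control of Euclidean extent, since the metric is conformally invariant. So rotation averaging yields nothing without a localization estimate.

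The missing ingredient is a quantitative lower bound on the cost of moving in the angular direction. The paper obtains it as follows.
\begin{itemize}
\item It cuts the annulus by $K=4K_2$ radial cuts $F_k$ of small diameter.
\item The conformal rectangles between consecutive cuts have vanishing moduli. By \Cref{lemma distance between segments is large} and the Markov property, crossing each from left to right costs at least $c\ka_{\kappa_n}$ with conditional probability at least $1/2$.
\item Hence crossing each of the four wide rectangles dominates $c\ka_{\kappa_n}$ times a $\mathrm{Binomial}(K_2,1/2)$ variable. For $K_2$ large this exceeds $D^{\kappa_n}(\partial\BD,\SCL^{\kappa_n}(0))$.
\item Therefore every shortest path uses loops of at most one of the four cut families.
\item By rotation invariance, the four events that some shortest path avoids the cut along a given one of the four rays are equiprobable given $\mathcal{C}_N$. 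At least two of them occur, so each has conditional probability at least $p/2$.
\end{itemize}
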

\begin{proof}
	\stepx{step:geod-event}{Construction of the event $\mathcal{C}_N$} Let $\varepsilon'>0$ and $N\ge 1$. As in the proof of the previous lemma, by Skorokhod's representation theorem, we assume that the convergence~\eqref{eq cv subsequence boundary} and the convergences given by~\Cref{lemma cv Hausdorff union of loops} (for the compact sets used below, with the loop surrounding $0$ excluded) hold a.s. Let us first construct an event $\mathcal{C}_N$ of positive probability on which $D(\partial \BD, \SCL(0)) \le \varepsilon'/2$ and $d_\mathrm{H}(\SCL(0), \partial \BD)\le 1/N$. 
	
	For all $\varepsilon>0$, consider the time $\tau_\varepsilon$ of the first $\SLE_4$ bubble $\gamma_{\tau_\varepsilon}$ associated with an excursion of height at least $\varepsilon$. Note that $\tau_\varepsilon$ converges to zero as $\varepsilon\to 0$ in probability. In particular, as in~\eqref{eq continuite a droite explo unif}, thanks to \cite[Proposition~6.1]{MS16QLE}, the domain $C_{\tau_\varepsilon -}(0)$ converges in the Carath\'eodory sense toward $\BD$. Indeed, $C_{\tau_\varepsilon -}(0)$ has the same law as $\widetilde{C}_{\tau_\varepsilon }(0)$, where $(\widetilde{C}_{t}(0))_{t\ge 0}$ is independent of $({C}_{t}(0))_{t\ge 0}$ and defined in the same way as $({C}_{t}(0))_{t\ge 0}$ except that we remove from the driving measure the excursions of height at least $\varepsilon$, so that its driving measure restricted to $[0,\tau_\varepsilon]$ tends to zero as $\varepsilon \to 0$, and we can also apply \cite[Proposition~6.1]{MS16QLE} which says that the convergence of the driving measure implies the Carath\'eodory convergence of the associated Loewner chain. Recall that the Carath\'eodory convergence of $C_{\tau_\varepsilon -}(0)$ to $\BD$ can be rephrased as the uniform convergence on compact subsets (and in particular on $\overline{B_{1-1/(2N)}(0)}$) of the associated conformal maps from $\BD$ to $C_{\tau_\varepsilon -}(0)$ toward the identity function. Moreover, the probability that $d_\mathrm{H}(\gamma_{\tau_\varepsilon}, \partial \BD)\le 1/(2N)$ and $\gamma_{\tau_\varepsilon}$ surrounds $0$ is positive and does not depend on $\varepsilon \le 2\pi$ (a bubble surrounding $0$ has excursion height $2\pi$): since $\SCL(0)$ is the image of the first bubble surrounding $0$ under a conformal map from $\BD$ onto $C_{\tau_0-}(0)$ fixing $0$, which does not increase the modulus (Schwarz lemma), it is at least $\BP[d_\mathrm{H}(\SCL(0), \partial \BD)<1/(2N)]>0$ 
	. Let $\mathcal{C}_N$ be the event that $\tau_\varepsilon<\varepsilon'/2$, that the associated conformal map from $\BD$ to $C_{\tau_\varepsilon -}(0)$ is within uniform distance $1/(2N)$ of the identity on $\overline{B_{1-1/(2N)}(0)}$, that all the loops discovered before time $\tau_\varepsilon$ by the exploration targeted at $0$ have diameter at most $1/N$, and that $d_\mathrm{H}(\gamma_{\tau_\varepsilon}, \partial \BD)\le 1/(2N)$ and $\gamma_{\tau_\varepsilon}$ surrounds $0$. The first three conditions have probability tending to $1$ as $\varepsilon \to 0$ (for the third, $\tau_\varepsilon \to 0$ in probability and a.s.\ no loop of diameter at least $1/N$ is discovered before some positive time, such loops being finitely many with positive labels). Using the independence of $\gamma_{\tau_\varepsilon}$ and of the exploration before time $\tau_\varepsilon$, we deduce that for all $\varepsilon>0$ small enough, $\BP[\mathcal{C}_N] > 0$. On $\mathcal{C}_N$, $D(\partial \BD, \SCL(0)) \le \varepsilon'/2$ (as $\tau_\varepsilon<\varepsilon'/2$ and $\gamma_{\tau_\varepsilon}$ surrounds $0$) and $d_\mathrm{H}(\SCL(0), \partial \BD)\le 1/N$ (by Rouch\'e's theorem the image of $B_{1-1/(2N)}(0)$ contains $B_{1-1/N}(0)$, so by injectivity $\SCL(0)$, the image of $\gamma_{\tau_\varepsilon}$, surrounds $B_{1-1/N}(0)$ without intersecting it).
	
	Recall that we have the a.s.\ Hausdorff convergence of $\SCL^{\kappa_n}(0)$ toward $\SCL(0)$. In particular, a.s.\ for all $n$ large enough, the loop $\SCL^{\kappa_n}(0)$ does not touch $\partial \BD$.
	
	\stepx{step:geod-cuts}{The cuts have small diameter} Let $K\ge1$ be an integer to be chosen later. For all $k \in [0,K-1]_\BZ$, let $F_k$ be the closure of the union of the domains encircled by loops in $\Gamma \setminus \{\SCL(0)\}$ which intersect the segment $I_k\defeq\{r e^{2\ri \pi k/K}: r \in [0,1]\}$. We define similarly $F^{\kappa_n}_k$ the closure of the union of the domains encircled by loops in $\Gamma^{\kappa_n} \setminus \{ \SCL^{\kappa_n}(0)\}$ which intersect $I_k$. For each $k\in [1,K]_\BZ$ (with the convention $F^{\kappa_n}_K = F^{\kappa_n}_0$), the sets $F_{k-1}^{\kappa_n}$ and $F_k^{\kappa_n}$ act as thickened radial cuts. Let $w_k \defeq e^{\ri \pi (2k-1)/K}$. When $w_k \notin F_{k-1}^{\kappa_n} \cup F_k^{\kappa_n} \cup \overline{\mathrm{int}(\SCL^{\kappa_n}(0))}$, let $R_k^{\kappa_n}$ be the component of $\BD \setminus (F_{k-1}^{\kappa_n} \cup F_k^{\kappa_n} \cup \SCL^{\kappa_n}(0))$ whose closure contains $w_k$; we check below that, with probability tending to $1$ as $N \to \infty$, it is a conformal rectangle with bottom, top, left and right boundaries in $\partial \BD$, $\SCL^{\kappa_n}(0)$, $F_{k-1}^{\kappa_n}$ and $F_k^{\kappa_n}$ respectively.
	
	Let us show that, conditionally on the event $\mathcal{C}_N$ (where $\varepsilon>0$ is chosen to be sufficiently small in a way that depends only on $N$ such that $\BP[\mathcal{C}_N] > 0$), $\max_{k}\mathrm{diam}(F_k) \to 0$ in probability as $N \to \infty$. On $\mathcal{C}_N$, $\tau_\varepsilon = \tau_0$ and the loops of $\Gamma \setminus \{\SCL(0)\}$ are either discovered before $\tau_\varepsilon$, hence of diameter at most $1/N$ and contained in $\BD \setminus C_{\tau_\varepsilon-}(0)$, or contained in the simply 
	connected domain $U \defeq C_{\tau_\varepsilon-}(0) \setminus \overline{\mathrm{int}(\SCL(0))}$ (bubbles do not surround other loops), where they form a $\CLE_4$ given the exploration up to time $\tau_\varepsilon$ by the strong Markov property for the uniform exploration
	. Since $C_{\tau_\varepsilon-}(0)$ and $\mathrm{int}(\SCL(0))$ contain $B_{1-1/N}(0)$ on $\mathcal{C}_N$, \Cref{lemma loops in thin annulus are small} shows that, conditionally on $\mathcal{C}_N$, with probability $1-o(1)$ as $N \to \infty$, these loops all have diameter at most some $\eta_N \to 0$. Lying in $\{|z| \ge 1-1/N\}$, those meeting $I_k$ are within distance $\eta_N$ of $I_k \cap \{|z| \ge 1-1/N\}$, so that $F_k \subseteq \overline{B_{1/N + \eta_N}(e^{2 \ri \pi k/K})}$ and $\mathrm{diam}(F_k) \le 2/N + 2\eta_N$. Therefore, by~\Cref{lemma cv Hausdorff union of loops} (with the loop surrounding $0$ excluded), a.s.\ $\max_{k} \mathrm{diam}(F^{\kappa_n}_k) \to \max_{k} \mathrm{diam}(F_k)$ as $n \to \infty$, which tends to zero in probability as $N \to \infty$ conditionally on $\mathcal{C}_N$.
	
	\stepx{step:geod-thin}{The rectangles $R^{\kappa_n}_k$ are thin} In particular, conditionally on $\mathcal{C}_N$, with probability $1-o(1)$ as $N\to \infty$, the conformal rectangles $R^{\kappa_n}_k$'s are well-defined for all $n$ large enough. But one can conformally map the conformal rectangles $R_k^{\kappa_n}$ for $k \in [1,K]_\BZ$ to some rectangles $[0,1] \times [0,L_k^{\kappa_n}]$. Then, conditionally on $\mathcal{C}_N$,
	\[
	\limsup_{n\to \infty} \max_{1 \le k \le K} L_k^{\kappa_n} \mathop{\longrightarrow}\limits_{N\to \infty}^{(\BP)}0
	\]
	where again we emphasize that in the above, first the $\limsup$ as $n \to \infty$ is taken and then the limit as $N \to \infty$. Let $\delta_0,c>0$ be given by~\Cref{lemma distance between segments is large} (associated to $\varepsilon=1/2$ for instance). One can then see that by choosing $N$ large enough (depending on $\delta_0$ and $K$), conditionally on $\mathcal{C}_N$, with probability $1-o(1)$ as $N\to \infty$, for all $n$ large enough, one can map the conformal rectangles $R^{\kappa_n}_k$'s for $k \in [1, K]_\BZ$ to the unit disk so that the left and right sides are sent to segments included in respectively $\{e^{\ri \theta}: \theta \in [0, \delta_0]\}$ and $\{e^{\ri \theta}: \theta \in [\pi,\pi+\delta_0]\}$. In particular, by~\Cref{lemma distance between segments is large}, conditionally on $\Gamma^{\kappa_n}\vert_{\BD \setminus \overline{R^{\kappa_n}_k}}$, with (conditional) probability at least $1/2$, the graph distance given by the restriction $\Gamma^{\kappa_n}\vert_{R^{\kappa_n}_k}$ between the left-hand side and the right-hand side of $R^{\kappa_n}_k$ is at least $c\ka_{\kappa_n}$.
	
	\stepx{step:geod-cross}{Crossings of the wide rectangles are unlikely} Let us write $K= K_1 \cdot K_2$ with $K_1, K_2\ge 1$ some integers to be chosen later. For all $j \in [1, K_1]$, we denote by $\widetilde{R}^{\kappa_n}_j$ the conformal rectangle obtained as the connected component of $\BD \setminus (F^{\kappa_n}_{(j-1)K_2} \cup  F^{\kappa_n}_{jK_2} \cup \SCL^{\kappa_n}(0) )$, where we recall that by convention $F^{\kappa_n}_K = F^{\kappa_n}_0$, such that the bottom and the top sides of $\widetilde{R}_j^{\kappa_n}$ are the sides included respectively in $\partial \BD$ and $\SCL^{\kappa_n}(0)$, while the left and right-hand sides are respectively included in $F_{(j-1)K_2}^{\kappa_n} $ and $ F_{jK_2}^{\kappa_n} $. Then, since a loop of $\Gamma^{\kappa_n}\vert_{\widetilde{R}^{\kappa_n}_j}$ meeting two of the $R^{\kappa_n}_k$'s crosses some $I_k$ and hence belongs to the family forming $F^{\kappa_n}_k$, a path of loops from the left to the right side of $\widetilde{R}^{\kappa_n}_j$ contains such a path in each of the $K_2$ rectangles $R^{\kappa_n}_k$. Applying the reasoning of~\Cref{step:geod-thin} successively to these rectangles, conditionally on $\mathcal{C}_N$, with probability $1-o(1)$ as $N\to \infty$, for all $n$ large enough, for all $j \in [1, K_1]_\BZ$, conditionally on $\Gamma^{\kappa_n}\vert_{\BD \setminus \overline{\widetilde{R}^{\kappa_n}_j}}$, the graph distance in $\Gamma^{\kappa_n}\vert_{\widetilde{R}^{\kappa_n}_j}$ between the left and right-hand sides of $\widetilde{R}^{\kappa_n}_j$ stochastically dominates $c\ka_{\kappa_n} X_j$, where $X_j$ is a binomial random variable of parameters $K_2$ and $1/2$.
		Let $p\in (0,1)$.  Set $K_1=4$. By taking $K_2\ge 1, N\ge 2$ large enough, we deduce that conditionally on $\mathcal{C}_N$, with probability at least $p$, for all $n$ large enough, for all $j \in \{1, \ldots, 4\}$, the graph distance in $\Gamma^{\kappa_n}\vert_{\widetilde{R}^{\kappa_n}_j}$ between the left and right-hand sides of $\widetilde{R}^{\kappa_n}_j$ is strictly larger than the graph distance in $\Gamma^{\kappa_n}$ between $\partial \BD$ and $\SCL^{\kappa_n}(0)$.
	
	\stepx{step:geod-conclusion}{Conclusion: a shortest path stays in one rectangle} In particular, on this event, the union of the loops in a shortest path from $\partial \BD$ to $\SCL^{\kappa_n}(0)$ cannot cross the rectangles $\widetilde{R}^{\kappa_n}_j$'s from left to right (or right to left). Since the loops do not cross each other, a loop in none of the four families forming the cuts $F^{\kappa_n}_{jK_2}$, $j \in [0,3]_\BZ$, lies in the closure of a single component of the complement of these cuts and of $\SCL^{\kappa_n}(0)$, and the components other than the $\widetilde{R}^{\kappa_n}_j$'s are bounded by loops of a single family (with $\partial \BD$ and $\SCL^{\kappa_n}(0)$). Hence a path of loops from $\partial \BD$ to $\SCL^{\kappa_n}(0)$ containing loops of two families crosses some $\widetilde{R}^{\kappa_n}_j$ from left to right, so on the above event every shortest path contains loops of at most one family. For $j \in [0,3]_\BZ$, let $M^{\kappa_n}_j$ be the component of $\BD \setminus (F^{\kappa_n}_{jK_2} \cup \overline{\mathrm{int}(\SCL^{\kappa_n}(0))})$ whose closure contains $\partial \BD \setminus B_{1/2}(e^{\ri \pi j/2})$, well defined (conditionally on $\mathcal{C}_N$, with probability $1-o(1)$ as $N \to \infty$, for $n$ large) since $\SCL^{\kappa_n}(0) \cap \partial \BD = \emptyset$ and $F^{\kappa_n}_{jK_2} \subseteq B_{1/2}(e^{\ri \pi j/2})$ by the above diameter bound; note that $M^{\kappa_n}_0 = R_n$
	. Let $P$ be a shortest path from $\partial \BD$ to $\SCL^{\kappa_n}(0)$ and $x \in \partial \BD$ a point of its first loop. As $x$ lies in at most one ball $B_{1/2}(e^{\ri \pi j/2})$ and $P$ contains loops of at most one family, there are at least two indices $j$ with $x \notin B_{1/2}(e^{\ri \pi j/2})$ and $P$ containing no loop of the family $F^{\kappa_n}_{jK_2}$; for such $j$, the interior of the first loop of $P$ contains points of the collar near $x$, hence lies in $M^{\kappa_n}_j$, and then all the loops of $P$ belong to $\Gamma^{\kappa_n}\vert_{M^{\kappa_n}_j}$ (touching loops outside the family have their interiors in the same component, as loops do not cross). Note that the conditioning event $\mathcal{C}_N$ is rotationally invariant (the exploration targeted at $0$ is rotationally invariant in law and each condition defining $\mathcal{C}_N$ is rotation invariant; applying an independent uniform rotation to the coupling, we may assume that the joint law of $(\Gamma^{\kappa_n})_{n \ge 1}$ and of the exploration of $\Gamma$ is rotationally invariant). Thus, the four events that some shortest path from $\partial \BD$ to $\SCL^{\kappa_n}(0)$ consists of loops of $\Gamma^{\kappa_n}\vert_{M^{\kappa_n}_j}$ have the same conditional probability given $\mathcal{C}_N$, and at least two of them occur on the above event; hence conditionally on $\mathcal{C}_N$, with probability at least $p/2$, for all $n$ large enough, $\mathcal{E}_n$ holds. Moreover, on $\mathcal{C}_N$, $\ka_{\kappa_n}^{-1} D^{\kappa_n}(\partial \BD, \SCL^{\kappa_n}(0)) \to D(\partial \BD, \SCL(0)) \le \varepsilon'/2$ by~\eqref{eq cv subsequence boundary}, so $\mathcal{A}_n(\varepsilon')$ holds for $n$ large. As the event that $\mathcal{E}_n \cap \mathcal{A}_n(\varepsilon')$ holds for all $n \ge n_0$ increases with $n_0$, we get $\BP[\mathcal{E}_n \cap \mathcal{A}_n(\varepsilon')] \ge \widetilde{\delta} \defeq \frac{p}{4}\BP[\mathcal{C}_N] > 0$ for all $n$ large enough.
\end{proof}
From the above lemma, we deduce that two long segments can be close with positive probability.
\begin{corollary}\label{cor long segments can be close}
	For all $\varepsilon>0$, there exist $L>0$, $p>0$, and $n_0 \in \BN$ such that for each $n \ge n_0$, a $\CLE_{\kappa_n}$ in $[0,L]\times [0,1]$ has graph distance between its top and bottom sides less than $\varepsilon \ka_{\kappa_n}$ with probability at least $p$ (a statement about each fixed $n$ separately).
\end{corollary}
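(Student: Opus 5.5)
We deduce the corollary from \Cref{lemma the geodesic does not touch the line} by cutting $\BD$ along $[0,1]$ and mapping to a long rectangle.

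Fix $\varepsilon>0$ and apply \Cref{lemma the geodesic does not touch the line} with $\varepsilon'=\varepsilon$. This gives $\delta>0$ such that, for $n$ large, $\BP[\mathcal{E}_n\cap\mathcal{A}_n(\varepsilon)]\ge\delta$. On this event the domain $R_n$ is defined, and some shortest path from $\partial\BD$ to $\SCL^{\kappa_n}(0)$ uses only loops of $\Gamma^{\kappa_n}\vert_{R_n}$. Its length is then at most $\varepsilon\ka_{\kappa_n}$. Since the loops meeting $[0,1]$ form the set $F_n$, the domain $R_n$ is a conformal rectangle with the following sides:
\begin{itemize}
\item bottom side contained in $\partial\BD$;
\item top side contained in $\SCL^{\kappa_n}(0)$;
\item left and right sides contained in $F_n$.
\end{itemize}
Along the cut, $F_n$ is glued to $\SCL^{\kappa_n}(0)$ and to $\partial\BD$ through the loops meeting $[0,1]$.

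The plan is to exploit the Markov property stated at the start of the section. We condition on the loops meeting $[0,1]$ (excluding $\SCL^{\kappa_n}(0)$) and on $\SCL^{\kappa_n}(0)$ itself. Because $[0,1]\cup\partial\BD$ is connected, the remaining loops in $R_n$ then form a $\CLE_{\kappa_n}$ in $R_n$. The shortest path from the event $\mathcal{E}_n$ is a path of loops in this restricted ensemble joining the bottom and top sides of $R_n$. Let $\psi_n$ be the conformal map sending $R_n$ to $[0,L_n]\times[0,1]$, with top and bottom sides sent to top and bottom. Then, conditionally, the top--bottom graph distance of a $\CLE_{\kappa_n}$ in $[0,L_n]\times[0,1]$ is at most $\varepsilon\ka_{\kappa_n}$ on this event.

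It remains to handle the random modulus $L_n$. We need $L_n$ to lie in a deterministic compact interval $[\ell_1,\ell_2]\subset(0,\infty)$ with probability close to $1$, uniformly in large $n$. This should follow from \Cref{lemma cv Hausdorff union of loops}, exactly as in the proof of \Cref{lemma distance between segments is large}: $F_n$ and $\SCL^{\kappa_n}(0)$ converge, so harmonic measures of the sides seen from a fixed interior point are bounded away from $0$. Shrinking $\delta$ to $\delta/2$, we get that with probability at least $\delta/2$ both the good event holds and $L_n\in[\ell_1,\ell_2]$.

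Passing from random $L_n$ to a fixed $L$ is the main obstacle. The plan is to use \Cref{remark monotonicity rectangle}: a rectangle with longer top and bottom sides has a smaller top--bottom distance. Concretely, take $L=\ell_1$ and view $[0,L]\times[0,1]$ as a sub-rectangle of $[0,L_n]\times[0,1]$. This comparison is not directly available, because the CLEs live in different domains. One remedy is to pigeonhole over a finite grid of values in $[\ell_1,\ell_2]$: some grid value $L$ carries probability at least $\delta/(2m)$ of the good event with $L_n$ close to $L$. The Carathéodory continuity of $\CLE_{\kappa_n}$ in the rectangle then lets us transfer the event to the fixed rectangle. Alternatively, one avoids continuity altogether: the conditional probability of distance less than $\varepsilon\ka_{\kappa_n}$ in $[0,L_n]\times[0,1]$ is at least $\delta/2$ on a set of $L_n$-values, so for some deterministic $L\in[\ell_1,\ell_2]$ the function $\ell\mapsto\BP[\text{distance in }[0,\ell]\times[0,1]<\varepsilon\ka_{\kappa_n}]$ is at least $\delta/2$. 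This gives $L$ depending on $n$. To make $L$ independent of $n$, we pass to the limit $\ka_{\kappa_n}^{-1}D$ and use subsequential compactness to find an $L$ that works for all large $n$, with $p=\delta/4$.
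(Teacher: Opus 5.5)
Your setup is the paper's: apply \Cref{lemma the geodesic does not touch the line}, condition on $F_n\cup\SCL^{\kappa_n}(0)$ so that $\Gamma^{\kappa_n}\vert_{R_n}$ is a $\CLE_{\kappa_n}$ in $R_n$, and map $R_n$ onto $[0,L_n]\times[0,1]$ with $L_n$ tight. The gap is the last step, passing from the random $L_n$ to one deterministic $L$ valid for \emph{every} $n\ge n_0$. Let $q_n(\ell)$ be the probability that a $\CLE_{\kappa_n}$ in $[0,\ell]\times[0,1]$ has top--bottom distance less than $\varepsilon\ka_{\kappa_n}$. What you actually have is $\BE[q_n(L_n)\mathbf{1}_{\{L_n\in[\ell_1,\ell_2]\}}]\ge\delta/2$.

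Your first attempt applies \Cref{remark monotonicity rectangle} in the wrong direction. A shorter rectangle ($L=\ell_1\le L_n$) has a stochastically \emph{larger} top--bottom distance, so $q_n(\ell_1)\le q_n(L_n)$ and no lower bound transfers. Neither remedy closes the gap. You have no continuity of $\ell\mapsto q_n(\ell)$ uniform in $n$; the event is not a continuous functional of the domain. The pigeonhole/compactness argument produces a value $L^{(n)}$ depending on $n$, which at best yields a good $L$ along a subsequence. The statement needs a single $L$ for all $n\ge n_0$, and so does its later use in \Cref{lemma distance from two long segments is small}.

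The missing observation is that the comparison you dismissed ``because the CLEs live in different domains'' follows directly from conformal invariance, and it shows that $\ell\mapsto q_n(\ell)$ is non-decreasing for each fixed $n$. Map $[0,\ell]\times[0,1]$ onto $\BD$ sending its center to $0$ and its symmetry axes to the coordinate axes. The top and bottom sides go to arcs centered at $\pm\ri$ whose lengths increase with $\ell$. So the top--bottom distance equals $D^{\kappa_n}(\alpha_\ell,\beta_\ell)$ for nested arcs, and \Cref{remark monotonicity rectangle} applies.

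With this you only need an \emph{upper} bound on $L_n$. Choose $L$ with $\BP[\mathcal{E}_n\cap\mathcal{A}_n(\varepsilon)\cap\{L_n\le L\}]\ge\delta/2$ for $n$ large. This is possible because $L_n^{-1}$ is at least the extremal length of curves from $\SCL^{\kappa_n}(0)$ to $\partial\BD$, and $\SCL^{\kappa_n}(0)\to\SCL(0)$, which stays away from $\partial\BD$. Then for every large $n$,
\begin{equation*}
\delta/2\le\BE\bigl[q_n(L_n)\mathbf{1}_{\{L_n\le L\}}\bigr]\le q_n(L),
\end{equation*}
which is the claim with $p=\delta/2$. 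Your lower bound $\ell_1$ is not needed.
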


\begin{proof}
	Let us start with the $\CLE_{\kappa_n}$ in $\BD$. By~\Cref{lemma the geodesic does not touch the line}, we know that for all $\varepsilon>0$, there exists $\delta>0$ such that for all $n$ large enough, with probability at least $\delta$, we have $D^{\kappa_n}(\partial \BD, \SCL^{\kappa_n}(0))< \varepsilon \ka_{\kappa_n}$ and $\mathcal{E}_n$ holds, with the notation $F_n$, $R_n$ of that lemma.
	
	On this event, the loops of the shortest path given by $\mathcal{E}_n$ belong to $\Gamma^{\kappa_n}\vert_{R_n}$ and join $\partial \BD$ to $\SCL^{\kappa_n}(0)$; with probability $1-o(1)$, $R_n$ is a conformal rectangle with top boundary in $\SCL^{\kappa_n}(0)$, bottom boundary in $\partial \BD$, left and right boundaries in $F_n$, and top-to-bottom graph distance with respect to $\Gamma^{\kappa_n}\vert_{R_n}$ less than $\varepsilon \ka_{\kappa_n}$. Note that $R_n$ is a function of $F_n \cup \SCL^{\kappa_n}(0)$ and that, by the domain Markov property of the $\CLE_{\kappa_n}$, conditionally on $F_n \cup \SCL^{\kappa_n}(0)$, the restriction $\Gamma^{\kappa_n}\vert_{R_n}$ is an independent $\CLE_{\kappa_n}$ in $R_n$.

We conformally map $R_n$ onto the rectangle $V_{L_n} \defeq [0,L_n] \times [0,1]$ for some (random) $L_n \in (0,\infty)$ such that the top (resp.\ bottom) boundary of $R_n$ is mapped to the top (resp.\ bottom) boundary of $V_{L_n}$, and note that there exists $L \in (0,\infty)$ such that for all $n \in \BN$ sufficiently large, we have, on an event with probability at least $\delta / 2$, that the above holds and $L_n \leq L$ ($L_n^{-1}$ is the extremal length of the curves joining the top and bottom boundaries of $R_n$, which is at least that of the curves joining $\SCL^{\kappa_n}(0)$ to $\partial \BD$, and $\SCL^{\kappa_n}(0) \to \SCL(0)$, which is at positive distance from $\partial \BD$; hence the $L_n$ are tight). Moreover, on $\{L_n \le L\}$, the top-to-bottom distance of a $\CLE_{\kappa_n}$ in $V_{L_n}$ stochastically dominates that of a $\CLE_{\kappa_n}$ in $V_L \defeq [0,L] \times [0,1]$ (map both rectangles onto $\BD$ with centers sent to $0$ and symmetry axes to the coordinate axes: the images of the top and bottom sides are arcs centered at $\pm \ri$ whose lengths increase with the length of the rectangle; conclude by~\Cref{remark monotonicity rectangle}). Hence, on $\{L_n \le L\}$, the conditional probability given $F_n \cup \SCL^{\kappa_n}(0)$ that the top-to-bottom distance of $R_n$ is less than $\varepsilon \ka_{\kappa_n}$ is at most $p_n(L)$, the corresponding probability in $V_L$, so that $p_n(L) \ge \delta/2$ for $n$ large.	
\end{proof}

We are now in position to prove~\Cref{lemma distance from two long segments is small}.
\begin{proof}[Proof of~\Cref{lemma distance from two long segments is small}]
	\stepx{step:long-setup}{Setup} As usual, by Skorokhod's representation theorem, let us assume that the convergence~\eqref{eq cv subsequence boundary} and the convergences given by~\Cref{lemma cv Hausdorff union of loops} (for the compact sets used below, with the loop surrounding $0$ excluded) hold a.s. Let $\varepsilon \in (0,1/2)$. Let $L,p>0$ be given by~\Cref{cor long segments can be close} for this $\varepsilon$, let $K \ge 1$ with $(1-p)^K \le \varepsilon/3$ and $d \defeq \pi/(16K)$.
	
	\stepx{step:long-positive}{The origin-containing loop is close to $\partial \BD$ with positive probability} Let $\delta\in (0,1)$. We know that
	\[
	\BP[d_\mathrm{H}(\partial \BD, \SCL(0)) < \delta]>0.
	\]
	This can be seen using the Brownian loop soup construction: the event $E_\delta$ that the loop soup contains a Brownian loop which is contained in $\BD \setminus \overline{B_{1-\delta/2}(0)}$ and which surrounds $B_{1-\delta/2}(0)$ has non-zero probability. On $E_\delta$ the cluster of that loop, hence its outer boundary, surrounds $B_{1-\delta/2}(0)$, so that $\SCL(0)$ surrounds $B_{1-\delta/2}(0)$ and lies in the annulus $\BD \setminus \overline{B_{1-\delta/2}(0)}$, which gives $d_\mathrm{H}(\partial \BD, \SCL(0)) \le \delta/2 < \delta$.
	
	Unlike in the previous proofs, we condition not on the event $\{d_\mathrm{H}(\partial \BD, \SCL(0)) < \delta\}$ but on an event of comparable probability measurable with respect to the loops of $\Gamma^{\kappa_n}$ meeting $[-1,1]$; by the Markov property, the law of the restriction of $\Gamma^{\kappa_n}$ to the region between the cuts along $[0,1]$ and $[-1,0]$ is then unaffected.
	
	\begin{figure}[ht!]
		\centering
		\includegraphics[width=0.6\linewidth]{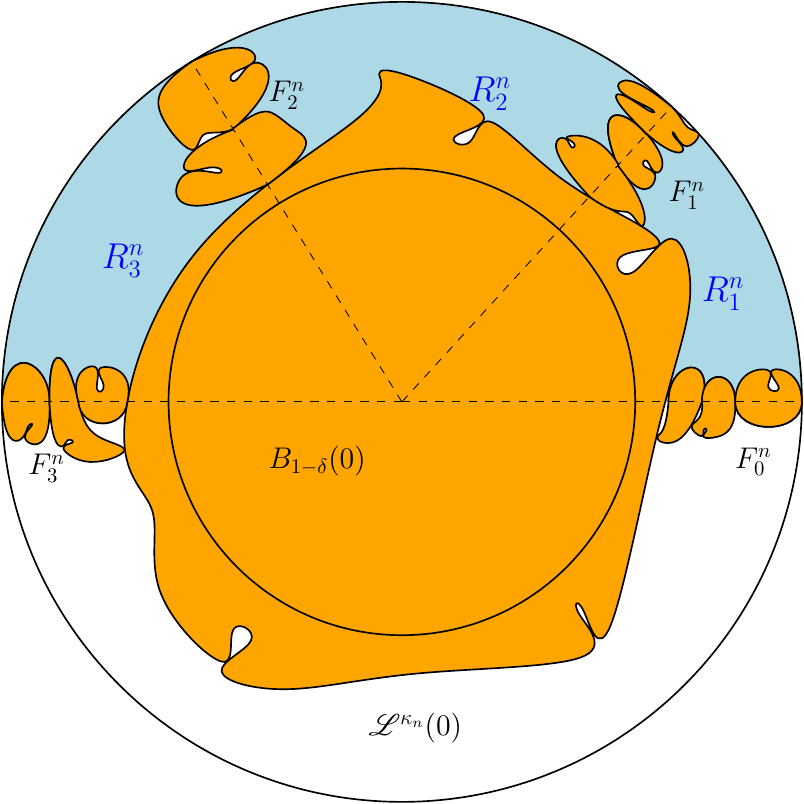}
		\caption{Illustration of the proof of~\Cref{lemma distance from two long segments is small}. The interiors of the loop encircling the origin $\SCL^{\kappa_n}(0)$ and of the loops forming the $F^n_k$ for $k \in [0, K]_\BZ$ (with $K=3$ in this figure) are in orange. The conformal rectangles $R^n_k$ for $k \in [1, K]_\BZ$ are in blue.}
		\label{fig:long_rectangles}
	\end{figure}
	
	\stepx{step:long-rectangles}{The cuts and the conformal rectangles} Let $\alpha_k \defeq k\pi/K$ for $k \in [0,K]_\BZ$.
	For all $k \in [0, K]_\BZ$, let $F_k$ be the closure of the union of the domains encircled by loops of $\Gamma\setminus \{\SCL(0)\}$ which intersect $\{r e^{\ri \alpha_k}: r \in [0,1]\}$. Similarly, let $F_k^n$ be the closure of the union of the domains encircled by loops of $\Gamma^{\kappa_n}\setminus \{\SCL^{\kappa_n}(0)\}$ which intersect $\{r e^{\ri \alpha_k}: r \in [0,1]\}$. Note that $F^n_k \cup \overline{\mathrm{int}(\SCL^{\kappa_n}(0))}$ contains the segment $\{r e^{\ri \alpha_k}: r \in [0,1]\}$ (almost every point of $\BD$ being surrounded by a loop), so $e^{\ri \alpha_k} \in F^n_k$. Let $I_k \defeq \{e^{\ri \theta} : \theta \in [\alpha_{k-1} + 5d, \alpha_k - 5d]\}$ for $k \in [1,K]_\BZ$. On the event that $\SCL^{\kappa_n}(0) \cap \partial \BD = \emptyset$ and all the $F^n_k$ have diameter less than $2d$ (so $F^n_k \subseteq B_{2d}(e^{\ri \alpha_k})$), let $R^n_k$ be the component of $\BD \setminus (F^n_{k-1} \cup F^n_k \cup \overline{\mathrm{int}(\SCL^{\kappa_n}(0))})$ whose closure contains $I_k$, and $R^n$ that of $\BD \setminus (F^n_0 \cup F^n_K \cup \overline{\mathrm{int}(\SCL^{\kappa_n}(0))})$ whose closure contains $I_1 \cup \dots \cup I_K$ (well defined by the collar argument of the proof of~\Cref{lemma the geodesic does not touch the line}); since the segments $\{r e^{\ri \alpha_k}\}$ outside $\mathrm{int}(\SCL^{\kappa_n}(0))$ lie in the cuts, $R^n_k$ lies in the sector between $\alpha_{k-1}$ and $\alpha_k$, so $R^n_k \subseteq R^n$. See Figure~\ref{fig:long_rectangles}. As before, these are conformal rectangles with top boundaries in $\SCL^{\kappa_n}(0)$, bottom boundaries in $\partial \BD$, and left and right boundaries in the cuts; we write $L^n_k$ (resp.\ $L^n$) for the modulus of $R^n_k$ (resp.\ $R^n$), so that $R^n_k$ is conformally equivalent to $[0,L^n_k] \times [0,1]$ with top and bottom boundaries sent to the top and bottom sides.

	\stepx{step:long-moduli}{Bounds on the moduli} Let us bound these moduli when moreover $d_\mathrm{H}(\partial \BD, \SCL^{\kappa_n}(0)) < \delta \le 1/4$. For $\theta \in [\alpha_{k-1}+5d, \alpha_k - 5d]$, the radial segment from the last point of $\SCL^{\kappa_n}(0)$ on $\{r e^{\ri \theta}\}$ to $e^{\ri \theta}$ lies in $\overline{R^n_k}$ (it avoids the cuts, since $|re^{\ri \theta} - e^{\ri \alpha_j}| > 2d$ for $r \ge 1-\delta$ and $|\theta - \alpha_j| \ge 5d$) and joins its top and bottom boundaries. These segments have length at most $\delta$ and span an angular sector of width $3\pi/(8K)$, so $(L^n_k)^{-1} \le 8K \log(1/(1-\delta))/(3\pi) \le 16 K \delta/(3\pi)$, i.e.\ $L^n_k \ge 3\pi/(16K\delta)$. On the other hand, with $\rho_n \defeq \dist(\SCL^{\kappa_n}(0), \partial \BD)$, the curves joining the top and bottom boundaries of $R^n$ cross the annulus $\{1-\rho_n < |z| < 1\}$, so $L^n \le 2\pi/\log(1/(1-\rho_n))$.

	\stepx{step:long-parameters}{Choice of the parameters} Note that conditionally on $d_\mathrm{H}(\partial \BD, \SCL(0)) < \delta'$, the diameters of $F_k$ for $k \in [0, K]_\BZ$ go to zero in probability as $\delta' \to 0$. Indeed, conditionally on $\SCL(0)$, $\Gamma \setminus \{\SCL(0)\}$ is a $\CLE_4$ in the annulus $\BD \setminus \overline{\mathrm{int}(\SCL(0))}$ \cite[Proposition~3.5]{SimCLEDblConnDom}, and on $\{d_\mathrm{H}(\partial \BD, \SCL(0)) < \delta'\}$ the loop $\SCL(0)$ surrounds $B_{1-\delta'}(0)$; by~\Cref{lemma loops in thin annulus are small}, conditionally on this event, with probability $1-o(1)$ as $\delta' \to 0$, all these loops have diameter at most some $\eta_{\delta'} \to 0$, and, lying in $\{|z| > 1-\delta'\}$, they give as in the proof of~\Cref{lemma the geodesic does not touch the line} that $\mathrm{diam}(F_k) \le 2\delta' + 2\eta_{\delta'}$ for all $k$. Let $\delta_1>0$ be such that for all $\delta' \le \delta_1$, conditionally on $d_\mathrm{H}(\partial \BD, \SCL(0)) < \delta'$, the probability that some $F_k$ has diameter at least $d$ is at most $\varepsilon^2/100$. Fix $\delta \in (0, \min(\delta_1/2, 1/4, 3\pi/(16KL)))$ with $\BP[d_\mathrm{H}(\partial \BD, \SCL(0)) = \delta] = 0$, then fix $\rho_0>0$ and $\delta' \in (\delta, \delta_1]$ such that, with $B \defeq \{d_\mathrm{H}(\partial \BD, \SCL(0)) < \delta\}$ and $\rho \defeq \dist(\SCL(0), \partial \BD)$, $\BP[\rho < \rho_0 \mid B] \le \varepsilon/6$ and $\BP[d_\mathrm{H}(\partial \BD, \SCL(0)) < \delta'] \le 2\BP[B]$.
	
	\stepx{step:long-event}{The conditioning event $E_n$} Let $\mathcal{G}_n$ be the $\sigma$-algebra generated by the loops of $\Gamma^{\kappa_n}$ meeting $[-1,1]$ (namely $\SCL^{\kappa_n}(0)$ and the loops forming $F^n_0$ and $F^n_K$), and $E_n \in \mathcal{G}_n$ the event that $d_\mathrm{H}(\partial \BD, \SCL^{\kappa_n}(0)) < \delta$, $\mathrm{diam}(F^n_0), \mathrm{diam}(F^n_K) < 2d$ and $\rho_n \ge \rho_0$. By the a.s.\ convergences of $\SCL^{\kappa_n}(0)$, $F^n_0$, $F^n_K$, almost surely,
	\[
	\liminf_{n\to \infty} {\bf 1}_{E_n} \ge {\bf 1}_{B \cap \{\mathrm{diam}(F_0), \mathrm{diam}(F_K) \le 2d\} \cap \{\rho \ge \rho_0\}},
	\]
	
	and the expectation of the right-hand side is at least $(1-\varepsilon^2/100-\varepsilon/6)\BP[B]$, so $\BP[E_n] \ge \BP[B]/2$ for $n$ large. Moreover, a.s.\ for $n$ large, $E_n \subseteq \{d_\mathrm{H}(\partial \BD, \SCL(0)) < \delta'\}$ and $\mathrm{diam}(F^n_k) \ge 2d$ implies $\mathrm{diam}(F_k) \ge d$; hence $\BP[E_n \cap \{\exists k: \mathrm{diam}(F^n_k) \ge 2d\}] \le \varepsilon^2 \BP[B]/25$ and the conditional probability given $E_n$ that some $F^n_k$ has diameter at least $2d$ is at most $\varepsilon/10$.
	
	\stepx{step:long-conclusion}{Conclusion} By the Markov property recalled above, conditionally on $\mathcal{G}_n$, $\Gamma^{\kappa_n}\vert_{R^n}$ is a $\CLE_{\kappa_n}$ in $R^n$. For $\ell>0$, let $q_n(\ell)$ be the probability that the top-to-bottom graph distance of a $\CLE_{\kappa_n}$ in $[0,\ell]\times[0,1]$ is less than $\varepsilon \ka_{\kappa_n}$; it is nondecreasing in $\ell$ (see the proof of~\Cref{cor long segments can be close}) and $q_n(L) \ge p$ for $n$ large by that corollary. On $E_n$, $L^n \le M \defeq 2\pi/\log(1/(1-\rho_0))$, so the conditional probability given $\mathcal{G}_n$ that the top-to-bottom distance of $R^n$ with respect to $\Gamma^{\kappa_n}\vert_{R^n}$ is less than $\varepsilon \ka_{\kappa_n}$, namely $q_n(L^n)$ by conformal invariance, is at most $q_n(M)$. On the other hand, conditionally on $\mathcal{G}_n$ and on the loops of $\Gamma^{\kappa_n}\vert_{R^n}$ meeting the segments $\{r e^{\ri \alpha_k}\}$, $k \in [1,K-1]_\BZ$ (which determine the $F^n_k$), the $\Gamma^{\kappa_n}\vert_{R^n_k}$ are independent $\CLE_{\kappa_n}$'s, and on $E_n \cap \{\forall k : \mathrm{diam}(F^n_k) < 2d\}$ we have $L^n_k \ge 3\pi/(16K\delta) > L$; hence, on this event, with conditional probability at least $1-(1-p)^K \ge 1-\varepsilon/3$, the top-to-bottom distance of some $R^n_k$, and thus that of $R^n$ (as $R^n_k \subseteq R^n$), is less than $\varepsilon \ka_{\kappa_n}$. Combining, $q_n(M) \ge (1-\varepsilon/3)(1-\varepsilon/10) \ge 1-\varepsilon$ for $n$ large, hence $q_n(M') \ge 1-\varepsilon$ for all $M' \ge M$. Finally, $D^{\kappa_n}(\alpha,\beta)$ for $\alpha,\beta$ the images of the top and bottom sides of $[0,M]\times[0,1]$ under a conformal map onto $\BD$ has the law of the top-to-bottom distance, which concludes the proof.
\end{proof}

\section{Checking the axioms} 
\label{sec:checking_the_axioms}

In this section, we prove that the subsequential limit $D$ satisfies the axioms listed in~\Cref{def:weak_axioms2}. Together with~\Cref{prop subsequential limit,prop subsequential limit boundary}, this proves~\Cref{thm:convergence_nonsimple_cle}.

\subsection{Axiom~\eqref{it:weak_axiom_geodesic2} (weak geodesic metric)}
Axiom~\eqref{it:weak_axiom_geodesic2},~\eqref{it:weak_axiom_geodesic_02} (connectedness of balls) is satisfied since by~\eqref{eq cv Skorokhod}, $\SCB_t(\SCL)$ is the Hausdorff limit of $\SCB^{\kappa_n}_{\ka_{\kappa_n}t}(\SCL^{\kappa_n})$ (for fixed $t$, see~\Cref{subsec:skorokhod}), which is connected (each loop at graph distance $k$ touches one at distance $k-1$), and by Lemma~\ref{lemma right-continuity at rational times and density of loops},  we have that $\SCB_t(\SCL)$ is the closure of the union of the regions encircled by the loops of $\Gamma$ at $D$ distance at most $t$ from $\SCL$. For a segment $\alpha\subseteq \partial \BD$, the same argument gives that $\SCB_t(\alpha)$ is connected.

Furthermore, Axiom~\eqref{it:weak_axiom_geodesic2},~\eqref{it:weak_axiom_geodesic_12} (right-continuity and left limits of balls) follows from~\eqref{eq cv Skorokhod}, which was obtained using~\Cref{lemma Skorokhod tightness}, and from~\Cref{lem:density_of_loops_in_balls} for the left limits (the balls being nondecreasing in $t$).

Axiom~\eqref{it:weak_axiom_geodesic2},~\eqref{it:weak_axiom_geodesic_42} (geodesic sets) stems from~\Cref{prop subsequential limit boundary}: $\widetilde{K}_{\SCL(x),\SCL(y)}$ is a Hausdorff limit of connected sets containing both loops, and its inclusion in $K_{\SCL(x),\SCL(y)}$ follows from the convergence of the balls and~\Cref{prop hitting balls} below.

We postpone the proof of Axiom~\eqref{it:weak_axiom_geodesic2},~\eqref{it:weak_axiom_geodesic_22} to the end of the section.

\subsection{Axiom~\eqref{it:weak_axiom_locality2} (locality)}
Let $V \subseteq \BD$ be a simply connected subdomain and let $I \in \SCS$ be a segment of $\partial \BD$. Let $V^{\star, \kappa}$ be the open set obtained by removing from $V$ the closure of the union of the domains encircled by loops of $\Gamma^\kappa$ which are not contained in $V$,  where $\kappa \in (4,8)$ is fixed.

	Let $V_j^\kappa$ for $j\ge 1$ be the connected components of $V^{\star, \kappa}$, in decreasing order of Lebesgue measure (we set $V^\kappa_j=\emptyset$ for $j$ larger than the number of connected components if there are only finitely many).  Note that since $V$ is simply connected (so that $\BC \setminus V$ is connected),  we have that $V_j^{\kappa}$ is simply connected for all $j\geq 1$. For all $j\ge 1$, let $Z_j^\kappa$ be an independent random point in $V_j^\kappa$ sampled uniformly with respect to Lebesgue measure. Let $\phi_j^\kappa$ be the unique conformal mapping from $\BD$ to $V_j^\kappa$ sending $0$ to $Z_j^\kappa$ and such that $(\phi_j^\kappa)'(0)>0$. 
	
	Here and below, $\Gamma^\kappa\vert_{W}$ denotes the loops of $\Gamma^\kappa$ contained in $\overline{W}$. For all $j\ge 1$ such that $V_j^\kappa \neq \emptyset$, let $D^\kappa_j$ be the graph distance on $(\phi_j^\kappa)^{-1}(\Gamma^\kappa\vert_{V^\kappa_j})$ (where, as for $D^\kappa$, two loops are adjacent if they intersect). Note that for all $\SCL_1, \SCL_2 \in \Gamma^\kappa\vert_{V^\kappa_j}$, we have
\begin{equation}\label{eq D kappa restricted is larger}
D^{\kappa}_j((\phi^\kappa_j)^{-1}(\SCL_1), (\phi^\kappa_j)^{-1}(\SCL_2)) \ge D^\kappa(\SCL_1, \SCL_2).
\end{equation}
Moreover, for all $\SCL \in \Gamma^\kappa\vert_{V^\kappa_j}$, for all $t \in [0, D^\kappa(\SCL, \partial V^\kappa_j)]$, the image under $\phi_j^\kappa$ of the metric ball for $D_j^\kappa$ of radius $t$ centered at $(\phi_j^\kappa)^{-1}(\SCL)$ coincides with $\SCB^\kappa_t(\SCL)$. This comes from the fact that $\Gamma^\kappa\vert_{V^\kappa_j}$ equipped with the adjacency relation can be seen as a subgraph of $\Gamma^\kappa$, and from the fact that a path of loops leaving $V^\kappa_j$ contains a loop intersecting $\partial V^\kappa_j$.

Here, for $A \subseteq \overline{\BD}$, we set $D^\kappa(\SCL, A) \defeq \inf\{t : \SCB^\kappa_t(\SCL) \cap A \neq \emptyset\}$ as in~\Cref{def:weak_axioms2}. Furthermore, given $\{\SCL \in \Gamma^\kappa: \SCL \not\subseteq V\}$,
$\{\SCB^\kappa_t(\SCL): \SCL \in \Gamma^\kappa, \  \SCL \not\subseteq V, t \in [0, D^{\kappa}(\SCL, \partial V^{\star, \kappa})]\}$ and $\{\SCB^{\kappa}_t(I): t \in [0, D^{\kappa}(I, \partial V^{\star, \kappa})]\},$ the conditional law of 	
\begin{equation*}
\left( (\phi^\kappa_j)^{-1}(\Gamma^\kappa \vert_{V_j^\kappa}), D^\kappa_j
 \right)
\end{equation*}
for $j\ge 1$ are independent with the law of $(\Gamma^\kappa, D^\kappa)$. Indeed, loops contained in $V$ touch loops not contained in $V$ only on $\partial V^{\star,\kappa}$, so the above balls and cutoff times are functions of the loops not contained in $V$, and the statement follows from the Markov property of~\Cref{sec: distance between two segments} with $A = \overline{\BD} \setminus V$ (the second coordinate being a function of the first). We call this statement the locality of the $\CLE_\kappa$.

Let us prove the analogous statement for $\kappa=4$ in our subsequential limit. 

\begin{proposition}\label{prop locality}
	Let $\Gamma, D$ be the random variables appearing in the subsequential limits~\eqref{eq cv ball} and~\eqref{eq cv subsequence boundary}. Let $V \subseteq \BD$ be a deterministic simply connected subdomain and $I \in \SCS$ a deterministic segment. Let $V^\star$ be the open set obtained by removing from $V$ the closure of the union of the domains encircled by the loops of $\Gamma$ which are not included in $V$. Let $V_j$ for $j\ge 1$ be the connected components of $V^\star$ in decreasing order of Lebesgue measure. For all $j\ge 1$, let $Z_j$ be an independent random point in $V_j$ sampled uniformly with respect to Lebesgue measure. Let $\phi_j$ be the unique conformal mapping from $\BD$ to $V_j$ sending $0$ to $Z_j$ and such that $\phi_j'(0)>0$.

	Then, there exist random distances $D_j$ on $\phi_j^{-1}(\Gamma\vert_{V_j})$, coupled with $(\Gamma, D)$, such that
	\begin{itemize}
		\item Conditionally on $\{ \SCL \in \Gamma: \SCL \not \subseteq V\}$, 
		$\{\SCB_t(\SCL): \SCL \in \Gamma, \  \SCL \not\subseteq V, t \in [0, D(\SCL, \partial V^{\star}))\}$ and $\{\SCB_t(I): t \in [0, D(I, \partial V^\star))\}$, the $(\phi_j^{-1}(\Gamma\vert_{V_j}), D_j)$'s are i.i.d.\ and their conditional law is that of $(\Gamma, D)$.
		\item For all $j$, for all $\SCL_1, \SCL_2 \in \Gamma\vert_{V_j}$, we have
		\[
		D(\SCL_1, \SCL_2) \le D_j(\phi_j^{-1}(\SCL_1), \phi_j^{-1}(\SCL_2)).
		\]
		\item For all $j$, for all $\SCL \in \Gamma\vert_{V_j}$, for all $t \in [0, D(\SCL, \partial V_j))$, the image under $\phi_j$ of the metric ball of radius $t$ for $D_j$ centered at $\phi_j^{-1}(\SCL)$ is $\SCB_t(\SCL)$.
	\end{itemize}
\end{proposition}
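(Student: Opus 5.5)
We will obtain the $D_j$'s as subsequential limits of the prelimit objects $(\phi_j^{\kappa_n})^{-1}(\Gamma^{\kappa_n}\vert_{V_j^{\kappa_n}})$ and $D^{\kappa_n}_j$ described above, for which the three properties (conditional independence with the law of $(\Gamma^\kappa,D^\kappa)$, the inequality~\eqref{eq D kappa restricted is larger}, and the coincidence of balls up to the hitting time of $\partial V_j^\kappa$) already hold. Concretely, we enlarge the joint convergence~\eqref{eq cv subsequence boundary}, \eqref{eq cv Skorokhod} by adding, for all $j\ge 1$, the triples $(V^{\kappa_n}_j, Z^{\kappa_n}_j, \phi^{\kappa_n}_j)$, the rescaled distances $\ka_{\kappa_n}^{-1}D^{\kappa_n}_j$ on rational points, their balls, and the conditioning data: the loops not contained in $V$ (through the sets $F$ of~\Cref{lemma cv Hausdorff union of loops}), the balls from these loops and from $I$ stopped at their hitting times of $\partial V^{\star,\kappa_n}$. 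Tightness of the $D^{\kappa_n}_j$-coordinates holds because each has the law of $D^{\kappa_n}$, so~\Cref{prop subsequential limit boundary,lemma Skorokhod tightness} apply verbatim; by a diagonal extraction we get a joint subsequential limit and, via Skorokhod, a.s.\ convergence. The limits of $(\phi_j^{\kappa_n})^{-1}(\Gamma^{\kappa_n}\vert_{V_j^{\kappa_n}}, D_j^{\kappa_n})$ then have the law of $(\Gamma,D)$ for each $j$.

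The first key step, and the one I expect to be the main obstacle, is the identification of the limit domains: we need $V^{\star,\kappa_n}_j\to V_j$ in the Carath\'eodory sense (with the ordering by Lebesgue measure preserved, and $Z_j^{\kappa_n}\to Z_j$), so that $\phi_j^{\kappa_n}\to\phi_j$ uniformly on compacts and $(\phi_j^{\kappa_n})^{-1}(\Gamma^{\kappa_n}\vert_{V_j^{\kappa_n}})\to\phi_j^{-1}(\Gamma\vert_{V_j})$ loop by loop. Mere Hausdorff convergence of individual loops is insufficient since thin large loops could pinch off components; this is exactly what~\Cref{prop:thin-loops} and~\Cref{lemma cv Hausdorff union of loops} rule out. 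I would apply~\Cref{lemma cv Hausdorff union of loops} to the connected compact sets $K_m=\overline{\BD}\setminus V_{1/m}$, where $V_{1/m}$ is an exhaustion of $V$, to get Hausdorff convergence of $\overline{\BD}\setminus V^{\star,\kappa_n}$ up to an error vanishing with $m$, and then argue as in~\Cref{cor cv Hausdorff ball boundary} (rational balls plus Carath\'eodory kernel theorem) that the components converge. Since the loops of $\Gamma$ do not touch, one also checks that each $\CLE_4$ loop in $V_j$ is a limit of loops of $\Gamma^{\kappa_n}$ in $V^{\kappa_n}_j$, and uses uniform Lebesgue measures to pass $Z_j^{\kappa_n}\to Z_j$ as in~\Cref{cor uniform exploration from x}.

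Given this, the second and third bullets pass to the limit: the inequality follows from~\eqref{eq D kappa restricted is larger} at rational points and density; the ball identity at rational $t<D(\SCL,\partial V_j)$ follows from the convergence of balls together with $D(\SCL,\partial V_j)=\lim \ka_{\kappa_n}^{-1}D^{\kappa_n}(\SCL^{\kappa_n},\partial V_j^{\kappa_n})$ (lower semicontinuity suffices, obtained from Hausdorff convergence of balls), and extends to real $t$ by right-continuity and~\Cref{lem:density_of_loops_in_balls}. For the first bullet, conditional independence passes to the limit because for every bounded continuous test functions $f$ of the conditioning data and $g_j$ of the $j$-th piece, the prelimit identity $\BE[f\prod_j g_j]=\BE[f\prod_j \BE g_j(\Gamma^\kappa,D^\kappa)]$ converges by the joint convergence, provided the conditioning data converge; here the stopped balls use the open intervals $[0,D(\cdot,\partial V^\star))$, which is why the statement avoids the endpoint, and convergence there follows from J$_1$ convergence in~\eqref{eq cv Skorokhod} at continuity points of the limit.
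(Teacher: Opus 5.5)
Your overall architecture matches the paper's. You realize the $D_j$ as a.s.\ limits of $\ka_{\kappa_n}^{-1}D_j^{\kappa_n}$, which are tight because each has the law of $D^{\kappa_n}$. You get $V_j^{\kappa_n}\to V_j$ in the Carath\'eodory sense from \Cref{lemma cv Hausdorff union of loops}, with labels matched by areas and $Z_j^{\kappa_n}\to Z_j$. You then push \eqref{eq D kappa restricted is larger} and the prelimit ball identity to the limit for the second and third items.

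The domain step is not the main obstacle, though. The paper applies \Cref{lemma cv Hausdorff union of loops} directly to $K=\overline{\BD}\setminus V$. This set is compact and connected because $V$ is simply connected, and the loops meeting it are exactly those not contained in $V$. Your exhaustion by $K_m$ is unnecessary. As written, it also needs a uniform-in-$n$ control that you do not supply: no large loop of $\Gamma^{\kappa_n}$ may be contained in $V$ yet lie within $1/m$ of $\partial V$.

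\emph{The genuine gap is in the first item}, which is where the paper does most of its work. Your test-function argument needs prelimit statistics that are measurable with respect to the prelimit conditioning data. They must converge a.s., in a topology where your $f$'s are continuous, to statistics generating the limiting conditioning $\sigma$-algebra. You take the stopped ball processes themselves and appeal to J$_1$ convergence in \eqref{eq cv Skorokhod} at continuity points. That only controls balls at deterministic times.

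The conditioning data are indexed by the random set of loops not contained in $V$ and cut off at the random times $D(\SCL,\partial V^\star)$ and $D(I,\partial V^\star)$. Nothing you say shows that $\ka_{\kappa_n}^{-1}D^{\kappa_n}(\SCL^{\kappa_n},\partial V^{\star,\kappa_n})$ converges to $D(\SCL,\partial V^\star)$. You do not even show that its $\liminf$ is at least $D(\SCL,\partial V^\star)$. Contact with $\partial V^{\star,\kappa_n}$ is not preserved under Hausdorff limits, and $V^{\star,\kappa_n}$ may have small components with no counterpart in the limit.

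The paper avoids cutoff times altogether by discretizing. On a randomly shifted dyadic grid of mesh $2^{-k}$, $Y_k^\kappa$ records the discretizations of:
\begin{itemize}
\item the loops $\SCL^\kappa(x_i)$, $i\le k$, that are not contained in $V$;
\item the balls from these loops and from $I$ at times $\ell 2^{-k}$, keeping a ball only when its discretization is disjoint from that of $V^{\star,\kappa}$;
\item $V^{\star,\kappa}$ itself.
\end{itemize}

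The disjointness criterion retains only pre-cutoff balls, so $Y_k^\kappa$ is a function of the prelimit conditioning data. $Y_k$ takes finitely many values, and the random shift gives $Y_k^{\kappa_n}=Y_k$ for $n$ large, a.s. Write $X_j^{\kappa_n}$ for the $j$-th rescaled piece and $X^{\kappa_n}=(\Gamma^{\kappa_n},D^{\kappa_n})$. The prelimit identity $\BE[\prod_j g_j(X_j^{\kappa_n})\mid Y_k^{\kappa_n}=y]=\prod_j\BE[g_j(X^{\kappa_n})]$ then passes to the limit atom by atom.

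Finally, $\sigma(Y_k)$ increases to the conditioning $\sigma$-algebra. A ball at a time $t<D(\SCL,\partial V^\star)$ is connected and misses $\partial V^\star$, so it lies at positive distance from $\overline{V^\star}$ and is eventually retained. Right-continuity recovers all times from dyadic ones, and martingale convergence concludes. Without this device, or some other control of the cutoff times, your first item is not established.
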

\begin{proof}
	\stepx{step:loc-setup}{Almost sure convergence of the restricted configurations} By Skorokhod's representation theorem (applied once to the joint law of all the random variables considered below), we may assume that~\eqref{eq cv subsequence boundary},~\eqref{eq cv Skorokhod} hold a.s., together with, for all $j\ge 1$, the almost sure convergence of $V_j^{\kappa_n}$ to $V_j$ in the Carath\'eodory sense (which stems from~\Cref{lemma cv Hausdorff union of loops} applied with $K = \overline{\BD} \setminus V$ (the loops not contained in $V$ are those intersecting $K$), because the Hausdorff convergence of the closure of the union of the domains encircled by the loops not contained in $V$ implies the Carath\'eodory convergence of the connected components of its complement in $V$; their areas converge as well (as in the proof of~\Cref{cor uniform exploration from x}), which allows us to match the labels $j$ (the areas of the $V_j$ being a.s.\ distinct) and to couple $Z_j^{\kappa_n}$ with $Z_j$), the almost sure convergence of $Z_j^{\kappa_n}$ to $Z_j$, yielding the almost sure convergence of $\phi_j^{\kappa_n}$ toward $\phi_j$ uniformly on compact subsets of $\BD$ (which entails the convergence of $(\phi_j^{\kappa_n})^{-1}$ toward $\phi_j^{-1}$ uniformly on compact subsets of $V_j$). Using the fact that the $D_j^{\kappa_n}$'s for $j\ge 1$ have the same law as $D^{\kappa_n}$, we may also assume that along the same subsequence, we also have the almost sure convergence \[\ka_{\kappa_n}^{-1}D_j^{\kappa_n}\left((\phi_j^{\kappa_n})^{-1}(\SCL^{\kappa_n}(\phi_j^{\kappa_n}(x))), (\phi_j^{\kappa_n})^{-1}(\SCL^{\kappa_n}(\phi_j^{\kappa_n}(y)))\right)
		\mathop{\longrightarrow}\limits_{n\to \infty} D_j\left(\phi_j^{-1}(\SCL(\phi_j(x))), \phi_j^{-1}(\SCL(\phi_j(y)))\right)
		\]
	 for all $x,y \in \BD_{\BQ}$, where $D_j$ is a random distance on $\phi_j^{-1}(\Gamma \vert_{V_j})$ such that $(\phi_j^{-1}(\Gamma \vert_{V_j}), D_j)$ has the same law as $(\Gamma,D)$, and we can also assume that the metric balls from loops for the distance $D_j^{\kappa_n}$ converge to the metric balls from loops for the distance $D_j$ in the sense of Skorokhod. Thus, we get the convergence
	 \begin{multline}\label{eq cv balls locality}
\left(
(\phi^{\kappa_n}_j)^{-1}(\SCB_{\ka_{\kappa_n}t}^{\kappa_n}(\SCL^{\kappa_n}(x)))
\right)_{x \in \BD_\BQ\cap V^{\kappa_n}_j, \ t \in 
[0, D( \SCL(x), \partial V_j) ) }
\\
\mathop{\longrightarrow}\limits_{n\to \infty}^{\mathrm{a.s.}}
\left( (\phi_j)^{-1}(\SCB_t(\SCL(x))) \right)_{ x \in \BD_\BQ \cap V_j, \ t \in [0, D( \SCL(x), \partial V_j) ) }
	 \end{multline}
	 in the sense of the J$_1$ Skorokhod topology for all $x \in \BD_\BQ \cap V_j$ (processes stopped at their terminal times; the limits are identified since the loops of $\Gamma\vert_{V_j}$ and the balls $\SCB_t(\SCL(x))$, $t<D(\SCL(x),\partial V_j)$, are compact in $V_j$, where $(\phi^{\kappa_n}_j)^{-1} \to \phi_j^{-1}$ uniformly, and the loop surrounding $\phi_j^{\kappa_n}(x)$ is eventually that surrounding a fixed rational point of $\mathrm{int}(\SCL(\phi_j(x)))$; this also gives $\liminf_n \ka_{\kappa_n}^{-1}D^{\kappa_n}(\SCL^{\kappa_n}(x),\partial V^{\kappa_n}_j) \ge D(\SCL(x),\partial V_j)$), as well as the convergence
	\begin{multline*}
		X_j^{\kappa_n} \defeq
		\left( 
		\begin{matrix}
			(\phi^{\kappa_n}_j)^{-1}(\Gamma^{\kappa_n} \vert_{V_j^{\kappa_n}}) \\
			\left(\ka_{\kappa_n}^{-1}D_j^{\kappa_n}\left((\phi_j^{\kappa_n})^{-1}(\SCL^{\kappa_n}(\phi_j^{\kappa_n}(x))), (\phi_j^{\kappa_n})^{-1}(\SCL^{\kappa_n}(\phi_j^{\kappa_n}(y)))\right)\right)_{x,y \in \BD_\BQ}
		\end{matrix}
		\right)
		\\
		\mathop{\longrightarrow}\limits_{n\to \infty}^{\mathrm{a.s.}}
		X_j \defeq 
		\left(
		\begin{matrix}
			\phi_j^{-1}(\Gamma \vert_{V_j})\\
			 \left(D_j\left(\phi_j^{-1}(\SCL(\phi_j(x))), \phi_j^{-1}(\SCL(\phi_j(y)))\right)\right)_{x,y \in \BD_\BQ}
		\end{matrix}
		\right),
	\end{multline*}
	where the first coordinate converges in the sense that the loop surrounding $x$ converges in the Hausdorff topology for all $x  \in \BD_\BQ$, and the second coordinate converges for the product topology. 

	\stepx{step:loc-items}{The second and third items} Note that the second item of the proposition already follows from~\eqref{eq D kappa restricted is larger} and from the above convergence.
	
	Moreover, the convergence~\eqref{eq cv balls locality} and the convergence of metric balls for $D^{\kappa_n}_j$, together with the fact that for all $\SCL \in \Gamma^{\kappa_n}\vert_{V^{\kappa_n}_j}$, for all $t \in [0, D^{\kappa_n}(\SCL, \partial V^{\kappa_n}_j)]$, the image under $\phi_j^{\kappa_n}$ of the metric ball for $D_j^{\kappa_n}$ of radius $t$ centered at $(\phi_j^{\kappa_n})^{-1}(\SCL)$ coincides with $\SCB^{\kappa_n}_t(\SCL)$, yield the third item of the proposition (the cutoff times being handled by the $\liminf$ bound above).
	
	\stepx{step:loc-discretization}{Discretization of the conditioning} The rest of the proof is devoted to the proof of the first item in the statement of the proposition. Let $(x_i)_{i\ge 1}$ be an enumeration of $\BD_\BQ$. For all $k\ge 1$, for all $\SCL^\kappa \in \Gamma^\kappa$ (resp.~$\SCL \in \Gamma$), let $\SCL^\kappa_k$ (resp.~$\SCL_k$) be the union of the squares $[ j 2^{-k}, (j+1)2^{-k}] \times [\ell 2^{-k}, (\ell+1)2^{-k}]$ for $j, \ell \in \BZ$ which intersect $\SCL^\kappa \in \Gamma^\kappa$ (resp.~$\SCL \in \Gamma$). We define similarly $\SCB^\kappa_{k,t}(\SCL^\kappa)$, $\SCB_{k,t}(\SCL)$, $\SCB_{k,t}^\kappa(I)$ and $\SCB_{k,t}(I)$ from $\SCB^\kappa_{t}(\SCL^\kappa)$, $\SCB_{t}(\SCL)$, $\SCB_{t}^\kappa(I)$ and $\SCB_{t}(I)$ for all $t\ge 0$, $\SCL^\kappa \in \Gamma^\kappa$ and $\SCL \in \Gamma$. We also let $V^{\star,\kappa}_k$ be the union of the $[ j 2^{-k}, (j+1)2^{-k}] \times [\ell 2^{-k}, (\ell+1)2^{-k}]$ for $j, \ell \in \BZ$ which intersect $V^{\star, \kappa}$ and we define similarly $V^{\star}_k$ from $V^\star$. Here, the grid is shifted by an independent uniform random vector in $[0,2^{-k})^2$, so that a.s.\ no compact set considered below meets a closed square without meeting its interior (which makes the discretizations continuous).
	
	Let \begin{equation*}Y^\kappa_k
	\defeq
	\left(
	\begin{matrix}
	\left\{ \SCL^\kappa_k(x_i) \right\}_{1 \le i \le k, \ \SCL^\kappa(x_i) \not\subseteq V } \\
	{\left\{\SCB_{k, \ka_{\kappa} 2^{-k} \ell }^{\kappa}(\SCL^{\kappa}(x_i)): 1 \le \ell \le k2^k, \ 1\le i\le k, \ \text{such that } { \SCB_{k, \ka_{\kappa} 2^{-k} \ell }^{\kappa}(\SCL^{\kappa}(x_i)) \cap V^{\star,\kappa}_k = \emptyset} \right\}}\\
	{
	\left\{\SCB_{k, \ka_{\kappa} 2^{-k} \ell }^{\kappa}(I): 1 \le \ell \le k2^k,  \ \text{such that } { \SCB_{k, \ka_{\kappa} 2^{-k} \ell }^{\kappa}(I)}\cap V^{\star,\kappa}_k = \emptyset \right\}} \\
	V^{\star,\kappa}_k
	\end{matrix}
	\right)
	\end{equation*}
	and 
	\begin{equation*}Y_k
	\defeq
	\left(
	\begin{matrix}
	\left\{\SCL_k(x_i) \right\}_{1 \le i \le k, \ \SCL(x_i) \not\subseteq V }\\
	\left\{\SCB_{k, 2^{-k}\ell }(\SCL(x_i)): 1 \le \ell \le k2^k, \ 1\le i\le k, \ \text{such that } {\SCB_{k, 2^{-k}\ell }(\SCL(x_i)) \cap V^\star_k =\emptyset}  \right\}\\
	{
	\left\{\SCB_{k, 2^{-k}\ell }(I): 1 \le \ell \le k2^k, \ \text{such that }  {\SCB_{k, 2^{-k}\ell }(I)} \cap V^\star_k =\emptyset \right\}} \\
	V^\star_k
	\end{matrix}
	\right).
	\end{equation*}
	Note that the above random variables take a finite number of possible values (uniformly in $\kappa$). Note also that the balls in $Y^\kappa_k$ are disjoint from $V^{\star,\kappa}_k$, hence are pre-cutoff balls of loops not contained in $V$, so that $Y^\kappa_k$ is a function of the locality data, and that $\sigma(Y_k)$ is nondecreasing in $k$. Moreover, by~\eqref{eq cv subsequence boundary} and the beginning of the proof, we know that $Y^{\kappa_n}_k \rightarrow Y_k$ a.s.
	
	\stepx{step:loc-prelimit}{Conditional independence in the prelimit} By the locality for the $\CLE_\kappa$ (and the previous remark), we know that conditionally on $Y^{\kappa_n}_k$, the $X^{\kappa_n}_j$'s for $j\ge 1$ are i.i.d.~with the same law as $X^{\kappa_n} \defeq (\Gamma^{\kappa_n}, (D^{\kappa_n}(\SCL^{\kappa_n}(x), \SCL^{\kappa_n}(y)))_{x,y \in \BD_\BQ})$. As a result, for all $d\ge 1$, for all possible values $y$ of $Y_k^{\kappa_n}$, for all bounded continuous functions $g_1, \ldots, g_d$,
	\begin{equation*}
	\BE[g_1(X^{\kappa_n}_1)\cdots g_d(X^{\kappa_n}_d) \mid Y^{\kappa_n}_k=y]
	=
	\prod_{j=1}^d \BE[ g_j(X^{\kappa_n})].
	\end{equation*}
	\stepx{step:loc-limit}{Passing to the limit} Moreover, by the convergences of $X^{\kappa_n}_j, Y^{\kappa_n}_k$ toward $X_j, Y_k$ (for the above product topology, $Y_k$ taking finitely many values) we see that
	\begin{equation*}
	\BE[ g_1(X^{\kappa_n}_1)\cdots g_d(X^{\kappa_n}_d) \mid Y^{\kappa_n}_k=y] \mathop{\longrightarrow}\limits_{n\to \infty}
	\BE[ g_1(X_1)\cdots g_d(X_d) \mid Y_k=y].
	\end{equation*}
	Furthermore, using the convergence of $X^{\kappa_n}$ to $X \defeq (\Gamma,(D(\SCL(x),\SCL(y)))_{x,y \in \BD_\BQ}
	)$, we conclude that
	\begin{equation*}
	\BE[ g_1(X_1)\cdots g_d(X_d) \mid Y_k=y]
	=
	\prod_{j=1}^d \BE[ g_j(X)].
	\end{equation*}
	Since the $\sigma(Y_k)$ are nondecreasing and generate, up to null sets, the conditioning $\sigma$-algebra of the first item (balls at all times $t < D(\cdot, \partial V^\star)$ being determined by those at dyadic times by right-continuity), the martingale convergence theorem concludes.
\end{proof}

\subsection{Axiom~\eqref{it:weak_axiom_conformal_invariance2} (conformal invariance)} This follows from the conformal invariance of the $\CLE_\kappa$ combined with the convergence in~\eqref{eq cv subsequence boundary}: $(\phi(\Gamma^{\kappa_n}), \phi_\ast D^{\kappa_n})$ has the law of $(\Gamma^{\kappa_n}, D^{\kappa_n})$, the graph distance being intrinsic, and this passes to the limit since the loop surrounding $\phi(x)$ is eventually that surrounding a fixed rational point of $\mathrm{int}(\SCL(\phi(x)))$.

\subsection{Axiom~\eqref{it:weak_axiom_uniform_exploration2} (uniform exploration)} This condition comes from combining ~\Cref{cor cv Hausdorff ball boundary} with Remark~\ref{remark removing the boundary increases the distance}: $\SCB_t(\partial \BD) = \SCB^\partial_t(\partial \BD)$ by the latter, this process is the explored region of a uniform exploration by the former, and $D(\SCL, \partial \BD)$ is the first time $\SCL \subseteq \SCB_t(\partial \BD)$ (\Cref{lem:density_of_loops_in_balls}), i.e.\ the discovery time of $\SCL$.

\subsection{Axiom~\eqref{it:weak_axiom_geodesic2},~\eqref{it:weak_axiom_geodesic_22}}
Finally, we consider Axiom~\eqref{it:weak_axiom_geodesic2},~\eqref{it:weak_axiom_geodesic_22}. Let us show the following. 
\begin{proposition}\label{prop hitting balls}
	Almost surely, for all $x, y \in \BD_\BQ$, for all $t_1 \le D(\SCL(x), \SCL(y)) $,
	\begin{equation*}
		D(\SCL(x), \SCL(y))  = t_1 + \inf \{ t \ge 0: \SCB_t(\SCL(y)) \cap \SCB_{t_1}(\SCL(x)) \neq \emptyset\}.
	\end{equation*}
	The same identity holds if we replace one of the loops or both of them by segments.
\end{proposition}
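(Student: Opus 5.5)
Write $d \defeq D(\SCL(x),\SCL(y))$, $d_1 \defeq \inf\{t\ge 0:\SCB_t(\SCL(y))\cap \SCB_{t_1}(\SCL(x))\neq\emptyset\}$ and $T \defeq t_1+d_1$. The plan is to prove the two inequalities $d\le T$ and $d\ge T$ separately. For each we first work at rational $t_1$ and rational radii, through the prelimit graph distances, and only at the end pass to general times using the right-continuity and left limits of balls from~\eqref{eq cv Skorokhod} and \Cref{lem:density_of_loops_in_balls}. By Skorokhod's representation theorem we assume that~\eqref{eq cv subsequence boundary} and~\eqref{eq cv Skorokhod} hold a.s.

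\emph{Upper bound $d\le T$.} Fix rationals $t_1$ and $s>d_1$. Then $\SCB_s(\SCL(y))$ meets $\SCB_{t_1}(\SCL(x))$. Since both balls are closures of unions of interiors of loops (\Cref{lemma right-continuity at rational times and density of loops}), and since distinct $\CLE_4$ loops are disjoint with disjoint interiors, the intersection point can be approached by rational points $z$ surrounded by loops in one ball and $w$ surrounded by loops in the other, with $z$ and $w$ arbitrarily close. The triangle inequality alone does not close this gap. Instead we use the prelimit: by Hausdorff convergence of the balls, for $n$ large the balls $\SCB^{\kappa_n}_{\ka_{\kappa_n}t_1}(\SCL^{\kappa_n}(x))$ and $\SCB^{\kappa_n}_{\ka_{\kappa_n}(s+\eta)}(\SCL^{\kappa_n}(y))$ are $o(1)$-close. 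We then need them to actually share a loop, or to be joined by a path of $o(\ka_{\kappa_n})$ loops. Here we invoke \Cref{lemma distance from two long segments is small} in a small neighbourhood of the near-contact point, via the Markov property: the gap between two close boundary arcs is bridged by a path of length $\le \varepsilon\ka_{\kappa_n}$ with high probability. This gives $\ka_{\kappa_n}^{-1}D^{\kappa_n}\le t_1+s+O(\varepsilon)$, hence $d\le t_1+s$, and then $d\le T$.

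\emph{Lower bound $d\ge T$.} This is the easier direction, and it holds at the prelimit level. A graph geodesic from $\SCL^{\kappa_n}(x)$ to $\SCL^{\kappa_n}(y)$ of length $d_n$ contains a loop at distance exactly $\lfloor\ka_{\kappa_n}t_1\rfloor$ from $\SCL^{\kappa_n}(x)$, and that loop lies in $\SCB^{\kappa_n}_{\ka_{\kappa_n}t_1}(\SCL^{\kappa_n}(x))$. Hence $\SCB^{\kappa_n}_{d_n-\ka_{\kappa_n}t_1+1}(\SCL^{\kappa_n}(y))$ meets the $t_1$-ball from $x$. We pass to the limit in Hausdorff distance, using that intersection of compacts is closed under limits: along the Skorokhod time-changes, $\SCB_{(d-t_1)+\eta}(\SCL(y))$ meets $\SCB_{t_1+\eta}(\SCL(x))$ for every $\eta>0$. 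Right-continuity of both ball processes then gives $d_1\le d-t_1$. This argument is uniform in $t_1$, modulo the Skorokhod reparametrization, which we handle by the $\eta$ slack.

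The main obstacle is the upper bound. The Hausdorff limit only shows that the $D^{\kappa_n}$-balls come close to each other, not that they touch, and controlling the number of extra loops needed to connect them is exactly where the rectangle-crossing estimate of \Cref{lemma distance from two long segments is small} and the locality/Markov decomposition must be combined. The segment versions follow by the same argument, with the boundary segments treated as sources.
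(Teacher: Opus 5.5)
Your lower bound $d\ge T$ is correct and is essentially the paper's first step. The paper argues by contraposition: disjoint limit balls are at positive Euclidean distance, so the prelimit balls are eventually disjoint and the graph distance exceeds $\ka_{\kappa_n}(t_1+t_2+\varepsilon)$. Your direct version, using the loop at depth $\lfloor \ka_{\kappa_n}t_1\rfloor$ on a geodesic and the stability of nonempty intersections under Hausdorff limits, is equivalent.

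The upper bound has a genuine gap, exactly at the step you flag. \Cref{lemma distance from two long segments is small} concerns a $\CLE_{\kappa_n}$ filling an entire conformal rectangle of large modulus. The Markov property gives you no $\CLE_{\kappa_n}$ ``in a small neighbourhood of the near-contact point'' with boundary arcs on the two balls. So Euclidean closeness of the prelimit balls cannot be fed into that lemma. The missing idea is to use a \emph{global} conformal rectangle and to extract its length from the \emph{local} near-contact geometry.

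Here is the construction the paper uses. Explore one ball, then the other layer by layer until it comes within $\varepsilon$ of the first; this is a stopping rule, so the Markov property applies. Let $R_n$ be the complementary component adjacent to both balls. Its side boundaries are:
\begin{itemize}
\item $\partial\BD$, if both balls reach $\partial\BD$;
\item otherwise the closure $F_n$ of the loops meeting a deterministic rational polygonal path from $x$ to $y$, plus a second such path to $\partial\BD$ when neither ball reaches it (this is the paper's case distinction).
\end{itemize}
Conditionally on these explored sets, $\Gamma^{\kappa_n}|_{R_n}$ is a $\CLE_{\kappa_n}$ in $R_n$.

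You then still have to show that $R_n$ is long. Both balls have diameter at least $3\delta$, since they contain $\SCL^{\kappa_n}(x)$ and $\SCL^{\kappa_n}(y)$. They come within $\varepsilon$ of each other near some $z$, so the top and bottom sides of $R_n$ both cross $B_\delta(z)\setminus\overline{B_\varepsilon(z)}$. Provided the sides avoid $\overline{B_\delta(z)}$, the extremal-length estimate of \Cref{lemma extremal distance rectangle} forces the modulus to be at least $l_0$ once $\varepsilon$ is small.

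Keeping $F_n$ out of $\overline{B_\delta(z)}$ uniformly in $n$ is where \Cref{lemma cv Hausdorff union of loops}, and hence \Cref{prop:thin-loops}, is needed. A priori, a thin macroscopic $\CLE_{\kappa_n}$ loop meeting the path could pinch the rectangle near $z$.

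Only once $R_n$ is known to have modulus at least $M$ does the rectangle form of \Cref{lemma distance from two long segments is small} (which holds uniformly over lengths at least $M$) give a bridging path of at most $\eta\ka_{\kappa_n}$ loops with conditional probability at least $1-\eta$. Your proposal names these ingredients but supplies neither the rectangle nor the modulus bound, so the upper bound is not yet proved.
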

Not only does the above proposition prove Axiom~\eqref{it:weak_axiom_geodesic2},~\eqref{it:weak_axiom_geodesic_22}, but it also shows that the distances and balls from segments defined via the subsequential limit satisfy the definition given in Axiom~\eqref{it:weak_axiom_geodesic2}. Indeed, by taking $t_1=0$, we get $D(\alpha, \SCL(x))= \inf \{ t\ge 0: \SCB_t(\SCL(x)) \cap \alpha \neq \emptyset \}$.

Before proving \Cref{prop hitting balls}, let us state a useful lemma on conformal rectangles in the unit disk.

\begin{lemma}\label{lemma extremal distance rectangle}
	For all $l, \delta>0$, there exists $\varepsilon_0>0$ depending only on $l$ and $\delta$ such that the following holds for all $\varepsilon \in (0, \varepsilon_0)$. Let $R \subset \BD$ be a conformal rectangle. Let $L>0$ be the unique real number such that the conformal rectangle $R$ can be conformally mapped to $(0, L) \times (0, 1)$ (so that the top, bottom, left and right sides of $R$ are mapped to the corresponding sides of $(0, L) \times (0,1)$). Let $z \in \BD$ and assume that there exists some connected component $\widetilde{R}$ of $R \cap (B_{\delta}(z) \setminus \overline{B_{\varepsilon}(z)})$ such that $\widetilde{R}$ is a conformal rectangle whose left (resp.\ right) boundary is a subarc of $\partial B_{\delta}(z)$ (resp.\ $\partial B_{\varepsilon}(z)$) and its top (resp.\ bottom) boundary is a subarc of the top (resp.\ bottom) boundary of $R$,  and $\partial \widetilde{R}$ does not intersect the left and right boundaries of $R$.  Then $L\ge l$.
\end{lemma}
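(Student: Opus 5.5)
The plan is to bound from below the extremal length of the family $\mathcal{C}$ of curves in $R$ joining its left and right sides. By conformal invariance of extremal length, the modulus $L$ of $R$ equals the extremal length of the left-to-right curve family in $(0,L)\times(0,1)$, namely $L$ itself with our normalization. So it suffices to show that $\lambda_R(\mathcal{C}) \ge c\log(\delta/\varepsilon)$ for a universal $c>0$, and then choose $\varepsilon_0 = \delta e^{-l/c}$.

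First, I would show that every curve $\gamma \in \mathcal{C}$ must cross $\widetilde{R}$ from its left boundary (on $\partial B_\delta(z)$) to its right boundary (on $\partial B_\varepsilon(z)$). Since $\widetilde R$ is a connected component of $R \cap (B_\delta(z)\setminus \overline{B_\varepsilon(z)})$ whose top and bottom sides lie on the top and bottom sides of $R$, the closure of $\widetilde R$ separates $R$. It cuts $R$ into a part adjacent to the left side of $R$ and a part adjacent to the right side of $R$, because $\partial\widetilde R$ avoids the left and right boundaries of $R$ and the top and bottom of $\widetilde R$ connect to the top and bottom of $R$. Consequently a curve from the left side to the right side of $R$ must pass through $\widetilde R$ from one of its arc-sides to the other. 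This topological separation is the step I expect to need the most care. I would argue it via the Jordan curve theorem applied to the image in $(0,L)\times(0,1)$, where the image of $\widetilde R$ is a subrectangle whose top and bottom lie on the horizontal sides, so it separates $\{0\}\times(0,1)$ from $\{L\}\times(0,1)$. One must also check which arc-side faces which end of $R$; but this does not matter, since in either case the curve crosses $\widetilde R$ between $\partial B_\delta(z)$ and $\partial B_\varepsilon(z)$.

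Second, given this, every $\gamma\in\mathcal{C}$ contains a subcurve in $\widetilde R \subset A_{\varepsilon,\delta}(z)$ joining $\partial B_\varepsilon(z)$ to $\partial B_\delta(z)$. Take the metric $\rho(w) = \frac{1}{|w-z|}\mathbf{1}_{A_{\varepsilon,\delta}(z)}(w)$. Each such radial crossing has $\rho$-length at least $\log(\delta/\varepsilon)$, while $\int\rho^2 = 2\pi\log(\delta/\varepsilon)$. Therefore $\lambda_R(\mathcal{C}) \ge \log(\delta/\varepsilon)^2/(2\pi\log(\delta/\varepsilon)) = \log(\delta/\varepsilon)/(2\pi)$.

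Finally, I would set $\varepsilon_0 \defeq \delta e^{-2\pi l}$, which depends only on $l$ and $\delta$. Then for $\varepsilon<\varepsilon_0$ we get $L = \lambda_R(\mathcal{C}) \ge \log(\delta/\varepsilon)/(2\pi) \ge l$.
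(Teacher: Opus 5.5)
Your proof is correct, and it takes a different route from the paper.

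\textbf{Your route.} You work directly with extremal length.
\begin{itemize}
\item $L$ is the extremal length of the family of curves in $R$ joining its left and right sides.
\item Every such curve must contain a sub-arc traversing $\widetilde R$ from $\partial B_\delta(z)$ to $\partial B_\varepsilon(z)$. This holds because, in the rectangle picture, $\widetilde R$ is the middle component cut out by the two circular crosscuts, each running from the top edge to the bottom edge. A first-hitting and last-exit argument along the curve makes this rigorous.
\item The metric $|w-z|^{-1}\mathbf{1}_{A_{\varepsilon,\delta}(z)}(w)$ then gives $L\ge \log(\delta/\varepsilon)/(2\pi)$, so $\varepsilon_0=\delta e^{-2\pi l}$ works.
\end{itemize}

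\textbf{The paper's route.} The paper argues through harmonic measure instead.
\begin{itemize}
\item It picks a crosscut $I$ of $\widetilde R$ on the intermediate circle $\partial B_{\sqrt{\varepsilon\delta}}(z)$.
\item It uses Beurling-type estimates (\cite[Proposition~3.79 and Corollary~3.78]{lawler2008conformally}) to show that Brownian motion started on $I$ exits $R$ through its left or right side with probability at most $C(\delta)\varepsilon^{1/4}$.
\item It converts this into $L\ge l$ via \cite[Proposition~3.69]{lawler2008conformally}. This needs a case distinction on whether $\phi(I)$ contains the centre of $(0,L)\times(0,1)$ or separates it from one of the vertical sides.
\end{itemize}

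\textbf{Comparison.} Both proofs rest on the same topological input: $\widetilde R$ separates the left side of $R$ from the right side. The paper uses this implicitly when it says a Brownian path must leave $\widetilde R$ through its circular sides to reach the left or right boundary of $R$. Your argument is shorter and avoids the probabilistic estimates and the case analysis. It also gives an explicit $\varepsilon_0$, and a bound of the correct logarithmic order: for $R$ an upper half-annulus $\{\varepsilon/2<|w-z|<2\delta,\ \arg(w-z)\in(0,\pi)\}$ one gets $L=\log(4\delta/\varepsilon)/\pi$. The paper's version gains nothing essential here beyond staying within the harmonic-measure toolkit it uses elsewhere.
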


\begin{proof}
\stepx{step:extr-component}{Restriction to an annular component} Let $\widetilde{R}$ be a connected component of $R \cap (B_{\delta}(z) \setminus \overline{B_{\varepsilon}(z)})$ satisfying the properties in the statement of the lemma.  Note that there exists a closed subarc $I$ of $\partial B_{\sqrt{\varepsilon \delta}}(z)$ such that 
$I$ crosses $\widetilde{R}$ between the top and bottom boundaries of $\widetilde{R}$,  and such that $I$ intersects $\partial \widetilde{R}$ only at its endpoints (the circle separates the left and right boundaries of $\widetilde{R}$, so one of its crosscuts in $\widetilde{R}$ does).

\stepx{step:extr-beurling}{A Beurling estimate} Fix $w \in I \cap \widetilde{R}$.  Then,  in order for a planar Brownian motion $\widetilde{B}$ starting from $w$ to exit $R$ for the first time either on the left or right boundaries of $R$,  it must exit $\widetilde{R}$ for the first time either on the left or right boundaries of $\widetilde{R}$,  i.e., at some point on $\partial B_{\delta}(z) \cup \partial B_{\varepsilon}(z)$.  Note that $\text{dist}(w,\partial \widetilde{R}) \leq 2 \sqrt{\varepsilon \delta}$ and that in order for $\widetilde{B}$ to exit $\widetilde{R}$ for the first time on the left boundary of $\widetilde{R}$,  it must traverse distance at least $\delta- \sqrt{\delta \varepsilon}\ge (\sqrt{\delta} - \sqrt{\varepsilon}) \text{dist}(w,\partial \widetilde{R}) / (2\sqrt{\varepsilon})$
.  Thus \cite[Proposition~3.79]{lawler2008conformally} implies that this occurs with probability at most $\widetilde{C} \varepsilon^{1/4} (\sqrt{\delta} - \sqrt{\varepsilon})^{-1/2}$ for some universal constant $\widetilde{C} \in (0,\infty)$. 

Similarly,  in order for $\widetilde{B}$ to exit $\widetilde{R}$ for the first time on the right boundary of $\widetilde{R}$,  it must make a crossing between the inner and outer boundaries of the annulus $B_{\sqrt{\varepsilon \delta}}(z) \setminus \overline{B_{\varepsilon}(z)}$ without intersecting the top and bottom boundaries of $\widetilde{R}$ (which we recall that they both cross $B_{\sqrt{\varepsilon \delta}}(z) \setminus \overline{B_{\varepsilon}(z)}$).  It follows from \cite[Corollary~ 3.78]{lawler2008conformally} that by possibly taking $\widetilde{C}$ to be larger (in a universal way),  we have that the probability that $\widetilde{B}$ exits $\widetilde{R}$ for the first time on the right boundary of $\widetilde{R}$ is at most $\widetilde{C} \varepsilon^{1/4} \delta^{-1/4}$.  Combining with the previous paragraph,  we obtain that there exists a constant $C = C(\delta) \in (0,\infty)$ depending only on $\delta$ such that the following holds for all $w \in I \cap \widetilde{R}$ and all $\varepsilon \in (0,\delta/2)$.  The probability that a planar Brownian motion starting from $w$ exits $R$ for the first time either on its left or right boundaries is at most $C \varepsilon^{1/4}$.

\stepx{step:extr-conclusion}{Conclusion} Let $\phi$ be the conformal transformation mapping $R$ onto $(0,L) \times (0,1)$ such that the top (resp.\ bottom) boundary of $R$ is mapped onto $(0,L) \times \{1\}$ (resp.\ $(0,L) \times \{0\}$) and the left (resp.\ right) boundary of $R$ is mapped onto $\{0\} \times (0,1)$ (resp.\ $\{L\} \times (0,1)$).  If $(L/2) + \ri(1/2) \in \phi(I)$,  then \cite[Proposition ~3.69]{lawler2008conformally} combined with the previous paragraph imply that if we take $\varepsilon \in (0,\delta / 2)$ sufficiently small (in a way that depends only on $\delta$ and $l$),  we have that $L \ge l$.  If $(L/2) + \ri(1/2)$ is not in $\phi(I)$,  then $\phi(I)$ disconnects $(L/2) + \ri(1/2)$ either from the left or the right boundary of $(0,L) \times (0,1)$.  Without loss of generality,  we can assume that the latter holds since the exact same argument works in the former case.  Then,  by the Markov property of planar Brownian motion,  we have that the probability that a planar Brownian motion $B$ exits $(0,L) \times (0,1)$ for the first time on $\{L\} \times (0,1)$ is at most
\begin{align*}
\sup_{w \in \phi(I)} \BP_w[B\,\,\text{exits}\,\,(0,L) \times (0,1) \,\,\text{for the first time on} \,\,  \{L\} \times (0,1)].
\end{align*}
But \Cref{step:extr-beurling} implies that the latter quantity is at most $C \varepsilon^{1/4}$ and so using again \cite[Proposition~3.69]{lawler2008conformally},  we obtain that $L \ge l$ (if we take $\varepsilon \in (0,\delta/2)$ sufficiently small depending only on $\delta$ and $l$).  This completes the proof of the lemma.
\end{proof}

We are now in position to prove \Cref{prop hitting balls}.

\begin{proof}[Proof of~\Cref{prop hitting balls}]
	It is enough to show that for all $t_1, t_2\in  \BQ_{\ge 0}$, we have a.s.
	\begin{equation}\label{eq axiom iii}
	\SCB_{t_1}(\SCL(x)) \cap \SCB_{t_2}(\SCL(y)) \neq \emptyset \iff
	D(\SCL(x), \SCL(y)) \le t_1 + t_2.
	\end{equation}
	(The general case follows by monotonicity and right-continuity of the balls.)
	\stepx{step:hb-disjoint}{The case $\SCB_{t_1}(\SCL(x)) \cap \SCB_{t_2} (\SCL(y))= \emptyset$} Fix $t_1, t_2 \ge 0$ and $x, y \in \BQ^2$ such that this holds. By right-continuity of $t \mapsto \SCB_{t}(\SCL(x))$, there exists $\varepsilon>0$ such that $\SCB_{t_1+\varepsilon}(\SCL(x)) \cap \SCB_{t_2} (\SCL(y))= \emptyset$. We aim to show that $D(\SCL(x), \SCL(y))\ge t_1+t_2+\varepsilon$.
	By Skorokhod's representation theorem, we may assume that the convergences~\eqref{eq cv subsequence boundary} and~\eqref{eq cv Skorokhod} hold a.s. 
	
	Then, $\SCB_{t_1}(\SCL(x))$ and $ \SCB_{t_2} (\SCL(y))$ are at a non-zero Euclidean distance from each other since they are compact and disjoint. But, by the Hausdorff convergence~\eqref{eq cv subsequence boundary}, we deduce that a.s.\ $\SCB^{\kappa_n}_{\ka_{\kappa_n} (t_1+\varepsilon)}(\SCL^{\kappa_n}(x))\cap \SCB^{\kappa_n}_{\ka_{\kappa_n}t_2} (\SCL^{\kappa_n}(y)) =\emptyset$ for all $n$ large enough. In other words, \begin{equation*}D^{\kappa_n}(\SCL^{\kappa_n}(x), \SCL^{\kappa_n}(y)) > \ka_{\kappa_n}(t_1+t_2+\varepsilon),\end{equation*} up to an additive error of at most $2$ due to rounding, negligible after dividing by $\ka_{\kappa_n}$. So by~\eqref{eq cv subsequence boundary}, we deduce that $D(\SCL(x), \SCL(y))\ge t_1+t_2+\varepsilon$, so that $D(\SCL(x), \SCL(y))>t_1+t_2$. This proves the first part of \eqref{eq axiom iii}.
	
	\begin{figure}[ht!]
		\centering
		\includegraphics[width=0.6\linewidth]{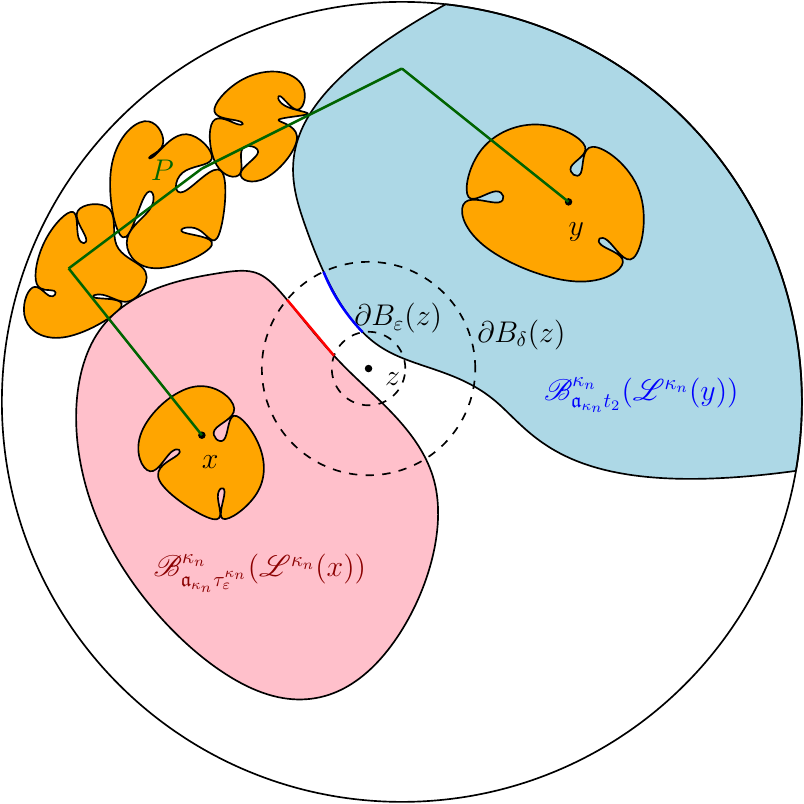}
		\caption{Illustration of the proof of~\Cref{prop hitting balls}. The case represented is Case 2, where $\SCB^{\kappa_n}_{\ka_{\kappa_n} \tau^{\kappa_n}_\varepsilon}(\SCL^{\kappa_n}(x)) \cap \partial \BD = \emptyset$ and $\SCB^{\kappa_n}_{\ka_{\kappa_n} t_2}(\SCL^{\kappa_n}(y)) \cap \partial \BD \neq \emptyset$. The ball $\SCB^{\kappa_n}_{\ka_{\kappa_n} \tau^{\kappa_n}_\varepsilon}(\SCL^{\kappa_n}(x))$ is sketched in pink and $\SCB^{\kappa_n}_{\ka_{\kappa_n} t_2}(\SCL^{\kappa_n}(y)) $ is in light blue. A polygonal path $P$ between $x$ and $y$ is drawn in green and some loops of $\Gamma^{\kappa_n}$ that intersect $P$ and form the set $F_n$ are drawn in orange. The conformal rectangle $R_n$ is the connected component of the white region whose closure intersects both metric balls. The red crossing of $B_\delta(z)\setminus \overline{B_\varepsilon(z)}$ is part of the bottom boundary of $R_n$ and the blue crossing is part of the top boundary of $R_n$. Note that, in this drawing, the annulus $B_\delta(z)\setminus \overline{B_\varepsilon(z)}$ is included in $\BD$, but this is not at all required by the proof.}
		\label{close_balls_new_proof}
	\end{figure}
	
	\stepx{step:hb-intersect}{The case $\SCB_{t_1}(\SCL(x)) \cap \SCB_{t_2} (\SCL(y)) \neq \emptyset$} Fix $t_1, t_2 \ge 0$ and $x, y \in \BQ^2$ such that this holds. We aim to show that $D(\SCL(x), \SCL(y))\le t_1+t_2$. If $y \in \SCB_{t_1}(\SCL(x))$,  then Lemma~\ref{lem:density_of_loops_in_balls} implies that $D(\SCL(x) , \SCL(y)) \leq t_1$,  and so the claim is obviously true. Hence,  we assume that $y \notin \SCB_{t_1}(\SCL(x))$. 
	
	Let $\varepsilon>0$. For all $n\ge 1$, let $\tau_\varepsilon^{\kappa_n}$ be the first time $t\ge 0$ such that $\mathrm{dist} (\SCB^{\kappa_n}_{\ka_{\kappa_n} t}(\SCL^{\kappa_n}(x)) ,\allowbreak \SCB^{\kappa_n}_{\ka_{\kappa_n} t_2}(\SCL^{\kappa_n}(y)) ) \le \varepsilon$. By~\eqref{eq cv Skorokhod}, a.s., for all $n$ large enough, we have $\tau_\varepsilon^{\kappa_n} \le t_1$.
	
	If $\SCB^{\kappa_n}_{\ka_{\kappa_n} \tau^{\kappa_n}_\varepsilon}(\SCL^{\kappa_n}(x)) \cap  \SCB^{\kappa_n}_{\ka_{\kappa_n} t_2}(\SCL^{\kappa_n}(y)) \neq \emptyset $ for infinitely many $n$, then we have
	\[
	D^{\kappa_n}(\SCL^{\kappa_n}(x), \SCL^{\kappa_n}(y))\le \ka_{\kappa_n}(\tau^{\kappa_n}_\varepsilon + t_2) +1\le \ka_{\kappa_n}(t_1+t_2)+1
	\]
	for infinitely many $n$ (by local finiteness such a ball is the union of the closed interiors of its loops, so the common point lies in two of them, which therefore intersect), so that $D(\SCL(x), \SCL(y))\le t_1+t_2$.
	
	From now on, we focus on the case where $\SCB^{\kappa_n}_{\ka_{\kappa_n} \tau^{\kappa_n}_\varepsilon}(\SCL^{\kappa_n}(x)) \cap  \SCB^{\kappa_n}_{\ka_{\kappa_n} t_2}(\SCL^{\kappa_n}(y)) =\emptyset $ for all $n$ large enough. We distinguish three cases.
	
	\emph{Case 1:} For infinitely many $n$, we have $\SCB^{\kappa_n}_{\ka_{\kappa_n} \tau^{\kappa_n}_\varepsilon}(\SCL^{\kappa_n}(x)) \cap \partial \BD \neq \emptyset$ and $\SCB^{\kappa_n}_{\ka_{\kappa_n} t_2}(\SCL^{\kappa_n}(y)) \cap \partial \BD \neq \emptyset$. Let $\mathcal{E}_1^n$ be the event that $\SCB^{\kappa_n}_{\ka_{\kappa_n} \tau^{\kappa_n}_\varepsilon}(\SCL^{\kappa_n}(x)) \cap  \SCB^{\kappa_n}_{\ka_{\kappa_n} t_2}(\SCL^{\kappa_n}(y)) =\emptyset $, that $\SCB^{\kappa_n}_{\ka_{\kappa_n} \tau^{\kappa_n}_\varepsilon}(\SCL^{\kappa_n}(x)) \cap \partial \BD \neq \emptyset$ and $\SCB^{\kappa_n}_{\ka_{\kappa_n} t_2}(\SCL^{\kappa_n}(y)) \cap \partial \BD \neq \emptyset$. On the event $\mathcal{E}^n_1$, the connected component $R_n$ of $\BD \setminus ( \SCB^{\kappa_n}_{\ka_{\kappa_n} t_2}(\SCL^{\kappa_n}(y)) \cup \SCB^{\kappa_n}_{\ka_{\kappa_n} \tau^{\kappa_n}_\varepsilon}(\SCL^{\kappa_n}(x)) )$ whose closure intersects both $\SCB^{\kappa_n}_{\ka_{\kappa_n} t_2}(\SCL^{\kappa_n}(y)) $ and $ \SCB^{\kappa_n}_{\ka_{\kappa_n} \tau^{\kappa_n}_\varepsilon}(\SCL^{\kappa_n}(x)) $ and which contains the point $z$ chosen below is simply connected (as $\partial \BD$ and the two balls form a connected set). We see $R_n$ as a conformal rectangle, whose top boundary is  $\partial R_n \cap \SCB^{\kappa_n}_{\ka_{\kappa_n} t_2}(\SCL^{\kappa_n}(y)) $ and whose bottom boundary is $\partial R_n \cap \SCB^{\kappa_n}_{\ka_{\kappa_n} \tau^{\kappa_n}_\varepsilon}(\SCL^{\kappa_n}(x)) $. Let $\phi_n$ be the conformal mapping from $R_n$ to the rectangle $(0, L_n) \times (0,1)$ for some random $L_n$ such that the top (resp.\ bottom) boundary of $R_n$ is sent to the top (resp.\ bottom) boundary of $(0, L_n) \times (0,1)$.
	
	Let $l_0>0$ and $\eta>0$. Let $\delta>0$ small enough that with probability at least $1-\eta$, the diameters of the loops $\SCL(x)$ and $\SCL(y)$ are larger than $4\delta$. Recall that $\SCL^{\kappa_n}(x)$ and $\SCL^{\kappa_n}(y)$ converge toward $\SCL(x)$ and $\SCL(y)$ respectively, so that for all $n$ large enough, with probability at least $1-\eta$, we have \[\mathrm{diam}(\SCB^{\kappa_n}_{\ka_{\kappa_n} \tau^{\kappa_n}_\varepsilon}(\SCL^{\kappa_n}(x)))\ge 3\delta
	\text{ and }
	\mathrm{diam}(\SCB^{\kappa_n}_{\ka_{\kappa_n} t_2}(\SCL^{\kappa_n}(y)) ) \ge 3\delta.
	\]
	Let $\mathcal{E}_2^n$ be the above event.
	
	 By taking $\varepsilon \in (0, \delta)$,  and since $\mathrm{dist} (\SCB^{\kappa_n}_{\ka_{\kappa_n} \tau^{\kappa_n}_\varepsilon}(\SCL^{\kappa_n}(x)), \SCB^{\kappa_n}_{\ka_{\kappa_n} t_2}(\SCL^{\kappa_n}(y))  )\le \varepsilon$, on the event $\mathcal{E}_1^n\cap \mathcal{E}^n_2$, there exists $z\in \BD_{\BQ} \setminus (\SCB^{\kappa_n}_{\ka_{\kappa_n} \tau^{\kappa_n}_\varepsilon}(\SCL^{\kappa_n}(x)) \cup \SCB^{\kappa_n}_{\ka_{\kappa_n} t_2}(\SCL^{\kappa_n}(y)))$ (near the midpoint of a segment realizing the distance between the two connected balls) such that $\SCB^{\kappa_n}_{\ka_{\kappa_n} \tau^{\kappa_n}_\varepsilon}(\SCL^{\kappa_n}(x))$ and $ \SCB^{\kappa_n}_{\ka_{\kappa_n} t_2}(\SCL^{\kappa_n}(y))$ make a crossing of $B_\delta(z) \setminus \overline{B_\varepsilon(z)}$. 
Note that both the left and right boundaries of $R_n$ are contained in $\partial \BD$ and that the sets $\SCB_{\ka_{\kappa_n} \tau_{\varepsilon}^{\kappa_n}}^{\kappa_n}(\SCL^{\kappa_n}(x)) \cap \partial \BD$ and $\SCB_{\ka_{\kappa_n} t_2}^{\kappa_n}(\SCL^{\kappa_n}(y)) \cap \partial \BD$ have empty interiors
.  Therefore,  both the top and bottom boundaries of $R_n$ cross the annulus $B_{\delta}(z) \setminus \overline{B_{\varepsilon}(z)}$,  and so the conditions in the statement of \Cref{lemma extremal distance rectangle} are satisfied for the conformal rectangle $R_n$.  It follows that we can choose $\varepsilon \in (0,\delta)$ sufficiently small (in a way that depends only on $\delta$ and $l_0$) such that on the event $\mathcal{E}_1^n \cap \mathcal{E}_2^n$,  we have that $L_n > l_0$.
	 
	 Moreover, by definition of $\tau^{\kappa_n}_\varepsilon$ and by the Markov property of~\Cref{sec: distance between two segments} applied layer by layer to the two balls (the second one explored until the stopping rule defining $\tau^{\kappa_n}_\varepsilon$), $z$ and $R_n$ being functions of the balls, conditionally on $\SCB^{\kappa_n}_{\ka_{\kappa_n} \tau^{\kappa_n}_\varepsilon}(\SCL^{\kappa_n}(x)), \SCB^{\kappa_n}_{\ka_{\kappa_n} t_2}(\SCL^{\kappa_n}(y)) $, the image $\phi_n(\Gamma^{\kappa_n}\vert_{R_n})$ is a $\CLE_{\kappa_n}$ in $(0, L_n) \times (0,1)$. By the rectangle form of~\Cref{lemma distance from two long segments is small} applied with $\varepsilon = \eta$, and by choosing $l_0 \ge M$, for all $n$ large enough, on the event $\mathcal{E}_1^n\cap \mathcal{E}^n_2$, conditionally on the metric balls $\SCB^{\kappa_n}_{\ka_{\kappa_n} \tau^{\kappa_n}_\varepsilon}(\SCL^{\kappa_n}(x)), \SCB^{\kappa_n}_{\ka_{\kappa_n} t_2}(\SCL^{\kappa_n}(y)) $, with probability at least $1-\eta$, the $\CLE_{\kappa_n}$ graph distance between the top and bottom boundaries of $R_n$ is at most $\ka_{\kappa_n} \eta$. In particular, for all $n$ large enough, on the event $\mathcal{E}_1^n\cap \mathcal{E}^n_2$, with probability at least $1-\eta$,
	\[
	D^{\kappa_n}(\SCL^{\kappa_n}(x), \SCL^{\kappa_n}(y))\le \ka_{\kappa_n}(\tau^{\kappa_n}_\varepsilon + t_2 + \eta) + 2.
	\] 
	Thus, by \eqref{eq cv subsequence boundary}, on the event that the diameters of the loops $\SCL(x)$ and $\SCL(y)$ are larger than $2\delta$, that $\mathcal{E}^n_1$ holds for infinitely many $n$ and that $\SCB_{t_1}(\SCL(x)) \cap \SCB_{t_2} (\SCL(y)) \neq \emptyset$, with probability at least $1-\eta$, we have
	\[
	D(\SCL(x), \SCL(y)) \le t_1+t_2+\eta.
	\]
	By letting $\eta \downarrow 0$ and then $\delta \downarrow 0$, we deduce that $D(\SCL(x), \SCL(y))\le t_1+t_2$ a.s.\ on the event that $\mathcal{E}^n_1$ holds for infinitely many $n$ and that $\SCB_{t_1}(\SCL(x)) \cap \SCB_{t_2} (\SCL(y)) \neq \emptyset$.
	
	\emph{Case 2:} For infinitely many $n$, we have $\SCB^{\kappa_n}_{\ka_{\kappa_n} \tau^{\kappa_n}_\varepsilon}(\SCL^{\kappa_n}(x)) \cap \partial \BD = \emptyset$ and $\SCB^{\kappa_n}_{\ka_{\kappa_n} t_2}(\SCL^{\kappa_n}(y)) \cap \partial \BD \neq \emptyset$. Let $\delta>0$. Let $P$ be a polygonal path from $x$ to $y$ made of finitely many segments whose endpoints have rational coordinates. Let $z \in \BD_{\BQ}$. 
	
	Let $\tau_\varepsilon^\infty$ be the limit of $\tau^{\kappa_n}_\varepsilon$ along a subsequence. Let $K_1=\SCB_{\tau_\varepsilon^\infty}(\SCL(x))$ and $K_2=\SCB_{t_2}(\SCL(y))$. Note that by \eqref{eq cv subsequence boundary}, $K_2$ is the almost sure limit  of $\SCB^{\kappa_n}_{\ka_{\kappa_n} t_2}(\SCL^{\kappa_n}(y))$ as $n\to \infty$. Moreover, by \eqref{eq cv Skorokhod} for all $\rho>0$, a.s.\ for all $n$ large enough, $K_1 \subset B_\rho(\SCB^{\kappa_n}_{\ka_{\kappa_n} \tau^{\kappa_n}_\varepsilon}(\SCL^{\kappa_n}(x)))$.
	
	Let $F$ be the closure of the union of the domains encircled by the loops of $\Gamma$ that intersect $P$. We work on the event that $F$ does not intersect $\overline{B_\delta(z)}$ and that $\mathrm{dist}(z, K_1)< \varepsilon$ and $\mathrm{dist}(z,K_2 )<\varepsilon$. Note that, by the convergences in the previous paragraph and by the Hausdorff convergence of $\SCL^{\kappa_n}(x)$ and $\SCL^{\kappa_n}(y)$ to $\SCL(x)$ and $\SCL(y)$ respectively, there exists such a path $P$ and such a point $z$ with probability arbitrarily close to one uniformly in $\varepsilon\in (0, \delta)$ by taking $\delta$ small enough. For all $n\ge 1$, let $F_n$ be the closure of the union of the domains encircled by the loops of $\Gamma^{\kappa_n}$ that intersect $P$. Let $R_n$ be the connected component of $\BD \setminus (\SCB^{\kappa_n}_{\ka_{\kappa_n} \tau^{\kappa_n}_\varepsilon}(\SCL^{\kappa_n}(x)) \cup \SCB^{\kappa_n}_{\ka_{\kappa_n} t_2}(\SCL^{\kappa_n}(y)) \cup F_n)$ whose closure intersects $\SCB^{\kappa_n}_{\ka_{\kappa_n} \tau^{\kappa_n}_\varepsilon}(\SCL^{\kappa_n}(x)) $ and $ \SCB^{\kappa_n}_{\ka_{\kappa_n} t_2}(\SCL^{\kappa_n}(y))$ and contains $z$. We view $R_n$ as a conformal rectangle whose bottom (resp.\ top) boundary is a connected component of $(\partial R_n )\cap  \SCB^{\kappa_n}_{\ka_{\kappa_n} \tau^{\kappa_n}_\varepsilon}(\SCL^{\kappa_n}(x)) $ (resp.\ $(\partial R_n) \cap \SCB^{\kappa_n}_{\ka_{\kappa_n} t_2}(\SCL^{\kappa_n}(y))$) that crosses the annulus $B_\delta(z)\setminus \overline{B_\varepsilon(z)}$, its left and right boundaries being the parts of $\partial R_n$ contained in $F_n$. 
	
	Let $l_0, \eta>0$. Let $\phi_n$ be the conformal mapping from $R_n$ to the rectangle $(0, L_n) \times (0,1)$ for some random $L_n$ such that the top (resp.\ bottom) boundary of $R_n$ is sent to the top (resp.\ bottom) boundary of $(0, L_n) \times (0,1)$. Next, note that by \Cref{lemma cv Hausdorff union of loops}, a.s., for all $n$ large enough, the set $F_n$ does not intersect $\overline{B_\delta(z)}$. By taking $\varepsilon$ small enough and applying \Cref{lemma extremal distance rectangle}, with probability at least $1-\eta$, we obtain that $L_n \ge l_0$. We conclude as in Case 1.
	
	\emph{Case 3:} For infinitely many $n$, we have $\SCB^{\kappa_n}_{\ka_{\kappa_n} \tau^{\kappa_n}_\varepsilon}(\SCL^{\kappa_n}(x)) \cap \partial \BD \neq \emptyset$ and $\SCB^{\kappa_n}_{\ka_{\kappa_n} t_2}(\SCL^{\kappa_n}(y)) \cap \partial \BD = \emptyset$. We do exactly the same proof as in Case 2, exchanging the roles of $(x,t_1)$ and $(y,t_2)$ (\eqref{eq axiom iii} being symmetric in these pairs).
	
	\emph{Case 4:} For all $n$ large enough, we have $\SCB^{\kappa_n}_{\ka_{\kappa_n} \tau^{\kappa_n}_\varepsilon}(\SCL^{\kappa_n}(x)) \cap \partial \BD = \emptyset$ and $\SCB^{\kappa_n}_{\ka_{\kappa_n} t_2}(\SCL^{\kappa_n}(y)) \cap \partial \BD = \emptyset$. This case is similar except that we draw a polygonal path $P_1$ from $x$ to $y$ and a polygonal path $P_2$ from $x$ (or $y$) to $\partial \BD$.
	
	These four cases are exhaustive (if none of Cases 1--3 occurs, Case 4 does for all $n$ large).

	In all the above cases, we get that $D(\SCL(x), \SCL(y)) \le t_1+ t_2$, thus finishing the proof of \eqref{eq axiom iii}. When one of the loops (or both) is replaced by a segment of $\partial \BD$, the same proof works.
\end{proof}

\bibliographystyle{alpha}
\bibliography{references}

\end{document}